\documentclass [11pt,reqno]{amsart}
\usepackage{amsmath,amssymb,verbatim,geometry,color}
\usepackage[all]{xy}
\usepackage{mathrsfs}
\usepackage{hyperref}

\usepackage[pdftex]{graphicx}
\usepackage{mathtools}
\usepackage[style=alphabetic, sorting=nyt, maxnames=99, maxalphanames=10, giveninits=true, isbn=false, doi=false, url=false]{biblatex}
\usepackage{quiver}

\def\XXint#1#2#3{{\setbox0=\hbox{$#1{#2#3}{\int}$ }
\vcenter{\hbox{$#2#3$ }}\kern-.6\wd0}}

\renewbibmacro{in:}{}

\newcommand{\A}{\mathbb{A}}

\newcommand{\G}{\mathbb{G}}

\newcommand{\N}{\mathbb{N}}
\renewcommand{\P}{\mathbb{P}}
 \newcommand{\Q}{\mathbb{Q}}
 \newcommand{\R}{\mathbb{R}}
 \newcommand{\Z}{\mathbb{Z}}

\newcommand{\fa}{\mathfrak{a}}
\newcommand{\fb}{\mathfrak{b}}

\renewcommand{\hom}{\mathrm{hom}}

\newcommand{\cC}{\mathcal{C}}

\newcommand{\cE}{\mathcal{E}}
\newcommand{\cF}{\mathcal{F}}

\newcommand{\cH}{\mathcal{H}}
\newcommand{\cJ}{\mathcal{J}}
\newcommand{\cL}{\mathcal{L}}
\newcommand{\cM}{\mathcal{M}}
\newcommand{\cN}{\mathcal{N}}
\newcommand{\cO}{\mathcal{O}}
\newcommand{\cP}{\mathcal{P}}

\newcommand{\cR}{\mathcal{R}}
\newcommand{\cT}{\mathcal{T}}

\newcommand{\Xdiv}{X^{\mathrm{div}}}

\renewcommand{\a}{\alpha}
\renewcommand{\b}{\beta}
\renewcommand{\d}{\delta}
\newcommand{\e}{\varepsilon}
\newcommand{\f}{\varphi}

\newcommand{\g}{\gamma}
\newcommand{\la}{\lambda}

\newcommand{\La}{\Lambda}
\newcommand{\p}{\psi}

\newcommand{\cf}{{\rm cf.\ }} 
\newcommand{\eg}{{\rm e.g.\ }} 
\newcommand{\ie}{{\rm i.e.\ }} 
\newcommand{\loccit}{\textit{loc.\,cit.}}
\renewcommand{\=}{\coloneqq}

\newcommand{\an}{\mathrm{an}}

\DeclareMathOperator{\ii}{I}
\DeclareMathOperator{\jj}{J}

\DeclareMathOperator{\Num}{N^1}
\DeclareMathOperator{\CNum}{CN^1}
\DeclareMathOperator{\WNum}{WN}
\DeclareMathOperator{\Curv}{WN_1}

\DeclareMathOperator{\BPF}{BPF}
\DeclareMathOperator{\BBPF}{BBPF}

\DeclareMathOperator{\CbDiv}{CbDiv}
\DeclareMathOperator{\CDiv}{CDiv}

\DeclareMathOperator{\FS}{FS}

\DeclareMathOperator{\MA}{MA}

\DeclareMathOperator{\PbDiv}{PbDiv}
\DeclareMathOperator{\PDiv}{PDiv}

\DeclareMathOperator{\Spec}{Spec}
\DeclareMathOperator{\Sym}{Sym}

\DeclareMathOperator{\supp}{supp}
\DeclareMathOperator{\vol}{vol}

\DeclareMathOperator{\WbDiv}{WbDiv}
\DeclareMathOperator{\WDiv}{WDiv}

\DeclareMathOperator{\ord}{ord}

\DeclareMathOperator{\NS}{NS}

\DeclareMathOperator{\Proj}{Proj}

\DeclareMathOperator{\PSH}{PSH}
\DeclareMathOperator{\CPSH}{CPSH}
\DeclareMathOperator{\IN}{IN}

\DeclareMathOperator{\gr}{gr}

\DeclareMathOperator{\Nbdd}{\cN_{\mathrm{bdd}}}
\DeclareMathOperator{\PCnum}{\overline{\cP\cC^{\mathrm{int}}}}

\DeclareMathOperator{\PL}{PL}

\DeclareMathOperator{\Hnot}{H^0}

\newcommand{\ddc}{\mathrm{dd^c}}

\DeclareMathOperator{\de}{d}

\DeclareMathOperator{\du}{\widetilde{d}_1}

\DeclareMathOperator{\trdeg}{tr.deg}

\renewcommand{\div}{\mathrm{div}}

\newcommand{\triv}{\mathrm{triv}}
\newcommand{\val}{\mathrm{val}}

\newcommand{\lin}{\mathrm{lin}}

\newcommand{\na}{\mathrm{na}}

\newcommand{\hto}{\hookrightarrow}

\newcommand{\simto}{\overset\sim\to}

\numberwithin{equation}{section}       

\newtheorem{prop} {Proposition} [section]
\newtheorem{thm}[prop] {Theorem} 
\newtheorem{defi}[prop] {Definition}
\newtheorem{lem}[prop] {Lemma}
\newtheorem{cor}[prop]{Corollary}
\newtheorem{prop-def}[prop]{Proposition-Definition}

\newtheorem*{thmA}{Theorem A} 
\newtheorem*{thmAp}{Theorem A'} 
 
\newtheorem*{thmB}{Theorem B} 
\newtheorem*{thmC}{Theorem C} 
\newtheorem*{thmD}{Theorem D}

\newtheorem{exam}[prop]{Example}
\newtheorem{rmk}[prop]{Remark}

\newtheorem*{thm*}{Theorem} 
\theoremstyle{remark}

\title{The space of polarized compactifications of an affine variety}
\date{\today}
\bibliography{infty}
\author{Mattias Jonsson \and Yueqiao Wu}

\address{Dept of Mathematics\\
  University of Michigan\\
  530 Church St\\
  Ann Arbor, MI 48109-1043\\
  USA}
\email{mattiasj@umich.edu}

\address{Dept of Mathematics\\
  Johns Hopkins University\\
  3400 N. Charles Street\\
  Baltimore, MD 21218\\
  USA}
\email{ywu347@jh.edu}

\begin{document}

\begin{abstract}
Let $X$ be a normal affine variety over an algebraically closed field of characteristic zero. Through the lens of non-Archimedean geometry, we study the space of polarized compactifications of $X$, consisting of pairs $(\bar X, D)$ where $\bar X$ is a normal projective variety containing $X$ as a dense open subset, and $D$ is an effective ample $\Q$-Cartier $\Q$-divisor with support equal to the complement of $X$ in $\bar X$. 

We identify this space with a natural class of functions--the Fubini--Study functions--on the Berkovich analytification of $X$. Through a Rees correspondence, we also give a bijection with the space of homogeneous $\Q$-valued, finitely generated negative non-Archimedean norms (or filtrations) on the coordinate ring of $X$.

Using the above identifications, we equip the space of polarized compactifications, and numerical classes thereof,  with natural metrics and identify the metric completions. The latter include a space of continuous plurisubharmonic functions in the sense of Chambert-Loir and Ducros, and a space of birational curve classes as studied by Dang and Favre.

\end{abstract}
\maketitle

\setcounter{tocdepth}{1}
\tableofcontents

\section*{Introduction}
Let $X$ be an affine variety over an algebraically closed field $k$ of characteristic zero. For the purposes of this introduction, $X$ is assumed normal.
By a \emph{polarized compactification} of $X$ we mean a pair $(\bar{X},D)$, where $\bar{X}$ is a projective variety with an open immersion $X\hto\bar{X}$ with dense image, and $D$ is an ample effective $\Q$-Cartier $\Q$-divisor on $\bar{X}$ with support equal to $\bar{X}\setminus X$. Our main goal in this paper is to study the space of polarized compactifications from several points of view. The situation is reminiscent of the space of (ample) test configurations of a polarized projective variety, as studied in~\cite{nakstab1}. Our results will be used in~\cite{Wu26} to study polarized affine Calabi--Yau varieties.

\medskip\noindent\textbf{Polarized compactifications and psh functions.}
Denote by $\cP\cC^{\mathrm{int}}(X)$ the space of normal polarized compactifications of $X$.\footnote{The superscript stands for "integrally closed", which here means normal, as $X$ is normal.}

Any $(\bar X, D)\in \cP\cC^{\mathrm{int}}(X)$ defines a function on the \emph{Berkovich analytification} $X^\na$ of $X$ with respect to the trivial absolute value on $k$: if $v\in X^\na\subset\bar{X}^\na$, we set $\p_D(v)=m^{-1}v(h_{mD})$, where $m$ is sufficiently divisible, and $h_{mD}$ is a local equation of $mD$ at the center of $v$ on $\bar{X}$. Then $\p_D\colon X^\na\to \R_+$ is continuous and homogeneous, vanishes on $X^\beth$, the set of semivaluations centered on $X$, and is strictly positive on the complement $X^\na_\infty$, the set of semivaluations centered at infinity. 

The functions defined by polarized compactifications also admit a different description. Write $X=\Spec R$. For $f\in R$, set $\log^+|f|(v)\=\max\{-v(f), 0\}$. Given a subgroup $\Lambda\subset \R$, the set $\cH_\Lambda$ of $\Lambda$-Fubini--Study functions consists of functions of the form
\[
\f=\max_j \lambda_j^{-1}\log^+|f_j|,
\]
where $\lambda_j\in \Lambda\cap\R_{>0}$ and $(f_j)$ is a finite set of $k$-algebra generators of $R$. The space $\cH_\Lambda\subset C^0(X^\na, \R_+)$ is stable under max and satisfies $\cH_\Lambda=\cH_{\Q\Lambda}$. Any Fubini--Study function is also plurisubharmonic in the sense of~\cite{CLD25}.
Using the explicit description of Fubini--Study functions, we show that $(\bar X,D)\mapsto\p_D$ defines a bijection
\[
\cP\cC^{\mathrm{int}}(X)\simto \cH(X)\=\cH_\Q(X).
\]

The space $\cH(X)$ carries a natural metric, which gives a metric space structure on $\cP\cC^{\mathrm{int}}(X)$ via the above correspondence. Indeed, any function in $\cH(X)$ is continuous, positive, and homogeneous on $X^\na_\infty$, and one can show that $X^\na_\infty/\R_+^\times$ is compact, so we have a well-defined metric $\de_\infty$ on $\cH(X)$ given by 
\[
\de_\infty(\f, \p)\=\inf\{\delta\ge 0\mid e^{-\delta}\f\le \p\le e^\delta\f\}.
\]
This is the analog of the sup norm metric for continuous functions on compact topological spaces. Motivated by complex and non-Archimedean pluripotential theory, we define a plurisubharmonic (psh) function to be a function $\f\colon X^\na\to \R_+$ that can be written as a pointwise limit of a decreasing net of functions in $\cH(X)$.\footnote{Such functions are psh in the sense of Chambert-Loir and Ducros~\cite{CLD25}, but our notion is more restrictive; for example, our functions are nonnegative and homogeneous.} 
The space $\PSH(X)$ of psh functions is equipped with the topology of pointwise convergence on the subset $X^\div_\infty$ of (rational) divisorial valuations centered at infinity. Such functions are nonnegative, vanish identically on $X^\beth$, and they are uniquely determined by their values on $X^\div_\infty$. By Dini's lemma, the set $\CPSH(X)$ of \emph{continuous} psh functions is the closure of $\cH(X)$ under locally uniform convergence, and hence the metric completion of $\cP\cC^{\mathrm{int}}(X)\simeq\cH(X)$ with respect to $\de_\infty$ can be identified with the space of functions in $\CPSH(X)$ that are strictly positive on $X^\na_\infty$. 

\medskip
\noindent\textbf{Numerical classes of polarized compactifications.}
Assume $n\=\dim X\ge 2$.
We next turn to describing natural metrics on the space $\PCnum(X)$ of \emph{numerical classes} defined by normal polarized compactifications. 
More precisely, $\PCnum(X)$ is the space of pairs $(\bar X,\a)$, where $\bar{X}$ is a normal compactification of $X$, and $\a\in N^1(\bar{X})$ is a numerical class represented by an ample effective $\Q$-Cartier $\Q$-divisor with support equal to $\bar{X}\setminus X$.

It is natural to work birationally, using the notion of b-divisors~\cite{Shokurov1, Shokurov2} and b-divisor classes~\cite{BFJ09, bdff, DangFavre}. Let $\BPF_{n-1}(X)$ be the closure of the cone generated by base point free Cartier b-divisor classes, the closure taken inside the space of Weil b-divisor classes on (the Riemann-Zariski space of) $X$. The volume function on Cartier divisors induces a natural function $\vol\colon\BPF_{n-1}(X)\to\R_+$, and we write $\BBPF_{n-1}(X)$ for the subcone of classes $\a$ that are \emph{big} in the sense that $\vol(\a)>0$.

Similarly, inside the space of Weil b-curve classes, we have the closed cone $\BPF_1(X)$, defined as the closure of the cone generated by intersections of $n-1$ base point free  Cartier b-divisor classes. We also have a \emph{dual volume function} $\vol^\vee\colon\BPF_1(X)\to\R_+$ defined by 
\[
\vol^\vee(\mu):=\inf_{\a\in\BBPF_{n-1}(X)}\frac{(\a,\mu)}{\vol(\a)^{1/n}},
\]
and $\BBPF_1(X)\subset\BPF_1(X)$ is defined by the condition $\vol^\vee>0$.

Dang and Favre in~\cite{DangFavre} proved that any element of $\BBPF_{n-1}(X)$ is a decreasing limit of semiample Cartier b-divisor classes, and used this to define a map 
\[
\BBPF_{n-1}(X)\to \BBPF_1(X),\quad \alpha\mapsto \alpha^{n-1},
\]
which they also proved to be a bijection, a version of the classical \emph{Minkowski problem} in convex geometry. We note that the setting in~\cite{DangFavre} is that of a smooth projective variety, but the statements are of birational nature, and apply to an affine variety $X$ via compactification and resolution of singularities.

Our first result is a topological strengthening of this result. Both $\BBPF_{n-1}(X)$ and $\BBPF_1(X)$ admit a $\de_\infty$-metric as follows: 
for $\alpha, \beta\in \BBPF_{p}(X)$ for $p=1, n-1$, we set 
\[
\de_\infty(\alpha, \beta)\=\inf\{\delta\ge 0\mid e^{-\delta}\a\le \b\le e^\delta\a\},
\]
where by $\a\ge\beta$ we mean $\alpha-\beta$ is pseudoeffective. 
\begin{thmA}
    The metric spaces $(\BBPF_{n-1}(X),\de_\infty)$ and $(\BBPF_1(X),\de_\infty)$ are complete, and the map $\a\mapsto\a^{n-1}$ between them 
    is a Lipschitz continuous homeomorphism, with a locally uniformly H\"older continuous inverse.
\end{thmA}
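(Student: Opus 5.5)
The plan has four parts: an a priori boundedness property of the metric, completeness of each space, Lipschitz continuity of the map, and Hölder continuity of its inverse via a Minkowski-type uniqueness argument.

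\textbf{Preliminaries.} Throughout we use monotonicity of intersection numbers on nef classes. For $\a\le\b$ in $\BPF_{n-1}(X)$ and any $\g_2,\dots,\g_{n-1}\in\BPF_{n-1}(X)$, the difference $\b\cdot\g_2\cdots\g_{n-1}-\a\cdot\g_2\cdots\g_{n-1}$ is pseudoeffective. This holds first for Cartier nef classes, since pseudoeffective divisors intersect nef classes pseudoeffectively, and then passes to limits. We also use the analogous statement for pairing with $\BPF_1$. First we check that $\de_\infty$ is finite and a genuine metric on $\BBPF_p(X)$. Finiteness means that for big $\a,\b$ there is $C$ with $\b\le C\a$. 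We get this from the Dang--Favre approximation by decreasing semiample Cartier classes, together with the fact that a big nef class dominates a multiple of any given Cartier class on a fixed model; we invoke \cite{DangFavre} for this. Definiteness follows because $\a\le\b\le\a$ forces $\a=\b$, the pseudoeffective cone being salient.

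\textbf{Completeness.} Let $(\a_j)$ be Cauchy, with $e^{-\e_j}\a_j\le\a_k\le e^{\e_j}\a_j$ for $k\ge j$. Then $\a_j':=e^{\e_j}\a_j$ is decreasing, since $\a_{j+1}'\le e^{\e_{j+1}+\e_j}\a_j$, so after passing to a subsequence with $\sum\e_j<\infty$ and rescaling by $e^{\sum_{i\ge j}\e_i}$ we get a genuinely decreasing sequence. Its weak limit $\a$ exists in the closed cone $\BPF_p$, and $\a\ge e^{-2\e_j}\a'_j$ is big. Passing to the limit in the inequalities, which is allowed because pseudoeffective cones are closed, gives $\de_\infty(\a_j,\a)\le 2\e_j\to0$.

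\textbf{Lipschitz continuity.} If $e^{-\delta}\a\le\b\le e^{\delta}\a$, then expanding by monotonicity factor by factor gives $e^{-(n-1)\delta}\a^{n-1}\le\b^{n-1}\le e^{(n-1)\delta}\a^{n-1}$. So the map is $(n-1)$-Lipschitz, and continuity is immediate.

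\textbf{Hölder continuity of the inverse.} This is the main obstacle, and the plan is a quantitative version of the Dang--Favre uniqueness argument. Suppose $e^{-\delta}\a^{n-1}\le\b^{n-1}\le e^{\delta}\a^{n-1}$. Pairing with $\a$ and $\b$ gives
\[
e^{-\delta}\vol(\a)\le(\b^{n-1}\cdot\a)\le e^{\delta}\vol(\a),
\]
and similarly with $\a,\b$ swapped. Combined with the Khovanskii--Teissier inequality $(\b^{n-1}\cdot\a)\ge\vol(\b)^{(n-1)/n}\vol(\a)^{1/n}$, this shows that both $\vol$'s agree up to $e^{O(\delta)}$ and that the mixed volumes are nearly extremal. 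Next, I would invoke a quantitative form of the equality case in Khovanskii--Teissier / Brunn--Minkowski, using the stability estimates in the b-divisor setting proven in Dang--Favre (via Siu-type inequalities). The aim is an estimate of the form $\a\le(1+C\delta^{c})\b$, with $C$ depending on upper and lower bounds of $\vol$ and on a bound of $\de_\infty(\a,\a_0)$; this is why the result is only \emph{locally} uniformly Hölder. Concretely, I would use a Siu-type bound
\[
\a\le n\,\frac{(\a\cdot\b^{n-1})}{\vol(\b)}\,\b
\]
only for crude comparability. For the sharp $\de_\infty$ control I would use the Diskant-type inequality for nef b-classes, which bounds the inradius $\sup\{t:t\b\le\a\}$ in terms of the deficit $(\a\cdot\b^{n-1})^{n/(n-1)}-\vol(\a)\vol(\b)^{1/(n-1)}$. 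I would prove this inequality first on models for Cartier nef classes, via the known convex-body/Diskant argument transferred through Okounkov bodies or the Hodge index theorem, and then pass to limits. The Diskant deficit is $O(\delta)$, and Diskant gives $t\ge e^{-O(\delta^{1/n})}$, hence Hölder exponent $1/n$. Finally, the inverse map is continuous, so the map is a homeomorphism.
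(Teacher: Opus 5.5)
Your Lipschitz bound and your Diskant-based bound for the inverse are essentially the paper's argument; the paper packages the latter as $\bar\ii(\a,\b)\le\frac{3n-2}{n-1}\delta$ followed by Proposition~\ref{prop:metrictop}. Two small corrections there:
\begin{itemize}
\item The deficit controlling $s(\a,\b)=\sup\{t\mid\a\ge t\b\}$ is $(\b,\a^{n-1})^{\frac{n}{n-1}}-\vol(\a)\vol(\b)^{\frac{1}{n-1}}$. You swapped $\a$ and $\b$ in the mixed term, which breaks homogeneity.
\item The estimate is scale invariant, so no dependence on volume bounds or on a base point enters. The word ``locally'' only reflects that the modulus $F$ is $\frac1n$-H\"older on bounded increments.
\end{itemize}
Your completeness proof turns a fast Cauchy subsequence into the decreasing sequence $e^{\sum_{i\ge j}\e_i}\a_j$ and takes its weak limit. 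This is a valid alternative to the paper's route and treats $p=1$ and $p=n-1$ uniformly. In particular, it gives completeness of $\BBPF_{n-1}(X)$ without the Minkowski theorem, which the paper's second argument uses. You should still justify that a decreasing sequence converges weakly: on each smooth model the incarnations decrease in a closed salient cone and are bounded below by $0$, hence converge, compatibly with pushforwards.

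The genuine gap is surjectivity. Your H\"older estimate shows that $\a\mapsto\a^{n-1}$ is injective and a homeomorphism onto its image. Theorem~A, however, asserts a homeomorphism onto all of $\BBPF_1(X)$: every $\mu\in\BBPF_1(X)$ must equal $\a^{n-1}$ for some $\a\in\BBPF_{n-1}(X)$. This is the existence half of the Minkowski problem, and your sentence ``the inverse map is continuous, so the map is a homeomorphism'' silently assumes it.

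It does not follow from what you prove. Completeness together with your two estimates only shows that the image is closed in $\BBPF_1(X)$. Density of the image needs a separate variational argument:
\begin{itemize}
\item minimize $(\a,\mu)/\vol(\a)^{1/n}$;
\item show, via concavity of $\vol^{1/n}$ and the Diskant inequality, that minimizing sequences are Cauchy;
\item use differentiability of the volume to show that the limit $\a$ satisfies $\a^{n-1}=c\mu$ for some $c>0$, then rescale.
\end{itemize}
The paper runs this argument for curve classes at infinity in \S\ref{sec:curvinf}, but for Theorem~A it does not reprove existence. It cites Theorem~C of~\cite{DangFavre} for bijectivity, and you must do the same or carry out the argument above.

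You should also note that $\a^{n-1}$ lands in $\BBPF_1(X)$. This follows from Khovanskii--Teissier: $\vol^\vee(\a^{n-1})=\vol(\a)^{\frac{n-1}{n}}>0$.
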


Here completeness is established by showing that every Cauchy net converges weakly, and then upgrading this convergence to metric convergence.
As already mentioned, the fact that $\a\mapsto\a^{n-1}$ is a bijection is due to Dang and Favre. To prove the metric estimates, we compare the $\de_\infty$-metric on $\BBPF_{n-1}(X)$ to a variant of the Aubin $\ii$-functional, a quasi-metric defined multiplicatively, by
\[
\bar\ii(\a, \b)\=\log \frac{(\alpha, \beta^{n-1})(\beta, \alpha^{n-1})}{(\alpha^n)(\beta^n)} +|\log (\a^n)-\log(\b^n)|.
\]
The identity map $(\BBPF_{n-1}(X),\de_\infty)\to(\BBPF_{n-1}(X),\bar\ii)$ is easily seen to be Lipschitz continuous, and the Diskant inequality implies that the inverse is locally uniformly H\"older continuous. As the map $(\BBPF_{n-1}(X),\de_\infty)\to(\BBPF_1(X),\de_\infty)$ and the inverse map $(\BBPF_1(X),\de_\infty)\to(\BBPF_{n-1}(X),\bar\ii)$ are both Lipschitz continuous, Theorem~A follows; see~\S\ref{sec:Mink} for details.

As in~\cite{LX,DangFavre}, the volume function on $\BBPF_{n-1}(X)$ induces a function on $\BBPF_1(X)$. If $\a\in\BBPF_{n-1}(X)$, and $\mu:=\a^{n-1}\in\BBPF_1(X)$, then $\vol(\a)=\vol^\vee(\mu)^{\frac{n}{n-1}}$, and $\a$ attains the infimum in the definition of $\vol^\vee(\mu)$. 

\medskip
Theorem~A also holds in the (slightly) more general context of quasiprojective varieties. Let us now return to compactifications. Any polarized compactification $(\bar X,D)$ of $X$ defines a Cartier b-divisor that is big and semiample. Using the identification $\cP\cC^{\mathrm{int}}(X)\simeq\cH(X)$, this defines a~\emph{curvature operator}
\[
\ddc\colon \cH(X)\to \BBPF_{n-1}(X),
\]
whose image we denote by $\bar\cH(X)\simeq\PCnum(X)$.

We then consider the \emph{Monge--Amp\`ere operator} 
\[
\MA\colon\cH(X)\to\BBPF_1(X), \quad\MA(\f)=(\ddc\f)^{n-1},
\]
and aim to describe its image, as well as the image of a natural extension of the Monge--Amp\`ere operator, defined using non-Archimedean pluripotential theory, following the projective case~\cite{BJpp}, itself following the complex analytic situation~\cite{GueZer07}.

To this end, we note that the map $\ddc\colon\cH(X)\to\BBPF_{n-1}(X)$ is increasing. By taking decreasing limits of the associated Cartier
b-divisors, the curvature operator extends to an increasing
map $\ddc\colon\PSH(X)\to\BPF_{n-1}(X)$. The volume function then extends by setting $\vol(\f)\=\vol(\ddc\f)$. It is increasing and usc,
and hence continuous along decreasing limits.

\[
\cE(X)\=\{\f\in \PSH(X)\mid \vol(\f)>0\};
\]
this is the analogue of the space $\cE^1$ of psh metrics of finite energy in the projective setting. We also write $\bar\cE(X):=\ddc\cE(X)\subset\BBPF_{n-1}(X)$. In fact, each function $\f\in\cE(X)$ gives rise to a nef and effective Weil b-divisor ``at infinity" on $X$, and $\ddc\f$ is the corresponding Weil divisor class. 
Mapping $\f\in\cE(X)$ to $(\ddc\f)^{n-1}$ defines an extension of the Monge--Amp\`ere operator to a map $\MA\colon\cE(X)\to\BBPF_1(X)$.

We will show that the image of this operator is contained in the set
\[
\cC_\infty(X)\subset\BBPF_1(X)
\]
consisting of curve classes whose dual volume can be computed by functions in $\cH(X)$, i.e., $\mu\in \BBPF_1(X)$ for which
\[
\vol^\vee(\mu)=\vol_\infty^\vee(\mu)\=\inf_{\f\in \cH}\frac{(\f, \mu)}{\vol(\f)^{1/n}}.
\]
We can now complement Theorem~A as follows. 
\begin{thmAp}
    The subspace $\cC_\infty(X)\subset\BBPF_1(X)$ is closed, and hence complete. Moreover:
    \begin{itemize}
        \item[(i)]
        $\MA(\cH(X))$ is a dense subset of $\cC_\infty(X)$.
        \item[(ii)]
        $\MA(\cE(X))\subset\cC_\infty(X)$, with equality when $X$ is smooth.
    \end{itemize}
\end{thmAp}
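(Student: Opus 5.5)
We first show that $\cC_\infty(X)$ is closed in $(\BBPF_1(X),\de_\infty)$. The main tool is that both $\vol^\vee$ and $\vol^\vee_\infty$ are monotone and homogeneous of degree one under scaling, so they behave well in the $\de_\infty$-metric. If $e^{-\delta}\mu\le\nu\le e^{\delta}\mu$, then $(\f,\nu)\le e^\delta(\f,\mu)$ for every $\f\in\cH(X)$, since $\ddc\f$ is nef and pairs nonnegatively with pseudoeffective curve classes. The same holds for every $\a\in\BBPF_{n-1}(X)$. Hence $\vol^\vee$ and $\vol^\vee_\infty$ are both $1$-Lipschitz for $\log$ with respect to $\de_\infty$: $|\log\vol^\vee(\mu)-\log\vol^\vee(\nu)|\le\de_\infty(\mu,\nu)$, and likewise for $\vol^\vee_\infty$. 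Note that $\vol^\vee_\infty\ge\vol^\vee$ holds automatically, because $\ddc\cH\subset\BBPF_{n-1}$. The condition $\vol^\vee=\vol^\vee_\infty$ is therefore a closed condition, and completeness follows from Theorem~A.

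For (i), the first step is $\MA(\cH)\subset\cC_\infty$. Given $\mu=(\ddc\f)^{n-1}$, Dang--Favre (as recalled before Theorem~A) shows that the infimum defining $\vol^\vee(\mu)$ is attained at $\a=\ddc\f$, which lies in $\ddc\cH$. Hence $\vol^\vee_\infty(\mu)\le\vol^\vee(\mu)$, and equality follows.

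Density of $\MA(\cH)$ is the main step. Take $\mu\in\cC_\infty$, and let $\a\in\BBPF_{n-1}$ be the Dang--Favre solution of $\a^{n-1}=\mu$. By Theorem~A it suffices to approximate $\a$ in $\de_\infty$ by elements $\ddc\f_j$ with $\f_j\in\cH$. Pick $\f_j\in\cH$, normalized by $\vol(\f_j)=\vol(\a)$, with $(\ddc\f_j,\mu)\to\vol^\vee_\infty(\mu)\vol(\a)^{1/n}=\vol(\a)$. This is an almost-equality case of the mixed inequality $(\b,\a^{n-1})\ge\vol(\b)^{1/n}\vol(\a)^{(n-1)/n}$. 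The Diskant-type estimate used for $\bar\ii$ then gives $\bar\ii(\ddc\f_j,\a)\to0$. The H\"older comparison between $\bar\ii$ and $\de_\infty$ from the proof of Theorem~A yields $\de_\infty(\ddc\f_j,\a)\to0$, hence $\MA(\f_j)\to\mu$ by the Lipschitz property. I expect this to be the main obstacle: one needs the Diskant/$\bar\ii$ estimate to control the deviation from equality in the Teissier/Minkowski inequality, and uniform bounds, since the H\"older constants are only local.

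For (ii), take $\f\in\cE$, write it as a decreasing limit of $\f_j\in\cH$, and use the continuity of $\vol$ along decreasing limits. Then $\ddc\f_j\to\ddc\f$ weakly, and mixed intersections $(\psi,\MA(\f_j))$ converge to $(\psi,\MA(\f))$ for $\psi\in\cH$. This gives $\vol^\vee_\infty(\MA\f)\le\lim(\f_j,\MA\f_j)/\vol(\f_j)^{1/n}=\vol(\f)^{(n-1)/n}=\vol^\vee(\MA\f)$, so $\MA(\f)\in\cC_\infty$. For the reverse inclusion when $X$ is smooth, take $\mu\in\cC_\infty$ and the minimizing sequence $\f_j$ from (i), now normalized. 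Replace it by a decreasing sequence, for instance via envelopes $\sup$ of tails regularized through a smooth compactification, or via a compactness argument in $\PSH$ with pointwise convergence on $X^\div_\infty$. Its limit $\f\in\cE$ has $\ddc\f=\a$, the Dang--Favre solution. Smoothness is needed to ensure that envelopes of Fubini--Study functions stay psh, an orthogonality/continuity-of-envelopes property. Then $\MA(\f)=\mu$.
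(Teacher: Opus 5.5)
Your closedness argument is fine and essentially the paper's: both $\log\vol^\vee$ and $\log\vol^\vee_\infty$ are $1$-Lipschitz for $\de_\infty$, so the locus where they agree is closed. The inclusions $\MA(\cH(X))\subset\cC_\infty(X)$ and $\MA(\cE(X))\subset\cC_\infty(X)$ are also fine and essentially as in the paper. One correction in the last one: the bound $\vol^\vee_\infty(\MA(\f))\le(\f_j,\MA(\f))/\vol(\f_j)^{1/n}\le\vol(\f_j)^{(n-1)/n}$ holds because the pairing is monotone and $\f\le\f_j$. Weak convergence of $\MA(\f_j)$ together with upper semicontinuity of $\vol^\vee_\infty$ would give the wrong direction.

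Your density argument is correct and genuinely different from the paper's. The paper never uses the Dang--Favre class here. It shows a minimizing net is $\de_\infty$-Cauchy (Lemma~\ref{lem:MAsoluniq}), obtains a strong limit $\nu$ of $\MA(\f_i)$ by completeness, and identifies $\nu=\mu$ via Corollary~\ref{cor:MAsol}, which rests on the uniform differentiability Theorem~\ref{thm:unifderiv}.

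You instead take $\a$ with $\a^{n-1}=\mu$ from Dang--Favre, which is already invoked in Theorem~\ref{thm:metricequivnum}. Then $\vol^\vee_\infty(\mu)=\vol^\vee(\mu)=\vol(\a)^{(n-1)/n}$ says exactly that $\jj_\a(\ddc\f_j)\to0$ along any minimizing sequence in $\cH(X)$. Lemma~\ref{lem:ijequiv}, Proposition~\ref{prop:metrictop} and~\eqref{equ:Mincont1} then finish the argument. This is shorter and even gives strong convergence of every normalized minimizing sequence. The paper's variational route, as in \cite{BJpp}, avoids needing $\a$, and that is also how it identifies $\MA(\f)=\mu$ in the smooth case.

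Your worry about local H\"older constants is unfounded: $F$ is a single continuous function with $F(0)=0$. Also, to stay in $\cH(X)=\cH_\Q(X)$, normalize volumes only approximately, using rational factors.

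The genuine gap is in surjectivity for smooth $X$. Envelopes of tails and compactness of $\PSH(X,\psi)$ (Theorem~\ref{thm:envpropcrit}) both require sublogarithmic growth, that is, $\f_j\le\psi$ for one fixed $\psi\in\cH(X)$. Part~(i) only controls the classes $[\f_j]$. Classes forget numerically trivial divisors at infinity, so getting a pointwise bound is a separate step, and it uses nonnegativity of the $\f_j$.

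Concretely, the bound on $(\f_j,\rho^{n-1})$ (from $\ddc\f_j\to\a$, or Lemma~\ref{lem:minnetbdd}) equals $\int_{X^\na_\rho}\f_j\,\MA_\rho(\rho)$. By~\eqref{equ:PLmeas}, $\MA_\rho(\rho)$ charges each point $b_E^{-1}\ord_E$ with $E\subset\bar X\setminus X$, and this bounds $\f_j(\ord_E)$. From there you have two options:
\begin{itemize}
\item[(a)] Follow the paper: do this for every $E\in\PbDiv_\infty(X)$, pass to a pointwise convergent subnet by Tychonoff, and apply Proposition~\ref{prop:nefweilpsh}. This is where smoothness enters.
\item[(b)] Use the maximum principle implicit in the proof of Theorem~\ref{thm:weaktop}: for $\f\in\PSH(X)$, $\f\le\bigl(\max_E\f(\ord_E)/\rho(\ord_E)\bigr)\rho$, with $E$ ranging over the finitely many components of $\bar X\setminus X$. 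This gives $\f_j\le C\rho$, and compactness then applies.
\end{itemize}
In either case you must first pass to a subnet converging pointwise on $\Xdiv_\infty$. A plain decreasing sup of tails yields $\limsup_j\f_j$ on $\Xdiv$, whose class need not be $\a$. With the subnet, Lemma~\ref{lem:numclasscts} gives $[\f]=\a$, hence $\vol(\f)>0$ and $\MA(\f)=\a^{n-1}=\mu$.
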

As a consequence, the metric completion of $(\PCnum(X), \de_\infty)$ can be topologically identified with $\cC_\infty(X)$, and also with $\bar\cE(X)$ when $X$ is smooth.

The fact that $\MA(\cE(X))\subset\cC_\infty(X)$ follows from $\ddc\f\in\BBPF_{n-1}(X)$ realizing the infimum in the definition of $\vol^\vee(\MA(\f))$, for any $\f\in\cE(X)$. To prove density of $\MA(\cH(X))$ in $\cC_\infty(X)$, we follow~\cite{BJpp}. Namely, given any $\mu\in\cC_\infty$, and any \emph{minimizing net} $(\f_i)_i$ for $\mu$ in $\cH(X)$, that is, a net such that 
$(\f_i,\mu)/\vol(\f_i)^{1/n}\to\vol^\vee(\mu)$ and $\vol(\f_i)^{\frac{n-1}{n}}\to\vol^\vee(\mu)$, we show that $\de_\infty(\MA(\f_i),\mu)\to0$. As in~\cite{BBGZ,BJpp}, this relies on the uniform differentiability property of a suitable volume function; see Theorem~\ref{thm:unifderiv}. Finally, when $X$ is smooth, we prove that the space $\cE(X)$ has a certain compactness property, which allows us to show that the infimum in the definition of $\vol^\vee(\mu)$ is computed by some $\f\in\cE(X)$. This compactness property is derived from the \emph{envelope property}~\cite{BJpp} of the space of homogeneous $\theta$-psh functions on $\bar X$, for smooth compactifications $\bar X$ and numerical classes $\theta\in\Num(\bar X)$.

\medskip
Theorem~A' can be seen as a somewhat more traditional Monge--Amp\`ere equation, in analogy to the polarized projective case~\cite{BBGZ,BFJnama,BJpp}. Namely, if we fix $\p\in\cH(X)$, then the subset $X^\na_\p\=\{\p=1\}\simeq X^\na_\infty/\R_+^\times$ of $X^\na_\infty$ is compact. By homogeneity, any function $\f\in\cE(X)$ is uniquely determined by its restriction to $X^\na_\p$, and any curve class $\mu\in\cC_\infty$ induces a positive Radon measure $\mu_\p$ on $X^\na_\p$ of mass $(\p,\mu)$. We can view the pair $(X^\na_\infty/\R_+^\times,\p)$ as the analogue of a polarized projective variety, on which $\cE(X)|_{X^\na_\p}$ plays the role of the space $\cE^1$ of potentials of finite energy, while the family of measures $\mu_\p$, $\mu\in\cC_\infty$, plays the role of the space $\cM^1$ of measures of finite energy. In this way, Theorem A' becomes an analogue of the results in~\cite{BJpp}. We note that special cases of the spaces $X^\na_\p$, for $X=\A^n$, occur in~\cite{FJEigenvals,BFJIzumi}; see also~\cite{JonDynBerko}.

\medskip
\noindent\textbf{The Rees correspondence.}
By analogy with the study of metrics on the space of test configurations~\cite{nakstab1}, and degenerations of (klt) singularities~\cite{BLQ, Qi}, the space $\cP\cC(X)$ can also be understood via \emph{norms} on the coordinate ring $R$.

A \emph{negative norm} on $R$ is a function $\chi\colon R\to \R\cup\{\infty\}$ such that $\chi|_{k^\times}\equiv 0$, $\chi(f+g)\ge \min\{\chi(f), \chi(g)\}$, $\chi(fg)\ge \chi(f)+\chi(g)$, for any $f, g\in R$, $\chi(f)<0$ for any $f\in R\setminus k$ and $\chi(f)=\infty$ iff $f=0$. It is further said to be \emph{homogeneous} if $\chi(f^d)=d\chi(f)$ for any $f\in R, d\in \Z_{>0}$. 

Any polarized compactification $(\bar X, D)$ with $D$ a Cartier divisor induces a \emph{geometric norm} on $R$ by setting 
\[
\chi_{\bar X, D}(f)\=-\min\{m\in \Z_{\ge0}\mid fs_D^m\in \Hnot(\bar X, mD)\},
\]
for any $0\ne f\in R$ and $\chi_{\bar X, D}(0)=\infty$. 
Negative norms on $R$ are in one-to-one correspondence with (increasing, right-continuous, multiplicative and exhaustive) filtrations of $R$ by $k$-vector spaces, satisfying $\cF^0R=k$, via
\[
\cF_\chi^\lambda R\=\{f\in R\mid \chi(f)\ge -\lambda\}, \quad \chi(f)\=-\inf\{\lambda\ge 0\mid f\in \cF^\lambda R\},
\]
for $f\ne 0$.
We say a negative norm is \emph{finitely generated} if the associated graded ring $\gr_\chi R$ is a finitely generated $k$-algebra. For any subgroup $\Lambda\subset\R$, we write $\cT_\Lambda$ for the set of finitely generated negative norms with values in $\Lambda$, and $\cT_\Lambda^\hom$ for its subset of homogeneous norms. The \emph{homogenization} defines a map
\[
\cT_\Lambda\to \cT^\hom_{\Q\Lambda},\quad \chi\mapsto \chi^\hom,
\]
where $\chi^\hom(f)=\lim_d\frac{\chi(f^d)}{d}=\sup_d\frac{\chi(f^d)}{d}$ for all $f\in R$.
On $\cT_\Z$ we define an equivalence relation by $\chi\sim\chi'$ iff $\cF^m_\chi R=\cF^m_{\chi'}R$ for $m$ sufficiently divisible. Equivalent norms in $\cT_\Z$ have the same image in $\cT^\hom_\Q$.

As in the case for test configurations, see~\cite{BHJ1}, our next result describes a Rees correspondence between polarized compactifications and finitely generated negative norms.
\begin{thmB}
    The Rees construction defines a bijection 
    \[
    \cP\cC(X)\to \cT_\Z/\sim
    \]
    between the space of polarized compactifications and (equivalence classes of) finitely generated $\Z$-valued negative norms. Further, by passing to the integral closure and homogenization, it induces a bijection
    \[
    \cP\cC^{\mathrm{int}}(X)\to \cT^\hom_\Q.
    \]
\end{thmB}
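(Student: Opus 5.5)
Given $(\bar X,D)\in\cP\cC(X)$ with $D$ Cartier, the first step is to form the graded ring $\cR(\bar X,D)\=\bigoplus_{m\ge0}\Hnot(\bar X,mD)t^m$. Since $D$ is effective with support $\bar X\setminus X$, restricting to $X$ embeds each $\Hnot(\bar X,mD)$ into $R$: a section $s$ corresponds to $f=s/s_D^m$. This gives an increasing, exhaustive, multiplicative filtration $\cF^mR=\Hnot(\bar X,mD)$. Exhaustiveness holds because any $f\in R$ has poles only along $\bar X\setminus X=\supp D$, so $fs_D^m$ is regular for $m\gg0$. The condition $\cF^0R=\Hnot(\bar X,\cO)=k$ holds by normality/projectivity; in the non-normal case one uses connectedness together with the fact that $\Hnot(\bar X,\cO_{\bar X})$ is a field, which suffices for $\bar X$ a projective variety over algebraically closed $k$. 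The associated norm is $\chi_{\bar X,D}$. Its Rees algebra $\bigoplus_m \cF^mR\,t^m$ is exactly $\cR(\bar X,D)$, which is finitely generated since $D$ is ample. The graded ring $\gr_\chi R$ is the quotient $\cR/s_D\cR$ (here $s_D=t$ in degree one), hence also finitely generated, so $\chi\in\cT_\Z$.

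Replacing $D$ by $rD$, or changing representatives within the class in $\cP\cC(X)$, only changes $\cF^mR$ for $m$ not sufficiently divisible. The resulting map $\cP\cC(X)\to\cT_\Z/\sim$ is therefore well defined, and I will check this against the paper's precise equivalence on $\cP\cC(X)$.

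For the inverse, given $\chi\in\cT_\Z$, the Rees algebra $\cR_\chi=\bigoplus_m\cF^m_\chi R\,t^m$ is a graded subalgebra of $R[t]$ containing $t\in\cF^1$. I will first show that $\cR_\chi$ is finitely generated. This follows from finite generation of $\gr_\chi R$, which is $\cR_\chi/t\cR_\chi$: lift generators, use that $t$ is a homogeneous nonzerodivisor of positive degree, and apply graded Nakayama. Then set $\bar X\=\Proj\cR_\chi$, with $D$ the Cartier divisor $\{t=0\}$ coming from $\cO(1)$ after passing to a Veronese subring generated in degree one. Exhaustiveness gives $\cR_\chi[t^{-1}]_0=R$, so $D_+(t)=X$ is a dense open subset. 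The support of $D$ is $V(t)=\bar X\setminus X$, and $D$ is ample. The two constructions are mutually inverse up to $\sim$, because for an ample $D$ one has $\Proj\cR(\bar X,D)=\bar X$, and because Rees algebras agreeing in sufficiently divisible degrees have the same $\Proj$. This is the step I expect to be the main obstacle. The difficulty is $D$ being only $\Q$-Cartier or generated in higher degree, and verifying that $\cF^m=\Hnot(\bar X,mD)$ for $m$ divisible enough when $\bar X$ is not normal: the section ring may differ from $\cR_\chi$ in finitely many degrees, but it agrees in large degrees by Serre vanishing/finiteness of saturation.

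For the second bijection, normality of $\bar X$ corresponds to integral closedness of the Rees algebra, and thus of its Veronese subrings, in $R[t]$ (equivalently in $R[t^{\pm1}]$, as $R$ is normal). Let $\bar\chi$ denote the norm of the integral closure of $\cR_\chi$, which is finite over $\cR_\chi$ and hence still finitely generated; its $\Proj$ is the normalization of $\bar X$. I will use the standard fact that for a finitely generated $\chi$, the homogenization $\chi^\hom$ coincides with the norm of the integral closure of the Rees algebra in the $\Q$-graded sense: $f$ has $\chi^\hom(f)\ge-\lambda$ iff $f^dt^{d\lambda}$ is integral over $\cR_\chi$. This would follow from the valuative description $\chi^\hom=\min_i c_i^{-1}\ord_{E_i}$ over the components $E_i$ of $D$ on the normalization; that description is essentially $\p_D$ restricted to divisorial points, tying in with $\cP\cC^{\mathrm{int}}\simeq\cH$. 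Surjectivity onto $\cT^\hom_\Q$ then comes from clearing denominators: $d\chi\in\cT_\Z$ for some $d$, and $\chi$ is finitely generated and homogeneous. Injectivity follows because a normal $(\bar X,D)$ is recovered from its $\Q$-graded integrally closed Rees algebra.
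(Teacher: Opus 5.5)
Your proposal is correct, and for the Rees correspondence it follows the paper's overall plan: $\Proj$ of the Rees algebra in one direction, the section ring in the other. The tools differ in places.

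\begin{itemize}
\item You get finite generation of $\cR_\chi$ from that of $\gr_\chi R=\cR_\chi/t\cR_\chi$ by graded Nakayama. The paper instead runs through the cycle of equivalences in Proposition~\ref{prop:ft}.
\item You prove $\chi\sim\chi_{\bar X,D}$ for $\bar X=\Proj\cR_\chi$ by comparing $\cR_\chi$ directly with $\bigoplus_m\Hnot(\bar X,\cO(m))$ via Serre's theorem. The paper embeds $\bar X$ in a weighted projective space through a monomial presentation and invokes Lemma~\ref{lem:embnormequiv}. Your version is cleaner and makes the mutual inverseness explicit.
\item Your ``standard fact'' for the second bijection is in substance Propositions~\ref{prop:homic} and~\ref{prop:notic}. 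That is, the formula $\chi^\hom=\min_ic_i^{-1}\ord_{E_i}$ on the normalization via Lemma~\ref{lem:critnopole}, plus the monic-equation bound for the other direction. It needs exactly that argument; it is not free.
\end{itemize}

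Where you genuinely diverge is afterwards. You get finite generation of $\chi^\hom$ because its Rees algebra is the integral closure of the finitely generated domain $\cR_\chi$ in $R[t^{1/L}]$, with denominators bounded by the valuative formula. You get surjectivity onto $\cT^\hom_\Q$ by clearing denominators: values of $\chi\in\cT^\hom_\Q$ lie in the semigroup generated by the degrees of finitely many generators of $\gr_\chi R$. This is combined with invariance of homogenization under $\sim$ and under integral closure. The paper instead uses the affinoid-algebra argument of Proposition~\ref{prop:fthom} and the $\FS$/$\IN$ machinery of Corollary~\ref{cor:FSsurj}.

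Your route is more elementary and fully adequate for $\Z$- and $\Q$-valued norms. The paper's route handles arbitrary value groups $\Lambda\subset\R$, which is needed for $\cT_\R$ in Theorem~C, and it produces the isometry $\cT^\hom_{\Q\Lambda}\simeq\cH_\Lambda$ at the same time.

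Two small points need care:
\begin{itemize}
\item Normality of $\bar X$ corresponds to integral closedness of the Rees algebra only up to $\sim$. For example, the residue norms of Lemma~\ref{lem:embnormequiv} can have a normal $\Proj$ but a non-integrally-closed Rees algebra.
\item For a $\Q$-divisor $D$ the norm must be normalized as $r\chi_{\bar X,rD}$, which is independent of $r$ up to $\sim$. By contrast, $(\bar X,D)$ and $(\bar X,rD)$ are distinct points of $\cP\cC(X)$ with inequivalent norms. So your phrase ``replacing $D$ by $rD$'' must be read in that normalized sense.
\end{itemize}
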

Recall that $\cP\cC^{\mathrm{int}}(X)$ is also in bijection with $\cH(X)$, the space of rational Fubini--Study functions on $X$. We thus get a bijection between $\cH(X)$ and $\cT_\Q^\hom$. In fact, the above theorem is proved by establishing this bijection using the Fubini--Study and infimum norm operators:
\[
\FS\colon\cT_\Z\to \cH(X),\quad \chi\mapsto \FS(\chi) \qquad
\IN\colon \cH(X)\to \cT^\hom_\Q,\quad \f\mapsto \IN(\f),
\]
where for $v\in X^\na$, $\FS(\chi)(v)\=\sup_{f\in R\setminus k}\frac{v(f)}{\chi(f)}$; and for $f\in R$, $\IN(\f)(f)\=\inf_{v\in X^\na_\infty}\frac{v(f)}{\f(v)}$. 

Given a geometric norm $\chi\in \cT_\Z$ associated to a polarized compactification $(\bar X, D)$, we first show that $\FS(\chi)=\FS(\chi^\hom)=\p_D$ and thus the $\FS$ operator factors through homogenization, with inverse given by $\IN$. Using this bijection, the Rees correspondence for $\cP\cC^{\mathrm{int}}(X)$ is realized by describing the process of taking integral closure as the round-down of the homogenization.

\medskip\noindent\textbf{Metric space structures.}
We next discuss the natural metric structures on the space of finitely generated norms. To describe these metrics, we first extend our discussion of norms to the space $\Nbdd(X)$ of \emph{bounded negative norms} on $X$, that is, negative norms $\chi$ such that there exists $C\ge 1$ with $C\chi_{\bar X, D}\le \chi\le C^{-1}\chi_{\bar X, D}$ for some (or equivalently any) geometric norm $\chi_{\bar X, D}$. Any finitely generated norm is in particular bounded. 

The space $\Nbdd(X)$ comes with a complete $\de_\infty$ metric by setting
\[
\de_\infty(\chi, \chi')\=\inf\{\d\ge 0\mid e^\d \chi\le \chi'\le e^{-\d}\chi\}.
\]
The restriction of $\de_\infty$ to $\cT^\hom_\Q$ induces a metric on $\cP\cC^{\mathrm{int}}(X)$ via the Rees correspondence. However, the metric completion of $(\cP\cC^{\mathrm{int}}(X), \de_\infty)$ does not equal $\Nbdd(X)$, as the latter space contains the set of homogeneous norms as a closed proper subset containing $\cT^{\hom}_\Q$. Instead, Theorem~C below identifies $(\cP\cC^{\mathrm{int}}(X),\de_\infty)$ with $(\cH(X),\de_\infty)$,
so its completion can be viewed as the space of functions in $\CPSH(X)$ that are strictly positive on $X^\na_\infty$.

Alternatively, motivated by the study of metric structures of test configurations and filtrations on a local ring, the space $\Nbdd(X)$ can also be equipped with a natural \emph{Darvas pseudometric} $\de_1$, that we now describe.
To any norm $\chi\in \Nbdd(X)$, we associate the \emph{volume}
\[
\vol(\chi)\=\limsup_{\la\to\infty}\frac{n!}{\la^n}\dim \cF^\la_\chi R\in (0, \infty);
\]
this limsup is in fact a limit, which can be understood geometrically as follows. Pick any polarized compactification $(\bar{X},D)$. After passing to a sufficiently divisible multiple, $D$ is Cartier and $\cF^m_\chi R\subset H^0(\bar X,mD)$ for all $m\in\Z_{\ge1}$. Thus $(\cF_\chi^mR)_{m\ge0}$ can be viewed as a graded linear subseries of $R(\bar X,D)$, whose volume then equals $\vol(\chi)$.

The pseudometric $\de_1$ is defined by setting, for each $\chi, \chi'\in \Nbdd(X)$,
\[
\de_1(\chi, \chi')\=\vol(\chi)+\vol(\chi')-2\vol(\chi\wedge\chi'),
\]
where $\chi\wedge \chi'$ denotes the pointwise minimum. Restricting to $\cT^\hom_\Q$, it induces a natural metric on the space $\cP\cC^{\mathrm{int}}(X)$. We now upgrade the bijection between $\cT^\hom_\Q$ and $\cH(X)$ to isometric isomorphisms for the two metrics defined above.

\begin{thmC}
    For any affine variety $X$ and any subgroup $\Lambda\subset\R$, we have
    \begin{itemize}
        \item [(i)] the map $\FS\colon(\cT_{\Q\Lambda}^\hom, \de_\infty)\simto (\cH_\Lambda, \de_\infty)$ is an isometric isomorphism, with inverse given by $\IN$;
        \item[(ii)] the surjection $\FS\colon \cT_\Lambda\twoheadrightarrow \cH_\Lambda$ identifies the Hausdorff quotient of $(\cT_\Lambda, \de_1)$ with $(\cH_\Lambda, \de_1)$, and the bijection $\FS\colon(\cT^\hom_{\Q\Lambda},\de_1)\to(\cH_\Lambda,\de_1)$ is an isometry.
    \end{itemize}
\end{thmC}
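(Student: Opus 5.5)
We treat (i) and (ii) separately. Both rest on the bijection $\FS\colon\cT^\hom_{\Q\Lambda}\to\cH_\Lambda$ with inverse $\IN$, which we take as established as in the proof of Theorem~B; we only need to check that it is compatible with the metrics.

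\emph{Part (i).} We first show that $\FS$ and $\IN$ are order-reversing on the appropriate sets and compatible with scaling. The order reversal is immediate from the formulas: if $\chi\le\chi'$ (both negative), then $v(f)/\chi(f)\ge v(f)/\chi'(f)$ whenever $v(f)\le 0$, and the supremum is attained on such $f$ for $v\in X^\na_\infty$. Hence $\FS(\chi)\ge\FS(\chi')$. Similarly, $\f\le\p$ implies $\IN(\f)\le\IN(\p)$ after accounting for signs. For the scaling, we have $\FS(c\chi)=c^{-1}\FS(\chi)$ and $\IN(c\f)=c^{-1}\IN(\f)$ for $c>0$. Now suppose $e^\d\chi\le\chi'\le e^{-\d}\chi$. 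Then $e^{-\d}\FS(\chi)\le\FS(\chi')\le e^\d\FS(\chi)$, so $\de_\infty(\FS\chi,\FS\chi')\le\de_\infty(\chi,\chi')$. Applying the same argument to $\IN$, and using $\IN\circ\FS=\mathrm{id}$ on $\cT^\hom_{\Q\Lambda}$, gives the reverse inequality. The only care needed is that the inequalities hold for all $f$, including those where $v(f)>0$. This is handled by restricting to $X^\na_\infty$, where $\FS(\chi)$ is computed by generators with $v(f_j)<0$.

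\emph{Part (ii).} This is the real content. We would relate $\vol(\chi)$ to $\vol(\FS(\chi))=\vol(\ddc\FS\chi)$, where $\vol(\f)$ for $\f\in\cH$ is the volume of the associated ample $\Q$-divisor $D$.

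First, for $\chi\in\cT_\Lambda$, we show $\vol(\chi)=\vol(\chi^\hom)$. The algebra $\gr_\chi R$ is finitely generated, so $\chi$ and $\chi^\hom$ differ by a bounded amount in the sense of integral closure of the Rees algebra. The filtrations $\cF^\la_\chi$ and $\cF^\la_{\chi^\hom}$ are then sandwiched as $\cF^\la_\chi\subset\cF^\la_{\chi^\hom}\subset\cF^{\la+C}_\chi$, or at least the integral-closure comparison holds, and the volumes agree.

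Second, for $\chi\in\cT^\hom_{\Q\Lambda}$ with $\FS\chi=\p_D$, we have $\cF^m_\chi R=H^0(\bar X,mD)$ for $m$ divisible. Hence $\vol(\chi)=(D^n)=\vol(\FS\chi)$.

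Third, we need the wedge. We must show $\vol(\chi\wedge\chi')=\vol(\FS(\chi)\vee\FS(\chi'))$, where on $\cH$ the metric is $\de_1(\f,\p)=\vol(\f)+\vol(\p)-2\vol(\max(\f,\p))$. Two facts make this work. Since $\chi\wedge\chi'\in\cT$ is again finitely generated, it is plausibly homogeneous up to equivalence. And $\FS(\chi\wedge\chi')=\max(\FS\chi,\FS\chi')$, which holds because $\FS$ is a sup of $v(f)/\chi(f)$ and $\cF^\la_{\chi\wedge\chi'}=\cF^\la_\chi+\cF^\la_{\chi'}$. Applying steps one and two to $\chi\wedge\chi'$ then gives $\de_1(\chi,\chi')=\de_1(\FS\chi,\FS\chi')$. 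Consequently $\FS$ kills exactly the zero-distance classes, so it identifies the Hausdorff quotient. The homogeneous statement is the special case $\Lambda\to\Q\Lambda$, using $\cH_\Lambda=\cH_{\Q\Lambda}$.

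\emph{Main obstacle.} The hard step is the finite generation of $\chi\wedge\chi'$ together with the volume equality $\vol(\chi\wedge\chi')=\vol(\max(\FS\chi,\FS\chi'))$. If finite generation fails, we would instead argue directly: the sum of the graded linear series $\cF_\chi^m+\cF_{\chi'}^m$ has integral closure equal to the section ring of the polarized compactification associated with $\max(\p_D,\p_{D'})$, namely the normalized blowup of the two compactifications. We would then conclude using the fact that volume is invariant under integral closure of graded series.
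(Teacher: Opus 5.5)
\textbf{Part (i).} This is essentially the paper's argument (Corollaries~\ref{cor:FSisom} and~\ref{cor:FSsurj}): $\FS$ and $\IN$ are both $\de_\infty$-non-increasing, and $\IN\circ\FS(\chi)=\chi^\hom$. One correction: $\FS$ is order-\emph{preserving}, not order-reversing. If $\chi\le\chi'<0$ and $v(f)<0$, then $v(f)/\chi(f)\le v(f)/\chi'(f)$ (Lemma~\ref{lem:FSformal}). Your conclusion $e^{-\delta}\FS(\chi)\le\FS(\chi')\le e^{\delta}\FS(\chi)$ only follows from this correct direction.

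\textbf{Step~1 of (ii).} This step is fine, and for finite-type norms it is a nice elementary substitute for Proposition~\ref{prop:volsat}. The bound $\chi\le\chi^\hom\le\chi+C$ from the proof of Proposition~\ref{prop:fthom} sandwiches the filtrations, which gives $\vol(\chi)=\vol(\chi^\hom)$.

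\textbf{The gap: Step~3.} This step inherits the same sign confusion, and it fails on several counts.
\begin{itemize}
\item The filtration of $\chi\wedge\chi'$ is the intersection $\cF^\la_\chi R\cap\cF^\la_{\chi'}R$, not the sum. Consequently $\FS(\chi\wedge\chi')\le\min\{\FS(\chi),\FS(\chi')\}$, not $\max$.
\item The $\de_1$ metric on $\cH$ is given by the rooftop formula with $\vol(\f\wedge\p)=\sup\{\vol(\tau)\mid\tau\in\cH_\R,\ \tau\le\min\{\f,\p\}\}$. Your formula with $\vol(\max\{\f,\p\})$ is negative whenever $\f\le\p$ and $\vol(\f)<\vol(\p)$.
\item The norm $\chi\wedge\chi'$ is in general \emph{not} of finite type. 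For geometric norms on a common integrally closed compactification, it is the filtration by sections of the Weil divisor $m\min\{D,D'\}$, whose section ring need not be finitely generated. So Steps~1 and~2 cannot be applied to it, and your fallback again concerns the max.
\item To identify the Hausdorff quotient you must show that $\de_1$ separates points of $\cH_\La$. The proposal never addresses this.
\end{itemize}

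\textbf{The paper's route.} It avoids any intrinsic wedge computation on $\cH$.
\begin{itemize}
\item Define $\de_1$ on $\cH_\La$ through $\IN$. The homogeneous isometry is then tautological.
\item For $\chi,\chi'\in\cT_\La$, obtain $\de_1(\chi,\chi')=\de_1(\chi^\hom,\chi'^\hom)$ from $\de_1(\chi,\chi^\hom)=0$ (your Step~1) and the triangle inequality.
\item For nondegeneracy, approximate $\chi\wedge\chi'$ from below by its canonical finite-type approximants (Proposition~\ref{prop:canapprox}). Their images $\tau_i\in\cH_\R$ under $\FS$ satisfy $\tau_i\le\min\{\f,\p\}$ and $\vol(\tau_i)\to\vol(\chi\wedge\chi')$. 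If $\de_1(\f,\p)=0$, then $\vol(\tau_i)\to\vol(\f)=\vol(\p)$. Lemma~\ref{lem:volweakconv} then forces $\tau_i\to\f$ and $\tau_i\to\p$ on $X^\div$, so $\f=\p$.
\end{itemize}
This uses $\vol(\chi)=\vol(\FS(\chi))$ for all $\chi\in\cT^\hom_\R$ (Lemma~\ref{lem:volequal}). For irrational $\La$ that identity needs a $\de_\infty$-approximation of $\cH_\R$ by $\cH_\Q$. Your Step~2, stated only for $\FS(\chi)=\p_D$, omits this.

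Alternatively, nondegeneracy follows from Proposition~\ref{prop:equalvol}: equal volumes of $\chi\wedge\chi'\le\chi,\chi'$ force $\FS(\chi)=\FS(\chi')$ on $X^\div$, and then everywhere by Theorem~\ref{thm:weaktop}.
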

The first isomorphism follows immediately from properties of the $\FS,\IN$ operators. To establish the second part of the theorem, we start by noting that if $\chi\=\chi_{\bar X, D}, \chi'\=\chi_{\bar X', D'}\in \cT_\Z$ are geometric norms with $\chi\le \chi'$, then the $\FS$ operator sends the two norms to $\p_D, \p_{D'}$ respectively with $\vol(\chi)=\vol(\p_D)\le \vol(\p_{D'})=\vol(\chi')$. In general, since the pointwise minimum $\chi\wedge\chi'$ need not be finitely generated, we use a finite type approximation, which also gives the following convenient rooftop formula for the $\de_1$ metric on $\cH_\R$:
\[
\de_1(\f, \p)=\vol(\f)+\vol(\p)-2\vol(\f\wedge\p),
\]
where $\vol(\f\wedge \p)\=\sup\{\vol(\tau)\mid \tau\in\cH_\R, \tau\le \f\wedge \p\}=\vol(\IN(\f)\wedge \IN(\p))$.

\medskip
\noindent\textbf{Geodesics in the Darvas metric.}
Our next goal is to study in more detail the metric structure and the Hausdorff quotient of $(\Nbdd(X), \de_1)$. This in turn builds on the study of metric geometry of spaces of test configurations and singularities, see~\cite{RemiGeodesics, nakstab1, BLQ, Qi}.  
To each $\chi\in \Nbdd(X)$, the \emph{saturation} $\bar \chi$ of $\chi$ is defined to be the largest homogeneous norm $\bar \chi\ge \chi$ such that $\FS(\chi)=\FS(\bar\chi)$ on $X^\div$. Passing to the saturation does not change the volume and hence $\de_1(\chi, \bar\chi)=0$. We denote by $\cN^s$ the space of saturated norms, that is, bounded negative norms satisfying $\bar\chi=\chi$.

The space $\Nbdd$ also comes with naturally defined geodesics, by choosing a common orthogonal basis. Given $\chi_0, \chi_1\in \Nbdd$, we may define for $t\in[0,1]$, a norm $\chi_t\in \Nbdd$ corresponding to the filtration given by
\[
\cF^\lambda_{\chi_t}R\=\sum_{(1-t)\mu+t\nu=\la}\cF^\mu_{\chi_0}R\cap \cF^\nu_{\chi_1}R.
\]
We now state the following analog of~\cite[Theorem~1.1]{BLQ},~\cite[Theorem~1.1]{Qi}.
\begin{thmD}
    The saturation map $(\Nbdd, \de_1)\to (\cN^s, \de_1)$ identifies $\cN^s$ as the Hausdorff quotient.
    Further, given a geodesic $\chi_t, t\in[0,1]$ joining two bounded negative norms $\chi_0, \chi_1$, the volume function $t\mapsto \vol(\chi_t)$ satisfies
    \begin{itemize}
        \item [(i)] $\vol(\chi_t)$ is smooth, and $\vol(\chi_t)^{-1/n}$ is concave;
        \item[(ii)]  $\vol(\chi_t)^{-1/n}$ is affine iff $\bar\chi_1=c\bar\chi_0$ for some $c>0$.
    \end{itemize}
\end{thmD}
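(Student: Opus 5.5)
We follow the strategy of \cite{BLQ,Qi} and reduce to convex geometry via Okounkov bodies. First we fix a polarized compactification $(\bar X,D)$ with $D$ Cartier and sufficiently divisible, so that all norms in sight are comparable to $\chi_{\bar X,D}$. We choose a valuation $\nu$ of rank $n$ on $k(X)$, e.g.\ one given by a flag on a smooth model. Then a bounded negative norm $\chi$ gives a graded linear series $W^\chi_m\=\cF^m_\chi R\subset H^0(\bar X,mD)$. We attach to it the convex body
\[
\Delta(\chi)\=\overline{\bigcup_m m^{-1}\nu(W^\chi_m\setminus0)}\subset\Delta(D).
\]
By Lazarsfeld--Musta\c{t}\u{a} and Kaveh--Khovanskii, $\vol(\chi)=n!\,\mathrm{Leb}(\Delta(\chi))$. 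Boundedness of $\chi$ ensures that $\Delta(\chi)$ contains a small homothetic copy of $\Delta(D)$, so it has nonempty interior.

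The key structural step is to describe the geodesic on Okounkov bodies. Using the common orthogonal basis adapted to both filtrations, and a compatible basis adapted to $\nu$ as in \cite[\S3]{BLQ}, we aim to show that
\[
\Delta(\chi_t)=\{(1-t)a+tb\},
\]
realized as a fibrewise Minkowski interpolation. More precisely, we work with the concave ``Legendre-type'' transform: we encode $\chi$ by the function $G_\chi$ on $\Delta(D)$, in the spirit of Boucksom--Chen, with $\Delta(\chi)=\{x\in\Delta(D)\mid G_\chi(x)\le 1\}$, after rescaling by homogeneity in $\lambda$. Along the geodesic, the convex gauge function $G_{\chi_t}$ interpolates affinely. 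The Boucksom--Chen limit measure then gives
\[
\vol(\chi_t)=n!\int_{\Delta(D)}\mathbf 1_{\{(1-t)G_0+tG_1\le1\}}\,dx,
\]
or more precisely an integral of the form $\int_{C} ((1-t)g_0+tg_1)^{-n}\,d\sigma$ over a sphere-like base, coming from the conical structure, since the filtration is indexed by $\lambda$ with no separate $m$.

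Concretely, homogeneity makes $\cF^\lambda R$ a cone structure: the Rees algebra is bigraded and gives a cone $\Sigma\subset\R^{n}\times\R_{\ge0}$. The volume is then the volume of a convex region $\{x\in\Sigma'\mid h_\chi(x)\le1\}$, where $h_\chi$ is a positive $1$-homogeneous convex function on an $n$-dimensional cone. In polar form,
\[
\vol(\chi)=c\int_{S}h_\chi(\theta)^{-n}\,d\theta.
\]
Since $h_{\chi_t}=(1-t)h_0+th_1$, the volume is smooth in $t$, which gives (i) smoothness. The integrand $((1-t)h_0+th_1)^{-n}$ is convex, but we need concavity of $V(t)^{-1/n}$. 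This follows from the Brunn--Minkowski inequality for the convex bodies $K_t=\{h_t\le1\}$: the bodies $K_t$ satisfy $K_t\supseteq$ the harmonic-type combination, and the dual Brunn--Minkowski (Firey $p=-1$) inequality yields concavity of $\vol(K_t)^{-1/n}$. Part (ii) follows from the equality case: equality forces $h_1=c\,h_0$. Translating back, $h_\chi$ determines $\FS(\chi)$ on $X^\div$, and hence determines $\bar\chi$, giving $\bar\chi_1=c\bar\chi_0$.

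For the Hausdorff quotient statement, we show that $\de_1(\chi,\chi')=0$ iff $h_\chi=h_{\chi'}$ a.e., and iff $\bar\chi=\bar\chi'$. For one direction, saturation preserves volume, and $\bar\chi\wedge\bar\chi'$ has the same $h$. For the other, vanishing of $\de_1$ gives equal volumes of $\chi,\chi',\chi\wedge\chi'$. Then $h_{\chi\wedge\chi'}\ge\max(h_\chi,h_{\chi'})$ forces equality, and a valuative characterization of saturation gives $\bar\chi=\bar\chi'$. The latter is analogous to \cite[Thm~3.?]{BLQ}, via multiplier-ideal or Izumi-type approximation of $\FS(\chi)$ by divisorial data.

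The main obstacle I expect is proving that the geodesic interpolates the gauge functions $h_t$ linearly, and that $h_\chi$ controls $\FS(\chi)|_{X^\div}$ exactly. This requires an orthogonal basis compatible with both the $\nu$-filtration and $\chi_0,\chi_1$ simultaneously. I would first try the BLQ argument of choosing a basis of each $\cF^\lambda_{\chi_0}\cap\cF^\mu_{\chi_1}$ adapted to $\nu$ on successive quotients.
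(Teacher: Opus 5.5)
\textbf{The proposal has a genuine gap at its central step.} The claim that the Okounkov-body gauges interpolate affinely along the geodesic, $h_{\chi_t}=(1-t)h_{\chi_0}+th_{\chi_1}$, is false in general, so the formula $\vol(\chi_t)=c\int_S((1-t)h_{\chi_0}+th_{\chi_1})^{-n}\,d\theta$ fails. You correctly note that this would need a basis orthogonal simultaneously for $\chi_0$, $\chi_1$ and the flag valuation $\nu$. However, three filtrations generally admit no common splitting (already three distinct lines in $k^2$), and the defect persists in the limit.

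Counterexample. Take $n=1$, $X=\P^1\setminus\{0,1,\infty\}$, $R=k[z,z^{-1},(z-1)^{-1}]$, $\chi_0=\min\{\ord_0,\ord_1,\ord_\infty\}$ and $\chi_1=\min\{\ord_0/2,\ord_1/3,\ord_\infty\}$. These are the homogenized geometric norms of $[0]+[1]+[\infty]$ and $2[0]+3[1]+[\infty]$. By partial fractions, $1,z^j,z^{-j},(z-1)^{-j}$ ($j\ge1$) is a common orthogonal basis. Hence $\chi_t(z^j)=-j$, $\chi_t(z^{-j})=-(1-\tfrac t2)j$, $\chi_t((z-1)^{-j})=-(1-\tfrac{2t}{3})j$, and
\[
\vol(\chi_t)=1+\frac{2}{2-t}+\frac{3}{3-2t}.
\]
Here every Okounkov body is a segment, so any formula $\vol=c\int_S h^{-n}$ over the at most two rays of a cone in $\R$ writes $\vol(\chi_t)$ as a sum of at most two terms $(\alpha+\beta t)^{-1}$. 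As an identity of rational functions this is impossible: the right-hand side has two distinct poles and tends to $1$ as $t\to\infty$. For instance, with $\nu=\ord_\infty$ one gets $\Delta(\chi_0)=[-1,2]$ and $\Delta(\chi_1)=[-1,5]$, and your prediction is $1+\frac{10}{5-3t}$.

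Independently, $h_\chi$ does not determine $\bar\chi$. For $\nu=\ord_\infty$, the norms of $2[0]+[1]+[\infty]$ and $[0]+2[1]+[\infty]$ both have $\Delta=[-1,3]$. Yet their saturations differ, and $\de_1=4+4-2\cdot 3=2$. So your equality-case translation fails, and so does the claim ``$\de_1(\chi,\chi')=0$ iff $h_\chi=h_{\chi'}$''.

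\textbf{What is missing is the joint limit measure of the pair, with no $\nu$ at all.} Take a common orthogonal basis $(f_i)$ for $\chi_0,\chi_1$ alone (Lemma~\ref{lem:commorth}). The discrete measures $\frac{n!}{m^n}\sum_i\delta_{m^{-1}(-\chi_0(f_i),-\chi_1(f_i))}$ converge to $\mu_\chi=\partial^2_{x_1x_2}\vol(\chi_0/x_1\wedge\chi_1/x_2)$. Since $\chi_t(f_i)=(1-t)\chi_0(f_i)+t\chi_1(f_i)$, this gives $\vol(\chi_t)=\int_\Delta((1-t)x_1+tx_2)^{-n}\,d\tilde\mu_\chi(x)$. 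This is your polar formula with $(h_0,h_1)_*d\theta$ replaced by $\tilde\mu_\chi$, and $\tilde\mu_\chi$ is in general not of that form. Okounkov bodies enter only through Brunn--Minkowski for the intersections $\cF^{\lambda x_1}_{\chi_0}R\cap\cF^{\lambda x_2}_{\chi_1}R$, where no compatibility issue arises.

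From this formula the rest follows:
\begin{itemize}
\item Smoothness holds because $\tilde\mu_\chi$ is supported in $\{C^{-1}x_1\le x_2\le Cx_1\}$ with $C=e^{\de_\infty(\chi_0,\chi_1)}$.
\item Concavity of $\vol(\chi_t)^{-1/n}$ is, for instance, the reverse Minkowski inequality for the $L^{-n}(\tilde\mu_\chi)$ quasi-norm of the linear functions $(1-t)x_1+tx_2$.
\item Equality holds iff $\tilde\mu_\chi$ is a point mass, i.e.\ iff $\vol(\chi_0)=\vol(\chi_0\wedge c^{-1}\chi_1)=\vol(c^{-1}\chi_1)$ for some $c>0$.
\end{itemize}

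Passing from this to saturations, and proving the Hausdorff-quotient statement, then requires the two inputs you leave as black boxes. First, $\vol(\bar\chi)=\vol(\chi)$ (Proposition~\ref{prop:volsat}), via equality of asymptotic base-ideal valuations on $X^\div$ and Sz\'ekelyhidi's lemma. Second, Proposition~\ref{prop:equalvol}: $\chi\le\chi'$ with $\vol(\chi)=\vol(\chi')$ forces $\FS(\chi)=\FS(\chi')$ on valuations of linear growth. This is proved directly by injecting $\cF^m_\chi R$ into $\cF^{m+il}_{\chi'}R/\cF^{m+il}_\chi R$ via $g\mapsto f^ig$, with no Okounkov-body gauge.
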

We remark that, as in~\loccit, the geodesics need not have constant speed.
The proof relies on the study of relative measures between two norms, analogous to the one in~\cite{BLQ}, which further rests on closely related relative limit measures in the K-stability setting, see~\cite{BLXZ}. To compare this with our previous discussions on isometric isomorphisms between $\cT^\hom_\Q$ and $\cH(X)$, the Fubini--Study operator extends automatically to the space of bounded negative norms, but the resulting function $\FS(\chi)$ is only an lsc function on $X^\na$ in general, and hence need not be psh. However, if $X$ has the envelope property, \eg when $X$ is smooth, then the upper semicontinuous (usc) regularization of $\FS(\chi)$ will be a function in $\cE_\uparrow(X)\subset\cE(X)$, where $\cE_\uparrow(X)$ is the space of psh functions that can be regularized from below. In this case, $\FS$ extends to a surjective isometry $\FS^\star\colon (\Nbdd, \de_1)\twoheadrightarrow (\cE_\uparrow(X), \de_1)$, which restricts to an isometric isomorphism on $\cN^s$, with inverse given by $\IN$.

\medskip
\noindent\textbf{Metric completions with respect to Darvas metrics.}
Let us end with a brief discussion of metric completions with respect to Darvas metrics, referring the reader to~\S\ref{sec:metriccomp} for details. 

First, as in the case of polarized projective varieties, using decreasing approximation by functions in $\cH(X)$, we extend $\de_1$ uniquely to a metric on $\cE(X)$ that is continuous along decreasing nets.
It is easy to see that $(\cE(X), \de_1)$ will never be complete since the completion always contains the zero function. A slight modification as in the definition of $\bar\ii$-functional (see also~\cite[\S12.1]{BJpp}) prevents this degeneration,  and we may ask for the completion with respect to the modified metric. More precisely, for $\f, \p$, we set 
\[
\du(\f, \p)\=\de_1(\f, \p)+|\log\vol(\f)-\log\vol(\p)|.
\]
By taking numerical classes, we get a natural continuous map 
\[
(\cE(X), \du)\to (\bar\cE(X), \de_\infty).
\]
The Monge--Amp\`ere operator therefore extends this uniquely to a continuous map from the metric completion of $(\cE(X),\du)$ to $(\cC_\infty(X),\de_\infty)$, with dense image.

We do not expect the space $(\cE(X), \du)$ to be complete in general. 
However, when $X$ is smooth, has no non-constant global invertible functions, and admits a smooth compactification with irregularity zero, we have $\cE(X)=\bar\cE(X)$.
In this case, using Theorem~A', we prove that $(\cE(X), \du)$ is complete; see Proposition~\ref{prop:Ecomplete}.
This is for example the case when $X$ is an affine open subset of a smooth Fano variety, with no non-constant global invertible functions.

Alternatively, motivated by the rooftop characterization of the $\de_1$ metric on $\cH(X)$, we may define a metric on $\BBPF_{n-1}(X)$, by setting
\[
\de_1(\alpha, \beta)\=\vol(\alpha)+\vol(\beta)-2\vol(\alpha\wedge \beta),
\]
where $\vol(\alpha\wedge\beta)\=\sup\{\vol(\gamma)\mid \gamma\le \alpha, \beta, \gamma \text{ is a nef b-Cartier divisor class} \}$. 
We similarly set, for each $\alpha, \beta\in \BBPF_{n-1}(X)$,
\[
\du(\alpha, \beta)\=\de_1(\alpha, \beta)+|\log\vol(\a)-\log\vol(\b)|.
\]
This restricts to a metric on $\bar \cE(X)$.
By comparing this metric with $\de_\infty$, one can show that $(\BBPF_{n-1}(X), \du)$ is in fact complete, and thus the metric completion of $(\bar \cE(X), \du)$ is canonically homeomorphic to $\cC_\infty(X)$ via Theorem~A'.

\medskip\noindent\textbf{Organization.}
After setting up basic terminology for compactifications and recalling numerical notions in the Riemann--Zariski space, we define and study $\de_\infty$ metrics on numerical classes and prove Theorem~A in~\S\ref{sec:prelim}. The Fubini--Study functions are defined and studied in~\S\ref{sec:FSfcn}. In~\S\ref{sec:psh}, we extend the class of Fubini--Study functions to psh functions, and study the topology and envelope properties. The intersection pairing and Monge--Amp\`ere operator are defined in~\S\ref{sec:numint}, in which we also study the $\de_\infty$ metric on $\bar\cE(X)$. In~\S\ref{sec:curvinf}, we define and study curve classes at infinity, and prove Theorem~A'. The notion of a bounded negative norm is introduced in~\S\ref{sec:norm}, where we also investigate the finite generation property and prove the first part of Theorem~B. In~\S\ref{sec:volume}, we define the volume of a bounded negative norm, introduce the $\de_1$ pseudo-metric, and prove most of Theorem~D. The Fubini--Study and infimum operators, as well as saturation, are studied in detail in~\S\ref{sec:FSop}, where we also complete the proof of Theorems~B, C and D. Finally, in~\S\ref{sec:metriccomp} we extend the Darvas metrics to $\cE(X)$ and $\bar\cE(X)$, study the regularized Fubini--Study operator and metric completions with respect to Darvas metrics.

\medskip\noindent\textbf{AI disclosure.}
ChatGPT was used to provide Example~\ref{exam:curvmeasneq}, Lemma~\ref{lem:iquasitriangle}, to strengthen the statement of Proposition~\ref{prop:volbounded}, to correct the proof of the last assertion in Proposition~\ref{prop:ft}, and to proofread.

\medskip\noindent\textbf{Acknowledgements.}
The first author was supported by grants DMS-2154380 and  DMS-2452797 from the NSF. The second author was supported by DMS-1926686 and the Simons Collaboration Grant \emph{Moduli of varieties}.

\section{Polarized compactifications and cycle classes}\label{sec:prelim}
We work over an algebraically closed field $k$ of characteristic zero. Let $X=\Spec R$ be an affine variety (reduced and irreducible) of dimension $n\ge1$.
\subsection{Compactifications}\label{sec:comp}
By a \emph{compactification} of $X$ we mean a projective variety $\bar{X}$ together with an isomorphism $\iota\colon X\hto\bar{X}$ onto a dense open affine subset of $\bar{X}$. 
For simplicity, we view $X$ as an open subset of $\bar{X}$, and set $\partial\bar{X}:=\bar{X}\setminus X$ with the reduced scheme structure. 
By~\cite[Proposition~1]{Goodman}, 
every irreducible component of $\partial\bar{X}$ has codimension 1. 

If $\bar X$ is a compactification of $X$, and $Y\subset X$ a subvariety, then the closure $\bar{Y}$ in $\bar X$ is a compactification of $Y$.

Given compactifications $\bar{X}$, $\bar{X}'$, we say $\bar{X}'$ \emph{dominates} $\bar{X}$ if the birational map $\bar{X}'\dashrightarrow\bar{X}$ induced by the identity map on $X$ is a morphism, that we then call a \emph{morphism of compactifications}. Any two compactifications are dominated by a third.

We say a compactification $\bar{X}$ is \emph{polarizable} if there exists an ample effective Cartier divisor $D$ on $\bar{X}$ with support equal to $\partial\bar{X}$. Equivalently, $\bar{X}$ is polarizable if there exists an embedding $\iota\colon\bar{X}\hto\P^N$ with $X=\iota^{-1}(\A^N)$. Polarizable compactifications exist: just pick any closed immersion $X\hto\A^N$, let $\bar{X}$ be the (scheme-theoretic) closure of $X$ in $\P^N$, and $D$ the pullback of the hyperplane at infinity.

In general, not every compactification is polarizable, but we have
\begin{lem}\label{lem:hasample}
    Any compactification of $X$ is dominated by a polarizable compactification.
\end{lem}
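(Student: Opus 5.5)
The plan is to dominate $\bar X$ by the closure of $X$ inside a product containing $\bar X$ and a polarizable compactification, and then to blow up so that the boundary is supported by an effective ample divisor.

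Let $\bar X$ be an arbitrary compactification. First fix a polarizable compactification $\bar X_0\subset\P^N$ with $X=\bar X_0\cap\A^N$, as constructed above. Let $\bar X_1$ be the closure of the diagonal image of $X$ in $\bar X\times\bar X_0$. This is a projective variety: $\bar X$ is projective by definition, and so is $\bar X_0$.

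The first step is to check that $\bar X_1$ is a compactification of $X$, that is, that $X\to\bar X_1$ is an open immersion. The image of $X$ is the graph of the morphism $X\hto\bar X_0$ inside $X\times\bar X_0$, and $X\times\bar X_0$ is open in $\bar X\times\bar X_0$. Since $\bar X_0$ is separated, this graph is closed in $X\times\bar X_0$. Hence $\bar X_1\cap(X\times\bar X_0)$ equals the graph, which is isomorphic to $X$ and open in $\bar X_1$. The projection $p\colon\bar X_1\to\bar X$ is then a morphism of compactifications. Moreover $X=q^{-1}(X)$ for $q\colon\bar X_1\to\bar X_0$, by the same argument with the roles of the factors swapped. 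So $\partial\bar X_1=q^{-1}(\partial\bar X_0)$, which is also $p^{-1}(\partial\bar X)$.

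The second step is to produce an ample divisor supported on $\partial\bar X_1$. Let $H$ be the hyperplane at infinity, and set $D_0:=H|_{\bar X_0}$, which is effective and ample with support $\partial\bar X_0$. Then $q^*D_0$ is effective Cartier with support $\partial\bar X_1$, but it is only nef and big, not ample. I would fix this by choosing an ample Cartier divisor $A$ on $\bar X_1$, for instance the restriction of a product of ample divisors on the two factors, and aiming for a divisor $mq^*D_0-E$ with $E$ effective and supported on the boundary.

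This step is the main obstacle. I expect it is cleaner to avoid needing $E$ directly by blowing up. Since $X$ is affine, one can choose a $p$-very ample (or relatively ample) boundary divisor after a further modification. Concretely, I would take the blow-up $\pi\colon\bar X_2\to\bar X_1$ of a coherent ideal sheaf cosupported on $\partial\bar X_1$ such that $\bar X_2\to\bar X_0$ is projective with $-F$ relatively ample, where $F$ is the exceptional Cartier divisor. For such an ideal one can take the ideal $\mathcal I$ of the closed subscheme $\partial\bar X_1$ endowed with a suitable thickening. Then $\mathcal O(-F)$ is $\pi$-ample. Because $\bar X_1\to\bar X_0$ is projective and is an isomorphism over $X$, a relatively ample divisor for the composite $\bar X_2\to\bar X_0$ can be chosen supported over $\partial\bar X_0$, using a relatively ample Cartier divisor that is trivial on $X$. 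I would obtain this via Goodman's theorem: $X$ is affine, so every compactification boundary supports an effective divisor that is relatively ample over a polarizable model (cf. \cite{Goodman}).

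Finally, $mq_2^*D_0-F$ is ample on $\bar X_2$ for $m\gg0$, because $D_0$ is ample on $\bar X_0$ and $-F$ is relatively ample. It is effective with support equal to $\partial\bar X_2$ once $m$ is large enough that $mq_2^*D_0\ge F$, which holds since both divisors are supported on the boundary and $q_2^*D_0$ has full boundary support. Thus $\bar X_2$ is polarizable and dominates $\bar X$. Alternatively, the whole statement follows directly from \cite[Theorem~2]{Goodman}, which says that a compactification of an affine variety becomes polarizable after blowing up an ideal supported on the boundary. If that citation is allowed, I would simply invoke it.
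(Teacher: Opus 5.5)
Your first step is fine: the graph closure $\bar X_1\subset\bar X\times\bar X_0$ is a compactification dominating $\bar X$, with $\partial\bar X_1=q^{-1}(\partial\bar X_0)$. Your last step is also fine \emph{provided} you have an effective boundary divisor $F$ with $-F$ ample relative to $\bar X_0$. The gap is in constructing such an $F$.

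Blowing up an ideal on $\bar X_1$ only makes $-F$ ample relative to $\pi\colon\bar X_2\to\bar X_1$. It gives no positivity on curves contracted by $q\colon\bar X_1\to\bar X_0$, and these are exactly the curves on which $q^*D_0$ has degree $0$. So $mq_2^*D_0-F$ need not be ample.

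A thickening of the boundary ideal does not fix this unless it is chosen with knowledge of $q$. Take $X=\A^2$, $\bar X_0=\P^2$, and let $\bar X$ be the blowup of $\P^2$ at a point $p$ of the line at infinity $L$. Then $\bar X_1=\bar X$ and $\partial\bar X_1=\tilde L+E=q^*L$ is already Cartier. Blowing up any power of its ideal changes nothing, and $F$ is a multiple of $q^*L$, which has degree $0$ on $E$; no $mq^*L-F$ is ample. The ideal $\cO(-\tilde L-2E)$ would work, but nothing in your argument tells you to choose it. Your appeal to Goodman for ``a relatively ample boundary divisor over a polarizable model'' is just a restatement of what has to be proved.

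The missing idea, which is the paper's proof, is to blow up on the polarized model itself rather than on $\bar X_1$. The birational map $\bar X_0\dashrightarrow\bar X$ that is the identity on $X$ is resolved by blowing up a coherent ideal $\cJ$ on $\bar X_0$ that is trivial on $X$. Equivalently, your $q$, being projective, birational and an isomorphism over $X$, is dominated by an $X$-admissible blowup of $\bar X_0$.

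Let $\pi\colon\bar X'\to\bar X_0$ be this blowup and write $\cJ\cdot\cO_{\bar X'}=\cO_{\bar X'}(-E')$. Then $E'$ is effective Cartier, supported on $\partial\bar X'=\pi^{-1}(\partial\bar X_0)$, and $-E'$ is $\pi$-ample, i.e.\ ample relative to the polarized model. Hence $m\pi^*D_0-E'$ is ample, effective, and has support $\partial\bar X'$ for $m\gg0$, exactly as in your final step.

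In the example, $\cJ=\mathfrak m_p$, and $mq^*L-E=m\tilde L+(m-1)E$ works for $m\ge2$. On your $\bar X_1$, the correct ideal to blow up is $\cJ\cdot\cO_{\bar X_1}$, not a thickening of the boundary. Your fallback of citing Goodman's blowup criterion for affine open subsets directly would also prove the lemma, but then the rest of your construction is superfluous.
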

\begin{proof}
Pick any polarizable compactification $\bar{X}''$ of $X$, and an ample effective Cartier divisor $D''$ on $\bar{X}''$ with support equal to $\partial\bar{X}''$. Let $\bar{X}$ be any other compactification, and consider the birational map $\bar{X}''\dashrightarrow\bar{X}$ induced by the identity on $X$. There exists a coherent ideal sheaf $\cJ$ on $\bar{X}''$ that is trivial on $X$ such that if $\pi\colon\bar{X'}\to\bar{X}''$ is the blowup along $\cJ$, then $\bar{X}'$ dominates $\bar{X}$. Write $\cJ\cdot\cO_{\bar{X}'}=\cO_{\bar{X}'}(-E')$ for an effective Cartier divisor $E'$ on $\bar{X}'$ supported on $\partial\bar{X}'$. Then $-E'$ is $\pi$-ample, and for $m\gg1$, the Cartier divisor $D':=m\pi^*D''-E'$ is ample, effective, and its support equals $\partial\bar{X'}$.
\end{proof}

\begin{defi}
    A \emph{polarized compactification} of $X$ is a polarizable compactification $\bar{X}$ of $X$ together with an ample effective $\Q$-divisor $D$ on $\bar{X}$ with support equal to $\partial\bar{X}$.
\end{defi}

We say that a compactification $\bar X$ is \emph{integrally closed} if it is integrally closed in $X$, that is, $\bar{X}$ can be covered by finitely many open affine subsets $U$ such that $\cO_{\bar{X}}(U)$ is integrally closed in $\cO_X(X\cap U)$. When $X$ is normal, this means precisely that $\bar{X}$ is normal.
\begin{exam}
    If $Y$ is a nodal cubic, and $p\in Y$ is a closed point different from the node, then $\bar{X}:=Y$ is an integrally closed compactification of $X:=Y\setminus\{p\}$.
\end{exam}
If $\bar{X}$ is an integrally closed compactification of $X$, and $\xi$ is the generic point of an irreducible component of $\partial\bar X$, then the local ring $\cO_{\bar{X},\xi}$ is a DVR. In particular, $\xi$ defines a divisorial valuation on the function field $k(X)$.
Any compactification $\bar{X}$ admits an \emph{integral closure} $\bar{X}'$. This is the unique integrally closed compactification of $X$ that dominates $\bar{X}$ via a finite morphism; see~\cite[Tag~0BAK]{stacks-project}. When $\bar{X}$ is polarizable, so is $\bar{X}'$.
As in~\cite[proof of Lemma~1.12]{BJpp}, we have a version of Zariski's main theorem.
\begin{lem}\label{lem:Zmain}
    If $\mu\colon\bar{X}'\to\bar{X}$ is a morphism of compactifications, with $\bar{X}$ integrally closed, then $\mu_*\cO_{\bar{X}'}=\cO_{\bar{X}}$.
\end{lem}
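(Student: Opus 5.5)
The statement is local on $\bar X$. Since $\mu$ is a morphism of compactifications, it is proper, because $\bar X'$ is projective and $\bar X$ is separated. Hence $\mu_*\cO_{\bar X'}$ is a coherent $\cO_{\bar X}$-algebra. The plan is to show that this algebra is integral over $\cO_{\bar X}$ and contained in $\cO_X$ on $X$, and then to invoke integral closedness of $\bar X$ in $X$.

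\textbf{Step 1: the comparison map is injective and lands in $\cO_X$.} Cover $\bar X$ by affine opens $U$ such that $\cO_{\bar X}(U)$ is integrally closed in $\cO_X(X\cap U)$. Set $A=\cO_{\bar X}(U)$ and $B=\cO_{\bar X'}(\mu^{-1}U)$. The map $\mu$ is the identity on $X$, and $X$ is dense in both $\bar X$ and $\bar X'$. So $\mu^{-1}(X\cap U)=X\cap\mu^{-1}(U)$, and $X$ sits as a dense open subset of $\mu^{-1}U$. Because $\bar X'$ is a variety, hence reduced and irreducible, restriction gives an injection $B\hto\cO_X(X\cap U)$. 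The map $A\to B$ is compatible with these injections, so we obtain $A\subset B\subset\cO_X(X\cap U)$.

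\textbf{Step 2: $B$ is integral over $A$.} By coherence, $B$ is a finitely generated $A$-module. Indeed, $B=H^0(U,\mu_*\cO_{\bar X'})$, and $\mu_*\cO_{\bar X'}$ is coherent on the affine scheme $U$ by the proper pushforward theorem (EGA III, or Hartshorne for projective morphisms; $\mu$ is projective since $\bar X'$ is projective). A ring that is finite as a module over a subring is integral over it, by the determinant trick. Thus every element of $B$ is integral over $A$.

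\textbf{Step 3: conclude.} Each element of $B$ lies in $\cO_X(X\cap U)$ and is integral over $A$. Since $A$ is integrally closed in $\cO_X(X\cap U)$, we get $B\subset A$, and therefore $A=B$. This says that $\cO_{\bar X}\to\mu_*\cO_{\bar X'}$ is an isomorphism on each $U$ of the cover. Isomorphism of sheaves is local, so it holds on all of $\bar X$.

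\textbf{Main obstacle.} The only genuinely nontrivial input is the finiteness of $\mu_*\cO_{\bar X'}$, i.e.\ the coherence theorem for proper pushforward. The remaining subtlety is the embedding $B\hto\cO_X(X\cap U)$ in Step 1, which requires $\bar X'$ to be integral so that sections are determined on the dense open $X$. Both hold here because compactifications are varieties by definition.
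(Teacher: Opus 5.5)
Your proof is correct and follows the paper's argument: $\mu_*\cO_{\bar X'}$ is coherent, so each section over $U$ is integral over $\cO_{\bar X}(U)$; it is also a regular function on $X\cap U$; and integral closedness of $\bar X$ in $X$ then forces it into $\cO_{\bar X}(U)$. One small point: in Step 1 you only need $X\cap\mu^{-1}(U)=X\cap U$, which is immediate from $\mu|_X=\mathrm{id}$, not the stronger equality $\mu^{-1}(X\cap U)=X\cap\mu^{-1}(U)$ — that one is true, but it requires a separatedness argument rather than just density.
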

\begin{proof}
    By coherence of direct images, $\mu_*\cO_{\bar{X'}}$ is a finite $\cO_{\bar{X}}$-module. If $U\subset\bar{X}$ is an open set, a section $s\in(\mu_*\cO_{\bar{X}'})(U)$ is a rational function on $U$ that is regular on $X\cap U$ and integral over $\cO_{\bar{X}}(U)$. As $\bar{X}$ is integrally closed, we get $s\in\cO_{\bar{X}}(U)$.
\end{proof}
\begin{defi}
    We write $\cP\cC$ for the set of polarized compactifications $(\bar X,D)$ of $X$, and $\cP\cC^{\mathrm{int}}$ for the subset with $\bar X$ integrally closed.
\end{defi}
More generally, a \emph{projective model of $X$} is a normal projective variety $Y$ together with a birational morphism $Y\to \bar X$ for some compactification $\bar X$ of $X$. 
\subsection{Cartier divisors at infinity}\label{sec:Cdivinfty}
If $\bar{X}$ is a compactification of $X$, we denote by $\CDiv_\infty(\bar{X})$ the group of \emph{Cartier divisors at infinity} on $\bar{X}$, that is, Cartier divisors with support contained in $\partial\bar{X}$. Such a divisor is \emph{strictly effective} if it is effective and its support equals $\partial\bar{X}$.

A morphism $\bar{X}'\to\bar{X}$ of compactifications with $\bar X$ integrally closed induces an injective pullback map $\CDiv_\infty(\bar{X})\to\CDiv_\infty(\bar{X}')$ which preserves the class of effective (resp.\ strictly effective, resp.\ semiample, resp.\ big) divisors. 
A \emph{Cartier b-divisor at infinity on $X$} is an element of the group 
\[
\CbDiv_\infty(X):=\varinjlim_{\bar{X}}\CDiv_\infty(\bar{X}).
\]
When $\bar X$ is integrally closed, we will often identify an element $\CDiv_\infty(\bar X)$ with its image in $\CbDiv_\infty(X)$; this image is said to be \emph{determined on $\bar{X}$} and \emph{represented} by the Cartier divisor at infinity on $\bar{X}$. 

By the remarks above, it makes sense to talk about the class of effective (resp.\ strictly effective, resp.\ semiample, resp.\ big) Cartier b-divisors at infinity on $X$. Any strictly effective Cartier b-divisor at infinity on $X$ is big.

A morphism $h\colon X\to X'$ of affine varieties induces a map $h^*\colon\CbDiv_\infty(X')\to\CbDiv_\infty(X)$, which preserves the class of effective (resp.\ semiample) divisors. When $h$ is finite (resp.\ finite and surjective), $h^*$ preserves the class of strictly effective (resp.\ big) divisors. 

An element of $\CbDiv_\infty(X)_\Q:=\CbDiv_\infty(X)\otimes_\Z\Q$ (resp.\ $\CbDiv_\infty(X)_\R\=\CbDiv_\infty(X)\otimes_\Z\R$) is called $\Q$-Cartier (resp.\ $\R$-Cartier) b-divisor at infinity.
\begin{cor}\label{cor:diffsa}
    Any divisor in $\CbDiv_\infty(X)$ can be written as a difference of two divisors that are semiample and strictly effective.
\end{cor}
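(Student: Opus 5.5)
Let $E\in\CbDiv_\infty(X)$ be given. By definition of the direct limit, $E$ is determined on some compactification $\bar X_0$, and we may take $\bar X_0$ integrally closed: its integral closure dominates it, and pulling back along that morphism preserves $E$ as an element of the direct limit. By Lemma~\ref{lem:hasample}, $\bar X_0$ is dominated by a polarizable compactification. Replacing that by its integral closure, which is again polarizable, we get an integrally closed polarizable compactification $\bar X$ on which $E$ is represented. Fix an ample strictly effective Cartier divisor $D$ on $\bar X$, so $\supp D=\partial\bar X$.

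The plan is to write
\[
E=(E+mD)-mD
\]
for $m\gg1$ and show that both terms are ample and strictly effective on $\bar X$. Ample implies semiample, and these properties are preserved under pullback by the remarks above, so both terms are then semiample and strictly effective as Cartier b-divisors.

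The term $mD$ is clearly ample and strictly effective. For $E+mD$, note that $E$ is a Cartier divisor supported on $\partial\bar X=\supp D$. Locally on an affine open $U$ on which both are principal, write $E=\div(g)$ and $D=\div(h)$, with $h$ regular and vanishing exactly on $\partial\bar X\cap U$. Then $g$ is a unit on $X\cap U=U\setminus V(h)$, so $g\in\cO(U)[h^{-1}]$. Hence $h^mg\in\cO(U)$ for $m\gg1$, and a finite cover of $\bar X$ by such opens gives a uniform $m$. So $E+mD$ is effective for $m\gg1$.

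The remaining step, where the slight care is needed, is that $\supp(E+mD)$ is all of $\partial\bar X$ and not smaller. I would handle this by taking $m$ one larger, so that $h^{m+1}g=h\cdot(h^mg)$ is divisible by $h$. Its zero locus then contains $V(h)$, and it is contained in $V(h)$ because $g$ is invertible on $U\setminus V(h)$. Therefore $\supp(E+(m+1)D)=\partial\bar X$.

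Finally, $E+mD$ is ample for $m\gg1$, since $D$ is ample and $E$ is Cartier (Serre, or openness of the ample cone), so adding a large multiple of $D$ to the fixed divisor $E$ gives an ample divisor. Taking $m$ large enough for all three conditions gives the desired decomposition.
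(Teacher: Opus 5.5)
Your proposal is correct and follows essentially the same route as the paper: pass (via Lemma~\ref{lem:hasample}) to a compactification carrying an ample strictly effective divisor at infinity, and write the given divisor as $(E+mD)-mD$ with both terms ample and strictly effective for $m\gg0$. The paper states this in one line. You additionally verify effectivity and full support of $E+mD$ via local equations, and pass to an integrally closed model so that the notions are compatible with pullback; both are sound.
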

\begin{proof}
    Assume $D$ is a divisor at infinity on a compactification $\bar{X}$ of $X$. By Lemma~\ref{lem:hasample} we may assume that $\bar{X}$ admits an ample and strictly effective divisor $H$ at infinity. It suffices to note that $mH+D$ and $mH$ are ample and strictly effective for $m\gg0$.
\end{proof}
\begin{lem}\label{lem:sabig}
    Any semiample and strictly effective Cartier divisor at infinity on $X$ is determined by a unique integrally closed polarized compactification $(\bar X, D)$ of $X$.
\end{lem}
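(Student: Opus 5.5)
The plan is to build $(\bar X,D)$ from the section ring via $\Proj$, exactly as one recovers the ample model of a semiample divisor. Let $E$ be a semiample, strictly effective Cartier b-divisor at infinity, determined on some integrally closed compactification $\bar X'$; we may assume $E$ is represented there by a base point free, strictly effective Cartier divisor $E'$ (replace $E$ by a multiple, harmless for $\Q$-divisors). Consider the section ring $S\=\bigoplus_{m\ge0}\Hnot(\bar X',mE')$. Since $E'$ is semiample it is finitely generated, so we may set $\bar X\=\Proj S$. We obtain a morphism $\phi\colon\bar X'\to\bar X$ with connected fibers, $\phi_*\cO_{\bar X'}=\cO_{\bar X}$, and $E'=\phi^*D$ for an ample $\Q$-Cartier divisor $D$ on $\bar X$; concretely, $mE'=\phi^*\cO(1)$ for $m$ divisible. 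We take $D$ to be the divisor of the section $s_{mE'}\in S_m$ (the canonical section $1$), divided by $m$.

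The key step is to check that $\bar X$ is a compactification of $X$, i.e. that $\phi$ is an isomorphism over $X$ onto an open set, and that $\supp D=\partial\bar X$. Since $E'$ is strictly effective, $X=\bar X'\setminus\supp E'=\{s_{mE'}\ne0\}$. On the other side, $\{s\ne0\}\subset\bar X$ is $\Spec S_{(s)}$ with $s=s_{mE'}$, and $S_{(s)}=\bigcup_j s^{-j}\Hnot(\bar X',jmE')$, which is exactly the ring of regular functions on $\bar X'\setminus\supp E'=X$. This ring is $R$, because $X$ is affine and every $f\in R$ has poles bounded by a multiple of $E'$, its poles lying in $\partial\bar X'=\supp E'$. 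Thus $\phi^{-1}(\{s\ne 0\})=X\to\Spec R$ is an isomorphism, so $X\hto\bar X$ is an open immersion with dense image, and $\partial\bar X=\{s=0\}=\supp D$. Here I expect the main obstacle: one must verify $\phi^{-1}(\phi(X))=X$. This follows because $\phi^*D=E'$ has support $\partial\bar X'$, so $\phi$ maps $\partial\bar X'$ into $\supp D$ and $X$ onto its complement. Next, $\bar X$ is integrally closed: since $\bar X'$ is integrally closed in $X$, each $S_{(s_i)}$ is the ring of functions on $\phi^{-1}(U_i)$, and hence is integrally closed in $\cO_X(X\cap U_i)$. Finally, $\bar X$ is polarizable, since $mD$ is ample, effective and Cartier with support $\partial\bar X$. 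The pullback of $D$ to $\bar X'$ is $E'$, so $(\bar X,D)$ determines $E$.

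For uniqueness, suppose $(\bar X_1,D_1)$ is another integrally closed polarized compactification determining $E$. Pass to a common integrally closed model $\bar Y$ dominating $\bar X$, $\bar X_1$, and $\bar X'$; the normalization of the closure of the graph will do. On $\bar Y$, the pullbacks of $D$ and $D_1$ agree with $E$. By Lemma~\ref{lem:Zmain}, $\Hnot(\bar X_1,mD_1)=\Hnot(\bar Y,mE_Y)=\Hnot(\bar X,mD)$ for all divisible $m$, compatibly with multiplication. Since $D_1$ and $D$ are ample, $\bar X_1\cong\Proj\bigoplus_m\Hnot(\bar X_1,mD_1)\cong\bar X$, and this isomorphism is compatible with the maps from $\bar Y$, hence restricts to the identity on $X$ and carries $D_1$ to $D$.
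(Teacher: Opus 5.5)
Your proof is correct and follows essentially the same route as the paper. For existence you take $\Proj$ of the section ring of a base point free representative on an integrally closed compactification. You also note that this $\Proj$ already satisfies $\phi_*\cO_{\bar X'}=\cO_{\bar X}$, so the paper's extra Stein factorization step is automatic and integral closedness transfers directly. For uniqueness you use Lemma~\ref{lem:Zmain} to identify the section rings on a common dominating model, as the paper does. One slip: since $X$ need not be normal, your common model $\bar Y$ should be the integral closure in $X$ of the graph closure, not its normalization.
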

\begin{proof}
    After passing to a multiple, there exists an integrally closed compactification $\bar{X}'$ such that the Cartier divisor is represented by a basepoint free and strictly effective Cartier divisor $D'$ at infinity on $\bar{X}'$. 
    Then the section ring $R(\bar{X}',D')$ is a finitely generated $k$-algebra, and we obtain a projective morphism $h\colon\bar{X}'\to\bar{X}''\coloneqq\Proj R(\bar{X}',D')$. Now     
    $D'$ is the pullback of an ample, strictly effective divisor $D''$ at infinity on $\bar{X}''$ and we can identify $X$ with the complement of $D''$ in $\bar{X}''$. Then $h\colon\bar{X}'\to\bar{X}''$ is a morphism of compactifications, whose Stein factorization yields an integrally closed compactification $\bar X$ of $X$. 
    Further, $D'$ is the pullback of an ample divisor $D$ on $\bar X$, which in turn is the pullback of $D''$ via a finite morphism. Suppose $(\bar X_1, D_1)$ and $(\bar X_2, D_2)$ are two integrally closed polarized compactifications representing the same     
    semiample strictly effective Cartier divisor at infinity. Then we may pick a compactification $\bar X_3$ dominating both $\bar X_1$ and $\bar X_2$ such that $D_3=h_1^*D_1=h_2^*D_2$, where $h_1\colon \bar X_3\to \bar X_1$ and $h_2\colon \bar X_3\to \bar X_2$. Since both $\bar X_1, \bar X_2$ are integrally closed, we infer from Lemma~\ref{lem:Zmain} that $R(\bar X_3, D_3)=R(\bar X_1, D_1)=R(\bar X_2, D_2)$, and hence $\bar X_1\cong \bar X_2$.
\end{proof}

\subsection{Ideals at infinity}\label{sec:idealinf}
An \emph{ideal at infinity} on a compactification $\bar{X}$ is a coherent ideal sheaf $\fa$ on $\bar{X}$ that is trivial on $X$. We say $\fa$ is \emph{strictly effective} if $\fa_\xi\ne\cO_\xi$ for all points $\xi\in\partial\bar{X}$.
If $\bar{X}'\to\bar{X}$ is a morphism of compactifications, then an ideal $\fa$ at infinity on $\bar{X}$ pulls back to an ideal $\fa'$ at infinity on $\bar{X}'$, strictly effective when $\fa$ is. We can always choose $\bar{X}'$ integrally closed such that $\fa'=\cO_{\bar{X}'}(-D')$ for some effective divisor $D'$ at infinity on $\bar X'$, and say that $\fa$ is \emph{semiample} if $D'$ is semiample. 
Similarly, a \emph{fractional ideal at infinity} on a compactification $\bar{X}$ of $X$ is a coherent fractional ideal sheaf that is trivial on $X$. 
The next result is proved in the same way as~\cite[Lemma~1.23]{nakstab1}.
\begin{lem}\label{lem:critnopole}
    Assume $\bar{X}$ is an integrally closed compactification of $X$, and let $\fa$ be a fractional ideal on $\bar{X}$ at infinity. Then $\fa\subset\cO_{\bar{X}}$ iff $\ord_E(\fa)\ge0$ for every irreducible component $E$ of $\bar{X}\setminus X$.
\end{lem}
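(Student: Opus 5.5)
The forward direction is immediate. If $\fa\subset\cO_{\bar X}$, then at the generic point $\xi_E$ of each boundary component $E$ the stalk $\fa_{\xi_E}$ is an ideal of the DVR $\cO_{\bar X,\xi_E}$. Hence $\ord_E(\fa)\ge0$.

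For the converse, assume $\ord_E(\fa)\ge0$ for every boundary component $E$. The question is local, so we fix an affine open $U\subset\bar X$ with $A:=\cO_{\bar X}(U)$ integrally closed in $B:=\cO_X(X\cap U)$. It suffices to show that every section $f\in\fa(U)$ lies in $A$. Since $\fa$ is trivial on $X$, we have $f\in B$, so $f$ is a rational function that is regular on $X\cap U$.

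The plan is to show that $f$ is integral over $A$. The key input is the condition at the boundary: for each component $E$ meeting $U$, the hypothesis $\ord_E(f)\ge\ord_E(\fa)\ge0$ says $f$ has no pole along $E$.

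There are two ways to conclude.

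\emph{Normal case.} If $\bar X$ is normal, then $\cO_{\bar X}$ is the intersection of its stalks at codimension-one points. At points of $X$ the function $f$ is regular, and at the generic points of boundary components it is regular by the valuation condition. By algebraic Hartogs we get $f\in A$.

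\emph{General case.} Here $\bar X$ is only integrally closed in $X$, and $X$ may fail to be normal. We pass to normalizations. Let $\nu\colon\bar X^n\to\bar X$ be the normalization; it restricts to the normalization $X^n\to X$. The pullback $\nu^*f$ is regular on $X^n\cap\nu^{-1}(U)$, since regular functions pull back to regular functions. It has no poles along any prime divisor of $\nu^{-1}(U)\setminus X^n$, because each such divisor maps onto a boundary component $E$, and the order of $\nu^*f$ along it is a positive multiple of $\ord_E(f)$. By the normal case, $\nu^*f$ is regular on $\nu^{-1}(U)$. Since $\nu$ is finite, $\nu^*f$ is integral over $A$. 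Now $f\in B$ and $B$ is reduced, so $f$ itself satisfies the same monic equation over $A$. As $A$ is integrally closed in $B$, we conclude $f\in A$.

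The main obstacle is this non-normal case, where one must correctly interpret the DVRs at the generic points of $\partial\bar X$. Integral closedness in $X$ guarantees that these local rings are DVRs, as noted in the paper. The task is to match the valuations $\ord_E$ on $\bar X$ with those upstairs on $\bar X^n$, and to check that the boundary of $\bar X^n$ really is $\nu^{-1}(\partial\bar X)$; this holds because $\nu$ restricts to the normalization of $X$. This is the same argument as in \cite[Lemma~1.23]{nakstab1}.
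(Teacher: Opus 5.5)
Your argument is correct and is essentially the standard proof that the paper invokes by citing \cite[Lemma~1.23]{nakstab1}. You pass to the normalization and apply Hartogs there; the order along each boundary prime of $\bar X^n$ is controlled by $\ord_E$ because $\cO_{\bar X,\xi_E}$ is already a DVR, so $\nu$ is even an isomorphism over $\xi_E$. You then conclude from $A$ being integrally closed in $B$, and the appeal to $B$ being reduced is superfluous, since the monic equation for $\nu^*f=f$ already holds in $k(X)\supset B$.
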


\subsection{Weil divisors at infinity}\label{sec:divinfty}
For any compactification $\bar X$ of $X$, we write $\WDiv_\infty(\bar X)$ for the group of \emph{Weil divisors at infinity on $\bar X$}. More precisely, it is the free group generated by the set $\PDiv_\infty(\bar X)$ of \emph{prime divisors at infinity}, that is, irreducible components of $\partial\bar X$. In other words, $\WDiv_\infty(\bar X)$ is the set of $\Z$-valued functions on $\PDiv_\infty(\bar X)$.
Any morphism $\pi\colon\bar X'\to \bar X$ of compactifications induces a group homomorphism $\pi_*\colon\WDiv_\infty(\bar X')\to \WDiv_\infty(\bar X)$. When $\bar X$ is integrally closed, we also have an injective map $\PDiv_\infty(\bar X)\hto\PDiv_\infty(\bar X')$, and $\pi_*$ corresponds to restriction of functions.

The group of \emph{Weil b-divisors at infinity} is defined as the projective limit
\[
\WbDiv_\infty(X):=\varprojlim_{\bar X} \WDiv_\infty(\bar X).
\]
Concretely, an element $B\in\WbDiv_\infty(X)$ is a family $(B_{\bar X})_{\bar X}$ of Weil divisors at infinity compatible under pushforwards; we sometimes call $B_{\bar X}$ the \emph{incarnation} of $B$ on $\bar{X}$. It is equivalent to let $\bar{X}$ range over integrally closed compactifications, these being cofinal.

If $\bar X'\to \bar X$ is a morphism of integrally closed compactifications, strict transforms define an injective map $\PDiv_\infty(\bar X)\to \PDiv_\infty(\bar X')$.
The set of \emph{prime b-divisors at infinity} is defined as 
\[
\PbDiv_\infty(X):=\varinjlim_{\bar X} \PDiv_\infty(\bar X),
\]
where $\bar{X}$ ranges over integrally closed compactifications. It follows that $\WbDiv_\infty(X)$ is the set of $\Z$-valued functions on $\PbDiv_\infty(X)$.

Replacing $\Z$ by $\Q$ or $\R$, we obtain a similar description of the vector spaces $\WbDiv_\infty(X)_\Q\=\varprojlim_{\bar X} \WDiv_\infty(\bar X)\otimes_\Z\Q$ and $\WbDiv_\infty(X)_\R\=\varprojlim_{\bar X} \WDiv_\infty(\bar X)\otimes_\Z\R$; the latter space is equipped with the inverse-limit topology, equivalently the topology of pointwise convergence on prime b-divisors at infinity.

For any integrally closed compactification $\bar{X}$ of $X$, we have an injective cycle map $\CDiv_\infty(\bar{X})\to\WDiv_\infty(\bar{X})$, and this gives rise to an injective group homomorphism 
\[
\CbDiv_\infty(X)\hto\WbDiv_\infty(X),
\]
and similarly after tensoring by $\Q$ or $\R$.

A Cartier divisor $B\in\CbDiv_\infty(X)$ is effective (resp.\ strictly effective) iff the corresponding function $\p_B\colon\PbDiv_\infty(X)\to\Z$ is nonnegative (resp.\ strictly positive).

\subsection{General Weil b-divisors}\label{sec:Weilb}
Ignoring the role of ``infinity", we also have the more general notion of a Weil b-divisor in the sense of~\cite{bdff}. Namely, for any projective variety $Y$, let $\WDiv(Y)$ be the group of Weil divisors on $Y$, and set 
\[
\WbDiv(X):=\varprojlim_Y\WDiv(Y),
\]
where $Y$ ranges over projective varieties together with a birational map $Y\dashrightarrow X$. Here it suffices to let $Y$ range over projective models of $X$.
We may even assume $Y$ smooth. Let $\PDiv(Y)$ denote the set of prime divisors $E\subset Y$. If $Y,Y'$ are projective models of $X$ such that the birational map $Y'\dashrightarrow Y$ is a morphism, and $Y$ is normal, then we have an injective map $\PDiv(Y)\hto\PDiv(Y')$ given by strict transform. We can therefore form the limit
\[
\PbDiv(X):=\varinjlim_Y\PDiv(Y),
\]
where $Y$ ranges over normal, or even smooth, projective models of $X$. Thus we can identify $\WbDiv(X)$ with the group of functions $\p\colon\PbDiv(X)\to\Z$ \emph{of divisorial type}, that is for each projective model $Y$ we have $\p(E)=0$ for all but finitely many prime divisors $E\subset Y$.

Similarly, $\WbDiv(X)_\Q\=\varprojlim_Y\WDiv(Y)\otimes_\Z\Q$ (resp.\ $\WbDiv(X)_\R\=\varprojlim_Y\WDiv(Y)\otimes_\Z\R$) can be viewed as the space of $\Q$-valued (resp.\ $\R$-valued) functions on $\PbDiv(X)$ of divisorial type.

If $Y$ is a normal projective model of $X$, dominating some integrally closed compactification $\bar X$, then $\PDiv_\infty(\bar X)$ injects into $\PDiv(Y)$. This gives rise to an injection 
\[
\PbDiv_\infty(X)\hto\PbDiv(X),
\]
which induces both a restriction map $\WbDiv(X)\to\WbDiv_\infty(X)$ and an injective map
\[
\WbDiv_\infty(X)\hto\WbDiv(X),
\]
the latter given by extending a function $\p$ on $\PbDiv_\infty(X)$ to $\PbDiv(X)$ by setting it equal to zero on $\PbDiv(X)\setminus\PbDiv_\infty(X)$; such an extension is automatically of divisorial type.

Summarizing the above discussion up, we have the following refinement of~\cite[Lemma~1.1]{bdff}.
\begin{lem}\label{lem:divfcn}
    The map $B\mapsto \psi_B$ defines a vector space isomorphism between the space of $\R$-Weil b-divisors and the space of functions on $\PbDiv(X)$ of divisorial type. Further, it restricts to an isomorphism between the space of $\R$-Weil b-divisors at infinity and the space of functions on $\PbDiv_\infty(X)$, and an $\R$-Cartier divisor at infinity is strictly effective iff $\psi>0$ on $\PbDiv_\infty(X)$.
\end{lem}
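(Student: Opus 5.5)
The plan is to assemble the lemma from the inverse/direct limit descriptions in \S\ref{sec:divinfty} and \S\ref{sec:Weilb}, working one model at a time and then passing to the limit.

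\emph{First assertion.} Fix a normal (or smooth) projective model $Y$. By definition $\WDiv(Y)\otimes\R$ is the space of finitely supported $\R$-valued functions on $\PDiv(Y)$. A Weil b-divisor $B=(B_Y)_Y$ therefore gives, for each $Y$, a function on $\PDiv(Y)$. Compatibility under pushforward means that if $Y'\to Y$ is a morphism of normal models, then $B_Y$ is obtained from $B_{Y'}$ by restricting to $\PDiv(Y)\subset\PDiv(Y')$ (the inclusion is via strict transform); the exceptional primes of $Y'$ are exactly the ones killed by $\pi_*$. So the functions on the various $\PDiv(Y)$ agree on overlaps, and since normal models are cofinal they glue to a function $\psi_B$ on $\PbDiv(X)=\varinjlim_Y\PDiv(Y)$. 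Its restriction to each $\PDiv(Y)$ is finitely supported, so $\psi_B$ is of divisorial type.

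Conversely, a function of divisorial type restricts on each $\PDiv(Y)$ to an element of $\WDiv(Y)_\R$. These restrictions are pushforward-compatible by the same restriction description, and non-normal models are handled by passing to the normalization, whose pushforward is a bijection on prime divisors. Linearity and mutual inverseness are immediate.

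\emph{Second assertion.} Run the same argument over integrally closed compactifications $\bar X$, which are cofinal, using $\WDiv_\infty(\bar X)=$ functions on $\PDiv_\infty(\bar X)$. No finiteness condition is needed because each $\PDiv_\infty(\bar X)$ is finite. To see that this is a restriction of the first isomorphism, use the injection $\PbDiv_\infty(X)\hto\PbDiv(X)$ and the extension-by-zero map $\WbDiv_\infty(X)\hto\WbDiv(X)$ from \S\ref{sec:Weilb}. The point to check is that for a normal model $Y$ dominating $\bar X$, the incarnation of the extended b-divisor on $Y$ is supported on prime divisors lying over $\partial\bar X$. These are exactly the elements of $\PbDiv_\infty(X)$ appearing on $Y$, so the two descriptions are compatible and the image is precisely the set of functions vanishing off $\PbDiv_\infty(X)$.

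\emph{Third assertion.} This is the $\R$-version of the remark closing \S\ref{sec:divinfty}. For $D$ determined on an integrally closed $\bar X$, strict effectivity on $\bar X$ means $\ord_E D>0$ for each $E\in\PDiv_\infty(\bar X)$. Pulling back to any $\bar X'\to\bar X$, every prime at infinity on $\bar X'$ maps into $\partial\bar X=\supp D$, so the coefficient of $\pi^*D$ along it is positive whenever $D$ is effective with full support. Hence $\psi_D>0$ on all of $\PbDiv_\infty(X)$; the converse is trivial by evaluating on $\PDiv_\infty(\bar X)$.

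The main obstacle is the $\R$-coefficient case, where $D=\sum a_iD_i$ with $D_i$ Cartier but not individually effective, so pullback positivity needs care. I expect to handle it with a local-equation argument: at the center on $\bar X$ of a prime $E'$ on $\bar X'$ lying over $\partial\bar X$, the function $\psi_D(E')=\ord_{E'}(D)$ is a combination $\sum a_i\ord_{E'}(h_i)$ of local equations, and positivity should follow from $D$ being effective with support equal to $\partial\bar X$ near that center. If this fails in general, I would instead define strict effectivity for $\R$-Cartier divisors directly by positivity on every model and reduce to the $\Q$-case by perturbation.
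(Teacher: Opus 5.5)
Your treatment of the first two assertions is the paper's route. The paper gives no separate proof and states the lemma as a summary of \S\ref{sec:divinfty} and \S\ref{sec:Weilb}. There, pushforward between normal models is identified with restriction of functions on prime divisors, and $\WbDiv_\infty(X)$ sits inside $\WbDiv(X)$ by extension by zero. One side remark of yours is wrong but harmless: the normalization of a non-normal model need not be a bijection on prime divisors. Along a non-normal codimension-one locus it can map two primes onto one, or one prime onto another with degree $>1$. You do not need it, since the inverse limit can be computed over the cofinal system of normal, or even smooth, models.

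The only substantive point is the one you flag: the $\R$-Cartier case of the third assertion. You define strict effectivity by positivity of $\ord_E D$ for $E\in\PDiv_\infty(\bar X)$. With that definition, your local-equation sketch does not settle it. Positivity along the boundary primes of $\bar X$ through the center says nothing directly about $\sum_ia_i\ord_{E'}(h_i)$ for a prime $E'$ on a higher model when the $a_i$ are real, and that is exactly what must be shown.

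Your perturbation fallback is the right argument, and it should become the proof.
\begin{itemize}
\item Write $D=\sum_{i=1}^ra_iD_i$ with $D_i\in\CDiv_\infty(\bar X)$.
\item The set of $b\in\R^r$ with $\sum_ib_i\ord_E(D_i)>0$ for all $E\in\PDiv_\infty(\bar X)$ is an open convex cone, cut out by finitely many inequalities with integer coefficients, and it contains $a$.
\item Hence $a$ is a convex combination of rational points $b^{(j)}$ of this cone.
\item Each $q_j=\sum_ib^{(j)}_iD_i$ is a $\Q$-Cartier divisor with positive coefficient on every boundary prime. It is therefore strictly effective by Lemma~\ref{lem:critnopole} after clearing denominators.
\item Your integral argument then gives $\ord_{E'}(\pi^*q_j)>0$ for every prime $E'$ at infinity on any $\bar X'\to\bar X$.
\item Positivity passes to the convex combination $D$.
\end{itemize}

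If you instead define effectivity of $\R$-Cartier divisors through nonnegative combinations of effective Cartier divisors, your local-equation argument goes through. In that case the converse direction needs this same rational approximation. What you should not do is redefine strict effectivity as positivity on every model, since that would make the third assertion a tautology.
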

\subsection{Numerical classes of divisors}\label{sec:numcld}
As in~\cite{bdff}, we define the space of \emph{Cartier b-divisor classes} of $X$ by
\[
\CNum(X)\=\varinjlim_Y \Num(Y),
\]
where $Y$ runs through all projective models of $X$ and the maps are given by pullbacks, and $\Num(Y)$ denotes the vector space of numerical classes of $\R$-Cartier divisors on $Y$. The space $\CNum(X)$ is endowed with the inductive-limit topology.
We also have the space of \emph{Weil b-divisor classes}, given by
\[
\WNum_{n-1}(X)\=\varprojlim_Y\WNum_{n-1}(Y),
\]
where $Y$ runs through all \emph{smooth} projective models of $X$ and the maps are given by pushforwards. We equip $\WNum_{n-1}(X)$  with the natural inverse limit topology.

There are natural maps $\CbDiv_\infty(X)_\R\to \CNum(X)$ and $\WbDiv_\infty(X)_\R\to \WNum_{n-1}(X)$. Indeed, if $D\in \CDiv_\infty(\bar X)$ for some compactification $\bar X$, then $D$  defines a Cartier b-divisor via pulling back to all higher projective models of $X$. Taking its numerical classes on each model gives a Cartier b-divisor class. Similarly, given a Weil b-divisor at infinity, the associated numerical class is given by its numerical class on each smooth model. 
\begin{lem}\label{lem:nepc}
    Suppose $(\bar X_1,D_1)$, $(\bar X_2,D_2)$ are two integrally closed polarized compactifications such that the images of $D_1$ and $D_2$ in $\CNum(X)$ coincide. Then $\bar X_1=\bar X_2=:\bar X$, and $D_2-D_1\in\CDiv_\infty(\bar{X})_\Q$ is numerically trivial.
\end{lem}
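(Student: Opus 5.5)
Pass to a common model, use numerical equivalence on the boundary to show that $D_1$ and $D_2$ have the same pullback up to a numerically trivial divisor, and then recover $\bar X_1$ from the section ring of $D_1$ as in the proof of Lemma~\ref{lem:sabig}. After replacing $D_1,D_2$ by a common multiple, we may assume both are Cartier.

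\textbf{Step 1: common model.} Choose an integrally closed (normal) compactification $\bar X_3$ dominating both, with morphisms $h_i\colon\bar X_3\to\bar X_i$. We may take $\bar X_3$ smooth, or at least a projective model. Set $D_i'=h_i^*D_i$. By hypothesis, $D_1'\equiv D_2'$ numerically on $\bar X_3$ (after passing to a further model if necessary). Put $E:=D_2'-D_1'$, a $\Q$-Cartier divisor supported on $\partial\bar X_3$ and numerically trivial.

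\textbf{Step 2: $E$ is $h_i$-trivial on exceptional components.} Consider $(h_1)_*E$ on $\bar X_1$. The aim is to show that $E=h_1^*((h_1)_*E)$ as Cartier divisors, i.e.\ that $E$ is pulled back from $\bar X_1$. Since $E$ is numerically trivial, it is in particular $h_1$-numerically trivial. By the negativity lemma, an $h_1$-nef divisor $F$ with $-(h_1)_*F$ effective satisfies $-F$ effective. Applied to $E-h_1^*(h_1)_*E$ and to its negative, this gives $E-h_1^*(h_1)_*E=0$.

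The difficulty is that $(h_1)_*E$ need not be $\Q$-Cartier on the normal variety $\bar X_1$. Instead I would argue as follows. $E$ is $h_1$-numerically trivial, so $\pm E$ are both $h_1$-nef. Now $h_1^*D_1+tE = (1-t)D_1'+tD_2'$ is $\Q$-Cartier and nef for $t\in[0,1]$. To show that $E$ is pulled back from $\bar X_1$, it suffices that $D_2'$ is $h_1$-trivial in the sense that $D_2'$ is numerically trivial on every curve contracted by $h_1$.

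\textbf{Step 3: contracted curves coincide.} This step is the key one. A curve $C\subset\bar X_3$ is contracted by $h_i$ iff $D_i'\cdot C=0$, since $D_i$ is ample on $\bar X_i$. Numerical equivalence of $D_1'$ and $D_2'$ gives $D_1'\cdot C=D_2'\cdot C$ for all $C$, so $h_1$ and $h_2$ contract exactly the same curves. It follows that the Stein factorizations coincide, i.e.\ $\bar X_1\cong\bar X_2$ compatibly with $h_i$. More concretely, $\bar X_i$ is the normal variety obtained from $\Proj R(\bar X_3,mD_i')$ by the argument of Lemma~\ref{lem:sabig}, with Lemma~\ref{lem:Zmain} giving $h_{i*}\cO=\cO$. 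A proper surjective morphism with connected fibers to a normal variety is determined by the set of contracted curves, via the rigidity lemma. Since both maps are the identity on $X$, $\bar X_1=\bar X_2$ as compactifications.

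\textbf{Step 4: conclusion.} With $\bar X:=\bar X_1=\bar X_2$ and $h_1=h_2=h$, we have $D_2-D_1\in\CDiv_\infty(\bar X)_\Q$ with $h^*(D_2-D_1)$ numerically trivial. Hence, by the projection formula together with surjectivity of $h$ (every curve on $\bar X$ is the image of a curve on $\bar X_3$), $D_2-D_1$ is numerically trivial on $\bar X$. The main obstacle is Step 3, namely invoking rigidity carefully with only normality, not smoothness, of the $\bar X_i$. Steps 1, 2 and 4 are formal, and the argument in Step 2 is in fact superseded by Step 3.
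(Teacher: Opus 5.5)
Your argument is correct. Step~2 contributes nothing and can be deleted. The core (same contracted curves, via numerical equivalence and ampleness, followed by Lemma~\ref{lem:Zmain}) is shared with the paper, but the way you conclude is different.

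\textbf{Comparison with the paper.} The paper takes $\bar X_3$ to be the closure of the diagonal image of $X$ in $\bar X_1\times\bar X_2$. On that model, your observation that $p_1$ and $p_2$ contract the same curves immediately gives that they contract \emph{no} curves, since a curve contracted by both projections would be a point of the product. So each $p_i$ is proper and quasi-finite, hence finite, and Lemma~\ref{lem:Zmain} upgrades finite to isomorphism.

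You instead work on an arbitrary dominating model and invoke the rigidity lemma. Unpacked, that lemma first uses Zariski's connectedness theorem to see that $h_2$ is constant on the connected fibres of $h_1$. It then passes to the image $\Gamma$ of $(h_1,h_2)$ in $\bar X_1\times\bar X_2$ and shows that $\Gamma\to\bar X_i$ is finite and bijective with $\cO_{\bar X_i}\to(\Gamma\to\bar X_i)_*\cO_\Gamma$ an isomorphism. That is exactly the paper's argument. So the paper's choice of model short-circuits the connectedness and rigidity input. Your route is the general principle that a contraction $h$ with $h_*\cO=\cO$ is determined by its contracted curves.

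\textbf{Caveat: the choice of $\bar X_3$.} Do not take $\bar X_3$ smooth, and do not replace it by a further projective model. The rigidity step needs $h_{i*}\cO_{\bar X_3}=\cO_{\bar X_i}$, not normality of $\bar X_i$, and Lemma~\ref{lem:Zmain} supplies this only when $h_i$ is a morphism of compactifications.

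The paper does not assume $X$ normal, and ``integrally closed'' is weaker than ``normal''. Suppose $X$ is not normal and $Y\to\bar X_i$ is a normal projective model. Then $Y$ is not a compactification of $X$, and $h_{i*}\cO_Y$ is the normalization of $\cO_{\bar X_i}$. Fibres over non-normal points of $X$ can even be disconnected. Moreover, ``connected fibres onto a normal target'' is the wrong hypothesis when $\bar X_i$ is not normal: think of the normalization of a cuspidal curve, which is bijective but not an isomorphism.

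So keep $\bar X_3$ an integrally closed compactification dominating both. Verify $h_1^*D_1\equiv h_2^*D_2$ on a smooth model $Y\to\bar X_3$, as the hypothesis provides. Then descend to $\bar X_3$ using that every curve on $\bar X_3$ is the image of a curve on $Y$, exactly as you already do in Step~4.
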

\begin{proof}
Let $\bar X_3$ be the closure of image of $X$ in the 
$\bar X_1\times\bar X_2$ under the diagonal map, and denote the projections by $p_i\colon\bar X_3\to\bar X_i$. By assumption,
$p_1^*D_1\equiv p_2^*D_2$, as can be checked after pullback
to a smooth projective model dominating $\bar X_3$.

If a curve $C\subset\bar X_3$ is contracted by $p_1$, then
$(p_2^*D_2\cdot C)=0$. Since $D_2$ is ample, $p_2$ also
contracts $C$, contradicting the inclusion
$\bar X_3\subset\bar X_1\times\bar X_2$. Thus $p_1$ has
no positive-dimensional fibers and is finite, and similarly
for $p_2$. Lemma~\ref{lem:Zmain} now implies that both $p_i$
are isomorphisms. Under the resulting identification
$\bar X_1=\bar X_2\eqqcolon\bar X$, we have $D_1\equiv D_2$,
so $D_2-D_1\in\CDiv_\infty(\bar X)_\Q$ is numerically trivial.
\end{proof}

The map  $\WbDiv(X)\to\WNum_{n-1}(X)$ is not continuous in general, see~\cite[Example~1.10]{bdff}. However, as noted in~\cite[Lemma~1.12]{bdff} and~\cite[Lemma~1.8]{BJGreen}, we have

\begin{lem}\label{lem:numclasscts}
    The map $\WbDiv_\infty(X)_\R\to\WNum_{n-1}(X)$ is continuous.
\end{lem}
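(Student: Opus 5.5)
The key point is that $\WbDiv_\infty(X)_\R$, with the topology of pointwise convergence on $\PbDiv_\infty(X)$, is the product space $\R^{\PbDiv_\infty(X)}$. The target $\WNum_{n-1}(X)$ carries the inverse limit topology. By the universal property of inverse limits, it therefore suffices to show that for each smooth projective model $Y$ of $X$, the composite map
\[
\WbDiv_\infty(X)_\R\to\WNum_{n-1}(X)\to\WNum_{n-1}(Y)=\Num(Y),
\]
$B\mapsto[B_Y]$, is continuous. (On a smooth $Y$, Weil and Cartier classes agree, and we use the finite-dimensional Euclidean topology.)

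Fix such a $Y$. We may assume $Y$ dominates some integrally closed compactification $\bar X$; otherwise replace $Y$ by a higher smooth model $Y'$. This is harmless because the map to $\Num(Y)$ factors through $\Num(Y')$ via pushforward, which is linear on finite-dimensional spaces and hence continuous. Now $Y\setminus X$ is a closed subset, since $X\subset\bar X$ is open and $Y\to\bar X$ is an isomorphism over $X$ after shrinking appropriately. More precisely, the preimage of $\partial\bar X$ in $Y$ is a divisor whose finitely many prime components $E_1,\dots,E_r$ all define elements of $\PbDiv_\infty(X)$. The incarnation of $B\in\WbDiv_\infty(X)_\R$ on $Y$ is then
\[
B_Y=\sum_{i=1}^r\p_B(E_i)E_i,
\]
since $B$, viewed in $\WbDiv(X)$ via extension by zero, vanishes on all prime divisors of $Y$ meeting $X$. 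This is the point where the restriction ``at infinity'' matters, in contrast with \cite[Example~1.10]{bdff}: there, infinitely many prime divisors on $Y$ can carry nonzero coefficients along a net. Here only the finitely many fixed components $E_i$ can appear.

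Consequently, $B\mapsto[B_Y]=\sum_i\p_B(E_i)[E_i]$ is the composition of the projection $\R^{\PbDiv_\infty(X)}\to\R^r$ onto finitely many coordinates, which is continuous for the product topology, with a linear map $\R^r\to\Num(Y)$, which is continuous. This proves continuity into each $\Num(Y)$, and hence into $\WNum_{n-1}(X)$.

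The only step requiring care is verifying that $B_Y$ is supported on the components of the boundary $Y\setminus X$ for $Y$ dominating a compactification. This is immediate from the construction of the injection $\WbDiv_\infty(X)\hto\WbDiv(X)$ by extension by zero, so I do not anticipate a genuine obstacle.
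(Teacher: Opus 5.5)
Your proposal is correct and is essentially the paper's argument: you reduce via the inverse-limit topology to a single smooth model $\pi\colon Y\to\bar X$, observe that the incarnation of $B$ on $Y$ is $\sum_j \psi_B(E_j)E_j$ over the finitely many prime divisors $E_j\subset\pi^{-1}(\partial\bar X)$, and conclude because each coefficient is a coordinate projection followed by a linear map. One small correction is that a smooth model $Y\to\bar X$ need not be an isomorphism over $X$ (for example, a blowup of a point of $X$), but your argument never uses this; it only needs that prime divisors of $Y$ not contained in $\pi^{-1}(\partial\bar X)$ have center in $X$ and hence have coefficient zero.
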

\begin{proof}
    It suffices to prove that for every compactification $\bar{X}$ of $X$, and any smooth projective model $\pi\colon Y\to\bar X$, the composition 
    \[    \WbDiv_\infty(X)_\R\to\WbDiv(X)_\R\to\WDiv(Y)_\R\to\WNum_{n-1}(Y)
    \] 
    is continuous. Let $E_1,\dots,E_N$ be the prime divisors on $Y$ contained in $\pi^{-1}(\partial\bar X)$. The image of $B\in\WbDiv_\infty(X)_\R$ in $\WNum_{n-1}(Y)$ is of the form $\sum_{j=1}^Nb_j(B)[E_j]$, where  the coefficients $b_j(B)$ depend continuously on $B$. The result follows.
\end{proof}
As in~\cite[Lemma~1.9]{bdff}, there is a natural continuous injection $\CNum(X)\to \WNum_{n-1}(X)$ with dense image. For a divisor supported at infinity, its numerical class is determined by testing on curves at infinity. 
\begin{lem}\label{lem:numtrivinf}
    Assume $n\ge 2$, and let $D\in\CDiv_\infty(\bar X)$ for some compactification $\bar X$ of $X$. Then $D$ is numerically trivial iff $D\cdot C=0$ for all curves $C\subset \bar X\setminus X$.
\end{lem}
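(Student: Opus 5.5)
The forward implication is immediate, so the work is in the converse. Assume $D\cdot C=0$ for every curve $C\subset\partial\bar X$; we must show $D\cdot C=0$ for every curve $C\subset\bar X$ meeting $X$. The idea is to move such a curve numerically into the boundary. Since $n\ge2$ and $X$ is affine, the closure of any affine curve in $X$ meets $\partial\bar X$ only in finitely many points. This suggests writing a suitable multiple of $C$, modulo numerical equivalence, as a combination of curves lying in $\partial\bar X$, using the ample hypersurface structure at infinity.

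First, by Lemma~\ref{lem:hasample} I would pass to a polarizable compactification $\pi\colon\bar X'\to\bar X$ dominating $\bar X$. For a curve $C'\subset\bar X'$ the projection formula gives $\pi^*D\cdot C'=D\cdot\pi_*C'$. A curve in $\partial\bar X'$ maps either to a point or to a curve in $\partial\bar X$, so the hypothesis transfers to $\pi^*D$ on $\bar X'$. Conversely, every curve of $\bar X$ is the pushforward of a multiple of a curve on $\bar X'$, so numerical triviality of $\pi^*D$ implies that of $D$. We may therefore assume $\bar X$ carries an ample, strictly effective Cartier divisor $H$ with $\supp H=\partial\bar X$.

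Next, I would handle a curve $C\not\subset\partial\bar X$. Since $H$ is ample, after replacing $H$ by a multiple we may assume that $\cO(mH)$ is very ample for all relevant $m$, so that we can deform. The simplest route avoids deformation and is a cycle-theoretic argument. Consider $\cO(D)$ restricted to $C$. The rational section $s_D$ of $\cO(D)$ is regular and nonvanishing on $X$, because $D$ is supported at infinity. Hence $\deg(D|_C)$ equals the degree of the divisor of $s_D|_C$, which is supported on $C\cap\partial\bar X$; the data are local near the finitely many boundary points of $C$. This does not yet give zero.

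Instead I would use a projecting family. Choose a general linear system and a surface $S$ through $C$: take $S$ to be the intersection of $\bar X$ with $n-2$ general members of $|mH|$ containing $C$ (possible for $m\gg0$ by Serre vanishing, or use a hypersurface of $|mH|$ not containing $C$). On the surface $S$, which is projective, let $\Gamma=S\cap\partial\bar X$, which is the support of $H|_S$, an ample curve. Since $S\cap X$ is an affine surface, $\Gamma$ is connected and supports an ample divisor. The key fact needed is then: on a normal projective surface, a Cartier divisor $D|_S$ supported on the support of an ample divisor $H|_S$ that is numerically trivial on each component of $\Gamma$ is numerically trivial on $S$. This follows from the Hodge index theorem. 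The intersection form restricted to the span of the components of $\Gamma$ has signature $(1,r-1)$ and contains the class of $H|_S$ with positive square. Now $D|_S$ is orthogonal to every component, hence orthogonal to $H|_S$ and to itself, so $(D|_S)^2=0$, and Hodge index forces $D|_S\equiv0$ on $S$. (One should work on a resolution of $S$ and pull back, noting that $D$ and $H$ are Cartier.) Then $D\cdot C=D|_S\cdot C=0$.

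The main obstacle is the construction of $S$ containing an arbitrary curve $C$, which must be irreducible, or at least such that the argument works component-wise. We also need, for each component $S_i$ containing $C$, that $\Gamma_i=S_i\cap\partial\bar X$ supports an ample divisor. That holds because it is the restriction of $H$. I would take $S$ to be a complete intersection of $n-2$ general members of $|\mathcal I_C(mH)|$ for $m\gg0$; if it is reducible, I would apply Hodge index on the component containing $C$. Since $n\ge2$, this is empty when $n=2$, where we simply take $S=\bar X$.
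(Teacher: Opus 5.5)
Your proof is correct, but for $n\ge3$ it takes a different route from the paper's. Both proofs begin with the same reduction to a polarizable $\bar X$ carrying an $H$ that is strictly effective and (very) ample at infinity.

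The paper then works globally on $\bar X$. It chooses very ample $L_1,\dots,L_N$ such that $H,L_1,\dots,L_N$ generate $\NS(\bar X)_\Q$. It observes that $H^{n-1}$ and $H_i\cdot H^{n-2}$ (for general $H_i\in|L_i|$) are $1$-cycles supported on $\partial\bar X$. By linearity this gives $(D\cdot H^{n-1})=(D^2\cdot H^{n-2})=0$, and it concludes with the higher-dimensional Hodge index theorem.

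You instead argue curve by curve. For a curve $C$ meeting $X$ you take an irreducible surface $S\supset C$. You note that $\cO(D)|_S$ and $\cO(H)|_S$ have rational sections whose divisors are supported on the boundary curves $\Gamma_i\subset\partial\bar X$. The hypothesis then gives $(D|_S)^2=\sum_ia_i(D\cdot\Gamma_i)=0$ and $(D|_S\cdot H|_S)=0$ directly. You then only need the Hodge index theorem on a resolution of $S$, where the pullback of $H|_S$ is nef with positive square, to get $D\cdot C=0$.

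What makes this work is that $S$ need not be a general surface section. Because $D$ lives at infinity, $(D|_S)^2=0$ holds on any surface through $C$. This lets you avoid both the choice of generators of $\NS(\bar X)_\Q$ and the Hodge index theorem for the form $(a,b)\mapsto(a\cdot b\cdot H^{n-2})$ on a possibly singular $n$-fold, which is less elementary than the surface case. The paper's argument is shorter once that theorem is available.

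Two minor remarks.
\begin{itemize}
\item Constructing $S$ is not really an obstacle. Iteratively take a component through $C$ of a hypersurface section that vanishes on $C$ but not on the current variety. No genericity, normality, or connectedness of $\Gamma$ is needed, and your aside about a member of $|mH|$ not containing $C$ is beside the point.
\item Your claim that the intersection form on the span of the components of $\Gamma$ has signature $(1,r-1)$ is false in general: it can be degenerate, as for two lines in $\P^2$. You never actually use it, though.
\end{itemize}
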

\begin{proof}
    We may assume $\bar X$ is polarizable. Let $H$ be a strictly effective very ample divisor at infinity on $\bar X$. If $n=2$, then $(D\cdot H)=0$ and $(D^2)=0$ by assumption. Thus $D\equiv 0$ by the Hodge index theorem. 
    When $n\ge 3$, pick very ample divisors $L_1, \dots, L_N$ such that $H, L_1, \dots, L_N$ generate $\NS(\bar X)_\Q$. Then, for general members $H_i\in |L_i|$, we have $(D\cdot H_i\cdot H^{n-2})=0$. By linearity this implies $(D^2\cdot H^{n-2})=0$. Since we also have $(D\cdot H^{n-1})=0$, we again have $D\equiv 0$ by the Hodge index theorem.
\end{proof}
We say that a Weil b-divisor $D\in\WbDiv_\infty(X)_\R$ at infinity is \emph{numerically trivial} if its image in $\WNum_{n-1}(X)$ vanishes.
\begin{lem}\label{lem:ntrivCDiv}
    If $B\in \WbDiv_\infty(X)_\R$ is numerically trivial, then $B$ is an $\R$-Cartier b-divisor in the sense that there exists a smooth projective model $Y$ of $X$ such that the trace $B_Y$ is numerically trivial, and such that $B_{Y'}$ is the pullback of $B_Y$ for any smooth projective model $Y'$ dominating $Y$. 
    If, in addition, $X$ is normal with isolated singularities, then $B\in \CbDiv_\infty(X)_\R$.
\end{lem}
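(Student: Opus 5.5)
We fix a polarizable integrally closed compactification $\bar X$ and a smooth projective model $\pi\colon Y\to\bar X$ on which $\pi^{-1}(\partial\bar X)$ is a simple normal crossings divisor with components $E_1,\dots,E_N$ (resolution of singularities). The incarnation $B_Y=\sum_j b_jE_j$ is numerically trivial on $Y$ by assumption. The plan is to show that this single model already determines $B$. We will use that any smooth model $Y'$ dominating $Y$ has incarnation $B_{Y'}$ numerically trivial, and compare it with $\rho^*B_Y$, where $\rho\colon Y'\to Y$.

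Set $F\=B_{Y'}-\rho^*B_Y$. Since $\rho_*B_{Y'}=B_Y$ by compatibility of incarnations, $F$ is a $\rho$-exceptional $\R$-divisor. It is also numerically trivial on $Y'$, being a difference of numerically trivial classes. The negativity lemma then gives $F=0$: a $\rho$-exceptional divisor that is $\rho$-numerically trivial (both $F$ and $-F$ are $\rho$-nef) is zero. This settles the first assertion. The only point needing care is that $B$ is only an $\R$-divisor supported on components at infinity, but the negativity lemma holds for $\R$-divisors, so no issue arises.

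For the second assertion, assume $X$ is normal with isolated singularities. We need a compactification on which $B$ is genuinely Cartier at infinity, not merely a numerically trivial class on $Y$. The idea is to choose the resolution so that $\pi$ is an isomorphism over $X_{\mathrm{reg}}$ and the boundary is handled by a log resolution that is an isomorphism away from $\partial\bar X\cup X_{\mathrm{sing}}$. The subtlety is that $\pi$ may blow up points of $\partial\bar X$ where $\bar X$ is singular. Since $B_Y$ is supported at infinity, we cannot hope for $B$ to descend to $\bar X$ itself. Instead, we take $Y$ itself as a compactification: the key is to arrange that $Y\to\bar X$ is an isomorphism over $X$.

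This is possible exactly because $X$ has isolated singularities. More precisely, we would first resolve away the singularities at infinity (a resolution of $\bar X$ that is an isomorphism over $X$ exists when singularities of $X$ are isolated? Not quite — the closure of $X_{\mathrm{sing}}$ meets nothing at infinity since it is finite). So Hironaka gives $\bar X'\to\bar X$, an isomorphism over $X$, with $\bar X'$ smooth near $\partial\bar X'$ and $\partial\bar X'$ snc. Then $B$'s incarnation on $\bar X'$ is a Weil divisor supported on $\partial\bar X'$, hence Cartier there because $\bar X'$ is smooth near the boundary. It remains to check that it pulls back to the incarnation on $Y$, a resolution of the finitely many isolated points of $\bar X'$. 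This pullback compatibility follows from the first part applied to $Y\to\bar X'$, since the exceptional divisors over interior points are not at infinity and carry coefficient zero on both sides. Hence $B\in\CbDiv_\infty(X)_\R$.

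The main obstacle is this last step: ensuring that a model exists which is an isomorphism over $X$ and smooth along the boundary. This is where the isolated-singularities hypothesis enters, and without it $B_{\bar X'}$ need not be Cartier along the boundary.
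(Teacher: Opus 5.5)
Your proposal is correct and follows the paper's proof. The first part is the same negativity-lemma argument applied to the $\rho$-exceptional, numerically trivial divisor $B_{Y'}-\rho^*B_Y$. The second part likewise uses a compactification $\bar X'$, isomorphic over $X$ and smooth near $\partial\bar X'$, on which $B_{\bar X'}$ is $\R$-Cartier.

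Drop the aside about taking $Y$ itself as a compactification isomorphic over $X$; this is impossible when $X$ is singular. Also, $B_Y=\pi^*B_{\bar X'}$ does not come from the first part, which is stated for smooth bases. It holds simply because $\pi$ is an isomorphism near the boundary. Higher smooth models are then handled by the first part applied to $Y$.
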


\begin{proof}
    Let $\pi\colon Y'\to Y$ be two smooth models of $X$. Then $B_{Y'}-\pi^*\pi_*B_{Y'}$ is both $\pi$-exceptional and numerically trivial. By the negativity lemma~\cite[Lemma~3.39]{KollarMori}, we have $B_{Y'}=\pi^*B_Y$ and hence $B$ is $\R$-Cartier. If $X$ is normal with isolated singularities, then we may pick $\bar X$ such that $\bar X$ is smooth at infinity and apply the same argument above.
\end{proof}

As in~\cite{bdff}, one can define nef Weil b-divisor classes as weak limits of nef Cartier b-divisor classes. It is equivalent to the following characterization by~\cite[Lemma~2.10]{bdff}(see also~\cite[Theorem~5.2]{DangFavre}).
\begin{defi}
    A Weil b-divisor class is \emph{nef} (resp. \emph{pseudoeffective}) if its incarnation on each smooth projective model is movable (resp. pseudoeffective). 
\end{defi}
We write 
\[
\BPF_{n-1}(X)\subset\WNum_{n-1}(X)
\]
for the set of nef Weil b-divisor classes. If $\alpha\in\WNum_{n-1}(X)$ is pseudoeffective, we will also write $\alpha\ge 0$.
By~\cite[Theorem~A]{DangFavre}, any $\alpha\in \BPF_{n-1}(X)$ can be written as a decreasing limit of nef Cartier b-divisor classes. Let us remark that the regularization is obtained by multiplier ideal approximations on  smooth models and hence works in our setting when $X$ is affine and singular, since it only depends on the Riemann--Zariski space of $X$.

Using this regularization, it was shown in \loccit \ that the intersection pairing extends to $\BPF_{n-1}(X)$, and is $\R_{>0}$-linear, increasing in each argument, upper semicontinuous, and continuous along decreasing limits. As a consequence, the classical inequalities deriving from the Hodge Index Theorem extend from nef Cartier classes to $\BPF_{n-1}(X)$. 
For example, the \emph{Khovanskii-Teissier inequality} states that $j\mapsto\log(\a^j, \b^{n-j})\in\R\cup\{-\infty\}$ is concave for any $\a,\b\in\BPF_{n-1}(X)$. In particular,
\begin{equation}\label{equ:KTclass2}
    (\b,\a^{n-1})\ge(\b^n)^{\frac1n}(\a^n)^{\frac{n-1}{n}}
\end{equation}
for $\a,\b\in\BPF_{n-1}(X)$.

We also have 
\begin{equation}\label{equ:KTclass1}
    (\alpha_1, \dots, \alpha_n)\ge (\alpha_1^p, \alpha_{p+1}, \dots, \alpha_n)^{1/p}\dots (\alpha_p^p, \alpha_{p+1}, \dots, \alpha_n)^{1/p},
\end{equation}
for any $\alpha_1, \dots, \alpha_n\in \BPF_{n-1}(X)$ and any $1\le p\le n$.

\subsection{Numerical classes of curves}\label{sec:numclc}
Assume now $n\ge 2$.
The space of \emph{Weil b-curve classes} is defined as
\[
\Curv(X)\=\varprojlim_Y\Curv(Y),
\]
where the limit is taken over all smooth projective models $Y$ of $X$. Via the intersection pairing, every Weil b-curve class in $\Curv(X)$ can be identified with a unique linear form on $\CNum(X)$. Similarly, the weak topology on $\Curv(X)$ is the natural inverse limit topology. More concretely, a Weil b-curve class consists of a family $(\mu_Y)_Y$ of curve classes on smooth models $Y$, compatible under pushforwards. A net of curve classes $\mu_j\to \mu$ weakly if $\mu_{j, Y}\to \mu_Y$ on each smooth model $Y$.
We also recall the following notions of positivity of numerical classes, see Definition~1.2 and Theorem~5.2 in~\cite{DangFavre}.

\begin{defi}
    A Weil b-curve class is \emph{BPF} (resp. \emph{pseudoeffective}) if its incarnation on each smooth projective model is movable (resp. \emph{pseudoeffective}). We write $\BPF_1(X)$ for the set of BPF curve classes.
\end{defi}
If $\mu,\nu$ are Weil b-curve classes, we write $\mu\le\nu$ iff $\nu-\mu$ is pseudoeffective.
We also recall the following characterization of BPF and pseudoeffective curve classes from~\cite[Proposition~5.6]{DangFavre}.
\begin{prop}\label{prop:bpfposlin}
    A curve class $\mu\in\WNum_1(X)$ is BPF (resp. \emph{pseudoeffective}) iff $(\alpha, \mu)\ge 0$ for any pseudoeffective (resp. \emph{nef}) Cartier b-divisor class $\alpha\in \CNum(X)$.
\end{prop}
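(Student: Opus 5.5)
The plan is to reduce both equivalences to classical cone dualities on a single smooth projective model, using that a Cartier b-divisor class is determined on some model and that pairings are compatible with pullback and pushforward.

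\textbf{Step 1: compatibility of the pairing.} If $\alpha\in\CNum(X)$ is determined on a smooth projective model $Y$, then for any smooth model $\pi\colon Y'\to Y$ the projection formula gives $(\pi^*\alpha_Y\cdot\mu_{Y'})=(\alpha_Y\cdot\pi_*\mu_{Y'})=(\alpha_Y\cdot\mu_Y)$. Hence $(\alpha,\mu)=(\alpha_Y\cdot\mu_Y)$ is computed on $Y$.

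\textbf{Step 2: positivity is detected on the determining model.} Next I would check that a Cartier class determined on $Y$ is pseudoeffective (resp.\ nef) as a b-class iff $\alpha_Y$ is pseudoeffective (resp.\ nef) on $Y$.
\begin{itemize}
\item Pullbacks of pseudoeffective (resp.\ nef) classes under birational morphisms remain pseudoeffective (resp.\ nef).
\item Pushforwards of pseudoeffective classes are pseudoeffective, which gives the converse in the pseudoeffective case.
\end{itemize}
Conversely, every pseudoeffective (resp.\ nef) class $\theta\in\Num(Y)$ on any smooth model $Y$ defines, via pullbacks, a pseudoeffective (resp.\ nef) Cartier b-divisor class with incarnation $\theta$ on $Y$.

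\textbf{Step 3: the BPF case.} By Step 2, the condition ``$(\alpha,\mu)\ge0$ for all pseudoeffective Cartier $\alpha$'' is equivalent to: for every smooth model $Y$, $(\theta\cdot\mu_Y)\ge0$ for all pseudoeffective $\theta\in\Num(Y)$. The equivalence therefore reduces, model by model, to the statement that the closed movable cone of curves on $Y$ is the dual of the pseudoeffective cone of divisors. This is the theorem of Boucksom--Demailly--P\u{a}un--Peternell. It is stated over $\mathbb C$, and I would transfer it to our algebraically closed field $k$ of characteristic zero by the Lefschetz principle: the model $Y$ and the finitely many relevant classes are defined over a finitely generated subfield embeddable in $\mathbb C$, and numerical groups and the relevant cones are invariant under such algebraically closed base field extensions. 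Alternatively one can cite the algebraic treatment in~\cite{DangFavre}. Since BPF means $\mu_Y$ is movable on every smooth $Y$, both directions follow.

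\textbf{Step 4: the pseudoeffective case.} The argument is identical, with BDPP replaced by the elementary duality between the pseudoeffective cone of curves (the closure of effective curves) and the nef cone of divisors, i.e.\ Kleiman's criterion, which holds over any algebraically closed field.

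\textbf{Main obstacle.} The main obstacle is the BDPP duality in Step 3 over an arbitrary algebraically closed field of characteristic zero. Specifically, one must verify that the base-change argument preserves the closed pseudoeffective and movable cones. Everything else is formal bookkeeping via Steps 1 and 2.
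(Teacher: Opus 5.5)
Your proposal is correct and follows the same route as the paper. Both compute $(\alpha,\mu)=(\alpha_Y\cdot\mu_Y)$ on a smooth model $Y$ where $\alpha$ is determined. Both then apply BDPP duality (movable curves vs.\ pseudoeffective divisors), or the nef/pseudoeffective-curve duality, on that model, and obtain the converses from the same dualities on every smooth model.

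The paper leaves implicit your Step 2 check that positivity of a Cartier b-class is detected on a determining model, and it simply cites BDPP over $k$. Your concern about the base field is legitimate, but it is settled more cleanly by the algebraic characteristic-zero proof of the BDPP duality via differentiability of volumes~\cite{BFJ09} than by a Lefschetz-principle transfer.
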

\begin{proof}
    If $\mu\in \WNum_1(X)$ is pseudoeffective, and $\alpha$ a nef Cartier b-divisor, then we can pick a smooth model $Y$ on which $\alpha=\alpha_Y$ is determined, $(\alpha, \mu)=(\alpha_Y, \mu_Y)\ge 0$ since $\mu_Y$ is pseudoeffective. Similarly if $\mu$ is BPF, and $\alpha$ is pseudoeffective, determined on a smooth model $Y$, then~\cite[Corollary~2.5]{BDPP} yields $(\alpha, \mu)=(\alpha_Y, \mu_Y)\ge 0$ since $\mu_Y$ is movable. The converse follows from the same duality on each smooth model.
\end{proof}
\begin{exam}\label{exam:Mink}
    As in~\cite{DangFavre}, any class $\a\in\BPF_{n-1}(X)$ defines a curve class $\mu:=\a^{n-1}\in\BPF_1(X)$.
    Indeed, if $\b\in\CNum(X)$, write $\b=\b_1-\b_2$ with $\b_i$ nef and Cartier, and define $(\b,\mu):=(\a^{n-1},\b_1)-(\a^{n-1},\b_2)$. By the properties of the intersection pairing, this is well defined, and $(\b,\mu)\ge0$ when $\b$ is psef; so $\mu\in\BPF_1(X)$.
\end{exam}
The discussion above holds for any quasi-projective variety. Let us now discuss the role of divisors at infinity, when $X$ is an affine variety.
In general, a curve class in $\WNum_1(X)$, viewed as a linear form on $\CNum(X)$, is not uniquely determined by its values on classes of Cartier divisors at infinity. For example, if $X=\A^2$, $\bar{X}=\P^2$, $H=\P^2\setminus\A^2$, and $\pi\colon Y\to\bar X$ is the blowup at a point in $\A^2$, with exceptional divisor $E$, then the curve classes defined by $2\pi^*H$ and $2\pi^*H-E$ are both BPF and define the same linear functional on $\CbDiv_\infty(X)$. 

However, the following lemma shows that certain curve classes \emph{are} uniquely defined by their values on Cartier divisors at infinity.
\begin{lem}\label{lem:curvmeascart}
    Assume $n\ge2$, and let $D,D'\in\CbDiv_\infty(X)_\R$ be big and nef such that $(D^{n-1}\cdot D'')=((D')^{n-1}\cdot D'')$ for all $D''\in\CbDiv_\infty(X)_\R$. Then $D$ and $D'$ have the same image in $\CNum(X)$.
\end{lem}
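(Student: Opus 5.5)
Everything takes place on one model. Choose an integrally closed compactification $\bar X$ on which $D$ and $D'$ are both determined, and let $\pi\colon Y\to\bar X$ be a resolution. The plan is to show that $E\=D-D'$ is numerically trivial on $Y$, or equivalently, by Lemma~\ref{lem:numtrivinf}, on $\bar X$ after pulling back. Intersection numbers of Cartier b-divisors determined on $Y$ can be computed on $Y$, so from now on $D,D',D''$ denote nef and big $\R$-Cartier divisors at infinity on $Y$.

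The first step is to pass from the hypothesis to mixed intersection numbers. I take $D''=D$ and $D''=D'$ in the hypothesis. This gives
\[
(D^n)=((D')^{n-1}\cdot D),\qquad (D^{n-1}\cdot D')=((D')^n).
\]
The Khovanskii--Teissier inequality~\eqref{equ:KTclass2} gives $((D')^{n-1}\cdot D)\ge (D^n)^{1/n}((D')^n)^{(n-1)/n}$. Combined with the first identity, this yields $(D^n)\ge((D')^n)$. The symmetric argument with the second identity gives the reverse inequality, so $(D^n)=((D')^n)$. All the Khovanskii--Teissier inequalities used are therefore equalities. In particular we get the equality case $(D^{n-1}\cdot D')=(D^n)^{(n-1)/n}((D')^n)^{1/n}$ with $(D^n)=((D')^n)>0$.

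The second step, which is the main obstacle, is to conclude $D\equiv D'$ from this equality case for big and nef classes. This is the equality case of the Khovanskii--Teissier (Minkowski) inequality for big and nef divisors, due to Boucksom--Favre--Jonsson and, in general, Lehmann--Xiao. It says that equality forces $D$ and $D'$ to be proportional in $\Num(Y)$, and equal volumes then force the constant to be $1$. If I want to avoid citing that result, I would instead argue directly with the hypothesis applied to $D''=E$ and to the general $D''$. Set $D_t\=D'+tE$. The function $f(t)\=(D_t^n)$ is a polynomial, and the hypothesis says $f'(0)=n((D')^{n-1}\cdot E)=n(D^{n-1}\cdot E)=f'(1)$. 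For the equality $f(0)=f(1)$ one uses that $f^{1/n}$ is concave on $[0,1]$, since all $D_t$ are nef and big. Then $f(0)=f(1)$ and $f'(0)=f'(1)$ force $f^{1/n}$ to be constant on $[0,1]$: its derivatives at the endpoints agree, so the (monotone) derivative is constant, hence $f^{1/n}$ is affine with equal endpoint values. Constancy of $f$ gives $(D_t^{n-1}\cdot E)=0$ for all $t\in[0,1]$, and then $(D_t^{n-2}\cdot E^2)=0$ by differentiating.

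The last step applies the Hodge index theorem, with $n\ge2$. Take $H\=D_{1/2}$, which is nef and big. We have $(H^{n-1}\cdot E)=0$ and $(H^{n-2}\cdot E^2)=0$. The Hodge index theorem for nef and big $H$, which holds as a limit of the ample case via the Teissier-type inequality used in Lemma~\ref{lem:numtrivinf}, then gives $E\equiv0$ on $Y$. For this I need $E$ to be an actual difference of divisors. If the nef-and-big version of the Hodge index theorem is not directly available, I would instead perturb $H$ to $H+\e A$ with $A$ ample, or fall back on the cited equality-case theorem. Either way $D$ and $D'$ have the same image in $\Num(Y)$, hence in $\CNum(X)$.
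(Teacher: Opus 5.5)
Your main line is the paper's proof. Testing the hypothesis on $D''=D$ and $D''=D'$ and using \eqref{equ:KTclass2} in both directions gives $(D^n)=((D')^n)$ and equality in Khovanskii--Teissier; the paper phrases this as $\lambda_0=\lambda_1=\lambda_{n-1}=\lambda_n$ for $\lambda_j=(D^{n-j}\cdot(D')^j)$. The equality case for nef and big classes from \cite{BFJ09} then gives proportionality, hence equality, in $\Num(Y)$; the paper cites the Diskant inequality, Theorems~D and~F there.

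The ``direct'' alternative, however, does not work and should be dropped.

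\emph{No new information.} The hypothesis is linear in $D''$, so taking $D''=E=D-D'$ adds nothing to the cases $D''=D,D'$. Your observation that $f(t)=(D_t^n)$ is constant only says that all the mixed numbers $(D^{n-j}\cdot(D')^j)$ agree, which is the equality case again.

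\emph{The final step fails for $n\ge3$.} It needs the \emph{equality} case of the Hodge index theorem for a merely nef and big $H$, which is false in that range. Only the semi-definiteness of $E\mapsto(H^{n-2}\cdot E^2)$ on $\{(H^{n-1}\cdot E)=0\}$ survives the limit from the ample case. For example, let $\pi\colon Y\to Z$ blow up a point on a smooth projective threefold, with exceptional divisor $F$, and let $H=\pi^*A$ with $A$ ample. Then $(H^2\cdot F)=(H\cdot F^2)=0$, yet $F\not\equiv0$.

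\emph{The fallback does not help.} Perturbing to $H+\varepsilon A'$ with $A'$ ample destroys both vanishings. In the limit, the ample Hodge index inequality then degenerates to $0\le0$.

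The nef-and-big equality case is exactly Teissier's problem, which \cite{BFJ09} settle via differentiability of the volume rather than by the Hodge index theorem. So the citation is essential, and with it your argument is complete.
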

\begin{proof}
    Set $\la_j=(D^{n-j}\cdot (D')^j)$ for $0\le j\le n$. The assumption yields $\la_0=\la_{n-1}$ and $\la_1=\la_n$. 
    Pick a compactification $\bar{X}$ of $X$ on which $D$ and $D'$ are determined, and a smooth model $Y\to\bar{X}$. The Khovanskii--Teissier inequalities on $Y$ give $\la_1^n\ge\la_0^{n-1}\la_n$ and $\la_{n-1}^n\ge\la_0\la_n^{n-1}$. Thus $\la_0\ge\la_n$ and $\la_n\ge\la_0$, so that $\la_0=\la_1=\la_{n-1}=\la_n$. It now follows from the Diskant inequality (see~\cite[Theorems~D and~F]{BFJ09}) that the images of $D$ and $D'$ in $\Num(Y)$ are proportional; hence equal, and the result follows.
\end{proof}
\subsection{Strong topology for divisor classes}\label{sec:strongPnum}
We declare a nef b-Weil divisor $\a\in\BPF_{n-1}(X)$ to be \emph{big} if 
\[
\vol(\a):=(\a^n)>0,
\]
and we write
\[
\BBPF_{n-1}(X)\subset\BPF_{n-1}(X)
\]
for the set of big and nef b-Weil divisor classes; this is a convex cone by concavity of $\vol^{1/n}$.

The following result, that we refer to as \emph{Siu's inequality} shows that any two big and nef b-Weil divisor classes are comparable.
\begin{lem}\label{lem:Siuclass}
    For $\alpha, \beta\in \BPF_{n-1}(X)$ with $(\beta^n)>0$, we have
    \[
    \alpha\le n\frac{(\alpha, \beta^{n-1})}{(\beta^n)}\beta.
    \]
\end{lem}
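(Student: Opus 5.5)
Our plan is to reduce to the case of nef Cartier b-divisor classes determined on a smooth projective model, where the inequality is the classical Siu inequality, and then pass to limits using the Dang--Favre regularization.

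\emph{Step 1: the Cartier case.} Suppose $\a,\b$ are nef Cartier b-divisor classes, both determined on a common smooth projective model $Y$, with $(\b^n)>0$, so that $\b_Y$ is nef and big. Siu's inequality on $Y$ states that
\[
\vol(\b-\gamma)\ge(\b^n)-n(\b^{n-1}\cdot\gamma)
\]
for nef classes $\b,\gamma$. Replacing $\b$ by $t\b$ shows that $t\b-\a$ is big on $Y$ whenever $t>n(\a\cdot\b^{n-1})/(\b^n)$. Letting $t$ decrease to $n(\a,\b^{n-1})/(\b^n)$ shows that this limiting multiple of $\b$, minus $\a$, is pseudoeffective on $Y$. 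The statement also has to hold on every higher smooth model $Y'\to Y$. This is automatic: the classes there are pullbacks, pullback preserves pseudoeffectivity, and pushforward to lower models preserves it as well. Hence the Weil b-divisor class $c\b-\a$, with $c=n(\a,\b^{n-1})/(\b^n)$, is pseudoeffective on every smooth model, which is exactly the relation $\a\le c\b$.

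\emph{Step 2: general nef classes.} By~\cite[Theorem~A]{DangFavre}, we can write $\a=\lim\a_j$ and $\b=\lim\b_j$ as decreasing limits of nef Cartier b-divisor classes. We may take both sequences to be indexed together; for instance, we can take the regularizations on a common sequence of models, or pass to a cofinal pairing. Since $\b_j\ge\b$, monotonicity gives $(\b_j^n)\ge(\b^n)>0$. Step~1 then yields
\[
\a\le\a_j\le n\frac{(\a_j,\b_j^{n-1})}{(\b_j^n)}\b_j.
\]
Intersection numbers are continuous along decreasing limits, so the coefficient converges to $c$. We then pass to the limit in $\WNum_{n-1}(X)$. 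The pseudoeffective cone is closed in the inverse limit topology, since on each model it is the closed psef cone, and $\b_j\to\b$ weakly. Therefore $c\b-\a=\lim(c_j\b_j-\a_j)+\lim(\a_j-\a)$ is pseudoeffective, where $c_j$ denotes the coefficient at stage $j$.

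\emph{Expected obstacle.} We expect Step~1 to be the delicate point. Siu's inequality is usually stated on a fixed smooth projective variety, so we must make sure that ``$\le$'' in $\WNum_{n-1}(X)$ is the correct notion. It requires pseudoeffectivity of every incarnation, not only of a Cartier representative. The pullback/pushforward argument above handles this. The limit argument in Step~2 is routine given the continuity properties recalled from~\cite{DangFavre}.
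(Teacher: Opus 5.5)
Your proof is correct and follows the paper's route. The paper likewise gets the Cartier case from Siu's inequality \cite[Theorem~2.2.15]{pos1} and the general case by decreasing approximation, citing \cite[Corollary~3.4]{DangFavre}; you spell out the details of that approximation: indexing both nets by the product directed set, continuity of intersection numbers along decreasing nets, and closedness of the pseudoeffective cone in $\WNum_{n-1}(X)$. One small simplification: the splitting $\lim(c_j\beta_j-\alpha_j)+\lim(\alpha_j-\alpha)$ is unnecessary. It suffices that each $c_j\beta_j-\alpha_j$ is pseudoeffective and converges weakly to $c\beta-\alpha$.
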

This follows from a classical inequality of Siu when $\a$ and $\b$ are Cartier classes, see~\cite[Theorem~2.2.15]{pos1}. In general, it follows by approximation, see~\cite[Corollary~3.4]{DangFavre}.

Lemma~\ref{lem:Siuclass} implies that if $\a\in\BPF_{n-1}(X)$ then $\a\in\BBPF_{n-1}(X)$ iff $\a$ is big in the sense of~\cite{DangFavre}, that is, $\a$ dominates some multiple of any fixed big and nef Cartier divisor class.

Now consider $\a,\b\in\BBPF_{n-1}(X)$. As in~\cite[Definition~3.7]{BFJ09}, we define the \emph{pseudoeffective slope} 
\begin{equation}\label{equ:psefslope1}
   s(\alpha, \beta)\=\sup\{t>0\mid \alpha\ge t\beta\}\in(0,\infty),
\end{equation}
which in turn induces a natural metric on $\BBPF_{n-1}(X)$, given by
\begin{equation}\label{equ:dinfty1}
    \de_\infty(\alpha, \beta)\=\inf\{\delta\ge0\mid e^{-\delta}\alpha\le \beta\le e^{\delta}\alpha\}
    =\log\max\{\frac{1}{s(\alpha, \beta)}, \frac{1}{s(\beta, \alpha)}\}.
\end{equation}
It is clear that $\de_\infty$ is a well-defined metric, and hence defines a topology on $\BBPF_{n-1}(X)$ that we call the \emph{strong topology}. This is stronger than the \emph{weak topology}, which is the topology induced by $\BBPF_{n-1}(X)\subset\WNum_{n-1}(X)$. 

\begin{prop}\label{prop:divcomplete}
    The metric space $(\BBPF_{n-1},\de_\infty)$ is complete.
\end{prop}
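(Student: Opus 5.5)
Let $(\a_i)_{i\in I}$ be a $\de_\infty$-Cauchy net in $\BBPF_{n-1}(X)$. The plan is to first produce a weak limit, then show that it lies in $\BBPF_{n-1}(X)$, and finally upgrade weak convergence to $\de_\infty$-convergence using the fact that the relevant order relations are closed under weak limits.

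\emph{Step 1 (uniform bounds).} Fix $i_0$ such that $\de_\infty(\a_i,\a_{i_0})\le 1$ for $i\ge i_0$. Then
\[
e^{-1}\a_{i_0}\le\a_i\le e\,\a_{i_0}
\]
for all $i\ge i_0$, and the net is eventually order-bounded. On each smooth projective model $Y$, the incarnations $\a_{i,Y}$ then lie in a set of the form $\{\g\mid e^{-1}\a_{i_0,Y}\le\g\le e\,\a_{i_0,Y}\}$ inside the finite-dimensional space $\WNum_{n-1}(Y)$. Pushforward preserves pseudoeffectivity, so passing from $\a$ to $\a_Y$ respects $\le$. The set above is compact, because the pseudoeffective cone on $Y$ is closed, convex and salient.

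\emph{Step 2 (weak limit).} We show that $(\a_{i,Y})_i$ is Cauchy in $\WNum_{n-1}(Y)$. For $i,j\ge i_1(\e)$ we have $e^{-\e}\a_j\le\a_i\le e^{\e}\a_j$. Hence
\[
0\le\a_{i,Y}-e^{-\e}\a_{j,Y}\le(e^{\e}-e^{-\e})\a_{j,Y}\le(e^{\e}-e^{-\e})e\,\a_{i_0,Y}.
\]
Since the psef cone is salient, a class squeezed between $0$ and a small multiple of a fixed class is small in any norm. Thus $\a_{i,Y}-\a_{j,Y}\to0$, and $\a_{i,Y}$ converges to some $\a_Y$. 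Pushforwards are linear and continuous, so the $\a_Y$ are compatible and define $\a\in\WNum_{n-1}(X)$ with $\a_i\to\a$ weakly. Movable cones are closed, so $\a_Y$ is movable for each $Y$, i.e.\ $\a\in\BPF_{n-1}(X)$.

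\emph{Step 3 (bigness and metric convergence).} Pseudoeffectivity is a weakly closed condition on $\WNum_{n-1}(X)$, since it is checked model by model on closed cones. Fix $j$ and let $i\to\infty$ in $e^{-\e}\a_j\le\a_i\le e^{\e}\a_j$ to get
\[
e^{-\e}\a_j\le\a\le e^{\e}\a_j\quad\text{for } j\ge i_1(\e),
\]
that is, $\de_\infty(\a_j,\a)\le\e$. Taking $j=i_0$ gives $\a\ge e^{-1}\a_{i_0}$. Monotonicity of the intersection pairing on $\BPF_{n-1}(X)$ then yields
\[
\vol(\a)\ge e^{-n}\vol(\a_{i_0})>0,
\]
so $\a\in\BBPF_{n-1}(X)$ and $\a_i\to\a$ in $\de_\infty$.

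The main point requiring care is the monotonicity of $\vol$ under the psef order between nef Weil classes, used in Step 3. It follows from \cite{DangFavre}: for $\b\le\a$ with $\a,\b$ nef, write $(\a^n)-(\b^n)$ as a telescoping sum of terms $(\a-\b,\a^{k}\b^{n-1-k})$. Each term is nonnegative, because $\a^k\b^{n-1-k}$ is a BPF curve class and Proposition~\ref{prop:bpfposlin} applies, after approximating by Cartier classes and using continuity along decreasing limits. Alternatively, one can apply Lemma~\ref{lem:Siuclass}-type arguments directly. Everything else is linear algebra on finite-dimensional models.
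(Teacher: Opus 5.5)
Your argument is correct and is essentially the adaptation of the proof of Proposition~\ref{prop:curvcomplete} that the paper has in mind. The paper omits the details and also mentions an alternative route via Theorem~\ref{thm:metricequivnum} together with completeness of $(\BBPF_1(X),\de_\infty)$. As in the curve case, you extract a weak limit from the uniform order bounds and then pass the inequalities $e^{-\e}\a_j\le\a_i\le e^{\e}\a_j$ to the limit using closedness of the pseudoeffective and movable cones. The differences are minor: you build the weak limit model by model via salience of the pseudoeffective cone rather than by testing against big and nef classes, and you get bigness of the limit from monotonicity of the volume (the paper takes this from \cite{DangFavre} as ``increasing in each argument'') rather than from upper semicontinuity.
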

The proof is an adaptation of the proof of Proposition~\ref{prop:curvcomplete} below; the details are omitted. Alternatively, Theorem~\ref{thm:metricequivnum} combined with Proposition~\ref{prop:curvcomplete} also gives completeness of $(\BBPF_{n-1}(X), \de_\infty)$.

\medskip
The pseudo-effective slope crucially enters in the \emph{Diskant inequality}, see~\cite[Theorem~3.6]{DangFavre}. If $\a,\b\in\BBPF_{n-1}(X)$, we have 
\begin{equation}\label{equ:Diskant}
    (\beta, \alpha^{n-1})^{\frac{n}{n-1}}-(\alpha^n)(\beta^n)^{\frac{1}{n-1}}
    \ge \left((\beta, \alpha^{n-1})^{\frac{1}{n-1}}-s(\a,\b)(\beta^n)^\frac{1}{n-1}\right)^n.
\end{equation}
Note that both sides of this inequality are non-negative, as a consequence of the Khovanskii--Teissier inequality~\eqref{equ:KTclass2}. Moreover, the Diskant inequality implies that equality holds in~\eqref{equ:KTclass2} iff $\a$ and $\b$ are proportional. 

Our next goal is to make the last observation quantitative.
As ``homogeneous" versions of functionals considered in the projective setting, we define
\begin{align*}
    \jj_\beta(\alpha)
    &\= \log \frac{(\alpha, \beta^{n-1})}{\vol(\alpha)^{1/n}\vol(\beta)^{1-1/n}}\\
    \ii(\alpha, \beta)
    &=\log \frac{(\alpha, \beta^{n-1})(\beta, \alpha^{n-1})}{\vol(\alpha)\vol(\beta)}
    =\jj_\beta(\alpha)+\jj_\alpha(\beta)
\end{align*}
for $\alpha, \beta\in \BBPF_{n-1}(X)$.
These functionals are non-negative by the Khovanskii-Teissier inequality. 
\begin{lem}\label{lem:ijequiv}
    For any $\alpha, \beta\in \BBPF_{n-1}(X)$, we have
    $\jj_\beta(\alpha)\le \ii(\alpha, \beta)\le n\jj_\beta(\alpha)$. Moreover, these three quantities are zero iff $\a$ and $\b$ are proportional.
\end{lem}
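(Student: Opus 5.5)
The lower bound is immediate from the definitions. Since $\ii(\a,\b)=\jj_\b(\a)+\jj_\a(\b)$ and both summands are nonnegative by the Khovanskii--Teissier inequality~\eqref{equ:KTclass2}, we get $\jj_\b(\a)\le\ii(\a,\b)$. The work is therefore in the upper bound $\jj_\a(\b)\le(n-1)\jj_\b(\a)$, and in the characterization of when the quantities vanish.

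For the upper bound we normalize. Everything is invariant under scaling $\a$ and $\b$ separately: each functional is homogeneous of degree zero in each argument. So we may assume $\vol(\a)=\vol(\b)=1$. Put
\[
x:=(\b,\a^{n-1}),\qquad y:=(\a,\b^{n-1}).
\]
Then $\jj_\b(\a)=\log y$ and $\jj_\a(\b)=\log x$. The claim becomes $x\le y^{n-1}$.

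To prove it, we apply the Khovanskii--Teissier concavity of $j\mapsto\log(\a^j,\b^{n-j})$ stated above. The sequence $\lambda_j=(\a^j,\b^{n-j})$ is log-concave with $\lambda_0=\lambda_n=1$, $\lambda_1=y$ and $\lambda_{n-1}=x$. Concavity applied at the points $1<n-1<n$ gives
\[
\log\lambda_{n-1}\le \tfrac{n-2}{n-1}\log\lambda_n+\tfrac{1}{n-1}\cdot\ldots
\]
This is the wrong direction, so we interpolate between $0$ and $n-1$ instead. From $\log\lambda_1\ge\frac{n-2}{n-1}\log\lambda_0+\frac1{n-1}\log\lambda_{n-1}$ we get $y\ge x^{1/(n-1)}$, that is, $x\le y^{n-1}$. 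Hence $\ii=\log x+\log y\le n\log y=n\jj_\b(\a)$. The case $n=1$ is trivial (all classes are proportional). We only need to check that concavity on $\{0,\dots,n\}$ with finite values holds for classes in $\BBPF_{n-1}(X)$. Finiteness and positivity come from bigness via Siu's inequality (Lemma~\ref{lem:Siuclass}), and concavity comes from the extension of Khovanskii--Teissier to $\BPF_{n-1}(X)$ recalled above.

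Finally we treat vanishing. If $\a=c\b$, a direct computation shows both $\jj$'s vanish. Conversely, by the two-sided inequality the three quantities vanish simultaneously, so it suffices to treat $\jj_\b(\a)=0$. This says equality holds in~\eqref{equ:KTclass2} with the roles of $\a$ and $\b$ swapped, namely $(\a,\b^{n-1})=\vol(\a)^{1/n}\vol(\b)^{(n-1)/n}$. By the remark following the Diskant inequality~\eqref{equ:Diskant}, this forces $\a$ and $\b$ to be proportional.

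To spell this out: when the left side of~\eqref{equ:Diskant} vanishes, we get $s(\b,\a)=\bigl((\a,\b^{n-1})/\vol(\a)\bigr)^{1/(n-1)}$, so $\b\ge t\a$ with $\vol(\b-t\a)$-type comparison. Combining this with Siu's inequality and volume equality then pins down $\b=t\a$. The main subtlety is this last step: deducing equality of classes from $\b\ge t\a$ and $(\b^n)=(t\a)^n$. For that I would use that if $\b-t\a\ge0$ is pseudoeffective and nonzero, then $(\b-t\a,\b^{n-1})>0$ for $\b$ big and nef, which contradicts the equality case.
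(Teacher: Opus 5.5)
Your proof of the two inequalities is correct and is the paper's argument. The lower bound follows from nonnegativity of both $\jj$'s. The upper bound follows from the Khovanskii--Teissier inequality $(\alpha,\beta^{n-1})\ge(\alpha^{n-1},\beta)^{\frac1{n-1}}(\beta^n)^{\frac{n-2}{n-1}}$, which is concavity of $j\mapsto\log(\alpha^j,\beta^{n-j})$ interpolated between $0$ and $n-1$. You should delete the abandoned display: concavity gives a lower bound at interior points, not the $\le$ you wrote. Your reduction of the vanishing statement to $\jj_\beta(\alpha)=0$ is also fine. So is your use of \eqref{equ:Diskant} with the roles swapped to get $\beta\ge t\alpha$, where $t=(\vol(\beta)/\vol(\alpha))^{1/n}$, so that $\vol(\beta)=\vol(t\alpha)$. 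Citing the remark after \eqref{equ:Diskant} does not settle the question, since this lemma is exactly where that remark is justified.

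The gap is your final step. The claim that a nonzero pseudoeffective class $\gamma$ satisfies $(\gamma,\beta^{n-1})>0$ whenever $\beta$ is big and nef is false. Take $X=\A^2$, $\bar X=\P^2$, and let $\pi\colon Y\to\P^2$ be the blowup of a point of $\A^2$ with exceptional curve $E$. Let $\beta$ be the Cartier b-class of $H$ and $\gamma$ that of $E$. Then $\gamma\ge0$ and $\gamma\ne0$, but $(\gamma,\beta)=(E\cdot\pi^*H)=0$. Bigness of $\beta^{n-1}$ only gives $\beta^{n-1}\ge\varepsilon\,\omega^{n-1}$. It does not put $\beta^{n-1}$ in the interior of the movable cone, so it yields no strict positivity against all pseudoeffective classes.

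What you are missing is the symmetric half, which you already have. The three quantities vanish simultaneously, and $\ii(\alpha,\beta)=\jj_\beta(\alpha)+\jj_\alpha(\beta)$ with both terms nonnegative, so also $\jj_\alpha(\beta)=0$. Applying the Diskant inequality in the other order then gives $\alpha\ge t^{-1}\beta$, i.e.\ $t\alpha\ge\beta$. Hence $\beta-t\alpha$ and $t\alpha-\beta$ are both pseudoeffective. Their incarnation on every smooth model lies in the salient pseudoeffective cone and therefore vanishes, so $\beta=t\alpha$. This is exactly the paper's proof: after normalizing $(\alpha^n)=(\beta^n)=1$, it reads off $s(\alpha,\beta)\ge1$ and $s(\beta,\alpha)\ge1$ from \eqref{equ:Diskant2}.
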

\begin{proof}
    The first inequality is clear. To see the other inequality, we use the Khovanskii--Teissier inequality to get
    \[
    (\alpha, \beta^{n-1})\ge (\alpha^{n-1}, \beta)^{\frac{1}{n-1}}(\beta^n)^{\frac{n-2}{n-1}}. 
    \]
    Thus,
    \[
    \exp\left((n-1)\jj_\beta(\alpha)\right)\ge \frac{(\alpha^{n-1}, \beta)(\beta^n)^{n-2}}{(\alpha^n)^{\frac{n-1}{n}}(\beta^n)^{\frac{(n-1)^2}{n}}} = \exp\left(\jj_\alpha(\beta)\right),
    \]
    and so
    \[
    \ii(\alpha, \beta)=\jj_\beta(\alpha)+\jj_\alpha(\beta)\le \jj_\beta(\alpha)+(n-1)\jj_\beta(\alpha)=n\jj_\beta(\alpha).
    \]
    The last statement is a consequence of the Diskant inequality. Indeed, the latter can be rewritten as 
    \begin{equation}\label{equ:Diskant2}
        s(\alpha, \beta)
        \ge\exp(\jj_\alpha(\beta))^{\frac{1}{n-1}}(\a^n)^{\frac{1}{n}}(\b^n)^{-\frac1n}-(\exp(\jj_\alpha(\beta))^{\frac{n}{n-1}}-1)^{\frac 1n}(\a^n)^{\frac{1}{n}}(\b^n)^{-\frac1n}
    \end{equation}
    for any $\a,\b\in\BBPF_{n-1}(X)$. Now suppose $J_\b(\a)=0$, which by what precedes is equivalent to $J_\a(\b)=0$. By homogeneity we may assume $(\a^n)=(\b^n)=1$. Then $s(\a,\b)\ge1$ and $s(\b,\a)\ge1$, and hence $\a=\b$.
\end{proof}

While we will not use it, let us for completeness prove that the $\ii$-functional satisfies a generalized triangle inequality, \cf~\cite[Theorem~1.8]{BBEGZ} and~\cite[Proposition~7.27]{BJpp}.
\begin{lem}\label{lem:iquasitriangle}
    For any $\a,\b,\g\in\BBPF_{n-1}(X)$, we have
    \[
    \ii(\a,\g)\le 2^{n+1}
    \bigl(\ii(\a,\b)+\ii(\b,\g)\bigr).
    \]
\end{lem}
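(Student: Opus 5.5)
Since both sides of the inequality are continuous along decreasing limits and invariant under scaling each class by a positive constant, I would first reduce to $\a,\b,\g$ nef and big Cartier b-divisor classes. These can be determined on a common smooth projective model $Y$, using the Dang--Favre regularization recalled above. I would then normalize $\vol(\a)=\vol(\b)=\vol(\g)=1$. In this normalization
\[
\ii(\a,\b)=\log(\a,\b^{n-1})+\log(\b,\a^{n-1}),
\]
and each summand is nonnegative by~\eqref{equ:KTclass2}. The statement becomes a bound on the logarithms of the mixed numbers $(\a,\g^{n-1})$ and $(\g,\a^{n-1})$ in terms of mixed intersection numbers of $\a,\b$ and of $\b,\g$. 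By Lemma~\ref{lem:ijequiv} it suffices to control $\jj_\g(\a)$ and $\jj_\a(\g)$ up to a factor of $n$, or to control one of them directly. This costs at most one extra factor, which I would absorb into the constant $2^{n+1}$.

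The core step is a telescoping argument through mixed intersection numbers. For multi-indices with $i+j+l=n$, I would introduce the quantities $m_{i,j,l}:=\log(\a^i,\b^j,\g^l)$ and interpolate from $(\a,\g^{n-1})$ to a product of terms involving only $\{\a,\b\}$ or only $\{\b,\g\}$. At each stage one copy of $\g$ is replaced by $\b$, or one copy of $\b$ by $\a$. To control a single replacement I would use the Hodge index theorem on $Y$, in the form of the Cauchy--Schwarz inequality for the quadratic form $(x,y)\mapsto(x\cdot y\cdot\omega_3\cdots\omega_n)$ on the plane spanned by two nef classes. Here the $\omega_k$ are fixed classes among $\a,\b,\g$. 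Concretely, log-concavity of $t\mapsto\log(x^t,y^{2-t},\omega_3,\dots,\omega_n)$, i.e.\ the Khovanskii--Teissier inequality~\eqref{equ:KTclass1} in mixed form, should bound the change in $m_{i,j,l}$ at one step by twice the change at the previous step. After at most $n$ such steps, one obtains
\[
\log(\a,\g^{n-1})\le 2^{n}\bigl(\jj_\b(\a)+\jj_\a(\b)+\jj_\g(\b)+\jj_\b(\g)\bigr)
\]
up to harmless terms. Symmetrizing in $\a\leftrightarrow\g$ then gives the factor $2^{n+1}$.

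The main obstacle is the single-replacement estimate. It must be genuinely linear in $\ii$, with a universal constant such as $2$. The naive routes lose too much. Siu's inequality (Lemma~\ref{lem:Siuclass}) produces an additive $\log n$ error, and Diskant's inequality~\eqref{equ:Diskant2} only gives a $1/n$-power (H\"older) control of the slope $s(\a,\b)$, neither of which suffices when $\ii$ is small. My first attempt would be to exploit the normalization $\vol=1$ together with the log-concavity of the whole sequence $j\mapsto\log(\a^j,\b^{n-j})$. Since this sequence vanishes at both endpoints, log-concavity bounds every interior term by the terms at $j=1$ and $j=n-1$, which sum to $\ii(\a,\b)$. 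I would then attempt to propagate this to mixed sequences containing $\g$ by applying~\eqref{equ:KTclass1} with the remaining slots filled by $\g$. If the constant from this propagation degrades by more than a factor of $2$ per step, I would instead look for a direct quadratic-form argument on $Y$ as in~\cite[Theorem~1.8]{BBEGZ}.
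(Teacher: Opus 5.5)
There is a genuine gap, and it sits exactly at the step you flag as the main obstacle. The single-replacement estimate (``the change at one step is bounded by twice the change at the previous step'') is asserted, not proved, and the inequalities you propose to derive it from cannot yield it.

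Khovanskii--Teissier in mixed form~\eqref{equ:KTclass1}, the Hodge index (reverse Cauchy--Schwarz) inequality on the plane spanned by two nef classes, and log-concavity of $(i,j,l)\mapsto\log(\a^i,\b^j,\g^l)$ along lines are all Jensen-type inequalities. Each bounds the logarithm of a mixed number \emph{from below} by an average of logarithms of mixed numbers whose multi-indices average to its own.

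After normalizing volumes, all of them are satisfied by the values $\log(\a^i,\b^j,\g^l)=M\min\{i,l\}$, for any $M\ge0$, because this function is concave on the simplex $\{i+j+l=n\}$. It vanishes on the two edges you control: all $\{\a,\b\}$- and $\{\b,\g\}$-numbers equal $1$, so $\ii(\a,\b)=\ii(\b,\g)=0$. Yet $\log(\a,\g^{n-1})=M$. Hence no telescoping built from these inequalities can bound $\ii(\a,\g)$; for $n=2$ they say only $(\a\cdot\b),(\b\cdot\g),(\a\cdot\g)\ge1$.

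The genuine upper bounds (Siu, the reverse inequality of Proposition~\ref{prop:Hineq}(iv), Diskant) lose an additive $\log n$ or a $1/n$-power, as you note yourself. So ``up to harmless terms'' is fatal near the diagonal, and your fallback to a quadratic-form argument is only a pointer.

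What is missing is a way to apply positivity to a \emph{combination} of the classes; the paper does this via the midpoint. Normalize $\vol(\a)=\vol(\b)=\vol(\g)=1$, and put $m_k\=(\a^k,\g^{n-k})\ge1$ and $V\=\vol\bigl(\tfrac{\a+\g}2\bigr)=2^{-n}\sum_k\binom nk m_k$. Jensen's inequality gives $\log V\ge\frac n{2^n}\max\{\log m_1,\log m_{n-1}\}$, so $\ii(\a,\g)\le\frac{2^{n+1}}n\log V$. Khovanskii--Teissier for the pair $\bigl(\tfrac{\a+\g}2,\b\bigr)$ gives
\[
V^{1/n}\le\Bigl(\tfrac{\a+\g}{2},\b^{n-1}\Bigr)=\tfrac{A+C}{2}\le AC,
\qquad A\=(\a,\b^{n-1}),\quad C\=(\g,\b^{n-1}),
\]
using $A,C\ge1$. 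Hence $\log V\le n\bigl(\jj_\b(\a)+\jj_\b(\g)\bigr)\le n\bigl(\ii(\a,\b)+\ii(\b,\g)\bigr)$, and the lemma follows with exactly the constant $2^{n+1}$.

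This midpoint inequality is not of Jensen type in the mixed numbers, and it excludes the example above: there $A=C=1$, while $V>1$ for $M>0$. For $n=2$ it reads $(\a\cdot\g)\le\frac12\bigl((\a\cdot\b)+(\b\cdot\g)\bigr)^2-1$, which is precisely the coupling between the three classes that your scheme lacks.
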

\begin{proof}
    By scaling invariance, we may assume
    $(\a^n)=(\b^n)=(\g^n)=1$. Set
    \[
    m_k\=(\a^k,\g^{n-k}),\qquad
    V\=\vol\left(\frac{\a+\g}{2}\right)
    =2^{-n}\sum_{k=0}^n\binom nk m_k.
    \]
    By the Khovanskii--Teissier inequality, we have $m_k\ge1$.
    Thus Jensen's inequality gives
    \[
    \log V
    \ge 2^{-n}\sum_{k=0}^n\binom nk\log m_k
    \ge \frac n{2^n}\max\{\log m_1,\log m_{n-1}\}
    \]
    It follows that
    \[
    \ii(\a,\g)=\log m_1+\log m_{n-1}
    \le \frac{2^{n+1}}n\log V.
    \]
    On the other hand, if we set
    \[
    A\=(\a,\b^{n-1}),\qquad C\=(\g,\b^{n-1}),
    \]
    then the Khovanskii--Teissier inequalities again give $A,C\ge1$, and
    \[
    V^{1/n}
    \le \left(\frac{\a+\g}{2},\b^{n-1}\right)
    =\frac{A+C}{2}\le AC.
    \]
    We conclude that 
    \[
    \ii(\a,\g)
    \le 2^{n+1}(\log A+\log C)
    \le 2^{n+1}\bigl(\ii(\a,\b)+\ii(\b,\g)\bigr),
    \]
    completing the proof.
\end{proof}

Given $\alpha, \beta\in \BBPF_{n-1}(X)$, we also define
\begin{equation}\label{defi:iibar}
\bar \ii(\alpha, \beta)\=\ii(\alpha, \beta)+|\log (\alpha^n)-\log (\beta^n)|
=\log \frac{(\alpha, \beta^{n-1})(\beta, \alpha^{n-1})}{\min\{(\alpha^n),(\beta^n)\}^2}.
\end{equation}

By Lemma~\ref{lem:ijequiv}, we have $\bar\ii(\a,\b)=0$ iff $\a=\b$, so it follows from Lemma~\ref{lem:iquasitriangle}, that $\bar\ii$ is a quasimetric. We now show this quasimetric defines the strong topology on $\BBPF_{n-1}(X)$.
\begin{prop}\label{prop:metrictop}
    We have 
    \[
    \bar \ii(\alpha, \beta)\le n\de_\infty(\alpha, \beta),
    \]
    and there is a continuous, strictly increasing function $F\colon\R_+\to \R_+$ with $F(0)=0$ and $F\in C^{0,1/n}_{\mathrm{loc}}([0,\infty))$,
    such that 
    \[
    \de_\infty(\alpha, \beta)\le F(\bar \ii(\alpha, \beta)).
    \]
    In particular,
    $\bar\ii(\alpha_i, \alpha)\to 0$ iff $\de_\infty(\alpha_i, \alpha)\to 0$.
\end{prop}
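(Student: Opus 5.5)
For the upper bound, set $\delta:=\de_\infty(\a,\b)$, so that $e^{-\delta}\a\le\b\le e^\delta\a$. The intersection pairing on $\BPF_{n-1}(X)$ is increasing in each argument and homogeneous. Applying this to $\b\le e^\delta\a$ gives $(\b^n)\le e^{n\delta}(\a^n)$ and $(\a,\b^{n-1})\le e^{\delta}(\a^n)$, where for the second we compare $\b^{n-1}$ with $(e^\delta\a)^{n-1}$ one factor at a time. In the same way, $(\b,\a^{n-1})\le e^\delta(\a^n)$. Symmetrically, using $\a\le e^\delta\b$, we get $(\a,\b^{n-1})\le e^{\delta}(\b^n)$ as well.

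Assume without loss of generality that $\min\{(\a^n),(\b^n)\}=(\b^n)$. I would then bound $(\a,\b^{n-1})\le e^{(n-1)\delta}(\b^n)$ by replacing each $\b$ factor by $e^{\delta}\a$ only when needed, or more simply as follows. I compute $\ii$ and the volume term separately: $\ii(\a,\b)\le\log\frac{e^\delta(\a^n)\cdot e^{\delta}(\b^n)}{(\a^n)(\b^n)}=2\delta$, and $|\log(\a^n)-\log(\b^n)|\le n\delta$. This gives $\bar\ii\le(n+2)\delta$.

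To reach the sharper constant $n$, I would instead use $(\a,\b^{n-1})\le e^{\delta}(\b^n)$ and $(\b,\a^{n-1})\le e^{(n-1)\delta}(\b^n)$, the latter obtained by bounding each $\a$ factor by $e^\delta\b$. Plugging these into the $\min$ form of~\eqref{defi:iibar} gives $\bar\ii\le n\delta$. I would check this bookkeeping carefully, since the stated constant depends on it.

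For the reverse bound, set $t:=\bar\ii(\a,\b)$. Both $\ii\le t$ and $|\log(\a^n)-\log(\b^n)|\le t$ hold, and since $\jj_\a(\b)\le\ii(\a,\b)$ we also have $x:=\exp(\jj_\a(\b))\le e^t$. The Diskant inequality in the form~\eqref{equ:Diskant2} gives
\[
s(\a,\b)\ge\bigl(x^{\frac1{n-1}}-(x^{\frac n{n-1}}-1)^{\frac1n}\bigr)\Bigl(\frac{(\a^n)}{(\b^n)}\Bigr)^{1/n}.
\]
The first factor is decreasing in $x$, since $x^{1/(n-1)}-(x^{n/(n-1)}-1)^{1/n}$ is a decreasing function of $x\ge1$ (check by rewriting with $y=x^{1/(n-1)}$ as $y-(y^n-1)^{1/n}$). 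It equals $1$ at $x=1$ and stays positive, because $y^n>y^n-1$. Hence
\[
s(\a,\b)\ge g(t)\,e^{-t/n},\qquad g(t):=e^{\frac t{n-1}}-\bigl(e^{\frac{nt}{n-1}}-1\bigr)^{1/n}\in(0,1].
\]
The same bound holds for $s(\b,\a)$ by symmetry. By~\eqref{equ:dinfty1}, $\de_\infty(\a,\b)\le F(t):=\frac tn-\log g(t)$.

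It remains to check the regularity of $F$. It is continuous with $F(0)=0$, and strictly increasing, because $t/n$ is strictly increasing and $-\log g$ is increasing. For the H\"older exponent, near $t=0$ we have $e^{nt/(n-1)}-1\sim\frac{n}{n-1}t$, so $(\cdot)^{1/n}$ contributes a term of order $t^{1/n}$. Away from $0$ the function $F$ is smooth, since $g>0$. Therefore $F\in C^{0,1/n}_{\mathrm{loc}}$. The final equivalence of convergence follows immediately from the two bounds. The main obstacle is not conceptual: it lies in verifying the monotonicity of $y-(y^n-1)^{1/n}$ and in applying~\eqref{equ:Diskant2} correctly, with $\jj_\a(\b)$ controlled by $\ii$.
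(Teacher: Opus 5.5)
Your proof is correct and is essentially the paper's argument. For the first inequality, the paper assumes $(\a^n)\le(\b^n)$ and uses $(\a,\b^{n-1})\le e^{(n-1)\d}(\a^n)$ and $(\b,\a^{n-1})\le e^{\d}(\a^n)$, which is your final min-form bookkeeping with $\a,\b$ swapped; the second inequality uses the Diskant inequality with the same $F(t)=\frac tn-\log g(t)$. One stray slip: in your first paragraph, $\b\le e^\d\a$ gives $(\a,\b^{n-1})\le e^{(n-1)\d}(\a^n)$, not $e^{\d}(\a^n)$, but that claim is never used in your final bound.
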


\begin{proof}
    By symmetry, we may assume $(\a^n)\le(\b^n)$. If we set $\d:=\de_\infty(\a,\b)$, then $(\a,\b^{n-1})\le e^{(n-1)\d}(\a^n)$ and $(\a^{n-1},\b)\le e^\d(\a^n)$. This yields $\bar\ii(\a,\b)\le n\d$.
    For the second inequality, note that $(\beta, \alpha^{n-1})=\exp(\jj_\alpha(\beta))(\a^n)^{\frac{n-1}{n}}(\b^n)^{\frac1n}$. The Diskant inequality as in~\eqref{equ:Diskant2} gives
\[        s(\alpha, \beta)
        \ge\exp(\jj_\alpha(\beta))^{\frac{1}{n-1}}(\a^n)^{\frac{1}{n}}(\b^n)^{-\frac1n}-(\exp(\jj_\alpha(\beta))^{\frac{n}{n-1}}-1)^{\frac 1n}(\a^n)^{\frac{1}{n}}(\b^n)^{-\frac1n}.
  \] 
    If we define $G:\R_+\to \R_+$ by $G(x)=e^{\frac{x}{n-1}}-(e^{\frac{n}{n-1}x}-1)^{\frac 1n}$, then $G$ is strictly decreasing, $G(0)=1$ and we see that
    \[
    -\log s(\alpha,\beta)\le \frac1n\log\frac{(\b^n)}{(\a^n)}-\log G(\jj_{\alpha}(\beta))
    \le\frac1n\bar\ii(\a,\b)-\log G(\bar\ii(\a,\b)),
    \]
    since $J_\a(\b)\le\ii(\a,\b)\le\bar\ii(\a,\b)$ and $\log\frac{(\b^n)}{(\a^n)}\le\bar\ii(\a,\b)$.
    By symmetry, the same bound holds for $s(\b,\a)$, so in view of~\eqref{equ:dinfty1} we get $\de_\infty(\alpha,\beta)\le F(\bar\ii(\a,\b))$, with 
    $F(x)=\frac{x}n-\log G(x)$. It is straightforward to check that $F\colon\R_+\to\R_+$ is continuous and strictly increasing, and H\"older continuous in the sense that there exists $C>0$ such that $|F(x)-F(y)|\le C|x-y|^{1/n}$ for all $x,y$ with $|x-y|\le1$.
\end{proof}
\begin{cor}\label{cor:strongdense}
    If $(\alpha_i)_i$ is a decreasing net in $\BBPF_{n-1}(X)$ converging weakly to $\alpha\in \BBPF_{n-1}(X)$, then $\alpha_i\to \alpha$ strongly. As a consequence, nef Cartier b-divisor classes are strongly dense in $\BBPF_{n-1}(X)$.
\end{cor}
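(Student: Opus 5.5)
The plan is to reduce strong convergence to convergence of the $\bar\ii$ quasimetric, via Proposition~\ref{prop:metrictop}, and to obtain the latter from the continuity of the intersection pairing along decreasing limits.

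\emph{First assertion.} Let $(\a_i)_i$ be a decreasing net in $\BBPF_{n-1}(X)$ converging weakly to $\a\in\BBPF_{n-1}(X)$. Since weak limits preserve pseudoeffectivity, we have $\a_i\ge\a$ for all $i$. By the properties of the intersection pairing recalled after the definition of $\BPF_{n-1}(X)$ (from~\cite{DangFavre}), mixed intersection numbers are continuous along decreasing nets. Hence
\[
(\a_i^n)\to(\a^n),\qquad (\a_i,\a^{n-1})\to(\a^n),\qquad (\a,\a_i^{n-1})\to(\a^n).
\]
Continuity along decreasing limits of the multilinear pairing applies here because each argument is either constant or the same decreasing net.

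Substituting into the definition~\eqref{defi:iibar}, we get
\[
\bar\ii(\a_i,\a)=\log\frac{(\a_i,\a^{n-1})(\a,\a_i^{n-1})}{\min\{(\a_i^n),(\a^n)\}^2}\to\log\frac{(\a^n)^2}{(\a^n)^2}=0,
\]
using $(\a^n)>0$. Proposition~\ref{prop:metrictop} then gives $\de_\infty(\a_i,\a)\le F(\bar\ii(\a_i,\a))\to0$, so $\a_i\to\a$ strongly.

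A more direct alternative is available because $\a_i\ge\a$ automatically: it then suffices to bound $\a_i\le e^{\d_i}\a$ with $\d_i\to0$. Siu's inequality (Lemma~\ref{lem:Siuclass}) only yields a factor of roughly $n$, not $1+o(1)$. The Diskant route above is therefore the right tool.

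\emph{Density.} Given $\a\in\BBPF_{n-1}(X)$, \cite[Theorem~A]{DangFavre} writes $\a$ as a decreasing limit of nef Cartier b-divisor classes $\a_i$. Each $\a_i\ge\a$ is nef, and $(\a_i^n)\ge(\a^n)>0$ by monotonicity of the pairing, so $\a_i\in\BBPF_{n-1}(X)$. The first part then gives strong convergence, hence strong density.

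\emph{Main obstacle.} The only delicate point is justifying the continuity of the mixed terms such as $(\a,\a_i^{n-1})$ along the decreasing net. I would handle it by sandwiching. Monotonicity gives the lower bound $(\a,\a_i^{n-1})\ge(\a^n)$. For the upper bound, $(\a,\a_i^{n-1})\le(\a_i^n)$, and $(\a_i^n)\to(\a^n)$ by continuity of the volume along decreasing limits. The same sandwich argument handles $(\a_i,\a^{n-1})$, since $(\a^n)\le(\a_i,\a^{n-1})\le(\a_i^n)$.
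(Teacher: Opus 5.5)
Your proposal is correct and follows the paper's own argument. Continuity of intersection numbers along decreasing nets gives $\bar\ii(\a_i,\a)\to0$, Proposition~\ref{prop:metrictop} converts this into $\de_\infty(\a_i,\a)\to0$, and density then follows from the Dang--Favre regularization. Your sandwich bounds (monotonicity together with $(\a_i^n)\to(\a^n)$) are a clean way to make the continuity of the mixed terms explicit, and your check that the regularizing Cartier classes are big fills in a detail the paper leaves implicit.
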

\begin{proof}
  From the continuity of intersection numbers under decreasing nets, it follows that
$\bar \ii(\alpha_i,\alpha)\to0$. The last statement follows by regularization.
\end{proof}

\subsection{Strong topology for curve classes}\label{sec:strongcurve}
Recall that $\BPF_1(X)\subset \Curv(X)$ is the space of BPF Weil b-curve classes. Following~\cite[\S5.2]{DangFavre} (itself following~\cite{Xiao17,LX}), we define:
\begin{defi}\label{def:volhat}
    The \emph{volume} of a Weil b-curve class $\mu\in \BPF_1(X)$ is
    \begin{equation}\label{equ:volhat}
    \vol^\vee(\mu)\=\inf_{\alpha}\frac{(\alpha, \mu)}{\vol(\alpha)^{1/n}}\in[0,+\infty),
    \end{equation}
    where the infimum is taken over all big and nef divisor classes $\alpha\in\CNum(X)$.

    We say that $\mu\in\BPF_1(X)$ is \emph{big} if $\vol^\vee(\mu)>0$, and write
    \[
    \BBPF_1(X)\subset\BPF_1(X)
    \]
    for the subset of big curve classes.
\end{defi}
\begin{rmk}
    Our definition of volume differs from the one in~\cite{DangFavre} by a power of $\frac{n}{n-1}$.
\end{rmk}
\begin{lem}\label{lem:bpfdual2}
    The volume function on $\BPF_1(X)$ is concave, usc, and homogeneous. As a consequence, $\BBPF_1(X)$ is a convex subcone of $\BPF_1(X)$.
\end{lem}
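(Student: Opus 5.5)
The plan is to read everything off the definition~\eqref{equ:volhat}: we exhibit $\vol^\vee$ as an infimum of a family of functions of $\mu$, each of which is linear and continuous for the weak topology on $\BPF_1(X)$. For a fixed big and nef class $\a\in\CNum(X)$, set
\[
\ell_\a(\mu)\=\frac{(\a,\mu)}{\vol(\a)^{1/n}}.
\]
Each $\ell_\a$ has three properties. It is linear in $\mu$. It is nonnegative on $\BPF_1(X)$ by Proposition~\ref{prop:bpfposlin}, since a nef class is pseudoeffective. Finally, it is continuous for the weak topology on $\Curv(X)$: $\a$ is determined on some smooth model $Y$, so $(\a,\mu)=(\a_Y,\mu_Y)$ depends only on the incarnation $\mu_Y$, and the projection $\mu\mapsto\mu_Y$ is continuous for the inverse-limit topology.

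Given this, each property of $\vol^\vee$ follows directly.
\begin{itemize}
\item \emph{Usc.} An infimum of any family of continuous functions is upper semicontinuous.
\item \emph{Concavity.} For $\mu,\nu\in\BPF_1(X)$ and $t\in[0,1]$, every $\a$ gives
\[
\ell_\a((1-t)\mu+t\nu)=(1-t)\ell_\a(\mu)+t\ell_\a(\nu)\ge(1-t)\vol^\vee(\mu)+t\vol^\vee(\nu),
\]
and taking the infimum over $\a$ yields concavity.
\item \emph{Homogeneity.} For $c>0$ we have $\ell_\a(c\mu)=c\,\ell_\a(\mu)$, hence $\vol^\vee(c\mu)=c\vol^\vee(\mu)$. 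Degree one is the right normalization, consistent with the Remark on the difference from~\cite{DangFavre}.
\item \emph{Values.} Since $\ell_\a\ge0$ on $\BPF_1(X)$, we get $\vol^\vee\ge0$. Finiteness is automatic, because the infimum is over a nonempty set of finite values: big and nef Cartier classes exist, for instance ample classes pulled back from a compactification.
\end{itemize}
For the consequence, recall that $\BPF_1(X)$ is a convex cone, being defined by movability of its incarnations on every smooth model, and each such condition cuts out a convex cone. If $\mu,\nu\in\BBPF_1(X)$ and $s,t\ge0$ are not both zero, then homogeneity and concavity give
\[
\vol^\vee(s\mu+t\nu)\ge s\vol^\vee(\mu)+t\vol^\vee(\nu)>0.
\]
Concavity is applied here after normalizing to the convex combination $\frac{s}{s+t}\mu+\frac{t}{s+t}\nu$. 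Hence $\BBPF_1(X)$ is a convex subcone.

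The only step needing any care is the weak continuity of $\ell_\a$, since a Weil b-curve class is a priori only a compatible family of classes. This is handled by the fact that a Cartier b-class is determined on a single smooth model $Y$, together with the projection formula, which makes $(\a_{Y'},\mu_{Y'})$ independent of the choice of higher model $Y'$. The remaining steps are formal consequences of writing $\vol^\vee$ as an infimum of nonnegative continuous linear forms.
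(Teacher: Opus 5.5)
Your proof is correct and follows the paper's argument. The paper's one-line proof likewise observes that $\vol^\vee$ is an infimum of the linear, weakly continuous functionals $\mu\mapsto(\alpha,\mu)/\vol(\alpha)^{1/n}$, so concavity, upper semicontinuity, homogeneity and the convex-subcone consequence all follow formally; you have simply written out those formal steps in more detail.
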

\begin{proof}
    This follows from $\mu\mapsto(\alpha, \mu)$ being linear and continuous for all $\alpha\in \CNum(X)$.
\end{proof}
If $\mu\in\BBPF_1(X)$, then by definition we have $(\a,\mu)>0$ for all Cartier classes $\a\in\BBPF_{n-1}(X)$. Moreover, $\a\mapsto\log(\a,\mu)$ is 1-Lipschitz in the $\de_\infty$-metric, so since Cartier classes are strongly dense in $\BBPF_{n-1}(X)$, see Corollary~\ref{cor:strongdense}, $\a\mapsto(\a,\mu)$ extends uniquely to a continuous positive function on $\BBPF_{n-1}(X)$, and $\a\mapsto\log(\a,\mu)$ remains 1-Lipschitz.
\begin{lem}\label{lem:bpfdual}
    The volume function on $\BBPF_1(X)$ satisfies
    \begin{equation}\label{equ:volhat2}
    \vol^\vee(\mu)=\inf_{\alpha\in \BBPF_{n-1}(X)}\frac{(\alpha, \mu)}{\vol(\alpha)^{1/n}}.
    \end{equation}
    Moreover, if $\a\in\BBPF_{n-1}(X)$, then $\mu\=\alpha^{n-1}\in\BBPF_1(X)$, and $\vol^\vee(\mu)=\vol(\alpha)^{\frac{n-1}{n}}$ with $\alpha$ achieving the infimum in~\eqref{equ:volhat2}.
\end{lem}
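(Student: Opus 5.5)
The proof has two parts. First we show that the infimum over Cartier classes in \eqref{equ:volhat} equals the infimum over all of $\BBPF_{n-1}(X)$. Then we treat $\mu=\a^{n-1}$, using the Khovanskii--Teissier inequality \eqref{equ:KTclass2}.

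For \eqref{equ:volhat2}, the inequality ``$\ge$'' is immediate, since big and nef Cartier classes lie in $\BBPF_{n-1}(X)$. Here $(\a,\mu)$ for general $\a\in\BBPF_{n-1}(X)$ means the continuous extension constructed just before the lemma. For ``$\le$'', fix $\a\in\BBPF_{n-1}(X)$. By Corollary~\ref{cor:strongdense}, choose nef Cartier classes $\a_i\to\a$ in $\de_\infty$; they are big for $i$ large. Then $(\a_i,\mu)\to(\a,\mu)$, because $\log(\cdot,\mu)$ is $1$-Lipschitz for $\de_\infty$. Also $e^{-\d_i}\a\le\a_i\le e^{\d_i}\a$ with $\d_i\to0$, and monotonicity of the intersection pairing on $\BPF_{n-1}(X)$ gives $\vol(\a_i)\to\vol(\a)$. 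Hence the ratio $(\a_i,\mu)/\vol(\a_i)^{1/n}$ converges to $(\a,\mu)/\vol(\a)^{1/n}$, and the infimum over Cartier classes is at most this value. The one point to check is that monotonicity holds when one of the two classes is Weil rather than Cartier. This follows because $\a\le\b$ with both in $\BPF_{n-1}(X)$ implies $(\a^n)\le(\b^n)$, using the properties of the extended pairing recalled after~\cite[Theorem~A]{DangFavre}.

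Now let $\a\in\BBPF_{n-1}(X)$ and $\mu=\a^{n-1}$. By Example~\ref{exam:Mink}, $\mu\in\BPF_1(X)$. For a nef Cartier class $\b$, the pairing $(\b,\mu)$ is by definition the intersection number $(\b,\a^{n-1})$. By strong continuity on both sides, this identity extends to all $\b\in\BBPF_{n-1}(X)$. To justify that step, we use that $\b\mapsto(\b,\a^{n-1})$ also satisfies $e^{-\d}(\b,\a^{n-1})\le(\b',\a^{n-1})\le e^{\d}(\b,\a^{n-1})$ when $\de_\infty(\b,\b')\le\d$, again by monotonicity. By \eqref{equ:KTclass2},
\[
\frac{(\b,\a^{n-1})}{(\b^n)^{1/n}}\ge(\a^n)^{\frac{n-1}{n}}
\]
for every $\b\in\BBPF_{n-1}(X)$. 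Taking $\b=\a$ gives equality. Using \eqref{equ:volhat2}, we conclude $\vol^\vee(\mu)=\vol(\a)^{\frac{n-1}{n}}>0$. So $\mu\in\BBPF_1(X)$, and $\a$ attains the infimum.

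A subtlety is that \eqref{equ:volhat2} was derived under the assumption $\mu\in\BBPF_1(X)$, which is what makes the continuous extension of $(\cdot,\mu)$ available. For $\mu=\a^{n-1}$ we avoid any circularity. Restricting the Khovanskii--Teissier bound to Cartier $\b$ already gives $\vol^\vee(\mu)\ge\vol(\a)^{\frac{n-1}{n}}>0$, so $\mu$ is big. Only after that do we apply \eqref{equ:volhat2} to obtain the upper bound $\vol^\vee(\mu)\le\vol(\a)^{\frac{n-1}{n}}$ by testing with $\b=\a$. The main technical point is the identification of the extended pairing $(\a,\mu)$ with $(\a^n)$, and the strong-continuity argument above handles it.
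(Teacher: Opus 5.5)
Your proof is correct and follows the paper's own argument. The first identity comes from strong density of big nef Cartier classes together with strong continuity of $(\cdot,\mu)$ and $\vol^{1/n}$. The second part is the Khovanskii--Teissier sandwich $\vol(\a)^{\frac{n-1}{n}}\le\vol^\vee(\mu)\le(\a,\mu)/\vol(\a)^{1/n}=\vol(\a)^{\frac{n-1}{n}}$. Two steps the paper leaves implicit, you make explicit: you establish bigness of $\a^{n-1}$ using Cartier test classes before invoking \eqref{equ:volhat2}, and you identify the extended pairing $(\a,\mu)$ with $(\a^n)$ by continuity.
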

Here $\a^{n-1}$ is defined as in~\cite{DangFavre} (see Example~\ref{exam:Mink}).

\begin{proof}
Fix $\mu\in\BBPF_1(X)$. It is clear that $\vol^\vee(\mu)\ge \inf_{\alpha\in \BBPF_{n-1}(X)}\frac{(\alpha, \mu)}{\vol(\alpha)^{1/n}}$. 
The reverse inequality follows from $\alpha\mapsto\log(\a,\mu)$ and $\a\mapsto\log\vol(\a)^{1/n}$ being strongly continuous, and Cartier classes being strongly dense in $\BBPF_{n-1}(X)$.

    Now suppose $\a\in\BBPF_{n-1}(X)$. The Khovanskii--Teissier inequalities imply 
    \[
    (\beta, \alpha^{n-1})\ge \vol(\beta)^{1/n}\vol(\alpha)^{\frac{n-1}{n}},
    \]
    for any $\beta\in \BBPF_{n-1}(X)$, and so 
    \[
    \vol(\alpha)^{\frac{n-1}{n}}\le \vol^\vee(\mu)\le \frac{(\alpha, \mu)}{\vol(\alpha)^{1/n}}=\vol(\alpha)^{\frac{n-1}{n}},
    \]
    where $\mu=\a^{n-1}$, proving the last assertion.
\end{proof}

We shall use the following version of Siu's inequality for curve classes.

\begin{lem}\label{lem:curvbig}
    For any $\mu\in \BBPF_1(X)$ and any $\a\in\BBPF_{n-1}(X)$, we have
    \begin{equation}\label{equ:Siucurve}
    \frac1e\frac{\vol^\vee(\mu)^n}{(\a,\mu)^{n-1}}\a^{n-1}
    \le\mu
    \le n\frac{(\a,\mu)}{\vol(\a)}\a^{n-1}
    \end{equation} 
    as curve classes.
\end{lem}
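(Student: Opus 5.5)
Both inequalities in~\eqref{equ:Siucurve} are inequalities between Weil b-curve classes. By Proposition~\ref{prop:bpfposlin}, to prove $\mu\le\nu$ it suffices to show $(\b,\nu-\mu)\ge0$ for every nef Cartier b-divisor class $\b\in\CNum(X)$; by homogeneity and density we may also take $\b$ big. So I fix such a $\b$ and prove two numerical inequalities. I write $V\=\vol^\vee(\mu)>0$, $A\=(\a,\mu)$ and $b\=(\b,\a^{n-1})$. The pairing $(\a,\mu)$ for the Weil class $\a$ is the continuous extension discussed just before Lemma~\ref{lem:bpfdual}.

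\emph{Upper bound.} I want $(\b,\mu)\le n\frac{A}{\vol(\a)}\,b$.
\begin{itemize}
\item By~\cite[Theorem~A]{DangFavre}, pick a decreasing net of nef Cartier classes $\a_i\searrow\a$.
\item Siu's inequality for Cartier classes (Lemma~\ref{lem:Siuclass}) gives that $n\frac{(\b,\a_i^{n-1})}{\vol(\a_i)}\a_i-\b$ is a pseudoeffective Cartier class.
\item Since $\mu$ is BPF, Proposition~\ref{prop:bpfposlin} shows that pairing this class with $\mu$ is nonnegative. Hence $(\b,\mu)\le n\frac{(\b,\a_i^{n-1})}{\vol(\a_i)}(\a_i,\mu)$.
\item Let $i\to\infty$. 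Intersection numbers are continuous along decreasing limits, and $\a\mapsto(\a,\mu)$ is strongly continuous. Corollary~\ref{cor:strongdense} upgrades the decreasing weak convergence $\a_i\to\a$ to strong convergence. This yields the bound.
\end{itemize}

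\emph{Lower bound.} This is the more delicate step; the idea is to use the variational definition of $\vol^\vee$ along the ray $\a+t\b$, $t>0$.
\begin{itemize}
\item $\a+t\b\in\BBPF_{n-1}(X)$, so~\eqref{equ:volhat2} gives $A+t(\b,\mu)\ge V\vol(\a+t\b)^{1/n}$.
\item Expand $\vol(\a+t\b)$ by multilinearity of the intersection pairing on $\BPF_{n-1}(X)$ (or by Cartier approximation as above). All mixed terms are nonnegative, so $\vol(\a+t\b)\ge n t\,b$, dropping the other terms.
\item Hence for every $t>0$, $(\b,\mu)\ge t^{-1}\bigl(V(ntb)^{1/n}-A\bigr)$.
\end{itemize}

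It remains to optimize in $t$; here the constant has to come out right.
\begin{itemize}
\item Substitute $u=V(ntb)^{1/n}$, i.e.\ $t=u^n/(nbV^n)$. The bound becomes $(\b,\mu)\ge nbV^n(u-A)u^{-n}$.
\item The function $u\mapsto(u-A)u^{-n}$ is maximized at $u=\frac{n}{n-1}A$. Its value there is $\frac{(n-1)^{n-1}}{n^nA^{n-1}}$.
\item This gives $(\b,\mu)\ge\bigl(1-\tfrac1n\bigr)^{n-1}\frac{V^n}{A^{n-1}}\,b$.
\item Since $(1-\frac1n)^{n-1}\ge e^{-1}$, we get $(\b,\mu)\ge\frac1e\frac{V^n}{A^{n-1}}(\b,\a^{n-1})$. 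This is the left inequality of~\eqref{equ:Siucurve}, tested against $\b$.
\end{itemize}

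The only technical points I expect are justifying the expansion of $\vol(\a+t\b)$ and the pairing with $\mu$ for Weil classes. Both reduce to the Cartier case by decreasing regularization, as in the upper bound.
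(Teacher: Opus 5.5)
Your proposal is correct and follows the paper's argument. For the lower bound you test $\vol^\vee(\mu)$ along the ray $\a+t\b$, which is the paper's ray $\b+s\a$ rescaled by $t=1/s$; you keep only the mixed term $nt(\b,\a^{n-1})$ and optimize to get the constant $(1-\frac1n)^{n-1}\ge e^{-1}$. For the upper bound you pair Siu's inequality with the BPF class $\mu$, and the remaining differences are cosmetic: you skip the normalization $\vol^\vee(\mu)=(\a,\mu)=1$ and spell out the Cartier approximation that the paper leaves implicit.
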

The first inequality together with monotonicity of $\vol^\vee$ implies that if $\mu\in\BPF_1(X)$, then $\vol^\vee(\mu)>0$ iff $\mu$ is big in the sense of~\cite{DangFavre}, that is for some/any big and nef Cartier class $\a$, we have $\mu\ge\e\a^{n-1}$ for some $\e>0$.
\begin{proof}
    A version of the first inequality was established in the proof of Theorem~5.11 in~\cite{DangFavre}. Here is an alternative, elementary argument. By homogeneity we may assume $\vol^\vee(\mu)=1$ and $(\a,\mu)=1$.
    Pick a nef Cartier class $\b$. For $t>0$, we have $\b+t\a\in\BBPF_{n-1}(X)$, and $\vol(\b+t\a)\ge nt^{n-1}(\b,\a^{n-1})$. 
    \[
    (\b,\mu)
    =(\b+t\a,\mu)-t
    \ge\vol(\b+t\a)^{\frac1{n}}-t
    \ge(n(\b,\a^{n-1}))^{\frac1{n}}t^{\frac{n-1}{n}}-t
\]
    where the equality follows from Cartier approximation of $\a$, the first inequality from the definition of $\vol^\vee(\mu)$, and the second inequality from the estimate above.
    By taking the supremum over $t>0$, we obtain
    \[
    (\b,\mu)
    \ge\left(\frac{n-1}{n}\right)^{n-1}(\b,\a^{n-1})
    \ge\frac1e(\b,\a^{n-1}).
    \]
    As this holds for all nef Cartier classes $\beta$, the first inequality in~\eqref{equ:Siucurve} follows.

    To prove the second inequality, it similarly suffices to prove $(\b,\mu)\le n\frac{(\a,\mu)}{\vol(\a)}(\b,\a^{n-1})$, and this follows from Siu's inequality for divisor classes, see Lemma~\ref{lem:Siuclass}.
\end{proof}

We now introduce the strong topology on $\BBPF_1(X)$.
We start with defining the pseudoeffective slope, following the procedure in~\S\ref{sec:strongPnum}.
For $\mu, \nu\in \BBPF_1(X)$, we set 
\[
s(\mu, \nu)\=\sup\{s>0\mid \mu\ge s\nu\}.
\]
It follows from Lemma~\ref{lem:curvbig} that $0<s(\mu,\nu)<\infty$. 
Then we set 
\[
\de_\infty(\mu, \nu)\=\inf\{\delta>0\mid e^{-\delta}\mu\le\nu\le e^\delta\mu\}
=\log\max\left\{\tfrac1{s(\mu,\nu)},\tfrac1{s(\nu,\mu)}\right\}.
\]
From the definition of $\vol^\vee$ we obtain the Lipschitz estimate
\begin{equation}\label{equ:volhatLip}
|\log\vol^\vee(\mu)-\log\vol^\vee(\nu)|\le\de_\infty(\mu,\nu).
\end{equation}
\begin{defi}
    The \emph{strong topology} on $\BBPF_1(X)$ is the metric topology induced by $\de_\infty$.
\end{defi}
It is clear that if a net $(\mu_i)$ in $\BBPF_1(X)$ converges strongly to $\mu\in \BBPF_1(X)$, then $\mu_i\to \mu$ weakly and $\vol^\vee(\mu_i)\to \vol^\vee(\mu)$. 
\begin{prop}\label{prop:curvcomplete}
    The metric space $(\BBPF_1(X), \de_\infty)$ is complete. 
\end{prop}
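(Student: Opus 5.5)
Let $(\mu_i)_{i\in I}$ be a $\de_\infty$-Cauchy net in $\BBPF_1(X)$. The plan, following the introduction, is first to produce a weak limit $\mu$, then to check that $\mu$ is big, and finally to upgrade weak convergence to $\de_\infty$-convergence using that the order relation $\le$ is weakly closed.

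\emph{Step 1: uniform bounds.} Fix $i_0$ such that $\de_\infty(\mu_i,\mu_j)\le 1$ for all $i,j\ge i_0$. Then $e^{-1}\mu_{i_0}\le\mu_i\le e\,\mu_{i_0}$ for $i\ge i_0$. Consequently, for every nef Cartier class $\b$, the numbers $(\b,\mu_i)$ lie in $[e^{-1}(\b,\mu_{i_0}),e(\b,\mu_{i_0})]$.

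\emph{Step 2: weak convergence.} Given $\e>0$, there is $i_\e$ such that $e^{-\e}\mu_j\le\mu_i\le e^{\e}\mu_j$ for $i,j\ge i_\e$. Pairing with a nef Cartier class $\b$, and using Proposition~\ref{prop:bpfposlin} (a BPF class, hence pseudoeffective differences, pair nonnegatively with nef classes), we get
\[
|(\b,\mu_i)-(\b,\mu_j)|\le (e^{\e}-1)\,e\,(\b,\mu_{i_0}).
\]
So $(\b,\mu_i)$ is a Cauchy net of reals and converges. Every Cartier class is a difference of nef Cartier classes, so by linearity we obtain a limiting linear form $\mu$ on $\CNum(X)$. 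On each smooth model $Y$, $\Curv(Y)$ is finite dimensional and dual to $\Num(Y)$, so the incarnations $\mu_{i,Y}$ converge. Hence $\mu\in\Curv(X)$ and $\mu_i\to\mu$ weakly. Since the conditions in Proposition~\ref{prop:bpfposlin} are closed, $\mu$ is BPF.

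\emph{Step 3: $\mu$ is big, and strong convergence.} The key point is that for fixed $j$ and $i\to\infty$, the relations $e^{-\e}\mu_j\le\mu_i\le e^{\e}\mu_j$ pass to the weak limit. Indeed, pseudoeffectivity of a curve class is the closed condition $(\b,\cdot)\ge0$ for all nef Cartier $\b$ (Proposition~\ref{prop:bpfposlin}). Therefore
\[
e^{-\e}\mu_j\le\mu\le e^{\e}\mu_j\qquad\text{for } j\ge i_\e.
\]
In particular $\mu\ge e^{-\e}\mu_j$. By monotonicity and homogeneity of $\vol^\vee$ (which are immediate from the definition, as $(\a,\cdot)$ is increasing for nef $\a$), this gives $\vol^\vee(\mu)\ge e^{-\e}\vol^\vee(\mu_j)>0$, so $\mu\in\BBPF_1(X)$. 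The same relations say exactly that $\de_\infty(\mu,\mu_j)\le\e$ for $j\ge i_\e$, so $\mu_j\to\mu$ in $\de_\infty$.

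The step needing the most care is Step 2, namely passing from convergence of the pairings with Cartier classes to an honest element of the inverse limit $\Curv(X)$, compatible under pushforward. Compatibility should follow from the projection formula, since pushforward is dual to pullback of Cartier classes and the limit is taken pairing by pairing. Everything else reduces to closedness of the pseudoeffective cone under weak limits.
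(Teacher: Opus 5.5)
Your proposal is correct and follows essentially the same route as the paper: produce a weak limit by showing that the pairings with nef (or big and nef) Cartier classes form Cauchy nets, then pass the two-sided bounds $e^{-\e}\mu_j\le\mu_i\le e^{\e}\mu_j$ to the limit in $i$ to obtain $\de_\infty$-convergence. The only difference is minor: you deduce bigness of $\mu$ from monotonicity and homogeneity of $\vol^\vee$ applied to $\mu\ge e^{-\e}\mu_j$, whereas the paper uses upper semicontinuity of $\vol^\vee$, and you spell out more carefully why the limiting linear form defines an element of $\Curv(X)$.
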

\begin{proof}
    Let $(\mu_i)$ be a Cauchy net in $\BBPF_1(X)$. By~\eqref{equ:volhatLip}, $(\log\vol^\vee(\mu_i))$ is a Cauchy net,  and so $\lim_j\vol^\vee(\mu_j)>0$. Moreover, we have $\lim_{i,j}s(\mu_i, \mu_j)=1$.
    Let $\alpha\in \CNum(X)$ be any big and nef class. Then
    \[
    0<\liminf_j\vol(\alpha)^{1/n}\vol^\vee(\mu_j)\le \liminf_j(\alpha, \mu_j),
    \]
    and further, for any $i, j$, we have
    \begin{equation}\label{equ:unifconv}
        s(\mu_i, \mu_j)\le \frac{(\alpha, \mu_i)}{(\alpha, \mu_j)}\le s(\mu_j, \mu_i)^{-1}.
    \end{equation}
    Thus $\lim_j(\alpha, \mu_j)$ exists and is positive for any $\alpha$ big and nef. Extending by linearity, this yields a weak limit $\mu\in \BPF_1(X)$, and by upper semicontinuity, we have $\vol^\vee(\mu)\ge\varlimsup_j\vol^\vee(\mu_j)>0$, so $\mu\in\BBPF_1(X)$.
    
    We claim that $\de_\infty(\mu_i,\mu)\to0$. This amounts to $\lim_js(\mu_j, \mu)=\lim_js(\mu, \mu_j)=1$, or equivalently, that 
    \[
    \lim_j\frac{(\alpha, \mu_j)}{(\alpha, \mu)}=\lim_j\frac{(\alpha, \mu)}{(\alpha, \mu_j)}=1
    \]
    uniformly for all big and nef classes $\alpha\in \CNum(X)$. To see this, fix $c>1$. Then by~\eqref{equ:unifconv}, for any $i, j\gg 1$, we have $c^{-1}\le \frac{(\alpha, \mu_i)}{(\alpha, \mu_j)}\le c$ for all $\alpha$ big and nef. Since $\mu_j\to \mu$ weakly, we have
    \[
    c^{-1}\le \frac{(\alpha, \mu_i)}{(\alpha, \mu)}\le c, \quad  c^{-1}\le \frac{(\alpha, \mu)}{(\alpha, \mu_i)}\le c
    \]
    for all $i\gg 1$. Letting $c\to 1$ we conclude.
\end{proof}

\subsection{The Minkowski problem}\label{sec:Mink}
We now prove Theorem~A in the introduction. More precisely we prove
\begin{thm}\label{thm:metricequivnum}
    The map $(\BBPF_{n-1}(X), \de_\infty)\to (\BBPF_1(X), \de_\infty)$ given by $\a\mapsto\a^{n-1}$ is a Lipschitz continuous homeomorphism, with locally uniformly H\"older continuous inverse. More precisely, for any $\alpha, \beta\in \BBPF_{n-1}(X)$, we have
    \begin{equation}\label{equ:Mincont1}
    \de_\infty(\alpha^{n-1},\beta^{n-1})\le(n-1)\de_\infty(\alpha, \beta),
    \end{equation}
    and 
    \begin{equation}\label{equ:Mincont2}
    \de_\infty(\alpha, \beta)\le F\left(\tfrac{3n-2}{n-1}\de_\infty(\alpha^{n-1},\beta^{n-1})\right),
    \end{equation}
    for the function $F$ in Proposition~\ref{prop:metrictop}.
\end{thm}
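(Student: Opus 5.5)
The plan is to prove \eqref{equ:Mincont1} directly from the monotonicity of the intersection pairing, and then to prove \eqref{equ:Mincont2} by bounding $\bar\ii(\a,\b)$ in terms of $\de_\infty(\a^{n-1},\b^{n-1})$ and applying Proposition~\ref{prop:metrictop}. Bijectivity of $\a\mapsto\a^{n-1}$ is due to Dang--Favre and will be taken as known; the two estimates then make the map a homeomorphism with the stated regularity.

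\textbf{Step 1: the Lipschitz bound \eqref{equ:Mincont1}.} Set $\d=\de_\infty(\a,\b)$, so that $e^{-\d}\a\le\b\le e^{\d}\a$. For any nef Cartier class $\g$, the pairing is increasing in each argument on $\BPF_{n-1}(X)$. Since $\b-e^{-\d}\a$ is only pseudoeffective, I will not expand $(\b,\dots,\b,\g)$ multilinearly. Instead I will rely on the monotonicity statement from Dang--Favre, which holds in each slot for nef arguments: if $\a'\le\b'$ with both nef, then $(\a',\a_2,\dots)\le(\b',\a_2,\dots)$. Replacing the $n-1$ copies of $\a$ by $e^{-\d}\a\le\b$ one slot at a time gives
\[
(\g,\b^{n-1})\ge e^{-(n-1)\d}(\g,\a^{n-1}).
\]
The reverse bound $(\g,\b^{n-1})\le e^{(n-1)\d}(\g,\a^{n-1})$ follows symmetrically. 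By Proposition~\ref{prop:bpfposlin}, testing against all nef Cartier $\g$ characterizes the order on curve classes. Hence $e^{-(n-1)\d}\a^{n-1}\le\b^{n-1}\le e^{(n-1)\d}\a^{n-1}$, which is \eqref{equ:Mincont1}.

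\textbf{Step 2: bounding $\bar\ii$ for \eqref{equ:Mincont2}.} Set $\e=\de_\infty(\mu,\nu)$ with $\mu=\a^{n-1}$ and $\nu=\b^{n-1}$. By \eqref{equ:volhatLip} and Lemma~\ref{lem:bpfdual}, we have $\vol^\vee(\mu)=\vol(\a)^{\frac{n-1}{n}}$, and therefore
\[
|\log\vol(\a)-\log\vol(\b)|\le\tfrac{n}{n-1}\e.
\]
Next, the extension of $\g\mapsto(\g,\mu)$ to $\BBPF_{n-1}(X)$ is continuous and increasing in $\mu$, so $\mu\le e^\e\nu$ yields $(\a,\a^{n-1})\le e^\e(\a,\b^{n-1})$, and similarly $(\b,\a^{n-1})\le e^\e(\b,\b^{n-1})$. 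Thus
\[
(\b,\a^{n-1})\le e^\e\vol(\b),\qquad (\a,\b^{n-1})\le e^\e\vol(\a)\quad\text{(using }\nu\le e^\e\mu\text{)}.
\]
It follows that $\ii(\a,\b)\le 2\e$, and hence
\[
\bar\ii(\a,\b)\le 2\e+\tfrac{n}{n-1}\e=\tfrac{3n-2}{n-1}\e.
\]
Since $F$ is increasing, Proposition~\ref{prop:metrictop} now gives \eqref{equ:Mincont2}. The local H\"older continuity of the inverse follows from $F\in C^{0,1/n}_{\mathrm{loc}}$ with $F(0)=0$.

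\textbf{Main obstacle.} The delicate step is justifying the monotonicity used in Steps~1 and~2 for non-Cartier classes. In Step~2 this means comparing the pairings of the Weil class $\a$ against the curve classes $\mu$ and $\nu$ through the continuous extension defined just before Lemma~\ref{lem:bpfdual}. I would handle it by approximating $\a$ and $\b$ by nef Cartier classes, either decreasingly or strongly via Corollary~\ref{cor:strongdense}, and passing to the limit using continuity along decreasing nets.
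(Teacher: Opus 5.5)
Your proposal is correct and follows essentially the same route as the paper. Step~1 is the paper's monotonicity argument for \eqref{equ:Mincont1}. Step~2 bounds $\bar\ii(\a,\b)\le\tfrac{3n-2}{n-1}\de_\infty(\a^{n-1},\b^{n-1})$ using Lemma~\ref{lem:bpfdual} and the upper bounds $(\a,\b^{n-1})\le e^{\e}\vol(\a)$, $(\b,\a^{n-1})\le e^{\e}\vol(\b)$, justified by decreasing Cartier approximation of $\a$ exactly as in the paper, and then invokes Proposition~\ref{prop:metrictop}.

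One small slip: the first inequality you derive in Step~2, $(\a,\a^{n-1})\le e^{\e}(\a,\b^{n-1})$, is a lower bound on $(\a,\b^{n-1})$ and is not the one you need. The required upper bound comes from $\nu\le e^{\e}\mu$, and your displayed conclusion states it correctly.
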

\begin{proof}
    That the map is a bijection is Theorem~C in~\cite{DangFavre}. It only remains to prove~\eqref{equ:Mincont1} and~\eqref{equ:Mincont2}.
    
    If $e^{-\d}\alpha\le\beta\le e^\d\alpha$, then $e^{-(n-1)\d}\alpha^{n-1}\le\beta^{n-1}\le e^{(n-1)\d}\alpha^{n-1}$; this immediately gives~\eqref{equ:Mincont1}.
    Now suppose $e^{-\d}\alpha^{n-1}\le \beta^{n-1}\le e^{\d}\alpha^{n-1}$. By Lemma~\ref{lem:bpfdual}, this first yields $e^{-\frac{n}{n-1}\d}\vol(\a)\le \vol(\b)\le e^{\frac{n}{n-1}\d}\vol(\a)$. Further, if $(\a_i)_i$ is a decreasing net in $\BBPF_{n-1}(X)$ converging to $\a$, then 
    $(\a_i,\b^{n-1})\le e^\d(\a_i,\a^{n-1})$, for all $i$, which by continuity of intersection numbers along decreasing nets yields $(\a,\b^{n-1})\le e^\d \vol(\a)$. By symmetry, we have $(\b,\a^{n-1})\le e^\d\vol(\b)$, which leads to $\bar\ii(\a,\b)\le (2+\frac{n}{n-1})\d$, and we get~\eqref{equ:Mincont2} using Proposition~\ref{prop:metrictop}. 
\end{proof}

\section{The Berkovich analytification and Fubini--Study functions}\label{sec:FSfcn}

Next we recall the Berkovich analytification $X^\na$ of $X$, and introduce certain natural classes of functions on it.

\subsection{Valuations and Berkovich analytification}\label{sec:vals}
A \emph{semivaluation} on $X$ (or on $R$) is a function $v\colon R\to\R\cup\{+\infty\}$ satisfying $v(0)=+\infty$, $v|_{k^\times}\equiv0$ and $v(fg)=v(f)+v(g)$, $v(f+g)\ge\min\{v(f),v(g)\}$ for $f,g\in R$. It is a \emph{valuation} if $v(f)<+\infty$ when $f\ne0$. Any valuation on $R$ extends uniquely to a valuation on the fraction field $k(X)$ of $X$. On the other hand, a semivaluation $v$ induces a valuation on $R/\{v=+\infty\}$. 

The \emph{Berkovich analytification} $X^\na$ of $X$ is defined as the space of semivaluations on $X$. If $X^\val\subset X^\na$ is the set of valuations on $X$, we see that $X^\na=\bigcup_ZZ^\val$, where $Z$ ranges over subvarieties of $X$. A semivaluation $v\in Z^\val\subset X^\na$ is said to have \emph{support} equal to $Z$.

We equip $X^\na$ with the topology of pointwise convergence on $R$; then $X^\na$ is locally compact. More precisely, if $f_1,\dots,f_N\in R$ are elements that generate $R$ as a $k$-algebra, then the set $\{v\in X^\na\mid\min_iv(f_i)\ge-t\}$ is compact for all $t>0$.

A semivaluation $v\in X^\na$ is \emph{centered on $X$} if $v\ge0$ on $R$: in this case the \emph{center} $c_X(v)$ is the point on $X$ defined by the prime ideal $\{v>0\}\subset R$. If $v$ is not centered on $X$, we say that $v$ is \emph{centered at infinity}. We write $X^\beth$ (resp.\ $X^\na_\infty$) for the closed (resp.\ open) set of semivaluations centered on $X$ (resp.\ at infinity). 

If $\bar X$ is a compactification of $X$, then it follows from the valuative condition of properness that any valuation $v\in X^\val$ admits a center $c_{\bar X}(v)\in\bar{X}$, the unique point such that $v$ is nonnegative on the local ring $\cO_{\bar X,c_{\bar X}(v)}$ and strictly positive on its maximal ideal. The center is defined also for $v\in X^\na$. We have $c_{\bar X}(v)\in X$ iff $v\in X^\beth$, and in this case $c_{\bar X}(v)=c_X(v)$ as defined above.

We have a natural multiplicative action of $\R_{>0}$ on $X^\na$. This preserves the support and center; in particular $X^\beth$ and $X^\na_\infty$ are invariant under this action.

The \emph{trivial valuation} $\triv_X$ is defined by $\triv_X\equiv0$ on $R\setminus\{0\}$ (hence also on $k(X)^\times$). More generally, we have a trivial semivaluation $\triv_Z$ for every subvariety $Z\subset X$. These semivaluations can be characterized by taking values in $\{0,+\infty\}$, as fixed points for the multiplicative action of $\R_{>0}$, or as semivaluations in $X^\beth$ for which the support is equal to the center.

A valuation $v\in X^\val$ is \emph{divisorial} if $v(k(X)^\times)\subset\Q$ and $\trdeg(\kappa(v)/k)=n-1$, where $\kappa(v)=\{v\ge0\}/\{v>0\}$ is the residue field of $v$. We write $X^\div$ for the set of divisorial valuations, and $X^\div_\infty$ for the subset of divisorial valuations centered at infinity. These sets are invariant under the scaling action by $\Q_{>0}$.

Valuations in $X^\div_\infty$ can be geometrically described as follows: 
there exists an integrally closed compactification $\bar{X}$ of $X$, an irreducible component $E\subset\partial\bar X$, and $c\in\Q_{>0}$ such that $v=c\ord_E$. This follows from a result by Zariski, see~\cite[Lemma~2.45]{KollarMori}, which says that after blowing up the center finitely many times, the center becomes a prime divisor. The same argument shows that if $v\in X^\beth\cap X^\div$, then $v=c\ord_E$, where now $E$ is a prime divisor on a normal projective variety $Y$ birational to $X$; we may assume $Y$ is a smooth projective model of $X$, that is, $Y$ is a smooth projective variety admitting a birational morphism $Y\to\bar X$ onto some integrally closed compactification $\bar X$ of $X$.
Moreover, we have 
\begin{prop}\label{prop:compact}
    The space $X^\na_\infty/\R_+^\times$ is compact.
\end{prop}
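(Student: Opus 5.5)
Fix $k$-algebra generators $f_1,\dots,f_N$ of $R$ and consider the continuous function
\[
\p(v)\=\max_{1\le i\le N}\log^+|f_i|(v)=\max_i\max\{-v(f_i),0\}
\]
on $X^\na$. The plan is to show that $X^\na_\infty=\{\p>0\}$, that $\p$ is homogeneous of degree one for the $\R_{>0}$-action, and that the slice $K\=\{\p=1\}$ is compact. The map $K\to X^\na_\infty/\R_+^\times$ is then a continuous surjection from a compact space, and the quotient is compact.

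First, $\p(v)>0$ iff $v(f_i)<0$ for some $i$. If all $v(f_i)\ge0$, then $v\ge0$ on $R$, since every element of $R$ is a polynomial in the $f_i$ and the ultrametric inequality applies with $v|_{k^\times}=0$; so $v\in X^\beth$. Conversely, $v\in X^\beth$ gives $\p(v)=0$. Hence $X^\na_\infty=\{\p>0\}$. Homogeneity $\p(tv)=t\p(v)$ is clear, so every $\R_{>0}$-orbit in $X^\na_\infty$ meets $K$ exactly once, at $t=\p(v)^{-1}$. This shows that $K\to X^\na_\infty/\R_+^\times$ is surjective, and it is continuous since it is the restriction of the quotient map.

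Next, $K$ is compact. It is contained in $\{v\mid\min_iv(f_i)\ge-1\}$, which is compact by the statement recalled in~\S\ref{sec:vals}. It is also closed, being the preimage of $\{1\}$ under the continuous function $\p$ (each $v\mapsto v(f_i)$ is continuous for the topology of pointwise convergence, with values in $\R\cup\{+\infty\}$, and $\max\{-v(f_i),0\}$ is then continuous and real-valued). A closed subset of a compact set is compact, so $K$ is compact.

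The only point needing care is the quotient topology. If one also wants Hausdorffness, or that $K\to X^\na_\infty/\R_+^\times$ is a homeomorphism, note that the inverse map $v\mapsto\p(v)^{-1}v$ is continuous on $X^\na_\infty$, since the action $\R_{>0}\times X^\na\to X^\na$ is jointly continuous. This map is constant on orbits and descends to a continuous inverse. Compactness itself, which is what is claimed, only uses surjectivity and continuity, so I expect no real obstacle; the main check is the compactness of the sublevel sets of $\min_i v(f_i)$, which was already recalled earlier in the paper.
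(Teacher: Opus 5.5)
Your proof is correct and is essentially the paper's argument. The paper takes the level set $\{\f_D=1\}$ of the homogeneous function $v\mapsto v(D)$, for a strictly effective Cartier divisor $D$ at infinity on a compactification $\bar X$, and your $\max_i\log^+|f_i|$ is exactly $\p_{\bar X,D}$ for the projective closure of $X\hto\A^N$ with $D$ the hyperplane at infinity; the only difference is that you get compactness of the slice from the recalled compactness of $\{\min_i v(f_i)\ge-1\}$ in $X^\na$, rather than (implicitly) from compactness of $\bar X^\na$, and your inverse $v\mapsto\p(v)^{-1}v$ matches the paper's homeomorphism $[v]\mapsto v/\f_D(v)$.
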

\begin{proof}
    Pick a compactification $\bar X$ of $X$ together with a strictly  effective Cartier divisor $D$ at infinity. The function $\f_D\colon\bar{X}^\na\to[0,+\infty]$ defined by $\f_D(v):=v(D)$ is then continuous, equivariant with respect to action of $\R_+^\times$, and we have $\{\f_D=0\}=X^\beth$, $\{0<\f_D<\infty\}=X^\na_\infty$. Thus $X^\na_D:=\{\f_D=1\}$ is a compact subset of $X^\na_\infty$. The restriction to $X^\na_D$ of the canonical map $X^\na_\infty\to X^\na_\infty/\R_+^\times$ is now a homeomorphism, whose inverse given by $[v]\mapsto v/\f_D(v)$. The result follows.
\end{proof}

A morphism $h\colon X\to X'$ between affine varieties induces a continuous map $h^\na\colon X^\na\to X^{\prime\na}$, which satisfies $(h^\na)^{-1}(X^{\prime\na}_\infty)\subset X^\na_\infty$. When $h$ is a closed immersion, or more generally a finite morphism, we have $h^\na(X^\na_\infty)\subset X^{\prime\na}_\infty$.

\subsection{Piecewise linear functions}\label{sec:PL}
Given a compactification $\bar{X}$ of $X$ and an effective Cartier divisor $D$ at infinity on $\bar{X}$, we define a continuous function $\psi_{\bar{X},D}\colon\bar{X}^\na\to\R_+\cup\{+\infty\}$ by 
\[
\p_{\bar{X},D}(v)\coloneqq v(D).
\]
More precisely, if $U\subset\bar{X}$ is an open affine subset containing the center of $v$ on $\bar{X}$, and such that $D$ is defined by $h\in\cO_{\bar{X}}(U)$, then $v\in U^\na$ and $\p_{\bar{X},D}(v)\coloneqq v(h)$. For example, if $\bar{X}$ is integrally closed and $D=\sum_ib_iE_i$, then $\p_{\bar{X},D}(\ord_{E_i})=b_i$.

The function $\p_D$ is \emph{homogeneous} in the sense that $\f(tv)=t\f(v)$ for any $v\in X^\na$ and any $t>0$. It is finite-valued on $X^\na$ and vanishes on $X^\na\setminus X^\na_\infty$. Further, $D$ is strictly effective iff $\p_D>0$ on $X^\na_\infty$. 

If $\mu\colon\bar{X}'\to\bar{X}$ is a morphism of compactifications, then $\p_{\bar{X},D}=\p_{\bar{X}',\mu^*D}$ on $X^\na$. We therefore obtain a well-defined map $D\mapsto\p_D\in C^0(X^\na)$ from the space of effective Cartier divisors at infinity on $X$. This map is $\Z_{>0}$-linear, and therefore extends uniquely to a $\Q$-linear map on the space $\CbDiv_\infty(X)_\Q\to C^0(X^\na)$.
This map is injective. Indeed, if $D\in\CbDiv_\infty(X)_\Q$, then we can realize $D$ as a $\Q$-Cartier $\Q$-divisor $D=\sum_ir_iE_i$ at infinity on an integrally closed compactification $\bar{X}$ of $X$, and we have $\p_D(\ord_{E_i})=r_i$, so if $\p_D=0$, then $D=0$
\begin{defi}
    The space $\PL(X)\subset C^0(X^\na)$ of \emph{piecewise linear functions} is the image of the map $\CbDiv_\infty(X)_\Q\to C^0(X^\na)$. We write $\PL^+(X)\subset\PL(X)$ for the set of functions $\p_D$ with $D\in\CbDiv_\infty(X)_\Q$ semiample and strictly effective.
\end{defi}
Any function in $\PL(X)$ is homogeneous and vanishes on $X^\na\setminus X^\na_\infty$. By Lemma~\ref{lem:sabig}, 
we have an identification 
\[
\PL^+(X)=\cP\cC^{\mathrm{int}}
\]
by sending the divisor to the unique integrally closed polarized compactification representing it. 
It follows that $\p$ is strictly positive on $X^\na_\infty$, and extends as a continuous homogeneous function $\p\colon\bar{X}^{\prime\an}\to\R_+\cup\{\infty\}$ for any  compactification of $X$ dominating $\bar X$.

\begin{lem}\label{lem:PLloggr}
Any function in $\PL(X)$ is a difference of functions in $\PL^+(X)$. Moreover, given $\p_1,\p_2\in\PL^+(X)$ there exists $C\ge1$ such that $C^{-1}\p_1\le\p_2\le C\p_1$ on $X^\na$.
\end{lem}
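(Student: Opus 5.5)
The first assertion is a direct translation of Corollary~\ref{cor:diffsa} through the linear map $D\mapsto\p_D$. Given $\p=\p_D\in\PL(X)$ with $D\in\CbDiv_\infty(X)_\Q$, I would first clear denominators: pick $m\in\Z_{>0}$ with $mD\in\CbDiv_\infty(X)$. Corollary~\ref{cor:diffsa} then writes $mD=A-B$ with $A,B$ semiample and strictly effective. Since $D\mapsto\p_D$ is $\Q$-linear, this gives $\p_D=\p_{A/m}-\p_{B/m}$. Both $A/m$ and $B/m$ are semiample and strictly effective $\Q$-Cartier b-divisors, so both functions lie in $\PL^+(X)$.

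For the comparison, I would write $\p_i=\p_{D_i}$ and, after scaling, assume the $D_i$ are Cartier. By Lemma~\ref{lem:hasample} I can then choose a single integrally closed compactification $\bar X$ on which both $D_1$ and $D_2$ are determined. Let $E_1,\dots,E_N$ be the prime components of $\partial\bar X$.

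Since $D_1$ is strictly effective, every $E_j$ appears in $D_1$ with positive coefficient, so $CD_1-D_2$ is effective for $C\gg1$. By symmetry, $CD_2-D_1$ is also effective. Effectivity of a Cartier divisor $D'$ at infinity gives $\p_{D'}=v(D')\ge0$ on $X^\na$, because a local equation of $D'$ is regular at the center of $v$ and $v$ is nonnegative on the local ring there. Applying this to $D'=CD_1-D_2$ and $D'=CD_2-D_1$, and using linearity, yields $\p_2\le C\p_1$ and $\p_1\le C\p_2$.

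The only point requiring care is the passage from coefficient positivity to effectivity when $\bar X$ is not normal: Weil coefficients along the $E_j$ must control effectivity of the Cartier divisor $CD_1-D_2$. This is exactly Lemma~\ref{lem:critnopole} applied to the fractional ideal $\cO_{\bar X}(-(CD_1-D_2))$, which is available because $\bar X$ is integrally closed. Alternatively, one can bypass it: both functions are homogeneous and continuous, and $\p_1>0$ on the compact set $X^\na_{D_1}=\{\p_1=1\}$ from Proposition~\ref{prop:compact}, where $\p_2$ is bounded above and below by positive constants. Homogeneity then extends the two-sided bound to $X^\na_\infty$, and on $X^\beth$ both functions vanish.
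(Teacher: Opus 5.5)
Your proposal is correct and follows the paper's proof. The first assertion comes from Corollary~\ref{cor:diffsa}, and the second from the effectivity of $CD_1-D_2$ and $CD_2-D_1$ for $C\gg1$. The paper only asserts that effectivity fact, and your justification via Lemma~\ref{lem:critnopole} on an integrally closed model (or via your compactness alternative) supplies the missing step. It also holds on any compactification: locally $D_i=(h_i)$ with $V(h_1)=V(h_2)$, so $h_1\in\sqrt{(h_2)}$ gives $h_1^C\in(h_2)$ for some $C$.
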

\begin{proof}
    The first statement is a consequence of Corollary~\ref{cor:diffsa}.
    The second statement follows from the fact that if $D_1$, $D_2$ are strictly effective divisors at infinity on some compactification, then there exists $C\ge1$ such that $CD_1-D_2$ and $CD_2-D_1$ are effective.
\end{proof}
\begin{lem}\label{lem:PLfin}
    Let $\tau\colon X\to X'$ be a finite morphism of affine varieties. Then $(\tau^\na)^*\PL(X')\subset\PL(X)$ and $(\tau^\na)^*\PL^+(X')\subset\PL^+(X)$.
\end{lem}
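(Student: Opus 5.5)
The idea is to realize $\tau$ as a morphism of compactifications and pull divisors back along it.

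Let $\psi\in\PL(X')$. We may write $\psi=\p_{D'}$ with $D'\in\CDiv_\infty(\bar X')_\Q$ for some integrally closed compactification $\bar X'$ of $X'$. We then choose a compactification $\bar X$ of $X$ such that $\tau$ extends to a morphism $\bar\tau\colon\bar X\to\bar X'$ with $\bar\tau^{-1}(\partial\bar X')=\partial\bar X$. To construct it:
\begin{itemize}
\item Start from any compactification $\bar X_0$ of $X$ and take the closure $\bar X$ of the graph of $\tau$ in $\bar X_0\times\bar X'$. This is a compactification of $X$, because the graph of $\tau$ is closed in $X\times\bar X'$ and hence $X$ is open and dense in $\bar X$.
\item The second projection gives the morphism $\bar\tau$.
\item To see that $\bar\tau^{-1}(X')=X$, use the valuative criterion: $\tau$ is finite, hence proper. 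For a point $x\in\bar X$ with $\bar\tau(x)\in X'$, pick a valuation $v$ of $k(X)$ centered at $x$. Then $v\circ\tau^*$ is centered on $X'$, so $v\ge0$ on $\tau^*R'$. Since $R$ is integral over $\tau^*R'$, we get $v\ge0$ on $R$, so $v$ is centered on $X$, i.e.\ $x\in X$.
\end{itemize}
This is the finiteness input already noted in \S\ref{sec:vals}, namely $(\tau^\na)^{-1}(X'^\na_\infty)=X^\na_\infty$ for finite $\tau$.

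Set $D:=\bar\tau^*D'$, which is a $\Q$-Cartier divisor supported in $\partial\bar X$. Passing to the integral closure of $\bar X$ if necessary, $D$ defines an element of $\CbDiv_\infty(X)_\Q$. The key identity is then
\[
\p_D(v)=v(\bar\tau^*h)=(\tau^\na v)(h)=\p_{D'}(\tau^\na v),
\]
where $h$ is a local equation of $D'$ at $c_{\bar X'}(\tau^\na v)=\bar\tau(c_{\bar X}(v))$. Hence $(\tau^\na)^*\p_{D'}=\p_D\in\PL(X)$, and by $\Q$-linearity the first inclusion holds on all of $\PL(X')$.

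For the second inclusion, take $D'$ semiample and strictly effective. Pullback preserves semiampleness, so $D$ is semiample. For strict effectivity:
\begin{itemize}
\item Since $\bar\tau^{-1}(\partial\bar X')=\partial\bar X$ and $\supp D'=\partial\bar X'$, we have $\supp\bar\tau^*D'=\bar\tau^{-1}(\supp D')=\partial\bar X$.
\item Valuatively, $\p_D=\p_{D'}\circ\tau^\na>0$ on $X^\na_\infty$, because $\tau^\na$ maps $X^\na_\infty$ into $X'^\na_\infty$, where $\p_{D'}>0$.
\end{itemize}
By Lemma~\ref{lem:divfcn}, $D$ is therefore strictly effective, so $(\tau^\na)^*\p_{D'}\in\PL^+(X)$.

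The main obstacle is constructing $\bar X$ so that $\bar\tau^{-1}(\partial\bar X')$ equals $\partial\bar X$ exactly. This is where finiteness of $\tau$ is used, through integrality of $R$ over $\tau^*R'$. The remaining steps are formal.
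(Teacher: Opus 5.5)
Your proof is correct and follows the same route as the paper: extend $\tau$ to a morphism $\bar\tau\colon\bar X\to\bar X'$ of compactifications, pull back $D'$, and use $\p_{\bar\tau^*D'}=\p_{D'}\circ\tau^\na$. You also supply details the paper leaves implicit, namely the graph-closure construction of $\bar X$ and the integrality argument giving $\bar\tau^{-1}(\partial\bar X')=\partial\bar X$; the latter is exactly what the paper's phrase ``since $\tau$ is finite'' relies on for strict effectivity.
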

\begin{proof}
    Suppose $\p\in\PL(X')$, so that $\p=\p_{D'}$ for some $\Q$-Cartier $\Q$-divisor $D'$ at infinity on some compactification $\bar{X}'$ of $X'$. Pick a compactification $\bar{X}$ of $X$ such that $\tau$ extends to a morphism $\tau\colon\bar{X}\to\bar{X}'$. Then $\p\circ\tau^\na=\p_{\tau^*D'}$, so $\p\circ\tau^\na\in\PL(X)$. Moreover, if $\p\in\PL^+(X')$, then $D'$ is semiample and strictly effective. It follows that $\tau^*D'$ has the same properties since $\tau$ is finite, so $\p\circ\tau^\na\in\PL^+(X)$.
\end{proof}

\subsection{Functions of logarithmic growth}\label{sec:loggr}
Given $\p\in\PL^+(X)$ we set 
\[
X^\na_\p:=\{v\in X^\na_\infty\mid \p(v)=1\}.
\]
This is a compact subset of $X^\na_\infty$, and the canonical map $X^\na_\infty\to X^\na_\infty/\R_+^\times$ restricts to a homeomorphism $X^\na_\p\simto X^\na_\infty/\R_+^\times$; see the proof of Proposition~\ref{prop:compact}. Any function $\psi_D$ defined by a strictly effective divisor $D\in \CbDiv_\infty(X)_\Q$ is in particular positive on $X^\na_\psi$ for any $\psi\in\PL^+(X)$ and hence there exists $C\ge 1$ with
$C^{-1}\psi\le \psi_D\le C\psi$. More generally, we introduce
\begin{defi}
    A function $\f\colon X^\na\to\R$ is of \emph{logarithmic growth} if it is homogeneous, and for some (or, equivalently, any) $\p\in\PL^+(X)$, 
    there exists $C\ge1$ such that 
    \[
    C^{-1}\p\le\f\le C\p.
    \]
\end{defi}
Denote by $\cL^\infty=\cL^\infty(X)$ the space of functions of logarithmic growth.
A function of logarithmic growth is thus nonnegative, and strictly positive exactly on $X^\na_\infty$. 
As an immediate consequence of Lemma~\ref{lem:PLfin}, we obtain
\begin{lem}\label{lem:loggrimm}
    Let $\tau\colon X\hto X'$ be a finite morphism of affine varieties. If $\f'\colon X^{\prime\na}\to\R_+$ is of logarithmic growth, so is $\f\coloneqq\f'\circ\tau^\na\colon X^\na\to\R_+$.
\end{lem}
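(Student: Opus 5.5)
The plan is to reduce to Lemma~\ref{lem:PLfin} via the two-sided comparison in the definition of logarithmic growth. Let $\f'\colon X^{\prime\na}\to\R_+$ be of logarithmic growth, and set $\f\=\f'\circ\tau^\na$.

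First, homogeneity of $\f$ is immediate. The map $\tau^\na$ is given by $v\mapsto v\circ\tau^*$, so it is equivariant for the multiplicative $\R_{>0}$-action: $\tau^\na(tv)=t\,\tau^\na(v)$. Hence $\f(tv)=\f'(t\tau^\na(v))=t\f(v)$.

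Second, we compare $\f$ with an element of $\PL^+(X)$. Fix any $\p'\in\PL^+(X')$. By definition there is $C\ge1$ with $C^{-1}\p'\le\f'\le C\p'$ on $X^{\prime\na}$. Composing with $\tau^\na$ gives
\[
C^{-1}\p'\circ\tau^\na\le\f\le C\,\p'\circ\tau^\na
\]
on $X^\na$. By Lemma~\ref{lem:PLfin}, $\p\=\p'\circ\tau^\na$ lies in $\PL^+(X)$, which is exactly where finiteness of $\tau$ is used. The displayed inequality is then precisely the condition in the definition of logarithmic growth, for this particular $\p$.

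Finally, the definition allows ``some (or, equivalently, any)'' $\p\in\PL^+(X)$, so exhibiting a single one suffices. If one prefers the "any" version, it follows from the mutual comparability of elements of $\PL^+(X)$ in Lemma~\ref{lem:PLloggr}, which only multiplies the constant. I expect no real obstacle here. The only substantive input is that pullback by a finite morphism preserves semiample, strictly effective $\Q$-Cartier divisors at infinity, and that is already contained in Lemma~\ref{lem:PLfin}.
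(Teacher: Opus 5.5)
Your proposal is correct and is essentially the paper's argument: the paper presents this lemma as an immediate consequence of Lemma~\ref{lem:PLfin}, which is exactly your step of pulling back the two-sided comparison $C^{-1}\p'\le\f'\le C\p'$ along $\tau^\na$ and noting that $\p'\circ\tau^\na\in\PL^+(X)$. Your explicit check of homogeneity via the $\R_{>0}$-equivariance of $\tau^\na$ fills in the one remaining detail.
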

The space $\cL^\infty$ comes with a $\de_\infty$ metric, that is,
given two functions $\varphi, \psi\in \cL^\infty$, we set
\[
\de_\infty(\varphi, \psi)\coloneqq \log\inf\{C\geq 1\mid C^{-1}\psi\leq \varphi\leq C\psi\};
\]
equivalently, $\de_\infty(\f,\p)=\sup_{X^\na_\infty}\|\log\f-\log\p\|$.

\subsection{Fubini--Study functions}
We now introduce a class of functions defined by regular functions on $X$. They give explicit descriptions of functions in $\PL^+(X)$.
\begin{defi}
    For any nonzero subgroup $\Lambda\subset\R$ we define $\cH_\Lambda=\cH_\Lambda(X)$ as the set of functions 
    $\f\colon X^\na\to\R_+$ of the form 
    \[
    \f=\max_{1\le i\le N}\la_i^{-1}\log^+|f_i|,
    \]
    where $0<\la_i\in\Lambda$ and where $f_i\in R$ generate $R$ as a $k$-algebra.
    We call the elements of $\cH\coloneqq\cH_\Q$ (resp.\ $\cH_\R$) rational (resp.\ real) \emph{Fubini--Study functions}.
\end{defi}

Any real Fubini-Study function is nonnegative, homogeneous, and strictly positive exactly on $X^\na_\infty$. 
Note that $\cH_\Lambda$ is stable under finite maxima. The set $\cH$ of rational Fubini--Study functions is also stable under addition (as follows from Lemma~\ref{lem:FSPL} below), but this is not true for $\cH_\Lambda$ for general $\Lambda$.

The following characterization will be convenient.
\begin{lem}\label{lem:charFS}
    Given a nonzero subgroup $\Lambda\subset\R$, let $\f\colon X^\na\to\R_+$ be a function of the form
    \[
    \f=\max_{1\le i\le M}\la_i^{-1}\log^+|f_i|,
    \]
    where $0<\la_i\in\La$, $f_i\in R$, and $R$ is an integral $k[f_1,\dots,f_M]$-module. Then $\f\in\cH_\La(X)$.
\end{lem}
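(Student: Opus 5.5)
We will show that $\f$ coincides with a function of the same shape built from a generating set of $R$, with no change in its value. Since $R$ is an integral $k[f_1,\dots,f_M]$-module and finitely generated as a $k$-algebra, it is a finite module over $A\coloneqq k[f_1,\dots,f_M]$. Pick $k$-algebra generators $g_1,\dots,g_L$ of $R$. Each $g_j$ satisfies a monic equation
\[
g_j^{d}+a_{1}g_j^{d-1}+\dots+a_{d}=0,\qquad a_l\in A
\]
(with $d=d_j$). For $\mu>0$ in $\La$, set $\f'\coloneqq\max\bigl(\f,\max_j\mu^{-1}\log^+|g_j|\bigr)$. This is of the required form with $\{f_i\}\cup\{g_j\}$ generating $R$, so $\f'\in\cH_\La$. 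It therefore suffices to choose $\mu$ so that $\mu^{-1}\log^+|g_j|\le\f$ on $X^\na$ for all $j$, which gives $\f'=\f$.

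The key estimate is a valuative bound. Write each $a_l$ as a polynomial in the $f_i$, and let $c$ bound the degrees of these polynomials. Then for $v\in X^\na$ we have
\[
\log^+|a_l|(v)\le c\max_i\log^+|f_i|(v)\le c\,\la_{\max}\f(v),
\]
where $\la_{\max}=\max_i\la_i$; indeed $\log^+|f_i|\le\la_i\f$. Apply $v$ to the monic equation. The ultrametric inequality forces the term $g_j^d$ to be dominated by some other term, so
\[
-d\,v(g_j)\le\max_l\bigl(-v(a_l)-(d-l)v(g_j)\bigr).
\]
Hence $\log^+|g_j|(v)\le\max_l\, l^{-1}\log^+|a_l|(v)\le c\la_{\max}\f(v)$. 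This is the standard fact that an integral element's growth is bounded by that of its coefficients. Semivaluations with $v(g_j)=+\infty$ cause no trouble, since then $\log^+|g_j|=0$.

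It remains to choose $\mu$. We need $\mu\in\La_{>0}$ with $\mu^{-1}\le(c\la_{\max})^{-1}$, that is $\mu\ge c\la_{\max}$. Since $\La$ is a nonzero subgroup, it contains $m\la_1$ for every positive integer $m$, so we may take $\mu=m\la_1$ with $m$ large. Then $\mu^{-1}\log^+|g_j|\le\f$, so $\f'=\f$ and $\f\in\cH_\La(X)$.

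The main obstacle is the integrality estimate $\log^+|g|\le C\max_i\log^+|f_i|$ pointwise on all of $X^\na$, including non-valuations. We handle it with the monic-equation argument applied to the semivaluation directly. The ultrametric step above is valid verbatim for semivaluations, with the convention $\infty$ arithmetic, so no separate treatment is needed.
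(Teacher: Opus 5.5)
Your proof is correct and follows essentially the same route as the paper: you enlarge $\f$ by terms $\mu^{-1}\log^+|g_j|$ for a generating set $(g_j)$, and use the monic equations of the $g_j$ over $k[f_1,\dots,f_M]$ together with the ultrametric inequality to get $\log^+|g_j|\le C\f$, so that for $\mu\in\La$ large enough these extra terms do not change $\f$. The only difference is cosmetic: you make the constant explicit as $C=c\,\la_{\max}$, whereas the paper simply takes $C$ large.
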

\begin{proof}
    Pick any $\la_0\in\Lambda\cap\R_{>0}$, and pick elements $g_1,\dots,g_N\in R$ that generate $R$ as a $k$-algebra. For $m\in\Z_{>0}$, the function $\p:=\max\{\f,(m\la_0)^{-1}\max_j\log^+|g_j|\}$ lies in $\cH_\La(X)$, so it suffices to prove that if $m\gg1$, then $(m\la_0)^{-1}\log^+|g_j|\le\f$ for all $j$. 

    As $g_j$ is integral over $k[f_1,\dots,f_M]$, it satisfies a monic equation
    \[
    g_j^{n_j}=\sum_{l=0}^{n_j-1}p_{j,l}(f_1,\dots,f_M)g_j^l
    \]
    for some polynomials $p_{j,l}\in k[z_1,\dots,z_M]$, where $n_j\ge1$.
    For large $C$, we have
    \[
    \max_l\log^+|p_{j,l}(f_1,\dots,f_M)|\le C\f
    \]
    for all $j,l$. This yields
    \[
    n_j\log^+|g_j|
    \le\max_l\log^+|p_{j,l}(f_1,\dots,f_M)g_j^l|
    \le\max_l l\log^+|g_j|+C\f,
    \]
    so that $\log^+|g_j|\le C\f$. Thus it suffices to take $m>C/\la_0$.
\end{proof}
\begin{cor}
    If $\Lambda\subset\R$ is a nonzero subgroup, then $\cH_\Lambda(X)=\cH_{\Q\Lambda}(X)$.
\end{cor}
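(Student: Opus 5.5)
The inclusion $\cH_\Lambda(X)\subset\cH_{\Q\Lambda}(X)$ is immediate from the definition, since $\Lambda\subset\Q\Lambda$. The content is the reverse inclusion, and I plan to reduce it to Lemma~\ref{lem:charFS}: I will rewrite each term of a $\Q\Lambda$-Fubini--Study function as a term with $\Lambda$-weight, at the cost of replacing the function $f_i$ by a power of itself.

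Let $\f=\max_{1\le i\le N}\la_i^{-1}\log^+|f_i|\in\cH_{\Q\Lambda}(X)$, with $0<\la_i\in\Q\Lambda$ and $f_1,\dots,f_N$ generating $R$ as a $k$-algebra. Each $\la_i$ has the form $q_i\mu_i$ with $q_i\in\Q_{>0}$ and $\mu_i\in\Lambda$. Replacing $\mu_i$ by $-\mu_i$ and $q_i$ by $-q_i$ if necessary, we may assume $\mu_i>0$. Write $q_i=a_i/b_i$ with $a_i,b_i\in\Z_{>0}$. Then $b_i\la_i=a_i\mu_i\in\Lambda$, and $b_i\la_i>0$.

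Next I use the identity $\log^+|f^b|=b\log^+|f|$, which holds because $v(f^b)=bv(f)$ and $\max\{bx,0\}=b\max\{x,0\}$ for $b>0$. It gives
\[
\la_i^{-1}\log^+|f_i|=(b_i\la_i)^{-1}\log^+|f_i^{b_i}|,
\]
and hence $\f=\max_i(b_i\la_i)^{-1}\log^+|f_i^{b_i}|$, where now every weight lies in $\Lambda\cap\R_{>0}$.

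The functions $f_i^{b_i}$ need no longer generate $R$, so this is the step that actually requires work. Each generator $f_i$ satisfies the monic equation $T^{b_i}-f_i^{b_i}=0$ over $k[f_1^{b_1},\dots,f_N^{b_N}]$. Since $R$ is generated by elements integral over this subring, $R$ is an integral $k[f_1^{b_1},\dots,f_N^{b_N}]$-module. Lemma~\ref{lem:charFS} therefore applies and gives $\f\in\cH_\Lambda(X)$. The real substance (adding extra generators $g_j$ with large enough weights) was already handled in that lemma, so no further obstacle is expected.
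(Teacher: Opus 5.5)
Your proof is correct and follows essentially the same route as the paper: rewrite $\la_i^{-1}\log^+|f_i|$ as a term with weight in $\Lambda$ by passing to a power of $f_i$, note that $R$ is integral over the subalgebra generated by these powers, and conclude with Lemma~\ref{lem:charFS}. The only difference is that you clear denominators separately for each $i$ using $b_i$, whereas the paper uses a single common denominator $m$ and the powers $f_i^m$; this changes nothing in the argument.
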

\begin{proof}
    We trivially have $\cH_\Lambda(X)\subset\cH_{\Q\Lambda}(X)$. Conversely, any $\f\in\cH_{\Q\Lambda}(X)$ can be written 
    \[
    \f=m\max_{1\le i\le M}\la_i^{-1}\log^+|f_i|=\max_{1\le i\le M}\la_i^{-1}\log^+|f_i^m|,
    \]
    where $m\in\Z_{>0}$, $0<\la_i\in\La$ for all $i$, and where $f_1,\dots,f_M$ generate $R$ as a $k$-algebra. As $R$ is finite (hence integral) over $k[f_1^m,\dots,f_M^m]$, it follows from Lemma~\ref{lem:charFS} that $\f\in\cH_\Lambda(X)$.
\end{proof}
\begin{cor}\label{cor:FSfin}
    If $\tau\colon X\to X'$ is a finite morphism, then $(\tau^\na)^*\cH_\Lambda(X')\subset\cH_\Lambda(X)$ for every nonzero subgroup $\Lambda\subset\R$.
\end{cor}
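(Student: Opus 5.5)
The plan is to pull back the defining expression of a Fubini--Study function on $X'$ and then invoke Lemma~\ref{lem:charFS}. Write $X'=\Spec R'$, so that the finite morphism $\tau$ corresponds to a $k$-algebra homomorphism $\tau^*\colon R'\to R$ making $R$ a finite, hence integral, $R'$-module.

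Take $\f'\in\cH_\Lambda(X')$ and write it as
\[
\f'=\max_{1\le i\le M}\la_i^{-1}\log^+|f_i'|,
\]
where $0<\la_i\in\Lambda$ and $f_1',\dots,f_M'$ generate $R'$ as a $k$-algebra. For $v\in X^\na$, the point $\tau^\na(v)$ is the semivaluation $v\circ\tau^*$. Hence $\log^+|f_i'|(\tau^\na(v))=\max\{-v(\tau^*f_i'),0\}$, and therefore
\[
\f'\circ\tau^\na=\max_{1\le i\le M}\la_i^{-1}\log^+|f_i|,\qquad f_i\=\tau^*f_i'\in R.
\]

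The subalgebra $k[f_1,\dots,f_M]\subset R$ equals $\tau^*(R')$, because the $f_i'$ generate $R'$. Since $R$ is finite over $\tau^*(R')$, it is an integral $k[f_1,\dots,f_M]$-module. Lemma~\ref{lem:charFS} then gives $\f'\circ\tau^\na\in\cH_\Lambda(X)$, which is the claim.

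There is no real obstacle here. The only point to check is that Lemma~\ref{lem:charFS} does not need the $f_i$ to be injective images or to generate $R$; it only needs $R$ to be integral over the subalgebra they generate, and finiteness of $\tau$ supplies exactly that. A degenerate case such as $\tau^*f_i'$ being constant causes no trouble, since the term $\log^+|c|$ for a constant $c$ vanishes identically.
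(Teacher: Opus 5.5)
Your proof is correct and is the argument the paper intends. The paper states the corollary without proof right after Lemma~\ref{lem:charFS}, and it follows exactly as you describe. You pull back the generators to $f_i=\tau^*f_i'$, note that $R$ is integral over $k[f_1,\dots,f_M]=\tau^*(R')$ by finiteness, and apply that lemma; you also handle the only delicate points correctly, namely a non-injective $\tau^*$ and constant pullbacks.
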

\begin{lem}\label{lem:FSPL}
    The space $\cH(X)$ of rational Fubini--Study functions equals $\PL^+(X)$. 
\end{lem}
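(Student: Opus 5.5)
We prove the two inclusions $\cH(X)\subset\PL^+(X)$ and $\PL^+(X)\subset\cH(X)$ separately, using Lemma~\ref{lem:sabig} to pass between semiample strictly effective divisors and integrally closed polarized compactifications.

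\emph{Inclusion $\cH\subset\PL^+$.} Let $\f=\max_i\la_i^{-1}\log^+|f_i|$ with $\la_i\in\Q_{>0}$ and $f_1,\dots,f_N$ generating $R$. Choose $m\in\Z_{>0}$ with $m\la_i^{-1}=:a_i\in\Z_{>0}$ for all $i$. Then $m\f=\max_i\log^+|f_i^{a_i}|$. Since $R$ is integral over $k[f_1^{a_1},\dots,f_N^{a_N}]$, the map $X\to\A^N$ given by $g_i:=f_i^{a_i}$ is finite.

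\begin{itemize}
\item First consider $\max_i\log^+|z_i|$ on $\A^N\subset\P^N$. This function equals $\p_H$, where $H$ is the hyperplane at infinity. To check this, take $v$ centered at infinity, and note that a local equation of $H$ near the center of $v$ is $1/z_j$, where $z_j$ has minimal (most negative) value under $v$.
\item The hyperplane divisor $H$ is ample and strictly effective, so $\p_H\in\PL^+(\A^N)$.
\item Pulling back along the finite morphism $X\to\A^N$ and applying Lemma~\ref{lem:PLfin}, we get $m\f=\p_H\circ\tau^\na\in\PL^+(X)$.
\item Since $\PL^+$ is stable under positive rational scaling, $\f\in\PL^+(X)$.
\end{itemize}

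\emph{Inclusion $\PL^+\subset\cH$.} Let $\p=\p_D$ with $D$ semiample and strictly effective. By Lemma~\ref{lem:sabig} and after passing to a multiple $mD$, we may assume $(\bar X,D)$ is integrally closed with $D$ a very ample strictly effective Cartier divisor. We may also assume $H^0(\bar X,\cO(D))$ embeds $\bar X$ in $\P^M$ so that $D$ is the pullback of the hyperplane $\{x_0=0\}$, with $s_D$ corresponding to $x_0$.

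\begin{itemize}
\item The sections $x_i/x_0$ restrict to regular functions $f_1,\dots,f_M\in R$, because their poles lie on $\operatorname{supp}D=\partial\bar X$.
\item We have $X=\bar X\setminus D=\bar X\cap\A^M$, and $X$ is closed in $\A^M$. Hence $k[f_1,\dots,f_M]\to R$ is finite; it is in fact surjective onto the coordinate ring of the image, and $R$ is integral over it.
\item By the computation in the first step, $\p_D=\max_i\log^+|f_i|$ on $X^\na$.
\item Lemma~\ref{lem:charFS} then gives $\p_D\in\cH$, and rescaling by $1/m$ handles the multiple.
\end{itemize}

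\emph{Main obstacle.} The delicate point is the identification $\p_{\bar X,D}(v)=\max_i\max\{-v(f_i),0\}$ for all $v\in X^\na$, including semivaluations with nontrivial support. We handle this as follows.

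\begin{itemize}
\item Cover $\bar X$ by the charts $U_j=\{x_j\ne0\}$.
\item On $U_j$, the divisor $D$ is defined by $x_0/x_j=1/f_j$ (with $f_0=1$).
\item If the center of $v$ lies in $U_j$, then $v(x_i/x_j)\ge0$ for all $i$. Hence $v(1/f_j)=-v(f_j)=\max_i(-v(f_i))$, and this quantity is at least $0$ because $f_0=1$.
\item Therefore $\p_D(v)=\max_i\log^+|f_i|(v)$.
\end{itemize}

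This works uniformly for semivaluations, since the center is defined for all $v\in X^\na$. It also applies verbatim in the first step with $\bar X=\P^N$.
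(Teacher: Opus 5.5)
Your proof is correct and follows essentially the same route as the paper. You prove $\cH\subset\PL^+$ via the finite map $(f_i^{a_i})\colon X\to\A^N$ together with Lemma~\ref{lem:PLfin}, and $\PL^+\subset\cH$ via the projective embedding of the integrally closed polarized compactification given by sections of $mD$ with $s_0=s_{mD}$. The only cosmetic differences are these: you verify $\p_D=\max_i\log^+|f_i|$ directly on the standard charts instead of pulling back $\p_H$ and citing Corollary~\ref{cor:FSfin}. Also, since the embedding is closed, the $f_i$ already generate $R$, so your appeal to Lemma~\ref{lem:charFS} is not actually needed.
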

\begin{proof}
    First consider $\p\in\PL^+(X)$, and write $\p=\p_D$ for an ample divisor $D\in\CDiv_\infty(\bar X)_\Q$ at infinity on a compactification $\bar{X}$ of $X$. For $m$ sufficiently divisible, pick a basis $s_0,\dots,s_N$ of $\Hnot(\bar{X},mD)$, with $s_0=s_{mD}$, and let $\tau\colon\bar{X}\to\P^N$ be the embedding defined by $\tau=[s_0:\dots:s_N]$. Then $\tau$ restricts to an embedding $X\to\A^N$ given by $\tau=(f_1,\dots,f_N)$, where $f_i=s_i/s_0$. Moreover, we have $\tau^*H=mD$, where $H=\P^N\setminus\A^N$. Hence $\p_D=m^{-1}(\tau^\na)^*\p_H$. Now $\p_H=\max_i\log^+|z_i|\in\cH(\A^N)$, so by Corollary~\ref{cor:FSfin} we have $\p_D\in\cH(X)$.

    Conversely, suppose $\p\in\cH(X)$, and write $\p=\max_{1\le i\le N}\la_i^{-1}\log^+|f_i|$, where $\la_i\in\Q_{>0}$ and where the $f_i$ generate $R$ as a $k$-algebra. After replacing $\p$ by an integer multiple, we may assume that $\la_i=1/d_i$, with $d_i\in\Z_{>0}$. If $\tau\colon X\to\A^N$ is the finite map given by $\tau=(f_1^{d_1},\dots,f_N^{d_N})$, then $\p=(\tau^\na)^*\p'$, where $\p'=\max_i\log^+|z_i|$. Now $\p'=\p_H$, where $H=\P^N\setminus\A^N$, so $\p'\in\PL^+(\A^N)$, and we conclude using Lemma~\ref{lem:PLfin} that $\p\in\PL^+(X)$.
\end{proof}
\begin{cor}
    Any function in $\PL(X)$ is a difference of rational Fubini--Study functions.
\end{cor}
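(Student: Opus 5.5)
The corollary asserts that every function in $\PL(X)$ is a difference of two rational Fubini--Study functions. I will combine the first part of Lemma~\ref{lem:PLloggr} with the identification $\cH(X)=\PL^+(X)$ of Lemma~\ref{lem:FSPL}.

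Let $\p\in\PL(X)$. By definition $\p=\p_D$ for some $D\in\CbDiv_\infty(X)_\Q$.

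First clear denominators: choose $m\in\Z_{>0}$ with $mD\in\CbDiv_\infty(X)$. By Corollary~\ref{cor:diffsa} we can write $mD=A-B$, where $A,B\in\CbDiv_\infty(X)$ are semiample and strictly effective. Then
\[
D=m^{-1}A-m^{-1}B,
\]
and both $m^{-1}A$ and $m^{-1}B$ are semiample, strictly effective $\Q$-Cartier b-divisors at infinity.

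Next use that $D\mapsto\p_D$ is $\Q$-linear. This gives
\[
\p=\p_{m^{-1}A}-\p_{m^{-1}B},
\]
and both terms lie in $\PL^+(X)$ by definition. (This is exactly the first assertion of Lemma~\ref{lem:PLloggr}.)

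Finally, Lemma~\ref{lem:FSPL} identifies $\PL^+(X)$ with $\cH(X)$. Hence both terms are rational Fubini--Study functions, which proves the claim.

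There is no real obstacle. The only point needing care is that positivity properties survive clearing denominators and rescaling by $m^{-1}$, and this holds because semiampleness and strict effectivity are invariant under positive rational multiples.
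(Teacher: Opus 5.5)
Your proposal is correct and follows the same route as the paper, which states this corollary without proof right after Lemma~\ref{lem:FSPL}: the first part of Lemma~\ref{lem:PLloggr} (itself a consequence of Corollary~\ref{cor:diffsa}) writes any element of $\PL(X)$ as a difference of elements of $\PL^+(X)$, and the identification $\cH(X)=\PL^+(X)$ finishes the argument. Your extra step of clearing denominators and rescaling by $m^{-1}$ just spells out the detail behind the first part of Lemma~\ref{lem:PLloggr} in the $\Q$-divisor setting.
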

\begin{cor}
    Any real Fubini--Study function is of logarithmic growth.
\end{cor}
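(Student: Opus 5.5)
The statement to prove is the last corollary: every real Fubini--Study function is of logarithmic growth. The plan is to squeeze an arbitrary $\f\in\cH_\R$ between two multiples of a single rational Fubini--Study function, and then use Lemma~\ref{lem:FSPL}. That lemma identifies $\cH$ with $\PL^+(X)$, and the definition of logarithmic growth may be tested against any element of $\PL^+(X)$.

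Write $\f=\max_{1\le i\le N}\la_i^{-1}\log^+|f_i|$, where $\la_i>0$ are real and $f_1,\dots,f_N$ generate $R$ as a $k$-algebra. Set
\[
\p:=\max_{1\le i\le N}\log^+|f_i|.
\]
This lies in $\cH=\cH_\Q$, since all its weights equal $1\in\Q$. By Lemma~\ref{lem:FSPL}, $\p\in\PL^+(X)$.

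Next put $a:=\min_i\la_i^{-1}>0$ and $b:=\max_i\la_i^{-1}$. Because $\log^+|f_i|\ge0$, each term satisfies $a\log^+|f_i|\le\la_i^{-1}\log^+|f_i|\le b\log^+|f_i|$. Taking the maximum over $i$ gives $a\p\le\f\le b\p$ pointwise on $X^\na$. With $C:=\max\{b,a^{-1},1\}$ this becomes $C^{-1}\p\le\f\le C\p$.

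Finally, $\f$ is homogeneous, because $(tv)(f_i)=t\,v(f_i)$ and $\log^+|f_i|(v)=\max\{-v(f_i),0\}$ scale linearly in $t>0$. So $\f$ satisfies the definition of logarithmic growth with respect to $\p\in\PL^+(X)$. None of these steps is a real obstacle; the one substantive input is the identification $\cH=\PL^+(X)$ from Lemma~\ref{lem:FSPL}, and it applies because $\p$ has rational weights and is built from the same generators $f_i$.
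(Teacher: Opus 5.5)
Your argument is correct and is essentially the paper's: the corollary is stated without proof as an immediate consequence of Lemma~\ref{lem:FSPL}. Comparing $\f$ with the rational Fubini--Study function $\max_i\log^+|f_i|\in\cH=\PL^+(X)$, built from the same generators, is exactly the intended step, and your checks of homogeneity and of the constant $C=\max\{b,a^{-1},1\}$ supply the routine details correctly.
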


\begin{cor}
    The space $\PL^+(X)$ is closed under finite maxima. As a consequence, the $\Q$-vector space $\PL(X)$ is closed under finite maxima and minima.
\end{cor}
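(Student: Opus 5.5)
The first claim, that $\PL^+(X)$ is closed under finite maxima, follows from Lemma~\ref{lem:FSPL} together with the fact that $\cH(X)=\cH_\Q(X)$ is stable under finite maxima. This stability is immediate from the definition. Suppose
\[
\f=\max_i\la_i^{-1}\log^+|f_i|,\qquad \p=\max_j\mu_j^{-1}\log^+|g_j|,
\]
where $(f_i)$ and $(g_j)$ each generate $R$ as a $k$-algebra. Then $\max\{\f,\p\}$ is again a maximum of the same shape over the combined family $(f_i)\cup(g_j)$, which still generates $R$, and all weights stay in $\Q_{>0}$. So $\max\{\f,\p\}\in\cH(X)=\PL^+(X)$, and induction handles any finite number of functions.

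For the second claim, I would first use that $\PL(X)$ is a $\Q$-vector space and that $\min\{a,b\}=-\max\{-a,-b\}$. This means it suffices to show closure under the maximum of two elements. Take $\f_1,\f_2\in\PL(X)$. By Lemma~\ref{lem:PLloggr} (or the previous corollary), write $\f_1=\p_1-\p_1'$ and $\f_2=\p_2-\p_2'$ with all four functions in $\PL^+(X)$. Put $\p'\coloneqq\p_1'+\p_2'$; this lies in $\PL^+(X)$ because $\cH(X)$ is closed under addition, as the sum of two semiample strictly effective $\Q$-Cartier b-divisors is again semiample and strictly effective. Adding $\p'$ to both functions gives
\[
\f_i+\p'=\p_i+\p'_{3-i},
\]
and each of these is a sum of two elements of $\PL^+(X)$, hence in $\PL^+(X)$. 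Therefore
\[
\max\{\f_1,\f_2\}=\max\{\f_1+\p',\f_2+\p'\}-\p',
\]
where the maximum on the right lies in $\PL^+(X)$ by the first claim. Subtracting $\p'$, which is in $\PL^+(X)\subset\PL(X)$, leaves the result in $\PL(X)$.

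The only point needing care is additivity of $\PL^+(X)$. It holds because $D\mapsto\p_D$ is additive, and sums of semiample, strictly effective divisors keep both properties; I expect no real obstacle.
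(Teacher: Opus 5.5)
Your proof is correct and takes essentially the same approach as the paper, which leaves this corollary unproved but sets it up the same way. There, closure of $\PL^+(X)=\cH(X)$ under maxima comes from merging the generating families (Lemma~\ref{lem:FSPL}), and the statement for $\PL(X)$ comes from writing each element as a difference of functions in $\PL^+(X)$ and using additivity of $\PL^+(X)$ to shift both into $\PL^+(X)$ by a common $\psi'$, exactly as you do.
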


\begin{prop}\label{prop:SW}
    For any $\p\in\PL^+(X)$, the set $\{\f|_{X^\na_\p}\mid \f\in\PL(X)\}$, is dense in $C^0(X^\na_\p)$.
\end{prop}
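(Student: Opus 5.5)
We plan to apply the Stone--Weierstrass theorem, in its lattice form, on the compact Hausdorff space $X^\na_\p$; compactness is given by the homeomorphism $X^\na_\p\simeq X^\na_\infty/\R_+^\times$ from Proposition~\ref{prop:compact}. The lattice version says the following. Let $\cF\subset C^0(K)$ be closed under finite max and min, and suppose that for any $x\ne y$ in $K$ and any $a,b\in\R$ there is $f\in\cF$ with $f(x)\approx a$ and $f(y)\approx b$ up to arbitrarily small error. Then $\cF$ is dense in $C^0(K)$.

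The set $\cF:=\{\f|_{X^\na_\p}\mid\f\in\PL(X)\}$ is a $\Q$-vector space closed under finite max and min by the corollary above. It also contains the constants in $\Q$, since $c\p|_{X^\na_\p}=c$. So it suffices to show that $\cF$ separates points. Given that, we take $f$ with $f(x)\ne f(y)$ and use affine combinations $sf+t$ with $s,t\in\Q$. These approximate any prescribed pair of values at $x,y$ to arbitrary precision, because $\Q$ is dense in $\R$, and this density is enough for the lattice Stone--Weierstrass argument.

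For separation, take distinct $v,w\in X^\na_\p$. Since $\p(v)=\p(w)=1$, the points $v$ and $w$ are not positive multiples of each other, as any scaling relation $w=tv$ would force $t=1$. Both are semivaluations with $v\ne w$, so there is $f\in R$ with $v(f)\ne w(f)$. Choose $c\in\Q_{>0}$ large; we consider $\f=\max\{\log^+|f|,c\p\}$ or a similar variant. More cleanly, we pick generators $f_1,\dots,f_N$ of $R$ with $f_1=f$ (after adding a constant if needed), and compare the rational Fubini--Study functions $\max_i\la_i^{-1}\log^+|f_i|$ as the weights $\la_i$ vary. Differences of such functions lie in $\PL(X)$ by Lemma~\ref{lem:FSPL}.

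The concrete step is as follows. If $v(f)\ne w(f)$, we may replace $f$ by $f+a$ for generic $a\in k$ so that the value stays distinct and some value is negative. Then for small $\la\in\Q_{>0}$ the function $\la^{-1}\log^+|f|$ dominates the other terms at whichever of $v,w$ has the more negative value. This gives $\max\{\la^{-1}\log^+|f|,\p'\}$ with different values at $v$ and $w$, or else we use $\log^+|f|$ against $\p$ directly.

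The main obstacle is the case where $v(f)$ and $w(f)$ differ but both are nonnegative, so that $\log^+|f|$ vanishes at both points. Here we use $f^{-1}$-type behavior on a compactification instead: we choose a compactification where $v,w$ have centers, and use $\p_{D}$ for suitable Cartier divisors at infinity, obtained by blowing up ideals at infinity generated by $f$ together with high powers of a defining equation of the boundary. Concretely, for the ideal $\fa=(f,h^m)$ at infinity, with $h$ locally defining $D$, the function $v\mapsto\min\{v(f),m\,v(h)\}$ lies in $\PL(X)$ after normalized blowup. For $m$ large it equals $\min\{v(f),m\}$ on $X^\na_\p$, and it distinguishes $v$ from $w$. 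Verifying that this function is indeed $\p_E$ for a Cartier b-divisor at infinity, which requires $\fa$ to be trivial on $X$, i.e. globally adjusting $f$, is where we expect the real work.
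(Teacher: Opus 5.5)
The Stone--Weierstrass framework you set up is fine: $X^\na_\psi$ is compact, the restrictions of $\PL(X)$ form a $\Q$-vector space closed under $\max$ and $\min$ that contains the rational constants (via $c\psi$), and the lattice version then reduces everything to separating points. But the separation step, which is the only real content, is not established.

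Your reduction ``replace $f$ by $f+a$ so that the values stay distinct and one becomes negative'' is false. Since $v(a)=0$ for $a\in k^\times$, we have $v(f+a)\ge\min\{v(f),0\}$, so adding a constant can never produce a negative value. Worse, if $v(f),w(f)>0$, then $v(f+a)=w(f+a)=0$ for every $a\ne0$, so distinctness is destroyed.

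You then correctly identify the case $v(f),w(f)\ge0$ as the obstacle, but you leave it open. As written, $(f,h^m)$ is not an ideal of $\cO_{\bar X}$, since $f$ may have poles along $\partial\bar X$. The ideal route can in fact be repaired. On a chart $U$ with $D|_U=\div(h)$ and $D$ Cartier, one has $U\cap X=\{h\ne0\}$, so $fh^N\in\cO(U)$ for some $N$. The ideals $(fh^N,h^{m+N})$ glue, and they are automatically trivial on $X$ because $h$ is a unit there. Then $\min\{v(f),m\psi\}=v(fh^N,h^{m+N})-N\psi\in\PL(X)$. So the difficulty you anticipate, triviality on $X$, is not where the issue lies, and in any case this step was not carried out.

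The missing idea is much simpler: use that $v$ is centered at infinity. Pick $g\in R$ with $v(g)<0$.
\begin{itemize}
\item If $v(g)\ne w(g)$, work with $g$, swapping $v$ and $w$ if necessary so that the smaller value is the negative one.
\item If $v(g)=w(g)$, replace $f$ by $fg^m$ with $m\gg0$. Then $v(fg^m)-w(fg^m)=v(f)-w(f)\ne0$, while both values are negative.
\end{itemize}
Thus you may assume $v(f)<w(f)$ and $v(f)<0$. The function $\max\{\psi,N\log^+|f|\}$ lies in $\cH(X)=\PL^+(X)$, because $\psi$ already involves a generating set of $R$. At $v$ it takes the value $-Nv(f)$. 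At $w$ it takes the value $\max\{1,-Nw(f)\}$, which is $<-Nv(f)$ for $N\gg1$. This is exactly how the paper separates points before invoking the lattice Stone--Weierstrass theorem.
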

\begin{proof}
It is clear that the set contains rational constants. Let $\psi=\psi_{\bar X, D}$ for some polarized compactification $(\bar X, D)$. Assume $v, v'\in X^\na_\psi$ are two different semivaluations centered at infinity. Then there exists $f\in R$ with $v(f)\ne v'(f)$. Wlog we may assume $v(f)<v'(f)$ and that $v(f)<0$. Indeed, if $g\in R$ is such that $v(g)<0$ and $v'(g)\ne v(g)$, then we may replace $f$ by $g$; if $v(g)=v'(g)<0$, then we may replace $f$ by $fg^m$ for $m\gg0$. Now the PL function $\max\{\psi, \log|f^N|\}$ separates $v$ and $v'$ for $N\gg 1$.
The assertion now follows from the lattice version of the Stone--Weierstrass theorem.
\end{proof}

Using the same method as in the proof of Lemma~\ref{lem:loggrimm}, we obtain
\begin{prop}\label{prop:FSpsh}
    Any real Fubini--Study function $\f\in\cH_\R(X)$ is a cpsh (continuous plurisubharmonic) function on $X^\na$ in the sense of Chambert-Loir and Ducros~\cite{CLD25}.
\end{prop}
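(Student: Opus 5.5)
The plan is to reduce, via pullback along a finite morphism, to the case of affine space, where the relevant function is a model function for a hyperplane at infinity. This is the same pattern as in the proofs of Lemma~\ref{lem:loggrimm} and Lemma~\ref{lem:FSPL}.

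First, write $\f=\max_{1\le i\le N}\la_i^{-1}\log^+|f_i|$ with $\la_i\in\R_{>0}$ and $f_1,\dots,f_N$ generating $R$. Let $\tau\colon X\hto\A^N$ be the closed immersion $\tau=(f_1,\dots,f_N)$. Then
\[
\f=(\tau^\na)^*\f',\qquad \f'\=\max_i\la_i^{-1}\log^+|z_i|.
\]
By the functoriality of cpsh functions under morphisms of analytic spaces in the sense of~\cite{CLD25}, which holds since pullback of psh forms and currents is compatible, it suffices to show that $\f'$ is cpsh on $(\A^N)^\na$.

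Second, handle $\f'$ on $\A^N$. Each $\log|z_i|$ is pluriharmonic on the open set $\{z_i\ne0\}^\na$, and $0$ is pluriharmonic everywhere. Hence $\log^+|z_i|=\max\{\log|z_i|,0\}$ is cpsh: near points where $z_i=0$ it coincides locally with $0$, since there $\log|z_i|<0$ on a neighbourhood. The constant rescalings $\la_i^{-1}\log^+|z_i|$ remain cpsh because $\la_i>0$. Finally, a finite maximum of continuous psh functions is psh in the sense of Chambert-Loir--Ducros; this is one of the basic stability properties established in~\cite{CLD25}. This gives that $\f'$ is cpsh.

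The main obstacle is making sure the cited properties of~\cite{CLD25} apply in the trivially valued setting on the non-compact space $(\A^N)^\na$. These properties are: locality of the psh condition, maxima, pullbacks by morphisms, and pluriharmonicity of $\log|f|$ for an invertible function $f$. I would first check that their framework (psh via positivity of $\ddc$ in the sense of currents on Berkovich spaces over an arbitrary non-Archimedean field, including trivially valued ones) covers this case. If the maximum property were only available for smooth or PL functions, I would instead argue as follows. Take $\la_i\in\Q$, so that $\f'=\p_D$ for an ample $D$ on a weighted compactification; then $\f'$ is a model function of a semipositive metric, hence psh. For real $\la_i$, approximate uniformly by rational $\la_i$ and use that uniform limits of cpsh functions are cpsh.
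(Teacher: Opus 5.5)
Your proposal follows the paper's route: embed $X$ in $\A^N$ via the generators $f_i$, then check the Chambert-Loir--Ducros psh condition for $\max_i\la_i^{-1}\log^+|z_i|$, which near any point is a finite maximum of the pluriharmonic functions $\log|z_i|$ (for those $z_i$ invertible nearby) and $0$. The one soft spot is how you justify pulling back along $\tau$: currents do not pull back in general, so that justification is not valid. You do not need it, though. As in the paper, which asserts that both the function on $\A^N$ and $\f$ are cpsh, the same local argument runs verbatim on $X$ with $f_i$ in place of $z_i$.
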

\begin{proof}
    By construction there is a closed immersion $\iota\colon X\to\A^N=\Spec k[z_1,\dots,z_N]$ and a function $\p\colon\A^{N,\na}\to\R_+$ of the form $\p=\max_i\la_i^{-1}\log^+|z_i|$, 
    $\la_i\in\R_+^\times$, such that $\f=\p\circ\iota^\na$. Arguing as in the proof of~\cite[Proposition~17.8.6]{CLD25}, one shows that $\p$ and $\f$ are cpsh.
\end{proof}
We also note the following connection with homogeneous psh functions in~\cite{BJpp}. 
\begin{prop}\label{prop:fshom}
    For any $\psi=\psi_{\bar X,D}\in \cH(X)$ with $\bar X$ integrally closed, there is a natural inclusion
    \[
    \iota\colon \cH(X, \psi)\=\{\f\in\cH(X)\mid \f\le\p\}\hookrightarrow \cH_\hom(\bar X, D), \f\mapsto \f-\p.
    \]
\end{prop}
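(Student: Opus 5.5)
We need to show that for $\f\in\cH(X)$ with $\f\le\p$, the function $\f-\p$ (suitably interpreted on $\bar X^\na$) lies in $\cH_\hom(\bar X,D)$, the space of homogeneous Fubini--Study functions for $(\bar X,D)$ in the sense of~\cite{BJpp}. We also need to show that the map is injective. Injectivity is immediate: $\f$ is recovered from $\f-\p$ by adding $\p$ on $X^\na$. So the content is the membership claim.

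Recall that in~\cite{BJpp} an element of $\cH_\hom(\bar X,D)$ is a function on $\bar X^\na$ of the form $m^{-1}\max_j(\log|s_j|+\la_j)$ with $\la_j$ rational; in the homogeneous (trivially valued, $\R_+^\times$-invariant) case we may take $\la_j=0$. Here the $s_j\in\Hnot(\bar X,mD)$ are sections without common zeros, and $\log|s|$ means $-v(s)$ measured relative to the local trivialization of $mD$. Equivalently, $\cH_\hom(\bar X,D)=\{\p_{\bar X,\fa}\}$: functions $v\mapsto -m^{-1}v(\fa)+\ldots$, that is, $-m^{-1}v(\fb)$ for $\fb\subset\cO_{\bar X}$ the base ideal of a basepoint-free-after-blowup linear subsystem of $|mD|$. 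More precisely, they are $-m^{-1}\log|\fb|$ where $\fb$ is an ideal such that $\fb\cdot\cO(mD)$ is globally generated.

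The plan is to write $\f=\p_{D'}$ with $D'$ semiample and strictly effective on a compactification $\bar X'$ dominating $\bar X$, using Lemma~\ref{lem:FSPL} and Lemma~\ref{lem:sabig}. Then $\f\le\p$ means $\p_{D'}\le\p_{\pi^*D}$ on $X^\na$, and in particular on divisorial valuations $\ord_E$ for $E\subset\partial\bar X'$. Hence $\pi^*D-D'$ is effective, supported at infinity. For $m$ divisible, $mD'$ is basepoint free. The sections of $\cO(mD')$ on $\bar X'$ inject into $\Hnot(\bar X',\pi^*mD)=\Hnot(\bar X,mD)$ via multiplication by the canonical section of $m(\pi^*D-D')$, using Lemma~\ref{lem:Zmain} for the last equality since $\bar X$ is integrally closed.

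Concretely, with the explicit Fubini--Study form $\f=\max_i m^{-1}\log^+|f_i^{d_i}|$ (after clearing denominators as in Lemma~\ref{lem:FSPL}), the elements $f_i^{\cdot}$ and $1$ give sections $s_i:=f_i s_{mD}\in\Hnot(\bar X,mD)$; this holds because $\f\le\p$ forces $v(f_i)\ge -m\p(v)$, hence $f_i$ has pole order at most $mD$ along each boundary divisor of $\bar X$, which by Lemma~\ref{lem:critnopole} gives regularity. Set $s_0=s_{mD}$. On $X^\na$ we then compute
\[m^{-1}\max_j\log|s_j|=m^{-1}\max\{\log|s_{mD}|,\max_i\log|f_i s_{mD}|\}=\f-\p,\]
since $\log|s_{mD}|=-\p$ in the trivialization of $\cO(mD)$ by $s_{mD}$ on $X$. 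The sections $s_0,\dots,s_N$ need not be basepoint free on $\bar X$, but~\cite{BJpp} allows arbitrary nonzero families: $\cH_\hom$ consists of $m^{-1}\max_j\log|s_j|$ for any finite family of sections generating a nonzero subsystem. If basepoint freeness is required, we instead add the rescaled sections $s_{mD}$ times generators with weights, using $\max$ with $\f-\p$ as a lower envelope as in Lemma~\ref{lem:charFS}.

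The main obstacle is matching conventions with~\cite{BJpp}: whether $\cH_\hom$ requires basepoint freeness, and the sign of $\log|s|$ under the identification $\f\mapsto\f-\p$ on $X^\na$ versus its extension to $\bar X^\na$. The homogeneous function $\f-\p$ on $X^\na$ extends continuously to $\bar X^\na$ as $m^{-1}\max_j\log|s_j|$, and a homogeneous function is determined by its values on the dense set $X^\na$ by continuity. Hence the map is well defined, and it is injective as noted at the start.
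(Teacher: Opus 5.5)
Your argument is the paper's: after clearing denominators to a common $m$ with $mD$ Cartier, the bound $\f\le\p$ together with Lemma~\ref{lem:critnopole} gives $f_is_{mD}\in\Hnot(\bar X,mD)$, and then $\f-\p=m^{-1}\max\{\log|s_{mD}|,\max_i\log|f_is_{mD}|\}$. You can drop the hedging about a basepoint-free convention and the proposed fallback that goes with it. If the sections $s_j$ had no common zeros, then $m^{-1}\max_j\log|s_j|$ would vanish identically on $\bar X^\na$. So the homogeneous class $\cH_\hom(\bar X,D)$ must allow common zeros, exactly as your main line (and the paper) assumes.
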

\begin{proof}
    Pick $\f\in\cH(X,\p)$. We can write $\f=\max_i\la_i^{-1}\log^+|f_i|$, with $\la_i\in \Z$ and $f_i\in R\setminus\{0\}$. After passing to a common multiple, we may and will assume $\la_i=m\in \Z_{>0}$ for all $i$, and that $mD$ is Cartier. Then 
    \[
    \f-\p=m^{-1}\max_i \max\{\log|s_{mD}|, \log|f_is_{mD}|\}.
    \]
    Since $\f-\p\le 0$, we have $\log|f_is_{mD}|\le 0$ on $X^\val=\bar X^\val$, and hence $f_is_{mD}\in \Hnot(\bar X, mD)$; see Lemma~\ref{lem:critnopole}. Thus $\f-\p\in \cH_\hom(\bar X, D)$.
\end{proof}
\section{Plurisubharmonic functions}\label{sec:psh}
In this section, we introduce and study the space of general psh functions on $X^\na$, obtained as suitable limits of Fubini--Study functions. Any psh function can be identified with a homogeneous psh function in the sense of~\cite{BJpp} on some polarized compactification of $X$, and also induces a nef Weil b-divisor at infinity.
\subsection{General plurisubharmonic functions}
We start with the definition and basic properties of general psh functions.
\begin{defi}
    A function on $X^\na$ is \emph{plurisubharmonic} (psh) if it is the pointwise limit of a decreasing net of Fubini--Study functions. We write $\PSH(X)$ for the space of psh functions, and equip it with the topology of pointwise convergence on $X^\div$.
\end{defi}
Any psh function is usc, nonnegative, homogeneous, and vanishes on $X^\beth=X^\na\setminus X^\na_\infty$. We also denote by $\CPSH(X)\= C^0(X^\na)\cap \PSH(X)$ the space of continuous psh functions.
\begin{rmk}\label{rmk:notCLDpsh}
If $f$ is a nonconstant invertible function on $X$, then the function $\log|f|$ is continuous and psh (even pluriharmonic) in the sense of~\cite{CLD25}, but it is not nonnegative, and hence not psh in our sense. However, the function $\log^+|f|$ is psh, and such functions generate $\PSH(X)$ in a suitable sense, see Corollary~\ref{cor:pshchar}.
\end{rmk}
\begin{thm}\label{thm:pshprop}
The space $\PSH(X)$ is closed under addition, the multiplicative action by $\R_{>0}$, finite maxima, decreasing limits, and locally uniform limits.
\end{thm}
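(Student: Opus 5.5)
The plan is to reduce everything to the corresponding stability properties of $\cH=\PL^+(X)$ and then pass to limits of decreasing nets. By Lemma~\ref{lem:FSPL}, $\cH(X)=\PL^+(X)$ is closed under addition, since a sum of semiample strictly effective $\Q$-Cartier b-divisors at infinity is again semiample and strictly effective. It is closed under multiplication by $\Q_{>0}$, and under finite maxima, as the definition of Fubini--Study functions shows directly. The statement for general psh functions will then follow, with one caveat: multiplication by irrational $t>0$ needs separate treatment.

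\textbf{Addition, maxima and scaling.} If $\f=\lim_i\f_i$ and $\p=\lim_j\p_j$ are decreasing nets in $\cH$, I index by the product directed set $(i,j)$. Then $\f_i+\p_j$ and $\max\{\f_i,\p_j\}$ are decreasing nets in $\cH$ converging pointwise to $\f+\p$ and $\max\{\f,\p\}$. For $t>0$ I pick rationals $t_k\searrow t$ and use the net $t_k\f_i$, indexed by pairs $(k,i)$; it is decreasing because $\f_i\ge0$, and it converges to $t\f$.

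\textbf{Decreasing limits.} Let $\f_\alpha\searrow\f$ be a decreasing net of psh functions, with each $\f_\alpha$ the decreasing limit of a net in $\cH$. Consider the family
\[
\cD=\{\p\in\cH\mid \p>\f_\alpha \text{ on } X^\na_\infty \text{ for some } \alpha\}.
\]
It is directed downward by the max/min structure; more precisely, I take finite minima and then replace them by a Fubini--Study function squeezed above, which is where care is needed. Its pointwise infimum should be $\f$. The key tool is a Dini-type argument on the compact set $X^\na_\p\simeq X^\na_\infty/\R_+^\times$ (Proposition~\ref{prop:compact}), using homogeneity. If $\p_j\searrow\f_\alpha$ with $\p_j\in\cH$ and $\f_\alpha<\tau$ for some continuous homogeneous $\tau>0$, then usc-ness of $\f_\alpha$ and compactness give $\p_j<\tau$ for large $j$. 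So I will define the net as $\{\p\in\cH\mid \p>\f \text{ on } X^\na_\infty\}$. It is directed downward because, given $\p_1,\p_2$ in it, the function $\min\{\p_1,\p_2\}$ is continuous and $>\f$. Compactness and Dini applied to the usc functions $\f_\alpha$ and their approximants then produce some $\p\in\cH$ with $\f<\p<\min\{\p_1,\p_2\}+\e\,\p_0$, and a slight adjustment is needed to land below both. To make this adjustment I would use the density of $\PL$ in $C^0(X^\na_{\p_0})$ (Proposition~\ref{prop:SW}) together with $\cH$ being stable under max, in order to approximate from above.

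\textbf{Locally uniform limits, and the main obstacle.} Since all functions are homogeneous and vanish on $X^\beth$, locally uniform convergence amounts to uniform convergence on $X^\na_{\p_0}$, that is, $|\f_k-\f|\le\e_k\p_0$ with $\e_k\to0$, where I fix $\p_0\in\cH$. Passing to a subsequence with $\sum\e_k<\infty$, the functions $\f_k+(\e_k+2\sum_{l>k}\e_l)\p_0$ form a decreasing sequence of psh functions converging to $\f$, and I conclude by the decreasing-limit case. The main obstacle is the decreasing-limit step, specifically showing that the relevant family in $\cH$ is genuinely directed downward. I expect to handle this via the Dini/compactness argument on $X^\na_{\p_0}$ combined with the regularization of minima from above by elements of $\cH$, using Proposition~\ref{prop:SW} and the lattice structure of $\PL$.
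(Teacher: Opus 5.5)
Your overall strategy is sound and matches the paper's. Addition, scaling and maxima are handled via product nets in $\cH$. Locally uniform limits are handled via the decreasing sequence $\varphi_k+c_k\rho_0$; your choice $c_k=\varepsilon_k+2\sum_{l>k}\varepsilon_l$ makes the paper's ``choose $t_i\to0$'' explicit and does give a decreasing sequence. For decreasing limits the paper simply invokes \cite[Lemma~4.6]{BJpp} on $K=X^\na_\rho$ with $\mathcal{F}=\{\log\varphi\mid\varphi\in\cH(X)\}$, which is exactly the Dini argument you set out to reprove.

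The step you flag as the main obstacle, directedness of $\mathcal{D}=\{\psi\in\cH\mid\psi>\varphi \text{ on } X^\na_\infty\}$, is not handled correctly as written. The proposed ``slight adjustment'' via Proposition~\ref{prop:SW} and max-stability does not work. Density of $\PL$ in $C^0(X^\na_{\rho_0})$ produces PL functions, not Fubini--Study ones. Stability of $\cH$ under max only helps approximate from below. Neither yields an element of $\cH$ lying under $\min\{\psi_1,\psi_2\}$.

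It is also unnecessary: you put the slack on the wrong side. The fix uses only your own Dini statement.
\begin{itemize}
\item Fix decreasing nets $\psi_{\alpha,j}\searrow\varphi_\alpha$ in $\cH$, and set $\tau=\min\{\psi_1,\psi_2\}$.
\item Since $\varphi<\tau$ on the compact set $K=X^\na_{\rho_0}$ and each $\varphi_\alpha$ is usc, the open sets $\{\varphi_\alpha<\tau\}\cap K$ increase with $\alpha$ and cover $K$. Hence $\varphi_\alpha<\tau$ on $K$ for some $\alpha$.
\item A second Dini argument gives $j$ with $\psi_{\alpha,j}<\tau$ on $K$.
\item Now $\tau-\psi_{\alpha,j}$ has a positive minimum $\delta$ on $K$. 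For rational $0<\varepsilon<\delta$, the function $\psi_{\alpha,j}+\varepsilon\rho_0\in\cH$ is $<\tau$ on $K$, hence $\le\tau$ on $X^\na$ by homogeneity.
\item It is also $\ge\varphi_\alpha+\varepsilon\rho_0>\varphi$ on $X^\na_\infty$, so it lies in $\mathcal{D}$.
\end{itemize}
Equivalently, use $t\psi_{\alpha,j}$ with rational $t>1$ close to $1$. This multiplicative slack is exactly why the paper remarks that closure under adding constants in $\log\Q_{>0}$ suffices for \cite[Lemma~4.6]{BJpp}.

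The same functions $\psi_{\alpha,j}+\varepsilon\rho_0$ show that $\inf\mathcal{D}=\varphi$. With this correction the decreasing-limit step, and hence the whole proof, goes through.
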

\begin{proof}
    Let $\varphi, \varphi'\in \PSH(X)$, and pick decreasing nets $(\varphi_i), (\varphi_i')$ in $\cH(X)$ converging to $\varphi, \varphi'$ respectively. Also pick a decreasing net $(a_i)$ in $\Q_{>0}$ converging to $a\in\R_{>0}$. Then $\varphi_i+\varphi_i', a_i\varphi_i, \max\{\varphi_i, \varphi_i'\}\in \cH(X)$ decrease to $\varphi+\varphi', a\varphi, \max\{\varphi, \varphi'\}$ respectively. That $\PSH(X)$ is closed under decreasing limits follows from~\cite[Lemma~4.6]{BJpp} with $K=X^\na_\rho$ for some $\rho\in \cH(X)$, $S=T$ is singleton, and $\cF=\{\log\varphi\mid \varphi\in\cH(X)\}$. 
    (Strictly speaking, this lemma requires $\cF$ to be closed under addition of arbitrary real constants, but it is easy to see that being closed under addition of constants in $\log(\Q_{>0})$ is enough.)
    Finally let $(\varphi_i)$ be a sequence in $\PSH(X)$ converging locally uniformly to $\varphi$. Pick $\rho\in \cH(X)$. Then $\varphi_i\to \varphi$ uniformly on $X^\na_\rho$.
    After passing to a subsequence, we may choose $0<t_i\to 0$ such that $\varphi_i+t_i$ decreases to $\varphi$ on $X^\na_\rho$. This shows $\varphi_i+t_i\rho\in \PSH(X)$ decreases to $\varphi$ on $X^\na$, and hence $\varphi\in \PSH(X)$.
\end{proof}
\begin{cor}\label{cor:cpshclos}
    The space $\CPSH(X)$ is the closure of $\cH(X)\subset C^0(X^\na)$ under local uniform convergence.
\end{cor}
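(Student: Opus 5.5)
We prove the two inclusions separately. The statement concerns the closure of $\cH(X)$ for the topology of local uniform convergence on $X^\na$; since every function in sight is homogeneous and vanishes on $X^\beth$, this is the same as uniform convergence on $X^\na_\rho$ for a fixed $\rho\in\cH(X)$.

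\emph{First inclusion.} The closure of $\cH(X)$ is contained in $\CPSH(X)$. By Theorem~\ref{thm:pshprop}, $\PSH(X)$ is closed under locally uniform limits, and the inclusion $\cH(X)\subset\PSH(X)$ is trivial via constant nets. Locally uniform limits of continuous functions are continuous. Hence any such limit of functions in $\cH(X)$ lies in $C^0(X^\na)\cap\PSH(X)=\CPSH(X)$.

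\emph{Second inclusion.} Conversely, let $\f\in\CPSH(X)$, and pick a decreasing net $(\f_i)$ in $\cH(X)$ converging pointwise to $\f$. Fix $\rho\in\cH(X)$. On the compact set $X^\na_\rho$ (Proposition~\ref{prop:compact}), the net $(\f_i)$ consists of continuous functions decreasing pointwise to the continuous function $\f$. By Dini's lemma for nets on a compact space, the convergence is uniform on $X^\na_\rho$.

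\emph{Extending the convergence.} By homogeneity, uniform convergence on $X^\na_\rho$ gives $|\f_i-\f|\le\e_i\rho$ on $X^\na_\infty$ with $\e_i\to0$. Both sides vanish on $X^\beth$, so the same bound holds on all of $X^\na$. Since $\rho$ is continuous, it is bounded on compact subsets of $X^\na$, and therefore $\f_i\to\f$ locally uniformly on $X^\na$. Thus $\f$ lies in the closure of $\cH(X)$.

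The only point requiring care is the Dini step. It needs compactness of the slice $X^\na_\rho$ and continuity of both $\f$ and the $\f_i$; both are available here.
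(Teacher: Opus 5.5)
Your proof is correct and follows the paper's argument. One inclusion uses the closedness of $\PSH(X)$ under local uniform limits (Theorem~\ref{thm:pshprop}), and the other applies Dini's lemma on the compact slice $X^\na_\rho$ to a decreasing approximating net from $\cH(X)$; your only addition is spelling out the homogeneity step that upgrades uniform convergence on $X^\na_\rho$ to local uniform convergence on $X^\na$, which the paper leaves implicit.
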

\begin{proof}
    Theorem~\ref{thm:pshprop} shows that $\CPSH(X)$ is closed under local uniform limits. On the other hand, any function $\f\in\CPSH(X)$ is a limit of a decreasing net in $\cH(X)$. By Dini's lemma, this is uniform convergence on $X^\na_\rho$ for any $\rho\in\cH(X)$, and the result follows.
\end{proof}
\begin{cor}
    The metric completion of $(\cH(X), \de_\infty)$ is $(\CPSH(X)\cap \cL^\infty, \de_\infty)$.
\end{cor}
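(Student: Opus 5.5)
The plan is to show two things: that $(\CPSH(X)\cap\cL^\infty,\de_\infty)$ is a complete metric space, and that $\cH(X)$ is $\de_\infty$-dense in it. Once both hold, the identification of the completion is formal.

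Work throughout on the compact slice $X^\na_\rho$ for a fixed $\rho\in\cH(X)$. By homogeneity, every $\f\in\cL^\infty$ is determined by its restriction to $X^\na_\rho$, and that restriction is bounded between $C^{-1}$ and $C$. Moreover,
\[
\de_\infty(\f,\p)=\sup_{X^\na_\rho}|\log\f-\log\p|.
\]
So the map $\f\mapsto\log\f|_{X^\na_\rho}$ is an isometric embedding of $(\cL^\infty\cap C^0,\de_\infty)$ into $C^0(X^\na_\rho)$ with the sup norm.

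\emph{Density.} Take $\f\in\CPSH(X)\cap\cL^\infty$. By Corollary~\ref{cor:cpshclos} and Dini's lemma, there is a net $(\f_i)$ in $\cH(X)$ converging to $\f$ uniformly on $X^\na_\rho$; this is the decreasing approximating net. Since $\f\ge C^{-1}>0$ on $X^\na_\rho$ and $\f_i\ge\f$, the logarithm is uniformly Lipschitz on the relevant range. Hence uniform convergence $\f_i\to\f$ on $X^\na_\rho$ gives uniform convergence $\log\f_i\to\log\f$ there, i.e.\ $\de_\infty(\f_i,\f)\to0$.

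\emph{Completeness.} Let $(\f_j)$ be a $\de_\infty$-Cauchy sequence in $\CPSH(X)\cap\cL^\infty$; sequences suffice for a metric space. Then $\log\f_j|_{X^\na_\rho}$ is uniformly Cauchy, so it converges uniformly to a continuous function $u$. Set $\f:=e^u$ on $X^\na_\rho$ and extend it homogeneously to $X^\na_\infty$, with $\f:=0$ on $X^\beth$.

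The $\de_\infty$-Cauchy property gives a uniform bound $\de_\infty(\f_j,\rho)\le C'$. Consequently $e^{-C'}\rho\le\f\le e^{C'}\rho$, so $\f\in\cL^\infty$, and $\f$ is continuous on $X^\na$ because the bounds force $\f\to0$ near $X^\beth$ as $\rho$ does. Furthermore, $\f_j\to\f$ uniformly on $X^\na_\rho$, hence locally uniformly on $X^\na$: homogeneity handles $X^\na_\infty$, and the bound $|\f_j-\f|\le 2e^{C'}\rho$ handles neighbourhoods of $X^\beth$. Theorem~\ref{thm:pshprop} now yields $\f\in\PSH(X)$, so $\f\in\CPSH(X)\cap\cL^\infty$ and $\de_\infty(\f_j,\f)\to0$.

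The only delicate point is this continuity and locally uniform convergence near $X^\beth$, where $\de_\infty$ gives no direct control. The plan is to handle it with the uniform comparison to $\rho$, which is continuous and vanishes on $X^\beth$, as described above.
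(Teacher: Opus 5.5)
Your proof is correct and follows essentially the same route as the paper: density via the decreasing approximation and Dini's lemma on the compact slice $X^\na_\rho$, and completeness by taking the uniform limit of $\log\f_j$ on $X^\na_\rho$ and invoking closure of $\PSH(X)$ under locally uniform limits. The paper does the completeness step for Cauchy nets in $\cH(X)$ rather than Cauchy sequences in $\CPSH(X)\cap\cL^\infty$, which is equivalent given density. Near $X^\beth$, homogeneity gives $|\f_j-\f|\le \varepsilon_j\rho$ on all of $X^\na$ with $\varepsilon_j:=\sup_{X^\na_\rho}|\f_j-\f|\to0$, which yields locally uniform convergence at once.
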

\begin{proof}
    Note first that since every $\f\in \CPSH(X)\cap \cL^\infty$ is a local uniform limit of functions in $\cH(X)$, it is also a limit of those functions in $\de_\infty$ metric.
    Suppose $(\f_i)$ is a Cauchy net in $\cH(X)$, and fix $\rho\in \cH(X)$. Then for each $v\in X^\na_\infty$, we have 
    \[
    -\de_\infty(\f_i,\f_j)\le \log\f_i(v)-\log\f_j(v) \le\de_\infty(\f_i,\f_j)
    \]
    for all $i,j$. Thus $(\log\f_i(v))$ is a Cauchy net for each $v\in X^\na_\infty$ and hence there exists $\f\colon X^\na\to \R_+$ such that $\f>0$ on $X^\na_\infty$ and $\f_i\to \f$ pointwise. This in fact shows that $\f_i\to \f$ locally uniformly since for any $c>1$, $c^{-1}\le \frac{\f_i}{\f_j}\le c$ for all $i, j\gg0$. Thus by Corollary~\ref{cor:cpshclos}, $\f\in \CPSH(X)$. Note that $X^\na_\rho$ is compact, we have $\f$ is bounded on $X^\na_\rho$, and so $\f\in \cL^\infty$.
\end{proof}

\begin{exam}\label{exam:PLPSH}
    Any function in $\PL(X)\otimes_\Q\R$ of the form $\f=\f_D$, with $D\in\CbDiv_\infty(X)_\R$ nef and effective, belongs to $\CPSH(X)$. The space of such functions is closed under addition and the scaling action by $\R_{>0}$, but not under finite maxima, in general.
\end{exam}
\begin{exam}
    The space $\cH_\R(X)$ of real Fubini--Study functions is contained in $\CPSH(X)$.
\end{exam}
This also gives an alternative characterization of $\PSH(X)$.
\begin{cor}\label{cor:pshchar}
    The space $\PSH(X)$ is the smallest class of non-negative homogeneous functions $\varphi: X^\na\to \R_+$ that contains all functions of the form $\log^+|f|$ for $f\in R$, and is stable under scaling by a positive real constant, addition, finite maxima and decreasing limits.
\end{cor}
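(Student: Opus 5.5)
Let $\cC$ denote the smallest class of functions $X^\na\to\R_+$ that contains all $\log^+|f|$, $f\in R$, and is stable under positive real scaling, addition, finite maxima and decreasing limits. I will prove $\cC=\PSH(X)$ by proving the two inclusions separately.

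For $\cC\subset\PSH(X)$, I first check that each $\log^+|f|$ is psh. Pick generators $g_1,\dots,g_N$ of $R$ and set $\p_m\=\max\{\log^+|f|, m^{-1}\max_j\log^+|g_j|\}$ for $m\in\Z_{>0}$. Since $f,g_1,\dots,g_N$ still generate $R$, each $\p_m$ lies in $\cH(X)$. The sequence $(\p_m)$ decreases pointwise to $\log^+|f|$, because $m^{-1}\max_j\log^+|g_j|\to0$ pointwise. Hence $\log^+|f|\in\PSH(X)$. The stability properties then follow from Theorem~\ref{thm:pshprop}, which shows that $\PSH(X)$ is closed under scaling by $\R_{>0}$, addition, finite maxima and decreasing limits. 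Elements of $\PSH(X)$ are also nonnegative and homogeneous, so $\PSH(X)$ is one of the classes over which $\cC$ is the smallest, and therefore $\cC\subset\PSH(X)$.

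For $\PSH(X)\subset\cC$, first take $\f\in\cH(X)$, so $\f=\max_i\la_i^{-1}\log^+|f_i|$ with $\la_i>0$. Each $\la_i^{-1}\log^+|f_i|$ is a positive multiple of a generator of $\cC$, and $\cC$ is closed under finite maxima, so $\f\in\cC$. Every psh function is by definition a pointwise limit of a decreasing net in $\cH(X)\subset\cC$, and $\cC$ is closed under decreasing limits, so $\PSH(X)\subset\cC$.

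The only subtle point is the convention for ``decreasing limits''. The definition of $\PSH$ uses decreasing \emph{nets}, so $\cC$ must be closed under limits of decreasing nets and not only sequences. This is consistent with Theorem~\ref{thm:pshprop}, which was proved for nets via~\cite[Lemma~4.6]{BJpp}. The remaining steps are immediate once $\log^+|f|\in\PSH(X)$ is established.
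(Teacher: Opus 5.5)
Your proof is correct and is essentially the argument the paper intends (it gives no separate proof, deriving the corollary from Theorem~\ref{thm:pshprop} together with the fact that Fubini--Study functions are finite maxima of positive multiples of functions $\log^+|f_i|$). Your explicit check that $\log^+|f|\in\PSH(X)$ for arbitrary $f\in R$, via $\max\{\log^+|f|,m^{-1}\max_j\log^+|g_j|\}\searrow\log^+|f|$, supplies the one step the paper leaves unstated, and your remark that decreasing limits must be taken over nets is appropriate.
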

Recall from Corollary~\ref{cor:FSfin} that pullbacks of Fubini--Study functions under finite morphisms remain Fubini--Study. For general psh functions, we have
\begin{prop}\label{prop:pshfunct}
    Let $\tau\colon X'\to X$ be a morphism of affine algebraic varieties. For any $\f\in\PSH(X)$, we then have $\f\circ\tau^\na\in\PSH(X')$.
\end{prop}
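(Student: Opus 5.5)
The plan is to reduce to Fubini--Study functions and then pass to decreasing limits. Since $\tau$ need not be finite, Corollary~\ref{cor:FSfin} does not apply: the pullbacks $\tau^*f_i$ of generators of $R$ need not generate $R'=\cO(X')$. Moreover $\tau^\na$ may send points of $X'^\na_\infty$ into $X^\beth$. So we cannot expect $\f\circ\tau^\na$ to lie in $\cH(X')$, or even to be positive on $X'^\na_\infty$. Neither is needed, since $\PSH(X')$ imposes no positivity condition at infinity.

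\emph{Step 1: the case $\f\in\cH(X)$.} Write $\f=\max_i\la_i^{-1}\log^+|f_i|$ with $f_i\in R$ and $\la_i\in\Q_{>0}$. For $v\in X'^\na$ we have $(\tau^\na v)(f_i)=v(\tau^*f_i)$, hence
\[
\f\circ\tau^\na=\max_i\la_i^{-1}\log^+|\tau^*f_i|.
\]
It therefore suffices to show that $\log^+|g|\in\PSH(X')$ for every $g\in R'$; stability of $\PSH(X')$ under scaling and finite maxima (Theorem~\ref{thm:pshprop}) then finishes this step. The membership $\log^+|g|\in\PSH(X')$ is part of Corollary~\ref{cor:pshchar}.

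To be self-contained, I would also verify it directly. Fix generators $h_1,\dots,h_M$ of $R'$. For $m\in\Z_{>0}$, set
\[
\f_m:=\max\Bigl\{\log^+|g|,\ m^{-1}\max_j\log^+|h_j|\Bigr\}.
\]
The family $\{g,h_1,\dots,h_M\}$ generates $R'$, so $\f_m\in\cH(X')$. The sequence $(\f_m)$ decreases pointwise to $\log^+|g|$, because the second term is finite and tends to $0$ at every point.

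\emph{Step 2: general $\f\in\PSH(X)$.} Choose a decreasing net $(\f_j)$ in $\cH(X)$ converging pointwise to $\f$. The pullbacks $\f_j\circ\tau^\na$ form a decreasing net that converges pointwise to $\f\circ\tau^\na$. Each $\f_j\circ\tau^\na$ lies in $\PSH(X')$ by Step 1, and $\PSH(X')$ is closed under decreasing limits (Theorem~\ref{thm:pshprop}). Hence $\f\circ\tau^\na\in\PSH(X')$.

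The only point requiring care is Step 1, namely that $\log^+|g|$ is psh even though $g$ alone need not generate $R'$. The explicit approximation by $\f_m$ handles this. The appeal to closure under decreasing limits in Step 2 relies on Theorem~\ref{thm:pshprop}, whose proof uses~\cite[Lemma~4.6]{BJpp}; I take it as given.
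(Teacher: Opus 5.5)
Your proof is correct and matches the paper's: reduce to $\varphi\in\cH(X)$ using closure of $\PSH$ under decreasing limits, then write $\varphi\circ\tau^\na=\max_i\lambda_i^{-1}\log^+|\tau^*f_i|$ and use closure under scaling and finite maxima (Theorem~\ref{thm:pshprop}). Your explicit approximation $\max\{\log^+|g|,\, m^{-1}\max_j\log^+|h_j|\}\searrow\log^+|g|$ supplies the fact $\log^+|g|\in\PSH(X')$ for arbitrary $g\in R'$, which the paper's proof uses without spelling it out.
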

\begin{proof}
    By Theorem~\ref{thm:pshprop}, it suffices to show that if $\varphi\in \cH(X)$, then $\varphi\circ \tau^\na\in \PSH(X')$. Write $\varphi=\max_i\lambda_i^{-1}\log^+|f_i|$, where $\{f_i\}$ generate $R\=\cO(X)$. Then $\varphi\circ\tau^\na=\max_i\lambda_i^{-1}\log^+|\tau^*f_i|$, which lies in $\PSH(X')$, again by Theorem~\ref{thm:pshprop}.
\end{proof}

\subsection{The topology of \texorpdfstring{$\PSH(X)$}{PSH(X)}}
As in~\cite{BJpp}, we now show that the topology on $\PSH(X)$ is Hausdorff.
\begin{thm}\label{thm:weaktop}
    Let $\varphi\in \PSH(X)$ and $\psi: X^\na\to \R_+$ be a homogeneous usc function. Then $\varphi\le \psi$ on $X^\div$ iff $\varphi\le \psi$ on $X^\na$.
\end{thm}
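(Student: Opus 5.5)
The plan is to follow the strategy of~\cite{BJpp}: use retractions onto dual complexes of snc models, and convexity of psh functions on the faces of these complexes. One direction is trivial, so assume $\f\le\p$ on $X^\div$ and fix $v\in X^\na$. If $v\in X^\beth$ then $\f(v)=0\le\p(v)$, because psh functions vanish on $X^\beth$ and $\p\ge0$. So we may assume $v\in X^\na_\infty$.

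\emph{Step 1: retractions and the inequality $\f\le\f\circ p_Y$.} Pick an integrally closed compactification $\bar X$, and consider smooth projective models $Y\to\bar X$ on which the boundary together with suitable auxiliary divisors forms an snc divisor $B_Y$. Let $\Delta_Y\subset\bar X^\na$ be the (homogeneous) dual cone complex and $p_Y\colon X^\na\to\Delta_Y$ the associated retraction. For $f\in R$ we have $p_Y(v)(f)\le v(f)$, with equality once $\div(f)$ is contained in $B_Y$ near the centers in question. Hence, for $\f'=\max_i\la_i^{-1}\log^+|f_i|\in\cH(X)$, we get $\f'\circ p_Y\ge\f'$, with equality when $Y$ resolves all the $f_i$. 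Moreover, $\f'\circ p_Y$ restricted to each cone of $\Delta_Y$ is a maximum of linear functions, hence convex.

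Now let $\f_i\in\cH(X)$ decrease to $\f$. Then $\f\circ p_Y\ge\f$ for all $Y$. For fixed $i$ and all $Y$ dominating a model resolving $\f_i$, we have $\f\circ p_Y\le\f_i\circ p_Y=\f_i$ pointwise, so $\limsup_Y\f(p_Y(v))\le\f_i(v)$. Letting $i\to\infty$ gives
\[
\lim_Y\f(p_Y(v))=\f(v).
\]
We also have $p_Y(v)\to v$ in $X^\na$, as the models get finer.

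\emph{Step 2: the inequality on quasimonomial points.} Let $w=p_Y(v)$, and let $\sigma$ be the face of $\Delta_Y$ containing $w$ in its relative interior. On $\sigma$, the function $\f$ is a decreasing limit of the convex functions $\f_i\circ p_Y$, hence convex and finite. It is therefore continuous on the relative interior of $\sigma$. Rational points of $\sigma^\circ$ are divisorial valuations and are dense in $\sigma^\circ$. So we can pick divisorial valuations $w_j\in\sigma^\circ$ with $w_j\to w$. Then
\[
\f(w)=\lim_j\f(w_j)\le\limsup_j\p(w_j)\le\p(w),
\]
using the hypothesis on $X^\div$ and the upper semicontinuity of $\p$ (here $\Delta_Y\cap X^\na$ is given the subspace topology).

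\emph{Step 3: conclusion.} Combining the two steps, and using usc of $\p$ once more,
\[
\f(v)=\lim_Y\f(p_Y(v))\le\limsup_Y\p(p_Y(v))\le\p(v).
\]

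I expect the main obstacle to be Step 1 in the present non-compact, semivaluative setting. One has to set up the retractions $p_Y$ on $X^\na_\infty$, including semivaluations whose support is a proper subvariety $Z$, by working on $\bar X^\na$ and allowing the cone complexes to include strata at infinity. One also has to check that $p_Y(v)\to v$ and that $p_Y(v)(f)\le v(f)$ for $f\in R$, which might have poles along $B_Y$. If this becomes too technical, the alternative is to reduce to~\cite{BJpp} directly. Since a decreasing net is eventually bounded by $\f_{i_0}\le C\p_D$ for a polarized compactification $(\bar X,D)$, Proposition~\ref{prop:fshom} identifies $\f-\p_{CD}$ with a homogeneous $\theta$-psh function on $(\bar X,CD)$. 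One would then invoke the corresponding statement there: psh functions are determined, as usc majorants, by their values on divisorial points. This reduction would be applied to the usc function $\p-\p_{CD}$ on $X^\na$.
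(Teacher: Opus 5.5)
Your argument is sound, but it takes a genuinely different route from the paper, which uses no retractions or dual complexes.

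\emph{The paper's route.} It first reduces to the case $\p\in\cH(X)$ by successive approximation:
\begin{itemize}
\item a homogeneous usc $\p$ is a decreasing limit of homogeneous continuous functions;
\item these are locally uniform limits of functions in $\PL(X)$ by Proposition~\ref{prop:SW};
\item a PL function $\p_1-\p_2$ is handled by replacing $\f$ with $\f+\p_2$.
\end{itemize}
For $\p=\p_{\bar X,D}\in\cH(X)$ with $\bar X$ integrally closed, and $\f\in\cH(X)$, Lemma~\ref{lem:critnopole} gives $\f\le c\p$ on all of $X^{\na}$. Here $c=\max_E\f(\ord_E)/\p(\ord_E)$, with $E$ ranging over the finitely many components of $\partial\bar X$. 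Decreasing limits then finish the proof.

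\emph{What each route buys.} The paper's argument is purely algebraic and needs no resolution of singularities. Semivaluations with nontrivial support, even inside $X_{\mathrm{sing}}$, cause no trouble, because ideals are evaluated at centers on $\bar X$ itself. It also proves a sharper maximum principle: finitely many divisorial test points suffice. That is exactly what gives Corollary~\ref{cor:supcts}, which is used later in Theorem~\ref{thm:pshint} and Corollary~\ref{cor:pshhomemb}. Your approach never uses the homogeneity of $\p$, only its upper semicontinuity. It also yields $\f=\lim_Y\f\circ p_Y$ and convexity of $\f$ on faces as byproducts. The price is the technical setup you flag.

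\emph{The setup you flag is manageable, and needs no auxiliary divisors.}
\begin{itemize}
\item Take $B_Y=Y\setminus\pi^{-1}(X)$. Then every $f\in R$ has poles only along $B_Y$, so $p_Y(v)(f)\le v(f)$.
\item If $Y\to\bar X$ is not an isomorphism over $X$, first lift $v$ to any point of $Y^{\an}$ above it. The support of that lift is not contained in $B_Y$, so all weights are finite and $p_Y(v)$ is an honest quasimonomial valuation.
\item The convergence $p_Y(v)\to v$ follows from $\log^+|h|\circ p_Y=\log^+|h|$ for $Y$ high enough. Apply this to $h=g^m$ and $h=fg^m$, where $v(g)<0$ and $m\gg1$.
\end{itemize}

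Your fallback through Proposition~\ref{prop:fshom} is also fine. It correctly avoids Corollary~\ref{cor:supcts}, which the paper derives from this very theorem, by bounding the net by $\f_{i_0}$ instead.
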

\begin{proof}
    We first assume $\psi\in \cH(X)$, and pick an integrally closed compactification $\bar X$ of $X$ such that $\psi=\varphi_D$ for some $D\in \CDiv_\infty(\bar X)$ strictly effective and semiample. If $\varphi\in \cH(X)$, then there is an integrally closed compactification $\pi\colon\bar X'\to \bar X$ such that $\varphi$ is defined by $D'\in \CDiv_\infty(\bar X')$. Up to replacing $\varphi, \psi$ by a positive multiple, we may also assume $D'$ is base point free.
    Set $\fa=\pi_*\cO_{\bar X'}(D')$, and $c\= \max_E \frac{\varphi(\ord_E)}{\psi(\ord_E)}\le1$, where the max is taken over all irreducible components of $\bar X\setminus X$. 
    Then on $\bar X$ we have 
    \[
    -\ord_E(\fa)= \varphi(\ord_E)\le c\psi(\ord_E)=c\ord_E(D),
    \]
    for all $E$. This implies $\cO_{\bar X}(-crD) \fa^r\subset\cO_{\bar X}$ for $r$ sufficiently divisible by Lemma~\ref{lem:critnopole}, and hence $\varphi\le c\psi\le\psi$ on $X^\na$. In general, $\varphi$ is a decreasing limit of $\varphi_j\in \cH(X)$. Assume $\varphi\le \psi$ on $X^\div$. Then in particular $\varphi(\ord_E)\le \psi(\ord_E)$ for any $E\subset \bar X\setminus X$, and $c_j\=\max_E\frac{\varphi_j(\ord_E)}{\psi(\ord_E)}$ satisfies $\lim_jc_j\le 1$. By the previous argument, we have $\varphi_j\le c_j\psi$ on $X^\na$ and letting $j\to \infty$ we get $\varphi\le \psi$ on $X^\na$.

    Next if $\psi\in \PL(X)$, then we may write $\psi=\psi_1-\psi_2$ with $\psi_1, \psi_2\in \cH(X)$. Then $\varphi\le \psi$ on $X^\div$ implies $\varphi+\psi_2\le \psi_1$ on $X^\div$. Since $\varphi+\psi_2\in \PSH(X)$, by the previous case we have $\varphi\le \psi$ on $X^\na$.

    Now suppose $\psi\in C^0(X^\na)$. By Proposition~\ref{prop:SW}, $\psi$ is a locally uniform limit of $\psi_i\in \PL(X)$. We may pick $\rho\in \cH(X)$ and $t_i\to 0+$ such that $\varphi\le \psi_i+t_i\rho$, and the result follows by taking $i\to \infty$.

    Finally, any homogeneous usc function $\psi\colon X^\na\to \R_+$ is a decreasing limit of homogeneous functions $\psi_i\in C^0(X^\na)$, and if $\varphi\le \psi\le \psi_i$ on $X^\div$, then $\varphi\le \psi_i$ on $X^\na$ and we are done by taking $i\to \infty$.
\end{proof}
\begin{cor}
    Every psh function $\varphi\in\PSH(X)$ is uniquely determined by its values on $X^\div$, and the topology on $\PSH(X)$ is Hausdorff. 
\end{cor}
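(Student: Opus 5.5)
The corollary should follow directly from Theorem~\ref{thm:weaktop}, by applying it twice with suitable choices of $\psi$. Both assertions reduce to showing that two psh functions agreeing on $X^\div$ agree everywhere.

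\emph{Uniqueness.} Let $\f,\f'\in\PSH(X)$ with $\f=\f'$ on $X^\div$. Every psh function is nonnegative, homogeneous and usc, so $\psi:=\f'$ is an admissible choice in Theorem~\ref{thm:weaktop}. Since $\f\le\f'$ on $X^\div$, the theorem gives $\f\le\f'$ on $X^\na$. Swapping the roles of $\f$ and $\f'$ gives the reverse inequality, hence $\f=\f'$ on $X^\na$. The only point to check is that psh functions are usc and homogeneous. Usc holds because a decreasing limit of continuous functions is usc, and homogeneity passes to pointwise limits; both facts were already recorded after the definition of $\PSH(X)$.

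\emph{Hausdorff property.} The topology on $\PSH(X)$ is, by definition, the topology of pointwise convergence on $X^\div$. This is the initial topology for the evaluation maps $\f\mapsto\f(v)$, $v\in X^\div$, into the Hausdorff space $\R$. Equivalently, it is induced by the restriction map $\PSH(X)\to\R^{X^\div}$, where the target carries the product topology and is Hausdorff.

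By the uniqueness statement, this restriction map is injective. An injective map into a Hausdorff space induces a Hausdorff initial topology. Concretely, if $\f\ne\f'$, then there is some $v\in X^\div$ with $\f(v)\ne\f'(v)$, and disjoint neighborhoods of $\f(v)$ and $\f'(v)$ in $\R$ pull back to disjoint open neighborhoods of $\f$ and $\f'$.

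I do not expect any real obstacle: all the substance lies in Theorem~\ref{thm:weaktop}, and the only care needed is verifying that $\psi=\f'$ meets that theorem's hypotheses.
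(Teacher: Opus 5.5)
Your proposal is correct and matches the paper, which states this corollary without a separate proof because it is read off directly from Theorem~\ref{thm:weaktop}. Applying that theorem in both directions with $\psi$ equal to the other psh function (which is admissible, being usc, homogeneous and $\R_+$-valued) gives uniqueness. Injectivity of restriction to $X^\div$ then gives the Hausdorff property, exactly as you argue.
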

The above proof also yields
\begin{cor}\label{cor:supcts}
    For every $\psi\in \cH(X)$, the map $\varphi\mapsto \sup_{X^\na_\psi}\varphi$ is continuous on $\PSH(X)$.
\end{cor}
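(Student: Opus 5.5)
The plan is to show that $\varphi\mapsto\sup_{X^\na_\psi}\varphi$ is both upper and lower semicontinuous on $\PSH(X)$. The topology is pointwise convergence on $X^\div$, and the proof of Theorem~\ref{thm:weaktop} reduces everything to finitely many divisorial values. Fix $\psi=\psi_{\bar X,D}\in\cH(X)$ with $\bar X$ integrally closed and $D$ ample and strictly effective. Let $E_1,\dots,E_r$ be the irreducible components of $\partial\bar X$, and put $v_j\=\ord_{E_j}/\psi(\ord_{E_j})\in X^\na_\psi\cap X^\div$.

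The key claim is that for every $\varphi\in\PSH(X)$,
\[
\sup_{X^\na_\psi}\varphi=\max_j\varphi(v_j)=\max_j\frac{\varphi(\ord_{E_j})}{\psi(\ord_{E_j})}.
\]
The inequality $\ge$ is trivial. For $\le$, set $c\=\max_j\varphi(\ord_{E_j})/\psi(\ord_{E_j})$.

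First take $\varphi\in\cH(X)$. The first step of the proof of Theorem~\ref{thm:weaktop} applies directly: the ideal $\fa=\pi_*\cO_{\bar X'}(D')$ satisfies $-\ord_{E_j}(\fa)=\varphi(\ord_{E_j})\le c\,\ord_{E_j}(D)$. Lemma~\ref{lem:critnopole} then gives $\varphi\le c\psi$ on $X^\na$. That argument only used $c$ as the max over the components of $\partial\bar X$, not that $c\le1$, so we may take $c$ arbitrary (rescaling to rational $c$ if needed).

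For general $\varphi$, pick a decreasing net $\varphi_i\in\cH(X)$ with $\varphi_i\searrow\varphi$. Set $c_i\=\max_j\varphi_i(\ord_{E_j})/\psi(\ord_{E_j})$. Then $\varphi_i\le c_i\psi$, and $c_i\to c$ because only finitely many points are involved. Letting $i\to\infty$ gives $\varphi\le c\psi$, hence $\sup_{X^\na_\psi}\varphi\le c$.

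Once the claim is established, $\varphi\mapsto\sup_{X^\na_\psi}\varphi$ is a maximum of finitely many evaluation maps $\varphi\mapsto\varphi(v_j)$ at divisorial points. Each of these is continuous for pointwise convergence on $X^\div$, so the map is continuous.

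The only delicate point is verifying that the Fubini--Study case of Theorem~\ref{thm:weaktop} yields $\varphi\le c\psi$ for a general constant $c$. One needs the inclusion $\cO_{\bar X}(-crD)\fa^r\subset\cO_{\bar X}$ to hold for $r$ divisible enough that $crD$ is integral. When $c$ is irrational, I would instead apply the argument with rational $c'>c$ and let $c'\downarrow c$.
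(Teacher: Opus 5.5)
Your proposal is correct and is precisely the argument the paper intends when it says the proof of Theorem~\ref{thm:weaktop} ``also yields'' this corollary: the ideal-theoretic step gives $\varphi_i\le c_i\psi$ for Fubini--Study approximants with $c_i=\max_E\varphi_i(\ord_E)/\psi(\ord_E)$, passing to the decreasing limit gives $\sup_{X^\na_\psi}\varphi=\max_E\varphi(\ord_E)/\psi(\ord_E)$, and this is a finite maximum of evaluations at points of $X^\div$. Your concern about irrational $c$ is unnecessary, since for $\varphi,\psi\in\cH(X)$ the values $\varphi(\ord_E)$ and $\psi(\ord_E)$ are rational, so $c$ is automatically rational.
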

From Proposition~\ref{prop:fshom} we can also deduce
\begin{cor}\label{cor:pshhomemb}
    Pick $\p\in\cH(X)$, and write $\p=\p_{\bar X, D}$ with $\bar{X}$ integrally closed and $D\in\CDiv_\infty(\bar X)_\Q$ ample and strictly effective. Then, for any  $t\in\Q_{>1}$, there is a closed embedding
    \[
    \iota\colon \PSH(X, \psi)\=\{\f\in \PSH(X)\mid \f\le \psi\}\hookrightarrow \PSH_\hom(\bar X, tD), \f\mapsto \f-t\p.
    \]
\end{cor}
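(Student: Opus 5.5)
We have to prove Corollary~\ref{cor:pshhomemb}: for $t\in\Q_{>1}$ the map $\f\mapsto\f-t\p$ is well defined from $\PSH(X,\p)$ to $\PSH_\hom(\bar X,tD)$, injective, and a homeomorphism onto a closed subset. The plan is to first treat Fubini--Study functions, then pass to decreasing limits, and finally use Theorem~\ref{thm:weaktop} to control both closedness and the topology.

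\textbf{Fubini--Study case.} Take $\f\in\PSH(X)$ with $\f\le\p$, and a decreasing net $\f_i\in\cH(X)$ with $\f_i\to\f$. The $\f_i$ need not satisfy $\f_i\le\p$, which is why we allow $t>1$. By Corollary~\ref{cor:supcts}, $\sup_{X^\na_\p}\f_i\to\sup_{X^\na_\p}\f\le1$. Hence for $i$ large we get $\sup_{X^\na_\p}\f_i<t$, and by homogeneity $\f_i\le t\p$ on $X^\na$. (Alternatively, apply Dini on the compact set $X^\na_\p$ to the continuous functions $\f_i$, which decrease to the usc function $\f<t\p$ there.) Since $t\p=\p_{\bar X,tD}\in\cH(X)$ with $tD$ ample, Proposition~\ref{prop:fshom} applied to $(\bar X,tD)$ gives $\f_i-t\p\in\cH_\hom(\bar X,tD)$.

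\textbf{Passing to the limit and injectivity.} The functions $\f_i-t\p$ decrease to $\f-t\p$ on $\bar X^\na$; on $X^\na$ this is clear, and on the boundary it holds with the convention that the value is $-\infty$ there. Since $\PSH_\hom(\bar X,tD)$ is stable under decreasing limits in \cite{BJpp}, we get $\f-t\p\in\PSH_\hom(\bar X,tD)$. Injectivity is immediate, because $\f$ is recovered on $X^\na$ by adding $t\p$, which is finite there.

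\textbf{Topological embedding and closedness.} Both topologies are pointwise convergence on divisorial valuations. On the $\bar X$ side these are the divisorial points of $\bar X^\na$, which coincide with $X^\div$ since $X$ is dense. Adding the fixed finite function $t\p$ on $X^\div$ preserves pointwise convergence, so $\iota$ is a homeomorphism onto its image. For closedness, let $\f_j-t\p\to u$ in $\PSH_\hom(\bar X,tD)$ with $\f_j\le\p$. The key point is to show that $u+t\p$ lies in $\PSH(X)$, and this is the main obstacle. I would try the following:
\begin{itemize}
\item Write $u$ as a decreasing limit of $u_k\in\cH_\hom(\bar X,tD)$.
\item Note that $u\le(1-t)\p$ on $X^\div$ by passing to the limit, and hence on all of $X^\na$ by Theorem~\ref{thm:weaktop} (or its analogue in \cite{BJpp}).
\item Replace $u_k$ by $\max\{u_k,\,-t\p\}$. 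Each such function has the form $m^{-1}\max_j\log|f_js_{mtD}|$ with $f_j$ including $1$, and corresponds to $\max_j m^{-1}\log^+|f_j|$.
\item Enlarge the generating set by adjoining $\epsilon_k$-small multiples of fixed generators so that it generates $R$, using Lemma~\ref{lem:charFS} style estimates. This produces elements of $\cH(X)$ decreasing to $u+t\p$.
\end{itemize}
Finally, $u+t\p\le\p$ on $X^\div$ by limits, hence on $X^\na$ by Theorem~\ref{thm:weaktop}, so $u+t\p\in\PSH(X,\p)$ and the image of $\iota$ is closed.
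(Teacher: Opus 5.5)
Your proposal is correct and follows the paper's proof. Well-definedness comes from Corollary~\ref{cor:supcts} and Proposition~\ref{prop:fshom}, and closedness from a decreasing approximation of $u$ in $\cH_\hom(\bar X,tD)$ via \cite[Corollary~6.17]{BJpp}, whose translates by $t\p$ have the form $m^{-1}\max_j\log^+|f_j|$. The only differences are cosmetic: the paper gets $u_k+t\p\ge0$ for free from $\f_j\ge0$, so your replacement by $\max\{u_k,-t\p\}$ is harmless but unnecessary, and it concludes via Corollary~\ref{cor:pshchar} rather than by adjoining $\epsilon_k\rho$ to produce genuine Fubini--Study approximants.
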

\begin{proof}
    If $\f\in \cH(X,\psi)$, then $\f\le \p\le t\p$ and $\f-t\p\in \cH_\hom(\bar X, tD)$ by Proposition~\ref{prop:fshom}. In general, pick a decreasing net $(\f_i)$ in $\cH(X)$ converging to $\f$. Then, by Corollary~\ref{cor:supcts}, we have $\f_i\le t\p$ for $i\gg 0$. Thus $\f-t\p$ is a limit of the decreasing net $\f_i-t\p\in \cH_\hom(\bar X, tD)$, and so is a function in $\PSH_\hom(\bar X, tD)$. 
    
    The map $\f\mapsto\f-t\p$ is continuous and injective since $X^\div=\bar X^\div$. To see that the image is closed, $(\f_j)_j$ is a net in $\PSH(X,\p)$ such that $\f_j-t\p$ converges pointwise on $X^\div$ to $\f'\in\PSH_\hom(\bar X, tD)$, and set $\f\=\f'+t\psi$. By~\cite[Corollary~6.17]{BJpp}, $\f'$ can be written as a pointwise limit of a decreasing net $(\tau'_i)$ in $\cH_\hom(\bar X, tD)$. We claim that $\tau_i\=\tau_i'+t\psi\in \PSH(X)$. Indeed, write $\tau'_i\=m^{-1}\max_i\log|s_i|$ for a finite set of global sections $(s_i)$ of $mtD$ for $m$ divisible enough. Then 
    \[
    \tau_i= \tau'_i+t\p=m^{-1}\max_i\log|s_i|-m^{-1}\log|s_{mtD}|=m^{-1}\max_i\log|f_i|,
    \]
    for $f_i=s_i/s_{mtD}\in R$. Note also that $\f_j\ge 0$ implies $\f\ge0$; hence $\tau_i\ge0$. Thus $\tau_i=m^{-1}\max_i\log^+|f_i|$, so $\tau_i\in \PSH(X)$ by Corollary~\ref{cor:pshchar}. This shows $\f=\lim_i\tau_i\in \PSH(X)$ and one further has $\f\le \psi$ by pointwise convergence. So $\f\in \PSH(X, \psi)$, as desired.
\end{proof}

\subsection{The envelope property}
We say that a family $\{\varphi_i\}_i$  of homogeneous functions on $X^\na$ has \emph{sublogarithmic growth} if there exists $\psi\in \cH(X)$ such that $\varphi_i\le \psi$ for all $i$.

\begin{defi}
    We say $X$ has the \emph{envelope property} if for any family $(\varphi_i)$ in $\PSH(X)$ of sublogarithmic growth, $\sup^\star_i\varphi_i\in \PSH(X)$.
\end{defi}
Here $\sup^\star_i\f_i$ denotes the usc regularization of $\sup_i\f_i$.
\begin{lem}\label{lem:divneg}
    For any family $(\varphi_i)$ in $\PSH(X)$ of sublogarithmic growth, we have
    \[
    \sup_i\varphi_i={\sup_i}^\star \varphi_i \text{ on } X^\div.
    \]
\end{lem}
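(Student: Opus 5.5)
Since $\sup_i\f_i\le\sup^\star_i\f_i$ everywhere, only the reverse inequality at a divisorial point $v\in X^\div$ needs proof. The plan is to reduce this to the analogous statement for homogeneous $\theta$-psh functions on a polarized projective variety, as in \cite{BJpp}, via the embedding of Corollary~\ref{cor:pshhomemb}.

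\textbf{Step 1.} By sublogarithmic growth there is $\p\in\cH(X)$ with $\f_i\le\p$ for all $i$. Write $\p=\p_{\bar X,D}$ with $\bar X$ integrally closed and $D\in\CDiv_\infty(\bar X)_\Q$ ample and strictly effective, and fix $t\in\Q_{>1}$. Corollary~\ref{cor:pshhomemb} then gives $\f_i-t\p\in\PSH_\hom(\bar X,tD)$ for all $i$.

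\textbf{Step 2.} Since $\p$ is continuous and finite on $X^\na$, taking the usc regularization commutes with subtracting $t\p$ on $X^\na$:
\[
{\sup_i}^\star\f_i-t\p={\sup_i}^\star(\f_i-t\p)\quad\text{on }X^\na .
\]
Points of $\bar X^\na\setminus X^\na$, where $t\p=+\infty$, do not matter here, because we only evaluate at divisorial points of $X$, and $X^\div=\bar X^\div$ with $\p$ finite there. A small point to check is that the usc regularization taken in $\bar X^\na$ agrees with the one taken in $X^\na$ at points of the open subset $X^\na$; this holds because regularization is local.

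\textbf{Step 3.} Invoke the projective result from \cite{BJpp}: for a family of $\theta$-psh functions that is bounded above, the sup and its usc regularization agree on divisorial points. This is the main obstacle, since it must hold without assuming the envelope property on $\bar X$. I expect \cite{BJpp} proves it by writing each function as a decreasing limit of Fubini--Study functions and using that a divisorial point has a compact neighborhood basis in which the $\theta$-psh functions are controlled by their values at finitely many quasimonomial points, i.e.\ an Izumi/continuity-along-retractions argument. I will cite that lemma rather than reprove it; in our homogeneous setting it applies with $\theta=tD$. The family $\f_i-t\p$ is bounded above by $(1-t)\p\le0$, so the hypothesis is met.

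If that citation is unavailable, the fallback is a direct argument. Work on $X^\na_\p$ and use that each $\f_i$ is determined by values on a dual complex of some snc model. Upper semicontinuity at a divisorial point $v$ then reduces to convexity of the functions on simplices, and convex functions bounded above are locally uniformly Lipschitz in the interior of faces. A point $v=c\ord_E$ lies in the relative interior of the vertex $E$, so it is handled by the $0$-dimensional case together with a monotonicity of $\f_i$ under retraction. With either route, the values at $v$ of the sup and of its regularization coincide, which gives the lemma.
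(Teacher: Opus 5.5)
Your proposal is correct and is essentially the paper's proof: dominate the family by some $\p\in\cH(X)$, use Corollary~\ref{cor:pshhomemb} to get $\f_i-t\p\in\PSH_\hom(\bar X,tD)$, and apply \cite[Theorem~5.6]{BJpp}, which is exactly the projective statement your Step~3 needs. Your Step~2 makes explicit something the paper leaves implicit, namely that usc regularization commutes with subtracting the continuous function $t\p$. The fallback is therefore unnecessary, and as written it is inaccurate anyway: a general psh function is not determined by its values on a single dual complex; only Fubini--Study functions are.
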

\begin{proof}
    Pick $\psi=\psi_{\bar X, D}\in \cH(X)$ with $\varphi_i\in \PSH(X, \psi)$ for all $i$, and write $\p=\p_{\bar X,D}$ for an integrally closed compactification $\bar{X}$ of $X$, and $D\in\CDiv(\bar X)_\Q$ ample and strictly effective. Pick any $t\in\Q_{>1}$. By Corollary~\ref{cor:pshhomemb} we have $\varphi_i-t\p\in \PSH_\hom(\bar X, tD)$. It then follows from~\cite[Theorem~5.6]{BJpp} that 
    \[
    \sup_i(\f_i-t\p)={\sup_i}^\star (\f_i -t\p)
    \]
    on $\Xdiv$ and hence the same holds for $(\f_i)$.
\end{proof}

\begin{thm}\label{thm:envpropcrit}
    The following properties are equivalent:
    \begin{itemize}
        \item [(i)] the envelope property holds;
        \item [(ii)] the space $\PSH(X, \psi)\=\{\varphi\in \PSH(X)\mid \varphi\le \psi\}$ is compact, for some (or equivalently any) $\psi\in \cH(X)$;
        \item [(iii)] any increasing net $(\varphi_i)$ in $\PSH(X)$ of sublogarithmic growth converges in $\PSH(X)$.
    \end{itemize}
\end{thm}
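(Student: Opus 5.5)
I would prove the cycle (i)$\Rightarrow$(iii)$\Rightarrow$(ii)$\Rightarrow$(i). Throughout, the topology on $\PSH(X)$ is pointwise convergence on $X^\div$, and Lemma~\ref{lem:divneg} says that for sublogarithmic families, $\sup_i\f_i$ and ${\sup_i}^\star\f_i$ agree on $X^\div$.

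For (i)$\Rightarrow$(iii), let $(\f_i)$ be increasing with $\f_i\le\p$, and set $\f\={\sup_i}^\star\f_i$. By (i), $\f\in\PSH(X)$. By Lemma~\ref{lem:divneg}, $\f=\sup_i\f_i=\lim_i\f_i$ pointwise on $X^\div$, so $\f_i\to\f$ in $\PSH(X)$.

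For (iii)$\Rightarrow$(ii), I would first prove the auxiliary claim that $\PSH(X,\p)$ is closed under finite maxima and that its values at each $v\in X^\div$ are bounded by $\p(v)$. Then I reduce to $\PSH_\hom$, using Corollary~\ref{cor:pshhomemb}: $\PSH(X,\p)$ embeds as a closed subset of $\PSH_\hom(\bar X,tD)$ via $\f\mapsto\f-t\p$. It then suffices to show that the relevant set $\{\f'\in\PSH_\hom(\bar X,tD)\mid \f'\le(1-t)\p\}$ is compact, and to identify what (iii) says in the homogeneous setting. By~\cite{BJpp}, compactness of $\PSH_\hom$ modulo normalization is equivalent to the envelope property there, which is again equivalent to convergence of increasing bounded nets. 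So I would transport (iii) to $\PSH_\hom(\bar X,tD)$. The issue is that an increasing net in $\PSH_\hom(\bar X,tD)$ bounded above by $(1-t)\p$ need not come from $\PSH(X,\p)$, because of the nonnegativity constraint. I would handle this by replacing $\f'$ with $\max\{\f',-t\p\}$, which forces $\f'+t\p\ge0$ and makes the resulting $\f=\f'+t\p$ psh by the argument at the end of the proof of Corollary~\ref{cor:pshhomemb}.

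A more direct route to (ii) avoids this. Given a net in $\PSH(X,\p)$, Tychonoff on $\prod_{v\in X^\div}[0,\p(v)]$ gives a subnet converging pointwise on $X^\div$ to some $g$, and I need $g$ to be the restriction of a psh function. Set $\tau_i\={\sup_{j\ge i}}\f_j$, regularized. To get these into $\PSH$ I would use increasing limits of finite maxima, since by (iii) the limit of the increasing net of finite maxima $\max_{i\le j\le k}\f_j$ over finite sets $k$ exists in $\PSH$. The $\tau_i$ then decrease in $i$, and their decreasing limit $\tau$ is psh by Theorem~\ref{thm:pshprop}. On $X^\div$, Lemma~\ref{lem:divneg} gives $\tau_i=\sup_{j\ge i}\f_j$, so $\tau=\limsup_j\f_j=g$ there. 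This is the main point to check carefully: the limsup along a subnet must equal the pointwise limit, and since the subnet converges, it does.

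For (ii)$\Rightarrow$(i), take $(\f_i)\subset\PSH(X,\p)$ and consider the net of finite maxima $\f_F=\max_{i\in F}\f_i$, indexed by finite sets $F$. It is increasing, so by compactness a subnet converges to some $\f\in\PSH(X,\p)$. Because the net is increasing, the limit on $X^\div$ is $\sup_i\f_i$. Both $\f$ and ${\sup_i}^\star\f_i$ are usc and homogeneous. By Theorem~\ref{thm:weaktop}, $\f_i\le\f$ on $X^\div$ implies $\f_i\le\f$ everywhere, hence ${\sup_i}^\star\f_i\le\f$. Conversely, $\f=\sup_i\f_i\le{\sup_i}^\star\f_i$ on $X^\div$, and again Theorem~\ref{thm:weaktop} gives $\f\le{\sup_i}^\star\f_i$ everywhere. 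So ${\sup_i}^\star\f_i=\f$ is psh.

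For ``some versus any'' $\p$: given two functions $\p,\p'\in\cH(X)$, Lemma~\ref{lem:PLloggr} gives $\p'\le C\p$ for some $C$. Since $\PSH(X,C\p)=C\cdot\PSH(X,\p)$, and $\PSH(X,\p')$ is a closed subset of it (the condition $\f\le\p'$ is closed on $X^\div$ and suffices by Theorem~\ref{thm:weaktop}), compactness for one $\p$ transfers to any other.
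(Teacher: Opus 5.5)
Your argument is correct and uses the same ingredients as the paper (Tychonoff plus the decreasing net of regularized tail suprema with Lemma~\ref{lem:divneg}, nets of finite maxima, and two applications of Theorem~\ref{thm:weaktop}). You run the cycle in the opposite direction, (i)$\Rightarrow$(iii)$\Rightarrow$(ii)$\Rightarrow$(i), whereas the paper proves (i)$\Rightarrow$(ii)$\Rightarrow$(iii)$\Rightarrow$(i) and obtains ``some versus any'' from the cycle rather than from your scaling argument; the preliminary sketch via $\PSH_\hom(\bar X,tD)$ in your (iii)$\Rightarrow$(ii) step is unnecessary and incomplete as written, so it should be dropped in favor of the direct route you give.
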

\begin{proof}
    First assume~(i), and let $\psi\in \PL^+(X)$. Pick a net $(\varphi_i)$ in $\PSH(X, \psi)$. For each $v\in X^\div_\psi$, $(\varphi_i(v))$ defines a bounded net in $\R$. By Tychonoff's theorem, we may assume, after passing to a subnet, that $\varphi_i$ converges pointwise to a function $\varphi\colon X^\div_\psi\to [0,1]$. Extending by homogeneity, this defines a non-negative homogeneous function on $X^\div$, which we still denote by $\varphi$. We need to show that $\varphi$ extends to a psh function on $X$, which will then necessarily be unique and in $\PSH(X, \psi)$, by Theorem~\ref{thm:weaktop}. By~(i), for each $i$, $\rho_i\= \sup_{j\ge i}^\star\varphi_j\in \PSH(X)$ and by construction $(\rho_i)$ is a decreasing net. Thus, the pointwise limit $\rho\=\lim_i\rho_i\in \PSH(X)$, and by Lemma~\ref{lem:divneg}, $\rho=\varphi$ on $X^\div$. This proves~(ii).

    Next assume~(ii) is true for some $\psi$, and let $(\varphi_i)$ be an increasing net in $\PSH(X)$ with $\varphi_i\le\rho$ for some $\rho\in \PL^+(X)$. By Lemma~\ref{lem:PLloggr} we may find $C\ge 1$ with $\rho\le C\psi$, and thus $(C^{-1}\varphi_i)$ defines an increasing net in $\PSH(X, \psi)$. By~(ii) this yields a limit $\varphi'\in \PSH(X, \psi)\subset \PSH(X)$, and $\varphi_i\to \varphi\=C\varphi'$ in $\PSH(X)$.

    Finally assume~(iii) and consider a family $(\varphi_i)_{i\in I}$ in $\PSH(X)$ with $\varphi_i\le \rho$ for some $\rho\in \PL^+(X)$. The increasing net defined by $\varphi_J\=\max_{i\in J} \varphi_i$ for $J\subset I$ finite subsets has a limit $\varphi\in \PSH(X)$. It remains to show $\varphi=\sup_i^\star\varphi_i$. On $X^\div$, by Lemma~\ref{lem:divneg}, we have
    \[
    \varphi=\sup_J\max_{i\in J}\varphi_i=\sup_i\varphi_i={\sup_i}^\star\varphi_i.
    \]
    We conclude that $\varphi\le\sup_i^\star\varphi_i$ on $X^\na$ by Theorem~\ref{thm:weaktop}, whereas the reverse inequality is clear from $\f_i\le\f$ and $\f$ being usc.
\end{proof}
\begin{cor}
    If $X$ is smooth, then the envelope property holds on $X$.
\end{cor}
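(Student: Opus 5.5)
The plan is to verify criterion~(ii) of Theorem~\ref{thm:envpropcrit}, namely that $\PSH(X,\p)$ is compact for a suitable $\p\in\cH(X)$, by transporting the question to a projective compactification where the envelope property of~\cite{BJpp} is available. Since $X$ is smooth, resolution of singularities lets us pick an integrally closed polarized compactification $(\bar X,D)$ and set $\p\=\p_{\bar X,D}$. We may in addition arrange that $\bar X$ is smooth, or at least that some smooth projective model is available with the homogeneous envelope property. Fix $t\in\Q_{>1}$. By Corollary~\ref{cor:pshhomemb}, the map $\f\mapsto\f-t\p$ is a closed embedding of $\PSH(X,\p)$ into $\PSH_\hom(\bar X,tD)$. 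Moreover, $\f\le\p$ translates to $\f-t\p\le(1-t)\p$, and $\f\ge0$ translates to $\f-t\p\ge -t\p$.

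So the key steps are as follows.
\begin{enumerate}
\item Show that the subset of $\PSH_\hom(\bar X,tD)$ consisting of functions $\f'$ with $\f'\le 0$ (normalized by the sup on $X^\na_\p$, say) is compact. This is where the envelope property of~\cite{BJpp} for homogeneous $\theta$-psh functions enters.
\item Conclude that the image of $\PSH(X,\p)$ is a closed subset of this compact set, hence itself compact.
\item Note that the embedding is a homeomorphism onto its image, since both topologies are pointwise convergence on $X^\div=\bar X^\div$.
\end{enumerate}
Theorem~\ref{thm:envpropcrit} then gives the envelope property on $X$.

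The main obstacle is step~(1). In~\cite{BJpp} compactness of normalized $\theta$-psh functions (or, equivalently, the envelope property for $(\bar X,\theta)$) is established for smooth $\bar X$, or more generally under the assumption that $\bar X$ has the envelope property. However, our $\bar X$ need not be smooth at infinity, since $\bar X$ must be the integrally closed compactification carrying an ample $D$.

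I would handle this by passing to a log resolution $\pi\colon Y\to\bar X$ that is an isomorphism over the smooth $X$, and working with the nef class $\pi^*(tD)$ on the smooth $Y$. Homogeneous $\pi^*(tD)$-psh functions on $Y^\na=\bar X^\na$ coincide with homogeneous $tD$-psh functions; this is the pullback invariance in~\cite{BJpp}. The envelope property for nef, and more generally arbitrary, classes on smooth projective varieties in characteristic zero is known from~\cite{BJpp}, so compactness of $\{\f'\in\PSH_\hom(Y,\pi^*tD)\mid\sup\f'=0\}$ follows.

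The remaining point is the uniform bounds. The inequality $-t\p\le \f-t\p\le (1-t)\p$ keeps the family bounded on $X^\na_\p$, so the sup normalization holds up to a bounded constant, and the image stays inside a compact set. Closedness of the image, which is part of Corollary~\ref{cor:pshhomemb}, finishes the argument.
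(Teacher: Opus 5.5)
Your skeleton is the paper's. Corollary~\ref{cor:pshhomemb} embeds $\PSH(X,\p)$ as a closed subset of $\PSH_\hom(\bar X,tD)$, the envelope property of~\cite{BJpp} makes the target compact, and Theorem~\ref{thm:envpropcrit}(ii) concludes.

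The problem is the step you single out as the main obstacle, which rests on a misdiagnosis. Nothing forces $\bar X$ to be singular at infinity. Since $X$ is smooth, you can resolve any polarized compactification by blowups along smooth centers lying over the singular locus, which is contained in the boundary. Each such blowup stays polarizable by the argument of Lemma~\ref{lem:hasample}: $m\pi^*D-E$ is ample and strictly effective for $m\gg0$. A smooth compactification is integrally closed because $X$ is normal. This is exactly what the paper does: it picks a smooth polarized compactification, uses \cite[Theorem~5.20]{BJpp} (envelope property for ample classes) and \cite[Corollary~6.22]{BJpp} to get compactness of $\PSH_\hom(\bar X,2D)$, and applies Corollary~\ref{cor:pshhomemb} with $t=2$.

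The workaround you propose instead is not justified as written. You claim that $\PSH_\hom(Y,\pi^*(tD))$ coincides with $\PSH_\hom(\bar X,tD)$ for a singular normal $\bar X$, and give no argument. The inclusion $\pi^*\PSH_\hom(\bar X,tD)\subset\PSH_\hom(Y,\pi^*(tD))$ is easy. The reverse inclusion is not a formal consequence of the definitions: it would transfer compactness from the smooth model $Y$ to the singular $\bar X$. That is an envelope-type statement on a singular variety, which the paper never uses. Note also that $Y^\na\neq\bar X^\na$; only $Y^\val=\bar X^\val$ and $Y^\div=\bar X^\div$. Without that identification, Corollary~\ref{cor:pshhomemb} gives closedness of the image in $\PSH_\hom(\bar X,tD)$, not in $\PSH_\hom(Y,\pi^*(tD))$, so your chain does not close.

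Finally, the discussion of sup-normalizations on $X^\na_\p$ and uniform bounds is unnecessary. Any homogeneous $\theta$-psh function $\f'$ satisfies $\sup\f'=\f'(\triv_{\bar X})=0$. So the whole of $\PSH_\hom(\bar X,tD)$ is the compact set, and the closed embedding finishes the proof immediately.
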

\begin{proof}
    Pick any polarized smooth compactification $\bar{X}$ of $X$. By~\cite[Theorem~5.20]{BJpp}, the envelope property holds for all ample classes on $\bar X$, so by Corollary~6.22 in~\loccit, $\PSH_\hom(\bar{X},D)$ is compact for all ample strictly effective divisors $D\in\CDiv_\infty(\bar X)$. Pick any such $D$, and write $\p:=\p_{\bar X,D}\in\cH(X)$. Corollary~\ref{cor:pshhomemb} with $t=2$ now gives a closed embedding $\PSH(X,\p)\hto\PSH_\hom(\bar{X},2D)$ given by $\f\mapsto \f-2\p$. Thus $\PSH(X,\p)$ is compact, which completes the proof in view of Theorem~\ref{thm:envpropcrit}.
\end{proof}
\subsection{Envelopes}\label{sec:FSenv}
For later use, we define and study two natural envelopes associated to non-negative functions on $X^\na$.
\begin{defi}
    Given any function $\varphi\colon X^\na\to \R_+\cup \{\infty\}$, the \emph{psh envelope} of $\varphi$ is the function $P(\varphi)\colon X^\na\to \R_+\cup\{+\infty\}$ defined by
    \[
    P(\varphi)\=\sup\{\psi\in \PSH(X)\mid \psi\le \varphi\},
    \]
    and the \emph{Fubini--Study envelope} of $\varphi$ is
    \[
    Q(\varphi)\= \sup\{\psi\in \cH(X)\mid \psi\le \varphi\}.
    \]
\end{defi}
The psh envelopes satisfy the following properties, and similarly for the Fubini--Study envelope:
\begin{itemize}
    \item $P(\varphi)\ge0$ and is homogeneous;
    \item $P(t\varphi)=tP(\varphi)$;
    \item $\varphi\mapsto P(\varphi)$ is increasing;
    \item $P(\varphi)=P(\hat\varphi)$, where $\hat\varphi$ is the homogenization of $\varphi$: $\hat\varphi(v)\=\inf_{t>0}t^{-1}\varphi(tv)$.
\end{itemize}
An immediate observation is that $Q(\varphi)\equiv 0$ unless $\varphi$ dominates some function in $\cH(X)$. In the case when $Q(\varphi)\not\equiv 0$, by density of $\cH(X)\subset \CPSH(X)$, it is not hard to see that 
\[
Q(\varphi)=\sup\{\psi\in\CPSH(X)\mid \psi\le \varphi\}.
\]

\begin{lem}\label{lem:FSpshenv}
    Let $\varphi\colon X^\na\to \R_+\cup \{\infty\}$ be a homogeneous function dominating some $\rho\in \PL^+(X)$, and let $\varphi_\star$ be its lsc regularization. Then
    \begin{itemize}
        \item [(i)] $Q(\varphi)=P(\varphi_\star)$;
        \item[(ii)] $Q(\varphi)\in C^0(X^\na)$ iff $Q(\varphi)\in \PSH(X)$;
        \item[(iii)] If $\varphi$ is further a pointwise limit of an increasing net $(\varphi_j)$ of non-negative homogeneous lsc functions, then $P(\varphi_j)\nearrow P(\varphi)$.
    \end{itemize}
\end{lem}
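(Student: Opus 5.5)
For (i), I will prove the two inequalities separately. Every $\psi\in\cH(X)$ is continuous, so $\psi\le\f$ forces $\psi\le\f_\star$. Since $\cH(X)\subset\PSH(X)$, this gives $Q(\f)\le P(\f_\star)$.

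For the reverse inequality, fix $\psi\in\PSH(X)$ with $\psi\le\f_\star$, and write $\psi=\lim_i\psi_i$ as the limit of a decreasing net in $\cH(X)$. The plan is a Dini-type argument on the compact set $X^\na_\rho$, where $\rho\in\PL^+(X)=\cH(X)$ is the function dominated by $\f$.

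Fix $\e\in\Q_{>0}$ and consider $\tau_i:=\max\{\psi_i-\e\rho,\,0\}$, or better a Fubini--Study variant of it. I want to show that $(1-\e)\psi_i$, or $\max\{(1-\e)\psi_i,\,\e'\rho\}$, lies below $\f$ for $i$ large.

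On $X^\na_\rho$, the function $\f_\star$ is lsc and satisfies $\f_\star\ge\rho=1$ there. The sets $\{\psi_i\ge\f_\star+\e\}\cap X^\na_\rho$ are closed, since $\psi_i$ is continuous and $\f_\star$ is lsc. They decrease in $i$, and their intersection is empty because $\psi\le\f_\star$. By compactness, $\psi_i<\f_\star+\e$ on $X^\na_\rho$ for $i\gg1$. Using $\f_\star\ge1$ on $X^\na_\rho$, this gives $\psi_i<(1+\e)\f_\star$, and by homogeneity $(1+\e)^{-1}\psi_i\le\f_\star\le\f$ on $X^\na_\infty$; both sides vanish elsewhere.

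The function $\max\{(1+\e)^{-1}\psi_i,\,\rho/2\}$ lies in $\cH(X)$ when $\e$ is rational, since $\cH$ is stable under max and under rational scaling. It is also $\le\f$. Hence $Q(\f)\ge(1+\e)^{-1}\psi$, and letting $\e\to0$ gives $Q(\f)\ge\psi$. Taking the sup over $\psi$ proves (i). The one point to be careful about is keeping the scalings in $\Q_{>0}$; this is harmless.

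For (ii), if $Q(\f)\in\PSH(X)$, continuity is not automatic, so the content lies in the other direction. First suppose $Q(\f)$ is continuous. The family $\{\psi\in\cH\mid\psi\le\f\}$ is stable under finite max, so it forms an increasing net whose pointwise limit $Q(\f)$ is continuous. By Dini on $X^\na_\rho$, the convergence is locally uniform, so $Q(\f)\in\CPSH(X)$ by Corollary~\ref{cor:cpshclos}. Conversely, suppose $Q(\f)=P(\f_\star)$ is psh. Then it is usc. It is also lsc as a sup of continuous functions, hence continuous.

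For (iii), monotonicity gives $P(\f_j)\le P(\f)$, and the $P(\f_j)$ increase. For the reverse, I use (i): each $\f_j$ is lsc. If the $\f_j$ dominate some element of $\PL^+$, then $P(\f_j)=Q(\f_j)$. Without that domination, I replace $\f_j$ by $\max\{\f_j,\e\rho\}$ and let $\e\to0$ at the end; this may need care, and it is the step I expect to be fiddly.

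Now take $\psi\in\cH$ with $\psi\le\f$. As before, I aim to show $(1+\e)^{-1}\psi\le\f_j$ for $j$ large. The sets $\{\psi\ge(1+\e)\f_j\}\cap X^\na_\rho$ are closed, since $\f_j$ is lsc. They decrease in $j$, and their intersection is empty because $\f_j\nearrow\f\ge\psi$ and $\psi>0$ on $X^\na_\rho$. Compactness then yields $j$ with $(1+\e)^{-1}\psi\le\f_j$, which gives $\sup_jP(\f_j)\ge Q(\f)$.

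Finally, $\f$ is lsc as an increasing limit of lsc functions, so $Q(\f)=P(\f_\star)=P(\f)$ by (i). This proves (iii). The main obstacle is exactly this compactness argument, which needs the strict positivity of $\psi$ on $X^\na_\rho$ and the lower semicontinuity of the $\f_j$.
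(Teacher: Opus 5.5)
Your proof is correct and follows essentially the same route as the paper: a Dini-type compactness argument on $X^\na_\rho$ for (i) and (iii), and for (ii) Dini's lemma together with Corollary~\ref{cor:cpshclos} and the lower semicontinuity of $Q(\varphi)$. The domination issue you flag in (iii) does not actually arise. As in the paper, you only need the trivial inequality $Q(\varphi_j)\le P(\varphi_j)$: the function $(1+\varepsilon)^{-1}\psi$ lies in $\cH(X)\subset\PSH(X)$ and sits below $\varphi_j$, so no domination of $\varphi_j$ by an element of $\PL^+(X)$ is required.
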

\begin{proof}
    First note that if $\psi\in \cH(X)$ and $\psi\le \varphi$, then by continuity of $\psi$, we have $\psi\le \varphi_\star$ and it follows that $Q(\varphi)\le P(\varphi_\star)$. Conversely, suppose $\psi\le \varphi_\star$ for a psh function $\psi$, and pick $\psi_i\in \cH(X)$ a decreasing net converging pointwise to $\psi$. Then for any $t>1$ and $v\in X^\na_\rho$, $\{\psi_i<t\varphi_\star\}$ defines an open neighborhood of $v$ in $X^\na_\rho$ for $i\gg 0$, by lower semicontinuity of $\varphi_\star$. Since $X^\na_\rho$ is compact, we have $\psi_i<t\varphi_\star$ on $X^\na_\rho$ for all $i\gg 0$. In particular, $\p\le\p_i\le t\f_\star$, which yields $P(\varphi_\star)\le Q(t\varphi_\star)=tQ(\varphi_\star)=tQ(\varphi)$, where the last equality follows since $Q(\varphi)$ is lsc. This proves the reverse inequality and hence~(i).

    For~(ii), if $Q(\varphi)\in \PSH(X)$, then it is usc. Since $Q(\varphi)$ is lsc by construction, this shows $Q(\varphi)\in C^0(X^\na)$. Conversely, suppose $Q(\f)$ is continuous. As $\cH(X)$ is closed under finite maxima, we may write $Q(\varphi)$ as the pointwise limit of an increasing net $(\varphi_j)$ in $\cH(X)$. By Dini's lemma, $\varphi_j\to Q(\varphi)$ uniformly on $X^\na_\rho$, and hence locally uniformly on $X^\na$. This shows $Q(\varphi)\in \PSH(X)$ by Theorem~\ref{thm:pshprop}.

    Finally we prove~(iii). It is clear that $\lim_jP(\varphi_j)\le P(\varphi)$. Further, $\f$ is lsc, so by~(i) we have $P(\f)=Q(\f)$. Now pick $\p\in\cH(X)$ with $\psi\le\varphi$. Then, for any $t>1$, arguing as before we have $\psi\le t\varphi_j$ for $j\gg 0$. This yields
    \[
    P(\f)=Q(\f)\le \sup_jQ(t\f_j)=\sup_jtQ(\f_j)\le\sup_jtP(\f_j),
    \]
    which completes the proof.
\end{proof}
\begin{rmk}
    In analogy with~\cite[Lemma~5.17]{BJpp}, we can characterize the envelope property using envelopes: the envelope property holds iff $P(\f)\in \CPSH(X)$ for any continuous homogeneous $\f\colon X^\na_\infty\to \R_{>0}$. 
\end{rmk}

\subsection{Associated Weil divisors at infinity}\label{sec:pshWeil}

As we saw in~\S\ref{sec:Weilb}, the set $\WbDiv_\infty(X)_\R$ of real Weil b-divisors at infinity is in bijection with the set of real-valued functions on the set $\PbDiv_\infty(X)$ of prime divisors at infinity, and therefore in bijection with the set of homogeneous real-valued functions on $X^\div_\infty$. 

If $\p\in\PSH(X)$, then $\p$ restricts to a non-negative homogeneous real-valued function on $X^\div_\infty$, and we write $D_\p$ for the corresponding effective Weil b-divisor at infinity.
\begin{lem}\label{lem:irregzero}
    Assume $X$ is smooth and admits a compactification $\bar X$ with $q(\bar X)=0$ and rational singularities. Then $\f, \p\in \PSH(X)$ define numerically equivalent Weil b-divisors $D_\f, D_\p$ iff $\f-\p=\sum_ic_i\log|f_i|$ for some $c_i>0$ and $f_i\in R^\times$.
\end{lem}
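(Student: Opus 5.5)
The plan is to prove the easy direction first and then reduce the converse to the statement that a numerically trivial divisor at infinity on a smooth model of $X$ is principal.

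\emph{Easy direction.} Suppose $\f-\p=\sum_i c_i\log|f_i|$ with $f_i\in R^\times$. For each $i$ the function $\log|f_i|$ is the function $\p_{\div(f_i)}$ of the principal Cartier b-divisor $\div(f_i)$. Since $f_i$ is invertible on $X$, this divisor is supported at infinity. Hence $D_\f-D_\p=\sum_i c_i\div(f_i)$ as Weil b-divisors at infinity, evaluating both sides on $X^\div_\infty$. A principal divisor is numerically trivial on every smooth model, so the image of $D_\f-D_\p$ in $\WNum_{n-1}(X)$ vanishes.

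\emph{Converse, setup.} Set $B\=D_\f-D_\p\in\WbDiv_\infty(X)_\R$ and assume it is numerically trivial. By Lemma~\ref{lem:ntrivCDiv}, there is a smooth projective model $Y$ of $X$ such that $B_Y$ is numerically trivial and $B$ is the pullback of $B_Y$ on all higher models. Since $X$ is smooth, we may take $Y$ to be a resolution of the given $\bar X$ that is an isomorphism over $X$. Then $B_Y=\sum_j b_jE_j$ is an $\R$-divisor supported on $Y\setminus X$.

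\emph{Converse, key step.} We need that a numerically trivial $\R$-divisor supported at infinity is an $\R_{>0}$-combination of principal divisors $\div(f)$ with $f\in R^\times$. This is where $q(\bar X)=0$ and rational singularities enter.
\begin{itemize}
\item Since $\bar X$ has rational singularities, $q(Y)=q(\bar X)=0$, so $\mathrm{Pic}^0(Y)$ is trivial. Hence $\Num(Y)$ equals $\NS(Y)_\R=\mathrm{Pic}(Y)\otimes\R$ modulo torsion.
\item Consider the linear map $\R^{N}\to\mathrm{Pic}(Y)_\R$ given by $(b_j)\mapsto\sum_j b_j[E_j]$. It is defined over $\Q$, so its kernel $K$ has a rational basis.
\item For an integral vector in $K$, the divisor $\sum_j b_jE_j$ is numerically trivial. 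With $\mathrm{Pic}^0=0$, a multiple of it is linearly equivalent to zero, so it equals $\div(f)$ for some rational function $f$. This $f$ has no zeros or poles on $X$, so $f\in R^\times$ (using normality of $X$).
\end{itemize}
Thus $K$ is spanned by the vectors of $\div(f)$ with $f\in R^\times$, and $B_Y=\sum_i c_i\div(f_i)$ with $c_i\in\R$. Replacing $f_i$ by $f_i^{-1}$ where necessary makes every $c_i>0$, which gives the positivity required in the statement.

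\emph{Converse, conclusion.} The b-divisor $B$ is determined on $Y$ and agrees with $\sum_i c_i\div(f_i)$ there. So $\f-\p$ agrees with $\sum_i c_i\log|f_i|$ on $X^\div_\infty$. Both sides vanish on $X^\div\cap X^\beth$: psh functions vanish on $X^\beth$, and each $f_i$ is a unit on $X$. Hence the two sides agree on all of $X^\div$.

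To pass from $X^\div$ to $X^\na$, rewrite the identity as $\f=\p+\sum_i c_i\log|f_i|$. The right-hand side is usc and homogeneous, and it is nonnegative because it agrees with $\f\ge0$ on $X^\div$. Theorem~\ref{thm:weaktop} then gives $\f\le$ RHS on $X^\na$. The function $\p+\sum_ic_i\log|f_i|-\ldots$ is awkward for the reverse inequality, since it is not psh in our sense. Instead, apply the same argument symmetrically: $\p=\f+\sum_ic_i\log|f_i^{-1}|$, with the roles of $\f$ and $\p$ swapped. This yields the reverse inequality and hence equality on $X^\na$.

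The main obstacle is the key step: showing that numerical triviality at infinity forces principality with $\R$-coefficients. This needs the rational-basis argument for the kernel $K$ and the vanishing of $\mathrm{Pic}^0(Y)$.
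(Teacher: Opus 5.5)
Your argument is essentially the paper's. You use $q(\bar X)=0$ and rational singularities to get irregularity zero on a smooth model, so numerical triviality of $D_\f-D_\p$ upgrades to $\R$-linear triviality by principal divisors of units. You then use Lemma~\ref{lem:ntrivCDiv} to see that the b-divisor is determined on that model. Working on a single resolution $Y$ avoids the paper's detour through arbitrary normal compactifications, and your rational-basis argument correctly spells out the step the paper states in one line. You also address the passage from $X^\div$ to $X^\na$, which the paper leaves implicit. There is a harmless sign slip: with the paper's conventions, $\log|f|=\p_{-\div(f)}$.

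There is one faulty inference, in that last step. Set $\tau\coloneqq\sum_ic_i\log|f_i|$. You claim $\p+\tau$ is nonnegative on $X^\na$ because it agrees with $\f\ge0$ on $X^\div$. For a merely usc function, nonnegativity on a dense set does not propagate to the whole space. So Theorem~\ref{thm:weaktop} cannot yet be applied to $\p+\tau$, nor to $\f-\tau$ in your symmetric step.

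The fix is to use continuous upper approximants. Take a decreasing net $(\p_j)$ in $\cH(X)$ converging to $\p$. Each $\p_j+\tau$ is continuous and homogeneous. It vanishes on $X^\beth$, since there $\p_j=0$ and $v(f_i)=0$ for units $f_i$. On $X^\div_\infty$ it satisfies $\p_j+\tau\ge\p+\tau=\f\ge0$. By continuity and density of $X^\div_\infty$ in $X^\na_\infty$, the function $\p_j+\tau$ is nonnegative everywhere. Theorem~\ref{thm:weaktop} then gives $\f\le\p_j+\tau$ on $X^\na$, and letting $j\to\infty$ yields $\f\le\p+\tau$. The same argument with the roles of $\f$ and $\p$ swapped gives the reverse inequality.
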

\begin{proof}
    It suffices to show the forward direction. First note that the assumption implies all smooth compactifications of $X$ has irregularity zero. Indeed, if $\bar X_1$ is a smooth compactification dominating $\bar X$, then $\mathrm{H}^1(\bar X_1, \cO_{\bar X_1})\simeq \mathrm{H}^1(\bar X, \cO_{\bar X})=0$, since $\bar X$ has rational singularities and $q(\bar X)=0$.
    Let $\bar X'$ be any normal compactification of $X$, and pick a smooth compactification $\bar X''$ dominating $\bar X'$. Then the map $\mathrm{H}^1(\bar X', \cO_{\bar X'})\to \mathrm{H}^1(\bar X'', \cO_{\bar X''})$ is injective, and so $q(\bar X')=0$. This implies all incarnations of $D_\f, D_\p$ on normal compactifications of $X$ are $\R$-linearly equivalent, and that $D_\f-D_\p\in \CbDiv_\infty(X)_\R$ by Lemma~\ref{lem:ntrivCDiv}. It follows that $\f-\p=\sum_i c_i\log|f_i|$ for some $c_i>0$ and $f_i\in R^\times$.
\end{proof}
The associated numerical class $[D_\p]\in\WNum_{n-1}(X)$, is by definition of $\PSH(X)$ the limit of nef Cartier classes, and hence a nef class in the sense of~\cite{DangFavre}. 

As a converse, we have
\begin{prop}\label{prop:nefweilpsh}
    Assume $X$ is smooth, and let $D\in\WbDiv_\infty(X)_\R$ be an effective Weil b-divisor at infinity whose numerical class $[D]\in\WNum_{n-1}(X)$ is nef. Then the associated homogeneous function $\p_D\colon X^\div\to\R_+$ is the restriction of a (unique) function in $\PSH(X)$.
\end{prop}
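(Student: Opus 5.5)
Uniqueness is immediate from Theorem~\ref{thm:weaktop}: a psh function is determined by its values on $X^\div$. For existence, the plan is to use the correspondence with homogeneous $\theta$-psh functions on a smooth compactification, together with the envelope property, which holds because $X$ is smooth.

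\emph{Reduction to a fixed model.} Fix a smooth polarized compactification $(\bar X,H)$, with $H$ ample and strictly effective at infinity, and set $\p_H:=\p_{\bar X,H}\in\cH(X)$. Since $[D]$ is nef, Siu's inequality (Lemma~\ref{lem:Siuclass}) should give $[D]\le C[H]$ for some $C$, so $D$ is dominated by a multiple of $H$ numerically. Replacing $D$ by $D+\e H$ and letting $\e\to0$ at the end, we may assume $[D]$ is big; the final limit is then a decreasing limit, using $\PSH(X)$ being closed under decreasing limits (Theorem~\ref{thm:pshprop}).

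\emph{Regularization.} By~\cite[Theorem~A]{DangFavre}, $[D]$ is a decreasing limit of nef Cartier classes, obtained by multiplier ideal approximation on smooth models. I would redo this at the level of divisors rather than classes. On each smooth model $Y\to\bar X$, the incarnation $D_Y$ is an effective divisor supported at infinity, with movable class. Following~\cite[\S2]{bdff}, I would take the asymptotic multiplier ideals $\fa_m:=\cJ(m\,\cdot)$ of the graded system $\cO_Y(-mD_Y)$ and consider the ideals at infinity they define. Each such ideal gives a Fubini--Study type function $\f_m:=m^{-1}\log|\fa_m|^{-1}$. Because the support is at infinity and $X$ is affine, this function has the form $m^{-1}\max_j\log^+|f_j|$ with $f_j\in R$, where the $f_j$ come from the sections $f\,s_{mD'}$ via Lemma~\ref{lem:critnopole}. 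The subadditivity of multiplier ideals should make $(\f_{2^m})$ decreasing (up to a $\p_H$-correction of size $O(1/m)$), with $\f_m\ge\p_D$ on $X^\div$.

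\emph{Convergence and the main obstacle.} I would define $\f:=\lim_m\f_{2^m}\in\PSH(X)$. The remaining task is to show that $\f=\p_D$ on $X^\div$. The inequality $\f\ge\p_D$ is built in. The reverse inequality, $\ord_E(\fa_m)/m\to\ord_E(D)$ for every prime $E$ at infinity, is the main obstacle: nefness of the b-class must be converted into the statement that the divisorial Zariski decomposition on each model has no negative part along the incarnations. In~\cite{bdff} this is exactly their characterization of nef Weil b-divisors as envelopes $\mathrm{Env}(D)$ with $\mathrm{Env}(D)=D$.

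\emph{Fallback via envelopes.} If this step fails, I would instead use the envelope property directly. Set $\f:=P(\p_D)$, the psh envelope, where $\p_D$ is extended to $X^\na$ as a usc homogeneous function (e.g.\ via the infimum over models). By the envelope property (Theorem~\ref{thm:envpropcrit}) and Lemma~\ref{lem:divneg}, $\f\in\PSH(X,C\p_H)$. The defect $D-D_\f\ge0$ would then be shown to vanish: its class is psef, while nefness of $[D]$ together with the maximality of the envelope should force it to be zero via the negativity lemma on each model.
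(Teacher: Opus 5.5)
Your uniqueness argument and the reduction to $[D]$ big are fine. The existence part, however, has a genuine gap, exactly where you flag the ``main obstacle''. Neither of your routes ever uses nefness of $[D]$ to produce psh functions taking the value $\ord_E(D)$ at $\ord_E$, and that is the whole content of the proposition.

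\emph{First route.} Multiplier ideals of the ideals $\cO_Y(-mD_Y)$ are local invariants of the divisor $D_Y$; for instance, for snc support they equal $\cO_Y(-\lfloor mD_Y\rfloor)$. So $m^{-1}\log|\fa_m|^{-1}$ only reproduces, up to an error tending to $0$, the function $\p_{\overline{D_Y}}$ of the Cartier b-divisor determined by $D_Y$ on $Y$. That function is in general not psh, and the positivity of $[D_Y]$ never enters. Moreover, an ideal at infinity is not generated by elements of $R$ just because $X$ is affine. To obtain functions $m^{-1}\max_j\log^+|f_j|$ you would need asymptotic multiplier ideals of the linear series $|m(D_Y+\e A)|$ together with a vanishing/regularity argument for global generation. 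The appeal to \cite{bdff} does not supply this: their envelopes live over an isolated singularity and use a divisorial, not numerical, notion of nefness.

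\emph{Fallback.} That $P(\p_D)\in\PSH(X)$ is fine. The negativity lemma for movable classes does legitimately show that $\p_{\overline{D_Y}}$ decreases in $Y$ to $\p_D$ on $X^\div$. But it cannot show that the defect $D-D_{P(\p_D)}$ vanishes. Maximality of the envelope only helps once you exhibit psh competitors below $\p_D$ with the right values at each $\ord_E$, which is precisely what is to be proved.

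\emph{What is missing.} You need a lemma tying envelopes to Nakayama's $\sigma$-decomposition. Let $B$ be a big $\Q$-divisor at infinity on a smooth model $Y$ (the $\R$-case follows by approximating from below), and $E\subset Y$ a prime divisor at infinity. Then $P(\p_{\overline B})(\ord_E)\ge\ord_E(B)-\sigma_E(B)$. Indeed, each $s\in\Hnot(Y,mB)$ gives $f=s/s_{mB}\in R$ with $m^{-1}\log^+|f|\le\p_{\overline B}$ and $m^{-1}\log^+|f|(\ord_E)\ge\ord_E(B)-m^{-1}\ord_E(s)$. Moreover, $\lim_m m^{-1}\min_s\ord_E(s)=\sigma_E(B)$ for big $B$.

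Since $[D_Y]$ is movable and, after your reduction, big, we get $\sigma_E(D_Y)=0$. Now consider the net $P(\p_{\overline{D_Y}})$: it is psh by the envelope property and decreasing in $Y$ by the negativity lemma. Its limit is psh and equals $\p_D$ on $X^\div$, so with this lemma your fallback closes.

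\emph{The paper's route.} The paper black-boxes this step via \cite[Theorem~6.40]{BJpp}, which yields $u\in\PSH_\hom(\bar X,[D_{\bar X}])$ with $u=\p_D-\p_{D_{\bar X}}$ on $X^\div$. Subtracting $\p_H$, for an effective $H$ at infinity with $L=D_{\bar X}+H$ ample, gives $\p_D-\p_L\in\PSH_\hom(\bar X,L)$. This is a decreasing limit of functions $m_l^{-1}\max_j\log|s_{l,j}|$. Dividing by $s_{m_lL}$ and taking $\max\{0,\cdot\}$ turns these into a decreasing sequence $m_l^{-1}\max_j\log^+|f_{l,j}|$ in $\PSH(X)$ converging to $\p_D$ on $X^\div$.
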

\begin{proof}
    Pick a polarizable smooth compactification $\bar{X}$ of $X$. The trace $D_{\bar{X}}$ of $D$ on $\bar{X}$ is effective. Pick an ample effective divisor $H\in\CDiv_\infty(\bar{X})_\R$ such that $L:=D_{\bar{X}}+H$ is an ample $\Q$-divisor. For example, given an ample strictly effective divisor $A$ at infinity, we can pick $L=mA$ and $H=mA-D_{\bar{X}}$ for $m$ sufficiently divisible.
    
    Applying~\cite[Theorem~6.40]{BJpp} to $[D]$ shows that the function $\p_D-\p_{D_{\bar{X}}}$ on $X^\div={\bar{X}}^\div$ is the restriction of a function $u\in\PSH_\hom(\bar{X},[D_{\bar X}])$ on $\bar{X}^\val=X^\val$. As $H$ is effective, $-\p_H$ is $[H]$-psh, and hence $\f:=u-\p_H=\p_D-\p_L\in\PSH_\hom(\bar{X},L)$.
    
    By Corollary~6.17 and Remark~6.18 of~\cite{BJpp}, we can find a decreasing sequence $(\f_l)_l$ of functions in $\cH_\hom(\bar{X},L)$ converging to $\f$ pointwise on $\bar{X}^\val=X^\val$. Thus each function $\f_l$ is of the form $\f_l=\frac1{m_l}\max_j\log|s_{l,j}|$, where $m_l\in\Z_{>0}$ is sufficiently divisible so that $m_lL$ is a $\Z$-divisor, and $\{s_{l,j}\}_j$ is a finite set of nonzero global sections of $m_lL$. As $\f_l\ge\f=0$ on $X^\beth\cap X^\val$, these sections have no common zero on $X$. If $s_{m_lL}$ is the canonical section of $m_lL$, then $f_{l,j}:=s_{l,j}/s_{m_lL}\in R$, and the functions $\{f_{l,j}\}_j$ have no common zero on $X$. 
    It follows that $\p_l:=\max\{0,\f_l+\p_L\}=\frac1{m_l}\max_j\log^+|f_{l,j}|\in\PSH(X)$. 
    Hence $(\p_l)_l$ forms a decreasing sequence in $\PSH(X)$ that converges to $\p_D$ pointwise on $X^\div$, so $\p_D$ extends uniquely to a function in $\PSH(X)$ by Theorem~\ref{thm:pshprop}.
\end{proof}

\section{Curvature and Monge--Amp\`ere operator}\label{sec:numint}
In this section, $X$ is an affine variety of dimension $n\ge2$. We define the curvature operator on psh functions by sending a function to the numerical class of its associated Weil b-divisor. By taking decreasing limits, we extend the intersection pairing on PL functions to arbitrary psh functions and define the Monge--Amp\`ere operator. We also study the metric space structure on the space of numerical classes defined by psh functions of positive volume.
\subsection{Curvature and intersection pairing on PL functions}\label{sec:curvPL}
Following~\cite{BFJ16} we define the \emph{curvature operator}
\begin{equation}\label{equ:curvop}
\ddc\colon \PL(X)\to \CNum(X) 
\end{equation}
as follows: if $\varphi\in\PL(X)$ is given by a $\Q$-Cartier divisor $D\in \CDiv_\infty(\bar X)_\Q$ for some compactification $\bar X$, then $\ddc\varphi\in\CNum(X)$ is the Cartier b-divisor class defined by $D$. Its image under the map $\CNum(X)\to\WNum_{n-1}(X)$ equals the numerical class induced by $\f|_{X^\div}$, see~\S\ref{sec:numcld}.

\begin{exam}
    If $f\in R$ is invertible, then $\log|f|\in\PL(X)$, and $\ddc\log|f|=0$. Indeed, pick any compactification $\bar{X}$ of $X$, and set $D=-\div_{\bar{X}}(f)$. Then $D$ is numerically trivial and $\log|f|=\p_D$. In the terminology of~\cite{CLD25}, this is compatible with the fact that $\log|f|$ is pluriharmonic, see~\cite[Lemme~16.7.4]{CLD25}.  
\end{exam}
\begin{exam}
    Let $E$ be an elliptic curve, and $p, q\in E$ be two distinct points. Then $X\=(E\setminus\{p, q\})\times\A^1$ is affine, and denote by $D_p=\{p\}\times\P^1, D_q\=\{q\}\times \P^1$ divisors at infinity on the compactification $E\times \P^1$. Then $D=D_p-D_q$ is a numerically trivial (but not trivial, if $p-q$ is not torsion) divisor at infinity, so $\ddc\p_D=0$.
\end{exam}
Via the curvature operator, the intersection pairing $\CNum(X)^n\to\R$  induces an intersection pairing $\PL(X)^n\to\Q$, which we write as
\[
    (\f_1,\dots,\f_n)\in\Q
\]
for $\f_1,\dots\f_n\in\PL(X)$, and which can be more concretely computed as follows: given $\f_1,\dots,\f_n\in\PL(X)$, pick an integrally closed compactification $\bar X$ of $X$ such that each $\f_i$ is defined by a $\Q$-Cartier divisor $D_i$ on $\bar X$. Then
\[
(\f_1,\dots,\f_n)=(D_1\cdot\ldots\cdot D_n).
\]
We also write $\vol(\f)\=(\f^n)=(\f,\dots,\f)$.
Note that $(\f_1,\dots,\f_n)>0$ when the $\f_i$ are all Fubini--Study functions. Indeed, the classes $\ddc\f_i\in\CNum(X)$ are then big and nef.
We recall the following classical inequalities, most of which follow from the Hodge Index Theorem, see~\cite{pos1,rKT}.
\begin{prop}\label{prop:Hineq}
    The intersection pairing on $\cH(X)$ satisfies the following:
    \begin{itemize}
        \item[(i)] for any $1\le p\le n$ and $\varphi_1, \dots, \varphi_p, \psi_1, \dots, \psi_{n-p}\in \cH(X)$, we have 
        \[
        (\varphi_1\dots\varphi_p, \psi_1, \dots, \psi_{n-p})\ge (\varphi_1^p, \psi_1, \dots, \psi_{n-p})^{1/p}\dots (\varphi_p^p, \psi_1, \dots, \psi_{n-p})^{1/p};
        \]
        \item [(ii)] for any $\varphi\in \cH(X)$, $\varphi\mapsto (\varphi^n)^{1/n}$ is concave;
        \item [(iii)] for any $\varphi, \psi\in \cH(X)$, the map $k\mapsto \log(\varphi^k, \psi^{n-k})$ is concave.
        \item[(iv)] for any $\varphi, \psi, \rho\in \cH(X)$, and $1\le k\le n-1$, we have
        \[
        (\varphi^k, \rho^{n-k})(\rho^k, \psi^{n-k})\ge \frac{k!(n-k)!}{n!}(\rho^n)(\varphi^k, \psi^{n-k}).
        \]
        \item[(v)] for $\f, \p\in\cH(X)$, $\ddc\f\le n\frac{(\f, \p^{n-1})}{(\p^n)}\ddc\p$;
    \end{itemize}
\end{prop}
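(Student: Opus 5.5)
The plan is to reduce everything to intersection theory on a single projective model, where the classical statements apply. Given finitely many functions in $\cH(X)$, I would use Lemma~\ref{lem:FSPL} and pick one integrally closed compactification $\bar X$ on which all of them are represented by semiample, strictly effective $\Q$-Cartier divisors $D_i$ at infinity. Then I pass to a resolution $\pi\colon Y\to\bar X$, with $Y$ a smooth projective model. The pullbacks $\pi^*D_i$ are nef and big $\Q$-Cartier divisors on $Y$. By the definition of the pairing on $\PL(X)$, $(\f_1,\dots,\f_n)=(\pi^*D_1\cdots\pi^*D_n)$, and intersection numbers are invariant under pullback. So each assertion becomes an inequality between intersection numbers of nef and big classes on the smooth projective variety $Y$. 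Since all quantities are homogeneous, I may also clear denominators and assume the divisors are integral.

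On $Y$, items (i)--(iii) are the standard consequences of the Hodge index theorem for nef classes.
\begin{itemize}
\item Item (i) is the Khovanskii--Teissier/Hodge inequality in its general mixed form, as in~\cite[Theorem~1.6.1]{pos1}. Alternatively, it is~\eqref{equ:KTclass1} applied to the nef Cartier classes $\ddc\f_i$ and $\ddc\p_j$.
\item Item (iii) is the log-concavity of $k\mapsto(\a^k\cdot\b^{n-k})$ for nef $\a,\b$, which again is~\cite{pos1}.
\item Item (ii) follows from (i) with $p=n$. By multilinearity, $((\f+\p)^n)=\sum\binom nk(\f^k,\p^{n-k})$, and (i) gives $(\f^k,\p^{n-k})\ge(\f^n)^{k/n}(\p^n)^{(n-k)/n}$. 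Hence $\vol(\f+\p)^{1/n}\ge\vol(\f)^{1/n}+\vol(\p)^{1/n}$, and homogeneity yields concavity.
\end{itemize}
Here we use that $\cH$ is closed under addition and rational scaling, and concavity is understood along rational segments.

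Item (v) is Siu's inequality, as in Lemma~\ref{lem:Siuclass} and~\cite[Theorem~2.2.15]{pos1}, applied to the nef divisors $A=\pi^*D_\f$ and $B=\pi^*D_\p$ with $B$ big. It gives $A\le n\frac{(A\cdot B^{n-1})}{(B^n)}B$ in the pseudoeffective order on $Y$, and therefore as b-divisor classes.

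Item (iv) is the reverse Khovanskii--Teissier inequality of~\cite{rKT} for nef classes $\a=\ddc\f$, $\b=\ddc\p$, $\g=\ddc\rho$ on the smooth projective $Y$:
\[
(\a^k\cdot\g^{n-k})(\g^k\cdot\b^{n-k})\ge\tfrac{k!(n-k)!}{n!}(\g^n)(\a^k\cdot\b^{n-k}).
\]
I would quote this directly. This is the only step that is not a formal consequence of Hodge index, and it is the main point where care is needed. One must check that the hypotheses in~\cite{rKT} are satisfied: nef classes on a smooth projective variety over an algebraically closed field of characteristic zero, which we have after resolution. One must also check that the constant matches the stated $\frac{k!(n-k)!}{n!}$.

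If one prefers a self-contained argument for (iv), it goes as follows. Write $\a^k\le c\,\g^k$ in the cone of classes pairing nonnegatively against products of nef classes. A Siu-type bound gives $c$ in terms of $(\a^k\cdot\g^{n-k})/(\g^n)$ with combinatorial constant $\binom nk$. Pairing this bound with $\b^{n-k}$ then yields the inequality. I expect this binomial-constant Siu inequality for higher-codimension powers to be the hardest part, which is why I would rely on~\cite{rKT}.
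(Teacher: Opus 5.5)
Your proposal is correct and follows the paper's approach. The paper gives no separate argument: it reduces to a common model and cites the classical Khovanskii--Teissier and Siu inequalities from \cite{pos1}, together with the reverse Khovanskii--Teissier inequality of \cite{rKT} for (iv), exactly as you do, and your derivation of (ii) from (i) via the binomial expansion fills in that item correctly.
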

\subsection{Monge--Amp\`ere operator on PL functions}\label{sec:MAPL}
The curvature operator and the multilinear map $\PL(X)^{n-1}\to\WNum_1(X)$ also induce a \emph{Monge--Amp\`ere operator} that we write as  
\[
\MA(\f_1,\dots,\f_{n-1}):=\ddc\f_1\wedge\dots\wedge \ddc\f_{n-1}\in\WNum_1(X)
\]
for $\f_1,\dots,\f_{n-1}\in\PL(X)$. We also write $\MA(\f)=(\ddc\f)^{n-1}$.

Note that the Monge--Amp\`ere operator takes values in curve classes rather than measures. If $\rho\in\cH(X)$ is a Fubini--Study function, associated to a big and semiample divisor $D\in\CbDiv_\infty(X)_\Q$, then the curve class $\MA(\f_1,\dots,\f_{n-1})$ also defines a signed Radon measure $\MA_\rho(\f_1,\dots,\f_{n-1})$ on the compact level set $X_\rho^\na:=\{\rho=1\}\subset X^\na$, uniquely determined by 
\[
\int_{X_\rho}\p\MA_\rho(\f_1,\dots,\f_{n-1})=(\f_1,\dots,\f_{n-1},\p)
\]
for any $\p\in\PL(X)$. This measure is well-defined since the right-hand side defines a bounded linear functional on the dense linear subspace $\PL(X)_\R|_{X_\rho^\na}\subset C^0(X_\rho^\na)$, see Proposition~\ref{prop:SW}.

More concretely, suppose that $\bar{X}$ is an integrally closed compactification on which $\rho$ and the $\f_i$ are determined, the latter by divisors $D_i\in\CDiv_\infty(\bar{X})_\Q$. Then 
\begin{equation}\label{equ:PLmeas}
\MA_\rho(\f_1,\dots,\f_{n-1})=\sum_E b_E(D_1|_E\cdot \dots\cdot D_{n-1}|_E)\delta_{b_E^{-1}\ord_E},
\end{equation}
where the sum runs through all irreducible components $E\subset \bar X\setminus X$ and $b_E\=\rho(\ord_E)$. 

If $X=\A^n$ or more generally when $X$ is normal and $\mathrm{Cl}(X)$ is torsion, the above measure determines the curve class.
The following example shows that in general the curve class $\MA(\f_1, \dots, \f_{n-1})$ cannot be determined by the measure on $X^\na_\rho$. However, recall from Lemma~\ref{lem:curvmeascart} that if $\f\in\cH(X)$, then $\MA(\f)$ is uniquely determined by its induced measure $\MA_\rho(\f)$ on $X^\na_\rho$.
\begin{exam}\label{exam:curvmeasneq}
Take $\bar{X}=\P^1\times\P^1\times\P^1$, with fiber divisor classes $h_1,h_2,h_3$. Set $A:=(x_1=x_2)\sim h_1+h_2$ and $B:=(x_2=x_3)\sim h_2+h_3$. Then $A+B\sim h_1+2h_2+h_3$ is ample, so $X:=\bar X\setminus(A\cup B)$ is affine. Now set $D_1=A+B$, $D_2=3A+5B$, $D'_1=A+4B$, $D'_2=2A+B$. These are ample  Cartier divisors at infinity on $\bar{X}$ and correspond to functions $\f_1,\f_2,\f'_1,\f'_2\in\cH(X)$. We claim that $\MA(\f_1,\f_2)\ne\MA(\f'_1,\f'_2)$ but $\MA_\rho(\f_1,\f_2)=\MA_\rho(\f'_1,\f'_2)$ for any $\rho\in\cH(X)$. To see this, note that $\Delta:=D_1\cdot D_2-D'_1\cdot D'_2=h_1h_2-h_1h_3+h_2h_3$. This yields $\Delta\cdot A=\Delta\cdot B=0$, but $\Delta\cdot h_1=1$.
\end{exam}
When the $\f_i$ are Fubini--Study functions, we have $\MA(\f_1,\dots,\f_{n-1})\in\BPF_1(X)$, and $\MA_\rho(\f_1,\dots,\f_{n-1})$ is a positive measure on $X_\rho^\na$ of mass $(\f_1,\dots,\f_{n-1},\rho)$.
\begin{rmk}\label{rmk:MAatinfty}
    When $X=\A^2=\Spec k[z_1,z_2]$, and $\rho=\max_i\log^+|z_i|$, $X_\rho^\na$ is the space introduced in~\cite{FJEigenvals}, where it was called the \emph{valuative tree at infinity}, and the Monge--Amp\`ere operator $\MA_\rho$ corresponds to the tree Laplacian defined there; see also~\cite{FJDynComp,JonDynBerko}.
\end{rmk}
\begin{rmk}\label{rmk:MACLD}
    Any Fubini--Study function is a continuous psh function on $X^\na$ in the sense of~\cite{CLD25}. If $\f_1,\dots,\f_n\in\cH(X)$, we expect the Monge--Amp\`ere measure $d'd''\f_1\wedge\dots\wedge d'd''\f_n$ in the sense of~\cite{CLD25} to be a Dirac mass at the trivial valuation on $X$, with mass $(\f_1,\dots,\f_n)$. Similarly, if $\rho\in\cH(X)$, then $\max\{\rho,1\}$ is a continuous psh function in the sense of~\cite{CLD25}, and we expect $\MA_\rho(\f_1,\dots,\f_{n-1})$ to equal $d'd''\f_1\wedge\dots\wedge d'd''\f_{n-1}\wedge d'd''\max\{\rho,1\}$.
\end{rmk}

\subsection{Curvature and intersection pairing on psh functions}
The constructions in~\S\ref{sec:curvPL} and~\S\ref{sec:MAPL} apply in particular to Fubini--Study functions. We now extend them to psh functions.

Let us start with the curvature operator. Recall  from~\S\ref{sec:pshWeil} that each psh function induces a nef and effective Weil b-divisor at infinity. Partly extending~\eqref{equ:curvop}, we define 
\[
\ddc\colon\PSH(X)\to\WNum_{n-1}(X), \quad \varphi\mapsto \ddc\varphi,
\]
taking a function $\varphi$ to the numerical class represented by the associated Weil b-divisor. The class $\ddc\f$ is nef in the sense of~\cite{DangFavre}, as it is the (decreasing) limit of nef Cartier classes. Two functions $\f, \p\in \PSH(X)$ satisfy $\ddc\f=\ddc\p$ iff the restriction of $\tau\=\f-\p$ to $X^\div$ defines a numerically trivial Weil b-divisor. If in addition $X$ is normal with isolated singularities, then $\tau\in \PL(X)\otimes_\Q\R$ by Lemma~\ref{lem:ntrivCDiv}.

\begin{thm}\label{thm:pshint}
    For $p=1, n-1, n$, and fixed $\gamma\in \BPF_p(X)$ if $p=1, n-1$,
    there exists a unique extension of the intersection pairing on $\cH(X)$ to a symmetric map
    \[
    \PSH(X)^p\to \R_+,
    \quad (\varphi_i)_{i=1}^p\mapsto(\varphi_1, \varphi_2, \dots, \varphi_p, \gamma),
    \]
    that is upper semicontinuous, increasing and $\R_{>0}$-linear in each variable. This pairing is furthermore continuous along decreasing nets. 
\end{thm}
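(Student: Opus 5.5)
The plan is to follow the standard strategy of \cite[\S7]{BJpp}: define the extension by taking infima over Fubini--Study majorants, and then check each required property in turn.

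\emph{Definition.} For $\varphi_1,\dots,\varphi_p\in\PSH(X)$ we set
\[
(\varphi_1,\dots,\varphi_p,\gamma)\=\inf\{(\psi_1,\dots,\psi_p,\gamma)\mid \psi_i\in\cH(X),\ \psi_i\ge\varphi_i\}.
\]
For $p=n$ there is no $\gamma$. For $p=n-1$ the pairing of $\cH(X)$ with $\gamma\in\BPF_{n-1}(X)$ is the intersection of nef Cartier classes with a nef Weil class, as in \cite{DangFavre}. For $p=1$ it is $(\ddc\psi_1,\gamma)$ with $\gamma\in\BPF_1(X)$. The pairing on $\cH(X)$ is increasing in each variable: differences of nef Cartier classes $\ddc\psi'-\ddc\psi$ with $\psi'\ge\psi$ are represented by effective divisors at infinity. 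It is then paired against products of nef classes, resp.\ against BPF curve classes, using Proposition~\ref{prop:bpfposlin}. This monotonicity is the first thing to verify carefully, since a Cartier b-divisor $D'-D$ with $\psi_{D'}\ge\psi_D$ is effective, and products of nef classes with $\gamma$ are BPF curve classes. Given monotonicity, the infimum agrees with the original pairing on $\cH(X)$, and the extension is increasing.

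\emph{Continuity along decreasing nets.} The key step is the following claim: if $\varphi_{i,j}\searrow\varphi_i$ are decreasing nets in $\cH(X)$, then
\[
(\varphi_{1,j},\dots,\varphi_{p,j},\gamma)\to(\varphi_1,\dots,\varphi_p,\gamma).
\]
Given a competitor $\psi_i\in\cH(X)$ with $\psi_i\ge\varphi_i$ and any $t>1$, we have $\varphi_{i,j}\le t\psi_i$ for $j\gg0$. This is a Dini-type argument on the compact set $X^\na_\rho$, exactly as in the proof of Lemma~\ref{lem:FSpshenv}: $\varphi_{i,j}$ is continuous, decreases to $\varphi_i\le\psi_i<t\psi_i$ on $X^\na_\rho$, and one uses homogeneity. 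Multilinearity then gives
\[
\lim_j(\varphi_{\bullet,j},\gamma)\le t^p(\psi_\bullet,\gamma),
\]
and letting $t\to1$ proves the claim. The same argument extends continuity along decreasing nets from $\cH(X)$ to $\PSH(X)$: approximate each decreasing psh net from above by Fubini--Study functions, using a diagonal choice of the index set as in \cite[Lemma~4.6]{BJpp}.

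\emph{Remaining properties.} Linearity under addition and under scaling by $\R_{>0}$ follows by approximating each $\varphi_i$ by decreasing nets in $\cH(X)$. Sums and rational multiples of such nets decrease to the corresponding combination, and we then apply continuity. Upper semicontinuity follows from the infimum description. If $\varphi_{i,k}\to\varphi_i$ pointwise on $X^\div$ and $\psi_i\ge\varphi_i$ with $\psi_i\in\cH(X)$, then Corollary~\ref{cor:supcts} (applied to $\varphi_{i,k}/\psi_i$-type comparisons) gives $\varphi_{i,k}\le t\psi_i$ eventually for any $t>1$. To do this, one reduces via Theorem~\ref{thm:weaktop} to the sup over $X^\na_{\psi_i}$ being $<t$, which is an open condition by Corollary~\ref{cor:supcts}. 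Hence
\[
\limsup_k(\varphi_{\bullet,k},\gamma)\le t^p(\psi_\bullet,\gamma).
\]

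\emph{Uniqueness.} Any usc increasing extension satisfies $\ge$ along decreasing nets by monotonicity and $\le$ by upper semicontinuity. It is therefore determined by its values on $\cH(X)$, since every psh function is a decreasing limit of Fubini--Study functions.

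I expect the main obstacle to be the uniform comparison $\varphi_{i,k}\le t\psi_i$ in the upper semicontinuity step, because the topology is only pointwise on $X^\div$. This is where Theorem~\ref{thm:weaktop} and Corollary~\ref{cor:supcts} must be used, and I would try that route first.
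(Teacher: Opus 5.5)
Your proposal is correct and follows the paper's proof, which is itself modeled on \cite[Theorem~7.1]{BJpp}. Both use the same definition as an infimum over Fubini--Study majorants, the same monotonicity on $\cH(X)$, and the same upper semicontinuity argument via Corollary~\ref{cor:supcts}, yielding $\varphi_i'\le c\psi_i$ eventually for rational $c>1$. The only difference is the ordering: the paper proves upper semicontinuity first and then gets continuity along decreasing nets, and hence $\R_{>0}$-multilinearity, formally from usc plus monotonicity. So your separate Dini argument and the diagonal-net step are unnecessary; in any case the Dini argument already works directly for usc $\varphi_{i,k}$, since $\{\varphi_{i,k}<t\psi_i\}$ is open.
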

\begin{proof}
    We follow the proof of~\cite[Theorem~7.1]{BJpp}. Uniqueness is clear since every element of $\PSH(X)$ is the limit of a decreasing net in $\cH(X)$. As for existence, we set
     \[
    (\varphi_1, \varphi_2, \dots, \varphi_p, \gamma)\=\inf_{\psi_i\in\cH(X), \psi_i\ge \varphi_i}(\psi_1, \psi_2, \dots, \psi_p, \gamma),
    \]
    for any $\f_1,\dots,\f_p\in\PSH(X)$. By monotonicity,  this extends the pairing on $\cH(X)$, and is symmetric and increasing.
    
    It only remains to prove upper semicontinuity; continuity along decreasing nets then follows formally, and implies $\R_{>0}$-multilinearity. Thus     
    suppose $\varphi_i\in\PSH(X)$, $i=1,\dots,p$, are such that 
    \[
    (\varphi_1, \dots, \varphi_p, \gamma)<t
    \]
    for some $t\in \R_{>0}$. (Here $\gamma$ is omitted when $p=n$.) Then for $0<\varepsilon\ll1$ we can find $\psi_i\in \cH(X), i=1, \dots, p$ with $\psi_i\ge \varphi_i$ for each $i$ and 
    \[
    (\psi_1, \dots, \psi_p, \gamma)<(1-\varepsilon) t.
    \]
    Note that by Corollary~\ref{cor:supcts}, for any fixed constant $c\in\Q_{>1}$ and any fixed $\rho\in\cH(X)$, we can find open neighborhoods $U_i$ of $\varphi_i$ such that for any $\varphi_i'\in U_i$, one has $\varphi_i'< c\psi_i$ on $X_\rho$, and hence $\f'_i\le c\p_i$ on all of $X^\na$. This implies
    \[
    (\varphi_1', \dots, \varphi_p', \gamma)\le (c\psi_1, \dots, c\psi_p, \gamma)<c^p(1-\varepsilon)t<t,
    \]
    by choosing $c$ such that $c^p(1-\varepsilon)<1$, proving upper semicontinuity.
\end{proof} 

Next we show that the intersection pairing defined above is numerical in nature, and agrees with the one considered in~\cite{DangFavre} as recalled in~\S\ref{sec:numcld}.
\begin{prop}\label{prop:intnum}
    The intersection pairing depends only on the numerical class. In particular, for $p=0, 1, n-1$ and $\gamma=\ddc\f_{p+1}\wedge\dots\wedge\ddc\f_n$ for $\f_{p+1}, \dots, \f_n\in\PSH(X)$, we have
    \[
    (\f_1, \dots, \f_p, \gamma)=(\f_1, \dots, \f_n)
    \]
    for $\f_1, \dots, \f_p\in \PSH(X)$.
\end{prop}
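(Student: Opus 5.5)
The plan is to reduce everything to Fubini--Study functions, where the statement is a tautology, and then pass to general psh functions by decreasing limits. Two facts do the work: the pairing of Theorem~\ref{thm:pshint} is continuous along decreasing nets, and the Dang--Favre pairing on $\BPF_{n-1}(X)$ is continuous along decreasing limits of classes (\S\ref{sec:numcld}).

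\textbf{Step 1: Fubini--Study functions.} For $\f_1,\dots,\f_n\in\cH(X)$ the classes $\ddc\f_i$ are nef Cartier b-divisor classes. By definition, $(\f_1,\dots,\f_n)=(\ddc\f_1\cdots\ddc\f_n)$ is the intersection number of the representing divisors on a common integrally closed compactification. The same number is obtained by pairing $(\ddc\f_1,\dots,\ddc\f_p)$ with the curve (or point) class $\gamma=\ddc\f_{p+1}\wedge\dots\wedge\ddc\f_n$. So the identity holds on $\cH(X)$, and for Fubini--Study inputs the pairing depends only on the classes $\ddc\f_i\in\CNum(X)$.

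\textbf{Step 2: the numerical pairing passes to the limit.} Take general $\f_i\in\PSH(X)$ and decreasing nets $\f_{i,j}\searrow\f_i$ in $\cH(X)$; one can index all of them by a common directed set by taking products. The first task is to check that $\ddc\f_{i,j}$ decreases to $\ddc\f_i$ weakly in $\WNum_{n-1}(X)$.
\begin{itemize}
\item Weak convergence holds because $\f_{i,j}\to\f_i$ pointwise on $X^\div_\infty$, so the associated Weil b-divisors converge in $\WbDiv_\infty(X)_\R$, and Lemma~\ref{lem:numclasscts} gives continuity of the numerical class map.
\item The net is decreasing in the psef order because $\f_{i,j}-\f_{i,j'}\ge 0$ for $j\le j'$, so the difference defines an effective Weil b-divisor, whose class is pseudoeffective.
\end{itemize}
The Dang--Favre pairing is continuous along such decreasing limits of nef classes. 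Hence
\[
(\ddc\f_{1,j},\dots,\ddc\f_{n,j})\to(\ddc\f_1,\dots,\ddc\f_n).
\]
By Step~1 the left side equals $(\f_{1,j},\dots,\f_{n,j})$, which tends to $(\f_1,\dots,\f_n)$ by Theorem~\ref{thm:pshint}. This shows the pairing of psh functions equals the numerical pairing of their curvature classes, and in particular depends only on those classes.

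\textbf{Step 3: the mixed identity.} For $p=1$ or $p=n-1$, fix $\gamma=\ddc\f_{p+1}\wedge\dots\wedge\ddc\f_n$; this lies in $\BPF_{n-p}$, which is the correct degree for the pairing with a fixed class in Theorem~\ref{thm:pshint}. I would apply the same decreasing approximation in the first $p$ variables. The left side $(\f_{1,j},\dots,\f_{p,j},\gamma)$ converges to $(\f_1,\dots,\f_p,\gamma)$ by Theorem~\ref{thm:pshint}. The same quantity equals the Dang--Favre number $(\ddc\f_{1,j},\dots,\ddc\f_{p,j},\ddc\f_{p+1},\dots,\ddc\f_n)$, which converges to $(\ddc\f_1,\dots,\ddc\f_n)$ by Step~2. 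Combining with Step~2 gives $(\f_1,\dots,\f_p,\gamma)=(\f_1,\dots,\f_n)$.

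\textbf{Main obstacle.} The delicate point is the compatibility in Step~3 for Cartier $\f_{i,j}$. One has to check that pairing a nef Cartier class with the Dang--Favre class $\gamma$, which is defined as a linear form through decreasing regularization of the $\ddc\f_k$ for $k>p$, agrees with the Dang--Favre multilinear intersection number. This follows from the well-definedness statement in Example~\ref{exam:Mink} and its multilinear analogue, together with monotone continuity applied in the remaining variables. I would also need to handle the order of limits taken in different variables, using monotonicity and upper semicontinuity to sandwich the value.
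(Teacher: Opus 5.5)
Your argument is correct and is essentially the paper's proof. Both approximate by decreasing nets in $\cH(X)$ and use Lemma~\ref{lem:numclasscts} to get decreasing nets of nef Cartier classes converging to $\ddc\f_i$. Both then compare the two pairings through their continuity along decreasing nets (Theorem~\ref{thm:pshint} on one side, Dang--Favre on the other), which also gives the mixed identity.

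One small indexing slip: in the paper's dimension-indexed notation, $\gamma=\ddc\f_{p+1}\wedge\dots\wedge\ddc\f_n$ lies in $\BPF_p(X)$, not $\BPF_{n-p}(X)$. It is a curve class for $p=1$ and a divisor class for $p=n-1$, which is exactly what Theorem~\ref{thm:pshint} requires.
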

\begin{proof}
    Let $\varphi_0, \dots, \varphi_{n-1}, \varphi_0'\in \PSH(X)$ with $\ddc\varphi_0=\ddc\varphi_0'$. We are going to show
    \[
    (\varphi_0, \varphi_1, \dots, \varphi_{n-1})=(\varphi_0', \varphi_1, \dots, \varphi_{n-1}).
    \]
    The same argument will show that the intersection pairing only depends on the numerical classes.
    Let $(\varphi_{0j}')_j, (\varphi_{ij})_j, i=0, \dots, n-1$ be decreasing nets in $\cH(X)$ converging to $\varphi_0', \varphi_0, \dots, \varphi_{n-1}$ respectively. We also write $\alpha_i\=\ddc\varphi_i, i=0, \dots, n-1$ for the b-divisor classes, and similarly set $\alpha_{ij}\=\ddc\varphi_{ij}, i=0, \dots, n-1$ and $\alpha_{0j}'\=\ddc\varphi_{0j}'$. Then $(\alpha_{ij})$ in particular form decreasing nets of Cartier b-divisor classes converging to $\alpha_i$, by Lemma~\ref{lem:numclasscts}. 
    By continuity along decreasing nets, see~\cite[Theorem~3.2]{DangFavre}, we then have
    \begin{align*}
        (\varphi_0, \varphi_1, \dots, \varphi_{n-1})
        &=\lim_j (\varphi_{0j}, \varphi_{1j}, \dots, \varphi_{n-1,j})
        =\lim_j (\alpha_{0j}, \alpha_{1j}, \dots, \alpha_{n-1,j})\\
        &=(\alpha_{0}, \alpha_{1}, \dots, \alpha_{n-1})
        =\lim_j (\alpha_{0j}', \alpha_{1j}, \dots, \alpha_{n-1,j})\\
        &=\lim_j (\varphi_{0j}', \varphi_{1j}, \dots, \varphi_{n-1,j})
        =(\varphi_0', \varphi_1, \dots, \varphi_{n-1}).
    \end{align*}
    We are done by symmetry and induction. The second assertion follows from the fact that both intersection pairings in Theorem~\ref{thm:pshint} and~\cite{DangFavre} are continuous along decreasing nets.
\end{proof}

We will write $(\f^p, \f_1, \dots, \f_{n-p})\=(\f, \dots, \f, \f_1, \dots, \f_{n-p})$, and also
$\vol(\f):=(\f^n)$.
The following inequalities are direct consequences of monotone approximation and Proposition~\ref{prop:Hineq}:
\begin{prop}\label{prop:intineq}
    The intersection pairing on $\PSH(X)$ satisfies the following:
    \begin{itemize}
        \item[(i)] for any $1\le p\le n$ and $\varphi_1, \dots, \varphi_p, \psi_1, \dots, \psi_{n-p}\in \PSH(X)$, we have 
        \[
        (\varphi_1\dots\varphi_p, \psi_1, \dots, \psi_{n-p})\ge (\varphi_1^p, \psi_1, \dots, \psi_{n-p})^{1/p}\dots (\varphi_p^p, \psi_1, \dots, \psi_{n-p})^{1/p};
        \]
        \item [(ii)] for any $\varphi\in \PSH(X)$, $\varphi\mapsto\vol(\f)^{1/n}$ is concave;
        \item [(iii)] for any $\varphi, \psi\in \PSH(X)$, the map $k\mapsto \log(\varphi^k, \psi^{n-k})$ is concave.
        \item[(iv)] for any $\varphi, \psi, \rho\in \PSH(X)$, and $1\le k\le n-1$, we have
        \[
        (\varphi^k, \rho^{n-k})(\rho^k, \psi^{n-k})\ge \frac{k!(n-k)!}{n!}(\rho^n)(\varphi^k, \psi^{n-k}).
        \]
    \end{itemize}
\end{prop}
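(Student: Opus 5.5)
The plan is to reduce each inequality to its Fubini--Study counterpart in Proposition~\ref{prop:Hineq}, using decreasing approximation and the continuity of the pairing along decreasing nets (Theorem~\ref{thm:pshint}).

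Fix $\varphi_1,\dots,\varphi_p,\psi_1,\dots,\psi_{n-p}\in\PSH(X)$. Choose decreasing nets $(\varphi_{i,j})_j$ and $(\psi_{l,j})_j$ in $\cH(X)$ converging to $\varphi_i$ and $\psi_l$. All nets can be indexed by a common directed set, namely the product of the index sets. Then, for each $j$, Proposition~\ref{prop:Hineq}(i) gives
\[
(\varphi_{1,j}\dots\varphi_{p,j},\psi_{1,j},\dots,\psi_{n-p,j})\ge\prod_{i=1}^p(\varphi_{i,j}^p,\psi_{1,j},\dots,\psi_{n-p,j})^{1/p}.
\]
Every intersection number appearing here is a pairing of decreasing nets of psh functions, so Theorem~\ref{thm:pshint} (case $p=n$) lets us pass to the limit on both sides. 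Since $x\mapsto x^{1/p}$ is continuous on $\R_+$, this yields~(i). The same argument gives~(iv) from Proposition~\ref{prop:Hineq}(iv). For~(iii), concavity of $k\mapsto\log(\varphi^k,\psi^{n-k})$ amounts to the inequalities $(\varphi^k,\psi^{n-k})^2\ge(\varphi^{k-1},\psi^{n-k+1})(\varphi^{k+1},\psi^{n-k-1})$, which are preserved under the limit. Here one should allow values in $\R_+$, with $\log 0=-\infty$, so that the limit statement still makes sense when some volumes vanish.

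For~(ii), concavity of $\vol^{1/n}$ means $\vol(t\varphi+(1-t)\psi)^{1/n}\ge t\vol(\varphi)^{1/n}+(1-t)\vol(\psi)^{1/n}$ for $t\in[0,1]$. For rational $t$, the net $t\varphi_j+(1-t)\psi_j$ lies in $\cH(X)$ and decreases to $t\varphi+(1-t)\psi$, so the inequality passes to the limit. Real $t$ then follow in one of two ways. One option is to approximate $t$ from above and below by rationals, using the monotonicity and $\R_{>0}$-homogeneity from Theorem~\ref{thm:pshint}. The other is to expand $\vol(t\varphi+(1-t)\psi)$ by multilinearity as a polynomial in $t$, which makes continuity in $t$ immediate.

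The only potential obstacle is to make sure that the continuity along decreasing nets in Theorem~\ref{thm:pshint} applies to mixed pairings where all arguments vary simultaneously, rather than one at a time. This holds because the pairing is defined as an infimum over Fubini--Study majorants and is usc and increasing in each variable. Jointly decreasing nets therefore have limits squeezed between the infimum and the usc bound.
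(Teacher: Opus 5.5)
Your proposal is correct and follows the paper's argument: the paper deduces these inequalities from Proposition~\ref{prop:Hineq} by monotone approximation in $\cH(X)$ and continuity of the pairing along decreasing nets (Theorem~\ref{thm:pshint}), exactly as you do. One small refinement for (iii): once zero values are allowed, the three-term inequalities $a_k^2\ge a_{k-1}a_{k+1}$ with $a_k=(\varphi^k,\psi^{n-k})$ do not by themselves rule out interior zeros, so it is cleaner to pass to the limit the chord inequalities $a_k^{j-i}\ge a_i^{j-k}a_j^{k-i}$ for $i<k<j$, which hold in the Fubini--Study case, using the same argument.
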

We also record the following variant of Siu's inequality, see~\cite[Proposition~1.1, Corollary~3.4]{DangFavre}.

\begin{cor}\label{cor:siu}
    For any $\varphi, \psi\in \PSH(X)$ with $(\psi^n)>0$, we have
    \[
    \ddc\varphi\le n\frac{(\varphi, \psi^{n-1})}{(\psi^n)}\ddc\psi.
    \]
\end{cor}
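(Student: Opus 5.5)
The plan is to deduce Corollary~\ref{cor:siu} from Siu's inequality for nef b-divisor classes, Lemma~\ref{lem:Siuclass}. The bridge is Proposition~\ref{prop:intnum}, which says that the intersection numbers of psh functions coincide with those of their curvature classes.

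First, for $\f,\p\in\PSH(X)$ the classes $\a\=\ddc\f$ and $\b\=\ddc\p$ lie in $\BPF_{n-1}(X)$. Indeed, each is the numerical class of the Weil b-divisor defined by a decreasing limit of Fubini--Study functions, hence a weak limit of nef Cartier classes by Lemma~\ref{lem:numclasscts}. By Proposition~\ref{prop:intnum}, with $p=0$ and with $p=n-1$, we then have
\[
(\p^n)=(\b^n),\qquad (\f,\p^{n-1})=(\a,\b^{n-1}).
\]
In particular $(\b^n)>0$ by hypothesis.

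With these identities, Lemma~\ref{lem:Siuclass} applied to $\a$ and $\b$ gives exactly
\[
\ddc\f\le n\frac{(\f,\p^{n-1})}{(\p^n)}\ddc\p .
\]

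As a sanity check one could argue directly instead. Take decreasing nets $\f_j\searrow\f$ and $\p_j\searrow\p$ in $\cH(X)$ and apply Proposition~\ref{prop:Hineq}(v) to each pair. Then pass to the limit: the difference of classes converges weakly, the pseudoeffective cone is closed, and the intersection numbers converge along decreasing nets by Theorem~\ref{thm:pshint}. The route through Lemma~\ref{lem:Siuclass} avoids this, so no real obstacle is expected.

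The only point needing care is the identification in Proposition~\ref{prop:intnum} of the psh pairing with the Dang--Favre pairing on classes. That result is stated in the paper, so the corollary follows immediately.
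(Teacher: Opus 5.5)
Your proposal is correct and follows the paper's route. The paper simply records this corollary as Dang--Favre's Siu inequality for nef b-divisor classes (Lemma~\ref{lem:Siuclass}), transported to psh functions through the identification of the psh intersection numbers with the Dang--Favre pairing of the curvature classes, i.e.\ the chain of equalities in the proof of Proposition~\ref{prop:intnum}, which is exactly what you do.
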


\subsection{Monge--Amp\`ere operator on psh functions}\label{sec:MAPSH}
Next we extend the Monge--Amp\`ere operator in~\S\ref{sec:MAPL} to psh functions. Recall from~\S\ref{sec:numclc} that $\BPF_1(X)$ consists of curve classes that define a positive linear functional on $\CNum(X)$. In particular, by density of $\PL(X)\subset C^0(X^\na_\rho)$ for any $\rho\in\cH(X)$, any curve class $\mu\in \BPF_1(X)$ induces a positive measure of mass $(\rho, \mu)$ on $X^\na_\rho$. 
\begin{prop}\label{prop:MAop}
    There exists a unique extension of the Monge--Amp\`ere operator on $\cH(X)$ to a symmetric, $\R_{>0}$-multilinear map
    \[
    \PSH(X)^{n-1}\to\BPF_1(X),
    \quad (\f_i)_{i=1}^{n-1}\mapsto\MA(\f_1,\dots,\f_{n-1})=\ddc\f_1\wedge\dots\wedge\ddc\f_{n-1},
    \]
    that is continuous along decreasing nets.
    When $(\f_i^n)>0$, $1\le i\le n-1$, the curve class $\MA(\f_1,\dots,\f_{n-1})$ is nonzero.
\end{prop}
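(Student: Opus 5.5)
Extending the Monge--Amp\`ere operator to $\PSH(X)^{n-1}$ goes through duality: a curve class in $\WNum_1(X)$ is the same as a linear form on $\CNum(X)$, and $\CNum(X)$ is spanned by classes $\ddc\p$ with $\p\in\cH(X)$. This follows from Corollary~\ref{cor:diffsa}, Lemma~\ref{lem:FSPL}, and the fact that every nef Cartier b-divisor class on a projective model is a difference of classes coming from divisors at infinity. Concretely, any ample class on a model dominating a polarized compactification $\bar X$ differs by a nef Cartier class from a large multiple of the pullback of the ample strictly effective divisor at infinity.

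For $\f_1,\dots,\f_{n-1}\in\PSH(X)$ and $\p\in\cH(X)$, I set
\[
(\MA(\f_1,\dots,\f_{n-1}),\p):=(\f_1,\dots,\f_{n-1},\p),
\]
with the right-hand side given by Theorem~\ref{thm:pshint} for $p=n$. By Proposition~\ref{prop:intnum} this depends only on the classes $\ddc\f_i$ and on the class $\ddc\p$. A cleaner route is to use the Dang--Favre product directly: put $\MA(\f_1,\dots,\f_{n-1}):=\ddc\f_1\wedge\dots\wedge\ddc\f_{n-1}$ as the mixed product of the nef classes $\ddc\f_i\in\BPF_{n-1}(X)$, defined as in Example~\ref{exam:Mink} by polarization, or directly as in~\cite{DangFavre}. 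To evaluate it on a nef Cartier class $\b$, pair it with $\b$ through the Dang--Favre pairing, which is continuous along decreasing nets. Extending linearly in $\b$ gives a linear form on $\CNum(X)$, hence an element of $\WNum_1(X)$. The result is nonnegative on pseudoeffective Cartier classes: approximate each $\ddc\f_i$ by decreasing nef Cartier classes $\ddc\f_{ij}$, $\f_{ij}\in\cH(X)$; for these the products are movable on each model; then pass to the limit. By Proposition~\ref{prop:bpfposlin}, the class therefore lies in $\BPF_1(X)$.

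Symmetry and $\R_{>0}$-multilinearity are inherited from the pairing. Continuity along decreasing nets means weak convergence: for every nef Cartier $\b$, $(\f_{1j},\dots,\f_{n-1,j},\b)\to(\f_1,\dots,\f_{n-1},\b)$. This follows from the continuity of the Dang--Favre intersection numbers along decreasing nets, using Lemma~\ref{lem:numclasscts} to see that $\ddc\f_{ij}$ decreases weakly to $\ddc\f_i$; by linearity the convergence then holds on all of $\CNum(X)$. Uniqueness follows because every psh function is a decreasing limit of functions in $\cH(X)$, and the operator is required to be continuous along such nets.

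For nonvanishing, fix any $\p\in\cH(X)$. By Proposition~\ref{prop:intineq}(i), with $p=n-1$ and $\psi_1=\p$,
\[
(\MA(\f_1,\dots,\f_{n-1}),\p)\ge\prod_i(\f_i^{n-1},\p)^{1/(n-1)}.
\]
By Proposition~\ref{prop:intineq}(iii) (Khovanskii--Teissier), $(\f_i^{n-1},\p)\ge(\f_i^n)^{(n-1)/n}(\p^n)^{1/n}>0$, so the class is nonzero. The main obstacle is making sure that a class defined through pairings with divisors at infinity, or through the Dang--Favre product, is a genuine element of $\WNum_1(X)$ determined on all of $\CNum(X)$. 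Example~\ref{exam:curvmeasneq} warns that divisors at infinity alone need not determine a curve class. For this reason I would define the operator through the Dang--Favre product of the nef classes $\ddc\f_i$, rather than only through pairings with $\cH(X)$.
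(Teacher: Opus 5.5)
Your final argument is correct, and for existence, uniqueness, symmetry, multilinearity and continuity along decreasing nets it is essentially the paper's. The paper derives these from Theorem~\ref{thm:pshint} (with $p=n-1$, pairing against nef classes) together with Proposition~\ref{prop:intnum}, which identifies that pairing with the Dang--Favre one. You invoke the Dang--Favre product and its continuity directly, and your $\BPF_1$ check via Proposition~\ref{prop:bpfposlin} is fine.

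Nonvanishing is where your route differs. The paper uses Siu's inequality (Corollary~\ref{cor:siu}) to get $\ddc\f_i\ge c_i\ddc\rho$ for some $\rho\in\cH(X)$, hence $(\rho,\MA(\f_1,\dots,\f_{n-1}))\ge\prod_ic_i\,(\rho^n)>0$. Your Khovanskii--Teissier route via Proposition~\ref{prop:intineq}(i),(iii) is equally valid. It gives the cleaner, volume-only bound $(\MA(\f_1,\dots,\f_{n-1}),\p)\ge(\p^n)^{1/n}\prod_i(\f_i^n)^{1/n}$ for every $\p\in\cH(X)$.

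You should, however, delete your opening paragraph. The claim that $\CNum(X)$ is spanned by the classes $\ddc\p$, $\p\in\cH(X)$ (i.e.\ by Cartier b-divisor classes at infinity), is false, because projective models may blow up centers inside $X$. In the example of \S\ref{sec:numclc}, the curve classes $2\pi^*H$ and $2\pi^*H-E$ are distinct linear forms on $\CNum(X)$: they differ by $(E\cdot E)=-1$ on $[E]$. Yet they agree on all Cartier b-divisors at infinity, and Example~\ref{exam:curvmeasneq} shows the same phenomenon.

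Your ``concretely'' justification is also circular. The class $m\pi^*D-A$ need not be nef, and even if it were, nothing makes it a class at infinity.

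In particular, your first tentative definition, pairing only against $\cH(X)$, would not determine an element of $\WNum_1(X)$. This is exactly the issue you flag at the end. Since the Dang--Favre construction you settle on tests against all nef Cartier classes, the proof does not depend on the false claim.
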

For any $\rho\in\cH(X)$, the BPF curve class $\MA(\f_1,\dots,\f_{n-1})$ induces a unique positive measure $\MA_\rho(\f_1,\dots,\f_{n-1})$ on $X_\rho$. As before we write $\MA(\f)=\MA(\f,\dots,\f)$ and $\MA_\rho(\f)=\MA_\rho(\f,\dots,\f)$ for $\f\in\PSH(X)$.

\begin{proof}
    The first assertion follows from Theorem~\ref{thm:pshint} and Proposition~\ref{prop:intnum}. If $(\f_i^n)>0$, then by Corollary~\ref{cor:siu}, we have $\ddc\f_i\ge c_i\ddc\rho$ for $c_i>0$ and $\rho\in \cH(X)$. It follows that $(\rho, \MA(\f_1, \dots,\f_{n-1}))\ge \prod c_i(\rho^n)>0$, and so $\MA(\f_1, \dots, \f_{n-1})$ is nonzero.
    
\end{proof}

\subsection{Numerical classes of psh functions of positive volume}\label{sec:strongP}
We now restrict our discussion on the strong topology on $\BBPF_{n-1}(X)$ to infinity. We first set up some notation.
For $\varphi\in \PSH(X)$ we write $[\varphi]$ for the numerical class represented by $\f$. Denote by 
\[
\cE(X)\=\{\varphi\in \PSH(X)\mid \vol(\varphi)\=(\f^n)>0\}
\]
the set of psh functions of positive volume. By what precedes, any function $\f\in \cE(X)$ defines a nonzero curve class $\MA(\f)$ via the Monge--Amp\`ere operator.
\begin{exam}\label{exam:posvolloggr}
    By monotonicity of the intersection pairing, every psh function of logarithmic growth is in $\cE(X)$. But in general, not every function in $\cE(X)$ has logarithmic growth. For example, when $X=\A^1\times \G_m$ and let $D_{(0,1)}, D_{(-1,0)}, D_{(0,-1)}$ be the toric divisors on $\P^1\times \P^1$ at infinity. The function $\psi_{D_{(0,1)}+D_{(-1,0)}}\in \cE(X)\setminus\cL^\infty$.
\end{exam}
For functions in $\cE(X)$, we set
\[
\bar\cE(X)\=\{[\varphi]\mid \varphi\in \cE(X)\}\subseteq \BBPF_{n-1}(X).
\]

The space $\bar\cE(X)$ has a natural weak topology induced by the weak topology on $\WNum_{n-1}(X)$ and the $\de_\infty$ metric on $\BBPF_{n-1}(X)$ restricts to a metric on $\bar \cE(X)$, as in~\eqref{equ:dinfty1}:
    \[
    \de_\infty([\varphi], [\psi])\=\inf\{\delta\ge0\mid e^{-\delta}\ddc\varphi\le \ddc\psi\le e^\delta\ddc\varphi\}=\log\max\{\frac{1}{s(\varphi, \psi)}, \frac{1}{s(\psi, \varphi)}\},
    \]
    for all $[\varphi], [\psi]\in \bar\cE(X)$, where the \emph{pseudoeffective slope} of $\psi$ with respect to $\varphi$ is
    \[
    s(\varphi, \psi)\=\sup\{t>0\mid \ddc\varphi\ge t\ddc\psi\}>0.
    \]
\begin{defi}
    The \emph{strong topology} on $\bar\cE(X)$ is the metric topology defined by $\de_\infty$. 
\end{defi}

More quantitatively, we have
\begin{prop}\label{prop:currentconv}
    Assume $([\f_i])$ is a net in $\bar \cE(X)$ converging strongly to $[\f]\in \bar\cE(X)$.
    Then for any $k=1, \dots, n-1$, and any $f_1, \dots, f_{n-k-1}\in \cH(X)$, we have
    \[
    (\ddc\varphi_i)^k\wedge \ddc f_1\wedge\dots\wedge \ddc f_{n-k-1} \to (\ddc\varphi)^k\wedge \ddc f_1\wedge\dots\wedge \ddc f_{n-k-1}
    \]
    weakly as BPF curve classes. In particular, they also converge weakly as measures on $X^\na_\rho$ for some (or equivalently any) $\rho\in \cH(X)$. 
    Further, 
    \[
    \frac{(\psi, \varphi_i^k, f_1, \dots, f_{n-k-1})}{(\psi, \varphi^k, f_1, \dots, f_{n-k-1})}\to 1
    \]
    uniformly with respect to $\psi\in \cE(X)$.
\end{prop}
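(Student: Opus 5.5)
The plan is to use the sandwich inequality coming from the definition of $\de_\infty$, and then let multilinearity and monotonicity of the intersection pairing do the work.

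Set $\d_i\=\de_\infty([\f_i],[\f])\to0$. By definition,
\[
e^{-\d_i'}\ddc\f\le\ddc\f_i\le e^{\d_i'}\ddc\f
\]
holds as pseudoeffective inequalities of Weil b-divisor classes, for any $\d_i'>\d_i$.

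First I will pass from divisor classes to the curve classes in the statement. Fix $f_1,\dots,f_{n-k-1}\in\cH(X)$ and write $\g_i\=(\ddc\f_i)^k\wedge\ddc f_1\wedge\dots\wedge\ddc f_{n-k-1}$, with $\g$ defined analogously for $\f$. I want
\[
e^{-k\d_i'}\g\le\g_i\le e^{k\d_i'}\g
\]
as curve classes, that is, after pairing with any nef Cartier class $\b$. This follows the argument for \eqref{equ:Mincont1} in Theorem~\ref{thm:metricequivnum}. Replace one factor $\ddc\f_i$ at a time by $e^{\pm\d_i'}\ddc\f$, using that the intersection pairing on $\BPF_{n-1}(X)$ (equivalently on $\PSH(X)$, by Proposition~\ref{prop:intnum}) is increasing in each argument. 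The point needing care is that ``$\a\le\a'$ as psef classes'' should imply $(\a,\a_2,\dots,\a_n)\le(\a',\a_2,\dots,\a_n)$ when the other entries are nef. For Cartier classes this is standard. For general $\BPF_{n-1}$ classes I will obtain it from the Dang--Favre monotonicity, approximating the remaining nef entries by decreasing Cartier nets and using continuity along decreasing nets. I expect this step to be the main technical point: one must check that the monotonicity available in \cite{DangFavre} really covers differences that are merely psef, rather than requiring the larger class itself to be nef.

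Given the two-sided bound, pairing with any big nef Cartier class $\b$ gives
\[
e^{-k\d_i'}\le\frac{(\b,\g_i)}{(\b,\g)}\le e^{k\d_i'},
\]
where the denominator is positive by Proposition~\ref{prop:MAop} together with Siu's inequality. Writing an arbitrary Cartier class as a difference of big nef classes yields weak convergence $\g_i\to\g$.

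For the measures on $X^\na_\rho$, note that $\PL(X)$ is dense in $C^0(X^\na_\rho)$ (Proposition~\ref{prop:SW}) and that the masses $(\rho,\g_i)\to(\rho,\g)$ stay bounded, so weak convergence of the measures follows. Independence of the choice of $\rho$ follows from the homeomorphisms $X^\na_\rho\simeq X^\na_\infty/\R_+^\times$.

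Finally, for the uniform statement, take $\p\in\cE(X)$. Approximate $\p$ by a decreasing net in $\cH(X)$ and use continuity along decreasing nets (Theorem~\ref{thm:pshint}, together with Proposition~\ref{prop:intnum} to identify the pairing with the curve class). The bounds above then persist:
\[
e^{-k\d_i'}\le\frac{(\p,\f_i^k,f_1,\dots,f_{n-k-1})}{(\p,\f^k,f_1,\dots,f_{n-k-1})}\le e^{k\d_i'}.
\]
The denominator is positive because $\ddc\p\ge c\,\ddc\rho$ with $c>0$ by Corollary~\ref{cor:siu}. Since the bounds do not depend on $\p$, letting $\d_i\to0$ gives the claimed uniform convergence.
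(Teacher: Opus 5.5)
Your proposal is correct and follows essentially the same route as the paper. Both arguments take the $\de_\infty$ sandwich $e^{-\delta_i}\ddc\f\le\ddc\f_i\le e^{\delta_i}\ddc\f$ and propagate it through the $k$ factors by monotonicity of the pairing. Pairing against nef Cartier classes, which span $\CNum(X)$, then gives the weak convergence. Density of $\PL(X)$ in $C^0(X^\na_\rho)$, together with the $\p$-independent bounds, yields the measure convergence and the uniform ratio statement.

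The monotonicity in the psef order that you flag is exactly the property the paper quotes from Dang--Favre in \S\ref{sec:numcld}. Your reduction to Cartier entries via decreasing approximation is a valid way to justify it.
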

\begin{proof}
    Set $\delta_i\=\de_\infty([\varphi_i],[ \varphi])$. We have $\delta_i\to 0$ and for nef classes $\alpha\in \CNum(X)$, 
    \[
    e^{-k\delta_i}(\alpha, \varphi^k,  f_1, \dots, f_{n-k-1})
    \le (\alpha, \varphi_i^k,  f_1, \dots, f_{n-k-1})
    \le e^{k\delta_i}(\alpha, \varphi^k,  f_1, \dots, f_{n-k-1}).
    \]
    Taking $i\to\infty$, we get the desired convergence since $\CNum(X)$ is generated by nef classes. The same argument proves the remaining assertions, by noting that $\PL(X)$ is dense in $C^0(X^\na_\rho)$.
\end{proof}

Given $\varphi, \psi\in \cE(X)$, we also define
    \[
    \jj_\psi(\varphi)\= \log \frac{(\varphi, \psi^{n-1})}{\vol(\varphi)^{1/n}\vol(\psi)^{1-1/n}},
    \]
    \[
    \ii(\varphi, \psi)=\log \frac{(\varphi, \psi^{n-1})(\psi, \varphi^{n-1})}{\vol(\varphi)\vol(\psi)}
    =\jj_\psi(\varphi)+\jj_\varphi(\psi),
    \]
    and
\begin{equation}
\bar \ii(\varphi, \psi)\=\ii(\varphi, \psi)+|\log (\varphi^n)-\log (\psi^n)|
=\log \frac{(\varphi, \psi^{n-1})(\psi, \varphi^{n-1})}{\min\{(\varphi^n),(\psi^n)\}^2}.
\end{equation}
By Proposition~\ref{prop:intnum}, the functionals defined above only depends on the numerical classes. 
As an immediate corollary of Proposition~\ref{prop:metrictop}, we get an analog of~\cite[Corollary~10.4]{BJpp}.
\begin{cor}\label{cor:numequiv}
    Given $\varphi, \psi\in \cE(X)$, the following are equivalent:
    \begin{itemize}
        \item [(i)] $\bar \ii(\varphi, \psi)=\de_\infty([\f],[\p])=0$;
        \item[(ii)] $\ddc\varphi=\ddc\psi$;
        \item[(iii)] $\MA(\varphi)=\MA(\psi)$;
        \item[(iv)] $\MA_\rho(\varphi)=\MA_\rho(\psi)$ on $X^\na_\rho$ for any $\rho\in \cH(X)$.
    \end{itemize}
\end{cor}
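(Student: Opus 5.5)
The plan is to prove the chain of implications (i)$\Leftrightarrow$(ii)$\Rightarrow$(iii)$\Rightarrow$(iv)$\Rightarrow$(ii). Throughout we use that $[\f],[\p]\in\bar\cE(X)\subset\BBPF_{n-1}(X)$, and that by Proposition~\ref{prop:intnum} all intersection numbers involving $\f,\p$ coincide with those of the classes $\ddc\f,\ddc\p$ in the sense of~\cite{DangFavre}.

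\textbf{(i)$\Leftrightarrow$(ii).} By Proposition~\ref{prop:metrictop} applied to $\a=\ddc\f$ and $\b=\ddc\p$, we have $\bar\ii\le n\de_\infty$ and $\de_\infty\le F(\bar\ii)$ with $F(0)=0$. Hence $\bar\ii(\f,\p)=0$ iff $\de_\infty([\f],[\p])=0$. Since $\de_\infty$ is a metric on $\BBPF_{n-1}(X)$, the latter holds iff $\ddc\f=\ddc\p$.

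\textbf{(ii)$\Rightarrow$(iii)$\Rightarrow$(iv).} The Monge--Amp\`ere class $\MA(\f)=(\ddc\f)^{n-1}$ depends only on the numerical class, by Proposition~\ref{prop:intnum}, which gives the first implication. For the second, $\MA_\rho(\f)$ is the measure on $X^\na_\rho$ determined by the values $(\MA(\f),\tau)$ for $\tau\in\PL(X)$, so it depends only on $\MA(\f)$.

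\textbf{(iv)$\Rightarrow$(ii).} This is the main step. Fix $\rho\in\cH(X)$. Equality of the measures means $(\f^{n-1},\tau)=(\p^{n-1},\tau)$ for all $\tau\in\PL(X)$, and by linearity and monotone approximation this extends to all $\tau\in\PSH(X)$ of positive volume. Taking $\tau=\f$ and $\tau=\p$ gives
\[
(\f^n)=(\p^{n-1},\f),\qquad (\f^{n-1},\p)=(\p^n).
\]
Set $\la_j=(\f^{n-j},\p^j)$. These identities say $\la_0=\la_{n-1}$ and $\la_1=\la_n$. The Khovanskii--Teissier inequalities of Proposition~\ref{prop:intineq}(iii) give $\la_1^n\ge\la_0^{n-1}\la_n$ and $\la_{n-1}^n\ge\la_0\la_n^{n-1}$. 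Combining these forces $\la_0=\la_n$, and then all four quantities are equal. Consequently $\ii(\f,\p)=0$ and $\vol(\f)=\vol(\p)$, so $\bar\ii(\f,\p)=0$, and Lemma~\ref{lem:ijequiv} (Diskant) yields $\ddc\f=\ddc\p$. This is the argument of Lemma~\ref{lem:curvmeascart}, carried out at the level of $\BBPF_{n-1}(X)$.

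The main obstacle is justifying the test with $\tau=\f$ and $\tau=\p$, since these are only psh, not PL. The plan is to use Theorem~\ref{thm:pshint}: take decreasing nets $\f_j\searrow\f$ in $\cH(X)\subset\PL(X)$. For each $j$ we have $(\f^{n-1},\f_j)=(\p^{n-1},\f_j)$, because both sides are $\int\f_j\,\MA_\rho(\cdot)$. Both sides are continuous along decreasing nets, so passing to the limit gives the desired identity. Here the pairing with a fixed $\gamma\in\BPF_1(X)$, $p=1$, is applied with $\gamma=\MA(\f)$ and $\gamma=\MA(\p)$, and Proposition~\ref{prop:intnum} identifies $(\f,\MA(\p))$ with $(\f,\p^{n-1})$.
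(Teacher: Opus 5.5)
Your proof is correct and follows essentially the same route as the paper. You test the equality of measures against $\f$ and $\p$, via decreasing approximation together with Theorem~\ref{thm:pshint} and Proposition~\ref{prop:intnum}, to get $(\f^n)=(\f,\p^{n-1})$ and $(\p^n)=(\p,\f^{n-1})$, deduce $\bar\ii(\f,\p)=0$, and conclude with Proposition~\ref{prop:metrictop}. The paper phrases the volume-equality step as $\ii(\f,\p)=0\Rightarrow\jj_\p(\f)=0$, which relies on the same two Khovanskii--Teissier inequalities you use explicitly, and it leaves the approximation step implicit where you spell it out.
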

\begin{proof}
    Obviously we have~(i)$\Rightarrow$(ii)$\Rightarrow$(iii)$\Rightarrow$(iv). Assume~(iv) holds. Then we have $(\f^n)=(\f, \psi^{n-1})$ and $(\psi^n)=(\psi, \f^{n-1})$, showing $\ii(\f,\p)=0$. In particular, $\jj_\psi(\f)=\frac{n-1}{n}(\log(\varphi^n)-\log(\psi^n))=0$. This shows $\bar \ii(\f, \psi)=0$, which forces $\de_\infty([\f],[\psi])=0$ by Proposition~\ref{prop:metrictop}.
\end{proof}
\begin{exam}\label{exam:decstrong}
    If $(\varphi_i)$ is a decreasing net in $\cE(X)$ converging to $\varphi\in \cE(X)$, then $[\varphi_i]\to [\varphi]$ strongly in $\bar \cE(X)$.
\end{exam}
\begin{cor}\label{cor:weakstrongconv}
    Let $(\varphi_i)$ be a net in $\cE(X)$ and assume $\varphi_i\to \varphi$ weakly for some $\varphi\in \cE(X)$. Then $[\varphi_i]\to [\varphi]$ strongly iff $\vol(\varphi_i)\to \vol(\varphi)$.
\end{cor}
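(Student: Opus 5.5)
We need to prove Corollary~\ref{cor:weakstrongconv}: given a net $(\varphi_i)$ in $\cE(X)$ with $\varphi_i\to\varphi\in\cE(X)$ weakly (pointwise on $X^\div$), we have $[\varphi_i]\to[\varphi]$ strongly iff $\vol(\varphi_i)\to\vol(\varphi)$. The forward direction is immediate. Strong convergence gives $e^{-\delta_i}\ddc\varphi\le\ddc\varphi_i\le e^{\delta_i}\ddc\varphi$ with $\delta_i\to0$. Monotonicity of the intersection pairing on $\BPF_{n-1}(X)$ then yields $e^{-n\delta_i}\vol(\varphi)\le\vol(\varphi_i)\le e^{n\delta_i}\vol(\varphi)$. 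Equivalently, one can use Proposition~\ref{prop:metrictop}, which gives $|\log\vol(\varphi_i)-\log\vol(\varphi)|\le\bar\ii(\varphi_i,\varphi)\le n\de_\infty$.

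For the converse, the plan is to show $\bar\ii(\varphi_i,\varphi)\to0$ and then invoke Proposition~\ref{prop:metrictop}, which gives $\de_\infty\le F(\bar\ii)$ with $F(0)=0$ continuous. Since $\vol(\varphi_i)\to\vol(\varphi)$ by assumption, it suffices to show that both mixed terms $(\varphi_i,\varphi^{n-1})$ and $(\varphi,\varphi_i^{n-1})$ converge to $\vol(\varphi)$. Each of these is bounded below by $\vol(\varphi_i)^{a}\vol(\varphi)^{1-a}$ via the Khovanskii--Teissier inequality (Proposition~\ref{prop:intineq}), so the lower bounds tend to $\vol(\varphi)$. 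The problem therefore reduces to upper bounds, namely $\limsup_i$ of each mixed term is at most $\vol(\varphi)$.

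The upper bounds come from upper semicontinuity of the pairing (Theorem~\ref{thm:pshint}), which is with respect to the weak topology of pointwise convergence on $X^\div$. Applied with all arguments but one fixed, it gives $\limsup_i(\varphi_i,\varphi^{n-1})\le\vol(\varphi)$. For $(\varphi,\varphi_i^{n-1})$, joint upper semicontinuity in all variables gives $\limsup_i(\varphi,\varphi_i,\dots,\varphi_i)\le(\varphi,\dots,\varphi)$. Here I need to check that the usc proof in Theorem~\ref{thm:pshint} is joint in all $p$ variables. It is: it chooses neighborhoods $U_i$ of each $\varphi_i$ simultaneously, with $p=n$ and no $\gamma$.

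The one delicate point is whether weak convergence really places $\varphi_i$ in the neighborhoods used there. Those neighborhoods come from Corollary~\ref{cor:supcts}: pointwise convergence on $X^\div$ forces eventually $\varphi_i\le c\psi$ for $\psi\in\cH(X)$ with $\psi\ge\varphi$ and $c>1$. That corollary is stated as continuity of $\sup_{X^\na_\psi}$ on $\PSH(X)$ with the weak topology, which is exactly what is needed. With this, $\ii(\varphi_i,\varphi)\to0$, hence $\bar\ii\to0$, and so $\de_\infty([\varphi_i],[\varphi])\to0$. I expect this usc step to be the only substantive one.
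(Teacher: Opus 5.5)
Your proof is correct and follows the paper's route: both reduce to $\bar\ii(\varphi_i,\varphi)\to0$ using upper semicontinuity of the pairing together with convergence of volumes, and then invoke Proposition~\ref{prop:metrictop}. The only difference is that the paper bounds only $\jj_\varphi(\varphi_i)$, which needs upper semicontinuity in a single slot, and then applies $\ii\le n\,\jj$ from Lemma~\ref{lem:ijequiv}; this avoids your separate joint-usc argument for $(\varphi,\varphi_i^{n-1})$.
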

\begin{proof}
    The forward direction is obvious. Assume now $\vol(\varphi_i)\to \vol(\varphi)$. Then by upper semicontinuity of the intersection product that $0\le \limsup_i\jj_\varphi(\varphi_i)\le \jj_\varphi(\varphi)=0$, and hence $\ii(\varphi_i, \varphi)\approx \jj_\varphi(\varphi_i)\to 0$, which, together with the convergence of volumes, yields $\bar \ii(\varphi_i, \varphi)\to 0$.
\end{proof}
\begin{cor}\label{cor:monovolconv}
    Let $(\varphi_i)$ be a net in $\cE(X)$ and assume $\varphi_i\le \varphi$ for some $\varphi\in \cE(X)$. Then $[\varphi_i]\to [\varphi]$ strongly iff $\vol(\varphi_i)\to \vol(\varphi)$.
\end{cor}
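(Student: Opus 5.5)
The forward direction is immediate: if $[\f_i]\to[\f]$ strongly, then by \eqref{equ:dinfty1} we have $e^{-\d_i}\ddc\f\le\ddc\f_i\le e^{\d_i}\ddc\f$ with $\d_i\to0$. Monotonicity of the intersection pairing then gives $e^{-n\d_i}\vol(\f)\le\vol(\f_i)\le e^{n\d_i}\vol(\f)$. Equivalently, one can use $|\log\vol(\f_i)-\log\vol(\f)|\le\bar\ii(\f_i,\f)\le n\de_\infty$ from Proposition~\ref{prop:metrictop}.

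For the converse, assume $\vol(\f_i)\to\vol(\f)$. By Proposition~\ref{prop:metrictop}, and since all the functionals involved depend only on numerical classes (Proposition~\ref{prop:intnum}), it suffices to show $\bar\ii(\f_i,\f)\to0$. The volume term $|\log\vol(\f_i)-\log\vol(\f)|$ tends to $0$ by assumption, and $\vol(\f)>0$ because $\f\in\cE(X)$. By Lemma~\ref{lem:ijequiv} we have $\ii(\f_i,\f)\le n\jj_\f(\f_i)$, so it remains to prove $\jj_\f(\f_i)\to0$. Here the hypothesis $\f_i\le\f$ replaces the upper semicontinuity used in Corollary~\ref{cor:weakstrongconv}. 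Monotonicity of the pairing in Theorem~\ref{thm:pshint} gives $(\f_i,\f^{n-1})\le(\f^n)=\vol(\f)$, hence
\[
0\le\jj_\f(\f_i)\le\log\frac{\vol(\f)}{\vol(\f_i)^{1/n}\vol(\f)^{1-1/n}}=\frac1n\log\frac{\vol(\f)}{\vol(\f_i)}\to0,
\]
where the lower bound is the Khovanskii--Teissier inequality (Proposition~\ref{prop:intineq}). Therefore $\bar\ii(\f_i,\f)\to0$, and Proposition~\ref{prop:metrictop} gives $\de_\infty([\f_i],[\f])\le F(\bar\ii(\f_i,\f))\to0$, since $F$ is continuous with $F(0)=0$.

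There is no real obstacle in this argument. The only points needing care are that $\vol(\f_i)>0$, which holds since $\f_i\in\cE(X)$, and that the monotonicity statement $\f_i\le\f\Rightarrow(\f_i,\f^{n-1})\le(\f^n)$ holds for general psh functions and not only Fubini--Study ones. The latter is part of Theorem~\ref{thm:pshint}.
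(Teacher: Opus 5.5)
Your proposal is correct and follows essentially the same route as the paper: monotonicity gives $(\f_i,\f^{n-1})\le\vol(\f)$, hence $\jj_\f(\f_i)\to0$, then Lemma~\ref{lem:ijequiv} and the volume convergence give $\bar\ii(\f_i,\f)\to0$, and Proposition~\ref{prop:metrictop} concludes. Your explicit bound $\jj_\f(\f_i)\le\frac1n\log\frac{\vol(\f)}{\vol(\f_i)}$ is just a quantitative version of the paper's $\limsup$ argument.
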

\begin{proof}
    Assume $\vol(\varphi_i)\to \vol(\varphi)$. Then by monotonicity we have $0\le \limsup_i\jj_\varphi(\varphi_i)\le \jj_\varphi(\varphi)=0$, which shows $\bar\ii(\varphi_i, \varphi)\to 0$.
\end{proof}

\begin{cor}\label{cor:fcnstrtop}
    Let $[\varphi_i], [\varphi]\in \bar\cE(X)$. We have $[\varphi_i]\to  [\varphi]$ strongly iff $\MA(\varphi_i)\to \MA(\varphi)$ weakly and $\vol(\varphi_i)\to \vol(\varphi)$.
\end{cor}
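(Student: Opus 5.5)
The forward direction is immediate from what precedes. If $\de_\infty([\f_i],[\f])\to0$, then Proposition~\ref{prop:currentconv} with $k=n-1$ (and no auxiliary $f_j$) gives $\MA(\f_i)\to\MA(\f)$ weakly as BPF curve classes. Moreover, $e^{-\d_i}\ddc\f\le\ddc\f_i\le e^{\d_i}\ddc\f$ implies $e^{-n\d_i}\vol(\f)\le\vol(\f_i)\le e^{n\d_i}\vol(\f)$, by monotonicity of the intersection pairing on $\BPF_{n-1}(X)$. Hence $\vol(\f_i)\to\vol(\f)$.

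For the converse, assume $\MA(\f_i)\to\MA(\f)$ weakly and $\vol(\f_i)\to\vol(\f)$. By Proposition~\ref{prop:metrictop} it suffices to show $\bar\ii(\f_i,\f)\to0$. Since the volumes converge, this reduces to
\[
(\f,\f_i^{n-1})\to\vol(\f)\qquad\text{and}\qquad(\f_i,\f^{n-1})\to\vol(\f).
\]
The first limit comes from weak convergence of $\MA(\f_i)=(\ddc\f_i)^{n-1}$ tested against $\ddc\f$. The only subtlety is that $\ddc\f$ is a Weil class rather than a Cartier class, so the pairing is not directly a coordinate of the weak topology. I would handle this by choosing a decreasing net $\rho_j\in\cH(X)$ with $\rho_j\searrow\f$. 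For each $j$, weak convergence gives $(\rho_j,\MA(\f_i))\to(\rho_j,\MA(\f))$ as $i\to\infty$. Then
\[
\limsup_i(\f,\f_i^{n-1})\le\lim_i(\rho_j,\MA(\f_i))=(\rho_j,\f^{n-1}),
\]
and letting $j\to\infty$, continuity along decreasing nets (Theorem~\ref{thm:pshint}, Proposition~\ref{prop:intnum}) yields $\limsup_i(\f,\f_i^{n-1})\le\vol(\f)$. For the lower bound, Khovanskii--Teissier~\eqref{equ:KTclass2} gives $(\f,\f_i^{n-1})\ge\vol(\f)^{1/n}\vol(\f_i)^{(n-1)/n}\to\vol(\f)$. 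Hence $\jj_{\f_i}(\f)\to0$.

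The second limit, controlling $(\f_i,\f^{n-1})$, is the step I expect to be the main obstacle. Weak convergence of $\MA(\f_i)$ says nothing direct about $\ddc\f_i$ paired against $\f^{n-1}$. I would avoid it entirely using Lemma~\ref{lem:ijequiv}, which gives $\ii(\f,\f_i)\le n\,\jj_{\f_i}(\f)$. Since $\jj_{\f_i}(\f)\to0$, we get $\ii(\f_i,\f)\to0$. Combined with $|\log\vol(\f_i)-\log\vol(\f)|\to0$, this gives $\bar\ii(\f_i,\f)\to0$, and hence $[\f_i]\to[\f]$ strongly by Proposition~\ref{prop:metrictop}.

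The point to double-check is that Lemma~\ref{lem:ijequiv} applies with $\a=\ddc\f_i$ and $\b=\ddc\f$. This holds because both classes lie in $\BBPF_{n-1}(X)$, and by Proposition~\ref{prop:intnum} the functionals $\jj$ and $\ii$ defined on psh functions agree with those defined on classes.
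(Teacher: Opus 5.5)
Your proof is correct and follows the paper's argument essentially verbatim. The forward direction goes through Proposition~\ref{prop:currentconv}. For the converse, you bound $\limsup_i(\varphi,\varphi_i^{n-1})$ by $\vol(\varphi)$ using a decreasing approximation of $\varphi$ in $\cH(X)$, deduce $\jj_{\varphi_i}(\varphi)\to0$, and conclude with Lemma~\ref{lem:ijequiv} and Proposition~\ref{prop:metrictop}. Your explicit Khovanskii--Teissier lower bound and the monotonicity estimate for the volumes only spell out steps the paper leaves implicit.
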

\begin{proof}
    Suppose $\bar\ii(\varphi_i, \varphi)\to 0$, then it follows from Proposition~\ref{prop:currentconv} that $\MA(\varphi_i)\to \MA(\varphi)$ weakly, and by definition $\vol(\varphi_i)\to \vol(\varphi)$. 

    Conversely, pick $\psi_j\in \cH(X)$ a decreasing net converging to $\varphi$. Then for any $j$, we have
    \[
    \limsup_i (\varphi, \varphi_i^{n-1})\le \limsup_i (\psi_j, \varphi_i^{n-1})=(\psi_j, \varphi^{n-1}).
    \]
    Taking $j\to \infty$ we get $\limsup_i(\varphi, \varphi_i^{n-1})\le (\varphi^n)$, and so $\jj_{\varphi_i}(\varphi)\to 0$. By Lemma~\ref{lem:ijequiv}, this yields $\ii(\varphi_i, \varphi)\to 0$, and therefore $\bar\ii(\varphi_i, \varphi)\to 0$.
\end{proof} 
For later use, we also record the convergence of volumes along increasing nets:
\begin{cor}\label{cor:volincreas}
    Let $(\varphi_i)$ be an increasing net in $\cE(X)$ converging weakly to $\varphi\in \cE(X)$. Then $\vol(\varphi_i)\to \vol(\varphi)$.
\end{cor}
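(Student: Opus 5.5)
We prove the two inequalities $\limsup_i\vol(\f_i)\le\vol(\f)$ and $\liminf_i\vol(\f_i)\ge\vol(\f)$ separately.

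\textbf{Upper bound.} Since the net is increasing and $\f_i\to\f$ weakly, for each $v\in X^\div$ the values $\f_i(v)$ increase to $\f(v)$. Thus $\f_i\le\f$ on $X^\div$. As $\f_i\in\PSH(X)$ and $\f$ is homogeneous and usc, Theorem~\ref{thm:weaktop} gives $\f_i\le\f$ on all of $X^\na$. Monotonicity of the intersection pairing (Theorem~\ref{thm:pshint}) then yields $\vol(\f_i)\le\vol(\f)$ for every $i$. The volumes are therefore increasing and bounded above by $\vol(\f)$, so they converge to some limit $\le\vol(\f)$. (Upper semicontinuity of $\vol$ in the weak topology would also give this inequality.)

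\textbf{Lower bound.} Here upper semicontinuity is no help, so we use the mixed intersection numbers $(\f,\f_i^{n-1})$ as a bridge, via the Khovanskii--Teissier inequality.

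First we show $(\f,\f_i^{n-1})\to\vol(\f)$, arguing as in the proof of Corollary~\ref{cor:fcnstrtop}.
\begin{itemize}
\item Fix $\p\in\cH(X)$ with $\p\ge\f$. Then $(\p,\f_i^{n-1})$ is increasing in $i$, because $\f_i$ increases and the pairing is monotone.
\item Each $(\p,\f_i^{n-1})$ is an intersection of $\ddc\p$, a nef Cartier class, with a BPF curve class.
\item To pass to the limit, the plan is to use weak convergence of the classes $\ddc\f_i$ to $\ddc\f$, which holds by Lemma~\ref{lem:numclasscts} since $\f_i\to\f$ pointwise on $X^\div$. We combine this with the fact that the classes are increasing and bounded above by $\ddc\f$.
\item The aim is to show $(\p,\f_i^{n-1})\to(\p,\f^{n-1})$, and then let $\p\searrow\f$.
\end{itemize}
I expect this to be the main obstacle. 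The intersection pairing on $\BPF_{n-1}$ is only usc and continuous along \emph{decreasing} nets, not increasing ones.

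My first attempt at the obstacle is an envelope-type argument using Lemma~\ref{lem:FSpshenv}(iii). Alternatively I would use the Dang--Favre comparison: for nef classes with $\a_i\le\a$ and $\a_i\to\a$ weakly, Siu's inequality (Lemma~\ref{lem:Siuclass}) lets us write
\[
\a\le \a_i+(\a-\a_i),
\]
where $\a-\a_i$ is psef. We then try to show that $(\a-\a_i)\cdot\b^{n-1}\to0$ for a fixed big nef Cartier $\b$. Because $\b^{n-1}$ is a curve class and weak convergence means convergence on each model, this quantity is computed on a model where $\b$ is determined, and there it tends to $0$.

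Expanding $\a^n$ multilinearly in $\a_i$ and $\a-\a_i$ then gives
\[
\vol(\a)\le\vol(\a_i)+\sum_{k\ge1}\binom nk\bigl((\a-\a_i)^k,\a_i^{n-k}\bigr).
\]
Each term in the sum is bounded, through psef-ness and Siu's inequality, by $C\,\bigl((\a-\a_i)\cdot\b^{n-1}\bigr)$ with $\b$ chosen to dominate $\a$; this step needs care with the signs. All these terms tend to $0$, which gives $\liminf_i\vol(\f_i)\ge\vol(\f)$ and completes the proof.
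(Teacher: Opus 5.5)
Your upper bound is fine, but the lower bound has a genuine gap. It lies in the step where you bound each term $\bigl((\a-\a_i)^k,\a_i^{n-k}\bigr)$ by $C\bigl((\a-\a_i)\cdot\b^{n-1}\bigr)$.

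That inequality is false in general. To get $(\g,\mu)\le C(\g,\nu)$ for the psef but non-nef divisor class $\g=\a-\a_i$, you need $C\nu-\mu$ to be a \emph{movable} (BPF) curve class. Siu's inequality (Lemma~\ref{lem:curvbig}) only gives a psef difference of curve classes. For $k\ge2$ the terms contain several non-nef factors and have no sign at all.

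More fundamentally, your final argument uses the monotonicity of the net only through $\f_i\le\f$, and the conclusion fails under that weaker hypothesis. Here is a counterexample:
\begin{itemize}
\item Take $X=\A^2\subset\P^2$, $H$ the line at infinity, and distinct points $p_i\in H$.
\item Let $\pi_i\colon Y_i\to\P^2$ be the blowup of $p_i$, with exceptional divisor $E_i$.
\item Let $\f_i\in\cH(X)$ be defined by the ample, strictly effective divisor $\pi_i^*H-cE_i=\tilde H+(1-c)E_i$, with $c\in\Q\cap(0,1)$, and let $\f=\p_H$.
\end{itemize}
Then $\f_i\le\f$. Also $\f_i\to\f$ weakly, since a given $v\in X^\div$ has center $p_i$ for at most one $i$. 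With $\b=[\f]$ one gets $\bigl((\a-\a_i)\cdot\b\bigr)=c\,(E_i\cdot\pi_i^*H)=0$, yet $\vol(\f)-\vol(\f_i)=c^2$ for all $i$.

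The underlying problem is that pairing with a fixed Cartier class $\b^{n-1}$ only sees the incarnation on one model. Meanwhile $\a-\a_i$ can concentrate on divisors that are exceptional over that model, and weak convergence gives no uniformity over models. Your first bullet plan runs into the same obstacle. Lemma~\ref{lem:FSpshenv}(iii), which concerns envelopes of lsc functions, does not address it.

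What is missing is a way to use that the net is \emph{increasing} to get a comparison holding on all models at once. The paper does this via the strong topology:
\begin{itemize}
\item For $i\le j$, monotonicity gives $(\f_i,\f_j^{n-1})\le\vol(\f_j)$ and $(\f_j,\f_i^{n-1})\le\vol(\f_j)$.
\item Hence $\bar\ii(\f_i,\f_j)\le2\log\bigl(\vol(\f_j)/\vol(\f_i)\bigr)$, which tends to $0$ because $\vol(\f_i)$ increases to a positive finite limit.
\item By Proposition~\ref{prop:metrictop} (that is, the Diskant inequality), $([\f_i])$ is $\de_\infty$-Cauchy, so $e^{-\e}[\f_j]\le[\f_i]$ for $i,j\gg0$.
\item Letting $j\to\infty$, and using that $[\f_j]\to[\f]$ weakly and the psef cone is closed, yields $e^{-\e}[\f]\le[\f_i]$.
\end{itemize}
This gives $\vol(\f_i)\ge e^{-n\e}\vol(\f)$, which is the missing lower bound.
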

\begin{proof}
    By assumption we have $0<\lim_i\vol(\varphi_i)=\sup_i\vol(\f_i)\le \vol(\varphi)$. Thus $(\log\vol(\f_i))$ is a Cauchy net. Since $(\f_i)$ is increasing, a direct computation shows for $i\le j$, $\bar\ii(\f_i, \f_j)\le 2(\log\vol(\f_j)-\log\vol(\f_i))\to 0$. Thus $([\f_i])_i$ is a Cauchy net in the strong topology, and for any $\varepsilon>0$, Proposition~\ref{prop:metrictop} implies that for $i,j\gg0$, $e^{-\varepsilon}[\f_j]\le [\f_i]\le e^\varepsilon[\f_j]$. Letting $j\to \infty$, we have $e^{-\varepsilon}[\f]\le [\f_i]\le e^\varepsilon[\f]$ since $[\f_j]\to [\f]$ weakly. Thus $\de_\infty([\f_i], [\f])\to 0$ and in particular $\vol(\f_i)\to \vol(\f)$.    
\end{proof}

In general, strong convergence in $\bar\cE(X)$ does not imply weak convergence on $\cE(X)$. However, we have
\begin{lem}\label{lem:volweakconv}
    If $\varphi_i\le \psi_i$ in $\cE(X)$ with $\psi_i\in \cH(X)$ a decreasing net converging to some $\varphi\in \cE(X)$, and $\vol(\varphi_i)\to \vol(\varphi)$, then $\varphi_i\to \varphi$ weakly.
\end{lem}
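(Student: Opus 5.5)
Since the topology on $\PSH(X)$ is pointwise convergence on $X^\div$, it suffices to show that for every $v\in X^\div_\infty$ we have $\varphi_i(v)\to\varphi(v)$; on $X^\beth$ all functions vanish. The upper bound is immediate: $\varphi_i(v)\le\psi_i(v)\to\varphi(v)$, so $\limsup_i\varphi_i(v)\le\varphi(v)$. The whole argument therefore comes down to the lower bound $\liminf_i\varphi_i(v)\ge\varphi(v)$. This is the main obstacle, since there is no pointwise control from below. The only available input is that volumes converge, and I plan to turn this into a statement about numerical classes, i.e.\ about values on divisorial valuations.

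\textbf{Step 1: strong convergence of classes.} Set $\delta_i\to0$ in the appropriate sense. By monotonicity of the pairing,
\[
(\varphi_i,\varphi^{n-1})\le(\psi_i,\varphi^{n-1})\to(\varphi^n)
\]
by continuity along decreasing nets (Theorem~\ref{thm:pshint}). Hence $\limsup_i\jj_\varphi(\varphi_i)\le 0$, because $\vol(\varphi_i)\to\vol(\varphi)$. Lemma~\ref{lem:ijequiv} then gives $\ii(\varphi_i,\varphi)\to0$, and so $\bar\ii(\varphi_i,\varphi)\to0$. Proposition~\ref{prop:metrictop} yields $\de_\infty([\varphi_i],[\varphi])\to0$. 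In particular, for any $\varepsilon>0$ and $i\gg0$ we get $\ddc\varphi_i\ge e^{-\varepsilon}\ddc\varphi$ as classes.

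\textbf{Step 2: from class inequalities to function inequalities.} Classes alone do not see the function, since numerically trivial differences are invisible. I will instead exploit the squeeze $\varphi_i\le\psi_i$ together with the negativity lemma. Fix $v=c\ord_E$ with $E$ a prime divisor on some smooth model $Y$ dominating a compactification. Consider the Weil b-divisor $B_i:=D_{\psi_i}-D_{\varphi_i}\ge0$, which is effective at infinity. Its class satisfies
\[
[B_i]=\ddc\psi_i-\ddc\varphi_i\le\ddc\psi_i-e^{-\varepsilon}\ddc\varphi.
\]
As $i\to\infty$, $\ddc\psi_i\to\ddc\varphi$ strongly by Example~\ref{exam:decstrong}, so $[B_i]\le\eta_i\,\ddc\varphi$ with $\eta_i\to0$. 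The plan is then to show that an effective b-divisor at infinity whose class is dominated by $\eta\,\ddc\rho$, for a fixed $\rho\in\cH(X)$ with $\ddc\varphi\le C\ddc\rho$ by Siu's inequality (Corollary~\ref{cor:siu}), has small coefficients. Concretely, I would pair with $\ddc\rho^{\,n-1}$ and with $\MA$-type curve classes supported at $E$. The mass of the measure $\MA_\rho$ controls $\sum_E b_E(B_i)\cdot(\text{positive weight})$. The concern is that $E$ may carry zero weight under $\MA_\rho(\rho)$.

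\textbf{Step 2, fallback.} To handle that, I would choose $\rho$ depending on $v$, namely a Fubini--Study function determined on a compactification where $E$ is a divisor and which is ample there, so that $E$ carries positive mass $b_E(D_\rho|_E)^{n-1}>0$. Then
\[
b_E\,\ord_E(B_i)\,(D_\rho|_E^{n-1})\le(B_i,\rho^{n-1})\le\eta_i\,(\varphi,\rho^{n-1})\to0.
\]
Since $\ord_E(B_i)=\psi_i(\ord_E)-\varphi_i(\ord_E)\to0$ and $\psi_i(\ord_E)\to\varphi(\ord_E)$, we get $\varphi_i(v)\to\varphi(v)$. Here the pairing $(B_i,\rho^{n-1})$ is well defined because $\rho^{n-1}$ is determined by a Cartier class on a model, so pairing with the Weil incarnation on that model is legitimate. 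The remaining check is that $(B_i,\rho^{n-1})$ equals the difference $(\psi_i,\rho^{n-1})-(\varphi_i,\rho^{n-1})$ given by Proposition~\ref{prop:intnum}. Both terms then converge to $(\varphi,\rho^{n-1})$, the first by decreasing continuity and the second because strong convergence gives $(\varphi_i,\rho^{n-1})\to(\varphi,\rho^{n-1})$ (Proposition~\ref{prop:currentconv}). This makes the argument go through directly, without needing $\eta_i$.
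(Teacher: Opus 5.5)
Your final argument is correct and is essentially the paper's proof. You get $\bar\ii(\varphi_i,\varphi)\to0$ from the monotone bound $(\varphi_i,\varphi^{n-1})\le(\psi_i,\varphi^{n-1})$, then take $\rho$ ample on a compactification where $E$ lives and use that each coefficient of the effective b-divisor $D_{\psi_i}-D_{\varphi_i}$ is controlled by $(\psi_i,\rho^{n-1})-(\varphi_i,\rho^{n-1})\to0$; the paper writes this same estimate as $c_E(\psi_i(v)-\varphi_i(v))\le\int_{X^\na_\rho}(\psi_i-\varphi_i)\MA_\rho(\rho)$. Two minor points: the factor $b_E$ in your displayed inequality is spurious, since the $E$-coefficient of $B_i$ is already $\psi_i(\ord_E)-\varphi_i(\ord_E)$, but this is harmless because only the limit matters; and the $\eta_i$ detour is unnecessary.
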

\begin{proof}
    Since $\varphi_i\le \psi_i$, by monotonicity we have $\limsup_i(\varphi_i, \varphi^{n-1})\le \lim_i(\psi_i, \varphi^{n-1})= (\varphi^n)$ and $\limsup_i(\varphi, \varphi_i^{n-1})\le \lim_i(\varphi, \psi_i^{n-1})=(\varphi^n)$. Thus if $\vol(\varphi_i)\to \vol(\varphi)$, then we must have $\bar\ii(\varphi_i, \varphi)\to 0$. 
    By Proposition~\ref{prop:currentconv}, for any $\rho\in \cH(X)$, we have
    \[
    \ddc\varphi_i\wedge (\ddc \rho)^{n-2}\to \ddc\varphi\wedge (\ddc \rho)^{n-2},
    \]
    and in particular $(\varphi_i, \rho^{n-1})\to (\varphi, \rho^{n-1})$. Now let $v\in \Xdiv$ be any divisorial valuation. If $v\in X^\beth$, then $\f_i(v)=\f(v)=0$, so suppose $v\in X^\na_\infty$. Pick a large enough integrally closed compactification $\bar X$ of $X$ such that $\bar X\setminus X$ supports an ample divisor $D$ and $v=c\ord_E$ for some $c>0$ and prime divisor $E\subset \bar X\setminus X$. Put $\rho=\varphi_D\in \cH(X)$. After scaling, we may assume $v=v_E$ for $v_E=\rho(\ord_E)^{-1}\ord_E$. Computed on $X^\na_\rho$, $\MA_\rho(\rho)=\sum_F c_F\delta_{v_F}$, where the sum runs through all irreducible components in $\bar X\setminus X$, and $c_F>0$. Thus,
    \begin{align*}
    0\le c_E( \psi_i(v)-\varphi_i(v))&\le \int_{X^\na_\rho} (\psi_i-\varphi_i)\MA_\rho(\rho)
    =\int_{X^\na_\rho} (\psi_i-\varphi)\MA_\rho(\rho)+\int_{X^\na_\rho} (\varphi-\varphi_i)\MA_\rho(\rho)\\
    &=(\psi_i, \rho^{n-1})-(\varphi, \rho^{n-1})+(\varphi, \rho^{n-1})-(\varphi_i, \rho^{n-1})\to 0.
    \end{align*}
    This implies $\varphi_i(v)\to \varphi(v)$, and we are done by Theorem~\ref{thm:weaktop}.

\end{proof}

\section{Big curve classes at infinity}\label{sec:curvinf}
In this section, $n\ge2$. We introduce the space $\cC_\infty(X)\subset \BPF_1(X)$ of big and BPF curve classes at infinity, together with a natural complete metric. The main goal is to show that the Monge--Amp\`ere operator induces a dense Lipschitz embedding of $\bar\cE(X)$ into $\cC_\infty(X)$, and that this is surjective when $X$ is smooth.

\subsection{Volumes of curve classes}
Recall that $\BPF_1(X)\subset \Curv(X)$ is the space of BPF Weil b-curve classes, and that any element $\mu\in \BPF_1(X)$ defines a positive Radon measure on $X^\na_\rho$ for any $\rho\in \cH(X)$.
Hence we get a canonical map $\BPF_1(X)\to C^0(X^\na_\rho)^\vee$ which is continuous with respect to the weak topology. The image is a convex cone, and a locally compact subset of $C^0(X^\na_\rho)^\vee$. As observed in~\S\ref{sec:numclc}, this map is not injective. However, we will see that it is injective on a subset of big curve classes at infinity, see Corollary~\ref{cor:curvinj}. 
Motivated by Definition~\ref{def:volhat}, we define:
\begin{defi}\label{def:volinfhat}
    The \emph{volume at infinity} of $\mu\in \BPF_1(X)$ is  
    \[
    \vol^\vee_\infty(\mu)\=\inf_{\varphi\in \cH(X)}\frac{(\varphi, \mu)}{\vol(\varphi)^{1/n}}\in [0, +\infty).
    \]
\end{defi}
By definition, $\vol^\vee(\mu)\le \vol^\vee_\infty(\mu)$ for any $\mu\in \BPF_1(X)$, and the following example shows that we have strict inequality in general.
\begin{exam}
    Recall the example in~\S\ref{sec:numclc} where $\mu=2\pi^*H-E$. Then $\vol^\vee(\mu)=\vol(2\pi^*H-E)^{1/2}=\sqrt{3}$, whereas $\vol^\vee_\infty(\mu)=\vol(2\pi^*H)^{1/2}=2.$
\end{exam}
\begin{defi}
    We say $\mu\in \BPF_1(X)$ is a \emph{big curve class at infinity} if 
    \[
    \vol^\vee_\infty(\mu)=\vol^\vee(\mu)>0.
    \]
\end{defi}

We denote by $\cC_\infty(X)\subset\BBPF_1(X)$ the space of big b-curve classes at infinity. 
We have the following refinement of Lemma~\ref{lem:bpfdual}.
\begin{prop}\label{prop:pshlegdual}
    The volume at infinity of a b-curve class satisfies the following:
    \begin{itemize}
        \item [(i)] for $\mu\in\BPF_1(X)$, we have
        \[
        \vol^\vee_\infty(\mu)=\inf_{\varphi\in \cE(X)}\frac{(\varphi, \mu)}{\vol(\varphi)^{1/n}} 
        \]
        \item [(ii)] $\vol^\vee_\infty$ is concave, usc and homogeneous;
        \item[(iii)] for $\varphi\in\cE(X)$, we have $\MA(\varphi)\in \cC_\infty(X)$, and $\vol^\vee_\infty(\MA(\varphi))=\vol(\varphi)^{\frac{n-1}{n}}$ with $\varphi$ achieving the infimum;
        \item[(iv)] for $\varphi\in\cE(X)$, we have 
        \[
        \vol(\varphi)^{1/n}=\inf_{\mu\in \cC_\infty(X)} \frac{(\varphi, \mu)}{\vol^\vee_\infty(\mu)},
        \]
        with $\mu=\MA(\varphi)$ achieving the infimum.
    \end{itemize}
\end{prop}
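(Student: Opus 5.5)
We will prove the four items in order, mirroring the proof of Lemma~\ref{lem:bpfdual} with $\cH(X)$ in place of Cartier classes.

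For~(i), the inequality ``$\ge$'' is immediate since $\cH(X)\subset\cE(X)$. For ``$\le$'', fix $\f\in\cE(X)$ and pick a decreasing net $(\f_j)$ in $\cH(X)$ converging to $\f$. By Theorem~\ref{thm:pshint} (applied with $p=1$ and $\gamma=\mu$, and with $p=n$), both $(\f_j,\mu)\to(\f,\mu)$ and $\vol(\f_j)\to\vol(\f)>0$. Hence $\frac{(\f_j,\mu)}{\vol(\f_j)^{1/n}}\to\frac{(\f,\mu)}{\vol(\f)^{1/n}}$, and the infimum over $\cH(X)$ is at most the value at $\f$.

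For~(ii), homogeneity is clear. Each map $\mu\mapsto(\f,\mu)$ with $\f\in\cH(X)$ is linear and weakly continuous, since $\f$ defines a Cartier class. An infimum of such functions is concave and usc, exactly as in Lemma~\ref{lem:bpfdual2}.

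For~(iii), set $\mu=\MA(\f)$ with $\f\in\cE(X)$. By Proposition~\ref{prop:intnum}, we have $(\p,\mu)=(\p,\f^{n-1})=(\p,\ddc\f^{n-1})$ for all $\p\in\PSH(X)$. Proposition~\ref{prop:intineq}(i) (Khovanskii--Teissier) then gives $(\p,\f^{n-1})\ge\vol(\p)^{1/n}\vol(\f)^{\frac{n-1}{n}}$ for all $\p\in\cE(X)$, while at $\p=\f$ we have equality: $(\f,\mu)/\vol(\f)^{1/n}=\vol(\f)^{\frac{n-1}{n}}$. Combined with~(i), this yields $\vol^\vee_\infty(\mu)=\vol(\f)^{\frac{n-1}{n}}$, with $\f$ achieving the infimum. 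Since $\ddc\f\in\BBPF_{n-1}(X)$ and $\mu=(\ddc\f)^{n-1}$, Lemma~\ref{lem:bpfdual} gives $\vol^\vee(\mu)=\vol(\ddc\f)^{\frac{n-1}{n}}=\vol(\f)^{\frac{n-1}{n}}$. Here we must check that $(\ddc\f)^n$ in the sense of Dang--Favre agrees with $(\f^n)$, which is the second assertion of Proposition~\ref{prop:intnum}. Therefore $\vol^\vee(\mu)=\vol^\vee_\infty(\mu)>0$, so $\mu\in\cC_\infty(X)$.

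For~(iv), for any $\mu\in\cC_\infty(X)$ the definition of $\vol^\vee_\infty$ gives $(\f,\mu)\ge\vol^\vee_\infty(\mu)\vol(\f)^{1/n}$, using~(i) to allow $\f\in\cE(X)$. Hence the infimum is at least $\vol(\f)^{1/n}$. Taking $\mu=\MA(\f)\in\cC_\infty(X)$ by~(iii) gives
\[
\frac{(\f,\MA(\f))}{\vol^\vee_\infty(\MA(\f))}=\frac{\vol(\f)}{\vol(\f)^{\frac{n-1}{n}}}=\vol(\f)^{1/n},
\]
so the infimum is attained.

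The main point requiring care is~(iii): we must identify $(\p,\MA(\f))$, viewed as a pairing against the BPF class, with the psh intersection number $(\p,\f^{n-1})$, and identify the volume of $\ddc\f$ with $\vol(\f)$. Both identifications rely on Proposition~\ref{prop:intnum} and on continuity along decreasing nets.
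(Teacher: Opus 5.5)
Your proposal is correct and follows essentially the same route as the paper: decreasing approximation with continuity from Theorem~\ref{thm:pshint} for~(i), infima of affine continuous functionals for~(ii), the Khovanskii--Teissier inequality for~(iii), and direct comparison for~(iv). The only difference is cosmetic and lies in~(iii). The paper tests Khovanskii--Teissier against big nef Cartier classes and gets both volumes at once from the chain $\vol(\f)^{\frac{n-1}{n}}\le\vol^\vee(\MA(\f))\le\vol^\vee_\infty(\MA(\f))\le\vol(\f)^{\frac{n-1}{n}}$, whereas you compute $\vol^\vee_\infty$ via~(i) and then $\vol^\vee$ separately via Lemma~\ref{lem:bpfdual}.
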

\begin{proof}
    It is clear that $\vol^\vee_\infty(\mu)\ge \inf_{\varphi\in \cE(X)}\frac{(\varphi, \mu)}{\vol(\varphi)^{1/n}}$. On the other hand, for any $\varphi\in\cE(X)$, let $(\varphi_j)_j$ be a decreasing net in $\cH(X)$ converging to $\varphi$. Then
    \[
    \frac{(\varphi, \mu)}{\vol(\varphi)^{1/n}}\ge \frac{(\varphi_j, \mu)}{\vol(\varphi_j)^{1/n}}+\frac{(\varphi-\varphi_j, \mu)}{\vol(\varphi)^{1/n}}\ge \inf_{\varphi\in \cH(X)}\frac{(\varphi, \mu)}{\vol(\varphi)^{1/n}}+\frac{(\varphi-\varphi_j, \mu)}{\vol(\varphi)^{1/n}}.
    \]
    We obtain the reverse inequality by taking $j\to \infty$ and applying Theorem~\ref{thm:pshint}. This proves the first equality. 
    
    The assertions in~(ii) is clear from the definition, since $\mu\mapsto \frac{(\varphi, \mu)}{\vol(\varphi)^{1/n}}$ is affine and continuous for all $\f\in\cH(X)$.
    The assertion~(iii) is a direct consequence of the Khovanskii--Teissier inequalities. Indeed, for any big and nef $\alpha\in \CNum(X)$, we have
    \[
    (\alpha, \varphi^{n-1})\ge \vol(\alpha)^{1/n}\vol(\varphi)^{\frac{n-1}{n}},
    \]
    and so
    \[
    \vol(\varphi)^{\frac{n-1}{n}}\le \vol^\vee(\MA(\varphi))\le \vol^\vee_\infty(\MA(\f))\le \frac{(\varphi, \varphi^{n-1})}{\vol(\varphi)^{1/n}}=\vol(\varphi)^{\frac{n-1}{n}}.
    \]
    Finally, for any $\varphi\in \cE(X)$ and $\mu\in\cC_\infty(X)$, we have
    $\vol(\varphi)^{1/n}\le \frac{(\varphi, \mu)}{\vol^\vee_\infty(\mu)},$ with equality if $\mu=\MA(\varphi)$. Taking inf over $\mu\in \cC_\infty(X)$, we get~(iv).
\end{proof}

\subsection{Strong topology}
We use the restriction of the $\de_\infty$ metric from \S\ref{sec:strongcurve}.

\begin{defi}
    The \emph{strong topology} on $\cC_\infty(X)$ is the metric topology induced by $\de_\infty$.
\end{defi}
It is clear that if a net $(\mu_i)$ in $\cC_\infty(X)$ converges strongly to $\mu\in \cC_\infty(X)$, then $\mu_i\to \mu$ weakly and $\vol^\vee_\infty(\mu_i)\to \vol^\vee_\infty(\mu)$. Moreover, we have
\begin{lem}\label{lem:Cinfclosed}
    The space $\cC_\infty(X)$ is closed in $\BBPF_1(X)$ in the strong topology.
\end{lem}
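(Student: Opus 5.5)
We need to show: if $\mu_i\in\cC_\infty(X)$ and $\mu_i\to\mu\in\BBPF_1(X)$ in $\de_\infty$, then $\mu\in\cC_\infty(X)$, i.e.\ $\vol^\vee_\infty(\mu)=\vol^\vee(\mu)$. Since $\vol^\vee(\mu)\le\vol^\vee_\infty(\mu)$ holds always and $\vol^\vee(\mu)>0$ because $\mu\in\BBPF_1(X)$, only the reverse inequality $\vol^\vee_\infty(\mu)\le\vol^\vee(\mu)$ needs proof.

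The plan is to show that $\vol^\vee_\infty$ is $\de_\infty$-Lipschitz in the same way as $\vol^\vee$ (compare \eqref{equ:volhatLip}). Set $\delta_i:=\de_\infty(\mu_i,\mu)$, so that $e^{-\delta_i}\mu\le\mu_i\le e^{\delta_i}\mu$ as curve classes. For every $\f\in\cH(X)$, the class $\ddc\f$ is a nef Cartier b-divisor class. By Proposition~\ref{prop:bpfposlin}, pseudoeffective curve classes pair nonnegatively with nef Cartier classes, so
\[
e^{-\delta_i}(\f,\mu)\le(\f,\mu_i)\le e^{\delta_i}(\f,\mu).
\]
Dividing by $\vol(\f)^{1/n}$ and taking the infimum over $\f\in\cH(X)$ in Definition~\ref{def:volinfhat} gives
\[
|\log\vol^\vee_\infty(\mu_i)-\log\vol^\vee_\infty(\mu)|\le\delta_i,
\]
with all quantities positive, since $\vol^\vee_\infty\ge\vol^\vee>0$ on $\BBPF_1(X)$. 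Together with \eqref{equ:volhatLip} for $\vol^\vee$, we get
\[
\vol^\vee_\infty(\mu)\le e^{\delta_i}\vol^\vee_\infty(\mu_i)=e^{\delta_i}\vol^\vee(\mu_i)\le e^{2\delta_i}\vol^\vee(\mu).
\]
Letting $\delta_i\to0$ yields $\vol^\vee_\infty(\mu)\le\vol^\vee(\mu)$, so $\mu\in\cC_\infty(X)$.

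I do not expect a real obstacle. The only point to check is that $(\f,\cdot)$ is monotone with respect to the pseudoeffective order on curve classes when $\f\in\cH(X)$. This holds because $\ddc\f$ is nef and Cartier, which is exactly the duality in Proposition~\ref{prop:bpfposlin}. Completeness of $\cC_\infty(X)$ then follows from Proposition~\ref{prop:curvcomplete}.
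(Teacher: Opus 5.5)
Your proof is correct and follows essentially the paper's argument. Both use monotonicity of $\vol^\vee_\infty$ for the pseudoeffective order (giving $\vol^\vee_\infty(\mu)\le s(\mu_i,\mu)^{-1}\vol^\vee_\infty(\mu_i)$), then the equality $\vol^\vee_\infty(\mu_i)=\vol^\vee(\mu_i)$, and then the Lipschitz estimate \eqref{equ:volhatLip} for $\vol^\vee$; your explicit Lipschitz bound for $\vol^\vee_\infty$ just replaces the paper's appeal to upper semicontinuity.
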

\begin{proof}
    Assume $\mu_i\to \mu$ strongly in $\BBPF_1(X)$ with $\mu_i\in \cC_\infty(X)$, we need to show $\mu\in \cC_\infty(X)$. Since $\de_\infty(\mu_i, \mu)\to 0$, we have
    \[
    \vol^\vee_\infty(\mu)\le \liminf_is(\mu_i, \mu)^{-1}\vol^\vee(\mu_i)\le \limsup_i\vol^\vee_\infty(\mu_i)\le\vol^\vee_\infty(\mu),
    \]
    where the last inequality follows from upper semicontinuity of $\vol^\vee_\infty$. This shows $\mu\in \cC_\infty(X)$ since $\vol^\vee_\infty(\mu)=\lim_i\vol^\vee_\infty(\mu_i)=\lim_i\vol^\vee(\mu_i)=\vol^\vee(\mu)$, where the last equality follows from~\eqref{equ:volhatLip}.
\end{proof}
As a direct consequence of Proposition~\ref{prop:curvcomplete}, we get the first part of Theorem~A'.
\begin{cor}\label{cor:curvinfcomplete}
    The space $(\cC_\infty(X), \de_\infty)$ is complete.
\end{cor}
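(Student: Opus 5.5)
The completeness of $(\cC_\infty(X),\de_\infty)$ will be deduced formally from two earlier results: Proposition~\ref{prop:curvcomplete}, which says the ambient space $(\BBPF_1(X),\de_\infty)$ is complete, and Lemma~\ref{lem:Cinfclosed}, which says $\cC_\infty(X)$ is closed in $\BBPF_1(X)$ for the strong topology. The metric on $\cC_\infty(X)$ is by definition the restriction of $\de_\infty$ on $\BBPF_1(X)$, so the standard fact that a closed subset of a complete metric space is complete gives the claim.

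Concretely, I will take a Cauchy net $(\mu_i)$ in $\cC_\infty(X)$. Since $\cC_\infty(X)\subset\BBPF_1(X)$ carries the restricted metric, $(\mu_i)$ is also Cauchy in $(\BBPF_1(X),\de_\infty)$. By Proposition~\ref{prop:curvcomplete}, there is some $\mu\in\BBPF_1(X)$ with $\de_\infty(\mu_i,\mu)\to0$. Lemma~\ref{lem:Cinfclosed} then gives $\mu\in\cC_\infty(X)$, so $\mu_i\to\mu$ inside $\cC_\infty(X)$.

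One point to check is that Proposition~\ref{prop:curvcomplete} is stated for nets rather than only sequences, since $\de_\infty$-completeness is tested on nets here. Its proof does handle nets, constructing the limit through the weak limit of $(\alpha,\mu_j)$. Likewise, the closedness proof in Lemma~\ref{lem:Cinfclosed} is phrased with nets.

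I expect no real obstacle. The substantive work, namely the upper semicontinuity of $\vol^\vee_\infty$ and the Lipschitz estimate~\eqref{equ:volhatLip}, was already done in Lemma~\ref{lem:Cinfclosed}.
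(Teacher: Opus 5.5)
Your proposal is correct and follows the paper's argument: Proposition~\ref{prop:curvcomplete} gives completeness of $(\BBPF_1(X),\de_\infty)$, Lemma~\ref{lem:Cinfclosed} gives that $\cC_\infty(X)$ is strongly closed in it, and a closed subset of a complete metric space is complete. The nets-versus-sequences point you raise is harmless in any case, since completeness for Cauchy sequences and for Cauchy nets coincide in a metric space.
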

As a restriction of Theorem~\ref{thm:metricequivnum}, we get
\begin{prop}\label{prop:metricequiv}
    The map $\bar\cE(X)\ni[\f]\mapsto\MA(\f)\in\cC_\infty(X)$ is a Lipschitz continuous homeomorphism onto its image, with uniformly locally H\"older continuous inverse. More precisely, for any 
    $\varphi, \psi\in\cE(X)$, we have 
    \begin{equation}\label{equ:MAcont1}
    \de_\infty(\MA(\f),\MA(\p))\le(n-1)\de_\infty([\varphi], [\psi]),
    \end{equation}
    and 
    \begin{equation}\label{equ:MAcont2}
    \de_\infty([\f], [\p])\le F\left(\tfrac{3n-2}{n-1}\de_\infty(\MA(\f),\MA(\p))\right),
    \end{equation}
    for the function $F$ as in Proposition~\ref{prop:metrictop}.
\end{prop}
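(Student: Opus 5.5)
This is a restriction of Theorem~\ref{thm:metricequivnum} to $\bar\cE(X)\subset\BBPF_{n-1}(X)$. The plan is to check four things: that everything is well defined, that the image lies in $\cC_\infty(X)$, that the map is injective, and that the two estimates transfer.

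First, for $\f\in\cE(X)$ we have $[\f]=\ddc\f\in\BBPF_{n-1}(X)$. Indeed, the class is nef, being a decreasing limit of nef Cartier classes, and $\vol(\ddc\f)=(\f^n)>0$. The agreement of the pairings in Proposition~\ref{prop:intnum} gives $\MA(\f)=(\ddc\f)^{n-1}$ in the sense of Example~\ref{exam:Mink}. Both $\MA$ and $\de_\infty$ depend only on numerical classes (Corollary~\ref{cor:numequiv}), so the map $[\f]\mapsto\MA(\f)$ is the restriction of $\a\mapsto\a^{n-1}$ to $\bar\cE(X)$. Proposition~\ref{prop:pshlegdual}(iii) shows that $\MA(\f)\in\cC_\infty(X)$, so the map indeed lands in $\cC_\infty(X)$.

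Next, since $\de_\infty$ on $\cC_\infty(X)$ is the restriction of $\de_\infty$ on $\BBPF_1(X)$, and $\de_\infty$ on $\bar\cE(X)$ is the restriction of $\de_\infty$ on $\BBPF_{n-1}(X)$, the estimates \eqref{equ:MAcont1} and \eqref{equ:MAcont2} are exactly \eqref{equ:Mincont1} and \eqref{equ:Mincont2} applied to $\a=\ddc\f$ and $\b=\ddc\p$.

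Injectivity follows from \eqref{equ:MAcont2} because $F(0)=0$; alternatively, use Corollary~\ref{cor:numequiv}. Lipschitz continuity comes from \eqref{equ:MAcont1}. Continuity of the inverse on the image, with local H\"older control, comes from \eqref{equ:MAcont2} and the local $C^{0,1/n}$ regularity of $F$ from Proposition~\ref{prop:metrictop}. Together these give a homeomorphism onto the image.

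The only mildly delicate point is the identification of $\MA(\f)$ with $(\ddc\f)^{n-1}$ as Weil b-curve classes. This amounts to checking that the pairing with every nef Cartier class agrees, which is the content of Proposition~\ref{prop:intnum} via continuity along decreasing nets. Everything else is formal.
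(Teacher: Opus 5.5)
Your proposal is correct and matches the paper, which obtains this proposition directly as the restriction of Theorem~\ref{thm:metricequivnum} to $\bar\cE(X)\subset\BBPF_{n-1}(X)$. Your extra checks make explicit what the paper leaves implicit: the identification $\MA(\f)=(\ddc\f)^{n-1}$ via Proposition~\ref{prop:intnum}, and the fact that the image lies in $\cC_\infty(X)$ via Proposition~\ref{prop:pshlegdual}(iii).
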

    
In~\S\ref{sec:MAeqn} we will show that the Monge--Amp\`ere operator has dense image, and in fact is surjective when $X$ is smooth.

\subsection{Uniform differentiability}
Through Legendre duality, for $[\varphi]\in \bar\cE(X)\cup\{0\}$ and $\alpha\in \CNum(X)$,
we define the \emph{volume at infinity} as
\[
\vol_\infty([\varphi]+\alpha)^{1/n}\=\inf_{\mu\in\cC_\infty(X)}\frac{((\varphi, \mu)+(\alpha, \mu))^+}{\vol^\vee_\infty(\mu)},
\]
where $(\cdot)^+\=\max\{\cdot, 0\}$.
\begin{lem}\label{lem:volinfty}
    The volume $\vol_\infty$ satisfies the following properties:
    \begin{itemize}
        \item [(i)] for $\alpha\in \CNum(X)$, $\vol_\infty(\alpha)\ge \vol(\alpha)$;
        \item [(ii)] for fixed $\alpha\in\CNum(X)$, $\varphi\mapsto \vol_\infty([\varphi]+\alpha)$ is usc, and hence continuous along decreasing nets in $\cE(X)$.
    \end{itemize}
\end{lem}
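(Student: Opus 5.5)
For (i), we compare the definition of $\vol_\infty(\alpha)$ with the dual description of $\vol(\alpha)$ given by the Khovanskii--Teissier inequality. If $\vol(\alpha)=0$ there is nothing to prove, since $\vol_\infty\ge0$ by construction. So we may assume $\vol(\alpha)>0$. Here $\vol(\alpha)$ should be understood as the volume of the class, which for $\alpha$ nef equals $(\alpha^n)$. For the nef case, we fix $\mu\in\cC_\infty(X)$. The plan is to show
\[
(\alpha,\mu)\ge \vol(\alpha)^{1/n}\vol^\vee_\infty(\mu).
\]
Since $\mu\in\cC_\infty(X)$, we have $\vol^\vee_\infty(\mu)=\vol^\vee(\mu)$. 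By Definition~\ref{def:volhat}, $\vol^\vee(\mu)\le (\alpha,\mu)/\vol(\alpha)^{1/n}$ for every big and nef $\alpha\in\CNum(X)$. Hence
\[
\frac{((\alpha,\mu))^+}{\vol^\vee_\infty(\mu)}\ge \frac{(\alpha,\mu)}{\vol^\vee(\mu)}\ge \vol(\alpha)^{1/n},
\]
and taking the infimum over $\mu$ gives (i). This is precisely where the defining property $\vol^\vee_\infty=\vol^\vee$ of $\cC_\infty(X)$ is used: with $\vol^\vee_\infty$ alone in the denominator, the inequality could fail.

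For a general (non-nef) $\alpha\in\CNum(X)$, we would interpret $\vol(\alpha)$ as the usual volume of the Cartier b-divisor class. If $\vol(\alpha)>0$, we use the Fujita-type approximation on a smooth model where $\alpha$ is determined. Concretely, we choose nef classes $\beta\le\alpha$, after a birational modification, with $\vol(\beta)$ arbitrarily close to $\vol(\alpha)$. Since $\mu$ is BPF, Proposition~\ref{prop:bpfposlin} gives $(\alpha-\beta,\mu)\ge0$ because $\alpha-\beta$ is pseudoeffective. This reduces the general case to the nef case already treated. The main step to check is exactly this reduction, namely that $\alpha-\beta$ is a pseudoeffective Cartier b-class. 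We expect this to hold on the model where the Fujita approximation is realized.

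For (ii), we note that $\vol_\infty([\f]+\alpha)^{1/n}$ is an infimum over $\mu\in\cC_\infty(X)$ of the functions
\[
\f\mapsto \frac{((\f,\mu)+(\alpha,\mu))^+}{\vol^\vee_\infty(\mu)}.
\]
Each such function is usc in $\f$. Indeed, $\f\mapsto(\f,\mu)$ is usc on $\PSH(X)$ by Theorem~\ref{thm:pshint} applied with $p=1$ and $\gamma=\mu\in\BPF_1(X)$. Adding the constant $(\alpha,\mu)$ and taking the positive part preserves upper semicontinuity, since $x\mapsto x^+$ is continuous and increasing. An infimum of usc functions is usc, so $\f\mapsto\vol_\infty([\f]+\alpha)$ is usc.

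It remains to show continuity along decreasing nets. Every term above is increasing in $\f$, by the monotonicity part of Theorem~\ref{thm:pshint}, so $\vol_\infty([\f]+\alpha)$ is increasing in $\f$. Now let $\f_i$ decrease to $\f$. Monotonicity gives $\lim_i\vol_\infty([\f_i]+\alpha)\ge\vol_\infty([\f]+\alpha)$. On the other hand, $\f_i\to\f$ pointwise on $X^\div$, so the net converges in the topology of $\PSH(X)$. Upper semicontinuity then gives the reverse inequality. We also note that the expression depends only on $[\f]$, by Proposition~\ref{prop:intnum}, so the map is well defined on classes.
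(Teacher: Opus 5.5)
Your proof is correct and follows the paper's argument. For (i) you use $\vol^\vee_\infty(\mu)=\vol^\vee(\mu)$ on $\cC_\infty(X)$ together with the definition of $\vol^\vee$ on big and nef Cartier classes, then pass to general $\alpha$ by Fujita approximation, where the effective remainder $\alpha-\beta$ pairs nonnegatively with the BPF class $\mu$. For (ii) you take an infimum of the usc, increasing functions $\varphi\mapsto((\varphi,\mu)+(\alpha,\mu))^+$ supplied by Theorem~\ref{thm:pshint}, and you spell out the Fujita reduction and the decreasing-net continuity argument that the paper leaves implicit.
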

\begin{proof}
    Assume first that $\alpha$ is a big and nef Cartier b-divisor class. By definition of $\vol^\vee$ we have
    \[
    \vol(\alpha)^{1/n}\le \inf_{\mu\in \cC_\infty(X)}\frac{(\alpha, \mu)}{\vol^\vee(\mu)}=\inf_{\mu\in \cC_\infty(X)}\frac{(\a, \mu)}{\vol^\vee_\infty(\mu)}=\vol_\infty(\alpha)^{1/n},
    \]
    and thus 
    \[
    \vol_\infty(\alpha)\ge \vol(\alpha),
    \]
    for any $\a\in \CNum(X)$, by Fujita approximation. This proves~(i).
    For~(ii), note that $\varphi\mapsto (\varphi, \mu)$ is usc by Theorem~\ref{thm:pshint}, and hence so is $\varphi\mapsto((\varphi, \mu)+(\alpha, \mu))^+$. Taking infimum preserves upper-semicontinuity, and~(ii) follows since $\varphi\mapsto\vol_\infty([\varphi]+\alpha)$ is also increasing.
\end{proof}

With this at hand, we are ready to state the following uniform differentiability of the above extended volume. 
\begin{thm}\label{thm:unifderiv}
    Let $\rho\in \cH(X)$, and $\alpha\in\CNum(X)$. For any $\varphi\in \cE(X)$ with $[\varphi]\le[\rho]$, we have 
    \[
    \vol_\infty([\varphi]+t\alpha)\ge\vol(\varphi)+tn(\alpha, \varphi^{n-1})+O(t^2)
    \]
    for any $0\le t\le1$ 
    and the constant in $O(t^2)$ only depends on $\rho$ and $\alpha$.
\end{thm}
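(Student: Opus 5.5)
The plan is to reduce to the case $\f\in\cH(X)$, where everything is determined on a single model. There I would use the algebraic Morse inequality together with part~(i) of Lemma~\ref{lem:volinfty}. The general case then follows by decreasing approximation, using the upper semicontinuity in Lemma~\ref{lem:volinfty}(ii).

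\emph{Step 1 (decomposition).} Write $\a=\b_1-\b_2$ with $\b_1,\b_2\in\CNum(X)$ nef Cartier classes, fixed once and for all in terms of $\a$ only. For example, take $\b_2=C\,\ddc\rho$ plus a nef Cartier class making $\b_1$ nef, using Corollary~\ref{cor:diffsa}. All constants below will be bounded by mixed intersection numbers of $\b_1,\b_2,\ddc\rho$, hence depend only on $\rho$ and $\a$.

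\emph{Step 2 (the case $\f\in\cH(X)$ with $[\f]\le 2[\rho]$).} Pick a smooth projective model $Y$ on which $\ddc\f$, $\ddc\rho$, $\b_1$ and $\b_2$ are all determined. Set $A_t=\ddc\f+t\b_1$, which is nef, and $B_t=t\b_2$, also nef. Siegel's/Siu's algebraic Morse inequality $\vol(A-B)\ge(A^n)-n(A^{n-1}\cdot B)$ for nef $A,B$ on $Y$ gives
\[
\vol([\f]+t\a)\ge (A_t^n)-nt(A_t^{n-1}\cdot\b_2).
\]
Expanding both terms in $t$, the terms of order $\ge2$ are mixed intersections of $\ddc\f$ with $\b_1,\b_2$. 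Each is bounded above by the corresponding intersection with $2\ddc\rho$ in place of $\ddc\f$, by monotonicity of intersection numbers of nef classes and $[\f]\le2[\rho]$. We therefore get $\vol([\f]+t\a)\ge\vol(\f)+tn(\a,\f^{n-1})-Ct^2$ for $0\le t\le1$, with $C=C(\rho,\a)$. Lemma~\ref{lem:volinfty}(i) then gives $\vol_\infty([\f]+t\a)\ge\vol([\f]+t\a)$ (on $\CNum(X)$, where $[\f]+t\a$ is Cartier), so the claim holds for such $\f$.

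\emph{Step 3 (general $\f\in\cE(X)$).} Take a decreasing net $(\f_j)$ in $\cH(X)$ converging to $\f$. By Example~\ref{exam:decstrong}, $[\f_j]\to[\f]$ strongly, so $[\f_j]\le e^{\d_j}[\f]\le 2[\rho]$ for $j\gg0$, and Step~2 applies to $\f_j$ with a uniform constant $C$. Letting $j\to\infty$, the left-hand side satisfies $\vol_\infty([\f]+t\a)\ge\limsup_j\vol_\infty([\f_j]+t\a)$ by Lemma~\ref{lem:volinfty}(ii). On the right-hand side, $\vol(\f_j)\to\vol(\f)$ and $(\a,\f_j^{n-1})\to(\a,\f^{n-1})$ by continuity of the pairing along decreasing nets (Theorem~\ref{thm:pshint}, Proposition~\ref{prop:intnum}, applied to $\b_1$ and $\b_2$ separately). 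This yields the inequality for $\f$ with the same $C$.

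\emph{Main obstacle.} The delicate step is Step~2. One has to make sure that the Morse inequality, which holds on the single model $Y$, gives a lower bound for the volume used in Lemma~\ref{lem:volinfty}(i); this is fine because volumes of Cartier b-classes are computed on any determining model. One must also check that the $O(t^2)$ error depends only on $\rho$ and $\a$ and not on $\f$. For the latter, the key is to bound every mixed term $(\ddc\f^{k}\cdot\b_1^{l}\cdot\b_2^{m})$ by the same expression with $2\ddc\rho$ in place of $\ddc\f$. This needs care, since $\ddc\f$ need not be Cartier-comparable termwise; I would first try monotonicity for nef classes, applied to the pseudoeffective difference $2\ddc\rho-\ddc\f$ with the remaining factors nef, one factor at a time.
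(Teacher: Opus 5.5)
Your proposal is correct and follows the paper's proof. You reduce to $\f\in\cH(X)$, where $\vol_\infty([\f]+t\a)\ge\vol([\f]+t\a)$ by Lemma~\ref{lem:volinfty}(i) and the second-order bound comes from the Morse-inequality estimate of~\cite[Corollary~3.4]{BFJ09}; the paper simply cites this with an auxiliary class $\omega\ge2[\rho]$ such that $\omega\pm\a$ is nef, which your Step~2 reproves with $\b_1,\b_2$ in place of $\omega\pm\a$. You then pass to general $\f\in\cE(X)$ by decreasing approximation, using Example~\ref{exam:decstrong} and Lemma~\ref{lem:volinfty}(ii).

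One minor point: the decomposition $\a=\b_1-\b_2$ into nef classes comes from writing a class on a smooth model as a difference of ample classes. It does not come from Corollary~\ref{cor:diffsa}, which only concerns divisors at infinity.
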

\begin{proof}
    First assume $\varphi\in \cH(X)$. In this case, by the previous discussion we have $\vol_\infty([\varphi]+t\a)\ge \vol([\varphi]+t\a)$ and the result follows from~\cite[Corollary~3.4]{BFJ09}. More precisely, pick a sufficiently ample class $\omega\in \CNum(X)$ such that $2[\rho]\le \omega$ and $\omega\pm\alpha$ is nef. Then 
    \[
    \vol([\f]+t\alpha)\ge \vol(\f)+tn(\alpha, \f^{n-1})+O(t^2),
    \]
    with the constant in $O(t^2)$ depends only on $(\omega^n)$, and hence on $\rho$ and $\alpha$.

    In general, pick a decreasing net $(\varphi_i)_i$ in $\cH(X)$ converging to $\varphi$. By Theorem~\ref{thm:pshint}, and Proposition~\ref{prop:MAop}, we have $\vol(\varphi_i)\to \vol(\varphi), \MA(\varphi_i)\to \MA(\varphi)$. Further, Lemma~\ref{lem:volinfty}~(ii) yields $\vol_\infty([\varphi_i]+t\a)\to \vol_\infty([\varphi]+t\a)$, and for large $i$, we have $[\varphi_i]\le2[\rho]$, see Example~\ref{exam:decstrong}. We are done by the previous case.
\end{proof}
\subsection{Minimizing nets} 
In order to prove that the Monge--Amp\`ere operator has dense image, we introduce the following concept.
\begin{defi}
    A net $(\varphi_i)$ in $\cE(X)$ is called a \emph{minimizing net} for $\mu\in \cC_\infty(X)$ if 
    \[\frac{(\varphi_i, \mu)}{\vol(\varphi_i)^{1/n}}\to \vol^\vee_\infty(\mu)\]
    and $\vol(\varphi_i)^{\frac{n-1}{n}}\to \vol^\vee_\infty(\mu)$.
\end{defi}
Such minimizing nets always exist, since $\varphi\mapsto\frac{(\varphi, \mu)}{\vol(\varphi)^{1/n}}$ is homogeneous of degree zero.
We have the following variational characterization of the Monge--Amp\`ere operator, see also~\cite[Theorem~C]{DangFavre}.

\begin{cor}\label{cor:MAsol}
    Let $\mu\in\cC_\infty(X)$ and $(\varphi_i)$ be a minimizing net for $\mu$ in $\cE(X)$.
    Then
    \[
    \MA(\varphi_i)\to \mu
    \]
    weakly as $i\to \infty$.
\end{cor}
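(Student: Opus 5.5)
We follow the variational argument of \cite{BBGZ,BJpp}, with Theorem~\ref{thm:unifderiv} playing the role of differentiability of the energy. The goal is to show $(\alpha,\MA(\f_i))\to(\alpha,\mu)$ for every $\alpha\in\CNum(X)$. By linearity it suffices to treat nef Cartier classes $\alpha$, and by Corollary~\ref{cor:diffsa} we may also assume $\alpha$ arbitrary but bounded by a fixed ample class. Since the normalization is homogeneous of degree zero in $\f_i$ while $\vol(\f_i)^{\frac{n-1}{n}}\to\vol^\vee_\infty(\mu)$, the volumes $\vol(\f_i)$ stay bounded above and below. The first step is a uniform upper bound $[\f_i]\le[\rho]$ for a fixed $\rho\in\cH(X)$. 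To get it, fix $\rho$ and use Corollary~\ref{cor:siu}: $\ddc\f_i\le n\frac{(\f_i,\rho^{n-1})}{(\rho^n)}\ddc\rho$. The pairing $(\f_i,\rho^{n-1})$ is controlled by $(\f_i,\mu)$ through the first inequality of Lemma~\ref{lem:curvbig}, which gives $\mu\ge c\,(\ddc\rho)^{n-1}$ with $c>0$. Finally, $(\f_i,\mu)$ is bounded because the net is minimizing. After rescaling $\rho$, we may therefore apply Theorem~\ref{thm:unifderiv} uniformly in $i$.

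Next comes the key perturbation inequality. Fix $\alpha$ and $0<t\le1$. By definition of $\vol_\infty$, with $\mu$ itself as a competitor in the infimum (note $\mu\in\cC_\infty(X)$),
\[
\vol_\infty([\f_i]+t\alpha)^{1/n}\le\frac{(\f_i,\mu)+t(\alpha,\mu)}{\vol^\vee_\infty(\mu)}.
\]
Theorem~\ref{thm:unifderiv} bounds the left side from below by $\bigl(\vol(\f_i)+tn(\alpha,\f_i^{n-1})-Ct^2\bigr)^{1/n}$. Expanding the $n$-th root to first order gives
\[
\vol(\f_i)^{1/n}+t\,\vol(\f_i)^{\frac1n-1}(\alpha,\MA(\f_i))-C't^2\le\frac{(\f_i,\mu)}{\vol^\vee_\infty(\mu)}+t\frac{(\alpha,\mu)}{\vol^\vee_\infty(\mu)}.
\]
The constant $C'$ is uniform, since $\vol(\f_i)$ is bounded below and $|(\alpha,\MA(\f_i))|$ is bounded, using $[\f_i]\le[\rho]$. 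Because the net is minimizing, $\frac{(\f_i,\mu)}{\vol^\vee_\infty(\mu)}-\vol(\f_i)^{1/n}=:\e_i\to0$ and $\vol(\f_i)^{\frac{n-1}{n}}\to\vol^\vee_\infty(\mu)$. Dividing by $t$ yields
\[
\limsup_i(\alpha,\MA(\f_i))\le(\alpha,\mu)+C''t+\limsup_i\e_i/t.
\]
Letting $i\to\infty$ first and then $t\to0$ gives $\limsup_i(\alpha,\MA(\f_i))\le(\alpha,\mu)$. Replacing $\alpha$ by $-\alpha$, which is allowed since Theorem~\ref{thm:unifderiv} holds for any $\alpha\in\CNum(X)$, gives the reverse inequality. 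Hence $(\alpha,\MA(\f_i))\to(\alpha,\mu)$ for all $\alpha$, which is weak convergence of curve classes.

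The main obstacle is the uniform control in the first step. Theorem~\ref{thm:unifderiv} needs $[\f_i]\le[\rho]$ for a fixed $\rho$, and the expansion of the $n$-th root needs two-sided bounds on $\vol(\f_i)$ and on $(\alpha,\MA(\f_i))$. The one point needing care is extracting a bound on $(\f_i,\rho^{n-1})$ from the bound on $(\f_i,\mu)$. This uses bigness of $\mu$ via Lemma~\ref{lem:curvbig} together with the homogeneity normalization of the minimizing net.
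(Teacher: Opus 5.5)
Your argument is correct and is essentially the paper's proof. It has the same structure: a uniform bound $[\f_i]\le[\rho]$ via Corollary~\ref{cor:siu} and Lemma~\ref{lem:curvbig}, the lower bound from Theorem~\ref{thm:unifderiv}, the upper bound using $\mu$ as a competitor in $\vol_\infty$, then $i\to\infty$, $t\to 0$ and $\alpha\mapsto-\alpha$. The differences are cosmetic (the paper normalizes $(\f_i,\rho^{n-1})=1$ and expands $n$-th powers rather than $n$-th roots), but you should restrict to $t$ small enough that $(\f_i,\mu)+t(\alpha,\mu)>0$ eventually, since $\vol_\infty$ is defined with a positive part.
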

\begin{proof}
    We will prove
    \[
    \frac{\MA(\varphi_i)}{\vol(\varphi_i)^{\frac{n-1}{n}}}\to \frac{\mu}{\vol^\vee_\infty(\mu)},
    \]
    from which the result follows since $\vol(\varphi_i)^{\frac{n-1}{n}}\to \vol^\vee_\infty(\mu)$.
    Let $\alpha\in\CNum(X)$. Since both sides are homogeneous of degree zero, we may
    pick $\rho\in \cH(X)$ and up to scaling assume $(\varphi_i, \rho^{n-1})=1$ for all $\varphi_i$.
    First note that by Corollary~\ref{cor:siu}, we have $\frac{(\f_i^n)}{n(\rho, \varphi_i^{n-1})}[\rho]\le [\varphi_i]\le \frac{n}{(\rho^n)}[\rho]$. Moreover, Lemma~\ref{lem:curvbig} implies that $(\varphi_i, \mu)\ge c(\varphi_i, \rho^{n-1})=c$ for some $c>0$, and
    therefore $\vol(\varphi_i)$ is uniformly bounded below, which in turn implies that $\jj_{\varphi_i}(\rho)\approx \jj_\rho(\varphi_i)$ is uniformly bounded. In particular, we may assume there are constants $c_1, c_2>0$ such that $c_1[\rho]\le [\varphi_i]\le c_2[\rho]$.   
    Thus we can choose $0< t\ll1$ such that $[\varphi_i]+t\alpha\ge 0$ for all $i$, and apply Theorem~\ref{thm:unifderiv} to get
    \[
    n(\alpha, \varphi_i^{n-1})+O(t)\le \frac 1t\left(\vol_\infty([\varphi_i]+t\alpha)-\vol(\varphi_i)\right),
    \]
    where the constant in $O(t)$ only depends on $\rho$ and $\alpha$. On the other hand, set $\delta_i\=\frac{(\varphi_i, \mu)}{\vol(\varphi_i)^{1/n}\vol^\vee_\infty(\mu)}$. By assumption, we have $1\le \delta_i\to 1$ as $i\to \infty$. We also have
    \[
    \vol_\infty([\varphi_i]+t\alpha)\le \frac{((\varphi_i, \mu)+t(\alpha, \mu))^n}{\vol^\vee_\infty(\mu)^n}
    \le \delta_i^n\vol(\varphi_i)+\frac{nt(\alpha,\mu)(\varphi_i, \mu)^{n-1}}{\vol^\vee_\infty(\mu)^n}+O(t^2),
    \]
    where $O(t^2)$ is controlled by $(\alpha, \mu)^k(\varphi_i, \mu)^{n-k}$ for $k=2,\dots, n$, which is in turn eventually bounded by a constant only depending on $\mu$, $\rho$, and $\alpha$ via Siu's inequality. Thus for each $i$, putting the above two inequalities together we have
    \[
    \frac{ n(\alpha, \varphi_i^{n-1})}{\vol(\varphi_i)^{\frac{n-1}{n}}}+O(t)\le t^{-1}(\delta_i^n-1)\vol(\varphi_i)^{1/n}+n\delta_i^{n-1}\frac{(\alpha, \mu)}{\vol^\vee_\infty(\mu)}+O(t).
    \]
    Letting $i\to \infty$ and then $t\to 0$ we get $\limsup_i\frac{(\alpha, \MA(\varphi_i))}{\vol(\varphi_i)^{\frac{n-1}{n}}}\le \frac{(\alpha, \mu)}{\vol^\vee_\infty(\mu)}.$ Replacing $\alpha$ by $-\a$ we can conclude.
\end{proof}

\begin{cor}\label{cor:MAsol2}
    Let $\mu\in \cC_\infty(X)$ and suppose $\varphi\in \cE(X)$ attains the infimum defining $\vol^\vee_\infty(\mu)$. Then $\MA(\varphi)=c\mu$ for $c=\frac{\vol(\varphi)^{\frac{n-1}{n}}}{\vol^\vee_\infty(\mu)}$.
\end{cor}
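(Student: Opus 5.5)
The plan is to reduce to Corollary~\ref{cor:MAsol} by feeding it the constant net $\varphi_i:=c'\varphi$, with $c'>0$ chosen to normalize the volume. First, since $\varphi$ attains the infimum, we have
\[
\frac{(\varphi,\mu)}{\vol(\varphi)^{1/n}}=\vol^\vee_\infty(\mu).
\]
This ratio is homogeneous of degree zero in $\varphi$, so every positive multiple $c'\varphi\in\cE(X)$ still attains the infimum. Choose $c'$ so that $\vol(c'\varphi)^{\frac{n-1}{n}}=\vol^\vee_\infty(\mu)$, that is,
\[
c'^{\,n-1}\vol(\varphi)^{\frac{n-1}{n}}=\vol^\vee_\infty(\mu).
\]
This is possible because $\vol(\varphi)>0$ and, since $\mu\in\cC_\infty(X)$, also $\vol^\vee_\infty(\mu)=\vol^\vee(\mu)>0$. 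In particular $c'^{\,n-1}=1/c$ with $c=\vol(\varphi)^{\frac{n-1}{n}}/\vol^\vee_\infty(\mu)$ as in the statement.

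The constant net $\varphi_i:=c'\varphi$ then satisfies both conditions in the definition of a minimizing net for $\mu$ exactly, not merely in the limit. Corollary~\ref{cor:MAsol} gives $\MA(c'\varphi)\to\mu$ weakly. A constant net converges weakly only to its own value, and the weak topology on $\Curv(X)$ is Hausdorff, being an inverse limit of finite-dimensional spaces. Hence $\MA(c'\varphi)=\mu$.

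Finally, multilinearity of the Monge--Amp\`ere operator (Proposition~\ref{prop:MAop}) gives $\MA(c'\varphi)=c'^{\,n-1}\MA(\varphi)$. Combining,
\[
\MA(\varphi)=c'^{-(n-1)}\mu=c\,\mu,
\]
as claimed. There is no real obstacle here; the only points to check are the scaling bookkeeping and the fact that $\vol^\vee_\infty(\mu)>0$, which holds because $\mu\in\cC_\infty(X)$.
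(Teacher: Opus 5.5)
Your proposal is correct and matches the paper's intended argument: the paper states this corollary without proof, as an immediate consequence of Corollary~\ref{cor:MAsol}. Applying that corollary to a constant (suitably rescaled) minimizing net, and then using homogeneity and the Hausdorff property of the weak topology, is exactly the intended deduction, and your scaling bookkeeping giving $c'^{\,n-1}=1/c$ checks out.
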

As a direct consequence, we have
\begin{cor}\label{cor:curvinj}
    The map $\cC_\infty(X)\to C^0(X^\na_\rho)^\vee$ is injective, for any $\rho\in \cH(X)$.
\end{cor}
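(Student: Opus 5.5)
The plan is to prove injectivity by recovering $\mu$ from its induced measure via the variational characterization in Corollary~\ref{cor:MAsol}. Suppose $\mu,\nu\in\cC_\infty(X)$ induce the same positive measure on $X^\na_\rho$. The first point is that every quantity in the definition of $\vol^\vee_\infty$ is seen by this measure. Indeed, for $\f\in\cH(X)\subset\PL(X)$ we have $(\f,\mu)=\int_{X^\na_\rho}\f\,\mu_\rho$, since the measure $\mu_\rho$ is defined precisely by testing against PL functions. Hence $(\f,\mu)=(\f,\nu)$ for all $\f\in\cH(X)$. It follows that $\vol^\vee_\infty(\mu)=\vol^\vee_\infty(\nu)$, because that infimum runs only over $\cH(X)$, and that any net in $\cH(X)$ is minimizing for $\mu$ exactly when it is minimizing for $\nu$.

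The key steps are as follows. First, pick a minimizing net $(\f_i)$ in $\cH(X)$ for $\mu$. Such a net exists: $\vol^\vee_\infty(\mu)$ is an infimum over $\cH(X)$, and since the ratio is homogeneous of degree zero we may rescale so that $\vol(\f_i)^{\frac{n-1}{n}}=\vol^\vee_\infty(\mu)$. By the previous paragraph, $(\f_i)$ is then also minimizing for $\nu$. Second, apply Corollary~\ref{cor:MAsol} to both classes: $\MA(\f_i)\to\mu$ and $\MA(\f_i)\to\nu$ weakly in $\Curv(X)$. Third, the weak topology on $\Curv(X)$ is the inverse limit topology on finite-dimensional spaces, so it is Hausdorff and weak limits are unique. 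Therefore $\mu=\nu$.

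There is no real obstacle; the only thing to check is that Corollary~\ref{cor:MAsol} gives convergence in $\Curv(X)$ rather than merely as measures. It does: its proof tests against arbitrary $\alpha\in\CNum(X)$, which is exactly weak convergence of b-curve classes. One also needs that $\cH(X)\subset\cE(X)$, so that the net is admissible, and this holds because Fubini--Study functions have positive volume.
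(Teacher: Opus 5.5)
Your proposal is correct and follows the paper's proof essentially verbatim: equality of the measures gives $(\f,\mu)=(\f,\nu)$ for all $\f\in\cH(X)$, hence $\vol^\vee_\infty(\mu)=\vol^\vee_\infty(\nu)$ and common minimizing nets, and Corollary~\ref{cor:MAsol} together with uniqueness of weak limits yields $\mu=\nu$. One cosmetic point: rescaling by real constants takes the net out of $\cH(X)$ and into $\cE(X)$, which is harmless because Corollary~\ref{cor:MAsol} is stated for minimizing nets in $\cE(X)$ and the pairings with $\mu$ and $\nu$ still agree by linearity.
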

\begin{proof}
    If $\mu=\mu'$ as measures on $C^0(X^\na_\rho)$, then $(\varphi, \mu)=(\varphi, \mu')$ for all $\varphi\in \cH(X)$ and in particular $\vol^\vee_\infty(\mu)=\vol^\vee_\infty(\mu')$. Thus any minimizing net $(\varphi_i)$ of $\mu$ is also a minimizing net of $\mu'$ and it follows that $\mu=\mu'$ by Corollary~\ref{cor:MAsol}.
\end{proof}

\begin{lem}\label{lem:minnetbdd}
    Let $(\varphi_i)$ be a minimizing net in $\cE(X)$ for $\mu\in \cC_\infty(X)$, and let $\rho\in \cH(X)$. Then $(\varphi_i, \rho^{n-1})$ is eventually uniformly bounded.
\end{lem}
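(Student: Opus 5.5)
The plan is to exploit the fact that, unlike the degree-zero ratio $(\f,\mu)/\vol(\f)^{1/n}$, the definition of a minimizing net fixes the scale through the condition $\vol(\f_i)^{\frac{n-1}{n}}\to\vol^\vee_\infty(\mu)$. The quantity $(\f_i,\rho^{n-1})$ will be controlled by $(\f_i,\mu)$, and $(\f_i,\mu)$ in turn is controlled by the two limits in the definition of a minimizing net.

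\emph{Step 1: $(\f_i,\mu)$ is eventually bounded.} We have
\[
(\f_i,\mu)=\frac{(\f_i,\mu)}{\vol(\f_i)^{1/n}}\cdot\vol(\f_i)^{1/n}.
\]
The first factor tends to $\vol^\vee_\infty(\mu)$. The second factor tends to $\vol^\vee_\infty(\mu)^{\frac1{n-1}}$. Hence $(\f_i,\mu)\to\vol^\vee_\infty(\mu)^{\frac{n}{n-1}}<\infty$, so $(\f_i,\mu)$ is bounded for $i$ large.

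\emph{Step 2: a lower bound for $\mu$ in terms of $\rho$.} Since $\mu\in\cC_\infty(X)\subset\BBPF_1(X)$, we have $\vol^\vee(\mu)=\vol^\vee_\infty(\mu)>0$. Apply the first inequality of Lemma~\ref{lem:curvbig} with $\a=\ddc\rho$, a big and nef Cartier class. This gives
\[
\mu\ge c\,(\ddc\rho)^{n-1},\qquad c\=\frac1e\frac{\vol^\vee(\mu)^n}{(\rho,\mu)^{n-1}}>0,
\]
where $c$ depends only on $\mu$ and $\rho$. In particular $\mu-c\,\MA(\rho)$ is pseudoeffective.

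\emph{Step 3: transfer the inequality to psh functions.} For $\p\in\cH(X)$, the class $\ddc\p$ is a nef Cartier b-divisor class. By Proposition~\ref{prop:bpfposlin}, it pairs nonnegatively with pseudoeffective curve classes, so $(\p,\mu)\ge c(\p,\rho^{n-1})$. For a general $\f_i\in\cE(X)$, write $\f_i$ as the limit of a decreasing net in $\cH(X)$. Both sides are continuous along decreasing nets: the left side by Theorem~\ref{thm:pshint} with $\gamma=\mu$, and the right side by Theorem~\ref{thm:pshint} with $\gamma=\MA(\rho)$, together with Proposition~\ref{prop:intnum}. Passing to the limit yields $(\f_i,\mu)\ge c(\f_i,\rho^{n-1})$.

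Combining this with Step 1 gives $(\f_i,\rho^{n-1})\le c^{-1}(\f_i,\mu)$, which is eventually uniformly bounded. The only point needing care is Step 3, namely that the pairing of a psh function with a BPF curve class is monotone with respect to the psef order on curve classes. Since that pairing is defined as a limit over Fubini--Study approximants, this reduces to the Cartier case as described, so I do not expect a genuine obstacle.
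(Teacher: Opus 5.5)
Your proposal is correct and follows the paper's proof: Lemma~\ref{lem:curvbig} gives $\mu\ge c\,\MA(\rho)$, hence $(\f_i,\rho^{n-1})\le c^{-1}(\f_i,\mu)$, and the two minimizing-net conditions force $(\f_i,\mu)\to\vol^\vee_\infty(\mu)^{\frac{n}{n-1}}$. Your Step~3 just makes explicit the passage from Fubini--Study functions to general psh functions, which the paper leaves implicit.
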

\begin{proof}
    Lemma~\ref{lem:curvbig} yields $c>0$ such that $\mu\ge c\MA(\rho)$. This gives $0\le(\f_i,\rho^{n-1})\le c^{-1}(\f_i,\mu)$.
    As $(\f_i)_i$ is a minimizing net for $\mu$, we also have $(\f_i,\mu)\to\vol^\vee_\infty(\mu)^{\frac{n}{n-1}}$. The result follows.
\end{proof}
\begin{lem}\label{lem:MAsoluniq}
    Let $(\varphi_i), (\psi_i)$ be two minimizing nets in $\cE(X)$ for $\mu\in \cC_\infty(X)$. Then we have $\lim_{i,j}\bar\ii(\varphi_i, \psi_j)=0$.
\end{lem}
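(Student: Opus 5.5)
The plan is to test the functional $\f\mapsto(\f,\mu)/\vol(\f)^{1/n}$ on the sum $\chi_{ij}\=\f_i+\p_j$, and to use that its value is squeezed between $\vol^\vee_\infty(\mu)$ and something tending to $\vol^\vee_\infty(\mu)$. By Theorem~\ref{thm:pshprop}, $\chi_{ij}\in\PSH(X)$. By monotonicity $\vol(\chi_{ij})\ge\vol(\f_i)>0$, so $\chi_{ij}\in\cE(X)$.

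\textbf{Step 1: asymptotics along minimizing nets.} Set $V\=\vol^\vee_\infty(\mu)^{\frac{n}{n-1}}>0$. From the two defining conditions of a minimizing net we get
\[
\vol(\f_i)\to V,\quad \vol(\p_j)\to V,\quad (\f_i,\mu)\to V,\quad (\p_j,\mu)\to V.
\]
Hence $(\chi_{ij},\mu)\to 2V$ by linearity of the pairing in Theorem~\ref{thm:pshint}.

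\textbf{Step 2: upper bound on the volume of the sum.} By Proposition~\ref{prop:pshlegdual}~(i), the infimum defining $\vol^\vee_\infty(\mu)$ can be taken over $\cE(X)$. This gives
\[
\vol(\chi_{ij})^{1/n}\le\frac{(\chi_{ij},\mu)}{\vol^\vee_\infty(\mu)},
\]
and therefore $\limsup_{i,j}\vol(\chi_{ij})\le 2^nV$.

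\textbf{Step 3: lower bound on each mixed term.} Expand by multilinearity:
\[
\vol(\chi_{ij})=\sum_{k=0}^n\binom nk(\f_i^k,\p_j^{n-k}).
\]
Proposition~\ref{prop:intineq}~(i) (the Khovanskii--Teissier inequality) gives, for each $k$,
\[
(\f_i^k,\p_j^{n-k})\ge\vol(\f_i)^{k/n}\vol(\p_j)^{(n-k)/n},
\]
and the right-hand side tends to $V$. So every term has $\liminf\ge V$, while the weighted sum, with weights summing to $2^n$, has $\limsup\le 2^nV$. This forces $(\f_i^k,\p_j^{n-k})\to V$ for every $k$. In particular this holds for $k=1$ and $k=n-1$, that is, $(\f_i,\p_j^{n-1})\to V$ and $(\p_j,\f_i^{n-1})\to V$.

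\textbf{Step 4: conclusion.} Substituting these limits into the formula defining $\bar\ii$ gives
\[
\bar\ii(\f_i,\p_j)=\log\frac{(\f_i,\p_j^{n-1})(\p_j,\f_i^{n-1})}{\min\{\vol(\f_i),\vol(\p_j)\}^2}\to\log\frac{V^2}{V^2}=0 .
\]

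The only delicate point is the squeeze in Step 3. It requires the limits to be taken jointly in $(i,j)$ over the product directed set; this is harmless, since all the convergences in Step 1 are separate in $i$ and in $j$. It also requires knowing that $\chi_{ij}$ is an admissible competitor, which is exactly what Proposition~\ref{prop:pshlegdual}~(i) provides.
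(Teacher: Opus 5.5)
Your proof is correct and follows essentially the paper's route: test the functional on a combination of $\varphi_i$ and $\psi_j$, which forces asymptotic equality in the concavity of $\vol^{1/n}$, and then extract convergence of the mixed terms. The paper uses $(1-t)\varphi_i+t\psi_j$ where you use the sum, i.e.\ $t=1/2$ up to scaling; your binomial expansion with the termwise Khovanskii--Teissier squeeze is an explicit version of the paper's terse appeal to log-concavity of $k\mapsto\log(\varphi_i^k,\psi_j^{n-k})$, and it gives both $(\varphi_i,\psi_j^{n-1})$ and $(\psi_j,\varphi_i^{n-1})$ directly, without passing through $\jj$ and Lemma~\ref{lem:ijequiv}.
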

\begin{proof}
    By definition of minimizing nets we have $\lim_i\vol(\varphi_i)=\lim_j\vol(\psi_j)$. Thus it suffices to show $\ii(\varphi_i, \psi_j)\to 0$, or equivalently, by Lemma~\ref{lem:ijequiv}, that $\jj_{\varphi_i}(\psi_j)\to 0$ as $i,j\to \infty$. To this end, fix $0<t<1$ and let $\varphi_{t,ij}\=(1-t)\varphi_i+t\psi_j\in\cE(X)$. By concavity of $\f\mapsto\vol(\f)^{1/n}$, we have
    \[
    \frac{(\varphi_{t,ij}, \mu)}{\vol(\varphi_{t,ij})^{1/n}}\le \frac{(1-t)(\varphi_i, \mu)+t(\psi_j, \mu)}{(1-t)\vol(\varphi_i)^{1/n}+t\vol(\psi_j)^{1/n}}\to \vol^\vee(\mu).
    \]
    This implies $(\varphi_{t,ij})$ is also a minimizing net, and that 
    \[
    1\le\frac{\vol(\varphi_{t,ij})^{1/n}}{(1-t)\vol(\varphi_i)^{1/n}+t\vol(\psi_j)^{1/n}}\to1.
    \]
    By concavity of $k\mapsto \log(\varphi_i^k, \psi_j^{n-k})$, we must have in particular $\jj_{\varphi_i}(\psi_j)\to 0$.
\end{proof}

\begin{prop}\label{prop:numtrivlim}
    Let $(\varphi_i), (\psi_i)$ be two minimizing nets in $\cE(X)$ for some $\mu\in \cC_\infty(X)$. Then, after passing to a subnet, $\varphi_i-\psi_i$ converges pointwise to some function $\tau$ on $\Xdiv$ which defines a numerically trivial Weil b-divisor at infinity.
\end{prop}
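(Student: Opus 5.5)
The plan is to extract pointwise limits by compactness, and then show that the limiting difference has trivial numerical class. The numerical triviality will come from the strong convergence $\de_\infty([\f_i],[\p_i])\to0$ furnished by Lemma~\ref{lem:MAsoluniq}.

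\emph{Step 1: pointwise bounds.} Fix $\rho\in\cH(X)$. By Lemma~\ref{lem:minnetbdd}, $(\f_i,\rho^{n-1})$ and $(\p_i,\rho^{n-1})$ are eventually uniformly bounded, say by $C$. Let $v\in X^\div_\infty$; values on $X^\beth$ vanish identically. As in the proof of Lemma~\ref{lem:volweakconv}, choose an integrally closed compactification $\bar X$ carrying an ample strictly effective divisor $D$ with $v=c\ord_E$ for a prime $E\subset\partial\bar X$, and set $\rho_D=\p_D$. Using $\f_i\ge0$ and~\eqref{equ:PLmeas}, we get
\[
c_E\,\f_i(v_E)\le\int_{X^\na_{\rho_D}}\f_i\,\MA_{\rho_D}(\rho_D)=(\f_i,\rho_D^{n-1}).
\]
Here $c_E>0$. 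The right-hand side is in turn bounded, eventually uniformly in $i$, by Lemma~\ref{lem:minnetbdd} applied to $\rho_D$. Hence $\{\f_i(v)\}_i$ and $\{\p_i(v)\}_i$ are eventually bounded for each $v\in X^\div$.

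\emph{Step 2: extracting the limit.} By Tychonoff's theorem, after passing to a subnet, both $\f_i$ and $\p_i$ converge pointwise on $X^\div$. Hence $\f_i-\p_i\to\tau$ pointwise on $X^\div$, with $\tau$ homogeneous and real valued. By Lemma~\ref{lem:divfcn}, $\tau$ defines a Weil b-divisor $D_\tau\in\WbDiv_\infty(X)_\R$. Pointwise convergence on $X^\div_\infty$ means $D_{\f_i}-D_{\p_i}\to D_\tau$ in $\WbDiv_\infty(X)_\R$. Lemma~\ref{lem:numclasscts} then gives $[\f_i]-[\p_i]\to[D_\tau]$ weakly in $\WNum_{n-1}(X)$.

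\emph{Step 3: the limit class vanishes.} By Lemma~\ref{lem:MAsoluniq}, $\bar\ii(\f_i,\p_i)\to0$, so Proposition~\ref{prop:metrictop} yields $\d_i\coloneqq\de_\infty([\f_i],[\p_i])\to0$. This means
\[
(e^{-\d_i}-1)[\f_i]\le[\p_i]-[\f_i]\le(e^{\d_i}-1)[\f_i].
\]
By Corollary~\ref{cor:siu} and Step 1, $[\f_i]\le C'[\rho]$ eventually, with $C'$ independent of $i$. Therefore $\pm\bigl([\p_i]-[\f_i]\bigr)\le (e^{\d_i}-1)C'[\rho]$, up to the harmless asymmetry between $e^{\d_i}-1$ and $1-e^{-\d_i}$.

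Now pair with any movable curve class on a smooth model $Y$, for instance a BPF class via Proposition~\ref{prop:bpfposlin}. Since movable classes span $\Curv(Y)$, the incarnations on $Y$ of $[\p_i]-[\f_i]$ tend to $0$. Thus the weak limit $[D_\tau]$ is zero, i.e.\ $D_\tau$ is numerically trivial.

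The main obstacle is Step 1, namely getting pointwise bounds on $X^\div$ from bounded intersection numbers alone. It is handled by the Dirac-mass description~\eqref{equ:PLmeas} of $\MA_\rho(\rho)$, which only requires choosing $\rho$ adapted to each divisorial valuation.
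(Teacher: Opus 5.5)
Your proposal is correct and follows essentially the same route as the paper: Tychonoff with the eventual pointwise bounds coming from Lemma~\ref{lem:minnetbdd}, then $\de_\infty([\f_i],[\p_i])\to0$ from Lemma~\ref{lem:MAsoluniq}, and finally the uniform bound from Siu's inequality (Corollary~\ref{cor:siu}) together with Lemma~\ref{lem:numclasscts} to conclude that the limit class vanishes. The only differences are cosmetic. You invoke Proposition~\ref{prop:metrictop} directly where the paper redoes the slope/Diskant argument, and you spell out, via the Dirac-mass formula for $\MA_\rho(\rho)$, the pointwise bound that the paper states in one line.
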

\begin{proof}
    By Tychonoff's theorem, after passing to a subnet, we may assume $\varphi_i-\psi_i$ converges pointwise to a homogeneous function $\tau\colon X^\div_\infty\to[-\infty,+\infty]$. 
    Now, for each $E\in \PbDiv_\infty(X)$, Lemma~\ref{lem:minnetbdd} implies that $(\varphi_i-\psi_i)(\ord_E)$ is eventually uniformly bounded, so $\tau(\ord_E)\in\R$.
    We can equivalently view $\tau$ as a real-valued function on the set $\PbDiv_\infty(X)$, and hence as a real Weil b-divisor $D_\tau\in\WbDiv_\infty(X)$ at infinity, which then induces a numerical Weil class $[\tau]\in\WNum_{n-1}(X)$.

    We must show that $[\tau]=0$. To this end, let $s_i\=s(\varphi_i, \psi_i)$. Then $\vol(\varphi_i)\ge s_i^n\vol(\psi_i)$ and so $\limsup_is_i\le 1$ by assumption. On the other hand, Lemma~\ref{lem:MAsoluniq} together with the Diskant inequality yields $\liminf_is_i\ge 1$. By symmetry we may find $\varepsilon_i>0$ with $\lim_i\varepsilon_i=0$ such that
    \[
    -\varepsilon_i[\varphi_i]\le [\varphi_i]-[\psi_i]\le \varepsilon_i[\varphi_i].
    \]
    It only remains to show that $[\f_i]$ is eventually uniformly bounded; this will show $[\tau]=\lim_i[\varphi_i-\psi_i]=0$, see Lemma~\ref{lem:numclasscts}. But if we pick any $\rho\in\cH(X)$, then Siu's inequality gives $0\le[\f_i]\le\frac{n(\f_i,\rho^{n-1})}{\vol(\rho)}[\rho]$, and by Lemma~\ref{lem:minnetbdd}, $(\f_i,\rho^{n-1})$ is eventually uniformly bounded.
\end{proof}

\subsection{The Monge--Amp\`ere equation}\label{sec:MAeqn}  
We can now improve upon Proposition~\ref{prop:metricequiv} and finish the proof of Theorem~A'.
\begin{thm}\label{thm:metricMAsol}
    The Monge--Amp\`ere operator defines a Lipschitz continuous embedding
    \[
    \MA\colon (\bar\cE(X), \de_\infty)\hookrightarrow (\cC_\infty(X), \de_\infty)
    \]
    with dense image. If $X$ is smooth, then it is further surjective.
\end{thm}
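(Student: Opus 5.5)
The Lipschitz embedding part is already contained in Proposition~\ref{prop:metricequiv}: $[\f]\mapsto\MA(\f)$ is well defined on $\bar\cE(X)$ by Proposition~\ref{prop:intnum}, takes values in $\cC_\infty(X)$ by Proposition~\ref{prop:pshlegdual}~(iii), and satisfies the Lipschitz bound \eqref{equ:MAcont1} together with the H\"older bound \eqref{equ:MAcont2} for the inverse. So it remains to prove density of the image and, when $X$ is smooth, surjectivity.

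\emph{Density.} Fix $\mu\in\cC_\infty(X)$ and choose a minimizing net $(\f_i)$ in $\cH(X)$, which exists because $\vol^\vee_\infty$ is defined as an infimum over $\cH(X)$ and the ratio is homogeneous of degree zero. By Corollary~\ref{cor:MAsol}, $\MA(\f_i)\to\mu$ weakly, and $\vol^\vee(\MA(\f_i))=\vol(\f_i)^{\frac{n-1}{n}}\to\vol^\vee_\infty(\mu)=\vol^\vee(\mu)$. We must upgrade this to $\de_\infty$-convergence. Using Lemma~\ref{lem:MAsoluniq}, the net $([\f_i])$ satisfies $\bar\ii(\f_i,\f_j)\to0$, so by Proposition~\ref{prop:metrictop} it is Cauchy for $\de_\infty$ in $\BBPF_{n-1}(X)$. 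By \eqref{equ:Mincont1}, $(\MA(\f_i))$ is then Cauchy in $(\BBPF_1(X),\de_\infty)$. By Proposition~\ref{prop:curvcomplete} it converges strongly to some $\nu\in\BBPF_1(X)$; strong convergence implies weak convergence, so $\nu=\mu$. Hence $\de_\infty(\MA(\f_i),\mu)\to0$ with $\f_i\in\cH(X)$, which proves density (in fact, density of $\MA(\cH(X))$).

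\emph{Surjectivity for $X$ smooth.} The goal is to find $\f\in\cE(X)$ attaining the infimum defining $\vol^\vee_\infty(\mu)$ and then apply Corollary~\ref{cor:MAsol2}, which gives $\MA(\f)=c\mu$; rescaling $\f$ yields $\mu$ itself. Take a minimizing net $(\f_i)$ in $\cH(X)$ and fix $\rho\in\cH(X)$. Lemma~\ref{lem:minnetbdd} bounds $(\f_i,\rho^{n-1})$, hence the values $\f_i(\ord_E)$ for the prime divisors of one model (the $\MA_\rho(\rho)$ argument of Lemma~\ref{lem:volweakconv}). However, $\f_i\le C\rho$ is not automatic because of unbounded growth along non-Cartier directions, and this uniform sublogarithmic bound is the main obstacle.

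I would first try to normalize using the strong Cauchy property. From $\de_\infty([\f_i],[\f_j])\to0$, the differences $\f_i-\f_j$ are nearly numerically trivial. The aim is to replace $\f_i$ by $\tilde\f_i$ with $[\tilde\f_i]=[\f_i]$ and $\tilde\f_i\le C\rho$, for instance by taking psh envelopes $P(\cdot)$ of $\min\{\f_i,C\rho\}$. These envelopes lie in $\PSH(X)$ by the envelope property (Theorem~\ref{thm:envpropcrit}). One must then check, via Siu's inequality (Corollary~\ref{cor:siu}) and Proposition~\ref{prop:nefweilpsh}, that the volume loss tends to zero. With that bound in hand, compactness of $\PSH(X,C\rho)$ (Theorem~\ref{thm:envpropcrit}~(ii)) provides a weak limit point $\f$. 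Upper semicontinuity of $(\cdot,\mu)$ and of $\vol$ (Theorem~\ref{thm:pshint}) gives $(\f,\mu)\le\lim(\f_i,\mu)$. To show $\vol(\f)\ge\lim\vol(\f_i)>0$, I would pass to the decreasing regularizations $\sup^\star_{j\ge i}\f_j$, which converge to $\f$ by Lemma~\ref{lem:divneg}, and use monotonicity of the volume. Together these give $\f\in\cE(X)$ with $(\f,\mu)/\vol(\f)^{1/n}\le\vol^\vee_\infty(\mu)$, so $\f$ attains the infimum.

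If the envelope normalization fails, an alternative is to work directly in the class space. The strong limit $\a=\lim[\f_i]$ exists in $\BBPF_{n-1}(X)$ by Proposition~\ref{prop:divcomplete}. Then realize $\a$ as $[D]$ with $D$ the limit of the effective Weil b-divisors $D_{\f_i}$ after subtracting numerically trivial corrections as in Proposition~\ref{prop:numtrivlim}. Finally, apply Proposition~\ref{prop:nefweilpsh} to obtain $\f=\p_D\in\PSH(X)$ with $\vol(\f)=\vol(\a)>0$, and conclude that $\MA(\f)=\a^{n-1}=\mu$ by Theorem~\ref{thm:metricequivnum}.
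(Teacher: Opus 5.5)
Your treatment of the Lipschitz embedding and of density agrees with the paper's proof. Taking the minimizing net in $\cH(X)$ and invoking Proposition~\ref{prop:curvcomplete} instead of Corollary~\ref{cor:curvinfcomplete} is harmless, and it even gives density of $\MA(\cH(X))$.

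The genuine gap is in your main route to surjectivity. You claim $(\f,\mu)\le\lim_i(\f_i,\mu)$ from upper semicontinuity, but Theorem~\ref{thm:pshint} gives the opposite inequality $(\f,\mu)\ge\limsup_i(\f_i,\mu)$, and your $\sup^\star$ regularization gives the same wrong direction. Since you also only get $\vol(\f)\ge\limsup_i\vol(\f_i)$, you cannot conclude that the weak limit point $\f$ attains the infimum, so Corollary~\ref{cor:MAsol2} does not apply.

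The missing ingredient is strong convergence of the classes. By Lemma~\ref{lem:MAsoluniq} and Proposition~\ref{prop:metrictop}, $([\f_i])$ is $\de_\infty$-Cauchy, and by Lemma~\ref{lem:numclasscts}, $[\f_i]\to[\f]$ weakly. Letting $j\to\infty$ in $e^{-\delta}[\f_j]\le[\f_i]\le e^{\delta}[\f_j]$, using that the pseudoeffective cone is weakly closed, gives $\de_\infty([\f_i],[\f])\to0$. Proposition~\ref{prop:currentconv} then yields $\MA(\f_i)\to\MA(\f)$, hence $\MA(\f)=\mu$ directly.

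The ``main obstacle'' you identify is not one, so the envelope truncation should be discarded. As proposed it would change the classes $[\f_i]$, and nothing shows that the volume loss vanishes. Take $\rho=\p_D$ with $D$ ample on an integrally closed $\bar X$. The proof of Theorem~\ref{thm:weaktop} (cf.\ Corollary~\ref{cor:supcts}) shows that for every $\f\in\PSH(X)$, $\sup_{X^\na_\rho}\f$ is attained at one of the finitely many points $\ord_E/\rho(\ord_E)$, $E\subset\bar X\setminus X$. Each of these carries positive $\MA_\rho(\rho)$-mass, so Lemma~\ref{lem:minnetbdd} already gives $\f_i\le C\rho$ eventually. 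The paper does not even need this uniform bound: it bounds each $\f_i(\ord_E)$ separately and applies Tychonoff on $[0,+\infty]^{\PbDiv_\infty(X)}$.

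Your fallback is essentially the paper's argument and works, provided you do not subtract ``numerically trivial corrections''. Proposition~\ref{prop:numtrivlim} concerns the difference of two minimizing nets and is irrelevant here. Subtracting could also destroy the effectivity that Proposition~\ref{prop:nefweilpsh} requires. The corrected steps are:
\begin{itemize}
\item Pass to a subnet on which $\f_i(\ord_E)$ converges to a finite limit for every $E\in\PbDiv_\infty(X)$, and let $D$ be the resulting effective Weil b-divisor at infinity.
\item By Lemma~\ref{lem:numclasscts}, $[D_{\f_i}]\to[D]$ weakly, so $[D]$ equals the strong limit $\a$ and is nef.
\item Proposition~\ref{prop:nefweilpsh} gives $\f\in\PSH(X)$ with $\ddc\f=\a$ and $\vol(\f)=\vol(\a)>0$.
\item Finally $\MA(\f)=\a^{n-1}=\lim_i\MA(\f_i)=\mu$.
\end{itemize}
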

\begin{proof}
    That $\MA$ is a Lipschitz continuous embedding follows from Proposition~\ref{prop:metricequiv}. It remains to prove density. For this, let $\mu\in \cC_\infty(X)$ and pick a minimizing net $(\varphi_i)$ in $\cE(X)$ for $\mu$.
    By Lemma~\ref{lem:MAsoluniq}, $([\varphi_i])_i$ is a Cauchy net for $(\bar\cE(X), \de_\infty)$ and hence $(\mu_i\=\MA(\varphi_i))$ is also Cauchy in $\cC_\infty(X)$. Now Corollary~\ref{cor:curvinfcomplete} yields a limit $\nu\in \cC_\infty(X)$ with $\de_\infty(\mu_i, \nu)\to 0$, and in particular $\mu_i\to \nu$ weakly. We also have $\mu_i\to \mu$ weakly by Corollary~\ref{cor:MAsol}, and thus $\mu=\nu$ and $\de_\infty(\mu_i, \mu)\to 0$. 
    
    Assume now $X$ is smooth, and let $(\varphi_i)$ be a minimizing net in $\cE(X)$ for $\mu$. For each $E\in \PbDiv_\infty(X)$, $(\varphi_i(\ord_E))$ is eventually uniformly bounded, since $\int_{X^\na_\rho} \varphi_i\MA_\rho(\rho)$ is eventually uniformly bounded for any $\rho$ represented by an ample divisor on a large enough integrally closed compactification realizing $E$, by Lemma~\ref{lem:minnetbdd}. 
    
    By Tychonoff's theorem applied to $[0,+\infty]^{\PbDiv_\infty(X)}$, after passing to a subnet, we may assume that $\varphi_i(\ord_E)$ converges in $[0,+\infty]$ for every $E\in\PbDiv_\infty(X)$. For each fixed $E$, the eventual uniform bound established above implies that this limit is finite. Extending these limits by homogeneity gives a function $\f\colon\Xdiv_\infty\to\R_+$, and hence an effective Weil b-divisor $D_\varphi$ at infinity; see~\S\ref{sec:Weilb}.

    Further, weak convergence yields that the numerical class $[D_\varphi]\in\WNum_{n-1}(X)$ is nef. By Proposition~\ref{prop:nefweilpsh}, $\varphi$ extends to a unique function $\varphi\in\PSH(X)$, and we have $\varphi\in\cE(X)$ since $\vol(\varphi)\ge \lim_i\vol(\varphi_i)>0$. In particular, $\varphi_i\to \varphi$ weakly, and so $[\varphi_i]\to [\varphi]$ weakly by Lemma~\ref{lem:numclasscts}. We claim that in fact $[\f_i]\to [\f]$ strongly. Indeed, since $([\f_i])$ is a Cauchy net in the strong topology, for any $\delta>0$, $e^{-\d}[\f_j]\le [\f_i]\le e^\d[\f_j]$, for $i, j\gg0$. Taking $j\to \infty$ we have $\de_\infty([\f_i], [\f])\le \d$ and hence $\de_\infty([\f_i], [\f])\to 0$. This gives $\MA(\f_i)\to \MA(\f)$ by Proposition~\ref{prop:currentconv}.
    Thus $\MA(\varphi)=\mu$ since $\MA(\varphi_i)\to \mu$ weakly.
    This completes the proof.
\end{proof}

\section{Norms}\label{sec:norm}
In this section, we study negative norms on $X$ and their relation to polarized compactifications, and prove the main part of Theorem~B.
\subsection{Negative norms and filtrations}
By a \emph{norm} (resp.\ \emph{seminorm}) on $X$ (or $R$) we mean a function $\chi\colon R\to\R\cup\{+\infty\}$ such that $\exp(-\chi)$ is a submultiplicative norm (resp.\ seminorm) on the $k$-algebra $R$ in the usual multiplicative sense. Concretely, a function $\chi\colon R\to\R\cup\{+\infty\}$ is a seminorm iff $\chi|_{k^\times}\equiv0$, $\chi(0)=\infty$, $\chi(f+g)\ge\min\{\chi(f),\chi(g)\}$, and $\chi(fg)\ge\chi(f)+\chi(g)$. Further, $\chi$ is a norm iff $\chi^{-1}(+\infty)=\{0\}$.

We will mainly consider (semi)norms that are \emph{nonpositive}, that is, $\chi(f)\le0$ for all $f\ne0$. Setting $\cF^\la R\coloneqq\{\chi\ge-\la\}$ shows that nonpositive norms are in 1-1 correspondence with \emph{filtrations} $(\cF^\la R)_{\la\ge0}$ of $R$ with $k\subset \cF^0R$ and $\cF^\la R=0$ for $\la<0$, 
that are
\begin{itemize}
    \item increasing: $\cF^\mu R\subset\cF^\la R$ for $0\le\mu\le\la$;
    \item right-continuous: $\cF^\la R=\cF^{\la+}R\coloneqq\bigcap_{\mu>\la}\cF^\mu R$ for $\la\ge0$;
    \item multiplicative: $f\in\cF^\la R$, $g\in\cF^\mu R$ $\Rightarrow$ $fg\in\cF^{\la+\mu}R$;
    \item exhaustive: $\bigcup_\la\cF^\la R=R$.
\end{itemize}
The norm $\chi$ associated to such a filtration is given by $\chi(f)\coloneqq-\inf\{\la\ge0\mid f\in\cF^\la R\}$ for $f\ne 0$ and $\chi(0)=\infty$.
\begin{rmk}
    The convention differs from~\cite{nakstab1} where decreasing filtrations were used.
\end{rmk}
We say that a norm $\chi$ is \emph{negative} if $\chi(f)<0$ for all $f\in R\setminus k$. Negative norms always exist, as shown shortly.
\begin{defi}
    We write $\cN(R)\coloneqq\cN_{<0}(R)$ and $\cN_{\le0}(R)$ for the spaces of negative (resp.\ nonpositive) norms on $R$.
\end{defi}
These spaces have the following algebraic structure. First, if $\chi_1,\dots,\chi_N$ are nonpositive (resp.\ negative) norms, then so is $\chi:=\chi_1\wedge\dots\wedge\chi_N$, defined by $\chi(f):=\min_i\chi_i(f)$. Second, the multiplicative group $\R_+^\times$ acts on $\cN(R)$ and $\cN_{\le0}(R)$ by multiplication: if $\chi$ is a negative (resp.\ nonpositive) norm, so is $t\chi$ for any $t>0$.

The \emph{reduction} of a negative norm $\chi$ is the $\R_+$-graded $k$-algebra 
\[
\gr_\chi R\coloneqq\bigoplus_{\la\in\R_+}\cF^\la R/\cF^{\la-}R,
\]
where $\cF^{\la-}_\chi R\coloneqq\bigcup_{\mu<\la}\cF^\mu_\chi R$.

If $X\=\Spec R\to X'\=\Spec R'$ is a closed immersion, induced by a surjective $k$-algebra homomorphism
$\tau\colon R'\to R$, then any nonpositive norm $\chi'$ on $X'$ induces a nonpositive \emph{residue norm} $\chi$ on $X$, defined by $\chi(f)\coloneqq\sup\{\chi'(f')\mid \tau(f')=f\}$ for any $f\in R$.

\medskip

\begin{defi}
    A (semi)valuation $v\in X^\na$ is a \emph{negative valuation}
    if $v(f)<0$ for \emph{all} $f\in R\setminus k$.
\end{defi}
\begin{rmk}
    In general, there may not be any negative valuations. This is the case for $X=\G_m^n$, for example. 
    However, we will see shortly that $X$ admits negative norms of the form $\chi=\chi_{v_1}\wedge\dots\wedge\chi_{v_N}$, where $v_i\in X^\val_\infty$. 
\end{rmk}

\subsection{Geometric norms}
Consider a strictly effective Cartier divisor $D$ at infinity on a compactification $\bar{X}$ of $X$.
If $f\in R\setminus\{0\}$ and $m\ge0$, then $fs_D^m$ is a rational section of $\cO_{\bar{X}}(mD)$, and a regular section for $m\gg1$. Here $s_D$ is the canonical section of $\cO_{\bar{X}}(D)$. We can therefore define a negative norm $\chi_{\bar{X},D}$ on $R$ by
\[
\chi_{\bar{X},D}(f):=-\min\{m\in\Z_{\ge0}\mid fs_D^m\in\Hnot(\bar{X},mD)\}.
\]
We set $\chi_{\bar X, D}(0)=\infty$.
It is clear that if $D_1,D_2$ are two strictly effective divisors at infinity on the same compactification $\bar{X}$ of $X$, with $D_1\le D_2$, then $\chi_{\bar X, D_1}\le\chi_{\bar X, D_2}$. 
More generally, for a strictly effective $\Q$-Cartier $\Q$-divisor $D\in \CDiv_\infty(\bar X)_\Q$, pick $m\gg0$ minimal so that $mD$ is a Cartier divisor, and we write $\chi_{\bar X, D}\=m\chi_{\bar X, mD}$.
\begin{defi}
    A \emph{geometric norm} is a norm of the form $\chi_{\bar X, D}$ for a compactification $\bar X$ and a strictly effective semiample divisor $D\in\CDiv_\infty(\bar X)$.
\end{defi}
\begin{lem}\label{lem:gnpull}
    Let $D$ be a strictly effective Cartier divisor at infinity on a compactification $\bar{X}$ of $X$. Let $\rho\colon\bar{X}'\to\bar{X}$ be a morphism of compactifications, and set $D'\coloneqq\rho^*D$. Then $\chi_{\bar{X},D}\le\chi_{\bar{X}',D'}$, with equality if $\bar{X}$ is integrally closed.
\end{lem}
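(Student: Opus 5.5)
The plan is to compare the two norms directly via pullback of sections, and then use Lemma~\ref{lem:Zmain} for the equality case. First I will record that the canonical section pulls back correctly: since $D'=\rho^*D$, we have $\rho^*s_D=s_{D'}$ as sections of $\cO_{\bar X'}(D')=\rho^*\cO_{\bar X}(D)$, and $\rho$ restricts to the identity on $X$. Hence for $f\in R\setminus\{0\}$ and $m\ge0$, the rational section $f s_{D'}^m$ of $\cO_{\bar X'}(mD')$ is $\rho^*(fs_D^m)$.

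For the inequality $\chi_{\bar X,D}\le\chi_{\bar X',D'}$, I will show that if $fs_D^m\in\Hnot(\bar X,mD)$ then $fs_{D'}^m\in\Hnot(\bar X',mD')$. This is immediate, because the pullback of a regular section of $\cO_{\bar X}(mD)$ is a regular section of $\rho^*\cO_{\bar X}(mD)=\cO_{\bar X'}(mD')$. Therefore the minimal admissible $m$ for $\bar X'$ is at most the minimal one for $\bar X$, and after the sign change this gives $\chi_{\bar X,D}(f)\le\chi_{\bar X',D'}(f)$. The case $f=0$ is trivial.

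For the equality case with $\bar X$ integrally closed, I need the converse: if $fs_{D'}^m$ is regular on $\bar X'$, then $fs_D^m$ is regular on $\bar X$. By the projection formula and Lemma~\ref{lem:Zmain} ($\rho_*\cO_{\bar X'}=\cO_{\bar X}$), we get
\[
\rho_*\cO_{\bar X'}(mD')=\cO_{\bar X}(mD)\otimes\rho_*\cO_{\bar X'}=\cO_{\bar X}(mD).
\]
This gives $\Hnot(\bar X',mD')=\Hnot(\bar X,mD)$, compatibly with the identification of rational sections through $\rho^*$, since both are subspaces of the same space of rational sections and $\rho$ is birational. So a regular section $fs_{D'}^m$ on $\bar X'$ descends to the regular section $fs_D^m$ on $\bar X$, and the two minima coincide.

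The only point needing care is this compatibility: the element of $\Hnot(\bar X,mD)$ corresponding to $fs_{D'}^m$ is exactly $fs_D^m$. This holds because both restrict to $f\cdot s^m$ on $X$, where the line bundles are canonically trivialized by the canonical sections. I expect no substantive obstacle beyond this.
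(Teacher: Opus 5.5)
Your proposal is correct and follows the paper's proof. The inequality comes from pulling back the regular section $fs_D^m$. In the integrally closed case, Lemma~\ref{lem:Zmain} together with the projection formula gives $\Hnot(\bar X,mD)=\Hnot(\bar X',mD')$, which yields equality. Your extra remark on compatibility of the identifications on $X$ spells out a point the paper leaves implicit.
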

\begin{proof}
    If $f\in R\setminus k$ and $\chi_{\bar{X},D}(f)=-m$, then $fs_D^m\in\Hnot(\bar{X},mD)$. This implies $fs_{D'}^m\in\Hnot(\bar{X'},mD')$, so that $\chi_{\bar{X}',D'}(f)\ge-m$.

    Now suppose $\bar{X}$ is integrally closed. By Lemma~\ref{lem:Zmain} we have $\rho_*\cO_{\bar{X'}}=\cO_{\bar{X}}$. Together with the projection formula, this yields $\Hnot(\bar{X},mD)=\Hnot(\bar{X}',mD')$ for all $m\in\Z_{>0}$, which implies $\chi_{\bar{X},D}=\chi_{\bar{X}',D'}$.
\end{proof}
This lemma allows us to associate a norm $\chi_D$ to any strictly effective divisor $D\in\CbDiv_\infty(X)$: just set $\chi_D\coloneqq\chi_{\bar{X},D}$ for any integrally closed compactification $\bar{X}$ of $X$ where $D$ ``lives".

\medskip

\subsection{Bounded negative norms}
Next we make the notion of a negative norm more quantitative. First, let us say that a negative norm is \emph{weakly bounded} (away from zero) if $\dim_k\cF^\la_\chi R<\infty$ for all $\la>0$. For example, any geometric norm is weakly bounded. We will work with a stronger notion, motivated by the following result.
\begin{lem}\label{lem:geomnormbd}
    If $\chi'\colon R\to\R\cup\{+\infty\}$ is any seminorm and $\chi$ is a geometric norm, then $\chi'\ge C\chi$ for some $C>0$.
\end{lem}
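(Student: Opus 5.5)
The idea is to reduce to a finite set of algebra generators, using the fact that a geometric norm grows at least linearly in the degree with respect to those generators. Write $\chi=\chi_{\bar X,D}$ with $D$ strictly effective and semiample; here $\chi$ is negative and integer valued. Fix $k$-algebra generators $g_1,\dots,g_N$ of $R$. Since $\chi'$ is a seminorm, $\chi'(g_j)>-\infty$. Set $A\=\max_j\max\{-\chi'(g_j),0\}\ge0$. Multiplicativity and the ultrametric inequality then give
\[
\chi'(f)\ge -A\deg(f)
\]
for any $f=P(g_1,\dots,g_N)$ with $\deg P\le d$, since each monomial of degree $\le d$ has $\chi'\ge -Ad$. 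Here $\deg(f)$ denotes the minimal degree of such a polynomial representation $P$. It therefore suffices to find $c>0$ with
\[
\deg(f)\le c\,|\chi(f)|\quad\text{for all } f\in R\setminus k.
\]
Then $\chi'(f)\ge -Ac|\chi(f)|=Ac\,\chi(f)$, and we may take $C=Ac$, or any positive constant if $A=0$. The cases $f=0$ and $f\in k^\times$ are trivial, since $\chi'\ge0=\chi$ there.

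The heart of the proof is the degree bound. I will first pass to a convenient model. By Lemma~\ref{lem:gnpull}, $\chi_{\bar X,D}\le\chi_{\bar X',D'}$ for any morphism of compactifications, and replacing $\chi$ by a larger norm only weakens what we need; hence we may shrink $\chi$ freely. Replace $D$ by a multiple $mD$, which scales $\chi$ by at most a bounded factor, so that $D$ is base point free. Then pass to the polarized compactification of Lemma~\ref{lem:sabig}, using $\Proj R(\bar X,D)$. On the normalization $\bar X$ the pullback is the same divisor, and again Lemma~\ref{lem:gnpull} lets us compare.

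So we may assume $D$ is ample and Cartier. Take $m$ so that $mD$ is very ample and the section ring is generated in degree $\le 1$ after Veronese, i.e.\ $\bigoplus_\ell\Hnot(\bar X,\ell mD)$ is generated by $\Hnot(\bar X,mD)$. Choose a basis $s_0=s_{mD},s_1,\dots,s_M$, and set $h_i=s_i/s_0\in R$. These $h_i$ generate $R$, since $X=\bar X\setminus\supp D$ is the affine chart, and each $h_i$ is a polynomial in the $g_j$ of some bounded degree $e$.

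Now take $f$ with $\chi(f)=-\ell$, so that $fs_D^\ell\in\Hnot(\bar X,\ell D)$. Then $fs_D^{m\lceil \ell/m\rceil}$ lies in $\Hnot(\bar X, m\lceil\ell/m\rceil D)$, and by generation it is a polynomial of degree $\le\lceil\ell/m\rceil$ in the $s_i$. Dividing by $s_0^{\lceil \ell/m\rceil}$ writes $f$ as a polynomial of degree $\le\lceil \ell/m\rceil$ in the $h_i$, hence of degree $\le e\lceil\ell/m\rceil\le 2e\ell$ in the $g_j$, using $\ell\ge1$ since $f\notin k$. This gives $c=2e$.

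The main obstacle is the reduction from a merely semiample $D$, on a possibly non-integrally-closed $\bar X$, to the ample case. The inequality in Lemma~\ref{lem:gnpull} goes in the helpful direction only when pulling back from the ample model to $\bar X$. I therefore have to check that the Proj/Stein factorization construction produces $\bar X''$ with $\bar X\to\bar X''$ a morphism of compactifications and $D=$ pullback of an ample $D''$. This holds after a multiple, because $D$ is base point free on $\bar X$ itself. Once that holds, we get $\chi_{\bar X'',D''}\le\chi_{\bar X,D}$, which is exactly the direction needed.
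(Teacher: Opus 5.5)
Your core estimate is correct and is essentially the paper's argument. Once $\bigoplus_\ell\Hnot(\bar X,\ell mD)$ is generated by $\Hnot(\bar X,mD)$, writing $fs_D^{mL}$ as a degree-$L$ polynomial in a basis of $\Hnot(\bar X,mD)$ gives the linear bound, and your detour through a fixed generating set $(g_j)$ and the degree function is harmless.

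The gap is in the reduction to the ample case, where the comparison runs the wrong way. Since $\chi\le0$, a bound $\chi'\ge C\chi''$ yields $\chi'\ge C\chi$ when $\chi''\ge\chi$, but not when $\chi''\le\chi$. In your formulation, $\deg f\le c|\chi''(f)|$ implies $\deg f\le c|\chi(f)|$ when $|\chi''|\le|\chi|$, i.e.\ when $\chi''\ge\chi$. So you may replace $\chi$ by a \emph{larger} norm, which proves something stronger, but never by a smaller one. Your sentence ``replacing $\chi$ by a larger norm only weakens what we need; hence we may shrink $\chi$ freely'' has this backwards. In your last paragraph you have $\bar X\to\bar X''$ with $D$ the pullback of an ample $D''$. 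Lemma~\ref{lem:gnpull} then gives only $\chi_{\bar X'',D''}\le\chi_{\bar X,D}$; its equality clause needs the \emph{target} $\bar X''$ to be integrally closed, which you have not arranged. An estimate for this smaller norm does not transfer back to $\chi_{\bar X,D}$, so this is not ``exactly the direction needed''.

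The step is easy to repair, in either of two ways.
\begin{itemize}
\item Reduce upward, as the paper does. By Lemma~\ref{lem:hasample}, choose a polarizable $\rho\colon\bar X'\to\bar X$ carrying an ample strictly effective divisor $H$ at infinity. Then $\chi_{\bar X,D}\le\chi_{\bar X',\rho^*D}\le\chi_{\bar X',\rho^*D+H}$ by Lemma~\ref{lem:gnpull} and monotonicity in the divisor, and $\rho^*D+H$ is ample.
\item Drop the reduction altogether. Your degree count only uses that some Veronese $\bigoplus_\ell\Hnot(\bar X,\ell mD)$ is generated by $\Hnot(\bar X,mD)$. This already holds for semiample $D$, since its section ring is finitely generated. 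Ampleness, and the fact that the $h_i$ generate $R$, are never used.
\end{itemize}
For $\bar X''=\Proj R(\bar X,D)$ the two norms do in fact coincide after passing to a multiple, since then $\Hnot(\bar X'',\ell D'')=\Hnot(\bar X,\ell D)$. That, however, is an additional argument, not a consequence of the inequality you cite.
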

\begin{proof}
    Write $\chi=\chi_{\bar{X},D}$ as above. Using Lemmas~\ref{lem:hasample} and~\ref{lem:gnpull} we may, after replacing $\chi$ by a multiple and a dominating geometric norm, assume that $D$ is ample and that the section ring $R(\bar{X},D)$ is generated in degree 1. Pick a basis $s_1,\dots,s_N$ of $\Hnot(\bar{X},D)$.
    Then $g_j\coloneqq s_j/s_D\in R$. Pick $r>0$ such that $\chi'(g_j)\ge-r$ for all $j$. 
    Now consider any $f\in R\setminus k$, and set $m:=-\chi_{\bar{X},D}(f)\ge1$.    
    Write $fs_D^m$ as a polynomial $\sum_{\a\in A}c_\a s^\a$, where $A\subset\Z_{\ge0}^N$ is a nonempty subset, and where $|\a|=m$ and $c_\a\in k^\times$ for all $\a\in A$. Then $f=\sum_\a c_\a g^\a$, so $\chi'(f)\ge\min_\a\chi'(g^\a)\ge-rm$. We can therefore pick $C=r$.
\end{proof}
\begin{defi}\label{defi:T}
    Given a negative norm $\chi$ and any seminorm $\chi'$ on $R$, we define
    \[
    T(\chi; \chi')=\sup_{f\in R\setminus k}\frac{\chi'(f)}{\chi(f)}=\inf\{C>0\mid C\chi\leq \chi'\}\in[0,\infty].
    \]
\end{defi}
Thus $T(\chi;\chi')<\infty$ when $\chi$ is geometric. Note that if $\chi, \chi', \chi''$ are negative norms, then
\[
T(\chi;\chi'')\le T(\chi;\chi')T(\chi';\chi'').
\]
\begin{defi}
  A negative norm $\chi$ is \emph{bounded} if there exists $C\ge1$ such that for some (equivalently, any) geometric norm $\chi'$ we have 
  \begin{equation}\label{equ:bddnorm}
      C\chi'\le\chi\le C^{-1}\chi'.
  \end{equation}
  We write $\cN_{\mathrm{bdd}}(X)\subset\cN(X)$ for the space of bounded negative norms. 
\end{defi}
Thus $\chi$ is bounded iff $T(\chi,\chi')<\infty$ for some (equivalently, any) geometric norm $\chi'$. 
It follows that if $\chi$ and $\chi'$ are any bounded negative norms, then $T(\chi;\chi'),T(\chi';\chi)<\infty$.
\begin{defi}\label{defi:dinfty}
    For any two bounded negative norms $\chi,\chi'$ on $R$, we set
    \[
    \de_\infty(\chi, \chi')\coloneqq \log \inf\{C\geq 1\mid C\chi\leq \chi'\leq C^{-1}\chi\}=\log\max\{T(\chi;\chi'),T(\chi';\chi)\}.
    \]
\end{defi}
It is clear that $\de_\infty$ defines a metric on the space $\cN_{\mathrm{bdd}}$ of bounded negative norms, and one easily checks that the metric space $(\cN_{\mathrm{bdd}}(X),\de_\infty)$ is complete.

If $\chi$ and $\chi'$ are bounded negative norms, then so is $\chi\wedge\chi'$, as is $t\chi$ for any $t\in\R_+^\times$.
Every bounded norm is weakly bounded. However, not every negative norm is bounded or even weakly bounded.
\begin{exam}\label{exam:negnotbdd}
    Pick a function $h\colon\Z_{\ge0}\to\R_+$ such that $h(0)=0$, $h(j)>0$ for $j>0$, and $h(j+k)\le h(j)+h(k)$ for all $j,k$. Setting $\chi_h(\sum_ja_jz^j)\coloneqq\min\{-h(j)\mid a_j\ne0\}$ defines a negative norm $\chi_h$ on $R=k[z]$. Then $\chi_h$ is weakly bounded iff $\lim_{j\to\infty}h(j)=\infty$, and $\chi_h$ is bounded iff there exists $\d>0$ such that $h(j)\ge\d j$ for all $j$. Picking $h(j)=\min\{j,1\}$ thus gives a norm that is not weakly bounded, whereas if $h(j)=\sqrt{j}$, then $\chi_h$ is weakly bounded but not bounded.
\end{exam}
\begin{exam}
  The norm $\chi_v$ associated to a negative valuation $v$ is negative but not necessarily bounded. See Example~\ref{exam:infsing}.
\end{exam}
\begin{rmk}
    Using the analysis in~\cite{Mon17}, one can construct divisorial valuations on $X=\A^2$ that are negative but not negative bounded.
\end{rmk}
One of the natural situations where bounded negative norms arise comes from taking the residue norm of a geometric norm. 
\begin{lem}\label{lem:embnormequiv}
    Let $i: X=\Spec R\hookrightarrow Y=\Spec S$ be an embedding of affine varieties, and $(\bar Y, E)$ be a polarized compactification with $E$ Cartier. Let $\bar X$ be the closure of $X$ in $\bar Y$, $D\=i^* E$ and $\chi$ be the residue norm of $\chi_{\bar Y, E}$ on $R$. Then $\chi\le \chi_{\bar X, D}$ and $\cF^m_\chi R=\cF^m_{\chi_{\bar X, D}}R$ for $m\gg0$ sufficiently divisible.
\end{lem}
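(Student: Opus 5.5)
The inequality $\chi\le\chi_{\bar X,D}$ is checked first, directly from the definitions. Let $f\in R\setminus k$ and suppose $f=i^*g$ with $g\in S$. If $\chi_{\bar Y,E}(g)=-m$, then $gs_E^m\in\Hnot(\bar Y,mE)$. Its restriction to $\bar X$ is $fs_D^m\in\Hnot(\bar X,mD)$, because $D=i^*E$ and the canonical section restricts to the canonical section. Hence $\chi_{\bar X,D}(f)\ge-m=\chi_{\bar Y,E}(g)$. Taking the supremum over all lifts $g$ gives $\chi(f)\le\chi_{\bar X,D}(f)$. Equivalently, $\cF^m_\chi R\subset\cF^m_{\chi_{\bar X,D}}R$ for every $m$.

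For the reverse inclusion in large sufficiently divisible degrees, I would identify both filtrations with graded pieces of section rings:
\[
\cF^m_{\chi_{\bar X,D}}R\cong\Hnot(\bar X,mD),\qquad \cF^m_\chi R\cong\operatorname{im}\bigl(\Hnot(\bar Y,mE)\to\Hnot(\bar X,mD)\bigr).
\]
The map $f\mapsto fs_D^m$ gives these identifications. In the second one, an element of $\cF^m_\chi R$ is the image of some $g$ with $gs_E^m\in\Hnot(\bar Y,mE)$; as integer-valued norms attain their supremum over lifts, this is exactly the image of the restriction map. The statement therefore reduces to surjectivity of $\Hnot(\bar Y,mE)\to\Hnot(\bar X,mD)$ for $m$ sufficiently divisible.

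Surjectivity follows from Serre vanishing. Since $E$ is ample, the exact sequence $0\to\cI_{\bar X}(mE)\to\cO_{\bar Y}(mE)\to\cO_{\bar X}(mD)\to0$ has $H^1(\bar Y,\cI_{\bar X}(mE))=0$ for $m\gg0$, where $\bar X$ is the scheme-theoretic closure. I have to be careful that $\Hnot(\bar X,mD)$ is taken on this closure, which is reduced since $X$ is. This handles all large $m$. The divisibility condition enters only if $E$ is a $\Q$-divisor or if one passes to a multiple so that $D$ is Cartier.

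The main obstacle I expect is the bookkeeping that $f\mapsto fs_D^m$ really identifies $\cF^m$ with global sections on $\bar X$ when $\bar X$ is not integrally closed. Regular sections of $\cO_{\bar X}(mD)$ correspond to rational functions that are regular on $X$ and have poles bounded by $mD$, and this correspondence holds on $\bar X$ itself with no normality needed. Once that is settled, the argument above goes through.
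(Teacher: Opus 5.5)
Your proof is correct and follows the paper's argument. Both proofs get $\chi\le\chi_{\bar X,D}$ by restricting $gs_E^m\in\Hnot(\bar Y,mE)$ to $\bar X$, and get equality of the filtrations from surjectivity of the restriction map $\Hnot(\bar Y,mE)\to\Hnot(\bar X,mD)$ for $m\gg0$. You also supply the Serre vanishing argument for that surjectivity, which the paper leaves implicit, and you correctly observe that no divisibility is needed since $E$ is Cartier.
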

\begin{proof}
    By construction, we have a natural restriction map for each $m\in \Z_{>0}$,
    \[
    \cF^m_{\chi_{\bar Y, E}} S=\Hnot(\bar Y, mE)\to \Hnot(\bar X, mD)=\cF^m_{\chi_{\bar X, D}}R,
    \]
    which is surjective for $m\gg0$. Denote by $\pi\colon S\to R$ the surjection inducing $i$. Then if $\pi(g)=f$ with $g\in \cF^m_{\chi_{\bar Y, E}}S$, then restriction yields $\chi_{\bar X, D}(f)\ge -m$ and hence $\chi_{\bar Y, E}(g)\le \chi_{\bar X, D}(\pi(g))$. This implies $\chi_{\bar X, D}(f)\ge \chi(f)$, by definition of the residue norm. Moreover, for $m\gg0$, any $f\in \cF^m_{\chi_{\bar X, D}}R$ lifts to some $g\in \cF^m_{\chi_{\bar Y, E}}S$, and thus $f\in \cF^m_\chi R$ as desired.
\end{proof}

We say two bounded negative $\Z$-valued norms $\chi, \chi'$ are \emph{equivalent}, written as $\chi\sim\chi'$, if $\cF^\la_\chi R=\cF^\la_{\chi'}R$  for all $\la\gg0$ sufficiently divisible. The above lemma then says $\chi\sim \chi_{\bar X, D}$. As an immediate corollary, we have
\begin{cor}\label{cor:resbnn}
    If $\chi$ is a bounded negative norm on $R$, and $R\to S$ is a surjection, then the residue norm on $S$ is also a bounded negative norm.
\end{cor}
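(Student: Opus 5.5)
Write $Y=\Spec R$ and $X=\Spec S$, so that the surjection $\pi\colon R\to S$ corresponds to a closed immersion $i\colon X\hookrightarrow Y$. Let $\chi_S(g)\coloneqq\sup\{\chi(f)\mid \pi(f)=g\}$ be the residue norm. The plan is to reduce everything to Lemma~\ref{lem:embnormequiv}, using two elementary facts about residues. First, the residue operation is monotone: $\chi_1\le\chi_2$ on $R$ implies that the residue of $\chi_1$ is at most the residue of $\chi_2$. Second, it commutes with scaling by $t\in\R_{>0}$.

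\textbf{Step 1 (reference geometric norm).} By Lemma~\ref{lem:hasample}, choose a polarized compactification $(\bar Y,E)$ of $Y$ with $E$ ample, Cartier and strictly effective. Since $\chi$ is bounded, there is $C\ge1$ with $C\chi_{\bar Y,E}\le\chi\le C^{-1}\chi_{\bar Y,E}$. Let $\chi'$ be the residue of $\chi_{\bar Y,E}$ on $S$. By the two facts above,
\[
C\chi'\le\chi_S\le C^{-1}\chi'.
\]

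\textbf{Step 2 (compare $\chi'$ with a geometric norm on $X$).} Let $\bar X$ be the closure of $X$ in $\bar Y$ and set $D\coloneqq i^*E$. Then $D$ is ample. Its support is $\bar X\cap\supp E=\bar X\setminus X$, so $D$ is strictly effective, and hence $\chi_{\bar X,D}$ is a geometric norm on $S$. Lemma~\ref{lem:embnormequiv} gives $\chi'\le\chi_{\bar X,D}$, together with $d,m_0$ such that $\cF^m_{\chi'}S=\cF^m_{\chi_{\bar X,D}}S$ for all $m\ge m_0$ divisible by $d$.

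For the reverse bound, take $g\in S\setminus k$ and set $m\coloneqq-\chi_{\bar X,D}(g)\ge1$. Let $m'$ be the smallest multiple of $d$ with $m'\ge\max\{m,m_0\}$. Then $g\in\cF^{m'}_{\chi_{\bar X,D}}S=\cF^{m'}_{\chi'}S$, so
\[
\chi'(g)\ge-m'\ge-(m+m_0+d)\ge(1+m_0+d)\,\chi_{\bar X,D}(g).
\]
Hence $C'\chi_{\bar X,D}\le\chi'\le\chi_{\bar X,D}$ with $C'\coloneqq 1+m_0+d$.

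\textbf{Step 3 (norm axioms).} The seminorm axioms pass to residues by taking sums and products of lifts, and $\chi_S(0)=\infty$ since $0$ lifts to $0$. For $c\in k^\times$, the lift $c$ gives $\chi_S(c)\ge0$, while the upper bound gives $\chi_S(c)\le C^{-1}\chi_{\bar X,D}(c)=0$. Combining Steps 1 and 2 yields
\[
CC'\chi_{\bar X,D}\le\chi_S\le C^{-1}\chi_{\bar X,D}.
\]
The upper bound shows $\chi_S<0$ on $S\setminus k$ and $\chi_S(g)<\infty$ for $g\neq0$, because the geometric norm $\chi_{\bar X,D}$ is negative and finite off $0$. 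Thus $\chi_S$ is a negative norm satisfying~\eqref{equ:bddnorm} with respect to the geometric norm $\chi_{\bar X,D}$, hence it is bounded.

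The only non-formal input is Step 2, which turns the ``for $m\gg0$ sufficiently divisible'' equality of filtrations into a multiplicative comparison; this works because $m\ge1$ on $S\setminus k$.
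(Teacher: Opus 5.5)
Your proof is correct and follows the same route as the paper, which derives the corollary directly from Lemma~\ref{lem:embnormequiv}: the residue of a geometric norm is equivalent to the geometric norm $\chi_{\bar X,D}$, and monotonicity and scaling of residues pass boundedness from $\chi$ to its residue. Your Step~2 simply spells out the detail the paper leaves implicit: the eventual equality of filtrations becomes a multiplicative bound because $-\chi_{\bar X,D}\ge1$ on $S\setminus k$.
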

Let $\bar{X}$ be a compactification of $X$. Following~\cite[Definition~2.9]{BKMS}, we say a valuation $v\in\bar{X}^\val$ has \emph{linear growth} if, for any big line bundle $L$ on $\bar{X}$, there exists $C>0$ such that $v(s)\le Cm$ for $s\in\Hnot(\bar{X},mL)\setminus\{0\}$. By~\cite{BKMS}, this condition does not depend on the choice of big line bundle $L$, nor on the choice of compactification $\bar{X}$; note that $\bar{X}^\val\simeq X^\val$. This defines a subset $X^\lin\subset X^\val$ of valuations of linear growth. We also write $X^\lin_\infty\coloneqq X^\lin\cap X^\na_\infty$.
\begin{lem}\label{lem:linbdd}
    A valuation $v\in X^\val$ has linear growth iff for some (equivalently, any) bounded negative norm $\chi$ on $X$, there exists $C>0$ such that $v\le-C\chi$.
\end{lem}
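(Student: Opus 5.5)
The plan is to reduce to a single geometric norm and translate linear growth into a bound on $v$ against it. Since any two bounded negative norms satisfy $T(\chi;\chi'),T(\chi';\chi)<\infty$, an inequality $v\le -C\chi$ for one bounded $\chi$ gives $v\le -CT(\chi';\chi)\,\chi'$ for any other bounded $\chi'$, because $\chi\ge T(\chi';\chi)\chi'$ and both sides are nonpositive. So the ``some/any'' clause is automatic. It therefore suffices to prove the equivalence for a single geometric norm $\chi=\chi_{\bar X,D}$, where $\bar X$ is an integrally closed compactification and $D$ is an ample, strictly effective Cartier divisor at infinity. Such a pair exists by Lemma~\ref{lem:hasample}, and we may pass to the integral closure.

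For $f\in R\setminus k$ with $\chi(f)=-m$, we have $s:=fs_D^m\in\Hnot(\bar X,mD)$, and
\[
v(s)=v(f)+m\,v(s_D)=v(f)+m\,\p_D(v).
\]
Here $\p_D(v)=v(D)\ge0$ is finite for a valuation $v$, since $v$ has a center on $\bar X$. Conversely, every nonzero $s\in\Hnot(\bar X,mD)$ is of the form $fs_D^m$ with $f\in R$ and $\chi(f)\ge -m$; this uses Lemma~\ref{lem:critnopole} together with the fact that $s_D$ is invertible on $X$.

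Suppose first that $v$ has linear growth. Take $L=\cO(D)$, which is big. Then $v(fs_D^m)\le C'm$, so
\[
v(f)\le (C'-\p_D(v))\,m\le C'm=-C'\chi(f).
\]
For constants $f\in k^\times$ both sides vanish, so this proves $v\le -C\chi$ with $C=C'$.

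Conversely, assume $v\le -C\chi$ on $R$, and let $s=fs_D^m\in\Hnot(\bar X,mD)\setminus\{0\}$. Then $\chi(f)\ge -m$, so $v(f)\le -C\chi(f)\le Cm$ (if $f\in k^\times$ we have $v(f)=0$). Hence
\[
v(s)\le (C+\p_D(v))\,m.
\]
This is linear growth for the big line bundle $\cO(D)$ on $\bar X$. By the independence result of~\cite{BKMS}, linear growth with respect to one big line bundle on one compactification implies linear growth for all. The argument is elementary once the identification $\Hnot(\bar X,mD)=\{fs_D^m\mid \chi(f)\ge -m\}$ is in place. The only point needing care is that $v(s_D)=\p_D(v)$ is finite, and this holds because $v$ is a valuation rather than a semivaluation with support contained in $\partial\bar X$.
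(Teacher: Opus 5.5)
Your proposal is correct and follows essentially the same route as the paper: fix a geometric norm $\chi_{\bar X,D}$ with $D$ ample, and use $v(s)=v(s/s_D^m)+m\,v(D)$ for $s\in\Hnot(\bar X,mD)\setminus\{0\}$ to turn $v\le -C\chi_{\bar X,D}$ into the linear-growth bound $v(s)\le (C+v(D))m$, and conversely. Your handling of the ``some/any'' clause via $T(\chi';\chi)<\infty$ makes explicit a step the paper leaves implicit. The appeal to Lemma~\ref{lem:critnopole} is unnecessary: $s/s_D^m\in R$ simply because $s_D$ is invertible on $X$.
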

\begin{proof}
    Let $\bar{X}$ be a compactification of $X$, and $D$ an ample strictly effective divisor at infinity on $\bar{X}$. Then $v(s)=v(s/s_D^m)+mv(D)$ for $m\in\Z_{>0}$ and $s\in\Hnot(\bar X,mD)\setminus\{0\}$. This implies $v\le-C\chi_{\bar{X},D}$ on $R$ iff $v(s)\le (C+v(D))m$ for $m\in\Z_{>0}$ and $s\in\Hnot(\bar X,mD)\setminus\{0\}$. The result follows.
\end{proof}
\begin{rmk}\label{rmk:linbdd}
Any negative valuation has linear growth, but a valuation of linear growth is not necessarily negative. Any Abhyankar valuation (in particular, any divisorial valuation) has linear growth, see \cite[Proposition~2.12]{BKMS}. A valuation $v$ has linear growth iff it is a nonpluripolar point, see~\cite[Proposition~4.38]{BJpp}.
\end{rmk}
\subsection{Base ideals}\label{sec:baseideals}
Let $\chi$ be a bounded negative norm and $(\bar X, D)$ a polarized compactification.  Let $\chi'\=\chi_{\bar X, D}$. If we pick $r\ge1$ sufficiently divisible, then $\chi\leq r^{-1}\chi'$ and $rD$ is Cartier, and we get natural inclusions $\cF_\chi^m R\subseteq H^0(\bar X, mrD)$ for all $m\in \N$. 
This defines a graded linear series of $R(\bar X, rD)$, which in turn gives base ideals
\[
\fb_m(\chi)\coloneqq \fb(|\cF_\chi^m R|).
\]
The base ideals depend on the ambient divisor $rD$; we suppress this dependence when it is fixed. It is clear that $\fb_\bullet(\chi)\coloneqq (\fb_m(\chi))_m$ is a graded sequence of ideals at infinity.

For $v\in X^\na$, the value $v(\fb_m(\chi))$ is the
minimum of the values of local generators at the center
of $v$. The sequence $(v(\fb_m(\chi)))_m$ is subadditive, so 
\[
v(\fb_\bullet(\chi))
:=\inf_{m\ge1}\frac{v(\fb_m(\chi))}{m}
=\lim_{m\to\infty}\frac{v(\fb_m(\chi))}{m}.
\]
\subsection{Common orthogonal bases}
As is well known, any two non-Archimedean norms on a finite-dimensional vector space over a trivially valued field admit a common orthogonal basis, see~\cite[Proposition~1.14]{BE21} or~\cite[Lemma~3.1]{AZ22}. The following result is an infinite-dimensional version in our setting.
\begin{lem}\label{lem:commorth}
    Let $\chi_0$ be a bounded negative norm, and $\chi$ be a bounded norm on $R$. Then there exists a $k$-vector space basis $(f_i)_0^\infty$ of $R$, with $f_0=1$,  that is orthogonal for both $\chi_0$ and $\chi$. More precisely, we have $\lim_{i\to\infty}\chi_0(f_i)=-\infty$, and for any nonempty finite subset $I\subset\Z_{\ge0}$ and $a\in(k^\times)^I$, we have 
    \[
    \chi_0(\sum_{i\in I}a_if_i)=\min_{i\in I}\chi_0(f_i)
    \quad\text{and}\quad
    \chi(\sum_{i\in I}a_if_i)=\min_{i\in I}\chi(f_i).
    \]
\end{lem}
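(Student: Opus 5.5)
The plan is to exhaust $R$ by an increasing sequence of finite-dimensional subspaces, apply the finite-dimensional common orthogonal basis result (\cite[Proposition~1.14]{BE21}, \cite[Lemma~3.1]{AZ22}) on each graded piece, and then glue. Concretely, set $V_m\coloneqq\cF^m_{\chi_0}R$ for $m\in\Z_{\ge0}$. Since $\chi_0$ is bounded, it is weakly bounded, so each $V_m$ is finite-dimensional. We have $V_0=k$ because $\chi_0$ is negative, and $\bigcup_mV_m=R$ by exhaustiveness. On $V_0=k$ we take $f_0=1$.

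The inductive step goes as follows. Suppose a basis $(f_i)_{i<N_m}$ of $V_m$ has been built that is orthogonal for both $\chi_0$ and $\chi$ restricted to $V_m$. We want to extend it to $V_{m+1}$.

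The first step is to refine the filtration. Between $V_m$ and $V_{m+1}$ the norm $\chi_0$ takes only finitely many values; this holds because $V_{m+1}$ is finite-dimensional, so $\chi_0$ has finitely many jumps there. So it is enough to extend a doubly orthogonal basis from $W=\cF^{\la-}_{\chi_0}R\cap V_{m+1}$ to $W'=\cF^{\la}_{\chi_0}R$ at each jump $\la$.

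The key tool is a standard lemma: if $W\subset W'$ are finite-dimensional, $\chi$ is a norm on $W'$, and $W$ has a $\chi$-orthogonal basis, then one can choose a complement $U$ of $W$ in $W'$ such that $\chi(w+u)=\min\{\chi(w),\chi(u)\}$. One constructs this via $\chi$-orthogonal projection onto $W$, using a distance-maximizing representative of each class. This uses that over a trivially valued field, $\chi$ takes finitely many values on $W'$, so suprema are attained. Then we take a $\chi$-orthogonal basis of $U$, which exists by the finite-dimensional result applied to $\chi|_U$.

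Every $u\in U\setminus\{0\}$ satisfies $\chi_0(u)=-\la$. Indeed, $u\in W'\setminus W$, because $U\cap W=0$. Hence $\chi_0(w+\sum a_iu_i)=\min\{\chi_0(w),-\la\}$ whenever the $u$-part is nonzero, by the strict ultrametric inequality, since $\chi_0(w)>-\la$. This shows the new basis is $\chi_0$-orthogonal, and $\chi$-orthogonality follows from the choice of $U$. Both orthogonality statements are therefore preserved.

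Concatenating the bases over all jumps gives the basis $(f_i)$ of $R$. Any finite sum $\sum_{i\in I}a_if_i$ lies in some $V_m$, so both identities hold by construction. Finally, $\lim_i\chi_0(f_i)=-\infty$ because each $V_m$ contains only finitely many $f_i$. The boundedness of $\chi$ is not really needed beyond its values being finite.

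I expect the main obstacle to be the complement lemma, that is, producing a $\chi$-orthogonal complement of $W$ inside $W'$. The approach I would try first is to choose, for each vector in a basis of $W'/W$, a lift maximizing $\chi$; this maximum exists because only finitely many values occur. I would then orthogonalize these lifts among themselves using the finite-dimensional result applied to the quotient norm on $W'/W$.
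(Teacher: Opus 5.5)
Your proposal is correct and is essentially the paper's proof. You induct along the discrete jumps of $\chi_0$, note that any extension of the basis across a jump is automatically $\chi_0$-orthogonal, and get $\chi$-orthogonality by taking an orthogonal basis of the successive quotient for the residue norm and lifting each vector to a $\chi$-maximizing representative, so that $\chi(w+u)\le\bar\chi(\bar u)=\chi(u)$. One correction to the order in your last paragraph: orthogonalize in the quotient first and then lift each resulting basis vector maximally, because linear combinations of maximal lifts need not be maximal lifts.
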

\begin{proof}
    Our assumptions imply that $\Phi_{\chi_0}=-\chi_0(R\setminus 0)$ is a discrete subset of $\R_{\ge 0}$ of the form $\{\la_j\}_{j=0}^\infty$, where $0=\la_0<\la_1<\la_2<\dots$ and $\lim_{j\to\infty}\la_j=\infty$. Set $V_j\coloneqq\cF^{\la_j}_{\chi_0} R$ and $N_j\coloneqq\dim_kV_j$. We have $V_0=k$, and if $f_0\coloneqq1$, then $\{f_0\}$ is orthogonal for the restrictions of $\chi_0$ and $\chi$ to $V_0$. Inductively, suppose $j\ge0$ and that $\{f_i\}_{i=0}^{N_j-1}$ is a basis for $V_j$ orthogonal for both $\chi_0$ and $\chi$. Any extension of this basis to a basis of $V_{j+1}$ will be orthogonal for $\chi_0$, so it suffices to show that we can pick the extended basis orthogonal for $\chi$ as well. 
    
    To this end, consider the residue norm $\chi_{j+1}$ induced by $\chi$ on $V_{j+1}/V_j$. Pick a basis $\bar{f}_{N_j},\dots,\bar{f}_{N_{j+1}-1}$ of $V_{j+1}/V_j$ that is orthogonal for $\chi_{j+1}$, and lift each $\bar{f}_i$ to an element of $f_i\in V_{j+1}$ with $\chi(f_i)=\chi_{j+1}(f_i)$. We claim that the basis $\{f_i\}_{i<N_{j+1}}$ for $V_{j+1}$ is orthogonal for $\chi$. It suffices to check that $t:=\chi(\sum_{i<N_{j+1}}a_if_i)=\min_i\chi(a_if_i)=:s$ for any nonzero $a\in k^{N_{j+1}}$. Clearly $s\le t$. If $\chi(a_if_i)>s$ for all $i\ge N_j$, then $\chi(\sum_{i\ge N_j}a_if_i)>s$ and $\chi(\sum_{i<N_j}a_if_i)=\min_{i<N_j}\chi(a_if_i)=s$, so $t=s$ in this case. Now suppose $\min_{i\ge N_j}\chi(a_if_i)=s$. Then 
    \[
    t=\chi(\sum_0^{N_{j+1}-1}a_if_i)
    \le\chi_{j+1}(\sum_{i\ge N_j}a_i\bar{f}_i)
    =\min_{i\ge N_j}\chi_{j+1}(a_i\bar{f}_i)
    =\min_{i\ge N_j}\chi(a_if_i)
    =s,
    \]
    and we are done.
\end{proof}

\subsection{Homogeneous norms}
A norm $\chi$ is \emph{homogeneous} if $\chi(f^m)=m\chi(f)$ for all $f\in R$ and all $m\ge1$. For example, any valuation $v$ induces a homogeneous norm $\chi_v$, and we shall later see that any bounded negative norm can be reconstructed from such norms. 
\begin{defi}
    The \emph{homogenization} of a nonpositive norm $\chi$ is given by 
    \[
    \chi^\hom(f)=\sup_d\frac 1d \chi(f^d)=\lim_{d\to \infty} \frac 1d \chi(f^d).
    \]
\end{defi}
It is easy to see that a norm is homogeneous iff it equals its homogenization. Moreover, if $\chi'$ is a nonpositive homogeneous norm with $\chi'\ge\chi$, then $\chi'^{\hom}\ge\chi^\hom$.

The homogenization of a negative norm may fail to be negative; the homogenization of the norm in Example~\ref{exam:negnotbdd} is the zero norm. However, we will see in Corollary~\ref{cor:hombdd}
that the homogenization of a bounded negative norm is a bounded negative norm.

Further, equivalent $\Z$-valued bounded negative norms have the same homogenization:
\begin{lem}\label{lem:equivhom}
    Let $\chi\sim\chi'$ be two $\Z$-valued bounded negative norms. Then $\chi^\hom=\chi'^\hom$.
\end{lem}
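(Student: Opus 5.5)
Since $\chi\sim\chi'$, there is an integer $m_0\ge1$ such that $\cF^{\la}_\chi R=\cF^{\la}_{\chi'}R$ for every $\la\in m_0\Z_{>0}$ with $\la\ge m_0$. The plan is to compare $\chi(f^d)$ and $\chi'(f^d)$ up to a bounded error. We will show that for every $f\in R\setminus k$ the difference $|\chi(f^d)-\chi'(f^d)|$ is bounded independently of $d$. Dividing by $d$ and letting $d\to\infty$ then gives $\chi^\hom(f)=\chi'^\hom(f)$. The cases $f\in k$ are trivial: both norms vanish on $k^\times$ and are $+\infty$ at $0$.

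The key step is a pointwise comparison. Fix $g\in R\setminus\{0\}$ and put $\la\coloneqq-\chi(g)\in\Z_{\ge0}$, so that $g\in\cF^\la_\chi R$. Let $\la'$ be the smallest element of $m_0\Z$ with $\la'\ge\max\{\la,m_0\}$, so $\la'\le\la+2m_0$. Since the filtration is increasing, $g\in\cF^{\la'}_\chi R=\cF^{\la'}_{\chi'}R$, and therefore
\[
\chi'(g)\ge-\la'\ge\chi(g)-2m_0 .
\]
Exchanging the roles of $\chi$ and $\chi'$ gives the reverse estimate, so $|\chi(g)-\chi'(g)|\le 2m_0$ for all $g\ne0$. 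Here we use that both norms are $\Z$-valued, which makes $\la$ an integer and the rounding up to a multiple of $m_0$ cost at most $2m_0$. This bound holds without any assumption on the size of $\la$.

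Apply this with $g=f^d$. Then $|\frac1d\chi(f^d)-\frac1d\chi'(f^d)|\le 2m_0/d$, and the limit defining the homogenization exists for both norms (by the definition, via superadditivity of $d\mapsto\chi(f^d)$). Letting $d\to\infty$ yields $\chi^\hom=\chi'^\hom$.

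I see no real obstacle. The only point needing care is making sure the equivalence relation is used only at levels that are large and divisible, which the rounding step handles. Boundedness of the norms is not even needed, except to ensure that $\chi(f^d)$ is finite and the homogenization is well defined.
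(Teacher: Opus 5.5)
Your argument is correct and is essentially the paper's proof: both bound $|\chi(f^d)-\chi'(f^d)|$ by a constant independent of $d$, using equality of the two filtrations at large multiples of a fixed integer, and then divide by $d$ and let $d\to\infty$. The only difference is bookkeeping: the paper places $\chi(f^d)$ and $\chi'(f^d)$ in a common interval $[-ml,-(m-1)l)$, whereas you round up to a multiple of $m_0$ once in each direction.
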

\begin{proof}
    Assume there is $l\in \Z_{>0}$ such that $\cF^{ml}_\chi R=\cF^{ml}_{\chi'}R$ for all $m\in \Z_{>0}$.
    Pick $f\in R\setminus k$, and $d\gg 0$ and $m=m(d)$ with $-ml\le \chi(f^d)<-(m-1)l$. Since $\chi\sim\chi'$, we have $-ml\le \chi'(f^d)<-(m-1)l$. Thus
    \begin{align*}
    |\chi^\hom(f)-\chi'^\hom(f)|&\le |\chi^\hom(f)-\frac{\chi(f^d)}{d}|+|\frac{\chi(f^d)}{d}-\frac{\chi'(f^d)}{d}|
    +|\frac{\chi'(f^d)}{d}-\chi'^\hom(f)|\\
    &\le |\chi^\hom(f)-\frac{\chi(f^d)}{d}|+\frac ld+|\frac{\chi'(f^d)}{d}-\chi'^\hom(f)|\to 0
    \end{align*}
    as $d\to \infty$. This shows $\chi^\hom=\chi'^\hom$.
\end{proof}
\subsection{Norms of finite type}\label{sec:ftnorm}
Let $\chi$ be a negative norm on $R$. 
We denote by $\Phi_\chi\subset\R_+$ the subsemigroup of nonnegative values in the group generated by $-\chi(R\setminus 0)$. 
To $\chi$ we associate a \emph{Rees algebra}; this is the $\Phi_\chi$-graded $k[t^{\Phi_\chi}]$-algebra given by 
 \[
 \cR_\chi:=\bigoplus_{\la\in\Phi_\chi}t^\la\cF_\chi^\la R.
 \]
When $\chi$ is further weakly bounded, each graded piece in $\cR_\chi$ is finite dimensional.
\begin{lem}
    For any bounded negative norm $\chi$ on $R$, the Rees algebra $R_\chi$ is a faithfully flat $k[t^{\Phi_\chi}]$-algebra.
\end{lem}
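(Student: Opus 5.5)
The plan is to show that $\cR_\chi$ is a free $k[t^{\Phi_\chi}]$-module. Freeness gives flatness. Faithfulness will then follow because a nonzero free module over $A\coloneqq k[t^{\Phi_\chi}]$ is faithfully flat: $M\otimes_A \cR_\chi$ is a direct sum of copies of $M$, so it vanishes only if $M=0$. Here $A$ denotes the semigroup algebra of $\Phi_\chi$.

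To produce a basis, I will use the structure of $\chi$ coming from boundedness. Since $\chi$ is bounded, it is weakly bounded, so each $\cF^\la_\chi R$ is finite dimensional. The value set $-\chi(R\setminus 0)$ is a discrete subset $\{\la_j\}$ of $\R_{\ge0}$ with $\la_0=0$ and $\la_j\to\infty$. Applying Lemma~\ref{lem:commorth} with $\chi_0=\chi$ (any bounded norm will do as the second norm; we only use orthogonality for $\chi_0$), we obtain a $k$-basis $(f_i)_{i\ge0}$ of $R$ with $f_0=1$ that is orthogonal for $\chi$. Set $\mu_i\coloneqq-\chi(f_i)\in\Phi_\chi$.

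The key claim is that for every $\la\in\Phi_\chi$, the set $\{f_i : \mu_i\le\la\}$ is a $k$-basis of $\cF^\la_\chi R$. This is where orthogonality enters. If $f=\sum a_if_i$, then $\chi(f)=\min_{a_i\ne0}\chi(f_i)$. Hence $f\in\cF^\la_\chi R$ if and only if every $f_i$ appearing with $a_i\ne0$ has $\mu_i\le\la$. Consequently
\[
\cR_\chi=\bigoplus_{\la\in\Phi_\chi}\ \bigoplus_{i:\mu_i\le\la}k\,t^\la f_i=\bigoplus_i\ \bigoplus_{\la\in\Phi_\chi,\ \la\ge\mu_i}k\,t^\la f_i .
\]

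It remains to identify each summand with a free rank-one $A$-module, namely to show $\bigoplus_{\la\in\Phi_\chi,\la\ge\mu_i}k\,t^\la f_i=A\cdot t^{\mu_i}f_i$. This requires $\{\la\in\Phi_\chi:\la\ge\mu_i\}=\mu_i+\Phi_\chi$. This is the step I expect to be the main obstacle, because $\Phi_\chi$ is the set of nonnegative elements of a group $G\subset\R$, not an arbitrary semigroup. The inclusion $\mu_i+\Phi_\chi\subset\{\la\ge\mu_i\}$ is clear. For the converse, if $\la\in\Phi_\chi$ and $\la\ge\mu_i$, then $\la-\mu_i\in G$ and $\la-\mu_i\ge0$, so $\la-\mu_i\in\Phi_\chi$. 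This is exactly why $\Phi_\chi$ was defined as the nonnegative part of the group generated by the values.

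Thus $\cR_\chi\simeq\bigoplus_i A\,t^{\mu_i}f_i$ is free with basis $(t^{\mu_i}f_i)_i$, and it is nonzero because it contains $f_0=1$. Hence $\cR_\chi$ is faithfully flat over $A$.
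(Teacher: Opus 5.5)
Your proposal is correct and follows the paper's proof: both take a $\chi$-orthogonal basis $1=f_0,f_1,\dots$ from Lemma~\ref{lem:commorth} and read off that $\cR_\chi$ is free over $k[t^{\Phi_\chi}]$ with basis $\{t^{-\chi(f_i)}f_i\}_i$. You also verify that $\{\la\in\Phi_\chi\mid \la\ge\mu_i\}=\mu_i+\Phi_\chi$, using that $\Phi_\chi$ is the nonnegative part of a group; the paper leaves this step implicit.
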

\begin{proof}
    By Lemma~\ref{lem:commorth} there exists a $\chi$-orthogonal basis $1=f_0,f_1,\dots$ of $R$ as a $k$-vector space. Thus $\cR_\chi$ is free as a $k[t^{\Phi_\chi}]$-module, with basis $\{t^{-\chi(f_i)}f_i\}_0^\infty$, and the result follows. 
\end{proof}
The \emph{graded residue ring} of $\chi$ is the $\Phi_\chi$-graded $k$-algebra defined by 
\[
\gr_\chi R\coloneqq \bigoplus_{\la\in\Phi_\chi} \frac{\cF^\la_\chi R}{\cF^{<\la}_\chi R}.
\]
Note that $\chi$ is homogeneous (resp.\ a valuation) iff $\gr_\chi R$ is reduced (resp.\ an integral domain). 

For a weakly bounded negative norm $\chi$, as in~\cite{nakstab1}, there is a canonical approximation as we now describe.
For $\la\gg0$, the canonical $k$-algebra homomorphism $\Sym\cF_\chi^\la R\to R$ is surjective. The restriction of $\chi$ to $\cF^\la_\chi R$ then induces a canonical norm on $\Sym\cF^\la_\chi R$, which in turn induces a residue norm $\chi^{(\la)}$ on $R$.
Concretely, let $1, f_1,\dots,f_N$ be an orthogonal basis for the normed $k$-vector space $\cF^\la_\chi R$, so that $\chi(\sum_{i=1}^Na_if_i)=\min\{\chi(f_i)\mid a_i\ne0\}$ for $a\in k^N$. If $g\in R$, then we have 
\[
\chi^{(\la)}(g)=\sup\min_{\a\in A}\sum_{i=1}^N\a_i\chi(f_i),
\]
where the supremum is taken over all representations $g=\sum_{\a\in A}a_\a f^\a$, where $A\subset\Z_{\ge0}^N$ is a finite subset, and $a_\a\ne0$ for all $\a\in A$. As $\chi(f_i)<0$ for all $i>0$, the supremum must be attained. From the construction, we see
\begin{lem}
    If $0<\mu\le\la$, then $\chi^{(\mu)}\le\chi^{(\la)}\le\chi$.
    Further, we have $T(\chi^{(\la)}; \chi')\to T(\chi; \chi')$ for all seminorms $\chi'$ on $R$.
\end{lem}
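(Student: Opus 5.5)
The plan is to work directly with the explicit description of $\chi^{(\la)}$ in terms of representations $g=\sum_{\a\in A}a_\a f^\a$, and to prove the three inequalities and the convergence separately. The underlying fact is that $\chi^{(\la)}$ is the largest norm one can build from the restriction of $\chi$ to $\cF^\la_\chi R$ using only the ultrametric and multiplicative axioms.

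\emph{Step 1: $\chi^{(\la)}\le\chi$.} Take any representation $g=\sum_{\a\in A}a_\a f^\a$ in terms of an orthogonal basis $1,f_1,\dots,f_N$ of $\cF^\la_\chi R$. The ultrametric inequality and multiplicativity of $\chi$ give
\[
\chi(g)\ge\min_{\a\in A}\chi(f^\a)\ge\min_{\a\in A}\sum_i\a_i\chi(f_i).
\]
Taking the supremum over all representations yields $\chi(g)\ge\chi^{(\la)}(g)$.

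\emph{Step 2: $\chi^{(\mu)}\le\chi^{(\la)}$ for $\mu\le\la$.} I will choose the bases compatibly. Since $\cF^\mu_\chi R\subset\cF^\la_\chi R$ is a step of the filtration, the inductive construction in Lemma~\ref{lem:commorth} (or simply extending any basis of a smaller filtration step) shows that an orthogonal basis $1,f_1,\dots,f_M$ of $\cF^\mu_\chi R$ extends to an orthogonal basis $1,f_1,\dots,f_N$ of $\cF^\la_\chi R$. Every representation of $g$ by monomials in $f_1,\dots,f_M$ is then also a representation by monomials in $f_1,\dots,f_N$, with the same value of $\min_\a\sum_i\a_i\chi(f_i)$. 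So the supremum defining $\chi^{(\la)}(g)$ runs over a larger set, which gives the inequality.

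One point needs checking: $\chi^{(\la)}$ must not depend on the choice of orthogonal basis. This should follow because it is the residue norm of the canonical norm on $\Sym\cF^\la_\chi R$, but I would verify this briefly. I expect it to be the only mildly delicate point.

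\emph{Step 3: convergence of $T$.} By Step~1 we have $\chi^{(\la)}\le\chi<0$ on $R\setminus k$. Hence $C\chi^{(\la)}\le C\chi$, and therefore
\[
T(\chi^{(\la)};\chi')\le T(\chi;\chi').
\]
By Step~2, $T(\chi^{(\la)};\chi')$ is increasing in $\la$.

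For the lower bound, note that $\chi^{(\la)}(f)=\chi(f)$ for every $f\in\cF^\la_\chi R$: the trivial representation $f=\sum_i a_if_i$ gives $\chi^{(\la)}(f)\ge\chi(f)$ by orthogonality, and Step~1 gives the reverse inequality. Consequently
\[
T(\chi^{(\la)};\chi')\ge\sup_{f\in\cF^\la_\chi R\setminus k}\frac{\chi'(f)}{\chi(f)}.
\]
As $\la\to\infty$, exhaustiveness of the filtration makes the right-hand side increase to $T(\chi;\chi')$. Allowing the value $+\infty$ is harmless here: if $\chi'(f)=+\infty$ for some $f\ne0$, both sides are $\infty$ as soon as $f\in\cF^\la_\chi R$. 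Combining the upper and lower bounds gives $T(\chi^{(\la)};\chi')\to T(\chi;\chi')$.
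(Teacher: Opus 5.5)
Your proposal is correct and is essentially the paper's argument. The paper calls the first assertion clear, and you supply the details via compatible orthogonal bases and the canonical description through $\Sym\cF^\la_\chi R$. For the second assertion the paper uses exactly your two ingredients, $\chi^{(\la)}\le\chi$ and $\chi^{(\la)}=\chi$ on $\cF^\la_\chi R$, together with exhaustiveness to interchange the suprema.

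One small correction concerns your parenthetical on infinite values. If $\chi'(f)=+\infty$, then $\chi'(f)/\chi(f)=-\infty$ and the constraint $C\chi(f)\le\chi'(f)$ is vacuous. So such $f$ never contribute to $T$ at all, rather than making both sides infinite as you claim; this is harmless to the rest of the argument.
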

\begin{proof}
    The first assertion is clear. Hence $\lim_\la T(\chi^{(\la)};\chi')=\sup_\la T(\chi^{(\la)};\chi')\in [0,\infty]$. Now 
    \[
    \sup_\la T(\chi^{(\la)};\chi') = \sup_\la\sup_f \frac{\chi'(f)}{\chi^{(\la)}(f)} = \sup_f\sup_\la\frac{\chi'(f)}{\chi^{(\la)}(f)}=\sup_f\frac{\chi'(f)}{\chi(f)} = T(\chi; \chi').
    \]
     Since $T\ge0$, terms with $\chi'(f)>0$ do not affect the supremum, so we may restrict to $\chi'(f)\le0$.
\end{proof}

The following class of norms is of crucial importance.
\begin{prop}\label{prop:ft}
    The following conditions on a negative norm $\chi$ on $R$ are equivalent:
    \begin{itemize}
        \item[(i)]
        there exists a surjection $k[z_1,\dots,z_N]\to R$, such that $\chi$ is the residue norm induced by a negative monomial valuation on $k[z_1,\dots,z_N]$;
    \item[(ii)]
         $R_\chi$ is finitely generated $k[t^{\Phi_\chi}]$-algebra;
    \item[(iii)]
        $\gr_\chi R$ is a finitely generated $k$-algebra.
    \item[(iv)] $\chi$ is weakly bounded and $\chi=\chi^{(\la)}$ for $\la\gg0$. 
    \end{itemize}
    Moreover, when these conditions are satisfied, $\chi$ is a bounded negative norm, and
    $\gr_\chi R$ has the same Krull dimension as $R$.
\end{prop}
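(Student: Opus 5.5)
The plan is to prove the cycle (i)$\Rightarrow$(ii)$\Rightarrow$(iii)$\Rightarrow$(iv)$\Rightarrow$(i), and then deduce boundedness and the dimension statement from (i) and (ii).

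For (i)$\Rightarrow$(ii), write $\chi$ as the residue of a negative monomial valuation $w$ on $S=k[z_1,\dots,z_N]$ with weights $w(z_i)=-a_i<0$. The Rees algebra $\cR_w$ is the monoid algebra generated by $t^{a_i}z_i$ and $t^{\Phi}$. The surjection $S\to R$ induces a map $\cR_w\to\cR_\chi$, which is surjective in each degree by the very definition of the residue norm: since $w$ takes values in the finitely generated group $\sum\Z a_i$, the supremum defining $\chi$ is attained. Hence $\cR_\chi$ is a quotient of a finitely generated $k[t^{\Phi_\chi}]$-algebra. Here $\Phi_\chi$ is finitely generated, because values lie in $\sum\Z_{\ge0}a_i$.

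For (ii)$\Rightarrow$(iii), use $\gr_\chi R=\cR_\chi/(t^{\la}\cR_\chi)_{\la>0}$, with the appropriate care when $\Phi_\chi$ is a finitely generated monoid; choose generators of $\Phi_\chi$.

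For (iii)$\Rightarrow$(iv), pick homogeneous generators $\bar f_1,\dots,\bar f_N$ of $\gr_\chi R$ of degrees $\la_i>0$, together with lifts $f_i$. First I would show that the $f_i$ generate $R$ and that every $g$ admits a representation $g=\sum a_\a f^\a$ with $\min_\a\sum\a_i\chi(f_i)=\chi(g)$. This goes by descending induction on the filtration level: subtract a polynomial in the $f_i$ lifting the leading term $\bar g$, which strictly increases $\chi$. Since $\gr_\chi R$ is finitely generated in positive degrees, each graded piece is finite-dimensional and the value set is discrete, so the process terminates. This gives weak boundedness and $\chi=\chi^{(\la)}$ for $\la\ge\max\la_i$.

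For (iv)$\Rightarrow$(i), choose an orthogonal basis $1,f_1,\dots,f_N$ of $\cF^\la_\chi R$ and the monomial valuation with $w(z_i)=\chi(f_i)$. Then $\chi^{(\la)}$ is, by its explicit formula, exactly the residue norm of $w$. The main subtlety here is that the formula uses the minimum over a representation rather than $w$ of a polynomial. These agree because $w$ is monomial and the supremum is taken over all representations; I expect this matching, together with the termination argument in (iii)$\Rightarrow$(iv), to be the main obstacle.

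For the remaining statements, boundedness follows from (i): choose the weights with $\min a_i=a>0$ and $\max a_i=b$. Then $w$ is comparable to $-\deg$, so $\chi$ is comparable to the residue of the degree norm on $k[z]$. By Lemma~\ref{lem:embnormequiv}, that residue is equivalent to the geometric norm $\chi_{\bar X,D}$ of the closure in $\P^N$. This gives $C\chi'\le\chi\le C^{-1}\chi'$, using Lemma~\ref{lem:geomnormbd} for one side. For the Krull dimension, $\cR_\chi$ is a finitely generated, flat (indeed free, by Lemma~\ref{lem:commorth}) algebra over $k[t^{\Phi_\chi}]$, with generic fiber $R$ and special fiber $\gr_\chi R$. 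After reducing to $\Phi_\chi\subset\Z_{\ge0}$ (rationally generated monoid: replace by a rank-one sub-grading via a generic linear functional), flatness and equidimensionality of fibers of a flat finite type family over a curve give $\dim\gr_\chi R=\dim R$. Alternatively, compare Hilbert functions $\dim\cF^m_\chi R$, which grow like $m^n$ by boundedness.
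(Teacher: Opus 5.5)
Your cycle (i)$\Rightarrow$(ii)$\Rightarrow$(iii)$\Rightarrow$(iv)$\Rightarrow$(i) and your boundedness argument are essentially the paper's. The steps you expect to be the main obstacle (termination in (iii)$\Rightarrow$(iv), and matching $\chi^{(\lambda)}$ with the residue of $w$) are in fact routine.

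Two small corrections are needed in (i)$\Rightarrow$(ii).
\begin{itemize}
\item By definition, $\Phi_\chi$ is the nonnegative part of the \emph{group} generated by the values. It is therefore not a finitely generated monoid once that group has rank $\ge2$: weights $1,\sqrt2$ make it dense in $\R_+$. What is discrete is $\sum_i\Z_{\ge0}a_i$, and that is what makes the supremum defining the residue norm attained.
\item $\Phi_\chi$ can be strictly smaller than the corresponding monoid for $w$. For example, take $R=k[x]$ and $z_1,z_2\mapsto x$ with weights $-1,-\sqrt2$; then $\chi=-\deg$. So $\cR_w\to\cR_\chi$ is not a graded map as written. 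Either normalize as the paper does (discard constant $f_i$ and arrange $\chi(f_i)=w(z_i)$), or check directly that the $t^{-\chi(f_i)}f_i$ generate. An optimal lift of $g\in\cF^\mu_\chi R$ gives $g=\sum_\alpha c_\alpha f^\alpha$ with $\nu_\alpha:=-\sum_i\alpha_i\chi(f_i)\le\mu$, and $\mu-\nu_\alpha\in\Phi_\chi$.
\end{itemize}

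The genuine gap is your first argument for $\dim\gr_\chi R=\dim R$. The reduction to $\Phi_\chi\subset\Z_{\ge0}$ ``via a generic linear functional'' does not exist when the value group $\Gamma$ has rank $\ge2$. In that case $\Gamma$ is dense in $\R$. A homomorphism $\ell\colon\Gamma\to\Z$ positive on $\Gamma\cap\R_{>0}$ would be strictly increasing. It would then send the infinitely many elements of $\Gamma\cap(0,\gamma)$ to distinct integers in $(0,\ell(\gamma))$, which is impossible. So no specialization $t^\lambda\mapsto s^{\ell(\lambda)}$ to a curve keeps $\gr_\chi R$ as its special fiber. Passing to rational weights changes the norm, so you would need a separate (Gr\"obner-fan type) argument that the graded ring is unchanged.

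The paper sidesteps this by base changing $\cR_\chi$ to the valuation ring of $t^\lambda\mapsto\lambda$ on $\mathrm{Frac}(k[t^{\Phi_\chi}])$. This valuation ring is non-discrete in general, and its residue field $\kappa$ gives special fiber $\gr_\chi R\otimes_k\kappa$. The paper then uses that a flat finite type scheme over an arbitrary valuation ring has special and generic fibers of the same dimension (Stacks Project, Tag 0B2J).

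Your Hilbert-function alternative does close the gap, and is more elementary than the paper's route, once the key step is supplied.
\begin{itemize}
\item Put $B=\gr_\chi R$, generated by classes $\bar f_i$ of degrees $\lambda_i>0$. Set $V=k+\sum_ik\bar f_i$ and $B_{\le m}=\bigoplus_{\lambda\le m}B_\lambda$.
\item Since $\chi$ jumps only on a discrete set, $\dim_k\cF^m_\chi R=\dim_kB_{\le m}$. By boundedness this is $\asymp m^n$.
\item On the other hand, $V^{\lfloor m/\max_i\lambda_i\rfloor}\subset B_{\le m}\subset V^{\lceil m/\min_i\lambda_i\rceil}$.
\item For $j\gg0$, $\dim_kV^j$ is a polynomial in $j$ of degree $\dim B$, since Gelfand--Kirillov dimension equals Krull dimension for finitely generated commutative algebras.
\end{itemize}
Hence $\dim B=n$. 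This route trades the flatness and valuation-ring input for the boundedness you already derived from (i).
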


\begin{defi}\label{defi:ft}
    A negative norm $\chi$ is \emph{of finite type} if the equivalent conditions above are satisfied.
\end{defi}
\begin{defi}
    Given a nonzero subgroup $\Lambda\subset\R$, we write $\cT_\Lambda$ for the set of negative norms $\chi$ of finite type with $\chi(R\setminus 0)\subset\Lambda$. We also write $\cT_\Lambda^\hom\subset\cT_\Lambda$ for the subset of homogeneous norms.
\end{defi}
\begin{rmk}
We mainly have in mind the cases $\La\in\{\Z,\Q,\R\}$. More generally, if $\La\subset\R_{<0}$ is any subset, one could define $\cT_\La$ as the set of norms of finite type with values in $\La$.
\end{rmk}
\begin{proof}[Proof of Proposition~\ref{prop:ft}]
    We first note that if $\chi$ satisfies~(i), then $\chi$ is a bounded negative norm (and in particular weakly bounded), as follows from Corollary~\ref{cor:resbnn}. Moreover, we may assume $f_i:=\pi(z_i)\not\in k$ for all $i$. As the numbers $t_i:=-v(z_i)$ generate a discrete subsemigroup of $\R_+$, we also see that every nonzero $g\in R$ can be written as $g=\sum_{\a\in A} a_\a f^\a$, where $A\subset\Z_+^N$ is a finite subset, $a_\a\ne0$ for all $\a$, and $\chi(g)=\min_\a\sum_i\a_iv(z_i)$. Using this, it is easy to see that we may also assume $\chi(f_i)=v(z_i)$ for all $i$. 
    
    \smallskip
    We will prove the implications (i)$\Rightarrow$(ii)$\Rightarrow$(iii)$\Rightarrow$(iv)$\Rightarrow$(i).
    First suppose~(i) holds, and consider a surjective $k$-algebra homomorphism $\pi\colon k[z_1,\dots,z_N]\to R$ such that $\chi$ is the residue norm from a negative monomial valuation $v$. Set $f_i:=\pi(z_i)$. As explained above, we may assume $\chi(f_i)=v(z_i)<0$ for all $i$. Write $\la_i:=-\chi(f_i)>0$. Then $\{t^{\la_i}f_i\}$ generate $R_\chi$ over $k[t^{{\Phi_\chi}}]$. Indeed, given $\mu\in\Phi_\chi$ and $g\in \cF^\mu_\chi R$, there is a presentation $g=\sum_\alpha a_\alpha\prod f_i^{\alpha_i}$ such that $\chi(g)=\min_\alpha\sum\alpha_i\chi(f_i)\ge -\mu$, and thus 
    $t^\mu g=\sum_\alpha a_\a t^{\mu-\sum_i\a_i\la_i} \prod (t^{\la_i}f_i)^{\alpha_i}$. 

    \smallskip
    Assuming~(ii) we have~(iii) through the identification $R_\chi/(t^{\Phi_\chi\setminus\{0\}})R_\chi\simeq \gr_\chi R$.   

  \smallskip
  To prove (iii)$\Rightarrow$(iv), suppose $\gr_\chi R$ is finitely generated, say by homogeneous generators $\bar f_0,\dots,\bar f_N$, where $f_j\in R$. We may assume $f_0=1$. 
  If we set $\la_i\coloneqq-\chi(f_i)$, then $\la_0=0$ and $\la_i>0$ for $1\le i\le N$, so the subsemigroup $\Gamma\subset\R_+$ generated by the $\la_i$ is a discrete subset of $\R_+$ containing $-\chi(R\setminus 0)$. 
  We claim that 
  \begin{equation}\label{equ:filt}
  \cF_\chi^t R=\mathrm{span}_k\{f^\a\mid \sum_i\a_i\la_i\le t\}
  \end{equation}
  for any $t>0$. Indeed, pick any nonzero $g\in\cF_\chi^t R$. Then we can write $\bar g = \sum_\alpha a_\a\prod_i \bar f_i^{\alpha_i}$, where $a_\a\ne0$ and $\sum_i\a_i\la_i=-\chi(g)$ for all $\a$. If $g=h\coloneqq\sum_\alpha a_\a \prod_i f_i^{\alpha_i}$, then we are done. Otherwise, we repeat the procedure with $g'\coloneqq g-h$. We have $-\chi(g')\in[0,-\chi(g))\cap\Gamma$. As $\Gamma$ is a discrete subset of $\R_+$, the process terminates after finitely many steps. 

  Now~\eqref{equ:filt} immediately implies that $\chi$ is weakly bounded. Pick $\la:=\max_i\la_i$, so that $f_i\in\cF_\chi^\la R$ for all $i$. We claim that $\chi^{(\la)}=\chi$. It suffices to prove $\chi^{(\la)}(g)\ge\chi(g)$ whenever $\chi(g)\le-\la$. But applying~\eqref{equ:filt} with $t=-\chi(g)$, we find a representation $g=\sum a_\a f^\a$ such that $a_\a\ne0$ and $\sum_i\a_i\la_i\le t$ for all $\a$, and this implies $\chi^{(\la)}(g)\ge\chi(g)$.

\smallskip
The final implication (iv)$\Rightarrow$(i) can be proved as follows. Assume $\chi$ is weakly bounded and that there exists $\la>0$ such that $\cF^\la_\chi R$ generates $R$, and $\chi^{(\la)}=\chi$. Let $f_0, \dots, f_N$ be an orthogonal basis for $\cF^\la_\chi R$, ordered such that if $\la_i=-\chi(f_i)$, then $\la_0\le\dots\la_N$. Then $\la_0=0$ whereas $\la_i>0$ for $i>0$, and after scaling we have $f_0=1$. Let $v$ be the monomial valuation on $S\=k[z_1, \dots, z_N]$ with $v(z_i)=-\la_i$. Then the $k$-algebra homomorphism $\pi\colon S\to R$ given by $\pi(z_i)=f_i$ is surjective, and $\chi$ is the residue norm induced by $v$. This proves~(iv)$\Rightarrow$(i).

\medskip
Assuming~(i)--(iv), let us finally prove that $\gr_\chi R$ has the same Krull dimension as $R$. Set $A:=k[t^{\Phi_\chi}]$ and $K:=\mathrm{Frac}(A)$. We know that the Rees algebra $\cR_\chi$ is a faithfully flat $A$-algebra of finite type. As the filtration is exhaustive, we have
\[
\cR_\chi\otimes_A K\simeq R\otimes_k K,
\]
which is a domain of Krull dimension $\dim R$.

Let $w$ be the monomial valuation on $A$ defined by $w(t^\lambda)=\lambda$ for $\lambda\in\Phi_\chi$. Denote by $R_w\subset K$ the corresponding valuation ring and by $\kappa(w)$ its residue field. Then
\[
\Spec(\cR_\chi\otimes_A R_w)\to\Spec R_w
\]
is of finite type and faithfully flat. Flatness gives an injection
\[
\cR_\chi\otimes_A R_w\hookrightarrow\cR_\chi\otimes_A K,
\]
so $\cR_\chi\otimes_A R_w$ is a domain. By~\cite[Tag~0B2J]{stacks-project}, its special fiber has the same dimension as its generic fiber, which, as we have seen, has dimension $\dim R$. Since the residue map $A\to\kappa(w)$ sends $t^\lambda$ to zero for every $\lambda>0$, the special fiber is
\[
(\cR_\chi\otimes_A R_w)\otimes_{R_w}\kappa(w)
\simeq \gr_\chi R\otimes_k\kappa(w).
\]
As extension of the ground field preserves Krull dimension, we conclude that $\dim\gr_\chi R=\dim R$.
\end{proof}
\begin{cor}\label{cor:fgqm}
    Any negative valuation of finite type is quasimonomial.
\end{cor}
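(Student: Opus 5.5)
The plan is to combine characterization~(i) of Proposition~\ref{prop:ft} with the fact that $\chi$ is a valuation. Let $\chi=\chi_v$ be the norm of a negative valuation $v$ that is of finite type. By~(i), there is a surjection $\pi\colon S=k[z_1,\dots,z_N]\to R$ such that $\chi$ is the residue norm of a negative monomial valuation $w$ on $S$. As in the proof of Proposition~\ref{prop:ft}, I will arrange that $\chi(f_i)=w(z_i)$ for $f_i=\pi(z_i)$. The goal is to show that the transcendence degree of the residue field of $v$ plus the rational rank of its value group equals $n=\dim X$. This is the Abhyankar equality, and for valuations on $k(X)$ it is equivalent to $v$ being quasimonomial.

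The key step is to compare the residue ring $\gr_v R=\gr_\chi R$ with the valuation data. Since $v$ is a valuation, $\gr_v R$ is an integral domain, graded by the value semigroup $\Phi_\chi$. By the final assertion of Proposition~\ref{prop:ft}, its Krull dimension equals $n$, and by~(iii) it is a finitely generated $k$-algebra. For a graded domain that is finitely generated and graded by a torsion-free group, the standard formula gives
\[
\dim\gr_v R=\operatorname{rat.rk}(\Gamma')+\trdeg_k\bigl((\gr_vR)_0^{\mathrm{frac}}\bigr),
\]
where $\Gamma'$ is the group generated by the degrees and the second term is the transcendence degree of the degree-zero part of the homogeneous fraction field. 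The homogeneous fraction field of $\gr_vR$ is $\kappa(v)[\Gamma_v]$ (twisted). Its degree-zero part embeds in the residue field $\kappa(v)$, and its degree group is the value group of $v$ on $k(X)$, because $v$ is determined on $R$. Hence $n=\dim\gr_vR\le\operatorname{rat.rk}(\Gamma_v)+\trdeg\kappa(v)$. Abhyankar's inequality gives the reverse bound, so equality holds and $v$ is Abhyankar.

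Finally, I will invoke the standard result that in characteristic zero Abhyankar valuations are exactly the quasimonomial ones, via a log resolution on a suitable model (Ein--Lazarsfeld--Smith, Jonsson--Musta\c{t}\u{a}). I expect the main obstacle to be identifying the degree-zero part of the homogeneous fraction field of $\gr_vR$ with $\kappa(v)$, or at least bounding its transcendence degree by $\trdeg\kappa(v)$. The subtlety is that $v$ is centered at infinity, so $\kappa(v)$ is defined via $\{v\ge0\}\subset k(X)$. My approach is to write each residue class as a ratio $f/g$ with $f,g\in R$ and $v(f)=v(g)$, which is possible because $\operatorname{Frac}R=k(X)$. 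This shows equality, and no more than the inequality is needed.
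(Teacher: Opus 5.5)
Your argument is correct and is essentially the paper's route. The corollary is deduced from the final assertion of Proposition~\ref{prop:ft}, that $\gr_v R$ is a finitely generated domain of Krull dimension $n$. This forces equality in Abhyankar's inequality, and in characteristic zero Abhyankar valuations are quasimonomial. Your opening use of characterization~(i), arranging $\chi(f_i)=w(z_i)$, is never used afterwards and can be dropped.
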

\begin{prop}\label{prop:fthom}
    If $\chi\in \cT_\Lambda$, then $\chi^\hom\in\cT^\hom_{\Q\Lambda}$.
\end{prop}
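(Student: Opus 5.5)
The plan is to identify the Rees algebra of $\chi^\hom$ with an integral closure of $\cR_\chi$, and to deduce finite generation from the finiteness of normalization. By Proposition~\ref{prop:ft}, $\chi$ is a bounded negative norm. Pick generators $f_1,\dots,f_N$ of $R$ as in Proposition~\ref{prop:ft}(i), with $\la_i\=-\chi(f_i)\in\Lambda_{>0}$. Let $\Gamma\subset\Lambda$ be the finitely generated free group generated by the $\la_i$. Then $\Phi_\chi\subset\Gamma_{\ge0}$, and $\cR_\chi$ is a finitely generated $k$-subalgebra of the domain $R[t^{\Gamma}]$. In particular $\cR_\chi$ is a domain.

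\textbf{Step 1: an integrality criterion.} For $f\in R$ and $\la\in\Q\Gamma_{\ge0}$, write $\la=p/q$ with $p\in\Gamma$ and $q\in\Z_{>0}$. I claim
\[
\chi^\hom(f)\ge-\la \iff t^{p}f^{q}\ \text{is integral over}\ \cR_\chi\ \text{inside}\ R[t^\Gamma].
\]
For ``$\Leftarrow$'', I would take a homogeneous monic equation $(t^pf^q)^m=\sum_{j<m}t^{(m-j)p}a_j(t^pf^q)^j$ with $\chi(a_j)\ge-(m-j)p$. Then I would run the argument of Lemma~\ref{lem:charFS} with $-\chi$ in place of $\log^+|\cdot|$, applied to powers of $f^q$, to get $\chi^\hom(f^q)\ge -p$. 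For ``$\Rightarrow$'', I would use that $\chi(f^{d})\ge -d\la-C$ for all $d$, where $C$ is uniform. This bound should follow from $\chi=\chi^{(\la_{\max})}$ and the discreteness of $\Gamma_{\ge0}\cap[0,c]$. From it, the powers of $t^pf^q$ lie in a finitely generated $\cR_\chi$-submodule of $R[t^\Gamma]$, which gives integrality.

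\textbf{Step 2: bounded denominators.} Let $\tilde\cR$ be the integral closure of $\cR_\chi$ in its fraction field. Since $\cR_\chi$ is a finitely generated domain over $k$, $\tilde\cR$ is a finite $\cR_\chi$-module. It is also graded, by the group $\Gamma'\subset\Q\Gamma$ generated by the degrees of its finitely many module generators. Hence there is $d\ge1$ such that every homogeneous element of $\tilde\cR$ integral over $\cR_\chi$ has degree in $d^{-1}\Gamma$. Combining this with Step~1 (and a version of Step~1 with $t^{p/q}$ adjoined), I expect that $\chi^\hom$ takes values in $d^{-1}\Gamma\subset\Q\Lambda$, and that
\[
\cR_{\chi^\hom}=\text{integral closure of }\cR_\chi\ \text{in}\ R[t^{d^{-1}\Gamma}].
\]
This is a finite $\cR_\chi$-module, hence a finitely generated $k[t^{\Phi_{\chi^\hom}}]$-algebra. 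Proposition~\ref{prop:ft}(ii) then shows that $\chi^\hom$ is of finite type. Homogeneity is automatic from the definition of $\chi^\hom$. Negativity follows from $\chi^\hom\le C^{-1}\chi'$ for a geometric norm $\chi'$, using boundedness of $\chi$ and homogeneity of geometric norms up to equivalence; equivalently, $\chi^\hom\le\chi^{\hom}_{\bar X,D}<0$.

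\textbf{Main obstacle.} The main obstacle is the exact identification in Step~2. The difficulty is to go from the supremum defining $\chi^\hom$ to an actual integral equation, in the ``$\Rightarrow$'' direction of Step~1: one must rule out that $\chi^\hom(f)$ is a limit that is not attained. Working with the finite normalization, where values are forced into the discrete set $d^{-1}\Gamma_{\ge0}$, should settle this. Indeed, $\chi^\hom(f)$ equals the maximal $\la\in d^{-1}\Gamma$ with $t^\la f\in\tilde\cR$, and by discreteness this maximum is attained.
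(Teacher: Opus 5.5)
\textbf{Verdict: genuine gap.} Your argument only works when the group $\Gamma$ generated by the $\la_i$ is cyclic, and those are exactly the steps that carry the proof.

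\textbf{The non-cyclic case.} Since $1\in\cF^\mu_\chi R$ for every $\mu\ge 0$, the Rees algebra $\cR_\chi$ contains $k[t^{\Phi_\chi}]=k[t^{\Gamma_{\ge0}}]$. Suppose $\Gamma$ has rank at least $2$. An example is a residue norm of the monomial valuation with weights $-1,-\sqrt2$; it lies in $\cT_\R$, which is the case the paper later needs. Then $\Gamma_{\ge0}$ is dense in $\R_+$ and lies in no finitely generated submonoid. Two consequences follow.
\begin{itemize}
\item $\cR_\chi$ is not a finitely generated $k$-algebra. It is not even Noetherian: the ideal generated by the $t^\mu$, $\mu\in\Gamma_{>0}$, is not finitely generated, as one sees by comparing degrees.
\item The finiteness of normalization you invoke in Step~2 is therefore unavailable. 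Your closing appeal to the discreteness of $d^{-1}\Gamma_{\ge0}$ also fails, since that set is dense.
\end{itemize}
No routine change of Rees algebra fixes this. You need an input controlling integral closure over the non-Noetherian base $k[t^{\Gamma_{\ge0}}]$, that is, over a non-discrete value group.

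The paper supplies this input by completion. It realizes $(\hat R,\chi^{\hom})$ as a reduced $\Lambda$-strict $k$-affinoid algebra, a finite quotient of $k\{z;\xi\}$. The graded-reduction theory behind \cite[Theorem~2.3]{nakstab1} then gives three things:
\begin{itemize}
\item $\chi^{\hom}(R\setminus 0)\subset\Q\Lambda$;
\item $\chi\le\chi^{\hom}\le\chi+C$;
\item $\gr_{\chi^{\hom}}R$ is finite over $\gr_{v_\xi}k[z]$.
\end{itemize}

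\textbf{The cyclic case.} Even for $\Gamma=\Z\gamma$, where $\cR_\chi$ is a finitely generated domain, Step~2 as written does not bound denominators. Homogeneous elements of the integral closure of $\cR_\chi$ in its own fraction field already have degrees in $\Gamma$. So finiteness of that closure says nothing about $\chi^{\hom}$ taking values in $d^{-1}\Gamma$, and ``the maximum is attained by discreteness'' presupposes the conclusion.

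What works here is the Rees-valuation argument. Let $\bar\cR$ be the normalization of $\cR_\chi$, and let $w_1,\dots,w_s$ be the valuations attached to the finitely many height-one primes of $\bar\cR$ containing $t^\gamma$. Since $\bar\cR$ is a Krull domain and $\bar\cR[t^{-\gamma}]=\bar R[t^{\pm\gamma}]$, Step~1 yields $\chi^{\hom}=\gamma\min_j w_j/w_j(t^\gamma)$ on $R$. This gives bounded denominators. Then $\cR_{\chi^{\hom}}$ is the integral closure of $\cR_\chi$ in $R[t^{\gamma/d}]$, hence finite over $\cR_\chi$.

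\textbf{Step 1 and negativity.} The uniform bound $\chi(f^d)\ge -d\la-C$ you use for ``$\Rightarrow$'' in Step~1 is itself a Rees-type theorem. It is essentially equivalent to the finiteness you are trying to prove, and does not follow from $\chi=\chi^{(\la_{\max})}$ plus discreteness. That direction is better done via the valuative criterion for integral closure. Your negativity argument via $\chi^{\hom}\le C^{-1}\chi^{\hom}_{\bar X,D}=C^{-1}\min_i b_i^{-1}\ord_{E_i}$ is fine.
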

\begin{proof}
    The same proof as in~\cite[Theorem 2.3]{nakstab1} shows the finite generation property. Indeed, if $\pi\colon k[z]\coloneqq k[z_1,\dots,z_N]\to R$ is the map inducing $\chi$ as the residue  norm of $v_\xi$, then after completion, there is a finite surjection $\hat\pi\colon k\{z;\xi\}\to \hat R$ making $(\hat R, \chi^\hom)$ a reduced $\Lambda$-strict $k$-affinoid algebra and that $\chi^\hom(R\setminus 0)\subset \Q\Lambda$. Same argument shows that there exists $C>0$ with $\chi\le \chi^\hom\le \chi+C$. In particular, if $\chi^\hom(f)=0$ for some $f\in R\setminus k$, then $\chi(f^m)\ge-C$ for all $m\ge 1$, and $\dim_k\cF^C_\chi R=\infty$, which is not possible since $\chi$ is bounded. It follows that $\chi^\hom$ is a negative norm.    
    Moreover, $\gr_{v_\xi}k[z]\cong \gr_{v_\xi}k\{z;\xi\}\to \gr_{\chi^\hom}\hat R\cong\gr_{\chi^\hom}R$ is finite. This shows $\gr_{\chi^\hom}R$ is finitely generated.
\end{proof}
\begin{cor}\label{cor:hombdd}
    If $\chi$ is a bounded negative norm, then so is $\chi^\hom$.
\end{cor}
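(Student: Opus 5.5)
The plan is to reduce to the finite-type case by sandwiching $\chi$ between multiples of a geometric norm, and then use monotonicity of homogenization together with Proposition~\ref{prop:fthom}. Since $\chi$ is bounded, fix a polarized compactification $(\bar X, D)$ with $D$ Cartier and ample, and set $\chi'\coloneqq\chi_{\bar X,D}$. By definition of boundedness there is $C\ge1$ with
\[
C\chi'\le\chi\le C^{-1}\chi'.
\]
Homogenization is monotone (if $\chi_1\le\chi_2$ then $\chi_1(f^d)/d\le\chi_2(f^d)/d$ for all $d$, hence $\chi_1^\hom\le\chi_2^\hom$) and commutes with scaling by $t>0$. We therefore obtain
\[
C\chi'^\hom\le\chi^\hom\le C^{-1}\chi'^\hom .
\]
It also follows directly that $\chi^\hom$ is a nonpositive norm: subadditivity under $\min$ and supermultiplicativity pass to the limit $\lim_d\chi(f^d)/d$, and $\chi^\hom\ge\chi$. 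So it suffices to show that $\chi'^\hom$ is itself a bounded negative norm, i.e.\ that $a\chi'\le\chi'^\hom\le b\chi'$ for some constants $a\ge b>0$.

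The lower bound is trivial: $\chi'^\hom\ge\chi'$. For the upper bound, I would show that the geometric norm $\chi'$ is of finite type. Once that holds, Proposition~\ref{prop:fthom} gives that $\chi'^\hom$ is a negative norm of finite type, hence bounded by Proposition~\ref{prop:ft}. Its comparison with $\chi'$ then follows from the definition of boundedness, and we can chain $C\chi'^\hom\le\chi^\hom\le C^{-1}\chi'^\hom$ to conclude.

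The main obstacle is finite type of $\chi'$. Here I would use Lemma~\ref{lem:embnormequiv}: embed $X\hookrightarrow\A^N$ via a basis of $\Hnot(\bar X,mD)$ as in Lemma~\ref{lem:FSPL}. The residue norm $\chi''$ of the degree norm on $k[z_1,\dots,z_N]$, which is a negative monomial valuation, is then of finite type by Proposition~\ref{prop:ft}(i). Moreover, $\chi''\le m\chi'$ after rescaling, and the two agree on $\cF^{ml}$ for $l\gg0$, so $\chi''\sim m\chi'$ up to normalization. Lemma~\ref{lem:equivhom} then gives $\chi''^\hom=(m\chi')^\hom$. Thus we may directly replace $\chi'$ by the finite-type norm $\chi''$, which is bounded with $\chi''^\hom$ bounded by the previous paragraph. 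This avoids checking finite type of $\chi'$ itself and yields the corollary.
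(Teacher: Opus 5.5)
Your proof is correct and is essentially the paper's argument: sandwich $\chi$ between multiples of a geometric norm $\chi_{\bar X,D}$, use monotonicity of homogenization, and invoke Proposition~\ref{prop:fthom} to see that $\chi_{\bar X,D}^\hom$ is of finite type, hence bounded.

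The detour through the residue $\chi''$ of the degree norm is unnecessary. The Rees algebra of $\chi_{\bar X,D}$ is just the section ring $R(\bar X,D)$, so $\chi_{\bar X,D}$ is already of finite type. Your normalization is also inverted: Lemma~\ref{lem:embnormequiv} gives $m\chi''\sim\chi_{\bar X,D}$, not $\chi''\sim m\chi_{\bar X,D}$. This is harmless, because boundedness is invariant under scaling.
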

\begin{proof}
    Pick a polarized compactification $(\bar X, D)$ for $D$ a strictly effective ample Cartier divisor. Since $\chi$ is bounded, there is some $C\ge 1$ such that $\chi\le C^{-1}\chi_{\bar X, D}$ and hence $\chi^\hom\le C^{-1}\chi_{\bar X, D}^\hom$. By Proposition~\ref{prop:fthom}, $\chi_{\bar X, D}^\hom$ is of finite type and in particular bounded. It follows that $\chi^\hom$ is bounded as well.
\end{proof}
\begin{lem}\label{lem:normimm}
    Let $\tau\colon R'\to R$ be a surjective $k$-algebra homomorphism between integral $k$-algebras of finite type. If $\chi'$ is a negative norm on $R'$ of finite type, then the residue norm $\chi$ on $R$ induced by $\tau$ is also of finite type. 
\end{lem}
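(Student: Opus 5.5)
The plan is to use characterization~(i) of Proposition~\ref{prop:ft}, which behaves well under composing surjections. Since $\chi'$ is of finite type, there is a surjection $\pi'\colon k[z_1,\dots,z_N]\to R'$ such that $\chi'$ is the residue norm of a negative monomial valuation $v$ on $k[z_1,\dots,z_N]$. Set $\pi\coloneqq\tau\circ\pi'\colon k[z_1,\dots,z_N]\to R$; this is again surjective. The aim is to show that $\chi$ equals the residue norm of $v$ along $\pi$, and then conclude by Proposition~\ref{prop:ft}.

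The key step is transitivity of residue norms. For $f\in R$,
\[
\chi(f)=\sup_{\tau(f')=f}\chi'(f')=\sup_{\tau(f')=f}\ \sup_{\pi'(g)=f'}v(g)=\sup_{\pi(g)=f}v(g).
\]
The last equality holds because, as $f'$ ranges over $\tau^{-1}(f)$ and $g$ over $(\pi')^{-1}(f')$, the element $g$ ranges exactly over $\pi^{-1}(f)$. This identity is a formal manipulation of suprema and needs no hypotheses beyond surjectivity.

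Before invoking~(i) one must confirm that $\chi$ is a negative norm on $R$, since Proposition~\ref{prop:ft} is stated for negative norms. This is the only nonformal point. By Proposition~\ref{prop:ft}, $\chi'$ is bounded, so Corollary~\ref{cor:resbnn} shows that the residue norm $\chi$ on $R$ is a bounded negative norm. In particular $\chi$ is a norm rather than just a seminorm, and $\chi<0$ on $R\setminus k$. Condition~(i) of Proposition~\ref{prop:ft} then holds for $\chi$ via $\pi$ and $v$, so $\chi$ is of finite type.

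One could instead go through~(iii), using the surjection $\gr_{\chi'}R'\to\gr_\chi R$. That requires checking that this map is surjective, which follows from $\chi$ being a residue norm and $\chi'$ attaining its suprema (valuations are discrete on finitely generated pieces). I would keep the argument via~(i), where the only real input is Corollary~\ref{cor:resbnn}.
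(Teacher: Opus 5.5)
Your proposal is correct and follows the same route as the paper: compose the surjection $k[z_1,\dots,z_N]\to R'$ with $\tau$, use transitivity of residue norms, and conclude via characterization~(i) of Proposition~\ref{prop:ft}. Your extra step using Corollary~\ref{cor:resbnn} to check that $\chi$ is a (bounded) negative norm is a valid clarification that the paper leaves implicit.
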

\begin{proof}
    First suppose $\chi'$ is of finite type. Thus there exists a surjective $k$-algebra homomorphism $\sigma\colon R''\to R'$, where $R''\coloneqq k[z_1,\dots,z_N]$, and a negative monomial norm $\chi''$ on $R''$ such that $\chi'$ is the residue norm of $\chi''$ via $\sigma$. But then $\chi$ is the residue norm of $\chi''$ via $\tau\circ\sigma$, and hence of finite type. 
\end{proof}

We also record the following relation between finite generation of norms on $X$ and on a polarized compactification of $X$.
\begin{lem}\label{lem:lgnorm}
    Consider a compactification $\bar{X}$ of $X$ and a strictly effective ample divisor $D$ at infinity on $\bar{X}$. Let $\pi\colon R(\bar{X},D)\to R$ be the surjective map given by $\pi(s)=s/s_D^m$ on $R_m=\Hnot(\bar{X},mD)$.
\begin{itemize}
    \item[(i)]
    Suppose $\tilde\rho$ is a $\G_m$-invariant nonpositive linearly bounded norm on $R(\bar{X},D)$ such that $\cF^0_{\tilde\rho}R(\bar{X},D)=k[s_D]$. Then the induced residue seminorm $\rho$ on $R$ via $\pi$ is a nonpositive norm that satisfies
    \[
    \rho(f)=\lim_{m\to\infty}\tilde\rho(fs_D^m)=\sup_m\tilde\rho(fs_D^m),
    \]
    where we only consider $m\ge-\chi_{\bar{X},D}(f)$ so that $fs_D^m\in\Hnot(\bar{X},mD)$.
    \item[(ii)]
    If the norm $\tilde\rho$ in~(i) is of finite type, in the sense that the graded algebra is finitely generated, then the norm $\rho$ on $R$ is a bounded negative norm of finite type.
    \item[(iii)]
    Suppose $\rho$ is a bounded negative norm on $R$, and define $\tilde\rho$ as the unique $\G_m$-invariant norm on $R(\bar{X},D)$ whose restriction to $\Hnot(\bar{X},mD)$ satisfies $\tilde\rho(s)=\rho(s/s_D^m)$. Then $\tilde\rho$ is linearly bounded, nonpositive, and $\cF^0_{\tilde\rho}R=k[s_D]$. Moreover, $\rho$ is the residue norm of $\tilde\rho$ via $\pi$, and $\tilde\rho$ is the largest $\G_m$-invariant linearly bounded nonpositive norm on $R(\bar{X},D)$ whose residue norm under $\pi$ equals $\rho$.
\end{itemize}
\end{lem}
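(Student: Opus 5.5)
The plan is to use the formula in (i) as the main tool, and then derive (ii) from Proposition~\ref{prop:ft}(i) and (iii) from (i).

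\emph{Part (i).} By definition, $\rho(f)=\sup\tilde\rho(s)$ over all lifts $s=\sum_m s_m$ with $\pi(s)=f$, where $s_m\in\Hnot(\bar X,mD)$. The key observation is that $\tilde\rho(s_D)=0$, since $s_D\in\cF^0_{\tilde\rho}$ and $\tilde\rho$ is nonpositive. Submultiplicativity then gives $\tilde\rho(s\,s_D)\ge\tilde\rho(s)$. Hence $m\mapsto\tilde\rho(fs_D^m)$ is increasing, so its limit exists and equals its supremum.

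Conversely, take a lift $s=\sum_m s_m$ and choose $M$ larger than every degree appearing. Then $fs_D^M=\sum_m s_m s_D^{M-m}$ is a homogeneous lift. By $\G_m$-invariance, $\tilde\rho(s)=\min_m\tilde\rho(s_m)$, and so
\[
\tilde\rho(fs_D^M)\ge\min_m\tilde\rho(s_m s_D^{M-m})\ge\tilde\rho(s).
\]
This proves $\rho(f)=\sup_m\tilde\rho(fs_D^m)$. Since $\tilde\rho\le0$, every term in the supremum is $\le0$. Thus $\rho$ is nonpositive and finite-valued on $R\setminus0$, so it is a nonpositive norm rather than merely a seminorm.

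\emph{Part (ii).} Finite generation of the graded algebra of $\tilde\rho$ lets us run the argument of (iii)$\Rightarrow$(iv)$\Rightarrow$(i) in Proposition~\ref{prop:ft}, in the $\G_m$-equivariant setting. Choose bihomogeneous generators $s_D=w_0,w_1,\dots,w_N$ that are orthogonal in each degree. This exhibits $\tilde\rho$ as the residue norm, under a surjection $k[y_0,\dots,y_N]\to R(\bar X,D)$ with $y_j\mapsto w_j$, of the monomial valuation $v$ with $v(y_j)=\tilde\rho(w_j)$. Here $v(y_0)=0$. Since $\cF^0_{\tilde\rho}=k[s_D]$, we may discard generators lying in $k[s_D]$, and then $v(y_j)<0$ for $j\ge1$.

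Composing with $\pi$ amounts to setting $y_0=1$, because $\pi(s_D)=1$. The formula of (i) then shows that $\rho$ is the residue norm of the negative monomial valuation $v|_{k[y_1,\dots,y_N]}$ under $y_j\mapsto\pi(w_j)$; generators mapping to constants are discarded. Proposition~\ref{prop:ft}(i) then gives that $\rho$ is of finite type, hence bounded and negative. The step I expect to be the main obstacle is this one: one must check carefully that passing to the residue in $R$ commutes with the monomial presentation, i.e.\ that dehomogenizing a representation of $fs_D^m$ yields a representation of $f$ of the same value and conversely. I would handle this using the monotonicity in $m$ established in (i).

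\emph{Part (iii).} First, $\tilde\rho$ is a norm: it is submultiplicative because $\pi$ is multiplicative on homogeneous elements, and it is extended to sums by taking the minimum over graded components. For linear boundedness, boundedness of $\rho$ gives $\rho\ge C\chi_{\bar X,D}$. For $s\in\Hnot(\bar X,mD)$ we have $\chi_{\bar X,D}(s/s_D^m)\ge-m$, hence $\tilde\rho(s)\ge-Cm$. Nonpositivity is inherited from $\rho$.

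Next, $\tilde\rho(s)=0$ holds iff $\rho(s/s_D^m)=0$, which by negativity of $\rho$ holds iff $s\in k\,s_D^m$. This gives $\cF^0_{\tilde\rho}=k[s_D]$. By (i), the residue norm of $\tilde\rho$ is $\sup_m\rho(f)=\rho(f)$.

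Finally, maximality. Let $\tilde\rho'$ be any other $\G_m$-invariant, linearly bounded, nonpositive norm on $R(\bar X,D)$ with residue norm $\rho$. By (i), $\tilde\rho'(fs_D^m)\le\rho(f)=\tilde\rho(fs_D^m)$ on each graded piece. Since both norms are $\G_m$-invariant, this inequality extends to all of $R(\bar X,D)$, so $\tilde\rho'\le\tilde\rho$.
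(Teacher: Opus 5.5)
Your proof is correct. Parts (i) and (iii) follow the paper's argument essentially step by step:
\begin{itemize}
\item monotonicity in $m$ from $\tilde\rho(s_D)=0$;
\item rehomogenizing an arbitrary lift as $fs_D^M=\sum_m s_ms_D^{M-m}$;
\item linear boundedness from $\rho\ge C\chi_{\bar X,D}$;
\item maximality by comparing on graded pieces.
\end{itemize}
In (iii) you invoke (i) for the residue-norm claim where the paper computes directly, but this is the same computation.

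Part (ii) is where you genuinely differ. The paper notes that finite generation makes $\tilde\rho(R(\bar X,D))$ discrete. Hence the increasing limit in (i) is eventually stationary, which gives negativity of $\rho$. It then identifies $\gr_\rho R$ with $(A[u^{-1}])_0\simeq A/(u-1)$, where $A=\gr_{\tilde\rho}R(\bar X,D)$ and $u$ is the initial form of $s_D$, and concludes via criterion (iii) of Proposition~\ref{prop:ft}.

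You instead rerun the (iii)$\Rightarrow$(iv) termination argument for $\tilde\rho$ itself, and then dehomogenize to land in criterion (i). The termination argument still works although $\cF^0_{\tilde\rho}=k[s_D]\ne k$: the weights $\la_0=0$ and $\la_1,\dots,\la_N>0$ generate a discrete semigroup, and elements of value $0$ already lie in $k[w_0]$.

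The step you flagged is harmless. Specializing a representation $P(y_0,\dots,y_N)$ of $fs_D^m$ at $y_0=1$ gives a representation of $f$ of value at least $v(P)$, because $v(y_0)=0$ means merging monomials can only raise the minimum. Conversely, a representation of $f$ homogenizes with powers of $y_0$ to a representation of $fs_D^M$ of the same value. Together with the supremum formula of (i), this identifies $\rho$ with the residue norm of $v|_{k[y_1,\dots,y_N]}$.

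The paper's route is shorter and stays within graded algebras, but it relies on the stationarity and on the identification of $\gr_\rho R$. Yours is more hands-on and yields an explicit presentation of $R$ by the elements $\pi(w_j)$ with negative weights. From that presentation, negativity, boundedness and finite type are immediate.
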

\begin{proof}
    To simplify notation, we set $\bar{R}\coloneqq R(\bar{X},D)$ and $\bar R_m\coloneqq\Hnot(\bar{X},mD)$.
    
    To prove~(i), pick any $f\in R\setminus\{0\}$. As $\pi(fs_D^m)=f$, we have $\rho(f)\ge\tilde\rho(fs_D^m)$ for all $m\ge-\chi_{\bar{X},D}(f)$. Moreover, $\tilde\rho(fs_D^{m+1})\ge\tilde\rho(fs_D^m)$. Now suppose $s\in\bar{R}$ is such that $\pi(s)=f$. Write $s$ as a finite sum $s=\sum_ms_m$, where $s_m\in\bar{R}_m$. We must prove that $\tilde\rho(s)=\min_m\tilde\rho(s_m)$ is bounded above by $\tilde\rho(fs_D^l)$ for $l\gg1$. But $fs_D^l=\sum_ms_ms_D^{l-m}$ so 
    $\tilde\rho(fs_D^l)\ge\min_m\tilde\rho(s_ms_D^{l-m})$ and $\tilde\rho(s_ms_D^{l-m})\ge\tilde\rho(s_m)+(l-m)\tilde\rho(s_D)=\tilde\rho(s_m)$ for all $m$.

    Now suppose $\tilde\rho$ is of finite type. Then $\tilde\rho(\bar{R})$ is a discrete subset of $\R_{\le0}$, so the limit in~(i) must be stationary, and $\rho$ is a negative norm on $R$.
    
    To see that $\rho$ is of finite type, set $A:=\gr_{\tilde\rho}\bar R$ and $u:=\operatorname{in}_{\tilde\rho}(s_D)$. Since $\tilde\rho(fs_D^m)=\rho(f)$ for each fixed $f\in R\setminus\{0\}$ and all sufficiently large $m$, we obtain
    \[
    \gr_\rho R\simeq (A[u^{-1}])_0\simeq A/(u-1),
    \]
    where the subscript refers to the section grading. Since $A$ is finitely generated, so is $\gr_\rho R$. Thus $\rho$ is of finite type.

    Next we turn to~(iii). That $\tilde\rho$ is a well-defined nonpositive norm with $\cF^0_{\tilde\rho}\bar{R}=k[s_D]$ is clear, and that $\tilde\rho$ is linearly bounded follows from $\rho\ge C\chi_{\bar{X},D}$ for some $C\ge1$. To prove that $\rho$ is the residue norm induced by $\tilde\rho$, we must prove that if $f\in R$, then $\tilde\rho(s)\le\rho(f)$ for all $s\in\bar{R}$ with $\pi(s)=f$, and that equality holds for at least one such $s$. On the one hand, if we set $s=fs_D^m$ for $m\gg1$, then $\pi(s)=f$ and $\tilde\rho(s)=\rho(f)$. On the other hand, if $s=\sum_ms_m$ satisfies $\pi(s)=f$, then 
    \[
    \tilde\rho(s)
    =\min_m\tilde\rho(s_m)
    =\min_m\rho(f_m)
    \le\rho(\sum f_m)
    =\rho(f),
    \]
    where $f_m=s_m/s_D^m$. 
    Finally, suppose $\tilde\rho'$ is another linearly bounded nonpositive $\G_m$-invariant norm on $\bar{R}$ with residue norm $\rho$ on $R$. We must prove that $\tilde\rho'(s)\le\tilde\rho(s)$. But by $\G_m$-invariance, it suffices to prove this when $s\in\bar{R}_m$, and then $\tilde\rho'(s)\le\rho(f)=\tilde\rho(s)$.
\end{proof}

\subsection{The Rees correspondence}
We now aim to describe the space $\cT_\Z$ of $\Z$-valued norms of finite type in geometric terms. Recall that two norms $\chi, \chi'\in \cT_\Z$ are equivalent if $\cF^\la_\chi R=\cF^\la_{\chi'}R$  for all $\la\gg0$ sufficiently divisible. The following Rees correspondence gives an identification between $\cP\cC$ and equivalence classes in $\cT_\Z$.
\begin{prop}\label{prop:Reescorr}
    There is a one-to-one correspondence between polarized compactifications of $X=\Spec R$ and equivalence classes of negative $\Z$-valued norms of finite type on $R$.
\end{prop}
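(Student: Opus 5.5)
The two maps are the geometric norm in one direction and the Rees/Proj construction in the other. Throughout, I take $D$ Cartier in $\cP\cC$ after passing to a suitable multiple, which is compatible with the definition of $\chi_{\bar X,D}$ for $\Q$-divisors.

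\emph{Compactification to norm.} Given $(\bar X,D)$, I send it to the class of the geometric norm $\chi_{\bar X,D}$. I first check that $\chi_{\bar X,D}\in\cT_\Z$. Choose a closed embedding $\bar X\hto\P^N$ by sections of $mD$ with $X=\bar X\cap\A^N$. By Lemma~\ref{lem:embnormequiv}, $\chi_{\bar X,mD}$ is equivalent to the residue norm of $\chi_{\P^N,H}$. The latter is the negative monomial norm $-\deg$ on $k[z_1,\dots,z_N]$, so by Proposition~\ref{prop:ft}(i) and Lemma~\ref{lem:normimm} this residue norm is of finite type. It is harmless to replace $\chi_{\bar X,D}$ by an equivalent norm: $\chi_{\bar X,D}$ itself has $\cF^m$ equal to the image of $\Hnot(\bar X,mD)$, and that section ring is finitely generated. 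Indeed, $\gr_{\chi_{\bar X,D}}R\simeq R(\bar X,D)/(s_D)$, which is finitely generated because $R(\bar X,D)$ is. Hence $\chi_{\bar X,D}\in\cT_\Z$.

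\emph{Norm to compactification.} For $\chi\in\cT_\Z$, the Rees algebra $\cR_\chi=\bigoplus_m t^m\cF^m_\chi R$ is finitely generated, graded, and has $\cR_{\chi,0}=k$ (since $\cF^0R=k$). I set $\bar X_\chi\=\Proj\cR_\chi$ and let $D_\chi$ be the divisor $\{t=0\}$, where $t\in(\cR_\chi)_1$ corresponds to $1\in\cF^1R$ (after replacing $\chi$ by a multiple so that $1$ lies in degree one). Localizing gives $(\cR_\chi)_{(t)}=\bigcup_m\cF^mR=R$ by exhaustiveness, so $X=D_+(t)$ is a dense open affine subset. After a Veronese subring, $\cO(1)$ is very ample and $D_\chi=\div(t)$ is ample and effective with support $\bar X_\chi\setminus X$. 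The support is all of $V_+(t)$ because $\chi$ is negative: every $f\notin k$ vanishes to positive order there. Projectivity holds because $\cR_\chi$ is finitely generated over $\cR_{\chi,0}=k$. Equivalent norms have isomorphic Veronese subrings $\cR^{(l)}$, so the construction depends only on the class, and $\Proj$ is insensitive to Veronese truncation.

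\emph{Mutual inverses.} Starting from $(\bar X,D)$, the truncated Rees algebra of $\chi_{\bar X,D}$ agrees, in sufficiently divisible degrees, with the section ring $R(\bar X,D)$ via $fs_D^m\leftrightarrow t^mf$. Hence $\Proj$ recovers $\bar X$ with $D=\{s_D=0\}$. Conversely, starting from $\chi$, the norm $\chi_{\bar X_\chi,D_\chi}$ has $\cF^m$ equal to $\Hnot(\bar X_\chi,mD_\chi)$, which coincides with $(\cR_\chi)_m=\cF^m_\chi R$ for $m\gg0$ divisible. This uses Serre's theorem that the section ring of $\Proj A$ agrees with $A$ in large degrees, and gives $\chi\sim\chi_{\bar X_\chi,D_\chi}$.

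\emph{Main obstacle.} The delicate point is bookkeeping rather than a hard estimate. I need to match the equivalence relation on norms exactly with equality of polarized compactifications, and to handle multiples of $D$ consistently with the normalization $\chi_{\bar X,D}=m\chi_{\bar X,mD}$. I also need to check that $\Proj$ of a finitely generated graded algebra generated in several degrees still yields an ample $\Q$-Cartier divisor with the correct support. I would resolve this by systematically passing to Veronese subrings generated in degree one.
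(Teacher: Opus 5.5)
Your proposal is correct and follows the paper's proof: geometric norms in one direction, and $\Proj$ of (a Veronese subring of) the Rees algebra in the other. The only real difference is how you get $\chi\sim\chi_{\bar X_\chi,D_\chi}$. You use Serre's theorem $A_m\simeq\Hnot(\Proj A,\cO(m))$ for $m\gg0$ directly, whereas the paper embeds $\bar X$ into a weighted projective space via Proposition~\ref{prop:ft}(i) and invokes Lemma~\ref{lem:embnormequiv}, itself a Serre-type surjectivity statement. Also, your direct computation $\gr_{\chi_{\bar X,D}}R\simeq R(\bar X,D)/(s_D)$ already gives finite type, so the detour through $\P^N$ in your first step is unnecessary.
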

\begin{proof}
Let $(\bar X, D)$ be a polarized compactification of $X$, and pick $r\gg0$ divisible enough so that $rD$ is an ample Cartier divisor. Then the norm $\chi\=r\chi_{\bar X, rD}$ is negative, and satisfies $\cF^{mr}_\chi R=\Hnot(\bar X, mrD)$ for all $m\in \Z_{\ge0}$. Thus 
we see from Proposition~\ref{prop:ft} that $\chi$ is of finite type.

Conversely, suppose $\chi$ is a $\Z$-valued negative norm of finite type. Set $R_m:=\cF^m_\chi R$ for $m\in\Z_{\ge0}$. By Proposition~\ref{prop:ft}, the $k$-algebra $\bar{R}:=\bigoplus_mR_m$ is finitely generated, so $\bar{X}:=\Proj(\bar{R})$ is a scheme of finite type over $k$. Moreover, the isomorphism $\bigoplus_m R_m\cong \bigoplus_mR_mt^m\subset R[t, t^{-1}]$ shows that $\bar X$ is a projective variety and $X\hto \bar X$ as a Zariski open set $\{t\neq 0\}\subset \bar X$.
On the other hand, by Proposition~\ref{prop:ft} again, there exists a surjective map $k[z_1,\dots,z_N]\to R$ induced by $\Sym \cF_\chi^dR$ for $d\gg0$ such that $\chi$ is the residue norm with respect to a negative $\Z$-valued monomial valuation $v_\alpha$ on $k[z_1,\dots,z_N]$. This embeds $X\hto \A^N$ and $\bar X\hto \P(1, -v_\alpha(z_1), \dots, -v_\alpha(z_N))$, and $\bar R^{(d)}$ is generated in degree $1$. Let $D\=\bar X\cap H$
where $H$ is the infinity hyperplane on the weighted projective space, we have $\chi\sim \chi_{\bar X, D}$ by Lemma~\ref{lem:embnormequiv}.
\end{proof}

\subsection{Geometric norms and homogenization}
We now study how finitely generated $\Z$-valued negative norms are affected when passing to homogenization and integral closure.
In view of Lemma~\ref{lem:equivhom}, it is enough to consider the effect on geometric norms.
\begin{prop}\label{prop:homic}
    Let $\bar{X}$ be an integrally closed compactification of $X$, and $D=\sum_ib_iE_i$ a strictly effective $\Q$-Cartier $\Q$-divisor on $\bar{X}$ at infinity. If $\chi\coloneqq\chi_{\bar{X},D}$, then:
    \begin{itemize}
        \item[(i)] 
        $\chi^{\hom}=\min_ib_i^{-1}\ord_{E_i}$; 
        \item[(ii)] 
        $\chi\sim\lfloor\chi^\hom\rfloor$, with equality if $D$ is Cartier;
        \item[(iii)] 
        if $D$ is a reduced Cartier divisor, then $\chi$ is homogeneous.
        \end{itemize}
\end{prop}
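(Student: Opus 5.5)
The key tool will be Lemma~\ref{lem:critnopole}, which converts membership in $\Hnot(\bar X,mD)$ into valuative inequalities along the boundary components. First reduce to $D$ Cartier. By definition $\chi_{\bar X,D}=m\chi_{\bar X,mD}$, with $m$ minimal such that $mD$ is Cartier. The formula in~(i) is invariant under replacing $D$ by $mD$ and rescaling, since $(mb_i)^{-1}\ord_{E_i}$ scales correctly. So for~(i) and the Cartier cases of~(ii) and~(iii) we may assume $D$ is Cartier.

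For $f\in R\setminus\{0\}$, the rational section $fs_D^m$ is regular on $X$, and along $E_i$ its order is $\ord_{E_i}(f)+mb_i$. Since $\bar X$ is integrally closed, Lemma~\ref{lem:critnopole}, applied to the fractional ideal generated by $fs_D^m$ in a local trivialization, gives
\[
fs_D^m\in\Hnot(\bar X,mD)\iff \ord_{E_i}(f)+mb_i\ge0\ \text{for all } i.
\]
Hence
\[
\chi(f)=-\min\{m\in\Z_{\ge0}\mid m\ge -b_i^{-1}\ord_{E_i}(f)\ \forall i\}=\bigl\lfloor \min\{0,\min_ib_i^{-1}\ord_{E_i}(f)\}\bigr\rfloor.
\]
Set $\chi'\coloneqq\min_i b_i^{-1}\ord_{E_i}$. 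This is $\le0$ on $R\setminus k$: indeed $\chi'(f)\ge0$ would place $f\in\Hnot(\bar X,\cO)=k$, using Lemma~\ref{lem:critnopole} with $m=0$ and the fact that $\bar X$ is projective and integral. So $\chi=\lfloor\chi'\rfloor$ exactly when $D$ is Cartier, which is the equality case of~(ii).

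For~(i), note that $\chi'$ is homogeneous because each $\ord_{E_i}$ is a valuation. Then
\[
\chi(f^d)/d=\lfloor d\chi'(f)\rfloor/d\to\chi'(f),
\]
so $\chi^\hom=\chi'$. In the non-Cartier case the scaling relation $\chi_{\bar X,D}=m\chi_{\bar X,mD}$ gives $\chi^\hom=m\cdot(\text{formula for }mD)$, which equals the formula for $D$.

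For~(ii) in general, write $\chi=m\lfloor \chi'_{mD}\rfloor$ with $\chi'_{mD}=m^{-1}\chi'$. It suffices to compare filtrations at levels $\la\in m\Z$. On one side, $\chi(f)\ge-\la$ iff $\lfloor\chi'/m\rfloor\ge-\la/m$, and since $\la/m$ is an integer this holds iff $\chi'\ge-\la$. On the other side, $\lfloor\chi'\rfloor\ge-\la$ iff $\chi'\ge-\la$ for integer $\la$. So the two filtrations agree at all sufficiently divisible levels, giving $\chi\sim\lfloor\chi^\hom\rfloor$.

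For~(iii), if $D$ is reduced then all $b_i=1$, so $\chi'=\min_i\ord_{E_i}$ is $\Z$-valued. Then $\chi=\lfloor\chi'\rfloor=\chi'=\chi^\hom$, and $\chi$ is homogeneous.

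The main point requiring care is the first step: the valuative membership criterion relies on integral closedness, which is exactly what Lemma~\ref{lem:critnopole} provides. The rest consists of bookkeeping with floors and the scaling convention for $\Q$-divisors.
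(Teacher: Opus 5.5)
Your proposal is correct and follows essentially the same route as the paper. Both arguments use Lemma~\ref{lem:critnopole} to turn $fs_{rD}^m\in\Hnot(\bar X,mrD)$ into the inequalities $\ord_{E_i}(f)+mrb_i\ge0$, pass to powers of $f$ to get $\chi^\hom=\min_ib_i^{-1}\ord_{E_i}$, and compare the filtrations at levels divisible by $r$. The only differences are cosmetic: you reduce to the Cartier case through the convention $\chi_{\bar X,D}=m\chi_{\bar X,mD}$, and you check explicitly, via $\Hnot(\bar X,\cO_{\bar X})=k$, that $\min_ib_i^{-1}\ord_{E_i}\le0$ on $R\setminus\{0\}$, which the paper leaves implicit.
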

\begin{proof}
    Pick $r\in \Z_{>0}$ minimal such that $rD$ is a Cartier divisor.
    Pick any $f\in R$. By definition, we have $\chi(f)\ge -mr$ iff $fs_{rD}^m$ is a regular section of $\cO_{\bar X}(mrD)$. By  Lemma~\ref{lem:critnopole} this is equivalent to $\ord_{E_i}(fs_{rD}^m)\ge0$ for all $i$. We can rewrite this as $\chi(f)\ge -mr$ iff $\lfloor\min_ib_i^{-1}\ord_{E_i}(f)\rfloor\ge-mr$, which applied to powers of $f$ yields $\chi^\hom(f)=\min_ib_i^{-1}\ord_{E_i}(f)$, yielding~(i).The assertion~(ii) follows since $\cF^{mr}_\chi R=\cF^{mr}_{\lfloor\chi^\hom\rfloor}R$ for all $m\in \Z_{\ge0}$ and we can take $r=1$ if $D$ is Cartier. 
    When $D$ is reduced,~(i) shows that $\chi^\hom$ is $\Z$-valued, and thus~(ii) implies $\chi=\chi^\hom$, proving~(iii).
\end{proof}
\begin{prop}\label{prop:notic}
Let $\bar X$ be any compactification of $X$, and $D\in \CDiv_\infty(\bar X)_\Q$ be strictly effective and semiample. Let $\bar{X}'$ be the integral closure of $\bar X$ in $X$ and $D'$ the pullback of $D$ to $\bar{X}'$ under the finite morphism $\pi\colon\bar{X}'\to\bar{X}$. Then
\begin{equation}\label{equ:notic}
\chi_{\bar{X}',D'}^{\hom}=\chi_{\bar{X},D}^{\hom}
\quad\text{and}\quad
\chi_{\bar{X}',D'}\sim\lfloor\chi_{\bar{X},D}^{\hom}\rfloor,
\end{equation}
where the equivalence is an equality if $D$ is Cartier.
\end{prop}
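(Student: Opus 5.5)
We will first show that the homogenizations agree, and then deduce the equivalence statement from Proposition~\ref{prop:homic} applied on $\bar X'$. By passing to multiples (using $\chi_{\bar X,D}=m\chi_{\bar X,mD}$ and homogeneity of $\chi\mapsto\chi^\hom$ under scaling), we may assume $D$ is Cartier. Lemma~\ref{lem:gnpull} already gives $\chi_{\bar X,D}\le\chi_{\bar X',D'}$, and homogenization is monotone, so $\chi^\hom_{\bar X,D}\le\chi^\hom_{\bar X',D'}$.

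For the reverse inequality, fix $f\in R$ with $\chi_{\bar X',D'}(f)\ge -m$, i.e.\ $fs_{D'}^m\in\Hnot(\bar X',mD')=\Hnot(\bar X,\pi_*\cO_{\bar X'}\otimes\cO_{\bar X}(mD))$ by the projection formula. The key point is that $\pi$ is finite and $\bar X'$ is the integral closure, so any section $\sigma\in\Hnot(\bar X',mD')$ is integral over the section ring $R(\bar X,D)$. Concretely, locally on an affine $U$ trivializing $D$ with equation $h$, the element $g:=fh^m$ lies in $\cO(\pi^{-1}U)$, which is integral over $\cO(U)$, so $g^r+a_1g^{r-1}+\dots+a_r=0$ with $a_j\in\cO(U)$. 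Rather than work locally, it is cleaner to argue valuatively: by Proposition~\ref{prop:homic}(i) on $\bar X'$, $\chi^\hom_{\bar X',D'}=\min_i b_i'^{-1}\ord_{E_i'}$, where the $E_i'$ are the prime components of $\partial\bar X'$ and $b_i'=\ord_{E_i'}(D')$. So it suffices to show that $\chi^\hom_{\bar X,D}(f)\ge\min_i b_i'^{-1}\ord_{E'_i}(f)=:-t$. Suppose $t\le m/d$ with $m,d$ integers. Then $\ord_{E_i'}(f^ds_{D'}^m)\ge0$ for all $i$, hence $f^ds_{D'}^m\in\Hnot(\bar X',mD')$ by Lemma~\ref{lem:critnopole}. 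This section is integral over $R(\bar X,D)$: because $\pi$ is finite, $\pi_*\cO_{\bar X'}$ is a coherent sheaf of $\cO_{\bar X}$-algebras, and $\Hnot(\bar X',\cO(kmD'))=\Hnot(\bar X,\pi_*\cO_{\bar X'}(kmD))$. By Serre vanishing/ampleness of $D$ (semiample suffices after passing to the ample model, as in Lemma~\ref{lem:sabig}), the ring $\bigoplus_k\Hnot(\bar X',kD')$ is a finite module over $R(\bar X,D)$. Hence a monic equation $\sigma^r+\sum_j a_j\sigma^{r-j}=0$ holds with $\sigma=f^ds_{D'}^m$ and $a_j\in\Hnot(\bar X,jmD)$. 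Dehomogenizing by $s_D$ gives $g^r=-\sum_j\alpha_j g^{r-j}$ with $g=f^d$ and $\chi_{\bar X,D}(\alpha_j)\ge -jm$. Then, as in the proof of Lemma~\ref{lem:charFS}, a standard argument shows that $\chi_{\bar X,D}(g^N)\ge -Nm-C$ for all $N$, with $C$ independent of $N$: the powers $g^N$ lie in the finitely generated module $\sum_{l<r}g^l\cdot(\cdots)$, and one controls the norm degree linearly. Hence $\chi^\hom_{\bar X,D}(f)\ge -m/d$, and letting $m/d\downarrow t$ gives the claim.

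Given the equality of homogenizations, Proposition~\ref{prop:homic}(ii), applied to the integrally closed $\bar X'$, gives $\chi_{\bar X',D'}\sim\lfloor\chi^\hom_{\bar X',D'}\rfloor=\lfloor\chi^\hom_{\bar X,D}\rfloor$, with equality when $D$ (hence $D'$) is Cartier. The main obstacle is the integrality step: producing a uniform linear bound $\chi_{\bar X,D}(g^N)\ge -Nm-C$ from the integral equation. I would handle it by using finiteness of $\bigoplus_k\Hnot(\bar X',kD')$ over $R(\bar X,D)$, generated in degrees $\le k_0$, so that every section of $kD'$ has the form $\sum \beta_l\tau_l$ with $\tau_l$ fixed generators and $\beta_l\in\Hnot(\bar X,(k-k_l)D)$. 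Pushing this back to $R$ multiplies by a fixed denominator, which is what bounds the degree loss by a constant.
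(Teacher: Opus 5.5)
Your proof is correct, but you obtain the key inequality differently from the paper.

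\textbf{The paper's route.} The paper proves $\chi_{\bar X,D}\le\chi_{\bar X',D'}\le\chi^\hom_{\bar X,D}$ directly. If $\chi_{\bar X',D'}(f)=-m$, the section $s_{D'}^mf$ satisfies a homogeneous monic equation over $R(\bar X,D)$. Dehomogenizing gives $f^d+\sum_i g_if^{d-i}=0$ with $\chi^\hom_{\bar X,D}(g_i)\ge -im$. Applying the ultrametric and supermultiplicative inequalities to the \emph{homogeneous} norm $\chi^\hom_{\bar X,D}$ yields $d\chi^\hom(f)\ge\min_i\{-im+(d-i)\chi^\hom(f)\}$, which forces $\chi^\hom(f)\ge -m$ at once. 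No additive constants are needed, and the valuative formula of Proposition~\ref{prop:homic}(i) on $\bar X'$ is never used.

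\textbf{Your route.} You work with $\chi_{\bar X,D}$ itself. For this norm the monic equation alone gives nothing, because supermultiplicativity only bounds $\chi(g^r)$ from below and so cannot be turned into a lower bound on $\chi(g)$. You therefore need a uniform additive bound, and your final paragraph supplies it correctly. Write sections of $kD'$ as $\sum_l\beta_l\tau_l$ over finitely many fixed generators $\tau_l\in\Hnot(\bar X',k_lD')$, with $\beta_l\in\Hnot(\bar X,(k-k_l)D)$, and dehomogenize. This shows $\chi_{\bar X,D}\ge\chi_{\bar X',D'}-C$ on $R\setminus\{0\}$, with $C$ depending only on the elements $\tau_l/s_{D'}^{k_l}\in R$. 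Your vaguer sketch ``as in Lemma~\ref{lem:charFS}'' can also be made rigorous: the monic equation places every $\sigma^N$ in $\sum_{l<r}\sigma^lR(\bar X,D)$, with coefficients of degree $(N-l)m$.

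\textbf{Unneeded steps.} Once you have the additive bound, homogenizing gives $\chi^\hom_{\bar X,D}\ge\chi^\hom_{\bar X',D'}$ directly. So in your route the following are all superfluous:
\begin{itemize}
\item the passage through $\min_i b_i'^{-1}\ord_{E_i'}$;
\item the approximation $m/d\downarrow t$;
\item the monic equation itself.
\end{itemize}

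\textbf{Comparison.} Your approach buys a slightly stronger quantitative statement: $\chi_{\bar X,D}$ and $\chi_{\bar X',D'}$ differ by a bounded additive constant, analogous to the estimate $\chi\le\chi^\hom\le\chi+C$ used in Proposition~\ref{prop:fthom}. The paper's argument is shorter and uses the finiteness of $R(\bar X',D')$ over $R(\bar X,D)$ only through the integral equation of a single element. Your deduction of the equivalence from Proposition~\ref{prop:homic}(ii) on $\bar X'$ is the same as the paper's.
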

In other words, integral closure is the round-down of homogenization.

\begin{proof}
   Set $\chi=\chi_{\bar{X},D}$ and $\chi':=\chi_{\bar{X}',D'}$. It suffices to prove that 
   \[
   \chi\le\chi'\le\chi^{\hom}.
   \]
   Indeed, this yields the first equality in~\eqref{equ:notic}, and the second assertion follows from Proposition~\ref{prop:homic}.
   
   The inequality $\chi\le\chi'$ is trivial. To prove that $\chi'\le\chi^{\hom}$, we may replace $D$ by a multiple and assume $D$ is Cartier.
   Consider the section rings $R(\bar{X},D)=\bigoplus_{m=0}^\infty\Hnot(\bar{X},mD)$ and 
   \[
   R(\bar{X}',D')=\bigoplus_{m=0}^\infty\Hnot(\bar{X}',mD')
    =R(\bar{X},\cO_{\bar{X}}(D)\otimes\pi_*\cO_{\bar{X}'}).
   \]
   Since the map $\pi\colon\bar{X}'\to\bar X$ is finite, $R(\bar{X}',D')$ is a finite $R(\bar{X},D)$-module.

   Now pick any $f\in R$ and set $-m:=\chi'(f)\in\Z_{\le0}$. Then $s:=s_{D'}^mf\in\Hnot(\bar{X'},mD')$. By what precedes, $s$ satisfies a monic equation
   \[
   s^d+\sum_{i=1}^d t_is^{d-i}=0,
   \]
   where $d\ge1$ and $t_i\in\Hnot(\bar{X},imD)$. If we set $g_i:=t_i/s_D^{im}\in R$, then $f^d+\sum_{i=1}^dg_if^{d-i}=0$ and $\chi^{\hom}(g_i)\ge\chi(g_i)\ge-im$. Thus
   \[
   d\chi^{\hom}(f)
   =\chi^{\hom}(f^d)
   \ge\min_i\{-im+(d-i)\chi^{\hom}(f)\},
   \]
   which implies $\chi^{\hom}(f)\ge-m=\chi'(f)$, and we are done.
\end{proof}

\section{Volumes and limit measures}\label{sec:volume}
In this section we define and study volumes and geodesics of bounded negative norms, as well as the pseudometric $\de_1$. The main part of Theorem~D is also proved.
\subsection{The volume of a bounded negative norm}
We define the \emph{volume} of a nonpositive norm $\chi$ on $R$ by 
\[
\vol(\chi)\coloneqq\limsup_{\la\to\infty}\frac{\dim_k\cF_\chi^\la R}{\la^n/n!}\in\R_+\cup\{+\infty\}.
\]
The volume function is clearly increasing: if $\chi\le\chi'$, then $\vol(\chi)\le\vol(\chi')$. It also has the homogeneity property $\vol(t\chi)=t^{-n}\vol(\chi)$ for any $t\in\R_+^\times$.
\begin{prop}\label{prop:volbounded}
    Let $\chi$ be a nonpositive norm on $R$. Then
    $\vol(\chi)>0$, and the limsup defining $\vol(\chi)$
    is a limit, possibly infinite. Moreover,
    $\vol(\chi)<\infty$ iff $\chi$ is a bounded negative norm.
    \footnote{The proof of the forward direction (which we do not use) was provided by ChatGPT.}
\end{prop}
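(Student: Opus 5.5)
The plan has three parts: a lower bound via a geometric norm, existence of the limit and finiteness via Okounkov bodies in the bounded case, and the converse for unbounded norms.

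\emph{Lower bound.} For any nonpositive norm $\chi$, Lemma~\ref{lem:geomnormbd} gives a geometric norm $\chi'=\chi_{\bar X,D}$ and $C>0$ with $\chi\ge C\chi'$. By monotonicity and homogeneity of the volume,
\[
\vol(\chi)\ge\vol(C\chi')=C^{-n}\vol(\chi').
\]
We may take $D$ ample and Cartier on an integrally closed $\bar X$. Then $\cF^m_{\chi'}R=\Hnot(\bar X,mD)$, so $\vol(\chi')=(D^n)>0$, and hence $\vol(\chi)>0$.

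\emph{Bounded case: the limit exists and is finite.} Suppose $\chi$ is bounded negative. As in \S\ref{sec:baseideals}, fix $(\bar X,D)$ with $D$ ample Cartier and $r\ge1$ such that $\chi\le r^{-1}\chi_{\bar X,D}$. Then $\cF^\la_\chi R\subset\Hnot(\bar X,\lceil r\la\rceil D)$, so $\vol(\chi)\le r^n(D^n)<\infty$.
\begin{itemize}
\item For $\la$ in a sufficiently divisible lattice $\la\in m\Z$, the spaces $V_m:=\cF^{m}_\chi R$ form a graded linear subseries of $R(\bar X,rD)$, by multiplicativity of the filtration.
\item This subseries contains an ample series: $\chi\ge C\chi_{\bar X,D}$ gives $\Hnot(\bar X,\lfloor m/C\rfloor D)\cdot s_D^{\,\ast}\subset V_m$.
\item Lazarsfeld--Musta\c{t}\u{a} Okounkov body theory (the ``condition (C)'' case) then shows $\lim_m \dim V_m/m^n$ exists.
\item For general real $\la$, sandwich $\la$ between consecutive multiples of the lattice step; monotonicity of $\la\mapsto\dim\cF^\la_\chi R$ gives existence of the limit along all $\la\to\infty$.
\end{itemize}

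\emph{Unbounded case.} If $\chi$ is not bounded, I claim $\liminf_{\la}\dim\cF^\la_\chi R/\la^n=\infty$, which gives both that the limit exists (equal to $+\infty$) and the converse. Failure of boundedness means $T(\chi;\chi')=\infty$ for $\chi'$ geometric, so for every $\e>0$ there is $f\in R\setminus k$ with $\chi(f)\ge\e\,\chi'(f)$. Fix such an $f$ with $\chi'(f)=-m_0$, so $-\chi(f)\le\e m_0$.

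From this $f$, I will build many elements of small $\chi$-size. Take $g_1,\dots,g_N\in R$ generating $R$, with $\chi(g_j)\ge -c$. For degree $\la$, form the products $f^a g^\beta$ with
\[
a\e m_0+c|\beta|\le\la,
\]
all of which lie in $\cF^\la_\chi R$. I then need a lower bound for the dimension of their span. The plan is to compare with the section ring $R(\bar X,D)$: view $f$ as a section of $m_0D$ and count sections of the form $f^a\cdot(\text{degree-}b)$, landing in $\Hnot(\bar X,(am_0+b)D)$.

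\emph{Main obstacle: producing $\gg\e^{-1}\la^n$ linearly independent elements.} The candidate argument is as follows.
\begin{itemize}
\item Multiplication by $f^a$ is injective, since $R$ is a domain.
\item Use a filtration by order of vanishing along a component of $\{f s_D^{m_0}=0\}$, or an Okounkov body with respect to a flag starting at that divisor. The $f^a$-multiples then occupy disjoint slices, giving roughly $(\la/\e m_0)\cdot(\la/c)^{n-1}$ independent elements.
\item This bound is of order $\e^{-1}\la^n$, and hence the $\liminf$ is at least a constant times $\e^{-1}$.
\item Letting $\e\to0$ gives the claim.
\end{itemize}
If the slicing argument proves fiddly, I would instead pass to the finite map $X\to\A^n$ via Noether normalization with $f$ as one coordinate, and count monomials directly.
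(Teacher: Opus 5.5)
Your lower bound and your treatment of bounded norms are fine. You get the limit from Lazarsfeld--Musta\c{t}\u{a} via an ample series, whereas the paper uses Kaveh--Khovanskii under the weaker hypothesis $\liminf<\infty$. The gap is in the unbounded case, at exactly the step you flag.

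Your count of roughly $(\la/\e m_0)\cdot(\la/c)^{n-1}$ independent elements is of order $(\e m_0)^{-1}\la^n$, not $\e^{-1}\la^n$. The integer $m_0=-\chi_{\bar X,D}(f)$ depends on $f$, hence on $\e$, and $\e m_0\ge-\chi(f)$ need not tend to $0$ as $\e\to0$; for $\Z$-valued $\chi$ it is at least $1$. Concretely, take $R=k[z]$ with $\chi(g)=-\sqrt{\deg g}$ (Example~\ref{exam:negnotbdd} with $h(j)=\sqrt j$) and $\chi_{\bar X,D}=-\deg$.
\begin{itemize}
\item The admissible $f$ are those of degree $j\ge\e^{-2}$, so $\e m_0=\e j\ge\sqrt j$.
\item Your bound then yields only $\liminf_\la\la^{-1}\dim\cF^\la_\chi R\ge j^{-1/2}\to0$, whereas $\dim\cF^\la_\chi R=\lfloor\la^2\rfloor+1$.
\item The Noether normalization fallback fails the same way: the monomials $f^a$ with $a\sqrt j\le\la$ number only $\la/\sqrt j$.
\end{itemize}
Your products $f^az^b$ do in fact span a space of dimension $\approx\sqrt j\,\la$, so the construction is sound; it is the count that is wrong. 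Relatedly, the translates $f^a\cdot\mathrm{span}\{g^\beta\}$ do not occupy disjoint slices, since each spans a range of valuation values of width $\asymp\la\gg m_0$.

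What is missing is that each translate must contribute $\asymp m_0\la^{n-1}$ new dimensions. For this the shift has to be measured along a boundary component where $f$ has a pole of order $\asymp m_0$, not along a zero of $f$ in $X$: for $f=\prod_{i\le j}(z-c_i)$ with distinct $c_i$ every zero is simple, so the shifts are by $1$. Here is how the count goes.
\begin{itemize}
\item Write $D=\sum_ib_iE_i$ on an integrally closed $\bar X$. Lemma~\ref{lem:critnopole} gives some $i$ with $-\ord_{E_i}(f)>(m_0-1)b_i$.
\item Filter $V=\sum_{a\le A}f^a\Hnot(\bar X,dD)$ by $\ord_{E_i}$. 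Since $\gr_{\ord_{E_i}}k(X)$ is a domain, the piece of $\gr V$ at level $s+a\ord_{E_i}(f)$ contains a copy of $\gr^s\Hnot(\bar X,dD)$.
\item For $s$ in a window of width $|\ord_{E_i}(f)|$ just above $-db_i$, these levels are pairwise distinct in $(a,s)$.
\item Each such $\gr^s\Hnot(\bar X,dD)$ has dimension $\gtrsim d^{n-1}$ once $d\gg m_0$, because $D-tE_i$ is ample for small $t\ge0$.
\item Take $A\asymp\la/(\e m_0)$ and $d\asymp\la$; this keeps all elements in $\cF^\la_\chi R$ by Lemma~\ref{lem:geomnormbd}. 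The result is $\dim\cF^\la_\chi R\gtrsim\e^{-1}\la^n$, with the factor $m_0$ now cancelling.
\end{itemize}
The paper avoids this computation by arguing contrapositively. Suppose $\liminf_m m^{-n}\dim\cF^m_\chi R<\infty$. For flags starting at each $E_i$, Kaveh--Khovanskii shows the Okounkov body of the semigroup $\{(\nu_i(f),m)\mid 0\ne f\in\cF^m_\chi R\}$ is compact. This gives $\ord_{E_i}\ge-C_im$ on $\cF^m_\chi R$, hence $\chi_{\bar X,D}\ge(\max_iC_i)\chi$ after homogenizing. The same theory yields existence of the limit.
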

\begin{proof}
    As $\la\mapsto\dim_k\cF^\la_\chi R$ is an increasing
    function, it suffices to consider limits and limsups
    over integers $\la=m\to\infty$.

    Pick an integrally closed polarized compactification
    $(\bar X,D)$ with $D$ a Cartier $\Z$-divisor, and set
    $\chi'=\chi_{\bar X,D}$. Then
    $\cF^m_{\chi'}R=\Hnot(\bar X,mD)$, and, as is
    well-known, see~\cite[Example~11.4.7]{PosII}, the limit
    \[
    \lim_{m\to\infty}\frac{n!}{m^n}
    \dim_k\cF_{\chi'}^mR
    \]
    exists and equals $\vol(D)\in(0,+\infty)$.
    By Lemma~\ref{lem:geomnormbd}, we have
    $\chi\ge C\chi'$ for some $C\ge1$, and hence
    $\vol(\chi)\ge C^{-n}\vol(\chi')>0$.
    If $\chi\in\Nbdd(X)$, then
    $C\chi\le\chi'$ for some $C\ge1$, so
    \[
    \dim_k\cF_\chi^mR
    \le\dim_k\cF_{\chi'}^{Cm}R=O(m^n),
    \]
    yielding $\vol(\chi)<\infty$.

    To complete the proof, it suffices to show that if    
    \[
    \liminf_{m\to\infty}
    \frac{\dim_k\cF_\chi^mR}{m^n}<\infty,
    \]
    then $\chi$ is bounded negative, and the
    limsup defining $\vol(\chi)$ is a limit.
    Under this assumption,
    $\dim_k\cF_\chi^mR<\infty$ for all $m$.

    Pick an integrally closed polarized compactification
    $(\bar X,D)$ with $D$ Cartier, and write
    $D=\sum_{i=1}^r b_iE_i$, where $b_i\in\Z_{>0}$.
    For each $i$, as in~\cite{LM}, choose an admissible flag of the form
    \[
    \bar X\supset E_i\supset Y_1\supset\dots\supset \{\text{pt}\},
    \]
    and denote by
    \[
    \nu_i\colon k(X)^\times\to\Z^n
    \]
    the valuation associated to the flag.
    Consider the semigroup
    \[
    S_i\=\{(\nu_i(f),m)\mid
    m\in\Z_{\ge0},\ 0\ne f\in\cF_\chi^mR\}.
    \]
    By~\cite[Proposition~2.6, Example~2.13]{KK12}, we have
    \[
    \#(S_i)_m=\dim_k\cF_\chi^mR.
    \]
    Moreover, $S_i$ generates $\Z^{n+1}$ as a group. Indeed, suppose $(\a,l)\in\Z^n\times\Z$. We can find nonzero elements $f,g\in R$ such that $\nu_i(f/g)=\a$. For $m\gg0$, we have $\chi(f)\ge-m-l$ and $\chi(g)\ge-m$, so $(\a,l)=(\nu_i(f), m+l)-(\nu_i(g), m)$.

    Set $M:=\{x_{n+1}\ge 0\}\subset\R^{n+1}$. By~\cite[Theorem~1.18]{KK12}, the Okounkov body $\Delta(S_i, M)$ is compact, and in particular bounded. Further, by~\cite[Corollary~1.16]{KK12}, the limit
    \[
    \lim_{m\to\infty}\frac{n!}{m^n}\dim_k\cF_\chi^mR
    \]
    exists and is finite. Thus there exists $C_i>0$ such that
    \[
    \ord_{E_i}(f)\ge -C_i m
    \quad\text{for all }0\ne f\in\cF_\chi^mR, 
    \]
    and so $\ord_{E_i}(f)\ge C_i\lfloor \chi(f)\rfloor$ for all $f\in R\setminus \{0\}$. This gives $\ord_{E_i}(f)\ge\lim_m\frac{C_i}{m}\lfloor \chi(f^m)\rfloor= C_i \chi^\hom(f)\ge C_i\chi(f)$.
    Hence we have
    \[
    \chi_{\bar X, D}\ge \min_i\ord_{E_i}\ge \min_i (C_i\chi)=(\max_iC_i)\chi,
    \]
    so that $\chi$ is negative and bounded, see Lemma~\ref{lem:geomnormbd}.
\end{proof}
\begin{exam}\label{exam:infsing}
    In general, a negative norm can have infinite volume. Let $H\colon\A^2\to\A^2$ be the automorphism given by $(z_1,z_2)\mapsto(z_1^2+z_2,z_1)$, let $v_0$ be the monomial valuation on $R=k[z_1,z_2]$ with weights $(-1,-1)$, and set $v_n\coloneqq2^{-n}H^{n,\na}(v_0)$ for $n\ge1$. As $H$ is an automorphism, $v_n$ is a negative divisorial valuation with $v_n(R\setminus 0)=-2^{-n}\Z_+$. We claim that  $v_n\ge v_{n-1}$ pointwise on $R$ for all $n\ge1$. Indeed, we have $v_1(z_1)=-1$ and $v_1(z_2)=-1/2$, which implies $v_1\ge v_0$ as $v_0$ is monomial. For $n\ge 1$, we now get $H^{n-1,\na}(v_1)\ge H^{n-1,\na}(v_0)$ and hence $v_n\ge v_{n-1}$ on $R$. It follows that the limit $v_\infty:=\lim_{n\to\infty}v_n$ exists, and is a nonpositive valuation on $R$. In fact, in the classification of~\cite{FJEigenvals}, it is an infinitely singular valuation, see~\cite[Example~2.12]{FJDynComp}, and this implies that for every nonzero $f\in R$, the sequence $n\mapsto v_n(f)$ is eventually constant. In particular, $v_\infty(f)<0$ when $f$ is nonconstant, so $v_\infty$ is a negative valuation. Now, as $H$ is an automorphism, we have $\vol(v_n)=4^n\vol(v_0)=4^n$, so $\vol(v_\infty)=\infty$.
\end{exam}

\subsection{Finite type approximation}
Recall from \S\ref{sec:ftnorm} that if $\chi\in \Nbdd(X)$, then there are canonical approximants $\chi^{(\lambda)}$ of finite type.
\begin{prop}\label{prop:canapprox}
    Let $\chi\in \Nbdd(X)$. Then $\lim_{\la\to\infty}\vol(\chi^{(\la)})=\vol(\chi)$.
\end{prop}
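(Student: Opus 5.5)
Since $\chi^{(\la)}\le\chi$ and $\chi^{(\la)}$ increases in $\la$, monotonicity of the volume gives $\vol(\chi^{(\la)})\le\vol(\chi)$, with $\vol(\chi^{(\la)})$ increasing. So the whole point is the reverse inequality $\liminf_\la\vol(\chi^{(\la)})\ge\vol(\chi)$. I will prove it by an Okounkov-body (or graded-linear-series) argument in the spirit of Fujita approximation, applied to the graded linear series defined by $\chi$.

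\textbf{Step 1: set-up.} Fix an integrally closed polarized compactification $(\bar X,D)$ with $D$ Cartier, and $r\ge1$ such that $\chi\le r^{-1}\chi_{\bar X,D}$. As in~\S\ref{sec:baseideals}, the spaces $V_m:=\cF^m_\chi R\subset\Hnot(\bar X,mrD)$ then form a graded linear series $V_\bullet$ of $R(\bar X,rD)$. By Proposition~\ref{prop:volbounded} and the comparison of volumes over integers, $\vol(\chi)$ equals the volume of $V_\bullet$, suitably normalized by $r^n$.

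\textbf{Step 2: compare with the subseries generated in degree $\le\ell$.} For integer $\ell\gg0$, let $V^{(\ell)}_\bullet$ be the graded subalgebra generated by $V_1,\dots,V_\ell$. By construction of the residue norm from $\Sym\cF^\ell_\chi R$, an element written as a product $f_1\cdots f_p$ with $f_j\in V_{m_j}$, $m_j\le\ell$, satisfies $\chi^{(\ell)}(f_1\cdots f_p)\ge-\sum m_j$. Hence $V^{(\ell)}_m\subset\cF^m_{\chi^{(\ell)}}R$, and $\vol(\chi^{(\ell)})\ge\vol(V^{(\ell)}_\bullet)$.

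\textbf{Step 3: $\vol(V^{(\ell)}_\bullet)\to\vol(V_\bullet)$.} This is the main obstacle. The standard route is the Kaveh--Khovanskii/Lazarsfeld--Musta\c{t}\u{a} approximation theorem: for a semigroup $S$ whose group is all of $\Z^{n+1}$ and whose Okounkov body is compact, the volumes of the subsemigroups generated in levels $\le\ell$ converge to the volume of $S$ (\cite[Theorem~1.26/Cor.]{KK12}, or~\cite[Theorem~3.3]{LM}). Its hypotheses were already verified in the proof of Proposition~\ref{prop:volbounded}: using the flag valuation $\nu_i$, the semigroup $S=\{(\nu_i(f),m)\}$ generates $\Z^{n+1}$ and has compact Okounkov body $\Delta(S,M)$. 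The remaining issue is that the generating condition should hold for the degree-$\le\ell$ subsemigroups once $\ell$ is large. This follows from the same argument, since the elements $f,g$ used there lie in $V_m$ for some fixed finite $m$. Applying the theorem with $\#S_m=\dim V_m$ then yields $\vol(V^{(\ell)}_\bullet)\to\vol(V_\bullet)$.

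\textbf{Step 4: conclusion.} Combining the steps, and passing from integer $\ell$ to real $\la$ via monotonicity of $\la\mapsto\chi^{(\la)}$, gives $\vol(\chi)\ge\lim_\la\vol(\chi^{(\la)})\ge\vol(\chi)$.
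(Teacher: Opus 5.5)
Your proof is correct and follows essentially the same route as the paper: after the easy inequality $\vol(\chi^{(\la)})\le\vol(\chi)$, both reduce the statement to a Fujita-type approximation for the graded linear series $\bigoplus_m\cF^m_\chi R\subset R(\bar X,rD)$. The paper observes that $T(d)=\bigoplus_m\cF^{md}_{\chi^{(d)}}R\subset S^{(d)}$ satisfies $T(d)_1=S_d$ and then cites \cite[Lemma~3.21]{nakstab1}; you instead compare with the subalgebra generated in degrees $\le\ell$ and apply the Kaveh--Khovanskii approximation theorem directly, taking its group-generation hypothesis from the proof of Proposition~\ref{prop:volbounded}.
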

\begin{proof}
By construction, $\chi^{(\la)}$ is of finite type. Note that $\chi^{(\la)}=\chi \text{ on } \cF^\la_\chi R$, and by submultiplicativity, it follows that $\chi^{(\la)}\leq \chi$. Since $\chi^{(\la)}\leq \chi^{(\la')}$ for $0\ll\la<\la'$, we have $\vol(\chi^{(\la)})\leq \vol(\chi^{(\la')})\leq \vol(\chi)$ and so $\lim_{\la\to \infty}\vol(\chi^{(\la)})=\sup_\la \vol(\chi^{(\la)})\le \vol(\chi)$. To see that we have equality, it suffices to show $\vol(\chi^{(d)})\to \vol(\chi)$ for $d\in \Z_{>0}$ and $d\to \infty$. For this we fix a geometric norm $\chi_{\bar X, D}$ such that, after possibly scaling $\chi$, we have $\chi\le \chi_{\bar X, D}$. This gives a graded subalgebra $S\=\bigoplus_m\cF^m_\chi R\subset R(\bar X, D)$. Since $\chi^{(d)}\le \chi$, we also have a graded subalgebra $T(d)\=\bigoplus_m\cF^{md}_{\chi^{(d)}}R\subset S^{(d)}$, with $T(d)_1=S^{(d)}_1=S_d$. Thus~\cite[Lemma~3.21]{nakstab1} applies, and we get $\vol(\chi^{(d)})=d^{-n}\vol(T(d))\to \vol(S)=\vol(\chi)$ as desired.
\end{proof}

\subsection{Bivariate limit measures}\label{sec:limmeas}
Next, following~\cite{BLXZ, BLQ} we associate a positive measure on $\R_+^2$ to any two bounded negative norms. 
\begin{prop}\label{prop:Hprop}
    For any bounded negative norms $\chi_i$, $i=1,2$ and $x\in\R_+^2$, the limit
    \[
    H(x)\coloneqq H_\chi(x)\coloneqq\lim_{\la\to\infty}\frac{n!}{\la^n}\dim_k(\cF_{\chi_1}^{\la x_1}R\cap\cF_{\chi_2}^{\la x_2}R)
    \]
    exists and is finite. Moreover, the function $H\colon\R_+^2\to\R_+$ satisfies:
    \begin{itemize}
    \item[(i)] $H(cx)=c^nH(x)$ for $c>0$;
    \item[(ii)] $H$ is continuous, and $H^{1/n}$ is concave;
    \item[(iii)] for each $i=1,2$, the function $x_i\mapsto H(x)$ is increasing; moreover, $x_2\mapsto H(x_1,x_2)$ is constant for $x_2\ge Cx_1$ and $x_1\mapsto H(x_1,x_2)$ is constant for $x_1\ge Cx_2$.
    \end{itemize}
    Here $C$ is any constant such that $C\ge e^{\de_\infty(\chi_1, \chi_2)}$.
\end{prop}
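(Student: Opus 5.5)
We plan to prove existence of the limit with the same Okounkov body machinery used in the proof of Proposition~\ref{prop:volbounded}, and then read off (i)--(iii) from the structure of the intersection filtration. Fix $x\in\R_+^2$. The trivial case is when some $x_i=0$. Then $\cF^0_{\chi_i}R=k$, since $\chi_i$ is negative, so $H(x)=0$. So assume $x_1,x_2>0$ and consider the nonpositive seminorm
\[
\chi_x:=\min\{x_1^{-1}\chi_1,\,x_2^{-1}\chi_2\}.
\]
Its filtration is $\cF^\la_{\chi_x}R=\cF^{\la x_1}_{\chi_1}R\cap\cF^{\la x_2}_{\chi_2}R$. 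As a minimum of two bounded negative norms, rescaled, $\chi_x$ is again a bounded negative norm. Proposition~\ref{prop:volbounded} then shows that the limit defining $\vol(\chi_x)$ exists and is finite and positive. So $H(x)=\vol(\chi_x)$ exists.

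Homogeneity~(i) is immediate: $\chi_{cx}=c^{-1}\chi_x$ and $\vol(t\chi)=t^{-n}\vol(\chi)$.

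For~(iii), monotonicity in each $x_i$ follows since the filtrations are increasing. For the plateau, let $C\ge e^{\de_\infty(\chi_1,\chi_2)}$, so that $C\chi_1\le\chi_2$. If $f\in\cF^{\la x_1}_{\chi_1}R$, then $\chi_2(f)\ge C\chi_1(f)\ge -C\la x_1$. Hence, when $x_2\ge Cx_1$, we get $\cF^{\la x_1}_{\chi_1}\subset\cF^{\la x_2}_{\chi_2}$, the intersection equals $\cF^{\la x_1}_{\chi_1}R$, and $H$ is independent of $x_2$. The symmetric argument handles the other case.

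The main point is~(ii), the concavity of $H^{1/n}$. I would use multiplicativity. For $x,y\in\R_{>0}^2$ and $s+t=1$, the product of $\cF^{\la sx}_{\chi_x\text{-type}}$ and $\cF^{\la ty}$ pieces lands in the filtration at $\la(sx+ty)$. So the graded families $V_\la(x):=\cF^{\la x_1}_{\chi_1}\cap\cF^{\la x_2}_{\chi_2}$ satisfy
\[
V_{\la}(x)\cdot V_{\mu}(y)\subset V_{1}(\la x+\mu y).
\]
I would realize all of these families inside one $\Z^n\times\Z^2$-graded semigroup. Concretely, take the semigroup of triples $(\nu(f),m_1,m_2)$ with $f\in\cF^{m_1}_{\chi_1}\cap\cF^{m_2}_{\chi_2}$, where $\nu$ is the flag valuation used in the proof of Proposition~\ref{prop:volbounded}. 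Let $\Delta\subset\R^n\times\R_+^2$ be the closed convex cone it generates. By the Kaveh--Khovanskii theory~\cite{KK12} (boundedness follows as before from the boundedness of $\chi_1$), $H(x)$ is $n!$ times the Euclidean volume of the fiber $\Delta_x$ over $x$, at least for $x$ with rational coordinates. The Brunn--Minkowski inequality then gives concavity of $x\mapsto\vol(\Delta_x)^{1/n}$ on the interior of $\R_+^2$.

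The expected obstacle is the fiberwise identification $H(x)=n!\vol(\Delta_x)$ for a genuinely two-parameter, real-indexed grading. One must check that the semigroup is full enough in the relevant cone; this uses $\cF^0=k$ and the generation argument from Proposition~\ref{prop:volbounded}. One must also pass from rational to real $x$. For that I would use the monotonicity and homogeneity already proved. Squeezing $x$ between rational points $x'\le x\le x''$ with $x''\le(1+\e)x'$ yields continuity of $H$ on the open quadrant. Concavity of $H^{1/n}$ then extends by density.

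Continuity up to the boundary follows from~(iii) and homogeneity. Near $x_1=0$, the plateau property gives $H(x_1,x_2)=H(x_1,Cx_1)=x_1^nH(1,C)\to 0=H(0,x_2)$, and similarly near $x_2=0$.
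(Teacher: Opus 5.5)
Your argument is correct. For existence, (i), (iii) and continuity at the boundary it coincides with the paper's proof. For the concavity in (ii) you take a different route.

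\textbf{The paper's route.} It never forms a global cone. After rescaling so that $\chi_i\le\rho=\chi_{\bar X,D}$, it attaches to each real $x\in\R_+^2$ the Okounkov body $\Delta^x$ of the graded subalgebra $V_x=\bigoplus_m\cF^{mx_1}_{\chi_1}R\cap\cF^{mx_2}_{\chi_2}R$ of $R(\bar X,MD)$. Multiplicativity gives $\Delta^x+\Delta^y\subset\Delta^{x+y}$. Brunn--Minkowski then yields $H(x+y)^{1/n}\ge H(x)^{1/n}+H(y)^{1/n}$ directly for all real $x,y$.

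\textbf{What each approach buys.} The per-$x$ bodies avoid exactly what you flagged as the obstacle. They need no slicing theorem of Lazarsfeld--Musta\c{t}\u{a} type identifying the fibers $\Delta_x$ of the global cone with the Okounkov bodies of the one-parameter subsemigroups; that theorem requires $x$ in the interior of the support and group generation. They also need no passage from rational to real $x$. Your route gives a more uniform picture, with every $H(x)$ a fiber volume of one convex cone. Your observation that monotonicity plus homogeneity alone give continuity on $\R_{>0}^2$ is cleaner than the paper's deduction of interior continuity from concavity.

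\textbf{A small omission.} At the origin neither plateau applies in the middle cone $C^{-1}x_1<x_2<Cx_1$. The bound $H(y)\le(\max_iy_i)^nH(1,1)\to0$ handles this case, as in the paper.
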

\begin{proof}
    The limit defining $H(x)$ equals zero when $x_i=0$ for some $i$. If $x_1, x_2>0$, then the limit also exists, and 
    $H(x)=\vol(\frac{\chi_1}{x_1}\wedge\frac{\chi_2}{x_2})>0$. Properties~(i) and~(iii) are clear.
    
    It remains to prove~(ii). As $H_{\chi}(x)=H_{c\chi}(cx)=c^nH_{c\chi}(x)$ for $c>0$, we may assume there exists a geometric norm $\rho=\chi_{\bar{X},D}$ such that $\chi_i\le\rho$ for $i=1,2$. If we set $V_{m,x}\coloneqq\cF_{\chi_1}^{mx_1}R\cap\cF_{\chi_2}^{mx_2}R$, then $V_x\coloneqq\bigoplus_mV_{m,x}$ is a graded subalgebra of $R(\bar{X},MD)$ for $M\gg0$. Given $x, y\in \R^2_+$, fix $M\gg0$ such that $V_x, V_y, V_{x+y}$ define graded subalgebras of $R(\bar X, MD)$. 
    Given any choice of a valuation on $R$ with value group $\Z^n$, $V_x$ gives rise to a convex (Okounkov) body $\Delta^x\subset\R^n$ of volume $n!\vol(\Delta^x)=H(x)$, see~\cite{Bou12}. Moreover, $\Delta^x+\Delta^y\subset\Delta^{x+y}$ for $x,y\in\R_+^2$. By the Brunn--Minkowski inequality, we therefore get $H(x+y)^{1/n}\ge H(x)^{1/n}+H(y)^{1/n}$, so that $H^{1/n}$ is concave, being 1-homogeneous.

    The concavity implies that $H$ is continuous on $\R_{>0}^2$. To see continuity on all of $\R_+^2$ we argue as follows. Suppose $(y^{(l)})_l$ is a sequence in $[0,1]^2$ converging to some point $x\in[0,1]^2\setminus(0,1]^2$. First note that $\chi_i\le\rho$ implies $H(x)\le H(1,1)\le\vol(\rho)<\infty$ when $\max\{x_1, x_2\}\le1$. Thus if $x=0$, then $H(y_l)\le(\max_iy^{(l)}_i)^n\vol(\rho)\to0$. Otherwise, by symmetry we may assume $x_1=0$. Then for all $l\gg1$ we have $Cy^{(l)}_1<y_2^{(l)}$ and $H(y^{(l)})=\vol(\frac{\chi_1}{y^{(l)}_1})=(y_1^{(l)})^n\vol(\chi_1)\to 0$ as $l\to \infty$.

\end{proof}
\begin{defi}
    The \emph{bivariate limit measure} $\mu=\mu_{\chi}$ associated to bounded negative norms $\chi_1,\chi_2$ is the locally finite, Borel measure $\mu_\chi$ on $\R_{>0}^2$ defined as the distributional derivative
    \[
    \mu_\chi\coloneqq\frac{\partial^2}{\partial x_1\partial x_2}H_\chi(x),
    \]
    of the function $H_\chi\colon\R_{>0}^2\to\R$.
\end{defi}
\begin{cor}
    The support of the multivariate limit measure $\mu_\chi$ is contained in the cone 
    \[
    \{x\in\R_+^2\mid C^{-1}x_1\le x_2\le Cx_1\},
    \]
    where $C$ is such that $C\ge e^{\de_\infty(\chi_1, \chi_2)}$. Moreover, $\mu_\chi$ has the invariance property $c_*\mu_\chi=c^{-n}\mu_\chi$ for every $c>0$.
\end{cor}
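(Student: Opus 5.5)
The plan is to read both assertions directly off Proposition~\ref{prop:Hprop}, using $\mu_\chi(R)=H(b)-H(a_1,b_2)-H(b_1,a_2)+H(a)$ for a rectangle $R=(a_1,b_1]\times(a_2,b_2]$. This identity holds because $H$ is continuous, and its mixed distributional derivative is a positive measure. Positivity comes from the submodularity of $x\mapsto \dim(\cF^{\la x_1}_{\chi_1}R\cap\cF^{\la x_2}_{\chi_2}R)$ in the rectangle sense: for subspaces $U\subset U'$ and $W\subset W'$ of $R$ one has $\dim(U'\cap W')-\dim(U\cap W')-\dim(U'\cap W)+\dim(U\cap W)\ge0$, and this inequality passes to the limit.

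\textbf{Support.} Consider first a point $x$ with $x_2>Cx_1$. By continuity I can pick a small rectangle $(a_1,b_1]\times(a_2,b_2]$ around $x$ with $a_2\ge Cb_1$. Then every corner $y$ satisfies $y_2\ge Cy_1$. Proposition~\ref{prop:Hprop}~(iii) says that $x_2\mapsto H(x_1,x_2)$ is constant for $x_2\ge Cx_1$, so $H(b_1,b_2)=H(b_1,a_2)$ and $H(a_1,b_2)=H(a_1,a_2)$. The rectangle therefore has $\mu_\chi$-measure zero. Points with $x_1>Cx_2$ are handled symmetrically. Since the open cone complements are covered by such rectangles, the support lies in the closed cone $\{C^{-1}x_1\le x_2\le Cx_1\}$.

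\textbf{Invariance.} Write $c\colon x\mapsto cx$. By Proposition~\ref{prop:Hprop}~(i), $H\circ c=c^nH$. For a rectangle $R$ this gives
\[
\mu_\chi(c^{-1}R)=\sum_{\text{corners}}\pm H(c^{-1}y)=c^{-n}\sum\pm H(y)=c^{-n}\mu_\chi(R).
\]
Because rectangles generate the Borel $\sigma$-algebra, this yields $c_*\mu_\chi=c^{-n}\mu_\chi$.

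The only step that needs care is positivity of the measure together with the rectangle formula. I would derive both from the dimension inequality above, applied at level $\la$, and then pass to the limit in $\la$ using the existence of the limit defining $H$.
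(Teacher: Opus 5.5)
Your proposal is correct and is essentially the argument the paper intends: the corollary is stated without proof as an immediate consequence of parts~(i) and~(iii) of Proposition~\ref{prop:Hprop}, and your rectangle computations supply exactly the details behind that. The one extra ingredient is your submodularity argument for positivity of $\mu_\chi$, and hence for the rectangle formula. The paper instead builds positivity into the definition and justifies it later, in the proof of Proposition~\ref{prop:volbymeas}, by realizing $\mu_\chi$ as a weak limit of the positive Dirac-mass measures $\mu_m$ coming from a common orthogonal basis.
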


The homogeneity property of $\mu_\chi$ under scaling means that we can work in ``spherical coordinates". Set $\Delta\coloneqq\{x\in\R_{>0}^2\mid x_1+x_2=1\}$, and define $\pi\colon\R_{>0}^2\to\Delta$ by $\pi(x)=x/(x_1+x_2)$.
\begin{defi}
    The \emph{spherical bivariate limit measure} $\tilde\mu_\chi$ on $\Delta$ associated to $\chi$ is the  pushforward under $\pi$ of the restriction of $\mu_\chi$ to the simplex $\{x\in\R_{>0}^2\mid\sum_ix_i<1\}$.
\end{defi}
\begin{cor}
    The spherical bivariate limit measure $\tilde\mu_\chi$ is a finite positive measure whose support is contained in the set $\{x\in\Delta\mid C^{-1}x_1\le x_2\le Cx_1\}$.
\end{cor}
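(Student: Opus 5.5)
The plan is to read off the three properties of $\tilde\mu_\chi$ (positivity, finite mass, support) directly from the corresponding properties of $\mu_\chi$ on $\R_{>0}^2$. Pushforward under $\pi$ preserves positivity and total mass. The cone $\{C^{-1}x_1\le x_2\le Cx_1\}$ is invariant under positive scaling, so $\pi$ maps it into $\{x\in\Delta\mid C^{-1}x_1\le x_2\le Cx_1\}$. It therefore suffices to show three things about $\mu_\chi$: it is positive, the simplex $\{x\in\R^2_{>0}\mid x_1+x_2<1\}$ has finite $\mu_\chi$-mass, and the support statement, which is the previous corollary.

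\emph{Positivity.} I would check that for $x\le x'$ coordinatewise the mixed difference
\[
H(x'_1,x'_2)-H(x_1,x'_2)-H(x'_1,x_2)+H(x_1,x_2)\ge0 .
\]
Fix $\la$ and set $A=\cF^{\la x_1}_{\chi_1}R\subset A'=\cF^{\la x'_1}_{\chi_1}R$ and $B=\cF^{\la x_2}_{\chi_2}R\subset B'=\cF^{\la x'_2}_{\chi_2}R$. Since $(A\cap B')\cap(A'\cap B)=A\cap B$, we get
\[
\dim(A\cap B')+\dim(A'\cap B)-\dim(A\cap B)=\dim\bigl((A\cap B')+(A'\cap B)\bigr)\le\dim(A'\cap B').
\]
Multiplying by $n!/\la^n$ and letting $\la\to\infty$ (the limits exist by Proposition~\ref{prop:Hprop}) gives the inequality. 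Hence $H$ has nonnegative rectangle increments. Since $H$ is also continuous by Proposition~\ref{prop:Hprop}(ii), its distributional mixed derivative $\mu_\chi$ is a positive Borel measure, namely the Lebesgue--Stieltjes measure of $H$.

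\emph{Finiteness.} The simplex lies inside $(0,1]^2$. Using the Stieltjes description and the continuity of $H$ up to the boundary of $\R_+^2$ (again Proposition~\ref{prop:Hprop}(ii)), we get
\[
\mu_\chi\bigl((0,1]^2\bigr)=H(1,1)-H(1,0)-H(0,1)+H(0,0)=H(1,1),
\]
because $H$ vanishes on the coordinate axes. Moreover $H(1,1)=\vol(\chi_1\wedge\chi_2)$, which is finite since $\chi_1\wedge\chi_2$ is a bounded negative norm (Proposition~\ref{prop:volbounded}). So $\tilde\mu_\chi$ has total mass at most $H(1,1)<\infty$.

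The only delicate point is passing from the corner values of $H$ to the mass of the half-open rectangle. This uses that $H$ extends continuously to the boundary of $\R_+^2$, where it is zero. Proposition~\ref{prop:Hprop} provides exactly that, so I expect no real obstacle.
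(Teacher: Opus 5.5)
Your proof is correct, but it reaches positivity and finiteness differently from the paper.

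The paper gives no separate proof of this corollary. Positivity of $\mu_\chi$ only appears later, in the proof of Proposition~\ref{prop:volbymeas}. There a common orthogonal basis (Lemma~\ref{lem:commorth}) writes the mixed derivative of $H_m$ as the discrete positive measure \eqref{equ:discretmeas}, and $\mu_\chi$ is obtained as the distributional limit. Finiteness then comes from the volume formulas of that proposition, e.g.\ $\mu_\chi\{x_1\le1,x_2\le1\}=\vol(\chi_1\wedge\chi_2)<\infty$.

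You instead get positivity from the inclusion--exclusion inequality $\dim(A\cap B')+\dim(A'\cap B)-\dim(A\cap B)\le\dim(A'\cap B')$. This makes $H$ $2$-increasing, so $\mu_\chi$ is the Lebesgue--Stieltjes measure of $H$. You then read off masses of rectangles from corner values. The continuity of $H$ on all of $\R_+^2$ and its vanishing on the axes, both shown in the proof of Proposition~\ref{prop:Hprop}, make that corner computation legitimate.

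What your route buys:
\begin{itemize}
\item It is more elementary and self-contained: it needs neither the common orthogonal basis nor the approximation $\mu_m\to\mu_\chi$.
\item It gives directly $\mu_\chi\bigl((0,t_1]\times(0,t_2]\bigr)=H(t_1,t_2)=\vol\bigl(\tfrac{\chi_1}{t_1}\wedge\tfrac{\chi_2}{t_2}\bigr)$, which is exactly Proposition~\ref{prop:volbymeas}(ii), the part the paper leaves to the reader.
\end{itemize}

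What the paper's route buys is the explicit discrete approximants and their weak convergence. These are what Proposition~\ref{prop:volbymeas}(i) needs. The filtration of $\chi_t$ is a sum of intersections along a line, not a single intersection, so the mass of $\{t_1x_1+t_2x_2\le1\}$ is not a rectangle increment of $H$. Relating that mass to $\vol(\chi_t)$ uses the diagonalization.
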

\begin{rmk}\label{rmk:limrddown}
    The function $H_\chi$ does not change if we replace the norms $\chi_i$, $i=1,2$ by their round-downs $\lfloor\chi_i\rfloor$. It follows that the bivariate limit measure $\mu_\chi$ and the spherical bivariate limit measure $\tilde\mu_\chi$ also do not change.
\end{rmk}
\subsection{Cones and geodesics}\label{sec:geod}
Consider two bounded negative norms $\chi_1,\chi_2$. Following~\cite[\S3]{BLXZ} and~\cite{RemiGeodesics} we define, for $t=(t_1,t_2)\in\R_+^2\setminus\{0\}$, a new norm $\chi_t$ in terms of its filtration:
\[
\cF^\la_{\chi_t}R=\sum_{\la=t_1\la_1+t_2\la_2}\cF^{\la_1}_{\chi_1}R\cap\cF^{\la_2}_{\chi_2}R.
\]
\begin{lem}\label{lem:geod}
Let $\chi_1, \chi_2$ be two bounded negative norms. For any $t\in\R_+^2\setminus\{0\}$, $\chi_t$ is a well-defined nonpositive norm. Moreover:
\begin{itemize}
\item[(i)]
    $\chi_{(1,0)}=\chi_1$, $\chi_{(0,1)}=\chi_2$, and $\chi_{at}=a\chi_t$ for $a>0$;
\item [(ii)]
    $\chi_t$ is a bounded negative norm for any $t\ne(0,0)$;
\item[(iii)]
    any common orthogonal 
    basis $\{f_i\}_i$ of $R$ diagonalizing both $\chi_1$ and $\chi_2$ also diagonalizes $\chi_t$, and $\chi_t(f_i)=t_1\chi_1(f_i)+t_2\chi_2(f_i)$;
\item [(iv)]
    if $t_1,t_2>0$, $\chi_t$ is the smallest norm such that 
    \[  
    \chi_t\ge\theta_1\chi_1\wedge\theta_2\chi_2
    \quad\text{for all $\theta_i>0$ such that $\frac{t_1}{\theta_1}+\frac{t_2}{\theta_2}=1$}.
    \]
\end{itemize}
\end{lem}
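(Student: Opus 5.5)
The plan is to make everything reduce to the common orthogonal basis of Lemma~\ref{lem:commorth}. Since $\chi_1,\chi_2$ are bounded negative norms, that lemma provides a $k$-basis $(f_i)_{i\ge0}$ of $R$ with $f_0=1$ that is orthogonal for both. Consequently each $\cF^{\la_j}_{\chi_j}R$ is spanned by the $f_i$ with $-\chi_j(f_i)\le\la_j$, and so is every intersection $\cF^{\la_1}_{\chi_1}R\cap\cF^{\la_2}_{\chi_2}R$.

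\emph{Step 1: identify the filtration.} Define $\chi'_t$ by declaring the basis orthogonal with $\chi'_t(f_i):=t_1\chi_1(f_i)+t_2\chi_2(f_i)$. I will show $\cF^\la_{\chi_t}R=\mathrm{span}\{f_i\mid t_1(-\chi_1(f_i))+t_2(-\chi_2(f_i))\le\la\}$. For $\supseteq$, take $\la_j=-\chi_j(f_i)$ plus slack so that $t_1\la_1+t_2\la_2=\la$. For $\subseteq$, each summand is spanned by basis vectors with $-\chi_j(f_i)\le\la_j$. This shows $\cF_{\chi_t}$ is the filtration of $\chi'_t$, so $\chi_t=\chi'_t$ and (iii) holds.

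\emph{Step 2: norm properties.} Increasing, right-continuous (the $\chi_j(f_i)$ are discrete) and exhaustive are immediate. Multiplicativity follows from the original definition: if $f\in\cF^{\la_1}_{\chi_1}\cap\cF^{\la_2}_{\chi_2}$ and $g\in\cF^{\mu_1}_{\chi_1}\cap\cF^{\mu_2}_{\chi_2}$, then $fg\in\cF^{\la_1+\mu_1}_{\chi_1}\cap\cF^{\la_2+\mu_2}_{\chi_2}$, and the sum condition is additive. Also $\cF^0=k$ once $t\ne0$, since $f_0=1$ is the only basis element with both values $0$. Property (i) is then clear from Step~1, and for $t=(1,0)$ the formula gives $\chi_1$ directly.

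\emph{Step 3: boundedness (ii).} Fix a geometric norm $\rho$ and $C\ge1$ with $C\rho\le\chi_j\le C^{-1}\rho$. For $f=\sum a_if_i$ we have $\chi_t(f)=\min_i(t_1\chi_1+t_2\chi_2)(f_i)$. The upper bound uses $(t_1\chi_1+t_2\chi_2)(f_i)\le(t_1+t_2)C^{-1}\rho(f_i)$, but $\rho$ is not diagonal in this basis, which is the main obstacle. To get around it, I compare with $\chi_1$, which is diagonal: $\chi_t(f)\le\min_i t'\chi_1(f_i)$ with $t'=t_1+t_2C^{-2}>0$, using $\chi_2\le C^{-2}\chi_1$ pointwise (from $\chi_2\le C^{-1}\rho\le C^{-2}\chi_1$), applied to basis elements. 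This gives $\chi_t\le t'\chi_1$, and the lower bound $\chi_t\ge(t_1+t_2C^2)\chi_1$ follows the same way. Boundedness of $\chi_t$ then follows from that of $\chi_1$.

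\emph{Step 4: characterization (iv).} For $\theta$ with $t_1/\theta_1+t_2/\theta_2=1$ and each basis element, $\min(\theta_1\chi_1(f_i),\theta_2\chi_2(f_i))\le t_1\chi_1(f_i)+t_2\chi_2(f_i)$, since the right side is the convex combination with weights $t_j/\theta_j$. Since $\theta_1\chi_1\wedge\theta_2\chi_2$ is evaluated on $f=\sum a_if_i$ by the min over $i$, as both are diagonal, we get $\chi_t\ge\theta_1\chi_1\wedge\theta_2\chi_2$. For minimality, let $\chi'$ dominate all these minima. Choose $\theta$ with $\theta_1\chi_1(f_i)=\theta_2\chi_2(f_i)$; this is possible when both values are negative, and then equality holds in the convex combination. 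This gives $\chi'(f_i)\ge\chi_t(f_i)$, and the ultrametric inequality extends it to all $f$. The case $i=0$ is trivial.
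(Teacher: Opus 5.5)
Your argument is correct, but it is organized differently from the paper's. The paper avoids bases everywhere except (iii). It uses the description $\cF^\la_{\chi_t}R=\mathrm{span}\{f\in R\mid t_1\chi_1(f)+t_2\chi_2(f)\ge-\la\}$ and reads off (i) and $\cF^0_{\chi_t}R=k$ from it. It obtains the upper bound in (ii) at the level of filtrations, namely $\cF^\la_{\chi_t}R\subset\cF^\la_{(t_1+t_2)\rho}R$ whenever $\chi_1,\chi_2\le\rho$. At that level the non-diagonality of $\rho$, which you identify as the main obstacle, never arises, because $\cF^{\la_1}_\rho R\cap\cF^{\la_2}_\rho R=\cF^{\min\{\la_1,\la_2\}}_\rho R$ and $\min\{\la_1,\la_2\}\le\la/(t_1+t_2)$. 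The lower bound is left implicit and follows from Lemma~\ref{lem:geomnormbd}. For minimality in (iv), the paper uses exactly your choice $\theta_j=-a/\chi_j(f)$, but applies it to arbitrary $f\in R\setminus k$ rather than to basis vectors, and concludes because $\cF^\la_{\chi_t}R$ is spanned by such $f$.

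Your reduction to the common orthogonal basis of Lemma~\ref{lem:commorth} instead gives one mechanism that verifies the norm axioms (multiplicativity, right-continuity), which the paper does not check. It also yields the explicit sandwich $(t_1+t_2C^2)\chi_1\le\chi_t\le(t_1+t_2C^{-2})\chi_1$. The price is the dependence on Lemma~\ref{lem:commorth} and the detour through $\chi_1$ for boundedness.

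One small point: (iii) concerns an \emph{arbitrary} common orthogonal basis. You should say explicitly that your Step~1 uses nothing but orthogonality, so it applies verbatim to any such basis, not only to the one produced by Lemma~\ref{lem:commorth}.
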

\begin{proof}
For $t\in \R^2_+\setminus\{0\}$, by construction 
\[
\cF^\lambda_{\chi_t}R=\mathrm{span}\langle f\in R\mid t_1\chi_1(f)+t_2\chi_2(f)\ge -\lambda\rangle.
\]
Using this characterization, it is clear that $\chi_{(1,0)}=\chi_1$ and $\chi_{(0,1)}=\chi_2$. If $a>0$, then 
\[
\cF^\lambda_{\chi_{at}}R=\mathrm{span}\langle f\in R\mid t_1\chi_1(f)+t_2\chi_2(f)\ge -\lambda/a\rangle=\cF^{\la/a}_{\chi_t}R=\cF^\lambda_{a\chi_t}R.
\]
If both $\chi_1, \chi_2$ are negative norms, then $\cF^0_{\chi_t}R=k$ and hence $\chi_t$ is negative as well. Assume further that both $\chi_1, \chi_2$ are bounded. Pick a positive multiple $\rho$ of a geometric norm with $\chi_1, \chi_2\le \rho$. Then $\cF^\la_{\chi_t}R\subseteq \cF^\la_{(t_1+t_2)\rho}R$ and hence $\chi_t\le (t_1+t_2)\rho$ is also bounded. This proves~(ii). The assertion~(iii) follows directly from the definition. For any $\theta_i>0$ with $\frac{t_1}{\theta_1}+\frac{t_2}{\theta_2}=1$, we have
\[
\cF^\la_{\theta_1\chi_1\wedge \theta_2\chi_2}R=\cF^{\la/\theta_1}_{\chi_1}R\cap \cF^{\la/\theta_2}_{\chi_2}R\subset \cF^\la_{\chi_t}R,
\]
where the second inclusion follows from $t_1\frac{\lambda}{\theta_1}+t_2\frac{\la}{\theta_2}=\la$. This shows $\chi_t\ge \theta_1\chi_1\wedge \theta_2\chi_2$. 

Conversely, let $\eta$ be any norm satisfying $\eta\ge\theta_1\chi_1\wedge\theta_2\chi_2$ for all $\theta_i>0$ with $t_1/\theta_1+t_2/\theta_2=1$. Given $f\in R\setminus k$ with $-a:=t_1\chi_1(f)+t_2\chi_2(f)\ge-\lambda$, set $\theta_i:=-a/\chi_i(f)$. Then $a>0$ and
  $t_1/\theta_1+t_2/\theta_2=1$, so $f\in\cF^\lambda_{\theta_1\chi_1\wedge\theta_2\chi_2}R
  \subset\cF^\lambda_\eta R$. This easily implies $\chi_t\le\eta$, proving~(iv).
\end{proof}
\begin{defi}
    The \emph{cone} spanned by two bounded negative norms $\chi_1$, $\chi_2$ is the image of $\R_{>0}^2\ni t\mapsto\chi_t$. The \emph{geodesic} spanned by $\chi_1$ and $\chi_2$ is the image of $[0,1]\ni u\mapsto\chi_{(1-u,u)}$.
\end{defi}

Next we connect the construction above with the limit measure construction in~\S\ref{sec:limmeas}.
\begin{prop}\label{prop:volbymeas}
    Let $\chi_1$ and $\chi_2$ be bounded negative norms, and $\mu_\chi$ the associated limit measure on $\R_+^2$. Then, for any $t\in\R_{>0}^2$, we have
    \begin{itemize}
        \item[(i)] 
        $\vol(\chi_t)=\mu_\chi\{x\in\R_{>0}^2\mid t_1x_1+t_2x_2\le 1\}$;
        \item[(ii)]
        $\vol(\frac{\chi_1}{t_1}\wedge\frac{\chi_2}{t_2})=\mu_\chi\{x\in\R_{>0}^2\mid x_1\le t_1, x_2\le t_2\}$.
    \end{itemize}
    \end{prop}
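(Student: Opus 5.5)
We prove (ii) first, since it is essentially the definition of $\mu_\chi$, and then deduce (i) by a counting argument using a common orthogonal basis.

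\textbf{Step 1: part (ii).} By the proof of Proposition~\ref{prop:Hprop}, for $t_1,t_2>0$ we have $H_\chi(t)=\vol(\frac{\chi_1}{t_1}\wedge\frac{\chi_2}{t_2})$. So it suffices to show $\mu_\chi((0,t_1]\times(0,t_2])=H_\chi(t)$. Since $\mu_\chi=\partial_1\partial_2H_\chi$ distributionally on $\R_{>0}^2$, the mass of a rectangle $(a_1,t_1]\times(a_2,t_2]$ is
\[
H(t_1,t_2)-H(a_1,t_2)-H(t_1,a_2)+H(a_1,a_2).
\]
This uses continuity of $H$ (Proposition~\ref{prop:Hprop}(ii)). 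One should also note that $\mu_\chi\ge0$: concavity of $H^{1/n}$ alone does not give this. Positivity should follow from the inclusion–exclusion counting in Step 2, since the rectangle differences count basis elements and are therefore nonnegative. Now let $a_1,a_2\to0$. By continuity on $\R_+^2$ and the vanishing of $H$ on the axes, the three correction terms tend to $0$, which gives (ii).

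\textbf{Step 2: the discrete measures.} Fix a common orthogonal basis $(f_i)$ for $\chi_1,\chi_2$ (Lemma~\ref{lem:commorth}) and set $x^{(i)}=(-\chi_1(f_i),-\chi_2(f_i))$. Orthogonality gives
\[
\dim(\cF^{a}_{\chi_1}R\cap\cF^{b}_{\chi_2}R)=\#\{i: x^{(i)}_1\le a,\ x^{(i)}_2\le b\},
\]
and, by Lemma~\ref{lem:geod}(iii), $\dim\cF^\la_{\chi_t}R=\#\{i: t\cdot x^{(i)}\le\la\}$. Define the rescaled counting measures
\[
\mu_\la\coloneqq\frac{n!}{\la^n}\sum_{i}\delta_{x^{(i)}/\la}
\]
on $\R_+^2$, with the index $i=0$ (that is, $f_0=1$) dropped. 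The first identity says $\mu_\la([0,a]\times[0,b])\to H(a,b)$ for all $a,b$. Rectangle differences of these counts are nonnegative, which gives positivity of $\mu_\chi$. Moreover the masses of bounded sets stay bounded, since $H(1,1)\le\vol(\rho)<\infty$ as in the proof of Proposition~\ref{prop:Hprop}.

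\textbf{Step 3: convergence and part (i).} From Step 2, $\mu_\la\to\mu_\chi$ vaguely on $\R_{>0}^2$, and indeed the distribution functions converge on all of $\R_+^2$. Then
\[
\frac{n!}{\la^n}\dim\cF^\la_{\chi_t}R=\mu_\la\{x: t\cdot x\le1\},
\]
so (i) follows once we can pass to the limit on the triangle $T=\{t\cdot x\le1\}$.

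The main obstacle is exactly this passage to the limit. The difficulty is twofold: mass could escape to the axes, and the boundary line $\{t\cdot x=1\}$ could carry mass. The plan is as follows.
\begin{itemize}
\item[(a)] \emph{Mass near the axes.} Proposition~\ref{prop:Hprop}(iii), or directly $\chi_2\ge C\chi_1$ and $\chi_1\ge C\chi_2$ with $C=e^{\de_\infty}$, shows that every $x^{(i)}$ lies in the cone $\{C^{-1}x_1\le x_2\le Cx_1\}$. So all $\mu_\la$ are supported in a fixed closed cone meeting the axes only at $0$. On $T$ they therefore sit in a compact set, and mass near $0$ is controlled by $H(\e,\e)=\e^nH(1,1)\to0$.
\item[(b)] \emph{Mass on the boundary line.} We approximate $T$ from inside and outside by finite unions of rectangles, on which convergence holds by Step 2. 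This reduces the claim to showing that $\mu_\chi$ puts no mass on the line $\{t\cdot x=1\}$. By the scaling invariance $c_*\mu_\chi=c^{-n}\mu_\chi$, the lines $\{t\cdot x=c\}$ for different $c$ are disjoint and carry masses $c^n m$, where $m$ is the mass of the unit line. Uncountably many disjoint sets of positive mass inside a bounded region would contradict local finiteness, so $m=0$.
\end{itemize}
Hence $\mu_\la(T)\to\mu_\chi(T)$. Since the left side above converges to $\vol(\chi_t)$, this gives (i), using also that $\vol$ is a genuine limit by Proposition~\ref{prop:volbounded}.
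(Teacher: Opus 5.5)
Your proof is correct. It shares the paper's backbone: a common orthogonal basis from Lemma~\ref{lem:commorth}, the rescaled counting measures, and passage to the limit on the triangle $\{t\cdot x\le 1\}$. The differences are in how the limit is justified.

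For (ii), you work directly from the definition $\mu_\chi=\partial_1\partial_2 H_\chi$. Positivity (from the counting) plus inclusion--exclusion on rectangles gives the rectangle masses, and continuity of $H$ up to the axes (Proposition~\ref{prop:Hprop}(ii)) lets the lower corner go to $0$. No limiting measures are needed. The paper instead proves (i) and only remarks that (ii) is similar.

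For (i), you first show that $\mu_\chi$ charges no segment $\{t\cdot x=1\}$, using $c_*\mu_\chi=c^{-n}\mu_\chi$ and finiteness of mass on a compact slab. You then approximate the triangle by unions of rectangles, and control the origin with the cone support and $H(\e,\e)=\e^nH(1,1)$.

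The paper never isolates a null-boundary statement. It uses the homogeneity $\vol(c^{-1}\chi_t)=c^n\vol(\chi_t)$ to get $\lim_m\mu_m\{\e<t\cdot x\le1\}=(1-\e^n)\vol(\chi_t)$ and $\lim_m\mu_m\{t\cdot x\le1+\e\}=(1+\e)^n\vol(\chi_t)$. The closed-set and open-set inequalities for weak convergence then sandwich $\mu_\chi\{t\cdot x\le1\}$ between these two bounds. This handles the origin and the boundary line in one step.

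The paper's route is shorter and needs no a priori information about how $\mu_\chi$ charges lines. Yours has the advantage of giving (ii) directly from the definition of $\mu_\chi$, which is the case the paper leaves to the reader.

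One harmless bookkeeping slip: since you dropped $f_0=1$ from $\mu_\la$, the identity should read $\frac{n!}{\la^n}\dim\cF^\la_{\chi_t}R=\mu_\la\{t\cdot x\le1\}+\frac{n!}{\la^n}$. This does not affect the limit.
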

\begin{proof}
    For any $m\in\Z_{>0}$, define $H_m\colon\R_{>0}^2\to\R$ by
    \[
    H_m(x_1,x_2)\coloneqq\frac{n!}{m^n}\dim_k(\cF_{\chi_1}^{mx_1}R\cap\cF_{\chi_2}^{mx_2}R).
    \]
    We first show that the distributional derivative
    \[
    \mu_m\coloneqq\frac{\partial^2H_m}{\partial x_1\partial x_2}
    \]
    defines a positive, locally finite measure on $\R_{>0}^2$.
    Pick $C\ge1$ such that $C\chi_1\le\chi_2\le C^{-1}\chi_1$. Then $x_1\mapsto H_m(x_1,x_2)$ (resp.\ $x_2\mapsto H_m(x_1,x_2)$) is constant on $x_1>Cx_2$ (resp.\ $x_2>Cx_1$) so the support of $\mu_m$ is contained in the cone $C^{-1}x_1\le x_2\le Cx_1$ of $\R_{>0}^2$. 
    Pick a basis $\{f_i\}_{i=0}^\infty$ of $R$ that diagonalizes both $\chi_1$ and $\chi_2$ as in Lemma~\ref{lem:commorth} with $f_0=1$. A direct calculation as in~\cite[(4.1)]{BLQ} shows
    \begin{equation}\label{equ:discretmeas}
        \mu_{m}=\frac{n!}{m^n}\sum_{i\ge1}\delta_{\tfrac1m(-\chi_1(f_i),-\chi_2(f_i))}.
    \end{equation}
    On any compact subset of $\R_{>0}^2$ the sequence $(H_m)_m$ is uniformly bounded, and converges to $H$ pointwise. By dominated convergence, we therefore have $H_m\to H$ in $L^1_{\mathrm{loc}}$. This implies that $\mu_m\to\mu$ in the sense of distributions, and hence in the weak sense of measures.

    We now prove~(i). By Lemma~\ref{lem:geod}, $\chi_t$ is also diagonalized by the same basis, and therefore $\dim_k\cF_{\chi_t}^mR$ is the number of $i\ge0$ such that $t_1\chi_1(f_i)+t_2\chi_2(f_i)\ge-m$.    
    In view of~\eqref{equ:discretmeas}, this gives
    \begin{align*}
    \vol(\chi_t)
    &=\lim_{m\to\infty}\frac{n!}{m^n}\dim_k\cF_{\chi_t}^mR
    =\lim_{m\to\infty}\left(\mu_{m}\{x\in\R_{>0}^2\mid t_1x_1+t_2x_2\le 1\}+\frac{n!}{m^n}\right)
    \end{align*}
    Thus for any $\varepsilon\in(0,1)$, we have
    \begin{align*}
    \lim_{m\to \infty}\mu_m\{x\in\R_{>0}^2\mid \varepsilon<t_1x_1+t_2x_2\le 1\} 
    &= \vol(\chi_t)-\lim_{m\to \infty}\mu_m\{x\in\R_{>0}^2\mid t_1x_1+t_2x_2\le \varepsilon\}\\
    &=\vol(\chi_t)-\vol(\varepsilon^{-1}\chi_t)=(1-\varepsilon^n)\vol(\chi_t),
    \end{align*}
    where the second equality follows from Lemma~\ref{lem:geod}(i). By weak convergence, this gives
    \[
    \mu\{x\in\R_{>0}^2\mid t_1x_1+t_2x_2\le 1\}\ge\lim_{m\to \infty}\mu_m\{x\in\R_{>0}^2\mid \varepsilon<t_1x_1+t_2x_2\le 1\} =(1-\varepsilon^n)\vol(\chi_t).
    \]
    On the other hand, we have
    \begin{align*}
    \mu\{x\in\R_{>0}^2\mid t_1x_1+t_2x_2\le 1\}
    &\le \mu\{x\in\R_{>0}^2\mid t_1x_1+t_2x_2< 1+\varepsilon\}\\
    &\le \liminf_{m\to \infty}\mu_m\{x\in\R_{>0}^2\mid t_1x_1+t_2x_2< 1+\varepsilon\}\\
    &=(1+\varepsilon)^{n}\vol(\chi_t).
    \end{align*}
    We get~(i) by letting $\varepsilon\to 0$.

    The proof of~(ii) is similar, and based on the fact that the basis $\{f_i\}_i$ of $R$ is orthogonal also for $\frac{\chi_1}{t_1}\wedge\frac{\chi_2}{t_2}$. The details are left to the reader.
\end{proof}
We also have volume formulas in terms of the spherical limit measure $\tilde\mu_\chi$ on the one-dimensional simplex $\Delta=\{x\in\R_{>0}^2\mid x_1+x_2=1\}\simeq(0,1)$.
\begin{cor}\label{cor:volcone}
    Let $\chi_1$ and $\chi_2$ be bounded negative norms, and $\tilde\mu_\chi$ the associated limit measure on $\Delta$. Then, for any $t\in\R_{>0}^2$, we have
    \begin{itemize}
        \item[(i)] 
        $\vol(\chi_t)=\int_\Delta(t_1x_1+t_2x_2)^{-n}\,d\tilde\mu_\chi(x)$;
        \item[(ii)]
        $\vol(\frac{\chi_1}{t_1}\wedge\frac{\chi_2}{t_2})=\int_\Delta\max\{t_1^{-1}x_1,t_2^{-1}x_2\}^{-n}\,d\tilde\mu_\chi(x)$.
    \end{itemize}
\end{cor}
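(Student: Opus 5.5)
The plan is to deduce both formulas directly from Proposition~\ref{prop:volbymeas} by a change of variables to spherical coordinates, using the scaling invariance $c_*\mu_\chi=c^{-n}\mu_\chi$.

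First, decompose $\R_{>0}^2$ via the homeomorphism $\R_{>0}^2\ni x\mapsto(r,\pi(x))\in\R_{>0}\times\Delta$ with $r=x_1+x_2$. The invariance $c_*\mu_\chi=c^{-n}\mu_\chi$, together with the fact that $\tilde\mu_\chi$ is the pushforward of $\mu_\chi$ restricted to $\{r<1\}$, should identify $\mu_\chi$ with the product measure $n r^{n-1}dr\otimes\tilde\mu_\chi$ under this decomposition. To check this, it suffices to test on sets of the form $\{r<a\}\cap\pi^{-1}(B)$ with $B\subset\Delta$ Borel. By scaling, these have measure $a^n\tilde\mu_\chi(B)$, because scaling by $a$ maps $\{r<1\}\cap\pi^{-1}(B)$ onto $\{r<a\}\cap\pi^{-1}(B)$ and multiplies the measure by $a^n$. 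Such sets generate the Borel $\sigma$-algebra and form a $\pi$-system, so the product identification follows. Finiteness of $\tilde\mu_\chi$ was already recorded, since $\mu_\chi\{r<1\}\le H(1,1)<\infty$.

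Next, for (i), write the region $\{t_1x_1+t_2x_2\le1\}$ in these coordinates as $\{x=r y:\ y\in\Delta,\ r\le (t_1y_1+t_2y_2)^{-1}\}$. Fubini then gives
\[
\mu_\chi\{t_1x_1+t_2x_2\le1\}=\int_\Delta\int_0^{(t_1y_1+t_2y_2)^{-1}}nr^{n-1}\,dr\,d\tilde\mu_\chi(y)=\int_\Delta(t_1y_1+t_2y_2)^{-n}\,d\tilde\mu_\chi(y).
\]
Combined with Proposition~\ref{prop:volbymeas}(i), this is (i). For (ii), the rectangle $\{x_1\le t_1,x_2\le t_2\}$ corresponds to $r\le \min\{t_1/y_1,t_2/y_2\}=\max\{t_1^{-1}y_1,t_2^{-1}y_2\}^{-1}$, and the same computation with Proposition~\ref{prop:volbymeas}(ii) gives the claim. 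The boundary of each region has the form $\{r=g(y)\}$, which has measure zero under $nr^{n-1}dr\otimes\tilde\mu_\chi$, so the distinction between strict and non-strict inequalities is immaterial.

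The main point needing care is the product decomposition. Specifically, one must make sure that $\mu_\chi$ puts no mass on the coordinate axes, and that the measure on $\R_{>0}^2$ is entirely captured by its pushforward from $\{r<1\}$ together with scaling. The first holds because $\mu_\chi$ is defined on $\R_{>0}^2$. The second is the scaling computation above. Everything else is Fubini.
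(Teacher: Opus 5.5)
Your proposal is correct and is essentially the paper's argument: the corollary is stated there without proof as an immediate consequence of Proposition~\ref{prop:volbymeas} and the scaling invariance $c_*\mu_\chi=c^{-n}\mu_\chi$, which amounts to the spherical-coordinate disintegration $\mu_\chi\simeq nr^{n-1}dr\otimes\tilde\mu_\chi$ that you carry out. Your $\pi$-system/$\sigma$-finiteness verification and the observation that the boundary sets $\{r=g(y)\}$ are null just make explicit the details the paper leaves to the reader.
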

\begin{thm}\label{thm:volan}
    For any two bounded negative norms $\chi_1,\chi_2$, there exists $\e>0$ such that the function $\vol(\chi_t)$ on $\R_+^2\setminus\{(0,0)\}$ extends to a smooth function $f(t)>0$ on the open cone
    \[
    \R_{>0}(1,-\e)+\R_{>0}(-\e,1).
    \] 
    Moreover $f(t)^{-1/n}$ is 1-homogeneous and concave, and affine iff there exists $c>0$ such that
    \begin{equation}\label{equ:affinevol}
        \vol(\chi_1)=\vol(\chi_1\wedge\frac{\chi_2}{c})=\vol(\frac{\chi_2}{c}).
    \end{equation}   
\end{thm}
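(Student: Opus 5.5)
Our starting point is Corollary~\ref{cor:volcone}(i): for $t\in\R_{>0}^2$,
\[
\vol(\chi_t)=\int_\Delta(t_1x_1+t_2x_2)^{-n}\,d\tilde\mu_\chi(x),
\]
where $\tilde\mu_\chi$ is a finite positive measure supported in the compact set $K:=\{x\in\Delta\mid C^{-1}x_1\le x_2\le Cx_1\}$, with $C=e^{\de_\infty(\chi_1,\chi_2)}$. On $K$ both $x_1$ and $x_2$ are bounded below by $\eta:=(1+C)^{-1}>0$.

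\textbf{Smooth extension.} We define $f(t)$ by the same integral for all $t\in\R^2$ with $t_1x_1+t_2x_2>0$ on $K$. If $t=a(1,-\e)+b(-\e,1)$ with $a,b>0$ and $\e$ small relative to $\eta$, say $\e<\eta/(1-\eta)$, then $t_1x_1+t_2x_2=a(x_1-\e x_2)+b(x_2-\e x_1)$ is bounded below by a positive multiple of $a+b$ uniformly on $K$. The integrand is then smooth in $t$, with all derivatives bounded locally uniformly, so differentiation under the integral gives a smooth $f>0$ on the open cone. This $f$ agrees with $\vol(\chi_t)$ on $\R_{>0}^2$. On the boundary rays it also agrees, since $\vol(\chi_{(1,0)})=\vol(\chi_1)$ is recovered from the formula by continuity; alternatively, observe that Proposition~\ref{prop:volbymeas} extends to $t_2=0$ because of the support condition. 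Homogeneity $f(at)=a^{-n}f(t)$ is immediate, so $f^{-1/n}$ is 1-homogeneous.

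\textbf{Concavity.} Write $f(t)=\int\ell_x(t)^{-n}\,d\tilde\mu(x)$, where each $\ell_x(t):=t_1x_1+t_2x_2$ is linear and positive on the cone. The map $t\mapsto\|\ell(t)^{-1}\|_{L^n(\tilde\mu)}$ satisfies a Minkowski-type inequality. By convexity of $s\mapsto s^{-1}$, for $\lambda\in[0,1]$ we have $\ell((1-\lambda)t+\lambda s)^{-1}\le\big((1-\lambda)\ell(t)^{-1}+\lambda\ell(s)^{-1}\big)$. This shows $F(t):=\|\ell(t)^{-1}\|_{L^n}$ is convex, but we need $F^{-1}=f^{-1/n}$ to be concave, which is stronger.

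To get this, we use 1-homogeneity: for a positive 1-homogeneous function $g$, $g$ is concave iff its superlevel set $\{g\ge1\}$ is convex. Here $\{f^{-1/n}\ge1\}=\{F\le1\}$, which is convex since $F$ is convex. Hence $f^{-1/n}$ is concave. This is the dual-norm trick, essentially as in \cite{BLQ}, and it is quick.

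\textbf{Affineness.} Since $f^{-1/n}$ is concave and 1-homogeneous on a two-dimensional cone, it is affine iff it is linear. Tracing equality through the argument, equality in Minkowski/convexity of $s^{-1}$ along a segment forces $\ell_x(t)/\ell_x(s)$ to be constant $\tilde\mu$-a.e. This means $\tilde\mu$ is a point mass $\tilde\mu=V\delta_{x^0}$.

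Conversely, if $\tilde\mu=V\delta_{x^0}$ then $f(t)=V\ell_{x^0}(t)^{-n}$ and $f^{-1/n}$ is linear. This is the step I expect to need the most care: the equality analysis must be done directly, not via the level-set trick. I would argue via $\frac{d^2}{du^2}f^{-1/n}$ along a line, computed with the Cauchy--Schwarz/H\"older inequality in $L^2(\ell^{-n-2}\,d\tilde\mu)$. This is smooth by the first step, and vanishes iff $x_2/x_1$ is constant on $\supp\tilde\mu$.

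It remains to match the point-mass condition with \eqref{equ:affinevol}. Put $c=x^0_2/x^0_1$. By Corollary~\ref{cor:volcone}, with $\tilde\mu=V\delta_{x^0}$, a direct computation gives
\[
\vol(\chi_1)=V(x^0_1)^{-n},\qquad \vol(\chi_2/c)=Vc^{-n}(x^0_2)^{-n}=V(x^0_1)^{-n},
\]
and likewise $\vol(\chi_1\wedge\chi_2/c)=V\max\{x^0_1,x^0_2/c\}^{-n}=V(x^0_1)^{-n}$, which is \eqref{equ:affinevol}.

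Conversely, suppose \eqref{equ:affinevol} holds. By Corollary~\ref{cor:volcone}(ii),
\[
\int(\max\{x_1,x_2/c\}^{-n}-x_1^{-n})\,d\tilde\mu=0 .
\]
The integrand is $\le0$ and vanishes iff $x_2\le cx_1$, so $\supp\tilde\mu\subset\{x_2\le cx_1\}$. Symmetrically, comparing with $\vol(\chi_2/c)$ gives $\supp\tilde\mu\subset\{x_2\ge cx_1\}$. Hence $\tilde\mu$ is supported at the single point $x_2=cx_1$, i.e.\ it is a point mass, completing the characterization.
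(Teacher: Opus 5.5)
Your proposal is correct and follows the same route as the paper: the representation $\vol(\chi_t)=\int_\Delta(t_1x_1+t_2x_2)^{-n}\,d\tilde\mu_\chi$ together with the support bound gives the smooth extension, the concavity of $f^{-1/n}$, and the criterion that it is affine iff $\tilde\mu_\chi$ is a point mass; the paper delegates these steps to \cite{BLQ}, while you verify them directly via the $L^n$-Minkowski/level-set argument and Cauchy--Schwarz, and then match the point-mass condition with \eqref{equ:affinevol} through the volume formulas (you use Corollary~\ref{cor:volcone}(ii), the paper uses Proposition~\ref{prop:volbymeas} and the monotonicity of $H$ from Proposition~\ref{prop:Hprop}). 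The only slip is harmless: since $\vol(t\chi)=t^{-n}\vol(\chi)$, one has $\vol(\chi_2/c)=Vc^{n}(x^0_2)^{-n}$ rather than $Vc^{-n}(x^0_2)^{-n}$, and this still equals $V(x^0_1)^{-n}$ as you conclude.
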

We shall see in Corollary~\ref{cor:volaffinesat} that this is further equivalent to $\chi_1$ and $\frac{\chi_2}{c}$ having the same saturation.
\begin{proof}
    We follow the proof of Theorem~1.1 in~\cite{BLQ}. The first statement follows from the fact that the finite positive measure $\tilde\mu_\chi$ has support on the set $\{x\in\Delta\mid C^{-1}x_1\le x_2\le Cx_1\}$ for some $C\ge1$. Indeed, set $f(t)\coloneqq\int_\Delta(t_1x_1+t_2x_2)^{-n}\,d\tilde\mu(x)$.    
    As $\vol(\chi_{ct})=c^{-n}\vol(\chi_t)$ for $c>0$, $t\in\R_+^2\setminus\{(0,0)\}$, it follows that $f(t)^{-1/n}$ is 1-homogeneous, and we may assume $t_1+t_2=1$. The same argument as in~\cite{BLQ} shows that $f(t)$ is smooth on $t_1\in (-\varepsilon, 1+\varepsilon)$ for some $\varepsilon=\varepsilon(C)>0$, and that $f(t)^{-1/n}$ is concave. Moreover, $f(t)^{-1/n}$ is affine iff $\tilde{\mu}$ is supported at a single point, or equivalently, $\supp\mu_{\chi}\subset \{x_2=cx_1\}$ for some $c>0$. It remains to show that this is equivalent to~\eqref{equ:affinevol}.
    
    By Proposition~\ref{prop:volbymeas}, we have
    \[
    \vol(\chi_1)=\mu_{\chi}\{x_1\le 1\}=\mu_\chi\{x_2\le c\}=\vol(\frac{\chi_2}{c})=\mu_\chi\{x_1\le 1, x_2\le c\}=\vol(\chi_1\wedge\frac{\chi_2}{c}).
    \]
    Conversely, assume~\eqref{equ:affinevol} holds. Proposition~\ref{prop:Hprop} implies that for $x_2\ge cx_1$, we have
    \[
    \vol(\frac{\chi_1}{x_1})\ge H(x_1, x_2)\ge H(x_1, cx_1)=\vol(\frac{\chi_2}{cx_1}),
    \]
    and so $H(x_1,x_2)=\vol(\frac{\chi_1}{x_1})$; by symmetry, if $x_2\le cx_1$, $H(x_1, x_2)=\vol(\frac{\chi_2}{x_2})$. This implies $\supp\mu_\chi\subset\{x_2=cx_1\}$ as desired.
\end{proof}

\subsection{A pseudodistance}

Given two negative bounded norms $\chi_1,\chi_2$, we set 
\begin{equation}\label{equ:d1}
    \de_1(\chi_1,\chi_2)
    \coloneqq
    \vol(\chi_1)
    +\vol(\chi_2)-2\vol(\chi_1\wedge\chi_2).
\end{equation}

\begin{prop}\label{prop:pseudodis}
    Equation~\eqref{equ:d1} defines a well-defined pseudodistance on the space of bounded negative norms. Moreover, for any two such norms $\chi_1$, $\chi_2$, the path $[0,1]\ni t\to\chi_t\=\chi_{(1-t, t)}$ is a geodesic\footnote{Unlike the projective case~\cite{RemiGeodesics}, $t\mapsto\chi_t$ is not a constant speed geodesic in general.}: 
    we have 
    \[
    \de_1(\chi_1, \chi_t)+\de_1(\chi_t, \chi_2)=\de_1(\chi_1, \chi_2).
    \]
\end{prop}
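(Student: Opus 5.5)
The plan is to express every volume in the statement through the bivariate limit measure $\mu=\mu_\chi$ of the pair $(\chi_1,\chi_2)$, using Proposition~\ref{prop:volbymeas} and Corollary~\ref{cor:volcone}, and then reduce both assertions to elementary facts about functions integrated against $\tilde\mu_\chi$ on $\Delta$.

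\emph{Well-definedness and pseudodistance axioms.} By Proposition~\ref{prop:volbounded}, all three volumes in~\eqref{equ:d1} are finite, since $\chi_1\wedge\chi_2$ is again bounded negative. Symmetry is clear, and $\de_1(\chi,\chi)=0$. Nonnegativity follows from monotonicity of $\vol$, because $\chi_1\wedge\chi_2\le\chi_i$.

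The triangle inequality is the main obstacle. I would first check whether it is proved later via the Fubini--Study/rooftop description. Directly, I would aim to show
\[
\vol(\chi_1\wedge\chi_2)+\vol(\chi_2\wedge\chi_3)\le\vol(\chi_2)+\vol(\chi_1\wedge\chi_3),
\]
which is equivalent to it. Take a basis $\{f_i\}$ orthogonal for three norms simultaneously. For two norms this is Lemma~\ref{lem:commorth}; for three I am not sure such a basis exists, since the proof of Lemma~\ref{lem:commorth} only diagonalizes a second norm relative to a flag. Assuming it does, $\dim\cF^m$ of each wedge counts indices $i$ with $\min$ conditions. The inequality then reduces to the pointwise counting identity
\[
1_{A\cap B}+1_{B\cap C}\le 1_B+1_{A\cap C},
\]
where $A,B,C$ are the sets of indices with $\chi_1(f_i)\ge -m$, $\chi_2(f_i)\ge -m$, $\chi_3(f_i)\ge-m$ respectively. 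This identity is trivially true. One then divides by $m^n/n!$ and passes to the limit.

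If no triple orthogonal basis is available, the fallback is to filter by $\cF_{\chi_2}$ and use a submodularity argument on dimensions of intersections of subspaces. The inequality $\dim(U\cap V)+\dim(V\cap W)\le\dim V+\dim(U\cap W)$ fails in general, so this needs care, which is why I expect it to be the hard step.

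\emph{Geodesic identity.} Write everything in terms of $\mu$ on $\R^2_{>0}$. By Proposition~\ref{prop:volbymeas}(ii) with $t=(1,1)$, $\vol(\chi_1)=\mu\{x_1\le1\}$ and $\vol(\chi_2)=\mu\{x_2\le1\}$. For the intersections with $\chi_t$, note from Lemma~\ref{lem:geod}(iii) that a common orthogonal basis $\{f_i\}$ for $\chi_1,\chi_2$ also diagonalizes $\chi_t$ and $\chi_1\wedge\chi_t$. Hence, exactly as in the proof of Proposition~\ref{prop:volbymeas}, the discrete measures~\eqref{equ:discretmeas} converge and give
\[
\vol(\chi_1\wedge\chi_t)=\mu\{x_1\le1,\ (1-t)x_1+tx_2\le1\},
\]
and similarly for $\chi_t\wedge\chi_2$ and $\chi_t$. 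The boundary terms vanish, using the $\varepsilon$-sandwich argument together with homogeneity of $\mu$.

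Set $P=\{x_1\le1\}$, $Q=\{x_2\le1\}$ and $L_t=\{(1-t)x_1+tx_2\le1\}$. Then
\[
\de_1(\chi_1,\chi_t)=\mu(P\triangle L_t)\quad\text{and}\quad \de_1(\chi_1,\chi_2)=\mu(P\triangle Q),
\]
where finiteness of each term comes from the cone support of $\mu$. The identity $\mu(P\triangle L_t)+\mu(L_t\triangle Q)=\mu(P\triangle Q)$ follows from the pointwise inclusions $P\cap Q\subset L_t\subset P\cup Q$. These hold because $(1-t)x_1+tx_2\le\max\{x_1,x_2\}$ and $(1-t)x_1+tx_2\ge\min\{x_1,x_2\}$. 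With these inclusions one has $1_{P\triangle L}+1_{L\triangle Q}=1_{P\triangle Q}$ pointwise, and integrating against $\mu$ gives the geodesic identity.
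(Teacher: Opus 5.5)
\textbf{The triangle inequality is not proved.} Your main route needs a basis of $R$ that is orthogonal for three norms at once, and such a basis does not exist in general. An orthogonal basis must span each $\cF^\la_\chi R$ by the basis vectors lying in it. Now take three negative monomial norms on $k[x,y]$, the third one monomial in the coordinates $x+y$, $x-y$, whose $\cF^1$-pieces are $\mathrm{span}(1,x)$, $\mathrm{span}(1,y)$ and $\mathrm{span}(1,x+y)$. A common orthogonal basis would then contain at least four linearly independent vectors in the $3$-dimensional space $\mathrm{span}(1,x,y)$, which is impossible.

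Your fallback is discarded for a wrong reason. The inequality
\[
\dim(U\cap V)+\dim(V\cap W)\le \dim V+\dim(U\cap W)
\]
holds for \emph{all} finite-dimensional subspaces $U,V,W$. Indeed, $U\cap V$ and $V\cap W$ are subspaces of $V$, and their intersection $U\cap V\cap W$ is contained in $U\cap W$. Hence
\[
\dim(U\cap V)+\dim(V\cap W)=\dim\bigl((U\cap V)+(V\cap W)\bigr)+\dim(U\cap V\cap W)\le \dim V+\dim(U\cap W).
\]
This is Lemma~\ref{lem:linalg}, and it is exactly what the paper uses. Take $U=\cF^\la_{\chi_1}R$, $V=\cF^\la_{\chi_2}R$ (the middle norm) and $W=\cF^\la_{\chi_3}R$, and use $\cF^\la_{\chi_i\wedge\chi_j}R=\cF^\la_{\chi_i}R\cap\cF^\la_{\chi_j}R$. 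This gives your reduced inequality at every level $\la$; dividing by $\la^n/n!$ and letting $\la\to\infty$ finishes, since the limits exist by Proposition~\ref{prop:volbounded}. No basis is needed. Deferring instead to the later rooftop description would be circular, because Theorem~\ref{thm:metricH} uses this triangle inequality.

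\textbf{The geodesic identity is correct, but by a different route from the paper.} The paper stays at finite level. It proves
\[
\cF^\la_{\chi_t}R+\cF^\la_{\chi_1}R=\cF^\la_{\chi_t}R\cap\cF^\la_{\chi_2}R+\cF^\la_{\chi_1}R,
\]
and deduces an exact dimension identity for every $\la$, with no basis and no limit measure. Your inclusions $P\cap Q\subset L_t\subset P\cup Q$ express the same fact. You could apply them directly to the index sets $\{i:\chi_1(f_i)\ge-m\}$, etc., of a common orthogonal basis for $\chi_1,\chi_2$ (Lemmas~\ref{lem:commorth} and~\ref{lem:geod}(iii)). That yields the identity exactly at each level and avoids the weak-convergence and boundary arguments.

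One citation slip: Proposition~\ref{prop:volbymeas}(ii) with $t=(1,1)$ gives $\vol(\chi_1\wedge\chi_2)=\mu(P\cap Q)$, not $\vol(\chi_1)$. To get $\vol(\chi_1)=\mu(P)$, take $t=(1,T)$ with $T\ge e^{\de_\infty(\chi_1,\chi_2)}$, so that $\chi_1\wedge\chi_2/T=\chi_1$, and then use the cone support of $\mu$.
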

\begin{rmk}
    We note that the pseudodistance $\de_1$ is not a metric on $\Nbdd(X)$. Indeed, for $\chi\le \chi_{\bar X, D}$ constructed in Lemma~\ref{lem:embnormequiv}, we have $\de_1(\chi, \chi_{\bar X, D})=0$. We shall see later that $\de_1$ restricts to a metric on the space of saturated norms, see Corollary~\ref{cor:metricsat}.
\end{rmk}
    
The proof of Proposition~\ref{prop:pseudodis} follows~\cite{Qi}.
\begin{lem}\label{lem:linalg}
    Let $U, V, W$ be finite dimensional vector spaces. Then
    \[
    \dim V+\dim U\cap W\ge\dim U\cap V+\dim V\cap W
    \]
\end{lem}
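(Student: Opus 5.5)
The approach is to apply the dimension formula for a sum of two subspaces inside the single ambient space $V$. The spaces $U\cap V$ and $V\cap W$ are both subspaces of $V$, so Grassmann's formula gives
\[
\dim(U\cap V)+\dim(V\cap W)=\dim\bigl((U\cap V)+(V\cap W)\bigr)+\dim\bigl((U\cap V)\cap(V\cap W)\bigr).
\]
The proof then consists of bounding the two terms on the right separately.

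For the first term, the sum $(U\cap V)+(V\cap W)$ is contained in $V$. Hence its dimension is at most $\dim V$.

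For the second term, the intersection $(U\cap V)\cap(V\cap W)$ equals $U\cap V\cap W$. This is a subspace of $U\cap W$, so its dimension is at most $\dim U\cap W$.

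Adding the two bounds gives exactly $\dim V+\dim U\cap W\ge\dim U\cap V+\dim V\cap W$. All spaces involved are finite dimensional, so the formula applies without issue. There is no real obstacle here. The only point needing care is to apply Grassmann's formula to the pair $U\cap V$, $V\cap W$ inside $V$, rather than to $U$ and $W$ directly; this choice is what makes both bounds immediate.
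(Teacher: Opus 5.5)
Your proof is correct; it reaches the inequality by a different and shorter route than the paper. The paper works in the ambient space $U+V+W$ and runs a chain of quotient estimates,
\[
\dim\frac{V}{U\cap V}+\dim\frac{W}{V\cap W}=\dim\frac{V+W}{U\cap V}\ge\dim\frac{V+W}{U\cap(V+W)}=\dim\frac{U+V+W}{U}\ge\dim\frac{U+W}{U}=\dim\frac{W}{U\cap W}.
\]
This chain uses the inclusions $U\cap V\subseteq U\cap(V+W)$ and $U+W\subseteq U+V+W$ together with the second isomorphism theorem. The paper then cancels $\dim W$ to get the stated inequality.

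You instead apply Grassmann's formula to the two subspaces $U\cap V$ and $V\cap W$ of $V$. You then bound their sum by $\dim V$ and their intersection $U\cap V\cap W$ by $\dim U\cap W$.

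Your argument is more direct. It also only needs $V$ to be finite-dimensional, whereas the paper's final cancellation also needs $\dim W<\infty$. The paper's quotient formulation gains nothing essential here beyond matching the quotient isomorphisms used right afterwards for the geodesic identity. In all the applications the spaces are finite-dimensional filtration pieces or spaces of sections, so either proof suffices.
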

\begin{proof}
This is a direct computation.
    \begin{align*}
    \dim \frac{V}{U\cap V}+\dim\frac{W}{V\cap W}
    &=\dim \frac{V}{U\cap V}+\dim\frac{W+V}{V} = \dim\frac{W+V}{U\cap V}\\
    &\ge \dim\frac{W+V}{U\cap (V+W)} = \dim \frac{U+V+W}{U}\\
    &\ge \dim\frac{U+W}{U}
    =\dim\frac{W}{W\cap U}.
    \end{align*}
\end{proof}
\begin{proof}[Proof of Proposition~\ref{prop:pseudodis}]
    We first show that $\de_1$ is a pseudometric. It is clear that $\de_1(\chi, \chi)=0$ and $\de_1(\chi_1, \chi_2)=\de_1(\chi_2, \chi_1)$. It remains to check the triangle inequality. Let $\chi_3$ be any bounded negative norm. Unwinding definitions, this is equivalent to check
    \[
    \vol(\chi_3)+\vol(\chi_1\wedge \chi_2)\ge \vol(\chi_1\wedge \chi_3)+\vol(\chi_3\wedge \chi_2),
    \]
    which in turn is enough to check that for any $\la>0$, we have
    \[
    \dim\cF^\la_{\chi_3}R + \dim\cF^\la_{\chi_1}R\cap \cF^\la_{\chi_2}R\ge \dim \cF^\la_{\chi_1}R\cap \cF^\la_{\chi_3}R+\dim \cF^\la_{\chi_3}R\cap \cF^\la_{\chi_2}R.
    \]
    This follows from the previous lemma by taking $U=\cF^\la_{\chi_1}R, V=\cF^\la_{\chi_3}R, W=\cF^\la_{\chi_2}R$.

    Next we show that $\chi_t$ is indeed a geodesic. Again by writing out definitions, it suffices to show 
    \begin{equation}\label{equ:geoddis}
    \dim\cF^\la_{\chi_t}R-\dim \cF^\la_{\chi_t}R\cap \cF^\la_{\chi_1}R = \dim \cF^\la_{\chi_2}R\cap \cF^\la_{\chi_t}R-\dim\cF^\la_{\chi_1}R\cap\cF^\la_{\chi_2}R.
    \end{equation}
    Before verifying this, we claim that 
    \[
    \cF^\la_{\chi_t}R+\cF^\la_{\chi_1}R = \cF^\la_{\chi_t}R\cap\cF^\la_{\chi_2}R+\cF^\la_{\chi_1}R.
    \]
    Indeed, it suffices to show $\subseteq$, since the reverse inclusion is clear. To see this, write $f\in \cF^\la_{\chi_t}R$ as $f=\sum f_{\mu, \nu}$ for $f_{\mu, \nu}\in \cF^\mu_{\chi_1}R\cap \cF^\nu_{\chi_2}R$, with $(1-t)\mu+t\nu=\la$. If $\mu\le \la$, then $f_{\mu, \nu}\in \cF^\la_{\chi_1}R$; if $\mu>\la$, then we must have $\nu<\la$, making $f_{\mu, \nu}\in \cF^\la_{\chi_t}R\cap\cF^\la_{\chi_2}R$. This verifies the claim, and we now finish the proof of~\eqref{equ:geoddis}. Using the claim, and Lemma~\ref{lem:geod}, we have
    \[
    \frac{\cF^\la_{\chi_t}R}{\cF^\la_{\chi_t}R\cap \cF^\la_{\chi_1}R}\cong \frac{\cF^\la_{\chi_t}R+\cF^\la_{\chi_1}R}{\cF^\la_{\chi_1}R}=\frac{\cF^\la_{\chi_2}R\cap \cF^\la_{\chi_t}R+\cF^\la_{\chi_1}R}{\cF^\la_{\chi_1}R}\cong \frac{\cF^\la_{\chi_2}R\cap \cF^\la_{\chi_t}R}{\cF^\la_{\chi_1}R\cap \cF^\la_{\chi_2}R},
    \]
    as desired.
\end{proof}

\section{The Fubini--Study operator and saturated norms}\label{sec:FSop}
In this section we relate negative norms on $R$ with functions on $X^\na$ through the Fubini--Study and the infimum norm operators. Via the Fubini--Study operator, this gives an isometric isomorphism between $\cT^\hom_\Q$ and $\cH(X)$, both equipped with the $\de_\infty$ metric. We also study the saturation of bounded negative norms, and show that the $\de_1$ pseudometric restricts to a metric on the space of saturated norms. We finish the proof of Theorems~B--D. 
\subsection{The Fubini--Study operator}
The \emph{Fubini--Study operator} associates to any bounded negative norm $\chi$ a function 
\[
\FS(\chi)\colon X^\na\to\R\cup\{+\infty\}
\]
given by 
\[
\FS(\chi)(v)=\sup_{f\in R\setminus k}\frac{v(f)}{\chi(f)}.
\]
\begin{exam}\label{exam:FSmon}
    If $R=k[z_1,\dots,z_n]$ and $\chi$ is a monomial negative valuation on $R$,  then $\FS(\chi)=\max_i\a_i^{-1}\log^+|z_i|$, where $\a_i\coloneqq -\chi(z_i)>0$. In particular, $\FS(\chi)$ is a real Fubini--Study function. Moreover if $\Lambda\subset\R$ is a nonzero subgroup, and $\a_i\in\Lambda$ for all $i$, then $\FS(\chi)\in\cH_\Lambda$.
\end{exam}
\begin{lem}\label{lem:FSimm}
    Let $\iota\colon X\hto X'$ be a closed immersion of affine varieties, induced by a surjective $k$-algebra homomorphism $\tau\colon R'\to R$, where $X=\Spec R$ and $X'=\Spec R'$. Further, let $\chi'$ be a bounded negative norm on $R'$ and let $\chi$ be the residue norm of $\chi'$ by $\tau$. Then $\chi$ is a bounded negative norm, and if we set $\f\coloneqq\FS(\chi)$, $\f'\coloneqq\FS(\chi')$, then $\f=\f'\circ\iota^\na$.
\end{lem}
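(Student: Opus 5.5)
That $\chi$ is a bounded negative norm is exactly Corollary~\ref{cor:resbnn}, so the content of the lemma is the identity $\f=\f'\circ\iota^\na$. I would prove this by establishing the two inequalities separately, working pointwise at a fixed $v\in X^\na$. Set $v'\coloneqq\iota^\na(v)=v\circ\tau$, a semivaluation on $R'$. By definition,
\[
\f'(v')=\sup_{g\in R'\setminus k}\frac{v(\tau(g))}{\chi'(g)}, \qquad
\f(v)=\sup_{f\in R\setminus k}\frac{v(f)}{\chi(f)}.
\]

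\emph{Inequality $\f\le\f'\circ\iota^\na$.} Fix $f\in R\setminus k$. The residue norm is a supremum over the finite-dimensional-type fibres, and for a bounded negative norm $\chi'$ its values lie in a discrete set on the relevant pieces, so the supremum $\chi(f)=\sup\{\chi'(g)\mid\tau(g)=f\}$ is attained. Indeed, if $\chi(f)\ge-\la$, some lift lies in $\cF^{\la+1}_{\chi'}R'$, which is finite dimensional, and $\tau$ restricted to it has a finite-dimensional fibre over $f$ on which $\chi'$ takes finitely many values. Pick such a lift $g$ with $\chi'(g)=\chi(f)<0$; then $g\notin k$. Then
\[
\frac{v(f)}{\chi(f)}=\frac{v'(g)}{\chi'(g)}\le\f'(v').
\]
No sign issue arises, since both the numerator and the denominator are literally equal.

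\emph{Inequality $\f'\circ\iota^\na\le\f$.} Fix $g\in R'\setminus k$ and set $f=\tau(g)$. If $f\in k$, then $v'(g)=v(f)\in\{0,+\infty\}$. The value $+\infty$ occurs only for $f=0$, and in that case I would first handle the value $+\infty$ separately or note that $\f'$ is finite away from trivial cases. The main care is here: for $f\in k^\times$ the term is $0/\chi'(g)=0\le\f(v)$, using $\f\ge0$, which holds because the generators of $R$ give nonnegative contributions after possibly replacing $f$ by $f^{-1}$-free arguments; more simply, $\FS(\chi)\ge0$ since $\chi$ bounded gives $\FS(\chi)\ge c\,\p$ for some Fubini--Study $\p$. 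For $f=0$, the semivaluation $v'$ has $v'(g)=+\infty$, and I would argue the term is dominated by writing $g=g_0+(g-g_0)$ suitably, or else accept $\f'(v')=+\infty$ consistently with $\f$ via the supports. For $f\notin k$, we have $\chi(f)\ge\chi'(g)$ with both negative. When $v(f)\ge0$ the term is $\le0\le\f(v)$. When $v(f)<0$,
\[
\frac{v(f)}{\chi'(g)}\le\frac{v(f)}{\chi(f)},
\]
since dividing a negative number by the more negative $\chi'(g)$ gives a smaller quotient.

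The main obstacle I expect is the degenerate case $\tau(g)\in k$, especially $\tau(g)=0$ where $v'(g)=+\infty$. My first attempt there is to show that such $g$ never increase the supremum. For $g\in\ker\tau$ with $\chi'(g)<0$ I would replace $g$ by $1+g$, so that $\tau(1+g)=1$, and use the ultrametric inequality to show that the troublesome quotient is bounded by terms coming from genuine elements. If that fails, I would restrict attention to $v\in X^\na_\infty$, where the sup is controlled by generators, and extend using that both sides vanish on $X^\beth$.
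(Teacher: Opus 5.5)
Your strategy is the paper's. Its proof is a single chain of equalities between suprema, which unpacks into exactly your two inequalities. Your handling of the nondegenerate cases is correct: the justification that the residue-norm supremum is attained is fine, and for $\tau(g)=f\notin k$ the comparison $v(f)/\chi'(g)\le v(f)/\chi(f)$ is right.

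The gap is the case you single out yourself, $0\ne g\in\ker\tau$. You leave it open, and none of the remedies you list works:
\begin{itemize}
\item Replacing $g$ by $1+g$ produces a different term and says nothing about the original one.
\item Restricting to $X^\na_\infty$ changes nothing, because $v'(g)=+\infty$ for \emph{every} $v\in X^\na$.
\item ``Accepting $\f'(v')=+\infty$'' would make the lemma false. Indeed, $\chi\le C^{-1}\chi_{\bar X,D}$ together with Lemmas~\ref{lem:FSformal} and~\ref{lem:FSgeomnorm} gives $\f\le C\p_D<\infty$ on $X^\na$, while $\ker\tau\ne0$ whenever $\iota$ is not an isomorphism.
\end{itemize}

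The missing observation is a one-liner. Since $\tau$ is the identity on $k$, we have $\ker\tau\cap k=0$, so such $g$ lies in $R'\setminus k$. As $\chi'$ is a negative \emph{norm}, so that $(\chi')^{-1}(+\infty)=\{0\}$, the value $\chi'(g)$ is a finite negative real number. Therefore $v'(g)/\chi'(g)=(+\infty)/\chi'(g)=-\infty$. This is the convention in force in the paper (cf.\ the proof of Proposition~\ref{prop:FSloggr}, where $v\mapsto v(f)/\chi(f)$ takes values in $\R\cup\{-\infty\}$). It is also exactly what justifies the paper's third equality, which silently discards the $f'$ with $\tau(f')\in k$: such terms never increase the supremum.

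You should also tidy your justification of $\f\ge0$, which you use both for $\tau(g)\in k^\times$ and for $v(\tau(g))\ge0$. The first reason you give (``$f^{-1}$-free arguments'') is not meaningful. Either of the following works:
\begin{itemize}
\item Use $\chi\ge C\chi_{\bar X,D}$ and the same two lemmas to get $\f\ge C^{-1}\p_D\ge0$. Do not cite Proposition~\ref{prop:FSloggr} here, since its proof passes through Proposition~\ref{prop:FSft}, which relies on the present lemma.
\item Argue directly. If $v(f)<0$ for some $f$, that term is positive. Otherwise $v\ge0$ on $R$, and $v(f)=0$ for some $f\in R\setminus k$: take $f=1+h$ with $0\ne h\in\{v>0\}$, or any nonconstant $f$ if that ideal is zero.
\end{itemize}
With these two points filled in, your argument is complete.
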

\begin{proof}
    We already know from Corollary~\ref{cor:resbnn} that $\chi$ is a bounded negative norm. Pick any $v\in X^\na$ and set $v':=\iota^\na(v)$. Then 
    \begin{multline*}
    \f(v)
    =\sup\{\tfrac{v(f)}{\chi(f)}\mid f\in R\setminus k\}
    =\sup\{\tfrac{v(f)}{\chi'(f')}\mid f'\in R'\setminus k, \tau(f')=f\}\\
    =\sup\{\tfrac{v(\tau(f'))}{\chi'(f')}\mid f'\in R'\setminus k\}
    =\sup\{\tfrac{v'(f')}{\chi'(f')}\mid f'\in R'\setminus k\}
    =\f'(v'),
    \end{multline*}
    and we are done.
\end{proof}
\begin{prop}\label{prop:FSft}
    For any nonzero subgroup $\Lambda\subset\R$, we have $\FS(\cT_\La)=\cH_\La$.
\end{prop}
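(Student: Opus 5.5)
We prove the two inclusions separately. Both reduce, through closed immersions into affine space, to the monomial computation in Example~\ref{exam:FSmon} and the compatibility of $\FS$ with residue norms in Lemma~\ref{lem:FSimm}.

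\emph{Inclusion $\FS(\cT_\La)\subset\cH_\La$.} Let $\chi\in\cT_\La$. By Proposition~\ref{prop:ft}(i), and the normalization made at the start of its proof, there is a surjection $\pi\colon k[z_1,\dots,z_N]\to R$ and a negative monomial valuation $v$ on $k[z]$ with the following properties: $\chi$ is the residue norm of $v$, each $f_i\coloneqq\pi(z_i)$ is nonconstant, and $\chi(f_i)=v(z_i)$.

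\begin{itemize}
\item The weights $\a_i\coloneqq-v(z_i)=-\chi(f_i)$ then lie in $\La$.
\item By Example~\ref{exam:FSmon}, $\FS(v)=\max_i\a_i^{-1}\log^+|z_i|$, and this lies in $\cH_\La(\A^N)$.
\item Lemma~\ref{lem:FSimm} gives $\FS(\chi)=\FS(v)\circ\iota^\na=\max_i\a_i^{-1}\log^+|f_i|$.
\end{itemize}

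The $f_i$ generate $R$ as a $k$-algebra because $\pi$ is surjective. Hence $\FS(\chi)\in\cH_\La$ directly from the definition, with no need for Corollary~\ref{cor:FSfin}.

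\emph{Inclusion $\cH_\La\subset\FS(\cT_\La)$.} Let $\f=\max_{1\le i\le N}\la_i^{-1}\log^+|f_i|\in\cH_\La$, where $\la_i\in\La_{>0}$ and the $f_i$ generate $R$. We may discard any constant $f_i$, since $\log^+|c|=0$; if all were constant then $R=k$, which is excluded because $\dim X\ge 1$. Define $\pi\colon k[z_1,\dots,z_N]\to R$ by $z_i\mapsto f_i$; it is surjective. Let $v$ be the monomial valuation with $v(z_i)=-\la_i<0$. This is a negative monomial valuation, since every nonconstant monomial has negative value. Let $\chi$ be its residue norm on $R$.

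\begin{itemize}
\item By Proposition~\ref{prop:ft}(i), $\chi$ is a negative norm of finite type.
\item By Lemma~\ref{lem:FSimm} and Example~\ref{exam:FSmon}, $\FS(\chi)=\max_i\la_i^{-1}\log^+|f_i|=\f$.
\item The values of $v$ lie in the subsemigroup generated by the $-\la_i$, which is contained in $\La$. The residue norm takes values in $v$'s value set, because the supremum defining it is attained (the value set is discrete). Therefore $\chi(R\setminus0)\subset\La$ and $\chi\in\cT_\La$.
\end{itemize}

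\emph{Main obstacle.} The step needing the most care is the claim that $\chi$ is negative. Proposition~\ref{prop:ft}(i) takes negativity of $\chi$ as a hypothesis, so it must be checked here: for $f\in R\setminus k$, every lift $g$ of $f$ contains a nonconstant monomial, hence $v(g)<0$. The residue norm is a supremum over all such lifts, attained by discreteness, so $\chi(f)<0$.

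A second point to check is that Lemma~\ref{lem:FSimm} applies to a monomial valuation on $k[z]$. It requires a bounded negative norm there. This holds because $v$ is the residue norm of itself, so Proposition~\ref{prop:ft} shows it is bounded.
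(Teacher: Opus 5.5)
Your proof is correct and follows the paper's argument. Both inclusions go through a surjection $k[z_1,\dots,z_N]\to R$, a negative monomial norm with weights in $\La$, Example~\ref{exam:FSmon} and Lemma~\ref{lem:FSimm}. The differences are minor: you read off $\FS(\chi)=\max_i\a_i^{-1}\log^+|f_i|$ directly rather than invoking Corollary~\ref{cor:FSfin}, and you make explicit the negativity and $\La$-valuedness of the residue norm, which the paper leaves implicit. For boundedness of the monomial valuation on $k[z]$, the cleaner justification is the direct comparison $-(\max_i\la_i)\deg\le v\le-(\min_i\la_i)\deg$ with the geometric norm $-\deg=\chi_{\P^N,H}$. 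Citing Proposition~\ref{prop:ft} for this is slightly circular, since its own boundedness claim presupposes this comparison.
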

\begin{proof}
    First consider any $\chi\in\cT_\Lambda$. By assumption, there exists a negative monomial norm $\chi'$ on $R'=k[z_1,\dots,z_N]$ and a surjective $k$-algebra homomorphism $\tau\colon R'\to R$ such that $\chi$ is the residue norm of $\chi'$ by $\tau$. By Proposition~\ref{prop:ft} we may assume $\chi'(R'\setminus k)\subset-\Lambda$. Example~\ref{exam:FSmon} shows that $\FS(\chi')\in\cH_\Lambda(\A^N)$, so by Lemma~\ref{lem:FSimm} and Corollary~\ref{cor:FSfin}, we have $\FS(\chi)\in\cH_\Lambda$. 

    Conversely, given any $\f=\max_i\la_i^{-1}\log^+|f_i|\in \cH_\La$, there is a surjective morphism $R'\to R$ sending $z_i$ to $f_i$. Let $\chi$ be the residue norm of the monomial norm $\chi'$ on $R'$ such that $\chi'(z_i)=-\la_i$. Then it follows easily from the definition that $\FS(\chi)=\f$.
\end{proof}
Despite the name, $\FS(\chi)$ will not be a Fubini--Study function for general bounded negative norms. However, we have 
\begin{prop}\label{prop:FSloggr}
    For any bounded negative norm $\chi$, the function $\FS(\chi)$ is lsc and of logarithmic growth.
\end{prop}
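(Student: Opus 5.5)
We need to show that $\FS(\chi)$ is lsc and satisfies $C^{-1}\p\le\FS(\chi)\le C\p$ for some $\p\in\PL^+(X)$, and that it is homogeneous. The plan is to establish each of these in turn and then compare with a geometric norm.

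Homogeneity is immediate from the definition, since $v\mapsto v(f)/\chi(f)$ is homogeneous of degree one in $v$.

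For lower semicontinuity, each map $v\mapsto v(f)/\chi(f)$ with $f\in R\setminus k$ is continuous on $X^\na$ in the topology of pointwise convergence. Its value $+\infty$ occurs only when $v(f)=+\infty$, but since $\chi(f)<0$ this actually gives the value $-\infty$, which is harmless for a supremum. A supremum of continuous functions is lsc.

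For logarithmic growth, the key observation is that $\FS$ is decreasing in a monotone sense under comparison of norms. If $C\chi'\le\chi\le C^{-1}\chi'$, then for $f\in R\setminus k$ both $\chi(f)$ and $\chi'(f)$ are negative, so $\chi(f)\ge C\chi'(f)$ gives $1/\chi(f)\le 1/(C\chi'(f))$ in absolute terms. Working this through in the two regimes separately:
\begin{itemize}
\item when $v(f)\le 0$, we get $v(f)/\chi(f)\le C^{-1}\cdot v(f)/\chi'(f)$ up to the correct orientation;
\item when $v(f)>0$, the quotient is negative and irrelevant provided the supremum is $\ge 0$.
\end{itemize}
To justify that last proviso I would show $\FS(\chi)\ge 0$ by a separate argument, restricting the supremum to $f$ with $v(f)\le 0$. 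Concretely, one can use that $1\cdot f$ and $f+1$ have $v(f+1)\le 0$ when $v$ is centered at infinity, or else argue with $f$ replaced by $f+c$. Running the homogeneity relation $\FS(t\chi)=t^{-1}\FS(\chi)$ alongside the monotonicity above then yields $C^{-1}\FS(\chi')\le\FS(\chi)\le C\,\FS(\chi')$, with the constant $C$ or its inverse as appropriate.

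It therefore suffices to treat a geometric norm $\chi'=\chi_{\bar X,D}$ with $D$ ample and strictly effective. By Proposition~\ref{prop:homic}, $\chi'\sim\lfloor\chi'^\hom\rfloor$, so $\chi'$ is of finite type by Proposition~\ref{prop:Reescorr} (or directly by Proposition~\ref{prop:ft}). Proposition~\ref{prop:FSft} then gives $\FS(\chi')\in\cH_\Z=\cH$, which is $\PL^+(X)$ by Lemma~\ref{lem:FSPL}. Combined with the comparison above, this shows $\FS(\chi)\in\cL^\infty$.

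The main obstacle is the sign bookkeeping on $X^\beth$ and the justification of $\FS(\chi)\ge0$. On $X^\beth$ all terms $v(f)/\chi(f)$ are $\le 0$, and the supremum is $0$ because $v(f)=0$ for general $f$ (for instance $f+c$ with $c$ generic, which is a unit at the center). On $X^\na_\infty$ the supremum is positive. Both the monotonicity comparison and the bound $C^{-1}\p\le\FS(\chi)$ rest on these two facts, so they need to be checked carefully before the reduction to geometric norms goes through.
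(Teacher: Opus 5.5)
Your proof is correct and follows the paper's argument: lower semicontinuity as a supremum of continuous functions, then a comparison of $\chi$ with a reference norm whose Fubini--Study function lies in $\cH(X)=\PL^+(X)$, using the monotonicity and scaling of Lemma~\ref{lem:FSformal}. The paper takes an arbitrary $\chi'\in\cT_\Q$ as reference, while you take an ample geometric norm; your observation that $\FS\ge0$ (if $v(f)>0$ then $v(f+1)=0$) is exactly what makes that monotonicity valid. The appeal to Proposition~\ref{prop:homic} is superfluous and does not by itself give finite type. Finite type of $\chi_{\bar X,D}$ follows directly from Proposition~\ref{prop:ft}, since the Rees algebra is the section ring, or you can skip that step entirely using Lemma~\ref{lem:FSgeomnorm}, which gives $\FS(\chi_{\bar X,D})=\p_D$.
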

We leave it to the reader to verify the following formal properties.
\begin{lem}\label{lem:FSformal}
    For any bounded negative norm $\chi$, the function $\FS(\chi)$ is homogeneous with respect to the action of $\R_+^\times$ on $X^\na$. We also have $\FS(t\chi)=t^{-1}\FS(\chi)$ for any $t\in\R_+^\times$. Moreover, if $\chi\le\chi'$, then $\FS(\chi)\le\FS(\chi')$.
\end{lem}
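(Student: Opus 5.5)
The plan is to verify all three properties directly from the formula
\[
\FS(\chi)(v)=\sup_{f\in R\setminus k}\frac{v(f)}{\chi(f)},
\]
using only that $\chi(f)<0$ for $f\in R\setminus k$, since $\chi$ is negative. Throughout, terms with $v(f)=+\infty$ are read as $-\infty$; such terms never affect the supremum.

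For homogeneity, take $t>0$ and $v\in X^\na$. Then $(tv)(f)=t\,v(f)$ for every $f$, so each quotient scales by $t$, and so does the supremum. Hence $\FS(\chi)(tv)=t\,\FS(\chi)(v)$, also when the value is $+\infty$. For the scaling in $\chi$, each quotient becomes $v(f)/(t\chi(f))=t^{-1}v(f)/\chi(f)$, and $t\chi$ is again a bounded negative norm. Taking the supremum gives $\FS(t\chi)=t^{-1}\FS(\chi)$.

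Monotonicity is the only step that needs care, because of signs. Suppose $\chi\le\chi'$, so that $\chi(f)\le\chi'(f)<0$ for $f\in R\setminus k$, and write $-\chi(f)\ge-\chi'(f)>0$.
\begin{itemize}
\item If $v(f)\le0$, then
\[
\frac{v(f)}{\chi(f)}=\frac{-v(f)}{-\chi(f)}\le\frac{-v(f)}{-\chi'(f)}=\frac{v(f)}{\chi'(f)}.
\]
\item If $v(f)>0$, the inequality between the two quotients reverses. However, both quotients are then negative, so such terms are irrelevant provided the supremum is always $\ge0$.
\end{itemize}

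So the key step is to show that for every $v\in X^\na$ there is some $f\in R\setminus k$ with $0\le v(f)/\chi(f)$, that is, with $v(f)\le0$. Pick any $g\in R\setminus k$, which exists since $n\ge1$.
\begin{itemize}
\item If $v(g)\le0$, take $f=g$.
\item If $v(g)>0$, including $v(g)=+\infty$, take $f=1+g$. This is nonconstant, and $v(1+g)=\min\{v(1),v(g)\}=0$ because $v(1)=0\ne v(g)$.
\end{itemize}
Hence $\FS(\chi)(v)\ge0$, and for both $\chi$ and $\chi'$
\[
\FS(\chi)(v)=\sup\Bigl\{\frac{v(f)}{\chi(f)}\;\Big|\; f\in R\setminus k,\ v(f)\le0\Bigr\},
\]
with a nonempty index set. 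Applying the termwise inequality of the first case and taking suprema gives $\FS(\chi)\le\FS(\chi')$.

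The only real obstacle is this sign issue in the monotonicity claim. It is dealt with by reducing the supremum to elements with $v(f)\le0$ and showing that such elements always exist. Everything else is formal.
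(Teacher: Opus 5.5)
Your proof is correct, and it is the direct check from the definition that the paper omits (``We leave it to the reader to verify the following formal properties''). Your observation that $\FS(\chi)(v)\ge0$, via $f=1+g$ when $v(g)>0$, is exactly what makes the termwise comparison for monotonicity legitimate: the terms with $v(f)>0$, where the inequality reverses, can then be discarded.
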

\begin{proof}[Proof of Proposition~\ref{prop:FSloggr}]
    For any $f\in R\setminus k$, the function $X^\na\ni v\mapsto v(f)/\chi(f)\in\R\cup\{-\infty\}$ is continuous, so it follows that $\f:=\FS(\chi)$ is lsc. Pick any negative norm $\chi'\in \cT_\Q$ of finite type and set $\f'\coloneqq\FS(\chi')\in \cH(X)$. Then $C\chi'\le\chi\le C^{-1}\chi'$ for some $C\ge 1$. By Lemma~\ref{lem:FSformal} this gives $C^{-1}\f'\le\f\le C\f'$, so since $\f'$ is a Fubini--Study function, $\f$ is of logarithmic growth.
\end{proof}
Recall from~\S\ref{sec:loggr} that $\cL^\infty(X)$ is the space of functions of logarithmic growth on $X$.
As an immediate consequence of Proposition~\ref{prop:FSloggr}, we have
\[
\FS\colon (\Nbdd(X), \de_\infty)\to (\cL^\infty(X), \de_\infty)
\]
is distance non-increasing.

\begin{lem}\label{lem:fsnormhom}
    For any bounded negative norm $\chi$, $\FS(\chi)= \FS(\chi^\hom)$.
\end{lem}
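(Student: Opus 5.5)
We prove the two inequalities $\FS(\chi)\le\FS(\chi^\hom)$ and $\FS(\chi^\hom)\le\FS(\chi)$ pointwise on $X^\na$.

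\emph{First inequality.} We have $\chi\le\chi^\hom$, because $\chi^\hom(f)=\sup_d\frac1d\chi(f^d)\ge\chi(f)$ (take $d=1$). By Corollary~\ref{cor:hombdd}, $\chi^\hom$ is again a bounded negative norm, so $\FS(\chi^\hom)$ is defined. The monotonicity in Lemma~\ref{lem:FSformal} then gives $\FS(\chi)\le\FS(\chi^\hom)$.

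\emph{Second inequality.} Fix $v\in X^\na$ and $f\in R\setminus k$. We must show
\[
\frac{v(f)}{\chi^\hom(f)}\le\FS(\chi)(v).
\]
Since $\chi^\hom(f)<0$, we may assume $v(f)<0$, for otherwise the left side is $\le0\le\FS(\chi)(v)$. Here $\FS(\chi)\ge0$ because $X^\beth$ lies in the zero set and Proposition~\ref{prop:FSloggr} gives logarithmic growth; more directly, some term of the supremum is nonnegative.

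Now use homogeneity of $v$: for every $d\ge1$, the element $f^d$ is nonconstant, so
\[
\FS(\chi)(v)\ge\frac{v(f^d)}{\chi(f^d)}=\frac{v(f)}{\chi(f^d)/d}.
\]
Let $d\to\infty$. Then $\chi(f^d)/d\to\chi^\hom(f)<0$, and since $v(f)<0$ is fixed, the right side converges to $v(f)/\chi^\hom(f)$. Taking the supremum over $f$ gives $\FS(\chi^\hom)\le\FS(\chi)$.

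The only delicate points are sign bookkeeping and the nonvanishing of the limit $\chi^\hom(f)$. The latter is exactly the negativity of $\chi^\hom$ supplied by Corollary~\ref{cor:hombdd}. If $v(f)=+\infty$ (semivaluations with nontrivial support), both $\FS(\chi)(v)$ and $\FS(\chi^\hom)(v)$ receive the contribution $-\infty$ from this $f$, so such $f$ do not affect either supremum. I expect no real obstacle.
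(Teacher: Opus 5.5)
Your proof is correct and follows the same route as the paper. The inequality $\FS(\chi)\le\FS(\chi^\hom)$ comes from monotonicity (Corollary~\ref{cor:hombdd} and Lemma~\ref{lem:FSformal}), and the reverse inequality uses $v(f^d)=d\,v(f)$ together with $\chi(f^d)/d\to\chi^\hom(f)<0$; the only difference is that you bound each term $v(f)/\chi^\hom(f)$ by letting $d\to\infty$ directly, whereas the paper uses an $\varepsilon$-argument with an approximate maximizer $f$.
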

\begin{proof}
    By Corollary~\ref{cor:hombdd}, we have that $\chi^\hom$ is also a bounded negative norm. 
    Since $\chi\leq \chi^\hom$, we have $\FS(\chi)\le \FS(\chi^\hom)$ by Lemma~\ref{lem:FSformal}.
   Conversely, let $v\in X^\na$. We may assume $v\in X^\na_\infty$, since otherwise $\FS(\chi)(v)=\FS(\chi^\hom)(v)=0$. For any $0<\varepsilon< \min\{\FS(\chi^\hom)(v),1\}$, there is $f\in R\setminus k$ with $\FS(\chi^\hom)(v)<\frac{v(f)}{\chi^\hom(f)}+\varepsilon$ and $v(f)< 0$. Pick $d\geq 1$ such that  $\chi^\hom(f)<\frac 1d\chi(f^d)-\varepsilon\chi^\hom(f)\leq\frac{1-\varepsilon}{d}\chi(f^d)$. Then $\FS(\chi^\hom)(v)< \frac{v(f^d)}{(1-\varepsilon)\chi(f^d)}+\varepsilon\leq \frac{1}{1-\varepsilon}\FS(\chi)(v)+\varepsilon$. We are done by letting $\varepsilon\to 0$.
\end{proof}

The function $\FS(\chi)$ for a bounded negative norm $\chi$ can be computed more explicitly as follows.
\begin{lem}\label{lem:FSgeomnorm}
    If $\chi=\chi_{\bar{X},D}$ is a geometric norm, 
    then $\FS(\chi)=\psi_D$.
\end{lem}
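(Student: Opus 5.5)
We reduce to the homogeneous norm and compare pointwise. By Lemma~\ref{lem:fsnormhom} we have $\FS(\chi)=\FS(\chi^\hom)$. Since $\chi^\hom$ and $\psi_D$ scale the same way under $D\mapsto mD$ (Lemma~\ref{lem:FSformal}, and $\chi_{\bar X,D}=m\chi_{\bar X,mD}$), we may assume $D$ is Cartier. We may also pass to the integral closure $\bar X'$ of $\bar X$: Proposition~\ref{prop:notic} gives $\chi_{\bar X,D}^\hom=\chi_{\bar X',D'}^\hom$, and $\psi_D=\psi_{D'}$ on $X^\na$. So assume $\bar X$ is integrally closed, and write $D=\sum_ib_iE_i$ with $b_i>0$. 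Proposition~\ref{prop:homic}(i) then gives
\[
\chi^\hom=\min_i b_i^{-1}\ord_{E_i}.
\]
It remains to show $\sup_{f\in R\setminus k} v(f)/\chi^\hom(f)=v(D)$ for $v\in X^\na$. Both sides vanish on $X^\beth$, since $v(f)\ge0$ there and $\chi^\hom(f)<0$, while $\psi_D=0$ off $X^\na_\infty$. So take $v\in X^\na_\infty$.

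\emph{Upper bound.} Let $f\in R\setminus k$ and set $-m\coloneqq\chi^\hom(f)<0$. Assume first $m\in\Q$ and take $d$ with $dm\in\Z$. Then $\ord_{E_i}(f^d)\ge -dmb_i$ for all $i$, so Lemma~\ref{lem:critnopole} shows $f^ds_D^{dm}$ is a regular section of $dmD$. Evaluating $v$ at its center gives $v(f^d)+dm\,v(D)\ge0$, that is, $v(f)/\chi^\hom(f)=v(f)/(-m)\le v(D)$ when $v(f)<0$. If $v(f)\ge0$ the quotient is $\le0$ and there is nothing to prove. The values of $\chi^\hom$ are rational by Proposition~\ref{prop:fthom} (or directly from the formula above when $b_i\in\Q$), so this case covers everything.

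\emph{Lower bound.} Replace $D$ by a multiple so that $D$ is very ample and $|D|$ is basepoint free. Let $c=c_{\bar X}(v)\in\partial\bar X$. Choose $s\in\Hnot(\bar X,D)$ not vanishing at $c$, and set $f\coloneqq s/s_D\in R$. Then $v(f)=v(s)-v(D)=-v(D)$. Moreover $\chi(f)\ge-1$, hence $\chi^\hom(f)\ge-1$, so $v(f)/\chi^\hom(f)\ge v(D)$. Here $f\notin k$: $v(f)<0$, since $v(D)>0$ because $D$ is strictly effective and $v$ is centered at infinity. This gives $\FS(\chi)(v)\ge\psi_D(v)$.

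The main obstacle is handling the reductions cleanly: the scaling convention for $\Q$-divisors, the passage to the integral closure (does $\psi$ change?), and the semiampleness needed for basepoint freeness, which is part of the definition of a geometric norm. The pointwise inequalities themselves are routine.
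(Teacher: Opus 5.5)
Your proof is correct. Your lower bound is the same as the paper's: semiampleness gives a section $s$ of some $mD$ not vanishing at $c_{\bar X}(v)$, and $f=s/s_D^m$ then satisfies $v(f)=-mv(D)$ and $\chi(f)\ge -m$.

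The two arguments differ in the upper bound. The paper gets it straight from the definition of $\chi_{\bar X,D}$, on the given (possibly non-integrally-closed) $\bar X$. If $\chi(f)=-m$, then by definition $fs_D^m\in\Hnot(\bar X,mD)$. Evaluating at the center of $v$ gives $v(f)+mv(D)\ge 0$, that is, $v(f)/\chi(f)\le v(D)$.

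Your route is longer:
\begin{itemize}
\item you replace $\chi$ by $\chi^\hom$ (Lemma~\ref{lem:fsnormhom});
\item you pass to the integral closure (Proposition~\ref{prop:notic});
\item you invoke the formula $\chi^\hom=\min_i b_i^{-1}\ord_{E_i}$ of Proposition~\ref{prop:homic}(i);
\item you then re-derive regularity of $f^d s_D^{dm}$ via Lemma~\ref{lem:critnopole}, together with a rationality argument.
\end{itemize}
All of these results are proved independently of the present lemma, so there is no circularity. The detour has one merit: it exhibits explicitly the dual pair $\chi^\hom=\min_i b_i^{-1}\ord_{E_i}$ and $\psi_D(\ord_{E_i})=b_i$. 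For this statement, though, it is much heavier than needed. The defining property of $\chi_{\bar X,D}$ already gives directly the regularity of $fs_D^m$ that you reconstruct.

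One small slip: you cannot in general make $D$ very ample by passing to a multiple. A geometric norm only requires $D$ to be semiample. For example, take the pullback of the hyperplane at infinity to the blowup of $\P^n$ at a point at infinity: it is semiample and strictly effective but not ample. This is harmless, since you only use that $|mD|$ is basepoint free for some $m$.
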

\begin{proof}
    By definition, $\f:=\FS(\chi)$ satisfies $\f(v)=\sup_{f\in R\setminus k}\frac{v(f)}{\chi(f)}$. Pick any $f\in R\setminus k$. Then $\chi(f)=-m$, where $m$ is minimal such that $fs_D^m$ is a regular section of $\cO_{\bar{X}}(mD)$. This implies $v(f)=v(s_D^mf)-v(s_D^m)\ge0-mv(D)$, so $\f(v)\le v(D)$. 
    
    For the reverse inequality we may assume $v(D)>0$. 
    As $D$ is assumed semiample, we can find $m\ge1$ and a section $s\in H^0(\bar{X},mD)$ that does not vanish at the center of $v$ on $\bar{X}$. If we set $f:=s/s_D^m$, then $f\in R\setminus k$, $\chi(f)=-m$, and $v(f)=-mv(D)<0$, so $\f(v)\ge v(D)$. 
\end{proof}

More generally, use the notation of~\S\ref{sec:baseideals}, \ie pick a polarized compactification $(\bar X,D)$ and $r\in\Z_{\ge1}$ such that $rD$ is Cartier and $\chi\le r^{-1}\chi_{\bar X,D}$. Let $\fb_\bullet(\chi)$ be the corresponding graded sequence of base ideals at infinity, computed inside $R(\bar X,rD)$.

\begin{lem}\label{lem:FSbaseideal}
    In the above notation, we have 
    \[
    \FS(\chi)(v)=rv(D)-v(\fb_\bullet(\chi)),
    \]
    for any $v\in X^\na$.
\end{lem}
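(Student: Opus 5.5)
The plan is to compare both sides level by level in the filtration and then pass to the limit, following the proof of Lemma~\ref{lem:FSgeomnorm}. Fix $v\in X^\na$. If $v\in X^\beth$, both sides vanish. Indeed, $v(D)=0$, and since $\cF^m_\chi R\ni 1$ we have $s_{rD}^m\in\cF^m_\chi R\subset\Hnot(\bar X,mrD)$ (identifying $f$ with $fs_{rD}^m$). As $s_{rD}$ does not vanish at a center in $X$, $\fb_m(\chi)$ is trivial at the center and $v(\fb_m)=0$. So we may assume $v\in X^\na_\infty$. Throughout, every $f\in\cF^m_\chi R$ corresponds to the section $fs_{rD}^m\in\Hnot(\bar X,mrD)$, and
\[
v(fs_{rD}^m)=v(f)+mrv(D).
\]

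For the inequality $\FS(\chi)(v)\le rv(D)-v(\fb_\bullet(\chi))$, take $f\in R\setminus k$ and set $\la=-\chi(f)>0$. Here I first want to handle a general real value $\la$: for any integer $m\ge\la$ we have $f\in\cF^m_\chi R$, so $v(f)+mrv(D)\ge v(\fb_m)$. Rather than rounding $m=\lceil\la\rceil$ directly, I apply this to powers $f^d$. Since $\chi(f^d)\ge d\chi(f)$, we get $f^d\in\cF^{m_d}_\chi R$ with $m_d=\lceil d\la\rceil$, hence
\[
d\,v(f)\ge v(\fb_{m_d})-m_d\,r\,v(D).
\]
Now divide by $d\la$ and let $d\to\infty$. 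Using that $v(\fb_m)/m\to v(\fb_\bullet)$ (the infimum, which is in particular $\le v(\fb_m)/m$), this gives
\[
\frac{v(f)}{\chi(f)}=-\frac{v(f)}{\la}\le rv(D)-v(\fb_\bullet).
\]
The sign bookkeeping here (dividing by the negative number $\chi(f)$) needs care, but the argument is routine. Taking the supremum over $f$ gives the inequality.

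For the reverse inequality, fix $m$ and pick $f\in\cF^m_\chi R$ such that $fs_{rD}^m$ computes $v(\fb_m)$. This is possible because $v(\fb_m)$ is the minimum of $v$ over sections in $\cF^m_\chi R$ at the center (a general linear combination attains the minimum). Then $\chi(f)\ge -m$ and
\[
v(f)=v(\fb_m)-mrv(D).
\]
If $v(f)<0$ then $f\notin k$, and since $\chi(f)\in[-m,0)$, we get
\[
\frac{v(f)}{\chi(f)}\ge\frac{-v(f)}{m}=rv(D)-\frac{v(\fb_m)}{m}.
\]
If instead $rv(D)-v(\fb_m)/m\le0$, the bound $\FS(\chi)(v)\ge0$ already suffices. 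Letting $m\to\infty$ gives $\FS(\chi)(v)\ge rv(D)-v(\fb_\bullet)$.

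The main obstacle is this reverse step. One has to justify that a single element of the finite-dimensional space $\cF^m_\chi R$ realizes $v(\fb_m)$, using that $v$ restricted to a finite-dimensional space takes its minimum on a nonempty Zariski-open set of elements, and one must treat the case where that element is constant.
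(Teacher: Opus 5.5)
Your proof is correct and follows essentially the same route as the paper. The paper first rewrites $\FS(\chi)(v)=\sup_{m\ge1}\sup_{0\ne f\in\cF^m_\chi R}\frac{-v(f)}{m}$ using the same power-and-rounding trick, then reads off $\sup_{0\ne f\in\cF^m_\chi R}(-v(f))=mrv(D)-v(\fb_m(\chi))$ from the definition of base ideals; you split this into two inequalities, and the step you flag as the main obstacle is easier than you suggest, since $v(\fb_m(\chi))$ is the minimum of $v$ over the local generators coming from any basis of the finite-dimensional space $\cF^m_\chi R$ and is therefore already attained by a basis element, with no genericity argument needed.
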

\begin{proof}
    We begin with the observation that 
    \[
    \FS(\chi)(v)
    =\sup_{m\ge1}\sup_{0\ne f\in\cF_\chi^mR}
      \frac{-v(f)}m.
    \]
    Note that for any $f\in R\setminus k$ with $v(f)<0$ and $k\ge 1$ we have
    \[
    \frac{v(f^k)}{\lfloor k\chi(f)\rfloor}\le \frac{v(f)}{\chi(f)}\le \frac{v(f)}{\chi^\hom(f)}.
    \]
    Taking supremum over $k$ and $f$ gives the desired equality.
    Finally, by the definition of the base ideals,
    \[  
    \FS(\chi)(v)
    =\sup_{m\ge1}\sup_{0\ne f\in\cF_\chi^mR}
      \frac{-v(f)}m
    =\sup_{m\ge1}\left(
      rv(D)-\frac{v(\fb_m(\chi))}{m}\right)
    =rv(D)-v(\fb_\bullet(\chi)),
    \]
    completing the proof.
\end{proof}

\subsection{The infimum norm}
Next we define an operator in the opposite direction of the Fubini--Study operator. To any function $\f\in\cL^\infty(X)$ we set
\[
    \IN(\f)(f)\coloneqq\inf_{w\in X^\na_\infty}\frac{w(f)}{\f(w)}\in[-\infty,0)
\]
for $f\in R\setminus k$. 
This defines a homogeneous negative norm, which we call the \emph{infimum norm} associated to $\f$. The following properties are clear from the definitions. 
\begin{lem}\label{lem:INformal}
    For any $\varphi, \psi\in\cL^\infty$ with $\varphi\leq \psi$, and $t\in \R^\times_{>0}$, we have $\IN(t\varphi)=t^{-1}\IN(\varphi)$ and $\IN(\varphi)\leq \IN(\psi)$.
\end{lem}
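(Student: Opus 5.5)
The statement to prove is Lemma~\ref{lem:INformal}: for $\f\le\p$ in $\cL^\infty$ and $t>0$, we have $\IN(t\f)=t^{-1}\IN(\f)$ and $\IN(\f)\le\IN(\p)$. Both assertions follow directly from the defining formula
\[
\IN(\f)(f)=\inf_{w\in X^\na_\infty}\frac{w(f)}{\f(w)},
\]
so the plan is to argue pointwise for each fixed $f\in R\setminus k$. The remaining values $f=0$ and $f\in k^\times$ are fixed by convention ($\infty$ and $0$) and are unaffected by either operation.

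For the scaling identity, note that $t\f$ is still homogeneous and of logarithmic growth, hence lies in $\cL^\infty$. Then for every $w\in X^\na_\infty$ we have
\[
\frac{w(f)}{t\f(w)}=t^{-1}\frac{w(f)}{\f(w)},
\]
and taking the infimum over $w$ commutes with multiplication by the positive constant $t^{-1}$.

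For monotonicity, fix $w\in X^\na_\infty$. Functions in $\cL^\infty$ are strictly positive on $X^\na_\infty$, so $0<\f(w)\le\p(w)$. We split on the sign of $w(f)$.

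\emph{Case $w(f)\le 0$.} Dividing by the larger denominator gives a value closer to zero:
\[
\frac{w(f)}{\f(w)}\le\frac{w(f)}{\p(w)}.
\]

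\emph{Case $w(f)>0$.} Both quotients are positive. These $w$ do not affect the infimum, because the infimum is negative: as observed when $\IN$ was defined, for $f\in R\setminus k$ there is some $w\in X^\na_\infty$ with $w(f)<0$. So we may restrict the infimum for both $\f$ and $\p$ to those $w$ with $w(f)\le0$.

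On that common set the first case gives the pointwise inequality, and taking the infimum yields $\IN(\f)(f)\le\IN(\p)(f)$.

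The only point requiring any care is this sign issue in the monotonicity argument, namely justifying that the $w$ with $w(f)>0$ may be dropped from the infimum. It is dispatched by the negativity of $\IN(\f)(f)$ already recorded at the definition, so no real obstacle is expected.
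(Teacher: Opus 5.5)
Your proof is correct and is the same direct verification from the defining formula that the paper has in mind; the paper simply records these properties as clear from the definitions. The one point the paper leaves implicit is discarding the $w$ with $w(f)>0$ from the infimum, and your justification via $\IN(\varphi)(f)<0$, i.e.\ the existence of some $w\in X^\na_\infty$ with $w(f)<0$, handles it correctly.
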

\begin{lem}\label{lem:INFScomp}
    For any $\chi\in \Nbdd(X)$, we have $\IN\circ\FS(\chi)\geq \chi$, with equality iff $\chi$ is homogeneous.
\end{lem}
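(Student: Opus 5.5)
The inequality itself is formal. Fix $f\in R\setminus k$ and write $\f\coloneqq\FS(\chi)$, which is of logarithmic growth by Proposition~\ref{prop:FSloggr}. For any $w\in X^\na_\infty$ we have, by definition of $\FS$, that $\f(w)\ge w(f)/\chi(f)$. Since $\chi(f)<0$ and $\f(w)>0$, this rearranges to $w(f)/\f(w)\ge\chi(f)$. Taking the infimum over $w\in X^\na_\infty$ gives $\IN(\f)(f)\ge\chi(f)$.

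For the equality statement, first note that $\IN(\f)$ is homogeneous for every $\f\in\cL^\infty$, since $w(f^m)=mw(f)$. So if $\IN\circ\FS(\chi)=\chi$, then $\chi$ is homogeneous.

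Conversely, assume $\chi$ is homogeneous; we must show $\IN(\FS(\chi))(f)\le\chi(f)$ for every $f\in R\setminus k$. The plan is to produce, for each $\varepsilon>0$, some $w\in X^\na_\infty$ with $w(f)/\FS(\chi)(w)\le(1-\varepsilon)\chi(f)$, or an equivalent limit statement.

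\begin{itemize}
\item[(1)] Reduce to the case where $\chi$ is of finite type. Take the canonical approximants $\chi^{(\la)}\le\chi$, or homogenizations of them, which by Proposition~\ref{prop:fthom} lie in $\cT^\hom$ and still satisfy $\chi^{(\la)}(f)=\chi(f)$ for a fixed $f$ once $\la$ is large. By Lemma~\ref{lem:FSformal} we have $\FS(\chi^{(\la)})\le\FS(\chi)$, and $\IN$ is increasing by Lemma~\ref{lem:INformal}. Hence $\IN(\FS(\chi))(f)$ is controlled by the finite-type case only after we know $\chi^{(\la),\hom}(f)=\chi(f)$. We get this from homogeneity, using $\chi\ge\chi^{(\la),\hom}\ge\chi^{(\la)}$ and $\chi^{(\la)}(f)=\chi(f)$ for large $\la$.
\item[(2)] In the finite-type homogeneous case, present $\chi$ as a residue norm of a monomial valuation via Proposition~\ref{prop:ft}. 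Then $\gr_\chi R$ is reduced and finitely generated of dimension $n$. Homogeneity gives the standard description $\chi=\min_i c_i^{-1}w_i$ as a finite minimum of valuations $w_i\in X^\na_\infty$, namely the Rees valuations. These are obtained from the minimal primes of the reduced graded ring $\gr_\chi R$, or geometrically via Proposition~\ref{prop:homic}(i), which gives $\chi^\hom=\min_ib_i^{-1}\ord_{E_i}$ for geometric norms. Normalize so that $\chi=\min_iw_i$.
\item[(3)] Use $w_i\ge\chi$ on $R$ to get $\FS(\chi)(w_i)=\sup_g w_i(g)/\chi(g)\le1$. Then choose $i$ with $w_i(f)=\chi(f)$; this gives $w_i(f)/\FS(\chi)(w_i)\le w_i(f)=\chi(f)$, which is the desired bound.
\end{itemize}

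The main obstacle is step (2), the representation of a homogeneous finite-type norm as a finite minimum of scaled valuations centered at infinity. I would first try to establish it by combining Proposition~\ref{prop:Reescorr} with Proposition~\ref{prop:homic}: pass to an integrally closed polarized compactification $(\bar X,D)$ with $\chi^\hom=\chi_{\bar X,D}^\hom$, then read off the $\ord_{E_i}$.

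If the general bounded case resists the reduction in step (1), a fallback is a compactness argument. By Proposition~\ref{prop:compact}, $X^\na_\infty/\R_+^\times$ is compact, and the map $w\mapsto w(f)/\FS(\chi)(w)$ is usc there because $\FS(\chi)$ is lsc. One could then try to show directly that homogeneity forces its infimum down to $\chi(f)$.
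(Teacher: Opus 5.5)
Your proof of $\IN\circ\FS(\chi)\ge\chi$ is fine, and so is the implication ``equality $\Rightarrow$ $\chi$ homogeneous''. The gap is the reduction in step~(1): the monotonicity runs the wrong way. Since $\chi^{(\la),\hom}\le\chi$, Lemmas~\ref{lem:FSformal} and~\ref{lem:INformal} give
\[
\IN(\FS(\chi))(f)\ \ge\ \IN(\FS(\chi^{(\la),\hom}))(f)=\chi^{(\la),\hom}(f)=\chi(f).
\]
This is only the trivial inequality again. Approximants from below can only give lower bounds for $\IN\circ\FS(\chi)$, while you need the upper bound $\IN(\FS(\chi))(f)\le\chi(f)$.

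Equivalently, the Rees valuations $w_i$ that steps~(2)--(3) produce for the approximant satisfy $w_i\ge\chi^{(\la),\hom}$, not $w_i\ge\chi$. So you cannot conclude $\FS(\chi)(w_i)\le1$. Your fallback does not help either: it restates the goal, since knowing the infimum is attained says nothing about its value.

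The missing ingredient is a semivaluation $w\in X^\na$ with $w\ge\chi$ on \emph{all} of $R$ and $w(f)=\chi(f)$. Such a $w$ is automatically in $X^\na_\infty$, and your step~(3) then concludes. The paper gets it in one line from Berkovich's maximum modulus principle for the completion of $(R,\chi)$. Homogeneity means $e^{-\chi}$ is power-multiplicative, hence equal to its spectral seminorm, so $\chi(f)=\min_{w\ge\chi}w(f)$. No finite-type reduction or Rees valuations are needed.

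If you want to keep your route, you can repair it by compactness. Take $\la\ge\la_0$, with $\la_0\ge-\chi(f)$ large. The valuations $w^{(\la)}$ from step~(3) satisfy $w^{(\la)}\ge\chi^{(\la),\hom}\ge\chi^{(\la)}\ge\chi^{(\la_0)}$ and $w^{(\la)}(f)=\chi(f)$. So they lie in the compact set $\{w\in X^\na\mid w\ge\chi^{(\la_0)}\}$. Any limit point $w$ has $w(f)=\chi(f)$, and $w(g)\ge\chi(g)$ for every $g$, because $\chi^{(\la)}(g)=\chi(g)$ once $\la\ge-\chi(g)$.

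Even then, step~(2) still needs an argument when the value group of $\chi^{(\la),\hom}$ is not contained in $\Q$. The geometric route through Propositions~\ref{prop:Reescorr} and~\ref{prop:homic} only covers $\Q$-valued norms.
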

\begin{proof}
    For any $f\in R\setminus k$ and $w\in X^\na_\infty$, we have $\FS(\chi)(w)\geq \frac{w(f)}{\chi(f)}$ and so
    \[
    \IN\circ\FS(\chi)(f)=\inf_w\frac{w(f)}{\FS(\chi)(w)}\geq \chi(f).
    \]
    Since $\IN\circ\FS(\chi)$ is homogeneous, we have $\chi\le \chi^\hom\le \IN\circ\FS(\chi^\hom)=\IN\circ\FS(\chi)$ using Lemma~\ref{lem:fsnormhom}. Thus if equality holds, then $\chi$ is homogeneous.
    Now assume $\chi$ is homogeneous. Fix $f\in R\setminus k$. By the maximum modulus principle (see~\cite[Proposition~1.3.1]{Ber90}) for the completion of $(R, \chi)$, $\chi(f)=\min_{w\geq \chi}w(f)$. Thus for any $f\in R\setminus k$, we can find $w\ge \chi$ with $\chi(f)=w(f)$, and it follows that $\FS(\chi)(w)=1$.
    Hence
    \[
    \IN\circ\FS(\chi)(f) \leq \frac{w(f)}{\FS(\chi)(w)}=\chi(f),
    \]
    and equality follows.
\end{proof}

\begin{prop}\label{prop:INFScomp}
    For any $\chi\in\Nbdd(X)$, $\IN\circ \FS(\chi)=\chi^\hom$.
\end{prop}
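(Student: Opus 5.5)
The plan is to deduce the statement formally from Lemma~\ref{lem:fsnormhom} and Lemma~\ref{lem:INFScomp}, using Corollary~\ref{cor:hombdd} to guarantee that the homogenization stays in $\Nbdd(X)$. Fix $\chi\in\Nbdd(X)$. First, Corollary~\ref{cor:hombdd} shows that $\chi^\hom$ is again a bounded negative norm, so both operators may be applied to it. By Proposition~\ref{prop:FSloggr}, $\FS(\chi^\hom)\in\cL^\infty(X)$, so $\IN$ is defined on it.

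Next, Lemma~\ref{lem:fsnormhom} gives $\FS(\chi)=\FS(\chi^\hom)$. Applying $\IN$ to both sides yields
\[
\IN\circ\FS(\chi)=\IN\circ\FS(\chi^\hom).
\]
Finally, $\chi^\hom$ is homogeneous by construction: $(\chi^\hom)(f^m)=\lim_d\frac1d\chi(f^{md})=m\chi^\hom(f)$. Hence the equality case of Lemma~\ref{lem:INFScomp}, applied to the homogeneous bounded negative norm $\chi^\hom$, gives $\IN\circ\FS(\chi^\hom)=\chi^\hom$. Combining the two displays proves the proposition.

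The argument involves no substantive new step. The only point needing care is that the equality case of Lemma~\ref{lem:INFScomp} requires its input to lie in $\Nbdd(X)$, and this is exactly what Corollary~\ref{cor:hombdd} supplies. The genuine analytic content, namely the maximum modulus principle behind Lemma~\ref{lem:INFScomp}, is already contained in that lemma.
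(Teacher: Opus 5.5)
Your proposal is correct and matches the paper's proof: replace $\chi$ by $\chi^\hom$ using Lemma~\ref{lem:fsnormhom}, then apply the equality case of Lemma~\ref{lem:INFScomp} to the homogeneous norm $\chi^\hom$. Your explicit appeal to Corollary~\ref{cor:hombdd} supplies the fact that $\chi^\hom\in\Nbdd(X)$, which the paper's one-line argument uses implicitly.
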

\begin{proof}
    This follows from Lemma~\ref{lem:fsnormhom} and Lemma~\ref{lem:INFScomp}:
    \[
    \IN\circ\FS(\chi)=\IN\circ \FS(\chi^\hom)=\chi^\hom.
    \]
\end{proof}

We claim that $\IN(\f)$ is a homogeneous negative bounded norm. To see this, 
we first consider the following special case.
\begin{lem}\label{lem:INPLplus}
    If $\f=\p_{\bar{X},D}$ for a semiample, strictly effective Cartier divisor $D$ at infinity on an integrally closed compactification $\bar{X}$ of $X$, then $\IN(\f)=\chi_{\bar{X},D}^{\hom}\in\cT^\hom_\Q$.
\end{lem}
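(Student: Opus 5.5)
The plan is to compare both sides through the explicit formula of Proposition~\ref{prop:homic}: write $D=\sum_ib_iE_i$ with $b_i\in\Z_{>0}$ and $E_i$ the irreducible components of $\partial\bar X$. Then Proposition~\ref{prop:homic}(i) gives
\[
\chi_{\bar X,D}^{\hom}=\min_i b_i^{-1}\ord_{E_i}.
\]
By Lemma~\ref{lem:FSgeomnorm}, $\f=\p_D=\FS(\chi_{\bar X,D})$. Proposition~\ref{prop:INFScomp} then yields
\[
\IN(\f)=\IN\circ\FS(\chi_{\bar X,D})=\chi_{\bar X,D}^{\hom}.
\]
This is the main identity, and it is essentially formal from earlier results. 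To avoid circularity, I will double-check that Proposition~\ref{prop:INFScomp} only uses Lemma~\ref{lem:INFScomp} and Lemma~\ref{lem:fsnormhom}, together with the fact that $\chi_{\bar X,D}$ is bounded negative. The latter holds because geometric norms are bounded by definition; indeed $\chi_{\bar X,D}$ is itself a geometric norm since $D$ is semiample and strictly effective.

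As a sanity check independent of the maximum modulus principle, I can also verify the identity directly. First, since $\f(\ord_{E_i})=b_i$ and $b_i^{-1}\ord_{E_i}\in X^\na_\infty$, we get $\IN(\f)(f)\le\min_i b_i^{-1}\ord_{E_i}(f)$. Conversely, for any $w\in X^\na_\infty$ and $f\in R\setminus k$, set $c:=\min_i b_i^{-1}\ord_{E_i}(f)<0$. Then by Lemma~\ref{lem:critnopole} the rational function $f$ times a local equation of $D$ raised to $-c$ is regular, after passing to multiples to clear denominators. Hence $w(f)\ge c\,w(D)=c\,\f(w)$, i.e.\ $w(f)/\f(w)\ge c$. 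This gives the reverse inequality, via an argument like the one in Lemma~\ref{lem:FSgeomnorm}.

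It remains to show $\chi_{\bar X,D}^{\hom}\in\cT^\hom_\Q$. Since $\chi_{\bar X,D}$ is $\Z$-valued, Proposition~\ref{prop:Reescorr} (first paragraph of its proof) shows it is of finite type, hence lies in $\cT_\Z$. Proposition~\ref{prop:fthom} then gives $\chi_{\bar X,D}^{\hom}\in\cT^\hom_\Q$.

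The main obstacle is this finite-type input. The Rees-correspondence argument directly treats the case of ample $D$, and $D$ is only semiample. I will handle this via Lemma~\ref{lem:sabig}: replacing $D$ by a multiple (which rescales all norms and preserves $\cT^\hom_\Q$), $(\bar X,D)$ is the pullback of an integrally closed polarized compactification $(\bar X_0,D_0)$ with $D_0$ ample. Lemma~\ref{lem:gnpull} then gives $\chi_{\bar X,D}=\chi_{\bar X_0,D_0}$ since $\bar X_0$ is integrally closed, which reduces everything to the ample case.
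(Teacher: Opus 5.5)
Your proposal is correct and follows the paper's proof exactly: $\f=\FS(\chi_{\bar X,D})$ by Lemma~\ref{lem:FSgeomnorm}, then $\IN(\f)=\chi_{\bar X,D}^{\hom}$ by Proposition~\ref{prop:INFScomp}, and finite type follows from Proposition~\ref{prop:fthom}. Your reduction of the finite-type input to the ample case via Lemmas~\ref{lem:sabig} and~\ref{lem:gnpull} is valid but not really needed, since the Rees algebra of $\chi_{\bar X,D}$ is essentially the section ring $R(\bar X,D)$, which is finitely generated for semiample $D$; your direct check via Proposition~\ref{prop:homic}(i) and Lemma~\ref{lem:critnopole} is a correct alternative that avoids the maximum modulus principle.
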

\begin{proof}
    By Lemma~\ref{lem:FSgeomnorm} $\varphi=\FS(\chi_{\bar X, D})$, and so $\IN(\varphi)=\chi_{\bar X, D}^\hom$ by Proposition~\ref{prop:INFScomp}, which is further finitely generated by Proposition~\ref{prop:fthom}.
\end{proof}
As $\f\mapsto\IN(\f)$ is homogeneous and increasing, we conclude
\begin{lem}\label{lem:innormbd}
    For any function $\f\in\cL^\infty$, the infimum norm $\chi=\IN(\f)$ is a homogeneous bounded negative norm. If $\f$ is further a Fubini-Study function, then $\IN(\f)$ is of finite type.
\end{lem}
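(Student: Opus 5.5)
The plan is to reduce to the special case of Lemma~\ref{lem:INPLplus} by sandwiching $\f$ between multiples of a Fubini--Study function. The monotonicity and homogeneity of Lemma~\ref{lem:INformal} transport the resulting bounds to $\IN(\f)$.

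\emph{Norm properties.} Fix $f\in R\setminus k$. Each $w\in X^\na_\infty$ is a semivaluation, so $w(fg)=w(f)+w(g)$ and $w(f+g)\ge\min\{w(f),w(g)\}$. Since $\f(w)>0$ on $X^\na_\infty$, every map $f\mapsto w(f)/\f(w)$ is superadditive on products, satisfies the ultrametric inequality, and is homogeneous under powers. Taking the infimum over $w$ preserves all three properties, so $\IN(\f)$ is a homogeneous nonpositive seminorm. Finiteness and negativity will follow from the comparison below.

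\emph{Comparison.} Pick any $\p\in\cH(X)=\PL^+(X)$ and write $\p=\p_{\bar X,D}$ with $\bar X$ integrally closed and $D$ semiample, strictly effective and Cartier; this is possible after scaling, using Lemma~\ref{lem:sabig}. Since $\f\in\cL^\infty$, there is $C\ge1$ with $C^{-1}\p\le\f\le C\p$. Lemma~\ref{lem:INformal} then gives
\[
C\,\IN(\p)\le\IN(\f)\le C^{-1}\IN(\p).
\]
By Lemma~\ref{lem:INPLplus}, $\IN(\p)=\chi_{\bar X,D}^{\hom}$, and this norm is finite-valued, negative, and of finite type, hence bounded by Proposition~\ref{prop:ft}. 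Corollary~\ref{cor:hombdd} gives the same conclusion directly. The two-sided inequality therefore shows that $\IN(\f)$ takes values in $(-\infty,0)$ on $R\setminus k$, and that it is comparable to a bounded negative norm. Since the geometric norm $\chi_{\bar X,D}$ is comparable to $\chi_{\bar X,D}^{\hom}$, the boundedness criterion~\eqref{equ:bddnorm} holds for $\IN(\f)$. The value $\IN(\f)(0)=+\infty$ and the condition $\IN(\f)|_{k^\times}\equiv0$ hold by convention.

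\emph{Finite type.} If $\f\in\cH(X)$, then by Lemma~\ref{lem:FSPL} we may take $\p=\f$ itself, after scaling by a rational number so that $D$ becomes Cartier. Lemma~\ref{lem:INPLplus} then gives $\IN(\f)=\chi_{\bar X,D}^{\hom}$ directly, and the scaling is undone by Lemma~\ref{lem:INformal}, since rational multiples of norms in $\cT^\hom_\Q$ stay in $\cT^\hom_\Q$. This norm lies in $\cT^\hom_\Q$ by Proposition~\ref{prop:fthom}. The only mildly delicate point in the whole argument is this reduction to a Cartier $D$ on an integrally closed compactification; the rest is formal.
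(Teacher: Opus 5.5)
Your proposal is correct, and the boundedness part is exactly the paper's argument. You sandwich $\f$ between multiples of some $\p_{\bar X,D}\in\PL^+(X)$, apply Lemma~\ref{lem:INformal} and Lemma~\ref{lem:INPLplus}, and use that $\chi^{\hom}_{\bar X,D}$ is bounded (Corollary~\ref{cor:hombdd}). Your explicit check of the norm axioms is something the paper treats as evident from the definition.

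For finite type you take a slightly different path. You use Lemma~\ref{lem:FSPL} to write $\f$ itself as $\p_{\bar X,D}$ and then reapply Lemma~\ref{lem:INPLplus}. The paper instead uses Proposition~\ref{prop:FSft} to write $\f=\FS(\chi)$ with $\chi$ of finite type. It then combines Proposition~\ref{prop:INFScomp}, which gives $\IN(\f)=\chi^{\hom}$, with Proposition~\ref{prop:fthom}.

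The ingredients overlap, since Lemma~\ref{lem:INPLplus} is itself proved from Propositions~\ref{prop:INFScomp} and~\ref{prop:fthom}. The difference is in scope. Your route only works for rational Fubini--Study functions, because a function in $\cH_\R$ is in general not piecewise linear; its values on divisorial valuations can be irrational. That matches the statement as literally phrased. The paper's route works verbatim for $\f\in\cH_\Lambda$ with any nonzero subgroup $\Lambda$ and gives $\IN(\f)\in\cT^{\hom}_{\Q\Lambda}$. In particular it covers real Fubini--Study functions, which is the generality in which $\IN$ is later applied on $\cH_\R$.
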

\begin{proof}
Since $\f$ is of logarithmic growth, for any $\psi=\psi_{\bar X, D}\in \PL^+(X)$ with $D$ Cartier, there exists $C\geq 1$ such that 
\[
C^{-1}\psi\leq \f\leq C\psi.
\]
Applying $\IN$ to the above, and combining with Lemma~\ref{lem:INPLplus} yields
\[
C\chi_{\bar X, D}^\hom\leq \IN(\varphi)\leq C^{-1}\chi_{\bar X, D}^\hom.
\]
Note that by Corollary~\ref{cor:hombdd}, we have $\chi_{\bar X, D}^\hom$ is bounded, and hence so is $\IN(\f)$.
Now if $\varphi$ is further a Fubini-Study function, then by Proposition~\ref{prop:FSft} there is a norm $\chi$ of finite type such that $\varphi=\FS(\chi)$. Proposition~\ref{prop:INFScomp} in turn gives $\IN(\varphi)=\chi^\hom$, which is also of finite type, by Proposition~\ref{prop:fthom}.
\end{proof}
The proof shows that the $\IN$ operator also gives a distance non-increasing map 
\[
\IN\colon (\cL^\infty(X), \de_\infty)\to (\cN^\hom_{\mathrm{bdd}}(X), \de_\infty).
\]

\begin{cor}\label{cor:FSisom}
    For $\chi, \chi'\in \Nbdd(X)$, we have $\de_\infty(\FS(\chi), \FS(\chi'))=\de_\infty(\chi^\hom, \chi'^\hom)$.
\end{cor}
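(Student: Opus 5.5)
Since $\FS(\chi)=\FS(\chi^\hom)$ by Lemma~\ref{lem:fsnormhom}, and $\chi^\hom,\chi'^\hom$ are again bounded negative norms by Corollary~\ref{cor:hombdd}, we may assume from the outset that $\chi$ and $\chi'$ are homogeneous. We then prove the two inequalities separately.

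For the inequality $\de_\infty(\FS(\chi),\FS(\chi'))\le\de_\infty(\chi,\chi')$, set $\d:=\de_\infty(\chi,\chi')$, so that $e^{\d}\chi\le\chi'\le e^{-\d}\chi$. Monotonicity and the scaling rule $\FS(t\chi)=t^{-1}\FS(\chi)$ from Lemma~\ref{lem:FSformal} give
\[
e^{-\d}\FS(\chi)\le\FS(\chi')\le e^{\d}\FS(\chi).
\]
By Proposition~\ref{prop:FSloggr}, both functions lie in $\cL^\infty(X)$, so this says exactly that $\de_\infty(\FS(\chi),\FS(\chi'))\le\d$. This is the statement that $\FS$ is distance non-increasing, already noted after Proposition~\ref{prop:FSloggr}.

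For the reverse inequality, we use the infimum norm. Set $\d':=\de_\infty(\FS(\chi),\FS(\chi'))$, so that $e^{-\d'}\FS(\chi)\le\FS(\chi')\le e^{\d'}\FS(\chi)$. Lemma~\ref{lem:INformal} says $\IN$ is increasing and satisfies $\IN(t\f)=t^{-1}\IN(\f)$. Applying $\IN$ therefore gives
\[
e^{\d'}\IN\FS(\chi)\le\IN\FS(\chi')\le e^{-\d'}\IN\FS(\chi).
\]
By Proposition~\ref{prop:INFScomp}, $\IN\circ\FS$ is the identity on homogeneous bounded negative norms, so this reads $e^{\d'}\chi\le\chi'\le e^{-\d'}\chi$. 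Hence $\de_\infty(\chi,\chi')\le\d'$. This is the distance non-increasing property of $\IN$ from Lemma~\ref{lem:innormbd}, composed with the previous step.

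Combining the two inequalities proves the corollary. No step is really an obstacle. The only care needed is at the endpoint cases: one should check that the infima defining $\de_\infty$ are attained, or argue with $\d+\e$ throughout, which is routine because all the inequalities used are closed under limits. The substantive input is Proposition~\ref{prop:INFScomp}, which we may take as given.
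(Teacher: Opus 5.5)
Your proof is correct and is essentially the paper's argument. Both reduce to homogeneous norms via $\FS(\chi)=\FS(\chi^\hom)$. Both then use the sandwich $\de_\infty(\chi^\hom,\chi'^\hom)=\de_\infty(\IN\circ\FS(\chi),\IN\circ\FS(\chi'))\le\de_\infty(\FS(\chi),\FS(\chi'))\le\de_\infty(\chi^\hom,\chi'^\hom)$. The first equality is Proposition~\ref{prop:INFScomp}, and the two inequalities hold because $\FS$ and $\IN$ are monotone and scale inversely, hence distance non-increasing.
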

\begin{proof}
    Combining Proposition~\ref{prop:INFScomp} and Lemma~\ref{lem:fsnormhom}, we have
    \begin{align*}
    \de_\infty(\chi^\hom, \chi'^\hom)
    &=\de_\infty(\IN\circ\FS(\chi), \IN\circ\FS(\chi'))
    \leq \de_\infty(\FS(\chi), \FS(\chi'))\\
    &=\de_\infty(\FS(\chi^\hom), \FS(\chi'^\hom))\leq \de_\infty(\chi^\hom, \chi'^\hom).
    \end{align*}
\end{proof}
\begin{cor}\label{cor:FSsurj}
    For any nonzero subgroup $\La\subset\R$, $\FS\colon(\cT_\La, \de_\infty)\to (\cH_\La, \de_\infty)$ is a surjective distance non-increasing map which factors as 
    \begin{itemize}
        \item a surjective distance non-increasing map $(\cT_\La, \de_\infty)\to (\cT_{\Q\La}^\hom, \de_\infty)$;
        \item an isomorphism $(\cT_{\Q\La}^\hom, \de_\infty)\to (\cH_{\Q\La}, \de_\infty)$, with inverse given by $\IN$.
    \end{itemize}
\end{cor}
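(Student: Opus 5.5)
The plan is to factor $\FS$ through homogenization, using the identities established above. Given $\chi\in\cT_\La$, Proposition~\ref{prop:fthom} shows that $\chi^\hom\in\cT^\hom_{\Q\La}$, and Lemma~\ref{lem:fsnormhom} shows that $\FS(\chi)=\FS(\chi^\hom)$. So $\FS|_{\cT_\La}$ factors as $\chi\mapsto\chi^\hom$ followed by $\FS|_{\cT^\hom_{\Q\La}}$.

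For the first map, surjectivity onto $\cT^\hom_{\Q\La}$ requires an argument, since a homogeneous norm with values in $\Q\La$ need not lie in $\cT_\La$. Let $\chi'\in\cT^\hom_{\Q\La}$. By Proposition~\ref{prop:FSft} with group $\Q\La$, $\FS(\chi')\in\cH_{\Q\La}$, and the Corollary after Lemma~\ref{lem:charFS} gives $\cH_{\Q\La}=\cH_\La$. Applying Proposition~\ref{prop:FSft} again with group $\La$ produces $\chi\in\cT_\La$ with $\FS(\chi)=\FS(\chi')$. Proposition~\ref{prop:INFScomp} then gives
\[
\chi^\hom=\IN\circ\FS(\chi)=\IN\circ\FS(\chi')=\chi'^{\hom}=\chi',
\]
so $\chi\mapsto\chi^\hom$ is surjective. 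It is distance non-increasing because homogenization is monotone and commutes with scaling: if $C\chi_1\le\chi_2\le C^{-1}\chi_1$, then $C\chi_1^\hom\le\chi_2^\hom\le C^{-1}\chi_1^\hom$.

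For the second map, Proposition~\ref{prop:FSft} for $\Q\La$ shows that $\FS(\cT^\hom_{\Q\La})\subset\cH_{\Q\La}$. Conversely, any $\f\in\cH_{\Q\La}$ equals $\FS(\chi)$ for some $\chi\in\cT_{\Q\La}$, so $\f=\FS(\chi^\hom)$ with $\chi^\hom\in\cT^\hom_{\Q\La}$ by Proposition~\ref{prop:fthom} (note $\Q\Q\La=\Q\La$); hence the map is surjective. Proposition~\ref{prop:INFScomp} gives $\IN\circ\FS=\mathrm{id}$ on homogeneous norms. This makes $\FS$ injective with left inverse $\IN$, and together with surjectivity $\IN$ is a two-sided inverse. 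Corollary~\ref{cor:FSisom} then shows the map is an isometry.

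Composing the two maps, $\FS\colon\cT_\La\to\cH_\La=\cH_{\Q\La}$ is surjective and distance non-increasing. The only delicate point is the first surjectivity, which hinges on $\cH_{\Q\La}=\cH_\La$; the remaining steps are formal.
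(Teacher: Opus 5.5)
Your proof is correct and follows essentially the same route as the paper. Like the paper, you factor $\FS$ through homogenization via Proposition~\ref{prop:fthom} and Lemma~\ref{lem:fsnormhom}, obtain the isometry with inverse $\IN$ from Corollary~\ref{cor:FSisom} and Proposition~\ref{prop:INFScomp}, and deduce surjectivity of homogenization from $\cH_{\Q\La}=\cH_\La$ and Proposition~\ref{prop:FSft}. The only minor difference is that you show homogenization is distance non-increasing directly, from its monotonicity and compatibility with scaling, whereas the paper reads it off from $\de_\infty(\chi^\hom,\chi'^\hom)=\de_\infty(\FS(\chi),\FS(\chi'))\le\de_\infty(\chi,\chi')$.
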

\begin{proof}
    Surjectivity of $\FS$ is a direct consequence of Proposition~\ref{prop:FSft}. By Propostion~\ref{prop:fthom}, $\FS$ factors through homogenization.
    It follows from Corollary~\ref{cor:FSisom} and Proposition~\ref{prop:INFScomp} that $\FS: (\cT^\hom_{\Q\Lambda}, \de_\infty)\to (\cH_{\Q\Lambda}, \de_\infty)$ is an isometry with inverse given by $\IN$. This implies homogenization is surjective. It is also distance reducing since
    \[
    \de_\infty(\chi^\hom, \chi'^\hom)=\de_\infty(\FS(\chi), \FS(\chi'))\leq \de_\infty(\chi, \chi').
    \]
\end{proof}
Write $\cT^{\mathrm{int}}_\Z\subset\cT_\Z$ for the space of $\Z$-valued negative norms of finite type corresponding integrally closed polarized compactifications $(\bar X,D)$ under the Rees correspondence, see Proposition~\ref{prop:Reescorr}.
\begin{prop}\label{prop:TQhom}
    Homogenization provides a bijection $\cT^{\mathrm{int}}_\Z/{\sim}\simto\cT^\hom_\Q$, with inverse given by round-down.
\end{prop}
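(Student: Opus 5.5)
The plan is to define the two maps explicitly, show each lands in the claimed set, and check that they compose to the identity in both orders. Elements of $\cT^{\mathrm{int}}_\Z/{\sim}$ correspond, by Proposition~\ref{prop:Reescorr}, to integrally closed polarized compactifications $(\bar X,D)$. Moreover, each class contains the geometric norm $\chi=r\chi_{\bar X,rD}$ for $r$ divisible enough.

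For the forward map, well-definedness on classes is Lemma~\ref{lem:equivhom}. Homogenization lands in $\cT^\hom_\Q$ by Proposition~\ref{prop:fthom}. By Proposition~\ref{prop:homic}(i) we get the explicit form $\chi^\hom=\min_i b_i^{-1}\ord_{E_i}$, where $D=\sum_i b_iE_i$ on the integrally closed $\bar X$.

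For the inverse, take $\chi'\in\cT^\hom_\Q$. By Corollary~\ref{cor:FSsurj}, $\f\coloneqq\FS(\chi')\in\cH(X)$ and $\IN(\f)=\chi'$. By Lemma~\ref{lem:FSPL} and Lemma~\ref{lem:sabig}, $\f=\p_D$ for a unique integrally closed polarized compactification $(\bar X,D)$. Lemma~\ref{lem:INPLplus}, applied after scaling so that $D$ is Cartier and using homogeneity of $\IN$, gives $\chi'=\IN(\p_D)=\chi_{\bar X,D}^\hom$. Proposition~\ref{prop:homic}(ii) then gives $\lfloor\chi'\rfloor=\lfloor\chi_{\bar X,D}^\hom\rfloor\sim\chi_{\bar X,D}$. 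The same holds for $D$ only $\Q$-Cartier, using the rescaled norm $r\chi_{\bar X,rD}$, which lies in the class attached to $(\bar X,D)$. Hence $\lfloor\chi'\rfloor\in\cT^{\mathrm{int}}_\Z$, and its homogenization is $\chi^\hom_{\bar X,D}=\chi'$. So homogenization followed by round-down is the identity on $\cT^\hom_\Q$, and in particular homogenization is surjective.

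For the other composition, start with $[\chi]\in\cT^{\mathrm{int}}_\Z/{\sim}$ represented by $(\bar X,D)$ integrally closed. Proposition~\ref{prop:homic}(ii) gives $\chi\sim\lfloor\chi^\hom\rfloor$, so round-down followed by homogenization is the identity on classes, and homogenization is injective.

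The main obstacle I expect is bookkeeping with $\Q$-divisors. One has to check that round-down of $\chi^\hom_{\bar X,D}$ is a genuine norm of finite type that is $\sim$ to the Rees norm of $(\bar X,D)$. This is exactly what Proposition~\ref{prop:homic}(ii) provides, with the caveat that its equivalence only asserts $\cF^{mr}$ agreement for $m$ divisible. One also has to confirm that $\lfloor\cdot\rfloor$ preserves the properties of a norm (superadditivity and the ultrametric inequality survive flooring of a $\Q$-valued norm). For injectivity, Lemma~\ref{lem:nepc}-type uniqueness is not needed, since Lemma~\ref{lem:sabig} already pins down $(\bar X,D)$ from $\p_D$.
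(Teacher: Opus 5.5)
Your proof is correct. The forward direction and injectivity coincide with the paper's: Lemma~\ref{lem:equivhom} and Proposition~\ref{prop:fthom} give well-definedness and show the image lies in $\cT^\hom_\Q$, and Proposition~\ref{prop:homic}(ii) gives $\chi\sim\lfloor\chi^\hom\rfloor$.

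The routes differ in surjectivity. The paper first uses Corollary~\ref{cor:FSsurj} to lift $\chi'\in\cT^\hom_\Q$ to some $\chi\in\cT_\Z$ with $\chi^\hom=\chi'$. It then realizes $\chi$ by a polarized compactification, possibly not integrally closed, through the Rees correspondence. Finally it passes to the integral closure using Proposition~\ref{prop:notic}, i.e.\ the fact that integral closure is the round-down of homogenization.

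You instead use $\FS(\chi')\in\cH(X)=\PL^+(X)$ and Lemma~\ref{lem:sabig} to land directly on an integrally closed $(\bar X,D)$. You then identify $\chi'=\chi^\hom_{\bar X,D}$ by Lemma~\ref{lem:INPLplus}. Your route is shorter and never needs Proposition~\ref{prop:notic} or the Rees correspondence for non-integrally-closed compactifications, but it leans on the identification $\cH(X)\simeq\cP\cC^{\mathrm{int}}$. The paper's route makes explicit the geometric meaning of round-down as integral closure, which is the point emphasized in the introduction.

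One small caveat concerns the case where $D$ is only $\Q$-Cartier. There Proposition~\ref{prop:homic}(ii) gives only $\lfloor\chi'\rfloor\sim\chi_{\bar X,D}$; it does not show that $\lfloor\chi'\rfloor$ is itself of finite type. So your assertion $\lfloor\chi'\rfloor\in\cT^{\mathrm{int}}_\Z$ needs one more line. For example, when $rD$ is Cartier and ample, $\bigoplus_m\cF^m_{\lfloor\chi'\rfloor}R$ is a finite module over $\bigoplus_m\Hnot(\bar X,mrD)$ by Serre finiteness. Alternatively, read ``inverse given by round-down'' up to $\sim$, which is all the paper actually proves.
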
  
\begin{proof}
    Pick any $\chi\in\cT^{\mathrm{int}}_\Z$, and we may assume $\chi=\chi_{\bar{X},D}$ by Lemma~\ref{lem:equivhom}, where $(\bar{X}, D)$ is an integrally closed polarized compactification.
    Now $\chi^{\hom}$ is $\Q$-valued by Proposition~\ref{prop:homic}, and it is negative and of finite type by Proposition~\ref{prop:fthom}. Thus $\chi^\hom\in\cT_\Q^\hom$, and we have $\chi\sim\lfloor\chi^\hom\rfloor$ by Proposition~\ref{prop:homic}.

    Conversely, suppose $\chi'\in\cT^\hom_\Q$, that is, $\chi'$ is a negative homogeneous norm of finite type. Then by Corollary~\ref{cor:FSsurj} there is $\chi\in \cT_\Z$ with $\chi^\hom=\chi'$. By the Rees correspondence, there is a polarized compactification $(\bar X, D)$ with $\chi\sim \chi_{\bar X, D}$.
    Taking the integral closure of $(\bar X, D)$ gives some $\Tilde{\chi}\in \cT_\Z^\mathrm{int}$ with $\Tilde{\chi}\sim\lfloor\chi'\rfloor$, by Proposition~\ref{prop:notic}.
\end{proof}
\begin{cor}
    The map $(\bar X, D)\to \FS(\chi_{\bar X, D})$ restricts to a bijection between $\cP\cC^{\mathrm{int}}$ and $\cH(X)$.
\end{cor}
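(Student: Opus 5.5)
The plan is to compose bijections that are already in place. By Lemma~\ref{lem:FSPL} and Lemma~\ref{lem:sabig} we have the identification $\cP\cC^{\mathrm{int}}\simeq\PL^+(X)=\cH(X)$: an integrally closed polarized compactification $(\bar X,D)$ goes to $\p_D$, and this map is bijective. So it suffices to show that for every $(\bar X,D)\in\cP\cC^{\mathrm{int}}$ one has $\FS(\chi_{\bar X,D})=\p_D$. This is Lemma~\ref{lem:FSgeomnorm} when $D$ is Cartier. Here $D$ is semiample because it is ample.

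For a $\Q$-Cartier $D$, recall the convention $\chi_{\bar X,D}=m\chi_{\bar X,mD}$, where $mD$ is Cartier. Lemma~\ref{lem:FSformal} gives $\FS(m\chi)=m^{-1}\FS(\chi)$, and linearity gives $\p_{mD}=m\p_D$. We therefore expect
\[
\FS(\chi_{\bar X,D})=m^{-1}\FS(\chi_{\bar X,mD})=m^{-1}\p_{mD}=\p_D.
\]
The point to check is the direction of the scaling convention. Since $\chi_{\bar X,mD}(f)\approx m^{-1}\chi_{\bar X,D}(f)$ heuristically, the convention should really read $\chi_{\bar X,D}=m\chi_{\bar X,mD}$, which is consistent with the computation above. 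If the convention turns out to point the other way, I would instead route through homogenization: Lemma~\ref{lem:fsnormhom} together with Proposition~\ref{prop:homic}(i) gives $\FS(\chi_{\bar X,D})=\FS(\min_i b_i^{-1}\ord_{E_i})$. One then checks directly that this equals $\p_D$, using the Cartier case $mD$ and the fact that homogenization commutes with scaling.

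With this in hand, the map $(\bar X,D)\mapsto\FS(\chi_{\bar X,D})$ coincides with $(\bar X,D)\mapsto\p_D$, and the latter is a bijection $\cP\cC^{\mathrm{int}}\to\cH(X)$. Surjectivity comes from Lemma~\ref{lem:FSPL}, which says every element of $\cH(X)$ is some $\p_D$ with $D$ ample and strictly effective on an integrally closed compactification; this uses Lemma~\ref{lem:sabig} to pass from semiample to ample on the unique integrally closed model. Injectivity holds because $\p_D$ determines $D$ as a $\Q$-Cartier b-divisor, the map $\CbDiv_\infty(X)_\Q\to C^0(X^\na)$ being injective, and by the uniqueness in Lemma~\ref{lem:sabig} the b-divisor determines $(\bar X,D)$.

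As a consistency check, the same bijection also follows from Proposition~\ref{prop:TQhom} combined with Corollary~\ref{cor:FSsurj}, since $\FS$ factors through homogenization; this gives an alternative route. The only real obstacle is the $\Q$-Cartier scaling bookkeeping described above, and everything else is formal.
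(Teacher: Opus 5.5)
Your proof is correct, and it takes a more direct route than the one the paper has in mind.

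\textbf{The paper's route.} The corollary is stated without proof right after Proposition~\ref{prop:TQhom}, and is meant to be read off from the norm side. The Rees correspondence (Proposition~\ref{prop:Reescorr}) identifies $\cP\cC^{\mathrm{int}}$ with $\cT^{\mathrm{int}}_\Z/{\sim}$. Proposition~\ref{prop:TQhom} identifies this with $\cT^\hom_\Q$ by homogenization. Corollary~\ref{cor:FSsurj} identifies $\cT^\hom_\Q$ with $\cH(X)$ via $\FS$, with inverse $\IN$. Finally, Lemma~\ref{lem:fsnormhom} shows that the composite is $(\bar X,D)\mapsto\FS(\chi_{\bar X,D})$. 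This is essentially the ``alternative route'' in your last paragraph.

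\textbf{Your route.} You prove the pointwise identity $\FS(\chi_{\bar X,D})=\p_D$, using Lemma~\ref{lem:FSgeomnorm} applied to $mD$ together with $\FS(m\chi)=m^{-1}\FS(\chi)$ and $\p_{mD}=m\p_D$. Your reading $\chi_{\bar X,D}=m\chi_{\bar X,mD}$ is exactly the paper's convention, so the homogenization fallback is not needed. You then reuse the bijection $\cP\cC^{\mathrm{int}}\simeq\PL^+(X)=\cH(X)$ from Lemmas~\ref{lem:sabig} and~\ref{lem:FSPL}.

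\textbf{Comparison.} Your argument is shorter and more elementary. It avoids the finite generation of homogenizations (Proposition~\ref{prop:fthom}), the maximum-modulus argument behind $\IN\circ\FS(\chi)=\chi^\hom$ (Lemma~\ref{lem:INFScomp}), and the Rees correspondence. It also gives the sharper statement that the map literally coincides with $(\bar X,D)\mapsto\p_D$. The paper's route, in exchange, shows the bijection is compatible with the identifications of Theorem~B. Its inverse is then realized on the norm side by $\IN$, followed by round-down and the Rees construction.
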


\subsection{The Fubini--Study envelope}
Recall from~\S\ref{sec:FSenv} that the Fubini--Study envelope of $\f\in\cL^\infty$ is
    \[
    Q(\varphi)\coloneqq \sup\{\psi\in \cH(X)\mid \psi\leq \varphi\}.
    \]
\begin{lem}\label{lem:inenv} 
    For $\f\in \cL^\infty$, we have
    $\IN(\varphi)=\IN(Q(\varphi)).$
\end{lem}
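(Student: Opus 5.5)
We must show $\IN(\f)=\IN(Q(\f))$ for $\f\in\cL^\infty$. The inequality $\IN(Q(\f))\le\IN(\f)$ is formal. Indeed, $Q(\f)\le\f$, and $Q(\f)$ is of logarithmic growth: $\f\ge C^{-1}\p$ for some $\p\in\cH(X)$, so $Q(\f)\ge C^{-1}\p$, and $Q(\f)\le\f\le C'\p$. The monotonicity of Lemma~\ref{lem:INformal} then applies.

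For the reverse inequality, fix $f\in R\setminus k$ and write $c\coloneqq-\IN(\f)(f)>0$. By definition, $w(f)\ge -c\,\f(w)$ for all $w\in X^\na_\infty$, that is, $c^{-1}\log^+|f|\le\f$ on $X^\na_\infty$. This also holds on $X^\beth$, where both sides vanish: there $\log^+|f|=0$ since $w(f)\ge0$, and $\f=0$. The function $c^{-1}\log^+|f|$ is not itself Fubini--Study, since $f$ need not generate $R$. So we will produce elements of $\cH(X)$ below $\f$ that agree with it up to an arbitrarily small error.

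Pick $\p\in\cH(X)$ and $C\ge1$ with $C^{-1}\p\le\f$, and take $\e\in(0,1)$. Set
\[
\tau_\e\coloneqq\max\{(1-\e)c^{-1}\log^+|f|,\ \e C^{-1}\p\}.
\]
Write $\p=\max_j\la_j^{-1}\log^+|g_j|$ with the $g_j$ generating $R$. When $c,\e\in\Q$, the function $\tau_\e$ is a maximum of terms $\la^{-1}\log^+|h|$ whose $h$'s include the generators $g_j$, so $\tau_\e\in\cH(X)$. For general real $c$, we instead take a rational $c'>c$ and replace $c$ by $c'$; the step below is unaffected, since only $c'\to c$ matters. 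On the other hand, $\tau_\e\le\f$, because each term of the maximum is at most $\f$. Hence $\tau_\e\le Q(\f)$.

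Now evaluate. For any $w\in X^\na_\infty$, we have $w(f)\ge-\frac{c}{1-\e}\tau_\e(w)\ge-\frac{c}{1-\e}Q(\f)(w)$. Thus $\IN(Q(\f))(f)\ge-\frac{c}{1-\e}$ (or $-\frac{c'}{1-\e}$ in the general case), and letting $\e\to0$ and $c'\to c$ gives $\IN(Q(\f))(f)\ge\IN(\f)(f)$. The main point requiring care is that $\tau_\e$ really lies in $\cH(X)$, which needs the generator set and the rational coefficients. Lemma~\ref{lem:charFS} handles this, as does the direct observation above that $\tau_\e$ is a maximum of terms $\la^{-1}\log^+|h|$ with the $h$'s generating $R$. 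The rest is immediate from the definitions.
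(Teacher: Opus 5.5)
Your proof is correct and is essentially the paper's argument. From $\IN(\f)(f)\ge-\la$ one gets $\la^{-1}\log^+|f|\le\f$; adjoining a Fubini--Study minorant $\rho\le\f$ gives $\max\{\la^{-1}\log^+|f|,\rho\}\in\cH(X)$, whence $\la^{-1}\log^+|f|\le Q(\f)$. The paper handles real $\la$ via $Q(\la\f)=\la Q(\f)$, while you use rational $c'>c$.

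Two small remarks. The weights $1-\e$ and $\e$ are superfluous, since $\max\{(c')^{-1}\log^+|f|,C^{-1}\p\}\le\f$ already holds. You should also take $C\in\Q$, so that $C^{-1}\p\in\cH(X)$.
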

\begin{proof}
    Since $Q(\varphi)\leq\varphi$, we have $\IN(\varphi)\geq \IN(Q(\varphi))$. To see the other direction, it suffices to show that for any $f\in R\setminus k$, and $\la>0$, if $\IN(\varphi)(f)\ge -\la$, then $\IN(Q(\varphi))(f)\ge -\la$. Unwinding definitions, it will suffice to show that if $\log^+|f|\le \la\varphi$, then $\log^+|f|\le \la Q(\varphi)$. To this end, note that $\varphi$ is of logarithmic growth, and we may pick $\rho\in \cH(X)$ with $\rho\le \la\varphi$. Then $\max\{\log^+|f|, \rho\}\in \cH(X)$ with $\max\{\log^+|f|, \rho\}\le \la\varphi$. It follows that $\log^+|f|\le \max\{\log^+|f|, \rho\}\le Q(\la\varphi)=\la Q(\varphi)$.
\end{proof}
Similar to Proposition~\ref{lem:INFScomp}, we have
\begin{prop}\label{prop:FSINcomp}
    For $\f\in \cL^\infty$,
    $\FS(\IN(\varphi))=Q(\varphi)$.
\end{prop}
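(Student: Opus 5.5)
We prove the two inequalities $\FS(\IN(\f))\le Q(\f)$ and $\FS(\IN(\f))\ge Q(\f)$ separately. Set $\chi:=\IN(\f)$. By Lemma~\ref{lem:innormbd}, $\chi$ is a homogeneous bounded negative norm, so $\FS(\chi)$ is defined.

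\emph{Upper bound.} Fix $v\in X^\na_\infty$; on $X^\beth$ both sides vanish. For each $f\in R\setminus k$ with $v(f)<0$ and each $\la>-\chi(f)$, we have $\chi(f)>-\la$. By definition of $\IN$ this means $w(f)\ge-\la\f(w)$ for all $w\in X^\na_\infty$, that is, $\la^{-1}\log^+|f|\le\f$ on $X^\na$.

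\begin{itemize}
\item Pick $\rho\in\cH(X)$ with $\rho\le\f$, which exists since $\f\in\cL^\infty$.
\item Then $\max\{\la^{-1}\log^+|f|,\rho\}$ lies in $\cH(X)$ and is at most $\f$. This is the same trick as in Lemma~\ref{lem:inenv}: adding $\rho$ to the generating set keeps a generating set, with a rational weight after perturbing $\la$ slightly upward to a rational number.
\item Hence $\la^{-1}\log^+|f|\le Q(\f)$, so $-v(f)/\la\le Q(\f)(v)$.
\item Letting $\la\downarrow-\chi(f)$ gives $v(f)/\chi(f)\le Q(\f)(v)$.
\end{itemize}

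Taking the supremum over $f$ yields $\FS(\IN(\f))\le Q(\f)$.

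\emph{Lower bound.} Let $\p\in\cH(X)$ with $\p\le\f$. Monotonicity of $\IN$ (Lemma~\ref{lem:INformal}) gives $\IN(\p)\le\IN(\f)$. Monotonicity of $\FS$ (Lemma~\ref{lem:FSformal}) then gives $\FS(\IN(\p))\le\FS(\IN(\f))$.

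Now $\p=\FS(\chi')$ for some norm $\chi'$ of finite type, by Proposition~\ref{prop:FSft}. By Proposition~\ref{prop:INFScomp}, $\IN(\p)=\chi'^\hom$, and Lemma~\ref{lem:fsnormhom} gives $\FS(\chi'^\hom)=\FS(\chi')=\p$. So $\p\le\FS(\IN(\f))$, and taking the supremum over $\p$ gives $Q(\f)\le\FS(\IN(\f))$.

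The main point needing care is the upper bound. There one must ensure that the test function $\la^{-1}\log^+|f|$ can be enlarged to a genuine Fubini--Study function while staying below $\f$. This requires $\f$ to dominate some element of $\cH(X)$, which the logarithmic growth hypothesis provides, together with rationality of the weight, handled by approximating $\la$ from above by rationals. The lower bound is purely formal from the earlier identities.
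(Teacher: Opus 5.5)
Your proof is correct and follows the paper's argument. The lower bound is the paper's verbatim, except that the paper gets $\p=\FS(\IN(\p))$ from Corollary~\ref{cor:FSsurj}. Your upper bound inlines the trick of enlarging to $\max\{\log^+|f|,\rho\}$ from the proof of Lemma~\ref{lem:inenv}; the paper instead first proves $\FS(\IN(\f))\le\f$ for every $\f\in\cL^\infty$ and then applies this to $Q(\f)$ together with $\IN(\f)=\IN(Q(\f))$. Taking $\la$ rational from the start, as you do, also sidesteps the real-scaling identity $Q(\la\f)=\la Q(\f)$ that the paper uses in that lemma.
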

\begin{proof}
    Both functions vanish identically on $X^\beth$.
    Fix $v\in X^\na_\infty$. For any $f\in R$, we have $\IN(\varphi)(f)\leq \frac{v(f)}{\varphi(v)}$. Hence 
    \[
    \FS(\IN(\varphi))(v)=\sup_f \frac{v(f)}{\IN(\varphi)(f)}\leq \varphi(v).
    \]
    Apply the above to $Q(\varphi)$ and use Lemma~\ref{lem:inenv}, we get 
    \[
    \FS(\IN(\varphi))=\FS(\IN(Q(\varphi)))\le Q(\varphi).
    \]
    Conversely, for any $\psi\in \cH(X)$ with $\psi\leq \varphi$, by Corollary~\ref{cor:FSsurj}, Lemma~\ref{lem:FSformal} and Lemma~\ref{lem:INformal} we have
    \[
    \psi=\FS(\IN(\psi))\le \FS(\IN(\f)),
    \]
    and therefore $Q(\varphi)\leq \FS(\IN(\varphi))$.
\end{proof}
\begin{cor}
    We have $\IN\circ \FS\circ \IN=\IN$, and $\FS\circ\IN\circ\FS=\FS$.
\end{cor}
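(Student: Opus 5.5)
The two identities are formal consequences of Proposition~\ref{prop:INFScomp} and Proposition~\ref{prop:FSINcomp}, together with Lemma~\ref{lem:fsnormhom}. No new analytic input is needed. The only care required is to check that each composite is defined, namely that $\FS$ takes $\Nbdd(X)$ into $\cL^\infty$ and $\IN$ takes $\cL^\infty$ into $\Nbdd(X)$.

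For the first identity, fix $\f\in\cL^\infty$. By Lemma~\ref{lem:innormbd}, $\chi\coloneqq\IN(\f)$ is a homogeneous bounded negative norm. Proposition~\ref{prop:INFScomp} gives $\IN\circ\FS(\chi)=\chi^\hom$. Since $\chi$ is homogeneous, $\chi^\hom=\chi$, and therefore $\IN\circ\FS\circ\IN(\f)=\IN(\f)$. Alternatively, Proposition~\ref{prop:FSINcomp} rewrites the left-hand side as $\IN(Q(\f))$, and Lemma~\ref{lem:inenv} identifies this with $\IN(\f)$. Either route suffices.

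For the second identity, fix $\chi\in\Nbdd(X)$. By Proposition~\ref{prop:FSloggr}, $\f\coloneqq\FS(\chi)\in\cL^\infty$, so the composite is defined. Proposition~\ref{prop:INFScomp} gives $\IN\circ\FS(\chi)=\chi^\hom$. By Corollary~\ref{cor:hombdd}, $\chi^\hom$ is again a bounded negative norm. Applying Lemma~\ref{lem:fsnormhom} then yields $\FS(\chi^\hom)=\FS(\chi)$, which is the claim.

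Another way to phrase the second identity is that Proposition~\ref{prop:FSINcomp} shows $\FS\circ\IN\circ\FS(\chi)=Q(\FS(\chi))$, so $\FS(\chi)$ is fixed by the Fubini--Study envelope. That fact is not needed here; it simply drops out of the first route.

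I expect no real obstacle. The step most worth double-checking is the domain bookkeeping, namely $\FS(\Nbdd)\subset\cL^\infty$ and $\IN(\cL^\infty)\subset\Nbdd$, and both are already established by the results cited above.
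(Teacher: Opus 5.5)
Your proof is correct and is essentially the paper's own argument, which also obtains both identities formally from Lemma~\ref{lem:fsnormhom}, Proposition~\ref{prop:FSINcomp}, Lemma~\ref{lem:inenv} and Proposition~\ref{prop:INFScomp}. Your domain checks via Proposition~\ref{prop:FSloggr}, Lemma~\ref{lem:innormbd} and Corollary~\ref{cor:hombdd} are the appropriate bookkeeping.
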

\begin{proof}
    This is an immediate consequence of Lemma~\ref{lem:fsnormhom}, Proposition~\ref{prop:FSINcomp}, Lemma~\ref{lem:inenv} and Proposition~\ref{prop:INFScomp}.
\end{proof}
\subsection{The Darvas metric on \texorpdfstring{$\cH_\R(X)$}{HR(X)}}
Recall from Corollary~\ref{cor:FSsurj} that the surjection $\FS\colon (\cT_\R, \de_\infty)\twoheadrightarrow(\cH_\R, \de_\infty)$ factors through an isometric isomorphism
\[
\FS: (\cT_\R^\hom, \de_\infty)\rightarrow (\cH_\R, \de_\infty),
\]
with inverse given by $\IN$. Thus the pseudometric $\de_1$ on $\cT_\R$ defines a pseudometric on $\cH_\R(X)$. Concretely, for any $\varphi, \psi\in \cH_\R(X)$,
\[
\de_1(\varphi, \psi)\= \de_1(\IN(\varphi), \IN(\psi)).
\]
We now show that the Fubini--Study operator identifies $(\cH_\R, \de_1)$ as the Hausdorff quotient of $(\cT_\R, \de_1)$.
\begin{thm}\label{thm:metricH}
    The pseudodistance $\de_1$ is a metric on $\cH_\R(X)$, uniquely characterized by
    \[
    \varphi\ge \psi \implies \de_1(\varphi,\psi)=\vol(\varphi)-\vol(\psi);
    \]
    \[
    \de_1(\varphi, \psi)=\inf\{\de_1(\varphi, \tau)+\de_1(\tau, \psi)\mid \tau\in \cH_\R(X), \tau\le \varphi, \psi\}
    \] 
    for all $\varphi, \psi\in \cH_\R(X)$.
\end{thm}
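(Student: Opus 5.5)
The natural route is to work at the level of homogeneous norms of finite type, transport everything through the isometry $\FS\colon\cT^\hom_\R\to\cH_\R$ of Corollary~\ref{cor:FSsurj}, and prove a rooftop formula for $\vol(\IN(\f)\wedge\IN(\p))$ in terms of Fubini--Study functions below $\f\wedge\p$.

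First, the monotone case. If $\f\ge\p$ in $\cH_\R$, then $\IN(\f)\ge\IN(\p)$ by Lemma~\ref{lem:INformal}, so $\IN(\f)\wedge\IN(\p)=\IN(\p)$. Then \eqref{equ:d1} gives
\[
\de_1(\f,\p)=\vol(\IN(\f))-\vol(\IN(\p)).
\]
We write $\vol(\f):=\vol(\IN(\f))$. For $\f\in\cH$ rational, this should agree with the intersection volume $(\f^n)$. Indeed, by Lemma~\ref{lem:INPLplus} and Proposition~\ref{prop:homic}, $\IN(\f)=\chi^\hom_{\bar X,D}$ with $\lfloor\chi^\hom\rfloor\sim\chi_{\bar X,D}$, and rounding down does not change the volume. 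Hence $\vol(\IN(\f))=\vol(D)$ by Proposition~\ref{prop:volbounded}. This proves the first characterizing property.

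Second, the rooftop identity. Set $\chi:=\IN(\f)\wedge\IN(\p)$, a bounded negative norm but possibly not of finite type. I claim
\[
\vol(\chi)=\sup\{\vol(\tau)\mid\tau\in\cH_\R,\ \tau\le\f\wedge\p\}.
\]
For "$\ge$": if $\tau\le\f,\p$, then $\IN(\tau)\le\chi$, so $\vol(\tau)\le\vol(\chi)$. For "$\le$": use the canonical approximants $\chi^{(\la)}\le\chi$, which are of finite type and satisfy $\vol(\chi^{(\la)})\to\vol(\chi)$ by Proposition~\ref{prop:canapprox}. Put $\tau_\la:=\FS(\chi^{(\la)})$. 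Then $\tau_\la\in\cH_\R$ by Proposition~\ref{prop:FSft}, and $\tau_\la\le\FS(\IN(\f))=Q(\f)=\f$ by Lemma~\ref{lem:FSformal} and Proposition~\ref{prop:FSINcomp}; the same holds for $\p$. Also $\IN(\tau_\la)=(\chi^{(\la)})^\hom\ge\chi^{(\la)}$, so $\vol(\tau_\la)\ge\vol(\chi^{(\la)})$. Combining these two estimates gives the rooftop identity, and therefore
\[
\de_1(\f,\p)=\vol(\f)+\vol(\p)-2\sup_{\tau\le\f,\p}\vol(\tau).
\]
Applying the monotone case to each pair $(\f,\tau)$ and $(\tau,\p)$, this supremum formula is exactly the second characterizing property. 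Since the two properties determine $\de_1$ completely, uniqueness follows.

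Third, and this is the main obstacle: $\de_1$ separates points on $\cH_\R$. Suppose $\de_1(\f,\p)=0$. Then $\vol(\chi)=\vol(\IN(\f))$ with $\chi\le\IN(\f)$. I would use the bivariate limit measure for the pair $\IN(\f),\IN(\p)$. By Theorem~\ref{thm:volan}, the equality $\vol(\IN\f)=\vol(\IN\f\wedge\IN\p)=\vol(\IN\p)$ forces $t\mapsto\vol(\chi_t)^{-1/n}$ to be affine with $c=1$. From this I would deduce that the two homogeneous finite-type norms coincide. A more direct plan for this step is to pass to a common integrally closed compactification $\bar X$ with $\f=\p_{D}$ and $\p=\p_{D'}$ (at least for rational functions, then extending to real ones by approximation). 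Then $\chi$ has the same volume as $R(\bar X,D)$, so the graded subseries $\bigoplus_m H^0(mD)\cap H^0(mD')$ has full volume. By the $\ord_E$-description of Proposition~\ref{prop:homic}, together with a Okounkov-body strictness argument (a full-volume subseries forces the base ideals to have trivial asymptotic order at each $E$), we get $\p_D(\ord_E)\le\p_{D'}(\ord_E)$ for all $E$. By symmetry the reverse inequality holds, so $D=D'$ on $X^\div$, and Theorem~\ref{thm:weaktop} then gives $\f=\p$. The Okounkov-body strictness step is where I expect the real difficulty to lie.
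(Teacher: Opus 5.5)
Your first two steps match the paper's proof. Monotonicity of $\IN$ gives the first property. The canonical finite-type approximants $\chi^{(\la)}\le\IN(\f)\wedge\IN(\p)$ of Proposition~\ref{prop:canapprox}, pushed through $\FS$, give the rooftop formula and hence the second property. One small point: in the statement $\vol(\f)$ is the intersection volume $(\f^n)$, and you only verify $\vol(\IN(\f))=(\f^n)$ for rational $\f$. For $\f\in\cH_\R$ you need the $\de_\infty$-approximation of Lemma~\ref{lem:volequal}.

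The genuine gap is the separation of points.
\begin{itemize}
\item Appealing to Theorem~\ref{thm:volan} is circular. With $c=1$, condition~\eqref{equ:affinevol} is literally the hypothesis $\de_1(\f,\p)=0$. Passing from it to equality of the norms is exactly what must be shown; in the paper that is Corollary~\ref{cor:volaffinesat}, which rests on Proposition~\ref{prop:equalvol}.
\item In your direct plan, injectivity does not survive approximation. If $\de_1(\f,\p)=0$ for $\f,\p\in\cH_\R$ and you approximate by rational $\f_i,\p_i$, you only get $\de_1(\f_i,\p_i)\to0$, not $=0$. So the rational case tells you nothing about the real one.
\item The ``strictness'' step is the entire content and is not proved. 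It must hold along every prime divisor $E$ of the common compactification, including those contracted by the map to the model where $\f$ is ample. A first-variation argument for the volume only yields $(D^{n-1}\cdot(D-D\wedge D'))=0$, which says nothing about such $E$.
\end{itemize}

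The missing idea is to test against $\MA(\rho)$ for \emph{every} $\rho$, not only $\rho=\f$.
\begin{itemize}
\item By the rooftop formula, pick $\tau_i\in\cH_\R$ with $\tau_i\le\f,\p$ and $\vol(\tau_i)\to\vol(\f)=\vol(\p)$.
\item Since $\tau_i\le\f$, monotonicity gives $\bar\ii(\tau_i,\f)\to0$. By the Diskant inequality (Proposition~\ref{prop:metrictop}), $[\tau_i]\to[\f]$ strongly, hence $(\tau_i,\rho^{n-1})\to(\f,\rho^{n-1})$ for every $\rho\in\cH(X)$.
\item Take $\rho$ ample on a compactification containing a given prime divisor $E$ at infinity. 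By~\eqref{equ:PLmeas}, $\MA_\rho(\rho)$ charges $\ord_E$, and since $\f-\tau_i\ge0$ this forces $\tau_i(\ord_E)\to\f(\ord_E)$. This is Lemma~\ref{lem:volweakconv}.
\item The same argument for $\p$ gives $\f=\p$ on $X^\div$, hence everywhere by Theorem~\ref{thm:weaktop}, with no reduction to rational functions.
\end{itemize}
Alternatively, Proposition~\ref{prop:equalvol}, whose proof does not use this theorem, is exactly your strictness claim. Applied to $\IN(\f)\wedge\IN(\p)\le\IN(\f)$, it gives $\f=\FS(\IN(\f)\wedge\IN(\p))\le\p$ on $X^\div$, and symmetrically $\p\le\f$.
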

Recall from~\S\ref{sec:strongP} that for $\f\in \cH_\R$, $\vol(\f)=(\f^n)$. The proof relies on the following volume equality:
\begin{lem}\label{lem:volequal}
    For any $\chi\in\cT_\R^\hom$, we have $\vol(\chi)=\vol(\FS(\chi))$.
\end{lem}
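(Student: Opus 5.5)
The plan is to first treat $\chi\in\cT^\hom_\Q$, where everything reduces to geometry, and then pass to real values by approximation.

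\emph{Rational case.} By Proposition~\ref{prop:TQhom} we have $\chi=\chi'^{\hom}$ for some $\chi'\in\cT^{\mathrm{int}}_\Z$. Lemma~\ref{lem:equivhom} and the Rees correspondence allow us to take $\chi'=\chi_{\bar X,D}$, with $(\bar X,D)$ an integrally closed polarized compactification. Proposition~\ref{prop:homic} then gives $\chi\sim\lfloor\chi\rfloor=\lfloor\chi^\hom_{\bar X,D}\rfloor\sim\chi_{\bar X,D}$. Since the volume depends only on the filtration pieces for large $\lambda$, and changing values by less than $1$ does not change the volume, we get
\[
\vol(\chi)=\vol(\chi_{\bar X,D})=\lim_m \tfrac{n!}{m^n}\dim\Hnot(\bar X,mD)=(D^n).
\]
On the other side, Lemma~\ref{lem:fsnormhom} and Lemma~\ref{lem:FSgeomnorm} give $\FS(\chi)=\FS(\chi_{\bar X,D})=\p_D$. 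By the definition of the intersection pairing on $\PL(X)$, $\vol(\p_D)=(D^n)$, so the two volumes agree. The case of a $\Q$-divisor $D$ follows by scaling, using $\vol(t\chi)=t^{-n}\vol(\chi)$ and $\FS(t\chi)=t^{-1}\FS(\chi)$.

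\emph{Real case.} Let $\chi\in\cT^\hom_\R$ and write it as a residue of a negative monomial norm with weights $\lambda_i\in\R_{>0}$ on $k[z_1,\dots,z_N]$ (Proposition~\ref{prop:ft}). Approximate the weights by rational $\lambda_i^{(j)}$ with $e^{-\delta_j}\lambda_i\le\lambda_i^{(j)}\le e^{\delta_j}\lambda_i$ and $\delta_j\to0$. The corresponding residue norms $\chi_j\in\cT_\Q$ then satisfy $e^{\delta_j}\chi\le\chi_j\le e^{-\delta_j}\chi$, since the residue construction is monotone and $1$-homogeneous in the weights. Homogenizing preserves these inequalities, so $\de_\infty(\chi_j^\hom,\chi)\le\delta_j$, and $\chi_j^\hom\in\cT^\hom_\Q$ by Proposition~\ref{prop:fthom}.

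Volume is monotone and scales like $t^{-n}$, so $\vol(\chi_j^\hom)\to\vol(\chi)$. By Corollary~\ref{cor:FSisom}, $e^{-\delta_j}\FS(\chi)\le\FS(\chi_j^\hom)\le e^{\delta_j}\FS(\chi)$. The volume on $\cH_\R$ is monotone and $n$-homogeneous by Theorem~\ref{thm:pshint}, so $\vol(\FS(\chi_j^\hom))\to\vol(\FS(\chi))$. The rational case applied to each $\chi_j^\hom$ then gives the identity in the limit.

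The main obstacle is the careful justification that $\vol(\chi)=\vol(\chi_{\bar X,D})$ under the relation $\sim$ combined with rounding down. Concretely, one must check that $\cF^\lambda_\chi R$ and $\cF^\lambda_{\chi_{\bar X,D}}R$ coincide for $\lambda$ in an arithmetic progression $r\Z$ (non-integrally-closed subtleties are avoided because $\bar X$ is integrally closed), and that the limit over that progression computes the volume. The latter holds because the limit in Proposition~\ref{prop:volbounded} exists.
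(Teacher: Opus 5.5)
Your proposal is correct and follows essentially the paper's route: you reduce to $\cT^\hom_\Q$ by a $\de_\infty$-approximation, using monotonicity and $n$-homogeneity of the volume on both the norm side and the function side, and you then write $\chi=\chi^\hom_{\bar X,D}$ for an integrally closed $(\bar X,D)$ so that $\lfloor\chi\rfloor\sim\chi_{\bar X,D}$ gives $\vol(\chi)=(D^n)=\vol(\p_D)$. The differences are only in bookkeeping. The paper approximates $\FS(\chi)$ by functions in $\cH_\Q$ and transfers this to norms via the isometry $\IN$ of Corollary~\ref{cor:FSsurj}, identifying $\chi$ through Propositions~\ref{prop:INFScomp} and~\ref{prop:notic}. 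You instead perturb the monomial weights directly and identify $\chi$ through Proposition~\ref{prop:TQhom}.
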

\begin{proof}
    Put $\f=\FS(\chi)\in \cH_\R$, and we may approximate $\varphi=\FS(\chi)$ by decreasing local uniform limits of $\varphi_i\in \cH_\Q$. Then Corollary~\ref{cor:FSsurj} gives
    \[
    \de_\infty(\IN(\varphi_i), \chi)=\de_\infty(\varphi_i, \varphi)\to 0.
    \]
    By homogeneity of volumes, we have $\vol(\IN(\varphi_i))\to \vol(\chi)$, and we may further assume $\chi\in \cT^\hom_\Q$. Now $\FS(\chi)\in \cH(X)=\PL^+(X)$ by Lemma~\ref{lem:FSPL}, and we may then pick an integrally closed polarized compactification $(\bar X, D)$ with $\FS(\chi)=\psi_{\bar X,D}=\FS(\chi_{\bar X, D})$. This gives $\chi=\chi_{\bar X, D}^\hom$ by Proposition~\ref{prop:INFScomp}, and $\lfloor\chi\rfloor\sim \chi_{\bar X, D}$ by Proposition~\ref{prop:notic}. Therefore we have
    \[
    \vol(\chi)=\vol(\lfloor\chi\rfloor)=\vol(\chi_{\bar X, D})=\vol(D)=\vol(\FS(\chi)),
    \]
    as desired.
\end{proof}

\begin{proof}[Proof of Theorem~\ref{thm:metricH}]
    Uniqueness is clear once $\de_1$ is given by the formulas above. Let $\varphi, \psi\in \cH_\R(X)$, and let $\chi\=\IN(\varphi), \chi'\=\IN(\psi)$. Then  $\varphi=\FS(\chi), \psi=\FS(\chi')$. Thus Lemma~\ref{lem:volequal} yields $\vol(\varphi)=\vol(\chi), \vol(\psi)=\vol(\chi')$. Now if $\varphi\ge \psi$, then Lemma~\ref{lem:INformal} shows $\chi\ge \chi'$ and we have
    \[
    \de_1(\varphi, \psi)=\de_1(\chi,\chi')=\vol(\chi)-\vol(\chi')=\vol(\varphi)-\vol(\psi).
    \]
    In general, using Proposition~\ref{prop:canapprox} and Proposition~\ref{prop:fthom}, for any $\varepsilon>0$, we can find $\chi''\in \cT_\R^\hom$ with $\chi''\le \chi\wedge \chi'$ such that $\de_1(\chi'', \chi\wedge\chi')<\varepsilon$. If $\tau\=\FS(\chi'')$, then 
    \begin{align*}
    \de_1(\varphi, \tau)+d_1(\tau, \psi)&=\de_1(\chi, \chi'')+\de_1(\chi'', \chi')\le \de_1(\chi, \chi\wedge \chi')+\de_1(\chi\wedge\chi', \chi')+2\varepsilon\\
    &=\de_1(\chi, \chi')+2\varepsilon=\de_1(\varphi, \psi)+2\varepsilon,
    \end{align*}
    and letting $\varepsilon\to 0$, this shows the second equality. 

    Finally, to see that $\de_1$ is a metric on $\cH_\R(X)$, we assume $\de_1(\varphi, \psi)=0$. Then there exists $(\tau_i)$ in $\cH_\R(X)$ with $\tau_i\le \varphi, \psi$ and $\vol(\tau_i)$ converging to both $\vol(\varphi)$ and $\vol(\psi)$. By Lemma~\ref{lem:volweakconv}, this implies $\tau_i$ converges to both $\varphi$ and $\psi$ pointwise on $X^\div$; hence $\varphi=\psi$.
\end{proof}
\subsection{Saturation}
Following~\cite{nakstab1, BLQ, Qi} we study next the $\de_1$ distance on the space of saturated norms.
For a function $\varphi\in \cL^\infty$, we set $\IN_{\Xdiv_\infty}(\varphi)$ to be the negative norm defined as
\[
\IN_{\Xdiv_\infty}(\varphi)(f)\=\inf_{v\in \Xdiv_\infty}\frac{v(f)}{\varphi(v)},
\]
for all $f\in R$. 
\begin{defi}
    Let $\chi\in\Nbdd(X)$. The \emph{saturation} of $\chi$ is defined to be
    \[
    \bar\chi\=\IN_{\Xdiv_\infty}(\FS(\chi)).
    \]
\end{defi}
\begin{defi}
    A bounded negative norm $\chi$ is \emph{saturated} if $\bar\chi=\chi$. Denote by $\cN^s(X)$ the space of saturated negative norms.
\end{defi}
By monotonicity, if $\chi\le \chi'$ are two bounded negative norms, then $\bar\chi\le \bar\chi'$. For any bounded negative norm, the saturation $\bar\chi$ is also bounded:
\begin{lem}\label{lem:satprop}
    For any bounded negative norm $\chi$,
    \begin{itemize}
        \item[(i)] $\chi\le\chi^\hom\le\bar\chi$.
        \item[(ii)] if $\chi$ is geometric, then  $\chi^\hom=\bar\chi$; in particular, $\bar\chi\in\Nbdd(X)$.
        \item[(iii)] $\FS(\chi)(v)=\FS(\chi^\hom)(v)=\FS(\bar \chi)(v)$ for all $v\in \Xdiv_\infty$.
    \end{itemize}
\end{lem}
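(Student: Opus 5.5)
The proof handles the three items separately, relying mainly on the formal properties of $\FS$ and $\IN$ from \S\ref{sec:FSop}.

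For (i), the inequality $\chi\le\chi^\hom$ is immediate from the definition of homogenization. For $\chi^\hom\le\bar\chi$, I would start from Proposition~\ref{prop:INFScomp}, which gives $\chi^\hom=\IN(\FS(\chi))=\inf_{w\in X^\na_\infty}w(f)/\FS(\chi)(w)$. Since $X^\div_\infty\subset X^\na_\infty$, an infimum over the smaller set $X^\div_\infty$ can only be larger. Hence $\IN(\FS(\chi))\le\IN_{\Xdiv_\infty}(\FS(\chi))=\bar\chi$.

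For (ii), write $\chi=\chi_{\bar X,D}$. By Lemma~\ref{lem:gnpull} we may pass to an integrally closed compactification: the pullback of $D$ to the integral closure of $\bar X$ has the same $\psi_D$, hence the same $\FS$ by Lemma~\ref{lem:FSgeomnorm}, and by Proposition~\ref{prop:notic} it has the same homogenization. Now write $D=\sum_i b_iE_i$. Lemma~\ref{lem:FSgeomnorm} gives $\FS(\chi)=\psi_D$. Proposition~\ref{prop:homic}(i) gives $\chi^\hom=\min_i b_i^{-1}\ord_{E_i}$. The valuations $\ord_{E_i}$ lie in $X^\div_\infty$, and $\psi_D(\ord_{E_i})=b_i$. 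Therefore
\[
\bar\chi(f)\le\min_i\frac{\ord_{E_i}(f)}{b_i}=\chi^\hom(f),
\]
and combined with (i) this yields $\bar\chi=\chi^\hom$. Boundedness of $\bar\chi$ for a general $\chi\in\Nbdd(X)$ then follows by sandwiching. Choose a geometric norm $\chi'$ with $C\chi'\le\chi\le C^{-1}\chi'$. Both $\FS$ and $\IN_{\Xdiv_\infty}$ are monotone and scale as in Lemmas~\ref{lem:FSformal} and~\ref{lem:INformal}, so $C^{\pm1}$ factors pass through and we get $C\bar\chi'\le\bar\chi\le C^{-1}\bar\chi'$. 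Since $\bar\chi'=\chi'^\hom$ is bounded by Corollary~\ref{cor:hombdd}, so is $\bar\chi$.

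For (iii), the equality $\FS(\chi)=\FS(\chi^\hom)$ is Lemma~\ref{lem:fsnormhom}. From (i) and monotonicity of $\FS$ we get $\FS(\chi)\le\FS(\bar\chi)$ everywhere. For the reverse inequality on $X^\div_\infty$, fix $v\in X^\div_\infty$ and $f\in R\setminus k$. The definition of $\bar\chi$, evaluated at the point $v$, gives
\[
\bar\chi(f)\le\frac{v(f)}{\FS(\chi)(v)}.
\]
If $v(f)<0$, then dividing (keeping track of signs, since $\bar\chi(f)<0$) yields $v(f)/\bar\chi(f)\le\FS(\chi)(v)$. If $v(f)\ge0$, the ratio $v(f)/\bar\chi(f)$ is nonpositive and so certainly at most $\FS(\chi)(v)>0$. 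Taking the supremum over $f$ gives $\FS(\bar\chi)(v)\le\FS(\chi)(v)$.

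I expect the main care point to be the reduction in (ii) to an integrally closed $\bar X$ and the sign bookkeeping in (iii). The rest is formal.
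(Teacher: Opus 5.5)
Your proposal is correct and follows the paper's proof essentially step for step. In (i) you use $\chi^\hom=\IN(\FS(\chi))$ and restrict the infimum to $X^\div_\infty$. In (ii) you reduce to an integrally closed $\bar X$ and test $\bar\chi$ on the valuations $\ord_{E_i}$. In (iii) you combine monotonicity with evaluating the defining infimum of $\bar\chi$ at $v$. Your explicit sandwich argument, showing $\bar\chi\in\Nbdd(X)$ for a general bounded $\chi$ through the scaling and monotonicity of $\FS$ and $\IN_{\Xdiv_\infty}$, spells out what the paper leaves implicit when it cites (i) and Corollary~\ref{cor:hombdd}.
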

\begin{proof}
    By Lemma~\ref{lem:INFScomp} and Proposition~\ref{prop:INFScomp}, we have 
    $\chi\le \chi^\hom=\IN(\FS(\chi))\le \bar\chi$. 
    Suppose $\chi=\chi_{\bar X, D}$ is geometric for $D=\sum_ib_iE_i$. After taking the integral closure, we may assume $\bar X$ is integrally closed by Proposition~\ref{prop:notic} and Lemma~\ref{lem:fsnormhom}.
    Then for any $f\in R\setminus k$,
    \[
    \bar\chi(f)\leq \min_i\left\{\frac{\ord_{E_i}(f)}{\FS(\chi)(\ord_{E_i})}\right\} 
    =\min_i\left\{\frac{\ord_{E_i}(f)}{b_i}\right\}=\chi^\hom(f),
    \]
    where the second equality follows from Lemma~\ref{lem:FSgeomnorm}. This proves~(ii) in view of~(i) and Corollary~\ref{cor:hombdd}.

    Finally, by~(i) we have $\FS(\chi)(v)= \FS(\chi^\hom)(v)\le \FS(\bar \chi)(v)$ for all divisorial valuations $v\in \Xdiv_\infty$. On the other hand, by definition of $\bar\chi$, we have for any $v\in \Xdiv_\infty$, and $f\in R\setminus k$, $\bar\chi(f)\le \frac{v(f)}{\FS(\chi)(v)}$. It follows that
    \[
    \FS(\bar\chi)(v)=\sup_f\frac{v(f)}{\bar \chi(f)}\le\FS(\chi)(v).
    \]
    This proves~(iii).
\end{proof}
\begin{prop}\label{prop:volsat}
    For a bounded negative norm $\chi$, we have $\vol(\chi)=\vol(\chi^\hom)=\vol(\bar \chi)$.
\end{prop}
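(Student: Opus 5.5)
The plan is to use $\chi\le\chi^\hom\le\bar\chi$ from Lemma~\ref{lem:satprop}(i) and monotonicity of the volume. These give
\[
\vol(\chi)\le\vol(\chi^\hom)\le\vol(\bar\chi),
\]
so it suffices to prove $\vol(\bar\chi)\le\vol(\chi)$.

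The first step is a reduction to the case where $\chi$ is of finite type, via the canonical approximants $\chi^{(\la)}\le\chi$ and Proposition~\ref{prop:canapprox}. That reduction alone does not bound $\vol(\bar\chi)$, however: $\bar\chi$ need not be the limit of the $\overline{\chi^{(\la)}}$. I would therefore argue directly through base ideals.

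Fix $(\bar X,D)$ and $r$ as in~\S\ref{sec:baseideals}, so that $\cF^m_\chi R\subset\Hnot(\bar X,mrD)$. By Lemma~\ref{lem:FSbaseideal}, $\FS(\chi)(v)=rv(D)-v(\fb_\bullet(\chi))$. By Lemma~\ref{lem:satprop}(iii), $f\in\cF^m_{\bar\chi}R$ means
\[
v(f)\ge -m\bigl(rv(D)-v(\fb_\bullet(\chi))\bigr)\quad\text{for all } v\in\Xdiv_\infty .
\]
Equivalently, with $s=fs_{rD}^m$, we have $v(s)\ge m\,v(\fb_\bullet(\chi))$ for all divisorial $v$ centered at infinity. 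On $X$ itself the base ideals are trivial, so the condition holds there too. Hence $s$ lies in the \emph{integral closure / multiplier-type} graded series
\[
\bigl\{s\in\Hnot(\bar X,mrD)\mid v(s)\ge m\,v(\fb_\bullet)\ \forall v\bigr\}.
\]
By Izumi-type/valuative characterization, this is the integral closure of the asymptotic base ideals, computed on a resolution.

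The key input is then the standard fact that a graded linear series $W_\bullet$ and its ``saturation'' $\bar W_m=\Hnot(\bar X,mrD\otimes\overline{\fb_m(W^{\infty})})$ have the same volume. This is the analogue of~\cite{nakstab1} and of the statement that $\vol(W_\bullet)=\vol(\bar W_\bullet)$, which holds for Okounkov bodies once $W_\bullet$ contains an ample series; see also~\cite[Theorem~6.?]{BJpp} for $\vol$ of a homogeneous psh function equal to that of the filtration. Concretely I would show
\[
\vol(\bar\chi)=\vol(\FS(\chi)^{\star})\quad\text{and}\quad \vol(\chi)=\lim_\la\vol(\FS(\chi^{(\la)})).
\]
The second identity follows from Lemma~\ref{lem:volequal} and Proposition~\ref{prop:canapprox}, since $\chi^{(\la),\hom}\in\cT^\hom_\R$. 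For the first, I would compare $\bar\chi$ with $\IN(Q(\FS\chi))$ and use that $\FS(\chi^{(\la)})$ increases to $\FS(\chi)$ on $\Xdiv_\infty$.

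The main obstacle is the inequality $\vol(\bar\chi)\le\sup_\la\vol(\chi^{(\la)})$. My first attempt would use the Okounkov-body description from the proof of Proposition~\ref{prop:volbounded}. For each admissible flag through a divisor at infinity, one shows that the valuation vectors of $\cF^m_{\bar\chi}R$ lie in $m$ times the closed convex body $\Delta(\chi)$, because the inequalities $v(s)\ge m\,v(\fb_\bullet)$ are asymptotically the defining inequalities of that body (Kaveh--Khovanskii / Lazarsfeld--Musta\c{t}\u{a} for $\Delta$ via restricted base loci). This gives $\vol(\bar\chi)\le n!\,\mathrm{vol}(\Delta(\chi))=\vol(\chi)$.
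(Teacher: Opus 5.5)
Your reduction to $\vol(\bar\chi)\le\vol(\chi)$ is correct. So is your rewriting of $\cF^m_{\bar\chi}R$ as the sections $s=fs_{rD}^m$ with $v(s)\ge m\,v(\fb_\bullet(\chi))$ for all divisorial $v$. This matches the paper's setup, where it appears as $v(\fb_\bullet(\chi))=v(\fb_\bullet(\bar\chi))$ for all $v\in\Xdiv$, via Lemmas~\ref{lem:FSbaseideal} and~\ref{lem:satprop}(iii).

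However, you never establish the step that carries all the content: that a graded linear series and its divisorial saturation have the same volume. Your three routes to it do not work:
\begin{itemize}
\item \emph{The Okounkov-body argument is circular.} $\Delta(\chi)$ is the closed convex hull of normalized flag-valuation vectors of $\cF^\bullet_\chi R$. Nothing says it is cut out by the inequalities $v(\cdot)\ge m\,v(\fb_\bullet)$ over divisorial $v$. Since $\Delta(\chi)\subset\Delta(\bar\chi)$ are convex bodies and $\Delta(\chi)$ has nonempty interior, the inclusion $\Delta(\bar\chi)\subset\Delta(\chi)$ you assert amounts to $\vol(\bar\chi)=\vol(\chi)$, which is the statement itself.
\item \emph{The route through $\vol(\FS(\chi)^\star)$ is unavailable.} In the paper, $\vol(\chi)=\vol(\FS(\chi))$ (Lemma~\ref{lem:volequalbdd}) is deduced from this proposition, already for non-homogeneous norms in $\cT_\R$. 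Moreover, $\vol(\FS^\star(\chi))=\vol(\FS(\chi))$ (Proposition~\ref{prop:regFS}(iv)) needs the envelope property, which the paper only has for smooth $X$.
\item \emph{The identity $\vol(\chi)=\lim_\la\vol(\FS(\chi^{(\la)}))$ is not justified as written.} It silently uses $\vol(\chi^{(\la)})=\vol((\chi^{(\la)})^\hom)$, which is a special case of what you are proving.
\end{itemize}

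The missing ingredient is a genuine theorem, which the paper imports as a black box: \cite[Lemma~22]{Sze15}. It says that if $V_\bullet\subset W_\bullet$ are graded linear series with $v(\fb_\bullet(V))=v(\fb_\bullet(W))$ for all divisorial $v$, and $V_\bullet$ contains an ample series, then $\vol(V_\bullet)=\vol(W_\bullet)$. You mention the ample-series hypothesis but never verify it. Here is the check. Rescale so that $\chi\le\bar\chi\le\chi_{\bar X,D}$ with $D$ Cartier. Lemma~\ref{lem:geomnormbd} gives an integer $l\ge1$ with $\chi\ge l\chi_{\bar X,D}$. Hence $s_D^{m-m/l}\Hnot(\bar X,(m/l)D)\subset\cF^m_\chi R$ whenever $l\mid m$, so $\cF^\bullet_\chi R$ contains an ample series with ample part $D/l$. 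With this citation and this check, your setup becomes a complete proof with the same structure as the paper's; without them, the crux is missing.
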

\begin{proof}
    By Lemma~\ref{lem:satprop}(i), it suffices to show that $\vol(\chi)=\vol(\bar\chi)$. Pick a polarized compactification $(\bar X,D)$ with $D$ Cartier. After possibly rescaling, we may assume $\chi\le\bar\chi\le\chi_{\bar X,D}$.
    Then $\cF_\chi^m R\subset \cF_{\bar \chi}^m R\subset H^0(\bar X, mD)$. We claim that 
    \[
    v(\fb_\bullet(\chi))=v(\fb_\bullet(\bar\chi))
    \] 
    for all divisorial valuations $v$ on $X$. Indeed, when $v$ is centered on $X$ both sides vanish since the base ideals are trivial on $X$, and when $v$ is centered at infinity, the desired equality follows from Lemma~\ref{lem:FSbaseideal} and Lemma~\ref{lem:satprop}~(iii).

    By~\cite[Lemma 22]{Sze15}, it now suffices to show that $\cF_\chi^\bullet R$ contains an ample series. This follows from Lemma~\ref{lem:geomnormbd}. Indeed, choose an integer $l\ge1$ such that $\chi\ge l\chi_{\bar X,D}$, using Lemma~\ref{lem:geomnormbd}. For $m$ divisible by $l$, the corresponding linear series contains
    \[
    s_D^{m-m/l}\Hnot(\bar X,(m/l)D).
    \]
    Thus $\cF_\chi^\bullet R$, and hence also $\cF_{\bar\chi}^\bullet R$, contains an ample series, with ample part $D/l$ and effective part $(1-1/l)D$. 
\end{proof}
\begin{cor}\label{cor:volineq}
    If $\chi,\chi'$ are bounded negative norms such that $\FS(\chi)(v)\le \FS(\chi')(v)$ for all $v\in X^\div_\infty$, then $\vol(\chi)\le\vol(\chi')$.
\end{cor}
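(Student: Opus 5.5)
The idea is to pass to saturations and compare them through the infimum over divisorial valuations. Set $\f\=\FS(\chi)$ and $\f'\=\FS(\chi')$; these are functions of logarithmic growth by Proposition~\ref{prop:FSloggr}. By definition $\bar\chi=\IN_{\Xdiv_\infty}(\f)$ and $\bar\chi'=\IN_{\Xdiv_\infty}(\f')$, where $\IN_{\Xdiv_\infty}(\f)(f)=\inf_{v\in\Xdiv_\infty}\frac{v(f)}{\f(v)}$.

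\textbf{Step 1: $\bar\chi\le\bar\chi'$.} Fix $f\in R\setminus k$ and $v\in\Xdiv_\infty$. If $v(f)\ge0$, then both quotients $v(f)/\f(v)$ and $v(f)/\f'(v)$ are nonnegative. Such $v$ do not affect the infimum, because the infimum is negative. If $v(f)<0$, then $\f(v)\le\f'(v)$ together with $\f(v),\f'(v)>0$ gives
\[
\frac{v(f)}{\f(v)}\le\frac{v(f)}{\f'(v)}.
\]
Taking the infimum over $v$ yields $\bar\chi(f)\le\bar\chi'(f)$. This is the monotonicity of $\IN$ from Lemma~\ref{lem:INformal}, but restricted to $\Xdiv_\infty$.

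\textbf{Step 2: the saturations are bounded.} Write $\bar\chi=\IN_{\Xdiv_\infty}(\f)$ with $\f$ of logarithmic growth. Choose a geometric norm $\chi_{\bar X,D}$ with $C^{-1}\p_D\le\f\le C\p_D$ for some $C\ge1$. Using Lemma~\ref{lem:FSgeomnorm} and Lemma~\ref{lem:satprop}(ii), applying the same comparison as in Step 1 gives
\[
C\,\overline{\chi_{\bar X,D}}\le\bar\chi\le C^{-1}\overline{\chi_{\bar X,D}},
\]
and $\overline{\chi_{\bar X,D}}=\chi_{\bar X,D}^\hom$ is bounded. Hence $\bar\chi$, and likewise $\bar\chi'$, is a bounded negative norm, so its volume is defined and monotone.

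\textbf{Step 3: conclude.} By Proposition~\ref{prop:volsat}, $\vol(\chi)=\vol(\bar\chi)$ and $\vol(\chi')=\vol(\bar\chi')$. Monotonicity of the volume applied to Step 1 then gives $\vol(\bar\chi)\le\vol(\bar\chi')$, which proves $\vol(\chi)\le\vol(\chi')$.

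The only step that needs care is Step 2, namely checking that the saturation is bounded so that Proposition~\ref{prop:volsat} applies. That proposition already uses this fact implicitly, so we could cite it instead. Even without boundedness, monotonicity of $\dim\cF^\la$ in Step 3 still holds, since the volume is defined as a limsup for arbitrary nonpositive norms.
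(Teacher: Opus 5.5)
Your argument is correct and is essentially the paper's: the corollary is stated as an immediate consequence of Proposition~\ref{prop:volsat}, via $\vol(\chi)=\vol(\bar\chi)\le\vol(\bar\chi')=\vol(\chi')$. Here $\bar\chi\le\bar\chi'$ holds because $\bar\chi=\IN_{\Xdiv_\infty}(\FS(\chi))$ depends only on, and is monotone in, the values of $\FS(\chi)$ on $\Xdiv_\infty$. Step~2 is unnecessary, as you yourself note: Proposition~\ref{prop:volsat} only needs $\chi,\chi'$ bounded, and the volume is monotone on all nonpositive norms.
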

The following is an analogue of~\cite[Proposition~3.12]{BLQ}.
\begin{prop}\label{prop:equalvol}
     Suppose that $\chi$ and $\chi'$ are bounded negative norms such that $\chi\le\chi'$ and $\vol(\chi)=\vol(\chi')$. Then $\FS(\chi)=\FS(\chi')$ on the set $X^\lin$ of valuations of linear growth.
\end{prop}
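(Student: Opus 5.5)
We will follow the strategy of \cite[Proposition~3.12]{BLQ}, translated to the base-ideal language of \S\ref{sec:baseideals}. First reduce to a fixed ambient: pick a polarized compactification $(\bar X,D)$ with $D$ Cartier and ample, and after rescaling both norms by the same factor assume $\chi\le\chi'\le\chi_{\bar X,D}$. Then $\cF^m_\chi R\subset\cF^m_{\chi'}R\subset\Hnot(\bar X,mD)$ are graded linear series, and by Lemma~\ref{lem:FSbaseideal} (with $r=1$) we have $\FS(\chi)(v)=v(D)-v(\fb_\bullet(\chi))$ and likewise for $\chi'$. Thus the claim is equivalent to $v(\fb_\bullet(\chi))=v(\fb_\bullet(\chi'))$ for every $v\in X^\lin$. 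Since $\chi\le\chi'$ gives $\fb_m(\chi)\subset\fb_m(\chi')$, we already have $v(\fb_\bullet(\chi))\ge v(\fb_\bullet(\chi'))$, and only the reverse inequality is at stake. For $v\in X^\beth$ both sides vanish, so we fix $v\in X^\lin_\infty$.

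The main tool will be the bivariate limit measure of \S\ref{sec:limmeas}, applied not to $(\chi,\chi')$ but to the pair $(\chi',\chi_v)$, or more precisely to the filtrations of $\cF^m_{\chi'}R$ and $\cF^m_\chi R$ induced by $v$. Since $v$ has linear growth, Lemma~\ref{lem:linbdd} gives $v\le -C\chi'$ on $R$, so $v$ induces a filtration on each $\cF^m_{\chi'}R$ with jumps in $[-Cm, m\,\cdot\,\text{const}]$. Using an Okounkov body $\Delta(\chi')\supset\Delta(\chi)$ for a flag valuation as in the proof of Proposition~\ref{prop:volbounded}, equality of volumes gives $\Delta(\chi)=\Delta(\chi')$ up to measure zero. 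Then we will use the concave transform $G_v$ of the filtration by $v$ on each Okounkov body (\`a la Boucksom--Chen) and show it agrees on the two bodies: the $v$-filtration on $\cF^\bullet_\chi R$ is the restriction of that on $\cF^\bullet_{\chi'}R$, so $G_v^\chi\le G_v^{\chi'}$, and the volumes of superlevel sets are controlled by $\dim$ of $\cF^m_\chi R\cap\{v\ge tm\}$. Equal volumes of the bodies together with an inclusion of graded series should force equality of the limit measures of the $v$-jumps, i.e.\ the pushforwards of Lebesgue measure under $G_v^\chi$ and $G_v^{\chi'}$.

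Finally, we read off the minimal $v$-value from these limit measures: $v(\fb_\bullet(\chi))$ (after the $v(D)$ shift) equals $\min G_v^\chi$ essentially, i.e.\ the lower end of the support of the limit measure of $\inf$-values, computed as $\lim_m \frac1m\min\{v(s)\mid 0\ne s\in\cF^m_\chi R\}$ shifted by $v(D)$. Since the limit measures coincide and $\Delta(\chi)$ has nonempty interior, the essential infima coincide; the subtlety is that the minimum of $G_v$ may be attained only on a measure-zero set, so we will instead use concavity of $G_v$ (upper semicontinuity and concavity imply the infimum over the body equals the essential infimum over its interior) to conclude $v(\fb_\bullet(\chi))=v(\fb_\bullet(\chi'))$.

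The expected main obstacle is this last step: relating the asymptotic base-ideal value $v(\fb_\bullet)$, a minimum over sections, to the concave transform, whose natural invariants are integrals and volumes. Linear growth of $v$ is exactly what is needed for the filtration to be linearly bounded so $G_v$ is a finite concave function; I would first try to prove $\sup$ of $-G_v$ equals $v(\fb_\bullet)-v(D)$ directly from the definitions, and if that fails, fall back on approximating $v$ by quasimonomial valuations and using continuity of $\FS(\chi)$ along such approximations.
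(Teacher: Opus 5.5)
Your reductions are fine. After rescaling, Lemma~\ref{lem:FSbaseideal} turns the claim into $v(\fb_\bullet(\chi))=v(\fb_\bullet(\chi'))$, and one inequality is automatic from $\chi\le\chi'$. The equality of the limit measures of the $v$-filtrations, which you only say ``should'' hold, is also easy. Write $W_m=\cF^m_\chi R\cdot s_D^m\subset W'_m=\cF^m_{\chi'}R\cdot s_D^m$, and set $W^t_m=\{s\in W_m\mid v(s)\ge tm\}$, similarly ${W'}^t_m$. Then $W^t_m=W_m\cap {W'}^t_m$, so ${W'}^t_m/W^t_m\hookrightarrow W'_m/W_m$, and $\vol({W'}^t)\le\vol(W^t)$ because $\vol(W)=\vol(W')$.

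\textbf{The gap.} The last step, which you flag but do not resolve, is where all the content lies. Put $\lambda_{\min}(W)\coloneqq\lim_m\frac1m\min_{0\ne s\in W_m}v(s)=v(\fb_\bullet(\chi))$. The concave transform $G_v$ is built from the \emph{upper} envelope of the normalized points $(\nu(s)/m,v(s)/m)$. So the only free inequality is $G_v\ge\lambda_{\min}$: for $t\le\lambda_{\min}$ nothing is cut out. Your chain
\[
v(\fb_\bullet(\chi))\le\inf G_v^{\chi}=\inf G_v^{\chi'}\le v(\fb_\bullet(\chi'))
\]
needs the last inequality, i.e.\ $\vol({W'}^t)<\vol(W')$ for every $t>\lambda_{\min}(W')$. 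That is essentially the proposition itself, applied to the inclusion ${W'}^t\subset W'$. Neither concavity nor upper semicontinuity of $G_v$ yields it. The quasimonomial fallback fails too: $\FS(\chi)$ is only lsc (Proposition~\ref{prop:FSloggr}), and the statement concerns all of $X^\lin$.

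\textbf{The missing idea.} It is a direct multiplication argument, which uses linear growth essentially and makes Okounkov bodies unnecessary; this is how the paper proceeds.
\begin{itemize}
\item[(a)] Rescale $v\in X^\lin_\infty$ so that $\FS(\chi)(v)=1$, i.e.\ $v\ge -m$ on $\cF^m_\chi R\setminus\{0\}$. Suppose $\FS(\chi')(v)>1$. Then there exist $l$ and $f\in\cF^l_{\chi'}R$ with $v(f)=-k<-l$.
\item[(b)] By Lemma~\ref{lem:linbdd}, there is $B>0$ with $v\le Bm$ on $\cF^m_\chi R\setminus\{0\}$.
\item[(c)] If $g\in\cF^m_\chi R$ and $f^ig\in\cF^{m+il}_\chi R$, then $v(g)\ge -m+i(k-l)$. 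Taking $i=cm$ with $c(k-l)>B+1$ forces $g=0$.
\item[(d)] Hence $g\mapsto f^{cm}g$ injects $\cF^m_\chi R$ into $\cF^{(1+cl)m}_{\chi'}R/\cF^{(1+cl)m}_\chi R$.
\item[(e)] Since volumes are limits (Proposition~\ref{prop:volbounded}), this gives $\vol(\chi')-\vol(\chi)\ge(1+cl)^{-n}\vol(\chi)>0$, a contradiction.
\end{itemize}
The same trick applied to ${W'}^t\subset W'$ would close your last step. Once you have it, though, the detour through concave transforms is superfluous.
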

\begin{proof}
     Up to scaling we may assume $\chi\le\chi'\le \chi_{\bar X, D}$ for a geometric norm $\chi_{\bar X, D}$. For simplicity of notation, we write $V_m\=\cF^m_\chi R\subseteq \cF^m_{\chi'}R\eqqcolon V_m'$. Suppose to the contrary that for some $v\in X^\lin$, $0<\FS(\chi)(v)<\FS(\chi')(v)$. It is easy to see that $v\in X^\lin_\infty$. Moreover, in view of the observation in the proof of Lemma~\ref{lem:FSbaseideal}, after possibly scaling $v$, we may assume 
     \[
     1=\sup_m\sup_{f\in V_m}\frac{v(f)}{-m}< \sup_m\sup_{f\in V_m'}\frac{v(f)}{-m}.
     \]
     Thus there exists $l\in \Z_{>0}$ and $f\in V_l'\setminus V_l$ such that 
     \[
     -k=v(f)<-l.
     \]
     Now for any $i\in \Z_{>0}$, consider the map
     \[
     \phi\colon V_m\to V_{m+il}'/V_{m+il}, g\mapsto f^ig.
     \]
     Any $g\in \ker(\phi)$ satisfies
     \[
     v(g)=v(f^ig)-v(f^i)\ge -(m+il)+ik=-m+i(k-l).
     \]

    By Lemma~\ref{lem:linbdd}, there exists $B>0$ such that $v(g)\le Bm$ for all $m\ge1$ and $0\ne g\in V_m$. Choose $c\in\Z_{>0}$ with $c(k-l)>B+1$. Taking $i=cm$, the preceding inequality shows that $\phi$ is injective for every $m$. Hence
     
     \[
     \frac{\dim(V_{m+cml}'/V_{m+cml})}{(m+cml)^n/n!}\ge \frac{1}{(1+cl)^n} \frac{\dim V_m}{m^n/n!}\to \frac{1}{(1+cl)^n}\vol(\chi)>0,
     \]
    a contradiction.
\end{proof}

As a direct corollary, we finish the proof of Theorem~C.
\begin{cor}\label{cor:FSisometry}
    The surjection $\FS: (\cT_\R, \de_1)\twoheadrightarrow(\cH_\R, \de_1)$ is an isometry. Its restriction to $(\cT_\R^\hom, \de_1)$ is an isometric isomorphism, with inverse given by $\IN$.
\end{cor}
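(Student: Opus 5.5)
We have to show two things: that $\FS\colon(\cT_\R,\de_1)\to(\cH_\R,\de_1)$ satisfies $\de_1(\FS(\chi),\FS(\chi'))=\de_1(\chi,\chi')$ for all $\chi,\chi'\in\cT_\R$, and that on $\cT^\hom_\R$ the map is bijective with inverse $\IN$. The bijectivity on $\cT^\hom_\R$, with inverse $\IN$, is already given by Corollary~\ref{cor:FSsurj}. Moreover, $\de_1$ on $\cH_\R$ was defined by $\de_1(\f,\p)\=\de_1(\IN(\f),\IN(\p))$. So for $\chi,\chi'\in\cT_\R$, Proposition~\ref{prop:INFScomp} gives
\[
\de_1(\FS(\chi),\FS(\chi'))=\de_1(\chi^\hom,\chi'^\hom).
\]
Everything therefore reduces to proving $\de_1(\chi^\hom,\chi'^\hom)=\de_1(\chi,\chi')$ for bounded negative norms. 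The isometry statement on $\cT^\hom_\R$ is the special case $\chi=\chi^\hom$.

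For the volume terms, Proposition~\ref{prop:volsat} gives $\vol(\chi)=\vol(\chi^\hom)$, and likewise for $\chi'$. It remains to show
\[
\vol(\chi\wedge\chi')=\vol(\chi^\hom\wedge\chi'^\hom).
\]
First, $\chi\wedge\chi'$ is bounded negative, so $\vol(\chi\wedge\chi')=\vol(\overline{\chi\wedge\chi'})$, again by Proposition~\ref{prop:volsat}. Second, $\chi\wedge\chi'\le\chi^\hom\wedge\chi'^\hom$, so monotonicity of volume gives one inequality:
\[
\vol(\chi\wedge\chi')\le\vol(\chi^\hom\wedge\chi'^\hom).
\]

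For the reverse inequality I would use Corollary~\ref{cor:volineq}. It suffices to show that on $X^\div_\infty$,
\[
\FS(\chi^\hom\wedge\chi'^\hom)\le\FS(\chi\wedge\chi').
\]
Fix $v\in X^\div_\infty$ and $f\in R\setminus k$, and set $g=f^d$. The quantity $v(f)/(\chi^\hom\wedge\chi'^\hom)(f)$ equals $v(f)/\chi^\hom(f)$ or $v(f)/\chi'^\hom(f)$, whichever denominator is smaller. Suppose it is $\chi^\hom(f)$, the other case being symmetric. Choose $d$ large so that $\chi(f^d)/d$ is within a factor $1-\e$ of $\chi^\hom(f)$. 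Then
\[
\frac{v(f^d)}{(\chi\wedge\chi')(f^d)}\ \ge\ \frac{v(f^d)}{\chi(f^d)}\ \ge\ (1-\e)\frac{v(f)}{\chi^\hom(f)},
\]
where the first inequality needs care with signs. When $v(f)<0$ both numerator and denominator are negative, and $(\chi\wedge\chi')(f^d)\le\chi(f^d)$ makes the ratio larger, as required. When $v(f)\ge0$ the term is nonpositive and does not contribute to the supremum. This is exactly the argument of Lemma~\ref{lem:fsnormhom} applied to the wedge, and it yields $\FS(\chi^\hom\wedge\chi'^\hom)\le\FS(\chi\wedge\chi')$ everywhere. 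Corollary~\ref{cor:volineq} then gives the reverse inequality of volumes, hence equality.

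Combining the three volume identities gives $\de_1(\chi^\hom,\chi'^\hom)=\de_1(\chi,\chi')$, which proves the isometry. The main subtle step is the wedge comparison in the previous paragraph. If the direct sign bookkeeping fails, I would instead argue through saturation. The inequality $\overline{\chi\wedge\chi'}\le\bar\chi\wedge\bar\chi'$ holds by monotonicity. For the other direction, I would bound $\bar\chi\wedge\bar\chi'$ via $\FS$ on $X^\div_\infty$ using Lemma~\ref{lem:satprop}(iii), and then apply Corollary~\ref{cor:volineq}.
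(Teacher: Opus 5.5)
Your reduction is correct: Corollary~\ref{cor:FSsurj} together with the definition of $\de_1$ on $\cH_\R$ reduces everything to $\vol(\chi\wedge\chi')=\vol(\chi^\hom\wedge\chi'^\hom)$, and the inequality $\le$ follows from monotonicity. The argument for the reverse inequality, however, has a sign error at exactly the step you flagged. If $v(f)<0$, then $v(f^d)<0$ and $(\chi\wedge\chi')(f^d)\le\chi(f^d)<0$. So the denominator on the left has the \emph{larger} absolute value, and
\[
\frac{v(f^d)}{(\chi\wedge\chi')(f^d)}\le\frac{v(f^d)}{\chi(f^d)},
\]
which is the opposite of what you wrote. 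This is just the monotonicity $\FS(\chi\wedge\chi')\le\FS(\chi)$ of Lemma~\ref{lem:FSformal}. Moreover, $\chi^\hom(f)\le\chi'^\hom(f)$ does not tell you which of $\chi(f^d)$ and $\chi'(f^d)$ is smaller for your chosen $d$, so choosing $d$ by approximating $\chi^\hom(f)$ alone cannot work. Your saturation fallback would need the analogous comparison for $\bar\chi\wedge\bar\chi'$, and Lemma~\ref{lem:satprop}(iii), being a statement about a single norm, does not provide it.

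The missing observation is that homogenization commutes with finite minima. Since $\frac1d\chi(f^d)\to\chi^\hom(f)$ and $\frac1d\chi'(f^d)\to\chi'^\hom(f)$ are genuine limits, you get $\frac1d\min\{\chi(f^d),\chi'(f^d)\}\to\min\{\chi^\hom(f),\chi'^\hom(f)\}$, that is, $(\chi\wedge\chi')^\hom=\chi^\hom\wedge\chi'^\hom$. Proposition~\ref{prop:volsat}, applied to the bounded negative norm $\chi\wedge\chi'$, then gives $\vol(\chi\wedge\chi')=\vol(\chi^\hom\wedge\chi'^\hom)$ directly, with no need for $\FS$ or Corollary~\ref{cor:volineq}. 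The paper avoids the wedge altogether. Since $\chi\le\chi^\hom$, Proposition~\ref{prop:volsat} gives $\de_1(\chi,\chi^\hom)=\vol(\chi^\hom)-\vol(\chi)=0$. The triangle inequality for the pseudometric $\de_1$ (Proposition~\ref{prop:pseudodis}), applied in both directions, then yields $\de_1(\chi,\chi')=\de_1(\chi^\hom,\chi'^\hom)=\de_1(\FS(\chi),\FS(\chi'))$.
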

\begin{proof}
    Thanks to Theorem~\ref{thm:metricH}, it remains to prove that $\de_1=0$ along fibers of $\FS$. Assume $\FS(\chi)=\FS(\chi')$. Applying $\IN$ gives $\de_1(\chi^\hom, \chi'^\hom)=\de_1(\FS(\chi), \FS(\chi'))=0$.
    Further $\de_1(\chi, \chi^\hom)=\vol(\chi^\hom)-\vol(\chi)=0$ by Proposition~\ref{prop:volsat}.
    Thus,
    \[
    \de_1(\chi, \chi')\le \de_1(\chi, \chi^\hom)+\de_1(\chi^\hom, \chi'^\hom)+\de_1(\chi'^\hom, \chi')=0.
    \]
\end{proof}
The next two consequences finish the proof of Theorem~D.
\begin{cor}\label{cor:metricsat}
    The pseudometric $\de_1$ restricts to a metric on $\cN^s(X)$.
\end{cor}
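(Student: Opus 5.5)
We already know from Proposition~\ref{prop:pseudodis} that $\de_1$ is a pseudometric on $\Nbdd(X)$, hence on $\cN^s(X)$. It therefore suffices to show that if $\chi,\chi'\in\cN^s(X)$ satisfy $\de_1(\chi,\chi')=0$, then $\chi=\chi'$.

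Set $\eta\=\chi\wedge\chi'$, which is a bounded negative norm. The condition $\de_1(\chi,\chi')=0$ reads $\vol(\chi)+\vol(\chi')=2\vol(\eta)$. By monotonicity of the volume, $\vol(\eta)\le\min\{\vol(\chi),\vol(\chi')\}$, so we get $\vol(\eta)=\vol(\chi)=\vol(\chi')$. Since $\eta\le\chi$ and $\eta\le\chi'$, Proposition~\ref{prop:equalvol} gives
\[
\FS(\eta)=\FS(\chi)=\FS(\chi')
\]
on $X^\lin$. By Remark~\ref{rmk:linbdd}, divisorial valuations have linear growth, so $X^\div_\infty\subset X^\lin$. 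In particular $\FS(\chi)=\FS(\chi')$ on $X^\div_\infty$.

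Saturation depends only on the restriction of $\FS$ to $X^\div_\infty$, since $\bar\chi=\IN_{X^\div_\infty}(\FS(\chi))$. Therefore $\bar\chi=\bar\chi'$, and because both norms are saturated, $\chi=\bar\chi=\bar\chi'=\chi'$.

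The only place where care is needed is invoking Proposition~\ref{prop:equalvol}. Its hypotheses are exactly that the two norms are comparable and have equal volume, which is why we compare each norm with $\eta=\chi\wedge\chi'$ rather than with each other directly. It also matters that this proposition yields equality on all of $X^\lin$, which contains $X^\div_\infty$ rather than being smaller than it. Beyond that, no additional input is required.
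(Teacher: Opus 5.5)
Your proof is correct and follows the paper's argument. You deduce $\vol(\chi)=\vol(\chi\wedge\chi')=\vol(\chi')$ from $\de_1(\chi,\chi')=0$, apply Proposition~\ref{prop:equalvol} to the comparable pairs $\chi\wedge\chi'\le\chi$ and $\chi\wedge\chi'\le\chi'$ to get $\FS(\chi)=\FS(\chi')$ on $X^\div_\infty$, and conclude via saturation; you also usefully make explicit the finiteness of the volumes and the inclusion $X^\div_\infty\subset X^\lin$, which the paper leaves implicit.
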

\begin{proof}
    Assume $\de_1(\chi, \chi')=0$ for $\chi, \chi'\in \cN^s$. Then $\vol(\chi)=\vol(\chi\wedge \chi')=\vol(\chi')$, and Proposition~\ref{prop:equalvol} implies in particular $\FS(\chi)=\FS(\chi\wedge\chi')=\FS(\chi')$ on $\Xdiv_\infty$. Thus $\chi=\bar\chi=\bar\chi'=\chi'$, as desired.
\end{proof}

\begin{cor}\label{cor:volaffinesat}
    Let $\chi_1, \chi_2$ be bounded negative norms, and $\chi_t$ the geodesic joining them. Then $t\mapsto \vol(\chi_t)^{-1/n}$ is affine iff there exists $c>0$ such that $\bar \chi_1=\bar{c\chi_2}$.
\end{cor}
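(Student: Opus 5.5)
By Theorem~\ref{thm:volan}, $t\mapsto\vol(\chi_t)^{-1/n}$ is affine iff there is $c>0$ with
\[
\vol(\chi_1)=\vol(\chi_1\wedge\tfrac{\chi_2}{c})=\vol(\tfrac{\chi_2}{c}).
\]
So the corollary reduces to showing that this volume condition is equivalent to $\bar\chi_1=\overline{c'\chi_2}$ for some $c'>0$. Here $c'=c^{-1}$, and since the statement quantifies over all $c>0$, the naming of the constant is immaterial. Write $\chi_2'\coloneqq\chi_2/c$. Saturation commutes with scaling, because $\FS(t\chi)=t^{-1}\FS(\chi)$ and $\IN_{\Xdiv_\infty}$ is homogeneous of degree $-1$.

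For the forward direction, assume the displayed equalities. Then $\chi_1\wedge\chi_2'\le\chi_1$ with equal volumes, so Proposition~\ref{prop:equalvol} gives $\FS(\chi_1\wedge\chi_2')=\FS(\chi_1)$ on $X^\lin$. This holds in particular on $\Xdiv_\infty$, since divisorial valuations have linear growth (Remark~\ref{rmk:linbdd}). The same argument gives $\FS(\chi_1\wedge\chi_2')=\FS(\chi_2')$ on $\Xdiv_\infty$. Hence $\FS(\chi_1)=\FS(\chi_2')$ on $\Xdiv_\infty$. The saturation $\bar\chi=\IN_{\Xdiv_\infty}(\FS(\chi))$ depends only on the restriction of $\FS(\chi)$ to $\Xdiv_\infty$, so $\bar\chi_1=\bar\chi_2'$.

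For the converse, assume $\bar\chi_1=\bar\chi_2'$. Proposition~\ref{prop:volsat} gives $\vol(\chi_1)=\vol(\bar\chi_1)=\vol(\bar\chi_2')=\vol(\chi_2')$. It remains to show $\vol(\chi_1\wedge\chi_2')=\vol(\chi_1)$. By Lemma~\ref{lem:satprop}(iii),
\[
\FS(\chi_1)=\FS(\bar\chi_1)=\FS(\bar\chi_2')=\FS(\chi_2')\quad\text{on }\Xdiv_\infty.
\]
I then claim that $\FS(\chi_1\wedge\chi_2')\ge\FS(\chi_1)$ on $\Xdiv_\infty$; this is the main point.

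To prove the claim, let $\fb_\bullet$ denote base ideals computed in a fixed $R(\bar X,rD)$ with $\chi_1,\chi_2'\le r^{-1}\chi_{\bar X,D}$. Then $\cF^m_{\chi_1\wedge\chi_2'}R=\cF^m_{\chi_1}R\cap\cF^m_{\chi_2'}R$, but base ideals of intersections of linear series need not be controlled. So I would avoid base ideals and argue directly with saturations. Since $\chi\le\bar\chi$ for every $\chi$, we get $\chi_1\wedge\chi_2'\le\bar\chi_1$, which only gives an upper bound on the volume. Instead I would use $\bar\chi_1\wedge\bar\chi_2'=\bar\chi_1$ and compare saturations through $\IN_{\Xdiv_\infty}$ and $\FS$: the goal is $\overline{\chi_1\wedge\chi_2'}=\bar\chi_1$, after which Proposition~\ref{prop:volsat} finishes the argument.

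The key step is to show $\FS(\chi_1\wedge\chi_2')(v)\ge\FS(\chi_1)(v)$ for $v\in\Xdiv_\infty$. This would follow from a product trick. Given $f\in\cF^{\la}_{\chi_1}R$ nearly realizing $\FS(\chi_1)(v)$, and using $\FS(\chi_2')(v)=\FS(\chi_1)(v)$, pick $g$ that nearly realizes this value for $\chi_2'$. Then $fg$ lies in the intersection filtration at level at most $\la+\mu$, up to the bound $\chi_1\ge C\chi_2'$ from boundedness. This yields the inequality up to a constant factor that tends to $1$ after passing to powers $f^a g^b$ with $a\gg b$.

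Once the claim holds, Corollary~\ref{cor:volineq} gives $\vol(\chi_1\wedge\chi_2')\ge\vol(\chi_1)$, and the reverse inequality is monotonicity. Combined with the volume equality above, Theorem~\ref{thm:volan} then shows that $t\mapsto\vol(\chi_t)^{-1/n}$ is affine.

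The step I expect to be the main obstacle is making this product/power estimate uniform, i.e.\ producing elements of $\cF_{\chi_1}\cap\cF_{\chi_2'}$ with nearly optimal $v$-value.
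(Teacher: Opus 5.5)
Your forward direction is correct and is the paper's argument: Theorem~\ref{thm:volan}, then Proposition~\ref{prop:equalvol} applied to $\chi_1\wedge\chi_2'\le\chi_1$ and to $\chi_1\wedge\chi_2'\le\chi_2'$, restricted to $\Xdiv_\infty\subset X^{\lin}$. The converse, however, has a genuine gap exactly at the step you flag: the product trick cannot give $\FS(\chi_1\wedge\chi_2')\ge\FS(\chi_1)$ on $\Xdiv_\infty$. Take $f\in\cF^\la_{\chi_1}R$ and $g\in\cF^\mu_{\chi_2'}R$. The only control on $\chi_2'(f)$ and $\chi_1(g)$ comes from the comparison $C\chi_1\le\chi_2'\le C^{-1}\chi_1$. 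Writing $x=a\la$ and $y=b\mu$, this gives $(\chi_1\wedge\chi_2')(f^ag^b)\ge-\max\{x+Cy,\,Cx+y\}$, while $-v(f^ag^b)\approx s(x+y)$ with $s=\FS(\chi_1)(v)$. The ratio $(x+y)/\max\{x+Cy,\,Cx+y\}$ is at most $2/(1+C)$, attained at $x=y$. It tends to $1/C$, not to $1$, when $a\gg b$. The errors scale linearly in the exponents, so passing to powers never helps. Nothing in the hypothesis produces elements that are simultaneously near-optimal for both norms, and a direct base-ideal argument fails for the reason you already noted.

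The missing idea is to obtain the volume equality first, by a dimension count, rather than through $\FS$. Since $\chi_1\le\overline{\chi_1}$ and $\chi_2'\le\overline{\chi_2'}=\overline{\chi_1}$, the spaces $U_\la=\cF^\la_{\chi_1}R$ and $W_\la=\cF^\la_{\chi_2'}R$ both lie in $V_\la=\cF^\la_{\overline{\chi_1}}R$. Hence $\dim(U_\la\cap W_\la)\ge\dim U_\la+\dim W_\la-\dim V_\la$, which is Lemma~\ref{lem:linalg}. Dividing by $\la^n/n!$ and using Proposition~\ref{prop:volsat} gives $\vol(\chi_1\wedge\chi_2')\ge\vol(\chi_1)+\vol(\chi_2')-\vol(\overline{\chi_1})=\vol(\chi_1)$. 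Equivalently, as in the paper, the triangle inequality of Proposition~\ref{prop:pseudodis} gives $\de_1(\chi_1,\chi_2')\le\de_1(\chi_1,\overline{\chi_1})+\de_1(\overline{\chi_2'},\chi_2')=0$. This forces $\vol(\chi_1)=\vol(\chi_1\wedge\chi_2')=\vol(\chi_2')$, and Theorem~\ref{thm:volan} concludes. Your claimed inequality $\FS(\chi_1\wedge\chi_2')\ge\FS(\chi_1)$ on $\Xdiv_\infty$ is in fact true, but only a posteriori: it follows from Proposition~\ref{prop:equalvol} once the volume equality is known.
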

\begin{proof}
    By Theorem~\ref{thm:volan}, $\vol(\chi_t)^{-1/n}$ is affine iff there exists $c>0$ such that $\vol(\chi_1)=\vol(\chi_1\wedge c\chi_2)=\vol(c\chi_2)$, and so $\FS(\chi_1)=\FS(\chi_1\wedge c\chi_2)=\FS(c\chi_2)$ on $X^\div_\infty$ by Proposition~\ref{prop:equalvol}, and hence $\bar \chi_1=c\bar\chi_2$. 
    Conversely, if $\bar\chi_1=c\bar \chi_2$, then $\de_1(\chi_1, c\chi_2)=0$ by Proposition~\ref{prop:volsat}. This gives $\vol(\chi_1)=\vol(\chi_1\wedge c\chi_2)=\vol(c\chi_2)$ and we conclude by Theorem~\ref{thm:volan} again.
\end{proof}

\section{Darvas metrics and completions}\label{sec:metriccomp}
In this section, $n\ge2$. We study Darvas metrics on $\cE(X)$ and $\bar \cE(X)$.

\subsection{The Darvas metric on \texorpdfstring{$\cE(X)$}{E(X)}}
Using decreasing approximation by decreasing nets in $\cH_\R$, the $\de_1$ distance extends canonically to $\cE(X)$. To do so, we first extend the volume function on $\cE(X)$ to lsc functions of logarithmic growth. For $\f\in \cL^\infty$,
we define its \emph{volume} to be 
\[
\vol(\varphi)\= \sup \{(\psi^n)\mid \psi\in \PSH(X), \psi\le \varphi\}=\sup \{(\psi^n)\mid \psi\in \PSH(X)\cap \cL^\infty, \psi\le \varphi\}.
\]
For $\psi\in \PSH(X)\cap\cL^\infty$, we have $\psi\le \varphi$ iff $\psi\le P(\varphi)$, and it follows that $\vol(\varphi)=\vol(P(\varphi)).$ It is also clear that $\varphi\mapsto \vol(\varphi)$ is increasing and homogeneous of degree $n$. The Darvas metric $\de_1$ on $\cH_\R$, see Theorem~\ref{thm:metricH}, satisfies
\begin{equation}\label{equ:rooftop}
\de_1(\f, \p)=\vol(\f)+\vol(\p)-2\vol(\f\wedge\p),
\end{equation}
for any $\f, \p\in \cH_\R$.
\begin{prop}\label{prop:volcts}
    The volume functional $\vol$ is continuous along increasing nets of lsc functions of sublogarithmic growth in $\cL^\infty$.
\end{prop}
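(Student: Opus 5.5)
The natural approach is to reduce to Lemma~\ref{lem:FSpshenv}(iii) and to continuity of the intersection pairing along increasing nets. Let $(\f_j)$ be an increasing net of lsc functions in $\cL^\infty$ converging pointwise to $\f$. We assume $\f$ is of sublogarithmic growth and lies in $\cL^\infty$; in particular $\f\ge\rho$ for some $\rho\in\cH(X)$ after discarding finitely many indices, since $\f_j\in\cL^\infty$ already dominates a multiple of a Fubini--Study function. Set $\p_j\coloneqq P(\f_j)$ and $\p\coloneqq P(\f)$. By the remark preceding the statement, $\vol(\f_j)=\vol(\p_j)$ and $\vol(\f)=\vol(\p)$. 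The limit $\f$ is lsc, being an increasing limit of lsc functions. Lemma~\ref{lem:FSpshenv}(iii) then gives $\p_j\nearrow\p$ pointwise. Each of these envelopes equals a Fubini--Study envelope $Q(\cdot)$ by Lemma~\ref{lem:FSpshenv}(i), so it is lsc, homogeneous, and squeezed between two functions in $\cH(X)$.

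The issue is that $\p_j$ and $\p$ need not be psh, so $\vol(\p)$ means the supremum of $(\tau^n)$ over psh $\tau\le\p$. Monotonicity gives $\lim_j\vol(\f_j)\le\vol(\f)$ immediately. For the reverse inequality, fix $\e>0$ and pick $\tau\in\PSH(X)\cap\cL^\infty$ with $\tau\le\f$ and $(\tau^n)\ge\vol(\f)-\e$. We would like $\tau$ to be a Fubini--Study function. Approximate $\tau$ from above by a decreasing net $\tau_i\in\cH(X)$, and use a trick based on the fact that $\tau\le\p=Q(\f)$. By the compactness argument in Lemma~\ref{lem:FSpshenv}(i), for any $t>1$ we get $\tau_i\le t\,\f_\star=t\f$ on the compact set $X^\na_\rho$ for $i\gg0$. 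By homogeneity this holds on all of $X^\na$. Continuity along decreasing nets (Theorem~\ref{thm:pshint}) gives $(\tau_i^n)\to(\tau^n)$. We can therefore replace $\tau$ by $\sigma\coloneqq t^{-1}\tau_i\in\cH(X)$, which satisfies $\sigma\le\f$ and $(\sigma^n)\ge t^{-n}(\vol(\f)-2\e)$.

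Now $\sigma$ is continuous and $\f_j\nearrow\f$ with $\f_j$ lsc. Applying Dini-type compactness on $X^\na_\rho$ once more, the sets $\{\sigma<s\f_j\}$ for fixed $s>1$ form an increasing open cover of $X^\na_\rho$. Strictness $\sigma<s\f$ holds there because $\f\ge\rho>0$ on $X^\na_\rho$. Hence $\sigma\le s\f_j$ for $j\gg0$, which gives $\vol(\f_j)\ge s^{-n}(\sigma^n)$. Letting $s,t\to1$ and $\e\to0$ yields $\lim_j\vol(\f_j)\ge\vol(\f)$.

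The main obstacle we expect is the first step, namely ensuring the near-maximizer can be taken continuous, i.e.\ Fubini--Study, so that the compactness argument applies. This requires care with lsc versus continuous functions and with the degenerate region $X^\beth$, where all functions vanish. Homogeneity together with restriction to $X^\na_\rho$ handles the latter. If the decreasing approximation $\tau_i\le t\f$ fails because $\f$ is merely lsc, we would fall back on the identity $\vol(\f)=\sup\{(\sigma^n)\mid\sigma\in\cH(X),\ \sigma\le\f\}$. This identity follows from $Q(\f)=P(\f_\star)=P(\f)$ together with the same argument.
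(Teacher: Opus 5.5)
Your argument is correct, and its core is the same as the paper's: pick a psh near-maximizer below $\f$ and use compactness of $X^\na_\rho$ to push it below $s\f_j$ for $j\gg0$.

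The difference is that you first upgrade the near-maximizer $\tau$ to a continuous Fubini--Study function $\sigma$. The paper shows this is unnecessary. Since $\tau$ is usc and $\f_j$ is lsc, both finite-valued, $\tau-s\f_j$ is already usc. Hence the sets $\{\tau<s\f_j\}\cap X^\na_\rho$ are open. They increase in $j$ and cover $X^\na_\rho$, because $\tau\le\f<s\f$ there. So a single compactness argument suffices, and the ``main obstacle'' you anticipate does not actually arise.

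Several other ingredients in your write-up are superfluous:
\begin{itemize}
\item The envelopes $P(\f_j)$ and the convergence $P(\f_j)\nearrow P(\f)$ from Lemma~\ref{lem:FSpshenv}(iii) are never used.
\item Continuity along decreasing nets is not needed, since $\tau_i\ge\tau$ already gives $(\tau_i^n)\ge(\tau^n)$ by monotonicity.
\item You should take $t\in\Q$ so that $t^{-1}\tau_i\in\cH(X)$, although continuity of $\sigma$ is all you actually use.
\end{itemize}

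What your detour does buy is a by-product: for lsc $\f\in\cL^\infty$, the volume can be computed using only Fubini--Study competitors,
\[
\vol(\f)=\sup\{(\sigma^n)\mid\sigma\in\cH(X),\ \sigma\le\f\}.
\]
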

\begin{proof}
    Let $(\varphi_i)_i$ be an increasing net of lsc functions in $\cL^\infty$ converging to $\varphi\=\sup_i\varphi_i\in \cL^\infty$. Then $\vol(\varphi)\ge \sup_i\vol(\varphi_i)$ by monotonicity. On the other hand, for any $0<t<1$, we can find $\psi\in \PSH(X)\cap\cL^\infty$ with $\psi\le \varphi$ such that $\vol(\psi)>t\vol(\varphi)$. Now for each $i$, $V_i\=\{\psi-t^{-1}\varphi_i<0\}$ defines an open subset of $X^\na_\rho$ for $\rho\in \cH(X)$. By compactness of $X^\na_\rho$, we have $\psi<t^{-1}\varphi_i$ for $i\gg0$. Thus $t\vol(\varphi)<\vol(\psi)\le t^{-n}\vol(\varphi_i)\le t^{-n}\sup_i\vol(\varphi_i)$ for all $i\gg 0$. Letting $t\to 1$ we get $\vol(\varphi)\le \sup_i\vol(\varphi_i)$ as desired. 
\end{proof}
We may then extend Lemma~\ref{lem:volequal} to the following analog of~\cite[Theorem~5.1]{nakstab1}.
\begin{lem}\label{lem:volequalbdd}
    For any $\chi\in \Nbdd(X)$, we have $\vol(\chi)=\vol(\FS(\chi))$.
\end{lem}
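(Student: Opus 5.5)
We reduce to the finite type case, Lemma~\ref{lem:volequal}, using the canonical finite type approximations $\chi^{(\la)}$ of~\S\ref{sec:ftnorm}, and pass to the limit on both sides. On the norm side, Proposition~\ref{prop:canapprox} gives $\vol(\chi^{(\la)})\nearrow\vol(\chi)$. On the function side we will use the continuity of $\vol$ along increasing nets of lsc functions, Proposition~\ref{prop:volcts}.

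First, for each $\la\gg0$ the norm $\chi^{(\la)}$ is of finite type, and $\chi^{(\la)}\le\chi^{(\la')}\le\chi$ for $\la\le\la'$. By Proposition~\ref{prop:fthom}, $(\chi^{(\la)})^\hom\in\cT^\hom_\R$. Combining Lemma~\ref{lem:volequal}, Proposition~\ref{prop:volsat} and Lemma~\ref{lem:fsnormhom}, we get
\[
\vol(\chi^{(\la)})=\vol((\chi^{(\la)})^\hom)=\vol(\FS((\chi^{(\la)})^\hom))=\vol(\FS(\chi^{(\la)})),
\]
where $\FS(\chi^{(\la)})\in\cH_\R$, and its volume in the sense of the intersection pairing agrees with the envelope-defined volume on $\cL^\infty$, since it is psh.

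Second, we show that $\FS(\chi^{(\la)})\nearrow\FS(\chi)$ pointwise on $X^\na$. Monotonicity follows from Lemma~\ref{lem:FSformal}. For the limit, fix $f\in R\setminus k$. Then $f\in\cF^\la_\chi R$ for $\la\ge-\chi(f)$, and $\chi^{(\la)}=\chi$ on $\cF^\la_\chi R$. Hence $\FS(\chi^{(\la)})(v)\ge v(f)/\chi(f)$ for large $\la$, and taking the sup over $f$ gives the claim. All these functions are lsc by Proposition~\ref{prop:FSloggr}. They also lie uniformly between $C^{-1}\rho$ and $C\rho$ for a fixed $\rho\in\cH(X)$, because $\chi^{(\la_0)}\le\chi^{(\la)}\le\chi$ and both endpoints are bounded. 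So Proposition~\ref{prop:volcts} applies and gives $\vol(\FS(\chi^{(\la)}))\to\vol(\FS(\chi))$.

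Combining the two limits gives $\vol(\chi)=\lim_\la\vol(\chi^{(\la)})=\lim_\la\vol(\FS(\chi^{(\la)}))=\vol(\FS(\chi))$.

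The main obstacle is checking that Proposition~\ref{prop:volcts} genuinely applies. It requires the pointwise supremum of the increasing net to be exactly $\FS(\chi)$, including on non-divisorial points, which the argument above handles since it is pointwise on all of $X^\na$. It also requires that the envelope-defined volume of an $\cH_\R$ function agrees with $(\varphi^n)$. For the latter, note that any $\psi\in\PSH(X)$ with $\psi\le\varphi$ has $(\psi^n)\le(\varphi^n)$ by monotonicity (Theorem~\ref{thm:pshint}), while $\psi=\varphi$ itself attains the value.
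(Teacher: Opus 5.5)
Your proposal is correct and follows the paper's own route. The paper likewise approximates by the canonical finite-type norms $\chi^{(\la)}$, uses Proposition~\ref{prop:canapprox} on the norm side and Proposition~\ref{prop:volcts} for the increasing net $\FS(\chi^{(\la)})\nearrow\FS(\chi)$, and then settles the finite-type case with Lemma~\ref{lem:volequal} and Proposition~\ref{prop:volsat}. You additionally make explicit several points the paper leaves implicit: the pointwise convergence, the uniform logarithmic-growth bounds, the passage through homogenization via Lemma~\ref{lem:fsnormhom}, and the agreement of the two notions of volume on $\cH_\R$.
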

\begin{proof}
    By Proposition~\ref{prop:canapprox}, $\vol(\chi)=\lim\vol(\chi^{(\la)})$ for $\chi^{(\la)}\in\cT_\R$. We also have $\FS(\chi^{(\la)})$ increases to $\FS(\chi)$ pointwise, and hence $\vol(\FS(\chi^{(\la)}))\to \vol(\FS(\chi))$ by Proposition~\ref{prop:volcts}. Thus it suffices to show the equality for $\chi\in \cT_\R$. This follows from Lemma~\ref{lem:volequal} and Proposition~\ref{prop:volsat}.
\end{proof}

\begin{thm}\label{thm:d1P}
    There exists a unique metric $\de_1$ on $\cE(X)$, extending the one on $\cH_\R(X)$ and continuous along decreasing nets.
\end{thm}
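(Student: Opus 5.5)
We define $\de_1$ on $\cE(X)$ through the rooftop formula~\eqref{equ:rooftop}:
\[
\de_1(\f,\p)\=\vol(\f)+\vol(\p)-2\vol(\f\wedge\p),
\]
where $\vol(\f\wedge\p)$ is the extended volume of the (usc, homogeneous) function $\f\wedge\p$, that is, $\vol(P(\f\wedge\p))$. For $\f\in\cE(X)$ this volume agrees with $(\f^n)$, since $\f$ is itself a competitor and monotonicity gives the reverse inequality. On $\cH_\R$ the formula reproduces the metric of Theorem~\ref{thm:metricH}.

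Uniqueness is immediate, because every element of $\cE(X)$ is the limit of a decreasing net in $\cH(X)\subset\cH_\R$.

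For existence and continuity, take decreasing nets $\f_i\searrow\f$ and $\p_i\searrow\p$ in $\cH(X)$. Theorem~\ref{thm:pshint} gives $\vol(\f_i)\to\vol(\f)$ and $\vol(\p_i)\to\vol(\p)$. It remains to show $\vol(\f_i\wedge\p_i)\to\vol(\f\wedge\p)$.

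\begin{itemize}
\item The upper bound $\limsup_i\le$ is the key point. For $\tau\in\PSH$ with $\tau\le\f_i\wedge\p_i$ for all $i$, we get $\tau\le\f\wedge\p$. So I would argue as follows. Choose $\tau_i\le\f_i\wedge\p_i$ psh, nearly optimal. Assume the envelope property first, or more generally use upper semicontinuity of the volume on compact sets $\PSH(X,\rho)$. A subnet then converges to some $\tau\le\f\wedge\p$, and usc of $\vol$ (Theorem~\ref{thm:pshint}) gives the bound.
\item Without compactness, I instead use the norm side. Lemma~\ref{lem:volequalbdd} lets me write $\vol(\f_i\wedge\p_i)=\vol(\IN(\f_i)\wedge\IN(\p_i))$. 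I then pass to limits of the associated filtrations, where the graded pieces are finite-dimensional and decrease. I expect this step to be the main obstacle.
\item The lower bound $\liminf_i\ge$ is trivial by monotonicity.
\end{itemize}

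This continuity shows the definition is the limit of the $\cH$ values, hence independent of choices. It also shows $\de_1$ is continuous along decreasing nets, jointly in both arguments, by taking product nets.

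Symmetry and the triangle inequality pass to the limit from $\cH_\R$.

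Positivity comes last. If $\de_1(\f,\p)=0$, then $\vol(\f)=\vol(\f\wedge\p)=\vol(\p)$. I pick $\tau_j\in\PSH$ with $\tau_j\le\f\wedge\p$ and $\vol(\tau_j)\to\vol(\f)$. Corollary~\ref{cor:monovolconv} gives $[\tau_j]\to[\f]$ and $[\tau_j]\to[\p]$ strongly. Lemma~\ref{lem:volweakconv}, applied with majorants $\psi_i\searrow\f$ from $\cH$, gives $\tau_j\to\f$ weakly, and likewise $\tau_j\to\p$. Hence $\f=\p$ on $X^\div$, and so everywhere by Theorem~\ref{thm:weaktop}.
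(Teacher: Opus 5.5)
\textbf{The gap.} It is exactly the step you flag, and it is not a technicality. Well-definedness, independence of the approximating nets, and continuity along decreasing nets all rest on $\limsup_i\vol(\f_i\wedge\p_i)\le\vol(\f\wedge\p)$. You only prove this when $\PSH(X,\rho)$ is compact. That compactness is equivalent to the envelope property (Theorem~\ref{thm:envpropcrit}), which the paper only establishes for smooth $X$, whereas the theorem is stated for an arbitrary affine variety.

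Under the envelope property your argument is fine. A weak subnet limit $\tau$ of near-optimal $\tau_i\le\f_i\wedge\p_i$ satisfies $\tau\le\f_j\wedge\p_j$ for every $j$ by Theorem~\ref{thm:weaktop}, and upper semicontinuity of $\vol$ (Theorem~\ref{thm:pshint}) concludes. You even get a rooftop formula on $\cE(X)$, which the paper does not claim.

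The norm-side fallback, however, does not work. If $\chi_i:=\IN(\f_i)\wedge\IN(\p_i)$ decreases to $\chi_\infty$, then $\cF^\la_{\chi_\infty}R=\bigcap_i\cF^\la_{\chi_i}R$. This only yields $\vol(\chi_\infty)\le\lim_i\vol(\chi_i)$, which is the wrong inequality. There is no continuity of norm volumes along decreasing nets that would let you interchange $i\to\infty$ and $\la\to\infty$. Moreover, $\chi_\infty$ can take the value $-\infty$ when $\f$ or $\p$ is not of logarithmic growth, so it is not a bounded norm and Lemma~\ref{lem:volequalbdd} is unavailable in the limit. Extracting a limit Weil b-divisor from the $\tau_i$ via Tychonoff would not help either: turning it back into a psh function needs Proposition~\ref{prop:nefweilpsh}, again a smoothness hypothesis.

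\textbf{The missing idea.} You never need to identify the limit with a rooftop expression. Define $\de_1(\f,\p):=\lim_i\de_1(\f_i,\p_i)$ for decreasing nets $\f_i\searrow\f$, $\p_i\searrow\p$ in $\cH(X)$. This net is Cauchy by the triangle inequality in $\cH_\R$ together with $\de_1(\f_i,\f_j)=\vol(\f_i)-\vol(\f_j)$ for $\f_j\le\f_i$ (Theorem~\ref{thm:metricH}).

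Independence of the approximations needs only your ``trivial'' direction. For two nets $\f_j,\f'_j$ decreasing to the same $\f$, Lemma~\ref{lem:volequalbdd}, Proposition~\ref{prop:FSINcomp} and Lemma~\ref{lem:FSpshenv} give $\vol(\IN(\f_j)\wedge\IN(\f'_j))=\vol(Q(\f_j\wedge\f'_j))\ge\vol(\f)$. Hence $\de_1(\f_j,\f'_j)\le\vol(\f_j)+\vol(\f'_j)-2\vol(\f)\to0$.

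Continuity along decreasing nets then follows from $\de_1(\f,\p)=\vol(\p)-\vol(\f)$ for $\f\le\p$ (obtained by replacing $\p_i$ with $\f_i\vee\p_i$) plus the triangle inequality. Positivity is your argument, run with near-optimal $\tau_i\in\cH_\R$, $\tau_i\le\f_i\wedge\p_i$, supplied by Theorem~\ref{thm:metricH}, and Lemma~\ref{lem:volweakconv}, rather than with $\tau_j\le\f\wedge\p$.
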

We first show that the limit is independent of the approximations.
\begin{lem}
    Let $\varphi\in\cE(X)$, and let $(\varphi_j), (\varphi_j')$ be two decreasing nets of functions in $\cH(X)$ converging to $\varphi$. Then $\de_1(\varphi_j, \varphi_j')\to 0$ as $j\to\infty$.
\end{lem}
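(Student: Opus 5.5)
The plan is to compare both nets with a common third net that lies below each of them, so that the rooftop formula reduces everything to differences of volumes. Fix $j$ and consider $\tau_j\coloneqq\f_j\wedge\f'_j$. This is not itself in $\cH(X)$, but it is continuous and of logarithmic growth, and it dominates $\f$. By~\eqref{equ:rooftop} and Theorem~\ref{thm:metricH} we have
\[
\de_1(\f_j,\f'_j)=\vol(\f_j)+\vol(\f'_j)-2\vol(\f_j\wedge\f'_j),
\]
where $\vol(\f_j\wedge\f'_j)$ is the envelope volume from the beginning of this section. That envelope volume agrees with $\sup\{\vol(\tau)\mid\tau\in\cH_\R,\tau\le\f_j\wedge\f'_j\}$. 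To check this agreement, I would use $Q=P$ for lsc (here continuous) functions from Lemma~\ref{lem:FSpshenv}(i), together with continuity of volume along decreasing nets from Theorem~\ref{thm:pshint}, applied to the psh competitors.

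Monotonicity then gives the key bound. Since $\f\le\f_j\wedge\f'_j$ and $\f\in\PSH(X)$, we get $\vol(\f_j\wedge\f'_j)\ge\vol(\f)$. Hence
\[
0\le\de_1(\f_j,\f'_j)\le\bigl(\vol(\f_j)-\vol(\f)\bigr)+\bigl(\vol(\f'_j)-\vol(\f)\bigr).
\]
By Theorem~\ref{thm:pshint}, the volume is continuous along decreasing nets in $\PSH(X)$. So $\vol(\f_j)\to\vol(\f)$ and $\vol(\f'_j)\to\vol(\f)$, and the right-hand side tends to $0$.

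One detail needs care. Both nets must be indexed by a common directed set; I would pass to the product index set, which does not change the limits. Also, $\vol(\f)>0$ is not really needed here, only finiteness, but it holds since $\f\in\cE(X)$.

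The main obstacle is justifying the rooftop formula with the envelope volume. Theorem~\ref{thm:metricH} states the infimum over $\tau\in\cH_\R$ with $\tau\le\f_j,\f'_j$, which gives $\de_1(\f_j,\f'_j)=\vol(\f_j)+\vol(\f'_j)-2\sup_\tau\vol(\tau)$. So I must show that $\sup_\tau\vol(\tau)\ge\vol(\f)$, i.e.\ that $\f$ is approximated from below in volume by Fubini--Study functions sitting under $\f_j\wedge\f'_j$. I would do this directly as follows. Take a decreasing net $(\rho_k)$ in $\cH(X)$ converging to $\f$, and let $t>1$. By Dini/compactness on $X^\na_\rho$, as in the proof of Lemma~\ref{lem:FSpshenv}, we get $\rho_k\le t(\f_j\wedge\f'_j)$ for $k\gg0$, using continuity of $\f_j\wedge\f'_j$ and strictness after scaling. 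Then $t^{-1}\rho_k$ is an admissible $\tau$, with $\vol(\tau)\ge t^{-n}\vol(\f)$. Letting $t\to1$ gives the needed inequality.
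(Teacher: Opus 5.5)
Your proof is correct, and its skeleton coincides with the paper's. Both bound $\de_1(\f_j,\f'_j)$ above by $\vol(\f_j)+\vol(\f'_j)-2\vol(\f)$ using $\f\le\f_j\wedge\f'_j$, and then conclude by continuity of the volume along decreasing nets.

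The only difference is how the ``rooftop'' term is bounded below by $\vol(\f)$. The paper passes to norms. It writes $\de_1(\f_j,\f'_j)=\vol(\f_j)+\vol(\f'_j)-2\vol(\IN(\f_j)\wedge\IN(\f'_j))$, and then identifies $\vol(\IN(\f_j)\wedge\IN(\f'_j))$ with $\vol(Q(\f_j\wedge\f'_j))=\vol(P(\f_j\wedge\f'_j))\ge\vol(\f)$ via Lemma~\ref{lem:volequalbdd}, Proposition~\ref{prop:FSINcomp} and Lemma~\ref{lem:FSpshenv}.

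You instead stay on the function side. You combine the two characterizing properties in Theorem~\ref{thm:metricH} into $\de_1(\f_j,\f'_j)=\vol(\f_j)+\vol(\f'_j)-2\sup_\tau\vol(\tau)$. You then produce explicit competitors $t^{-1}\rho_k\in\cH_\R$ below $\f_j\wedge\f'_j$ by the Dini/compactness argument on $X^\na_\rho$, which is the same argument that proves Lemma~\ref{lem:FSpshenv}(i).

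Your route invokes less machinery at this step. It uses only the statement of Theorem~\ref{thm:metricH} and monotonicity of the volume. It avoids the identity $\vol(\chi)=\vol(\FS(\chi))$ for arbitrary bounded norms (Lemma~\ref{lem:volequalbdd}), which relies on finite-type approximation and Proposition~\ref{prop:volsat}. The paper's route, in exchange, identifies the rooftop term exactly as the volume of the norm $\IN(\f_j)\wedge\IN(\f'_j)$, which fits its norm-theoretic framework.

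Your first-paragraph detour through the envelope volume is not needed. The direct inequality $\sup_\tau\vol(\tau)\ge\vol(\f)$ you prove at the end is all that is used.
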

\begin{proof}
    Put $\chi_j\=\IN(\varphi_j)$ and $\chi_j'=\IN(\varphi_j')$. Then 
    \[
    \de_1(\varphi_j, \varphi_j')=\de_1(\chi_j, \chi_j')=\vol(\varphi_j)+\vol(\varphi_j')-2\vol(\chi_j\wedge \chi_j').
    \]
    We claim that $\vol(\varphi)\le \vol(\chi_j\wedge \chi_j')$ for all $j$. Indeed, by Lemma~\ref{lem:volequalbdd}, we have $\vol(\chi_j\wedge \chi_j')=\vol(\FS(\chi_j\wedge \chi_j'))=\vol(\FS(\IN(\varphi_j\wedge \varphi_j')))=\vol(Q(\varphi_j\wedge \varphi_j'))$, where the last equality follows from Proposition~\ref{prop:FSINcomp}. On the other hand, since $\varphi\le \varphi_j\wedge \varphi_j'$, we have $\vol(\varphi)\le \vol(\varphi_j\wedge \varphi_j')=\vol(P(\varphi_j\wedge\varphi_j'))=\vol(Q(\varphi_j\wedge\varphi_j'))$, where the last equality follows from Lemma~\ref{lem:FSpshenv}. Now using the claim, we deduce
    \[
    \de_1(\varphi_j, \varphi_j')\le \vol(\varphi_j)+\vol(\varphi_j')-2\vol(\varphi)\to 0,
    \]
    since the volume function is continuous along decreasing nets.
\end{proof}
\begin{proof}[Proof of Theorem~\ref{thm:d1P}]
    Given $\varphi, \psi\in \cE(X)$, pick decreasing nets $(\varphi_i), (\psi_i)$ in $\cH(X)$ decreasing to $\varphi, \psi$ respectively. Then by triangle inequality we have
    \begin{align*}
    |\de_1(\varphi_i, \psi_i)-\de_1(\varphi_j, \psi_j)|
    &\le |\de_1(\varphi_i, \psi_i)-\de_1(\varphi_j, \psi_i)|+|\de_1(\varphi_j, \psi_i)-\de_1(\varphi_j, \psi_j)|\\
    &\le \de_1(\varphi_i, \varphi_j)+\de_1(\psi_i, \psi_j)\to 0,
    \end{align*}
    as $i, j\to \infty$. This shows $(\de_1(\varphi_i, \psi_i))_i$ is a Cauchy net, and hence the limit $\de_1(\varphi, \psi)\=\lim_i\de_1(\varphi_i, \psi_i)$ exists, and it is further independent of the choice of converging nets by the previous lemma. This shows $\de_1$ extends uniquely to a pseudometric on $\cE(X)$. Suppose $\de_1(\varphi, \psi)=0$. Then $\lim_i\de_1(\varphi_i, \psi_i)=0$, and by Theorem~\ref{thm:metricH} there exist $\tau_i\in \cH_\R(X)$ with $\tau_i\le \varphi_i\wedge \psi_i$ such that $\lim_i\de_1(\varphi_i, \tau_i)=0$ and $\lim_i\de_1(\tau_i, \psi_i)=0$. By Lemma~\ref{lem:volweakconv} we have $\tau_i$ converges to both $\varphi$ and $\psi$ weakly, and this shows $\varphi=\psi$.

    It remains to prove continuity along decreasing nets. We first note that 
    \begin{equation}\label{equ:volformula}
    \de_1(\f,\p)=\vol(\p)-\vol(\f)
    \end{equation}
    for $\f,\p\in\cE(X)$ satisfying $\f\le\p$. Indeed, pick decreasing nets $(\f_i)_i$ and $(\p_i)_i$ in $\cH(X)$ converging to $\f$ and $\p$, respectively. After replacing $\p_i$ with $\f_i\vee\p_i$, we may assume $\f_i\le \p_i$. Then $\de_1(\f_i,\p_i)=\vol(\p_i)-\vol(\f_i)$, and passing to the limit yields~\eqref{equ:volformula} by the continuity of volume along decreasing nets. If $\f_i\searrow\f$ in $\cE(X)$, this gives
    \[
    \de_1(\f_i,\f)
    =\vol(\f_i)-\vol(\f)\to0,
    \]
    and continuity along decreasing nets follows from the triangle inequality.    
\end{proof}
\begin{rmk}
    Contrary to the setting of~\cite[Theorem~5.5]{nakstab1},
    if $\varphi,\psi\in\cE(X)$, there may be no $\tau\in\cE(X)$ with $\tau\le \varphi\wedge \psi$. Indeed, let $X=\A^1\times \G_m$ and $\varphi=\psi_{D_{(0,1)}+D_{(-1,0)}}, \psi=\psi_{D_{(0,-1)}+D_{(-1,0)}}$ as in Example~\ref{exam:posvolloggr}. Then $\varphi\wedge\psi\=\psi_{D_{(-1,0)}}\notin \cE(X)$.
\end{rmk}
Given $\varphi, \psi\in \cE(X)$, as before we set
\[
    \du(\varphi, \psi)\=\de_1(\varphi, \psi)+|\log \vol(\varphi)-\log\vol(\psi)|.
\]
Recall that the Monge--Amp\`ere operator defines a dense embedding
\[\MA\colon (\bar\cE(X), \de_\infty)\hookrightarrow(\cC_\infty(X), \de_\infty),\]
where
$\bar\cE(X)$ is the set of numerical classes $[\varphi]$ defined by $\varphi\in \cE(X)$.
\begin{prop}
    The canonical map $(\cE(X), \du)\to (\bar \cE(X), \de_\infty)$ is continuous, and extends uniquely to a continuous map to their metric completions.
\end{prop}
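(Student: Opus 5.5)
The plan is to show that $\du$-convergence in $\cE(X)$ forces $\bar\ii$-convergence of the numerical classes. Proposition~\ref{prop:metrictop} then converts this into $\de_\infty$-convergence. The extension to completions follows formally, provided the map is \emph{uniformly} continuous on bounded pieces, i.e.\ $\du$-Cauchy nets are sent to $\de_\infty$-Cauchy nets.

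\textbf{Step 1: reduction to a rooftop volume.} Fix $\f,\p\in\cE(X)$ and suppose first that there is $\tau\in\cE(X)$ with $\tau\le\f,\p$. By \eqref{equ:volformula}, $\de_1(\f,\tau)=\vol(\f)-\vol(\tau)$, and similarly for $\p$. Then Corollary~\ref{cor:monovolconv}, in its quantitative form, gives a bound: since $\tau\le\f$, monotonicity yields $(\tau,\f^{n-1})\le\vol(\f)$ and $(\f,\tau^{n-1})\le\vol(\f)^{1/n}\cdots$. More precisely, $\jj_\f(\tau)\le\log\frac{\vol(\f)^{1-1/n}}{\vol(\tau)^{1-1/n}}$. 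Combined with Lemma~\ref{lem:ijequiv}, this gives
\[
\bar\ii(\tau,\f)\le C_n\log\frac{\vol(\f)}{\vol(\tau)}.
\]
By Lemma~\ref{lem:iquasitriangle}, a similar bound holds for $\bar\ii(\f,\p)$ in terms of $\log\vol(\f)-\log\vol(\tau)$ and $\log\vol(\p)-\log\vol(\tau)$. These are controlled by $\de_1$ and the volumes, because $\vol(\tau)=\vol(\f)-\de_1(\f,\tau)$.

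\textbf{Step 2: the main obstacle.} The difficulty is that $\f\wedge\p$ may have no minorant in $\cE(X)$, as the preceding Remark shows. I would instead work with approximations. Pick decreasing nets $\f_i,\p_i\in\cH(X)$ converging to $\f$ and $\p$. By Theorem~\ref{thm:metricH}, there are $\tau_i\in\cH_\R$ with $\tau_i\le\f_i,\p_i$ and $\de_1(\f_i,\tau_i)+\de_1(\tau_i,\p_i)\le\de_1(\f_i,\p_i)+\varepsilon$. When $\de_1(\f,\p)<\tfrac12\min\{\vol(\f),\vol(\p)\}$, this forces $\vol(\tau_i)$ to be bounded below, so Step~1 applies to $\f_i,\p_i,\tau_i$. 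Letting $i\to\infty$, the classes $[\f_i]$ converge strongly to $[\f]$ by Example~\ref{exam:decstrong}, and $\de_1(\f_i,\p_i)\to\de_1(\f,\p)$ by Theorem~\ref{thm:d1P}. This yields an estimate
\[
\de_\infty([\f],[\p])\le G\bigl(\du(\f,\p)\bigr)
\]
with $G$ continuous and $G(0)=0$, valid once $\du(\f,\p)$ is small. Here $G$ is obtained by composing $F$ from Proposition~\ref{prop:metrictop} with the bounds above. The key point is that all volumes can be normalized using the $|\log\vol|$ term in $\du$.

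\textbf{Step 3: extension to completions.} Since $G$ depends only on $\du(\f,\p)$, the map is uniformly continuous. Hence it sends $\du$-Cauchy nets to $\de_\infty$-Cauchy nets, and therefore extends uniquely and continuously to the completion of $(\cE(X),\du)$. The target space $\BBPF_{n-1}(X)$, which contains $\bar\cE(X)$, is complete by Proposition~\ref{prop:divcomplete}; equivalently, via $\MA$ and Theorem~\ref{thm:metricMAsol}, the completion of $\bar\cE(X)$ is $\cC_\infty(X)$. Uniqueness of the extension follows from density and from the target being Hausdorff.
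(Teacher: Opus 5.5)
Your strategy is the paper's: approximate by decreasing nets in $\cH(X)$, take a rooftop minorant $\tau\in\cH_\R(X)$ from Theorem~\ref{thm:metricH}, and use $\tau\le\f$ together with closeness of volumes to make $\bar\ii(\tau,\f)$ small. Then convert to $\de_\infty$ with Proposition~\ref{prop:metrictop} and pass to the limit with Example~\ref{exam:decstrong}. The paper applies $F$ to $\bar\ii(\tau,\f)$ and $\bar\ii(\tau,\p)$ separately and then uses the triangle inequality for the genuine metric $\de_\infty$, which avoids Lemma~\ref{lem:iquasitriangle}. Your variant also works, and your quantitative estimate makes the passage from $\cH(X)$ to $\cE(X)$ cleaner than the paper's iterated limits.

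However, Step~3 as written is false: there is no modulus $G$ depending on $\du(\f,\p)$ alone, and the map is not uniformly continuous on $\cE(X)$. Indeed, $\de_\infty([c\f],[c\p])=\de_\infty([\f],[\p])$ for $c>0$, whereas $\de_1(c\f,c\p)=c^n\de_1(\f,\p)$ and the $\log\vol$ term is scale invariant. So take $\f,\p\in\cE(X)$ with non-proportional classes, rescaled so that $\vol(\f)=\vol(\p)$. Then $\du(c\f,c\p)\to0$ as $c\to0$, while $\de_\infty([c\f],[c\p])$ stays fixed and positive. Your own estimates show this: Step~1 controls $\log\frac{\vol(\f)}{\vol(\tau)}\le-\log\bigl(1-\de_1(\f,\tau)/\vol(\f)\bigr)$, which involves the ratio $\de_1/\vol$. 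Likewise, your threshold $\de_1(\f,\p)<\frac12\min\{\vol(\f),\vol(\p)\}$ depends on the volumes. The $|\log\vol|$ term in $\du$ only controls the ratio $\vol(\f)/\vol(\p)$, so it cannot normalize the absolute size of the volumes.

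What Steps~1--2 actually give is the following: for each $V>0$ there is a continuous $G_V$ with $G_V(0)=0$ such that $\de_\infty([\f],[\p])\le G_V(\du(\f,\p))$ whenever $\vol(\f),\vol(\p)\ge V$ and $\du(\f,\p)$ is small in terms of $V$. This suffices, by the observation the paper uses. We have $|\vol(\f)-\vol(\p)|\le\de_1(\f,\p)$ and $|\log\vol(\f)-\log\vol(\p)|\le\du(\f,\p)$. Hence along a $\du$-Cauchy net the volumes converge to some $C>0$, so they are bounded below by $C/2$ on a tail, and the images of Cauchy nets are Cauchy. Similarly, $\log\vol$ extends to the completion as a $1$-Lipschitz function, so volumes stay bounded below near each point there; this gives continuity of the extension. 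With this local statement replacing the claim of global uniform continuity, your argument is complete.
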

\begin{proof}
    Assume first that $(\varphi_i)$ is a net in $\cH(X)$ so that $\du(\varphi_i, \varphi)\to 0$ for some $\varphi\in \cH(X)$. Then $\vol(\varphi_i)\to \vol(\varphi)$ and by Theorem~\ref{thm:metricH} we may find $\tau_i\in \cH_\R(X)$ with $\tau_i\le \varphi_i\wedge \varphi$ such that 
    \[
    \de_1(\varphi_i, \tau_i)+\de_1(\tau_i, \varphi)\to 0.
    \]
    Thus $\vol(\tau_i)\to \vol(\varphi)$, and further, since $(\tau_i, \varphi_i^{n-1})\le (\varphi_i^n)$, we have $\ii(\tau_i, \varphi_i)\to 0$. This gives $\de_\infty([\tau_i], [\varphi_i])\to 0$ by Proposition~\ref{prop:metrictop}. The same argument shows $\de_\infty([\tau_i], [\f])\to 0$, and thus 
    \[
    \de_\infty([\varphi_i], [\varphi])\le \de_\infty([\f_i], [\tau_i])+\de_\infty([\tau_i], [\varphi])\to 0,
    \]
    as desired.
    In general, if $(\varphi_i)$ is a net in $\cE(X)$ with $\du(\varphi_i, \varphi)\to 0$ for some $\varphi\in\cE(X)$, then we may pick decreasing nets $(\psi_{ij}), (\psi_j)$ in $\cH(X)$ converging to $\varphi_i, \varphi$ respectively. Then $\lim_i\lim_j\du(\p_{ij}, \psi_j)=0$, and the same argument as above shows $\lim_i\lim_j\de_\infty([\p_{ij}], [\psi_j])=0$. This combined with Example~\ref{exam:decstrong} gives $\de_\infty([\varphi_i], [\varphi])\to 0$.  

    To prove the second assertion, it suffices to show that if $(\f_i)$ is a Cauchy net in $(\cE(X), \du)$, then $([\f_i])$ is Cauchy in $(\bar \cE(X), \de_\infty)$. We may wlog assume $(\f_i)$ is a net in $\cH(X)$, by picking a decreasing net in $\cH(X)$ converging to each $\f_i$.  First note that $\vol(\f_i)\to C$ for some $C>0$.
    Now using Theorem~\ref{thm:metricH}, we can find $\tau_{ij}\in \cH_\R(X)$ with $\tau_{ij}\le \f_i\wedge\f_j$ so that 
    \[
    \de_1(\f_i, \tau_{ij})+\de_1(\tau_{ij}, \f_j)\to 0.
    \]
    Since $\tau_{ij}\le \f_i$, we have $\bar\ii(\tau_{ij}, \f_i)\to 0$. Similarly $\bar\ii(\tau_{ij}, \f_j)\to 0$. Thus
    \[
    \de_\infty([\f_i], [\f_j])\le \de_\infty([\f_i], [\tau_{ij}])+\de_\infty([\tau_{ij}], [\f_j])\to 0
    \]
    by Proposition~\ref{prop:metrictop}.
\end{proof}

While we do not expect $(\cE(X), \du)$ to be complete in general, we point out one situation when it is.
\begin{prop}\label{prop:Ecomplete}
    Assume $X$ is smooth with no nonconstant global invertible functions, and that it admits a compactification $\bar X$ with $q(\bar X)=0$ and rational singularities.
    Then $(\cE(X), \du)$ is complete.
\end{prop}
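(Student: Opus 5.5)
The plan is to take a Cauchy net $(\f_i)$ in $(\cE(X),\du)$, push it to the numerical side, solve there using Theorem~\ref{thm:metricMAsol} (surjectivity for smooth $X$), and then use Lemma~\ref{lem:irregzero} to get a canonical psh representative. Finally we upgrade convergence of classes to $\du$-convergence.

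First, by the preceding proposition, $([\f_i])$ is Cauchy in $(\bar\cE(X),\de_\infty)$. Hence $\MA(\f_i)$ is Cauchy in $(\cC_\infty(X),\de_\infty)$ by Proposition~\ref{prop:metricequiv}, and it converges to some $\mu\in\cC_\infty(X)$ by Corollary~\ref{cor:curvinfcomplete}. Since $X$ is smooth, Theorem~\ref{thm:metricMAsol} yields $\f\in\cE(X)$ with $\MA(\f)=\mu$. Proposition~\ref{prop:metricequiv} then gives $\de_\infty([\f_i],[\f])\to0$. In particular $\vol(\f_i)\to\vol(\f)$, so the logarithmic part of $\du$ converges, and it remains to show $\de_1(\f_i,\f)\to0$.

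Next we pin down representatives. Under the hypotheses, Lemma~\ref{lem:irregzero} says two psh functions with the same class differ by $\sum c_i\log|f_i|$ with $f_i\in R^\times=k^\times$, so they are equal. Thus $\cE(X)\to\bar\cE(X)$ is injective, i.e.\ $\cE(X)=\bar\cE(X)$ as the introduction states. More quantitatively, I would show that $e^{-\d}[\f]\le[\p]\le e^\d[\f]$ forces $e^{-\d}\f\le\p\le e^\d\f$ pointwise. Consider $B:=D_{e^\d\f}-D_\p$, a Weil b-divisor at infinity whose class is pseudoeffective, and which is a difference of nef effective ones. Effectivity of such a $B$ should follow from irregularity zero and the absence of units: on each smooth compactification the incarnation of $B$ is supported at infinity and pseudoeffective. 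Since $\mathrm{Pic}^0=0$ and there are no units, the boundary components are linearly independent in $\NS$. This independence should let one compare coefficients via the negativity lemma (Lemma~\ref{lem:ntrivCDiv}-type argument). This comparison is the step I expect to be the main obstacle, since pseudoeffective does not imply effective in general. If it fails, I would instead argue through numerical triviality of limits as in Proposition~\ref{prop:numtrivlim}: any weak subnet limit of $\f_i$ differs from $\f$ by a numerically trivial divisor, hence equals $\f$.

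Granting the pointwise sandwich $e^{-\d_i}\f\le\f_i\le e^{\d_i}\f$ with $\d_i\to0$, we conclude with \eqref{equ:volformula} and the triangle inequality:
\[
\de_1(\f_i,\f)\le\de_1(\f_i,e^{\d_i}\f)+\de_1(e^{\d_i}\f,\f)=2e^{n\d_i}\vol(\f)-\vol(\f_i)-\vol(\f)\to0,
\]
which uses only monotone comparisons. In the fallback route, weak convergence $\f_i\to\f$ together with $\vol(\f_i)\to\vol(\f)$ and domination by $e^{\d_i}\f$ again reduces to the same computation via \eqref{equ:volformula}. Hence $\du(\f_i,\f)\to0$, proving completeness.
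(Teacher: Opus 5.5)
Your first route breaks at the step you flagged, and the problem is not a missing proof but a false claim. Under the hypotheses of the proposition, $e^{-\d}[\f]\le[\p]\le e^{\d}[\f]$ does \emph{not} imply $e^{-\d}\f\le\p\le e^{\d}\f$. Here is a counterexample.
Take $\bar X=\P^1\times\P^1$, let $A=\{0\}\times\P^1$ (class $f_1$) and let $\Delta$ be the diagonal (class $f_1+f_2$). Set $X=\bar X\setminus(A\cup\Delta)$. This $X$ satisfies every hypothesis:
\begin{itemize}
\item it is affine, since $A+\Delta$ is ample;
\item it is smooth, and $q(\bar X)=0$;
\item it has no nonconstant units, because $A$ and $\Delta$ are numerically independent.
\end{itemize}
Now let $\f=\p_{A+\Delta}$ and $\p=\p_{2A+\frac23\Delta}$; both lie in $\cH(X)$. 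The class $\tfrac32[\f]-[\p]$ is the pullback of $\tfrac13f_1+\tfrac56f_2$, and $[\p]-\tfrac23[\f]$ is the pullback of $\tfrac43f_1$. Both are nef, so $\de_\infty([\f],[\p])\le\log\tfrac32$. Yet $\p(\ord_A)=2>\tfrac32=\tfrac32\f(\ord_A)$. The underlying issue is that numerical independence of the boundary components does not make a pseudoeffective boundary divisor effective: for instance, $\Delta-\tfrac12A$ is ample but not effective.

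Your fallback obtains weak convergence essentially as the paper does, provided you make two points explicit.
\begin{itemize}
\item Strong convergence $[\f_i]\to[\f]$ makes $(\f_i)$ a minimizing net for $\mu=\MA(\f)$.
\item Proposition~\ref{prop:numtrivlim}, together with Lemma~\ref{lem:ntrivCDiv} and $\cE(X)=\bar\cE(X)$, then forces every weak subnet limit of $\f_i-\f$ to vanish.
\end{itemize}
However, you then close the argument with ``domination by $e^{\d_i}\f$'', which is the same unavailable sandwich, so the final step still has a gap.

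The missing idea is to compare with the maximum rather than with a dilate of $\f$. The function $\f_i\vee\f\in\cE(X)$ dominates both $\f_i$ and $\f$, and it converges weakly to $\f$. Upper semicontinuity of the volume (Theorem~\ref{thm:pshint}) and monotonicity then give $\vol(\f_i\vee\f)\to\vol(\f)$. Combining \eqref{equ:volformula} with the triangle inequality yields $\de_1(\f_i,\f)\le 2\vol(\f_i\vee\f)-\vol(\f_i)-\vol(\f)\to0$. With this replacement, your fallback becomes the paper's proof.
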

\begin{rmk}
    This is for example the case when $X$ is a smooth affine open subset of a klt Fano variety with no nonconstant global invertible functions. Typical examples of such varieties arise when they admit asymptotically conical Calabi--Yau metrics; see~\cite{Wu26} for details. 
\end{rmk}
\begin{proof}
    First note that by Lemma~\ref{lem:irregzero}, we have $\bar \cE(X)=\cE(X)$.
    Let $(\f_i)$ be a Cauchy net in $(\cE(X), \du)$. We may also wlog assume $(\f_i)$ is a net in $\cH(X)$, by picking a decreasing net in $\cH(X)$ converging to each $\f_i$.  First note that $\vol(\f_i)\to C$ for some $C>0$.
    Now using Theorem~\ref{thm:metricH}, we can find $\tau_{ij}\in \cH_\R(X)$ with $\tau_{ij}\le \f_i\wedge\f_j$ so that 
    \[
    \de_1(\f_i, \tau_{ij})+\de_1(\tau_{ij}, \f_j)\to 0.
    \]
    Since $\tau_{ij}\le \f_i$, we have $\bar\ii(\tau_{ij}, \f_i)\to 0$. Similarly $\bar\ii(\tau_{ij}, \f_j)\to 0$. Thus
    \[
    \de_\infty([\f_i], [\f_j])\le \de_\infty([\f_i], [\tau_{ij}])+\de_\infty([\tau_{ij}], [\f_j])\to 0
    \]
    by Proposition~\ref{prop:metrictop}. By Proposition~\ref{prop:metricequiv}, this shows $(\MA(\f_i))$ is a Cauchy net in $(\cC_\infty(X), \de_\infty)$ and hence there exists $\mu\in \cC_\infty(X)$ with $\MA(\f_i)\to \mu$ in $\de_\infty$; see Corollary~\ref{cor:curvinfcomplete}. On the other hand, since $X$ is smooth, there exists $\f\in \cE(X)$ with $\MA(\f)=\mu$ by Theorem~\ref{thm:metricMAsol}, and we have $[\f_i]\to [\f]$ strongly in $\cE(X)$. We will show that $\f_i\to \f$ in $\du$.
    
    Indeed, $(\f_i)$ is a minimizing net for $\mu=\MA(\f)$. By Proposition~\ref{prop:numtrivlim} and Lemma~\ref{lem:ntrivCDiv}, every subnet has a further subnet along which $\f_i-\f\to\tau$ weakly for some numerically trivial $\tau\in\PL(X)\otimes_\Q\R$. Since $\bar\cE(X)=\cE(X)$, we must have $\tau\equiv0$. Thus every subnet has a further subnet converging weakly to $\f$, so that $\f_i\to\f$ weakly.
It remains to show that $\de_1(\f_i, \f)\to 0$. But since $\f_i\vee\f\to \f$ weakly, we have $\vol(\f_i\vee\f)\to \vol(\f)$ by upper-semicontinuity. Thus
    \[
    \de_1(\f_i, \f)\le \de_1(\f_i, \f_i\vee\f)+\de_1(\f_i\vee\f, \f)\to 0,
    \]
    and we are done.
\end{proof}

\subsection{The regularized Fubini--Study operator}
Recall that the Fubini--Study operator is a map $\FS: \Nbdd\to \cL^\infty$. For any $\chi\in \Nbdd$ we set $\FS^\star(\chi)\=\FS(\chi)^\star$ to be the usc regularization of $\FS(\chi)$. 

\begin{prop}\label{prop:regFS}
    For any $\chi, \chi'\in \Nbdd$, we have
    \begin{itemize}
        \item[(i)] $\FS^\star(\chi)=\FS(\chi)$ on $\Xdiv$;
        \item[(ii)] $\bar\chi=\IN(\FS^\star(\chi))$;
        \item [(iii)]$\bar\chi=\bar\chi'$ iff $\FS(\chi)=\FS(\chi')$ on $\Xdiv$ iff $\FS^\star(\chi)=\FS^\star(\chi')$;
        \item [(iv)]if the envelope property holds, then $\vol(\FS^\star(\chi))=\vol(\FS(\chi))$ and $\de_1(\FS(\chi^{(\la)}), \FS^\star(\chi))\to 0$ as $\la\to \infty$.
    \end{itemize}
\end{prop}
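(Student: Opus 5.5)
For (i), I would use that $\FS(\chi)$ is lsc and of logarithmic growth (Proposition~\ref{prop:FSloggr}). Write $\f:=\FS(\chi)$. By Lemma~\ref{lem:FSbaseideal}, on $X^\na$ we have $\f(v)=rv(D)-v(\fb_\bullet(\chi))$. Here $v\mapsto v(D)$ is continuous, while $v\mapsto v(\fb_\bullet(\chi))=\inf_m m^{-1}v(\fb_m)$ is usc, being an infimum of continuous functions. Thus $\f^\star\le rv(D)-v(\fb_\bullet)_\star$, where the subscript denotes the lsc regularization. To show $\f^\star=\f$ at divisorial points, I would argue as in \cite{BJpp}: the usc regularization of a function of the form $-v(\fb_\bullet)$ agrees with it at divisorial, indeed at all quasimonomial, valuations. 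This follows from the Izumi-type continuity of $v\mapsto v(\fb_\bullet)$ on the cone of quasimonomial valuations in a fixed snc model, combined with density of such cones. Alternatively, I would reduce to \cite[Theorem~5.6]{BJpp} via the embedding of Corollary~\ref{cor:pshhomemb}. Concretely, $\f$ is the supremum of the family $m^{-1}\log^+|f|$ over $f\in\cF^m_\chi R$, and this family lies in $\PSH(X)$ with sublogarithmic growth. Lemma~\ref{lem:divneg} then gives $\sup=\sup^\star$ on $X^\div$ directly. I expect this to be the cleanest route. The one point to check is that $\f$ really equals that supremum, which follows from the formula $\FS(\chi)(v)=\sup_m\sup_{f\in\cF^m}(-v(f)/m)$ in the proof of Lemma~\ref{lem:FSbaseideal}.

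For (ii), note first that $\FS^\star(\chi)\ge\FS(\chi)$, so $\IN(\FS^\star(\chi))\ge\IN(\FS(\chi))=\chi^\hom$. On the other hand, $\IN(\FS^\star(\chi))\le\IN_{X^\div_\infty}(\FS^\star(\chi))=\IN_{X^\div_\infty}(\FS(\chi))=\bar\chi$ by (i). For the reverse inequality, fix $f$ and $\la$ with $\bar\chi(f)\ge-\la$, so that $\log^+|f|\le\la\FS(\chi)=\la\FS^\star(\chi)$ on $X^\div$. Since $\log^+|f|$ is psh and $\FS^\star(\chi)$ is homogeneous usc, Theorem~\ref{thm:weaktop} gives $\log^+|f|\le\la\FS^\star(\chi)$ on all of $X^\na$, hence $\IN(\FS^\star(\chi))(f)\ge-\la$.

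For (iii), the first equivalence follows from the definition of saturation together with Lemma~\ref{lem:satprop}(iii), which gives $\FS(\bar\chi)=\FS(\chi)$ on $X^\div_\infty$. Both functions vanish on $X^\beth$. The second equivalence uses (i) in one direction. In the other direction, $\FS^\star(\chi)$ is determined by its values on $X^\div$: both sides are usc regularizations of suprema of psh families, so Lemma~\ref{lem:divneg} and Theorem~\ref{thm:weaktop} apply, noting that $\FS^\star(\chi)\in\PSH(X)$ under the envelope property. Without the envelope property, I would instead use $\FS^\star(\chi)=(\sup_{f,m}m^{-1}\log^+|f|)^\star$ and the fact that the families for $\chi$ and $\chi'$ are dominated by each other's regularizations on $X^\div$.

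For (iv), the envelope property gives $\FS^\star(\chi)\in\PSH(X)\cap\cL^\infty$. Hence $\vol(\FS(\chi))\ge\vol(\FS^\star(\chi))$ via $P(\FS(\chi))\ge$ any psh minorant. Conversely, any psh $\p\le\FS(\chi)$ satisfies $\p\le\FS^\star(\chi)$, and monotonicity gives the other inequality. Next, $\FS(\chi^{(\la)})\in\cH_\R$ increases to $\FS(\chi)$, so by Lemma~\ref{lem:FSpshenv}(iii) and Proposition~\ref{prop:volcts} the volumes converge to $\vol(\FS(\chi))=\vol(\FS^\star(\chi))$. Since $\FS(\chi^{(\la)})\le\FS^\star(\chi)$, formula \eqref{equ:volformula} gives $\de_1=\vol(\FS^\star(\chi))-\vol(\FS(\chi^{(\la)}))\to0$. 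The main obstacle is (i), specifically making sure the envelope statement of Lemma~\ref{lem:divneg} applies to the defining family.
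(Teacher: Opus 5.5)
Parts (i)--(iii) are fine.

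For (i), you apply Lemma~\ref{lem:divneg} to the family $m^{-1}\log^+|f|$, $f\in\cF^m_\chi R$, instead of the paper's family $\FS(\chi^{(\la)})$; this is the same argument. In (ii), and in your fallback for the second equivalence of (iii), you use Theorem~\ref{thm:weaktop}. The fallback is the version you actually need, since (iii) carries no envelope hypothesis. The paper instead uses elementary topology: a usc function that is nonnegative on the dense set $\Xdiv_\infty$ is nonnegative everywhere, and two lsc functions agreeing on a dense set have the same usc regularization. Either route works.

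The gap is in (iv), in the inequality $\vol(\FS^\star(\chi))\le\vol(\FS(\chi))$, which is the nontrivial direction. Your justification ``via $P(\FS(\chi))\ge$ any psh minorant'' does not apply, because $\FS^\star(\chi)\ge\FS(\chi)$. So $\FS^\star(\chi)$ is a majorant of $\FS(\chi)$, not a minorant; in fact $P(\FS(\chi))=\FS(\chi)$ here. You must show that passing to the usc regularization does not increase the volume, and that is a genuine continuity statement.

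The paper obtains it as follows. By (i), the increasing net $\FS(\chi^{(\la)})\in\cH_\R$ converges pointwise on $\Xdiv$ to $\FS^\star(\chi)$. Since $\FS^\star(\chi)\in\cE(X)$ under the envelope property, Theorem~\ref{thm:weaktop} makes this weak convergence. Corollary~\ref{cor:volincreas} (continuity of volume along increasing weakly convergent nets, proved via the $\bar\ii$/$\de_\infty$ comparison coming from the Diskant inequality) then gives $\vol(\FS^\star(\chi))=\sup_\la\vol(\FS(\chi^{(\la)}))$. Proposition~\ref{prop:volcts} identifies this supremum with $\vol(\FS(\chi))$.

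Without this input, your last step only shows
\[
\de_1(\FS(\chi^{(\la)}),\FS^\star(\chi))\to\vol(\FS^\star(\chi))-\vol(\FS(\chi))\ge0,
\]
not that the limit is zero.
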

\begin{proof}
    Since $\FS(\chi)=\sup_\la\FS(\chi^{(\la)})\in \cL^\infty$ and $\FS(\chi^{(\la)})\in \cH_\R$,~(i) is a consequence of Lemma~\ref{lem:divneg}. It follows that $\bar\chi=\IN_{\Xdiv_\infty}(\FS^\star(\chi))\ge \IN(\FS^\star(\chi))$. For the reverse inequality, note that for any $f\in R\setminus k$ and any $v\in \Xdiv_\infty$, we have
    \[
    \FS^\star(\chi)(v)-\frac{v(f)}{\bar\chi(f)}\ge 0,
    \]
    an inequality which extends to all $X^\na_\infty$ since $v\mapsto \FS^\star(\chi)(v)-\frac{v(f)}{\bar\chi(f)}$ is usc and $\Xdiv_\infty\subset X^\na_\infty$ is dense. Rearranging, we have $\bar\chi(f)\le \frac{v(f)}{\FS^\star(\chi)(v)}$ for all $v\in X^\na_\infty$, and~(ii) follows. 
    If $\bar\chi=\bar\chi'$, then $\FS(\chi)=\FS(\chi')$ on $\Xdiv$ by Lemma~\ref{lem:satprop}. Since these functions are lsc and $X^{\mathrm{div}}_\infty$ is dense in $X^\na_\infty$, their usc regularizations agree on $X^\na_\infty$, and hence on $X^\na$, since both vanish on $X^\beth$. Conversely, $\FS^\star(\chi)=\FS^\star(\chi')$ implies $\bar\chi=\bar\chi'$ by~(ii). This proves~(iii).
    Finally, assume the envelope property and we prove~(iv). Note that in this case $\FS^\star(\chi)\in  \cE(X)\cap \cL^\infty$. Further, since $\FS^\star(\chi)=\FS(\chi)=\sup_\la\FS(\chi^{(\la)})$ on $\Xdiv$, the increasing net $\FS(\chi^{(\la)})$ converges weakly to $\FS^\star(\chi)$, by Theorem~\ref{thm:weaktop}. Thus Corollary~\ref{cor:volincreas} gives $\vol(\FS^\star(\chi))=\sup_\la\vol(\FS(\chi^{(\la)}))=\vol(\FS(\chi))$, where the second equality follows from Proposition~\ref{prop:volcts}. Pick a decreasing net $(\varphi_i)$ in $\cH(X)$ converging to $\FS^\star(\chi)$, we get
    \begin{align}\label{equ:d1approx}
    \de_1(\FS(\chi^{(\la)}), \FS^\star(\chi))
    &=\lim_i(\vol(\varphi_i)-\vol(\FS(\chi^{(\la)})))\\
    &=\vol(\FS(\chi))-\vol(\FS(\chi^{(\la)}))\to 0,
    \end{align}
    again by Proposition~\ref{prop:volcts}. 
\end{proof}

We now state and prove the isometry statement discussed in the introduction. For the statement below, we set
\[
\cE_\uparrow(X)\=\{\varphi\in\cE(X)\mid \varphi=Q(\varphi)^\star\}.
\]
Note that $\cE_\uparrow(X)\subset \cL^\infty$.
\begin{thm}
    Assume $X$ has the envelope property. Then the regularized Fubini--Study operator defines a surjective isometry $\FS^\star\colon (\Nbdd, \de_1)\twoheadrightarrow (\cE_\uparrow(X), \de_1)$, which restricts to an isometric isomorphism $\FS^\star\colon (\cN^s, \de_1)\to (\cE_\uparrow(X), \de_1)$, with inverse $\IN\colon (\cE_\uparrow(X), \de_1)\to (\cN^s, \de_1)$.
\end{thm}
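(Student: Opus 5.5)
We need to show three things: $\FS^\star$ lands in $\cE_\uparrow(X)$ and is surjective there, $\de_1(\FS^\star(\chi),\FS^\star(\chi'))=\de_1(\chi,\chi')$, and $\IN$ inverts $\FS^\star$ on $\cN^s$.

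\emph{Step 1: $\FS^\star(\chi)\in\cE_\uparrow(X)$.} We write $\f=\FS(\chi)=\sup_\la\FS(\chi^{(\la)})$, an increasing limit of functions in $\cH_\R\subset\CPSH(X)$. This family has sublogarithmic growth by Proposition~\ref{prop:FSloggr}. The envelope property then gives $\FS^\star(\chi)\in\PSH(X)$, and positivity of the volume follows from Proposition~\ref{prop:regFS}(iv) together with Lemma~\ref{lem:volequalbdd}. To see $\FS^\star(\chi)=Q(\FS^\star(\chi))^\star$, note that $\FS(\chi^{(\la)})\le\FS^\star(\chi)$. Since $\cH_\R$ functions are limits of functions in $\cH$ (scale and use $\cH_\R\subset$ closure), $Q(\FS^\star(\chi))\ge\FS(\chi)$ on $\Xdiv$. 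Taking usc regularizations and using Proposition~\ref{prop:regFS}(i) and Theorem~\ref{thm:weaktop} gives equality.

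\emph{Step 2: $\IN$ and surjectivity.} For $\f\in\cE_\uparrow(X)$, we put $\chi=\IN(\f)$, which is bounded and homogeneous by Lemma~\ref{lem:innormbd}. Proposition~\ref{prop:FSINcomp} gives $\FS(\chi)=Q(\f)$, hence $\FS^\star(\chi)=Q(\f)^\star=\f$; this proves surjectivity. Proposition~\ref{prop:regFS}(ii) then gives $\bar\chi=\IN(\f)=\chi$, so $\chi\in\cN^s$. Conversely, for $\chi\in\cN^s$ Proposition~\ref{prop:regFS}(ii) yields $\IN(\FS^\star(\chi))=\bar\chi=\chi$. By Proposition~\ref{prop:regFS}(iii), $\FS^\star$ is injective on $\cN^s$. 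Hence the two maps are mutually inverse bijections between $\cN^s$ and $\cE_\uparrow(X)$.

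\emph{Step 3: isometry, the main step.} We approximate by finite type norms. By Corollary~\ref{cor:FSisometry}, $\de_1(\chi^{(\la)},\chi'^{(\la)})=\de_1(\FS(\chi^{(\la)}),\FS(\chi'^{(\la)}))$. On the function side, Proposition~\ref{prop:regFS}(iv) and the triangle inequality give convergence of the right-hand side to $\de_1(\FS^\star\chi,\FS^\star\chi')$.

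On the norm side we need $\de_1(\chi^{(\la)},\chi)\to0$. Since $\chi^{(\la)}\le\chi$, this distance equals $\vol(\chi)-\vol(\chi^{(\la)})$, which tends to $0$ by Proposition~\ref{prop:canapprox}. We will check via Lemma~\ref{lem:linalg} that $\de_1$ reduces to the volume difference for comparable norms; this holds because $\chi\wedge\chi^{(\la)}=\chi^{(\la)}$. The triangle inequality for the pseudometric then gives $\de_1(\chi^{(\la)},\chi'^{(\la)})\to\de_1(\chi,\chi')$.

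The delicate point is that the $\de_1$ on $\cE(X)$ of Theorem~\ref{thm:d1P} is defined through decreasing approximations, whereas here we approximate from below. Proposition~\ref{prop:regFS}(iv) handles this for a single function, but we still must justify the triangle inequality and the comparison $\de_1=\vol-\vol$ for the increasing family. We would rely on \eqref{equ:volformula} for $\FS(\chi^{(\la)})\le\FS^\star(\chi)$, both of which lie in $\cE(X)$, as in \eqref{equ:d1approx}. Combining these yields the isometry on all of $\Nbdd$, and restricting to $\cN^s$ gives the isometric isomorphism with inverse $\IN$.
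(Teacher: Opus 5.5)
Your proposal is correct and follows essentially the same route as the paper. Surjectivity goes through $\IN$ and Proposition~\ref{prop:FSINcomp}, injectivity on $\cN^s$ through Proposition~\ref{prop:regFS}, and the isometry through the finite type approximants $\chi^{(\la)}$, Corollary~\ref{cor:FSisometry}, \eqref{equ:d1approx} and the triangle inequality. Your Step~1 usefully makes explicit that $\FS^\star(\chi)\in\cE_\uparrow(X)$, which the paper leaves implicit. In Step~3, the identity $\de_1(\chi^{(\la)},\chi)=\vol(\chi)-\vol(\chi^{(\la)})$ follows directly from the definition of $\de_1$ since $\chi^{(\la)}\le\chi$, so Lemma~\ref{lem:linalg} is not needed.
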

\begin{proof}
    We first show surjectivity. Let $\varphi\in \cE_\uparrow(X)$ and set $\chi:=\IN(\varphi)$. Since $\varphi$ is usc, we have $\chi=\IN_{\Xdiv_\infty}(\varphi)$, as in the proof of Proposition~\ref{prop:regFS}(ii). This shows $\chi\in \cN^s$, and $\FS^\star(\chi)=Q(\varphi)^\star=\varphi$, thanks to Proposition~\ref{prop:FSINcomp}. 
    
    Now pick $\chi_1, \chi_2\in \Nbdd$. Then $\chi_i^{(\la)}\in \cT_\R$, and $\f_i^{(\la)}:=\FS(\chi_i^{(\la)})\in\cH_\R(X)$ for $\la\gg0$. Also set $\f_i:=\FS^\star(\chi_i)$. By~\eqref{equ:d1approx} and Proposition~\ref{prop:volcts} we have 
    \[
    \de_1(\f_i^{(\la)}, \f_i)=\de_1(\chi_i^{(\la)}, \chi_i)\to 0
    \]
    as  $\la\to \infty$, for $i=1,2$.
    Moreover, Corollary~\ref{cor:FSisometry} yields
    \[
    \de_1(\chi_1^{(\la)}, \chi_2^{(\la)})=\de_1(\f_1^{(\la)}, \f_2^{(\la)})
    \]
    for all $\la\gg0$. 
    The triangle inequality and the above convergences give
    \[
    \de_1(\f_1,\f_2)
    =\lim_\la\de_1(\f_1^{(\la)},\f_2^{(\la)})
    =\lim_\la\de_1(\chi_1^{(\la)},\chi_2^{(\la)})
    =\de_1(\chi_1,\chi_2).
    \]
    It remains to show that $\FS^\star$ restricts to an injection on $\cN^s$, and this is Proposition~\ref{prop:regFS}.
\end{proof}
\subsection{Darvas metrics on numerical classes}
We end with studying a Darvas metric on $\BBPF_{n-1}(X)$, using the rooftop formula~\eqref{equ:rooftop}.
For any $\alpha, \beta\in\BBPF_{n-1}(X)$, we define
\[
\vol(\a\wedge\b)\=\sup\{\vol(\gamma)\mid \gamma\le\a,\b,\ \gamma\in\BPF_{n-1}(X)\}.
\]
\begin{lem}
    The extended volume is continuous along decreasing nets in $\BBPF_{n-1}(X)$.
\end{lem}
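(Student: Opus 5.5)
The plan is to compare the nets with their limits in the $\de_\infty$-metric, using that decreasing weak convergence upgrades to strong convergence. Let $(\a_i)_i$ and $(\b_i)_i$ be decreasing nets in $\BBPF_{n-1}(X)$ converging weakly to $\a,\b\in\BBPF_{n-1}(X)$. I read ``continuous along decreasing nets'' as the claim that $\vol(\a_i\wedge\b_i)\to\vol(\a\wedge\b)$; the case of a single decreasing variable is the special case where one net is constant. Note that the supremum defining $\vol(\a\wedge\b)$ is taken over a nonempty set, since $\g=0$ is admissible, so the extended volume is well defined with values in $[0,\infty)$. It is also finite, by monotonicity of the volume on $\BPF_{n-1}(X)$ and the bound $\g\le\a$.

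First, monotonicity. If $\a\le\a'$ and $\b\le\b'$, then every $\g\in\BPF_{n-1}(X)$ with $\g\le\a,\b$ also satisfies $\g\le\a',\b'$. Hence $\vol(\a\wedge\b)\le\vol(\a'\wedge\b')$. Applied to our nets, this shows that $(\vol(\a_i\wedge\b_i))_i$ is decreasing and bounded below by $\vol(\a\wedge\b)$. So the limit exists and satisfies $\lim_i\vol(\a_i\wedge\b_i)\ge\vol(\a\wedge\b)$.

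For the reverse inequality, I apply Corollary~\ref{cor:strongdense}: a decreasing net in $\BBPF_{n-1}(X)$ that converges weakly to an element of $\BBPF_{n-1}(X)$ converges strongly. Thus $\d_i\=\max\{\de_\infty(\a_i,\a),\de_\infty(\b_i,\b)\}\to0$, and by the definition of $\de_\infty$ we get $\a_i\le e^{\d_i}\a$ and $\b_i\le e^{\d_i}\b$. Now let $\g\in\BPF_{n-1}(X)$ with $\g\le\a_i,\b_i$. Then $e^{-\d_i}\g$ lies in $\BPF_{n-1}(X)$, since it is a cone, and satisfies $e^{-\d_i}\g\le\a,\b$. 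Homogeneity of the volume then gives
\[
\vol(\a\wedge\b)\ge\vol(e^{-\d_i}\g)=e^{-n\d_i}\vol(\g).
\]
Taking the supremum over all such $\g$ yields $\vol(\a_i\wedge\b_i)\le e^{n\d_i}\vol(\a\wedge\b)$. Letting $i\to\infty$ gives $\lim_i\vol(\a_i\wedge\b_i)\le\vol(\a\wedge\b)$, and combined with the previous step this proves the claim.

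The only real input is the upgrade from weak to strong convergence, which is Corollary~\ref{cor:strongdense}. That result relies on continuity of intersection numbers along decreasing nets and on Proposition~\ref{prop:metrictop}. The point to check is that the limits $\a,\b$ are assumed to lie in $\BBPF_{n-1}(X)$, i.e.\ are big, so the corollary applies. If the statement is intended to allow a limit of volume zero, I would instead argue directly: from $\g\le\a_i$ we get $\vol(\g)\le\vol(\a_i)\to\vol(\a)=0$, so both sides tend to $0$.
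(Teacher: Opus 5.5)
Your proof is correct and follows essentially the same route as the paper. Monotonicity gives $\vol(\a\wedge\b)\le\liminf_i\vol(\a_i\wedge\b_i)$, and for the reverse inequality the weak-to-strong upgrade of Corollary~\ref{cor:strongdense} lets you rescale any competitor $\g\le\a_i,\b_i$ into a competitor for $\a,\b$. You rescale by $e^{-\d_i}$, whereas the paper uses a fixed $t<1$ with $t\a_i\le\a$ and $t\b_i\le\b$ for $i\gg0$; your version also works directly with competitors $\g\in\BPF_{n-1}(X)$ as in the definition, where the paper phrases this step with big and nef $\g\in\CNum(X)$.
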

\begin{proof}
    Suppose $(\alpha_i), (\beta_i)$ are decreasing nets in $\BBPF_{n-1}(X)$ converging to $\alpha, \beta\in \BBPF_{n-1}(X)$ respectively. By monotonicity, we trivially have $\vol(\alpha\wedge\beta)\le\liminf_i \vol(\alpha_i\wedge\beta_i)$. Note also that since $\alpha_i\to \alpha$ and $\beta_i\to \beta$ strongly, for any $0<t<1$, we have
    \[
    t\alpha_i\le \alpha\le \alpha_i, \quad t\beta_i\le \beta\le \beta_i
    \]
    for all $i\gg 0$.
    If $\gamma\in \CNum(X)$ is big and nef so that $\gamma\le \alpha_i\wedge\beta_i$, then $t\gamma\le t\alpha_i\le \alpha$ and the same applies to $\beta$. Thus 
    \[
    \vol(\alpha\wedge\beta)\ge t^n\limsup_i\vol(\alpha_i\wedge\beta_i),
    \]
    and we are done by letting $t\to 1$.
\end{proof}
For $\alpha, \beta\in \BBPF_{n-1}(X)$, we define
\[
\de_1(\a,\b)\=\vol(\a)+\vol(\b)-2\vol(\a\wedge\b).
\]
\begin{prop}
    The assignment defines a metric on $\BBPF_{n-1}(X)$.
\end{prop}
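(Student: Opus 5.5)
We must show that $\de_1(\a,\b)=\vol(\a)+\vol(\b)-2\vol(\a\wedge\b)$ is a metric on $\BBPF_{n-1}(X)$. Three things need checking: nonnegativity with $\de_1(\a,\b)=0$ only when $\a=\b$, symmetry, and the triangle inequality. Symmetry is immediate from the definition.

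\textbf{Nonnegativity and separation.} Any admissible $\g\le\a,\b$ has $\vol(\g)\le\min\{\vol(\a),\vol(\b)\}$, by monotonicity of the intersection pairing on $\BPF_{n-1}(X)$. Hence $\de_1(\a,\b)\ge|\vol(\a)-\vol(\b)|\ge0$. Now suppose $\de_1(\a,\b)=0$. Then $\vol(\a)=\vol(\b)$, and there is a sequence $\g_i\le\a,\b$ in $\BPF_{n-1}(X)$ with $\vol(\g_i)\to\vol(\a)$. For $i\gg0$ these classes are big, so they lie in $\BBPF_{n-1}(X)$. By monotonicity,
\[
\vol(\g_i)\le(\g_i,\a^{n-1})\le\vol(\a),
\]
so $\jj_\a(\g_i)\to0$, and similarly $\jj_\b(\g_i)\to0$. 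Lemma~\ref{lem:ijequiv} then gives $\bar\ii(\g_i,\a)\to0$ and $\bar\ii(\g_i,\b)\to0$. Proposition~\ref{prop:metrictop} converts this into $\de_\infty(\g_i,\a)\to0$ and $\de_\infty(\g_i,\b)\to0$. Since $\de_\infty$ is a metric, $\a=\b$.

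\textbf{Triangle inequality.} This is the main obstacle. We need
\[
\vol(\b)+\vol(\a\wedge\g)\ge\vol(\a\wedge\b)+\vol(\b\wedge\g)
\]
for all $\a,\b,\g\in\BBPF_{n-1}(X)$. For norms this came from the linear algebra of Lemma~\ref{lem:linalg}, and I would reduce to that case.

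\emph{Step 1.} Using Corollary~\ref{cor:strongdense} together with the continuity lemma just proved, approximate $\a,\b,\g$ by decreasing nets of nef Cartier classes. We may then assume they are Cartier, and in fact classes $\ddc\f$ with $\f\in\cH(X)$: a nef Cartier class plus a small ample perturbation becomes semiample big, and the perturbation is controlled by the same decreasing-limit continuity.

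\emph{Step 2.} Show that for $\f,\p\in\cH(X)$,
\[
\vol([\f]\wedge[\p])=\sup_{\tau}\vol(\tau),
\]
the supremum over $\tau\in\cH(X)$ whose numerical class satisfies $[\tau]\le[\f],[\p]$. This reduces to translating by PL functions $\log|h|$, and the genuine difficulty is that numerical domination $[\tau]\le[\f]$ does not imply pointwise domination $\tau\le\f$. I would try first to show the numerical wedge equals $\sup$ of $\vol(\f\wedge(\p+c))$ over translates $c$ of numerically trivial b-divisors, and then use an effective representative: if $[\f]-[\tau]$ is pseudoeffective, approximate it by effective $\R$-Cartier b-divisors at infinity.

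\emph{Step 3.} Given Step 2, transfer the triangle inequality from $\cN_{\rm bdd}$ through Theorem~\ref{thm:metricH} and Lemma~\ref{lem:volequalbdd}, which identify $\vol$ of minima of norms with the rooftop volumes. If Step 2 fails, a fallback is to prove the inequality directly on a smooth model. Writing $\vol(\d)=\lim\frac{n!}{m^n}h^0(m\d)$ for Cartier $\d$, the inequality would follow from Lemma~\ref{lem:linalg} applied to $U,V,W$ the section spaces of $m\a,m\b,m\g$ inside a common big ambient space. One would then identify $h^0$ of $\min$-divisors with the intersections of these section spaces and pass to the limit via Fujita approximation.
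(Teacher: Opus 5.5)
Your treatment of symmetry, nonnegativity and separation is correct and is essentially the paper's argument: the bound $(\g_i,\a^{n-1})\le\vol(\a)$, then Lemma~\ref{lem:ijequiv}, then Proposition~\ref{prop:metrictop}. Your reduction to nef Cartier classes via the continuity lemma is also what the paper does. The gap is the triangle inequality, which is the real content of the statement.

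Your Step~1 overreaches. $\BBPF_{n-1}(X)$ contains all big nef b-classes, not only classes at infinity, and a big nef (even ample) Cartier class on a model need not be $\ddc\f$ for any $\f\in\cH(X)$. For instance, take $X=\A^2$ and let $Y$ be the blowup of $\P^2$ at a point of $\A^2$. Then $2\pi^*H-E$ is ample, but every class at infinity incarnates on $Y$ as a multiple of $\pi^*H$. So the route through $\cH(X)$, Theorem~\ref{thm:metricH} and norms cannot reach the general statement. Even on $\bar\cE(X)$, Step~2 is only a plan, and the obstacle you name is left unresolved. A pseudoeffective difference $[\f]-[\tau]$ is in general represented only by effective divisors meeting $X$. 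Numerical domination therefore does not give the pointwise domination that the norm-side rooftop formula sees.

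Your fallback names the right tools (Lemma~\ref{lem:linalg} on a smooth model, plus Fujita approximation) but applies them to the wrong spaces. Take $U,V,W$ to be the section spaces of fixed representatives $D_\a,D_\b,D_\g$. The lemma then gives $\vol(D_\b)+\vol(D_\a\wedge D_\g)\ge\vol(D_\a\wedge D_\b)+\vol(D_\b\wedge D_\g)$. The two right-hand terms can be much smaller than $\vol(\a\wedge\b)$ and $\vol(\b\wedge\g)$: for effective representatives without common components the minimum is $0$. On that side you need lower bounds, and Fujita approximation does not supply them.

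The missing idea is to apply the lemma to near-optimal competitors instead:
\begin{itemize}
\item Take big nef Cartier classes $\theta\le\a,\b$ and $\chi\le\b,\g$.
\item On a smooth model $Y$, choose $\R$-divisors $D_\theta\in\theta$, $D_\chi\in\chi$, $D_{\b,\e}\in(1+\e)\b$ with $D_\theta,D_\chi\le D_{\b,\e}$. This is possible because $(1+\e)\b-\theta$ and $(1+\e)\b-\chi$ are big, hence have effective representatives.
\item Set $U=\Hnot(Y,\lfloor mD_\theta\rfloor)$, $V=\Hnot(Y,\lfloor mD_{\b,\e}\rfloor)$, $W=\Hnot(Y,\lfloor mD_\chi\rfloor)$. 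Then $U\cap V=U$, $V\cap W=W$ and $U\cap W=\Hnot(Y,\lfloor m(D_\theta\wedge D_\chi)\rfloor)$.
\item The lemma yields $(1+\e)^n\vol(\b)+\vol(D_\theta\wedge D_\chi)\ge\vol(\theta)+\vol(\chi)$.
\item Fujita approximation is now needed only in the favorable direction: $\vol(D_\theta\wedge D_\chi)\le\vol(\a\wedge\g)$, because $D_\theta\wedge D_\chi$ is dominated by representatives of $\theta\le\a$ and of $\chi\le\g$.
\end{itemize}
Taking the supremum over $\theta,\chi$ and letting $\e\to0$ gives the triangle inequality.
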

\begin{proof}
    It is clear that $\de_1(\alpha, \alpha)=0$ and it is symmetric. By the previous lemma, if $(\alpha_i), (\beta_i)$ are decreasing nets in $\BBPF_{n-1}(X)\cap \CNum(X)$ converging to $\alpha, \beta\in \BBPF_{n-1}(X)$, we also have 
    $\de_1(\alpha, \beta)=\lim_i\de_1(\alpha_i, \beta_i)$. Thus it is enough to prove the triangle inequality for $\alpha, \beta, \gamma\in \BBPF_{n-1}(X)\cap\CNum(X)$, which is equivalent to 
    \begin{equation}\label{equ:trangleineq}
        \vol(\alpha)+\vol(\beta\wedge\gamma)\ge \vol(\alpha\wedge\beta)+\vol(\alpha\wedge \gamma).
    \end{equation}
    If one of the volumes on the right hand side is zero, then the inequality trivially holds. So suppose there exist $\theta\le \alpha\wedge\beta$ and $\chi\le \alpha\wedge \gamma$, where $\theta, \chi\in \CNum(X)$ are big and nef. Pick a large enough smooth model $Y$ so that all of $\theta, \chi, \alpha, \beta, \gamma\in \Num(Y)$, and for any $0<\varepsilon\ll1$ pick $\R$-divisors $D_\theta\in \theta, D_\chi\in \chi, D_{\alpha, \varepsilon}\in (1+\varepsilon)\alpha$ on $Y$ so that $D_\theta, D_\chi\le D_{\alpha, \varepsilon}$. Then Lemma~\ref{lem:linalg} applied to 
    \[
    U=\Hnot(Y, \lfloor mD_\theta\rfloor), V=\Hnot(Y, \lfloor mD_{\alpha, \varepsilon}\rfloor), W=\Hnot(Y, \lfloor mD_\chi\rfloor)
    \]
    for $m\gg0$ gives
    \[
    (1+\varepsilon)^n\vol(\alpha)+\vol(D_\theta\wedge D_\chi)\ge \vol(\theta)+\vol(\chi).
    \]
    We claim that $\vol(D_\theta\wedge D_\chi)\le \vol(\beta\wedge\gamma)$. Indeed, if $\vol(D_\theta\wedge D_\chi)=0$, this trivially holds; if $D_\theta\wedge D_\chi$ is big, the claim follows from Fujita's approximation. Taking $\varepsilon\to 0$ and supremum over all such $\theta, \chi$ yields~\eqref{equ:trangleineq}.

    Finally, suppose $\de_1(\alpha, \beta)=0$. Then $\vol(\alpha)=\vol(\alpha\wedge\beta)=\vol(\beta)$. Pick $\theta_i\in \CNum(X)$ big and nef so that $\theta_i\le \alpha\wedge\beta$ and $\vol(\theta_i)\to \vol(\alpha\wedge\beta)$. Then 
    \[
    1\le \frac{(\theta_i, \alpha^{n-1})}{(\theta_i^n)^{1/n}(\alpha^n)^{1-1/n}}\le \frac{(\alpha^{n})^{1/n}}{(\theta_i^n)^{1/n}}\to 1,
    \]
    and thus $\bar\ii(\theta_i, \alpha)\to 0$ and symmetrically $\bar\ii(\theta_i, \beta)\to 0$. Proposition~\ref{prop:metrictop} gives $\theta_i\to \alpha$ and $\theta_i\to \beta$ strongly, which yields $\alpha=\beta$, and $\de_1$ is a metric.
\end{proof}
We define a modified metric on $\BBPF_{n-1}(X)$, by setting for $\a, \b\in \BBPF_{n-1}(X)$,
\[
\du(\a, \b)\=\de_1(\a, \b)+|\log\vol(\a)-\log\vol(\b)|.
\]
\begin{prop}
    A net $([\a_i])_i$ in $\BBPF_{n-1}(X)$ is a Cauchy net for $\du$ iff it is a Cauchy net for $\de_\infty$. In particular, $(\BBPF_{n-1}(X), \du)$ is complete. 
\end{prop}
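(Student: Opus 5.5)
The plan is to show that $\du$ and $\de_\infty$ are \emph{uniformly equivalent on sets where the volume is bounded above and below}, which gives both the equivalence of Cauchy nets and, via Proposition~\ref{prop:divcomplete}, completeness. We split the statement into two directions and use the quasimetric $\bar\ii$ together with Proposition~\ref{prop:metrictop} as the bridge.

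\emph{$\de_\infty$-Cauchy $\Rightarrow$ $\du$-Cauchy.} Suppose $\de_\infty(\a_i,\a_j)\le\d$. Then $e^{-\d}\a_i\le\a_j$ and $e^{-\d}\a_j\le\a_i$. Both classes are nef, so $\g:=e^{-\d}\a_i$ is a competitor in the definition of $\vol(\a_i\wedge\a_j)$. Hence $\vol(\a_i\wedge\a_j)\ge e^{-n\d}\max\{\vol(\a_i),\vol(\a_j)\}$. Moreover $|\log\vol(\a_i)-\log\vol(\a_j)|\le n\d$. These two bounds give $\de_1(\a_i,\a_j)\le 2(1-e^{-n\d})\max\vol$. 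Volumes stay bounded along a $\de_\infty$-Cauchy net, so $\du(\a_i,\a_j)\to0$.

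\emph{$\du$-Cauchy $\Rightarrow$ $\de_\infty$-Cauchy.} First, $\log\vol(\a_i)$ is Cauchy, so $\vol(\a_i)\to V>0$. Next, $\de_1(\a_i,\a_j)\to0$ forces $\vol(\a_i\wedge\a_j)\to V$. For each pair $i,j$, choose a big nef (Cartier) $\g_{ij}\le\a_i,\a_j$ with $\vol(\g_{ij})\ge\vol(\a_i\wedge\a_j)-\e_{ij}$. Monotonicity gives $(\g_{ij},\a_i^{n-1})\le(\a_i^n)$ and $(\a_i,\g_{ij}^{n-1})\le(\a_i^n)$. Therefore
\[
\bar\ii(\g_{ij},\a_i)\le 2\log\frac{\vol(\a_i)}{\min\{\vol(\g_{ij}),\vol(\a_i)\}}\to0,
\]
and similarly for $\a_j$. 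Proposition~\ref{prop:metrictop} converts this into $\de_\infty(\g_{ij},\a_i)\to0$ and $\de_\infty(\g_{ij},\a_j)\to0$. The triangle inequality for $\de_\infty$ then yields $\de_\infty(\a_i,\a_j)\to0$. This mirrors the argument already used for $(\cE(X),\du)$ above.

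\emph{Completeness.} Let $(\a_i)$ be $\du$-Cauchy. By the previous step it is $\de_\infty$-Cauchy, so by Proposition~\ref{prop:divcomplete} it has a limit $\a$ with $\de_\infty(\a_i,\a)\to0$. The first step applied to the pair $(\a_i,\a)$ then gives $\du(\a_i,\a)\to0$.

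The main obstacle is the approximation step in the second direction. Here $\vol(\a\wedge\b)$ is defined as a supremum over $\BPF_{n-1}$ classes, and we need near-optimal competitors $\g$ that lie in $\BBPF_{n-1}$, so that $\bar\ii$ and Proposition~\ref{prop:metrictop} apply. This is fine once $\vol(\a_i\wedge\a_j)>0$, which holds eventually. We also need the monotonicity inequalities for mixed intersections of $\BPF_{n-1}$ classes, which follow from Dang–Favre as recalled in \S\ref{sec:numcld}.
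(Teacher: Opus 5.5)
Your proposal is correct and follows essentially the same route as the paper. For $\de_\infty\Rightarrow\du$ you use the competitor $e^{-\delta}\alpha_i\le\alpha_i,\alpha_j$; for the converse you use near-optimal big nef competitors $\gamma_{ij}\le\alpha_i,\alpha_j$, monotonicity to get $\bar\ii(\gamma_{ij},\alpha_i)\to0$, and then Proposition~\ref{prop:metrictop}. You also make explicit the deduction of completeness from Proposition~\ref{prop:divcomplete}, which the paper leaves implicit.
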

\begin{proof}
    If $\delta_{ij}\=\de_\infty(\a_i, \a_j)\to 0$, then by Proposition~\ref{prop:metrictop} we have $(\log\vol(\a_i))$ is a Cauchy net, and so $\vol(\a_i)\to C$ for some $C>0$. Further,
    \[
    \de_1(\a_i, \a_j)\le \vol(\a_i)+\vol(\a_j)-\vol(e^{-\delta_{ij}}\a_i)-\vol(e^{-\delta_{ij}}\a_j)=(1-e^{-n\delta_{ij}})(\vol(\a_i)+\vol(\a_j))\to 0.
    \] 
    This shows $(\a_i)$ is a Cauchy net for $\du$. 

    Conversely, if $\du(\a_i, \a_j)\to 0$, then again $\vol(\a_i)\to C>0$, and we may pick $\alpha_{ij}\in \CNum(X)$ big and nef with $\alpha_{ij}\le \a_i\wedge\a_j$ so that 
    \[
    \vol(\a_i)+\vol(\a_j)-2\vol(\alpha_{ij})\to 0.
    \]
    This implies $\bar\ii(\a_i, \a_{ij})\to 0$ and $\bar\ii(\a_j, \a_{ij})\to 0$ and by Proposition~\ref{prop:metrictop} we have $\de_\infty(\a_i, \a_j)\to 0$.
\end{proof}
Restricting to $\bar\cE(X)$ and applying Theorem~A, we obtain
\begin{cor}
    The metric completion of $(\bar\cE(X), \du)$ can be identified with $\cC_\infty(X)$. If $X$ is smooth, then $(\bar \cE(X), \du)$ is complete.
\end{cor}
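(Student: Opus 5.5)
The plan is to reduce everything to the $\de_\infty$-picture and then apply Theorem~\ref{thm:metricequivnum} (Theorem~A) together with Theorem~\ref{thm:metricMAsol}. The identification will be a homeomorphism, since $\du$ and $\de_\infty$ are not bi-Lipschitz equivalent in general.

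\textbf{Step 1: $\du$ and $\de_\infty$ have the same convergent sequences on $\BBPF_{n-1}(X)$.} I would rerun the argument of the preceding proposition with one endpoint fixed. If $\de_\infty(\a_i,\a)\to0$, then
\[
\de_1(\a_i,\a)\le(1-e^{-n\d_i})(\vol(\a_i)+\vol(\a)),
\]
and the volumes converge by Proposition~\ref{prop:metrictop}, so $\du(\a_i,\a)\to0$. Conversely, if $\du(\a_i,\a)\to0$, I would choose big and nef Cartier classes $\g_i\le\a_i,\a$ with $\vol(\g_i)$ close to $\vol(\a_i\wedge\a)$. Then $\bar\ii(\g_i,\a_i)\to0$ and $\bar\ii(\g_i,\a)\to0$, and Proposition~\ref{prop:metrictop} gives $\de_\infty(\a_i,\a)\to0$.

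Combined with the preceding proposition (Cauchy nets agree, and $(\BBPF_{n-1}(X),\du)$ is complete), this shows that the $\du$-completion of any subset $S\subset\BBPF_{n-1}(X)$ is its closure $\bar S$ in $\BBPF_{n-1}(X)$. This closure is the same for either metric. Moreover, the identity map $(\bar S,\du)\to(\bar S,\de_\infty)$ is a homeomorphism.

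\textbf{Step 2: apply Step 1 to $S=\bar\cE(X)$.} The completion of $(\bar\cE(X),\du)$ is then the $\de_\infty$-closure $\overline{\bar\cE(X)}$ of $\bar\cE(X)$ inside $\BBPF_{n-1}(X)$. By Theorem~\ref{thm:metricequivnum}, $\a\mapsto\a^{n-1}$ is a homeomorphism $\BBPF_{n-1}(X)\to\BBPF_1(X)$ for the $\de_\infty$ metrics. It therefore carries $\overline{\bar\cE(X)}$ onto the closure of $\MA(\bar\cE(X))$ in $\BBPF_1(X)$.

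By Theorem~\ref{thm:metricMAsol}, $\MA(\bar\cE(X))$ is dense in $\cC_\infty(X)$. By Lemma~\ref{lem:Cinfclosed}, $\cC_\infty(X)$ is closed in $\BBPF_1(X)$. Hence the closure of $\MA(\bar\cE(X))$ is exactly $\cC_\infty(X)$. This identifies the completion of $(\bar\cE(X),\du)$ homeomorphically with $\cC_\infty(X)$.

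\textbf{Step 3: the smooth case.} When $X$ is smooth, Theorem~\ref{thm:metricMAsol} says that $\MA$ is onto $\cC_\infty(X)$. Then $\bar\cE(X)$ is the preimage of the closed set $\cC_\infty(X)$ under the homeomorphism $\a\mapsto\a^{n-1}$, so it is closed in $\BBPF_{n-1}(X)$. A closed subset of the complete space $(\BBPF_{n-1}(X),\du)$ is complete.

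\textbf{Main obstacle.} The one point needing care is the fixed-endpoint convergence equivalence in Step 1. The preceding proposition only states the Cauchy-net version, so I would verify that its proof passes verbatim to the case of a fixed limit, using the same rooftop approximation by big and nef Cartier classes.
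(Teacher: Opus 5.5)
Your proposal is correct and follows the same route as the paper. The paper takes the preceding proposition ($\du$- and $\de_\infty$-Cauchy nets coincide, and $(\BBPF_{n-1}(X),\du)$ is complete), restricts it to $\bar\cE(X)$, and invokes Theorem~A together with the density and smooth-case surjectivity from Theorem~\ref{thm:metricMAsol}, exactly as you do. Your fixed-endpoint check in Step~1 is fine, though it also follows formally from the Cauchy-net equivalence: interleave the net $(\a_i)$ with the constant net at its limit $\a$.
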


\printbibliography

\end{document}